\newtheorem{thm}{Theorem}[section]
\newtheorem{lem}[thm]{Lemma}
\newtheorem{pro}[thm]{Proposition}
\newtheorem{cor}[thm]{Corollary}
\newtheorem{con}[thm]{Conjecture}
\theoremstyle{definition}
\newtheorem{exa}[thm]{Example}
\theoremstyle{remark}
\newtheorem{rem}[thm]{Remark}
\newcommand{\R}{\mathbb{R}}
\newcommand{\Z}{\mathbb{Z}}
\newcommand{\N}{\mathbb{N}}
\newcommand{\bO}{\mathbb{O}}
\newcommand{\C}{\mathbb{C}}
\newcommand{\K}{\mathbb{K}}
\newcommand{\bH}{\mathbb{H}}
\newcommand{\cD}{\mathcal{D}}
\newcommand{\cL}{\mathcal{L}}
\newcommand{\cM}{\mathcal{M}}
\newcommand{\cP}{\mathcal{P}}
\newcommand{\cR}{\mathcal{R}}
\newcommand{\cS}{\mathcal{S}}
\newcommand{\al}{\alpha}
\newcommand{\be}{\beta}
\newcommand{\ga}{\gamma}
\newcommand{\Ga}{\Gamma}
\newcommand{\de}{\delta}
\newcommand{\De}{\Delta}
\newcommand{\ep}{\varepsilon}
\newcommand{\om}{\omega}
\newcommand{\si}{\sigma}
\newcommand{\Si}{\Sigma}
\newcommand{\la}{\lambda}
\newcommand{\La}{\Lambda}
\renewcommand{\phi}{\varphi}
\newcommand{\crt}{\operatorname{crt}}
\newcommand{\dist}{\operatorname{dist}}
\newcommand{\Hd}{\operatorname{Hd}}
\newcommand{\CAT}{\operatorname{CAT}}
\newcommand{\hyp}{\operatorname{H}}
\newcommand{\id}{\operatorname{id}}
\newcommand{\area}{\operatorname{area}}
\newcommand{\const}{\operatorname{const}}
\newcommand{\semi}{semi$\text{-}\C$}
\newcommand{\semik}{semi$\text{-}\K$}
\newcommand{\Semik}{Semi$\text{-}\K$}
\newcommand{\aut}{\operatorname{Aut}}
\newcommand{\lift}{\operatorname{lift}}
\newcommand{\sign}{\operatorname{sign}}
\newcommand{\grad}{\operatorname{grad}}
\newcommand{\im}{\operatorname{Im}}
\newcommand{\slope}{\operatorname{slope}}
\newcommand{\conj}{\operatorname{Conj}}
\newcommand{\es}{\emptyset}
\renewcommand{\d}{\partial}
\newcommand{\di}{\d_{\infty}}
\newcommand{\set}[2]{\{#1:\,\text{#2}\}}
\newcommand{\sm}{\setminus}
\newcommand{\sub}{\subset}
\newcommand{\sups}{\supset}
\newcommand{\ov}{\overline}
\newcommand{\wt}{\widetilde}
\newcommand{\wh}{\widehat}
\begin{document}

\title{M\"obius structures and Ptolemy spaces: boundary at infinity
of complex hyperbolic spaces}
\author{Sergei Buyalo\footnote{Supported by RFBR Grant
08-01-00079-a and SNF Grant 20-119907/1}
\ \& Viktor Schroeder\footnote{Supported by Swiss National
Science Foundation Grant 20-119907/1}}

\date{}
\maketitle

\begin{abstract}
The paper initiates a systematic study of M\"obius structures
and Ptolemy spaces. We conjecture that every compact Ptolemy 
space with circles and many space inversions is M\"obius equivalent 
to the boundary at infinity of a rank one symmetric space
$\K\hyp^n$
of noncompact type. We prove this conjecture for the class of complex hyperbolic spaces
$\C\hyp^n$
as our main result.
\end{abstract}

\section{Introduction}
\label{sect:introduction}

The paper initiates a systematic study of M\"obius structures
and Ptolemy spaces. A M\"obius structure on a set
$X$
is a class of M\"obius equivalent metrics. If a M\"obius structure
is fixed then
$X$
is called a M\"obius space. Ptolemy spaces are M\"obius spaces
with property that the inversion operation preserves the
M\"obius structure. A classical example of a Ptolemy space is
an extended
$\wh\R^n=\R^n\cup\infty=S^n$, $n\ge 0$,
where the M\"obius structure is generated by an Euclidean metric
on
$\R^n$,
and
$\R^n\cup\infty$
is identified with the unit sphere
$S^n\sub\R^{n+1}$
via the stereographic projection. For more detail see Section~\ref{sect:moebius_structures}.

Our motivation is to find a M\"obius characterization of the boundary
at infinity of rank one symmetric spaces 
$Y$
of noncompact type. In the case
$Y=\hyp^n$
this problem is solved in \cite{FS2} for every
$n\ge 1$: {\em every compact Ptolemy space such that through any three points there is
a Ptolemy circle is M\"obius equivalent to
$\wh\R^n=\di\hyp^{n+1}$.}
Here a Ptolemy circle is a subspace 
M\"obius equivalent to the Ptolemy space
$\wh\R=S^1$.

Given distinct points
$\om$, $\om'$
in a Ptolemy space
$X$,
there is a well defined notion of a {\em metric sphere}
$S$
between
$\om$, $\om'$,
and a notion of a {\em space inversion} w.r.t.
$\om$, $\om'$, $S$,
which is a M\"obius involution
$\phi_{\om,\om',S}:X\to X$,
see sect.~\ref{subsect:space_inversions}.

We consider a Ptolemy space
$X$
with the following basic properties. 

\noindent
(E) Existence: there is at least one Ptolemy circle in
$X$.

\noindent
(I) Inversion: for each distinct
$\om$, $\om'\in X$
and every metric sphere
$S\sub X$
between
$\om$, $\om'$
there is a unique space inversion
$\phi_{\om,\om',S}:X\to X$
w.r.t.
$\om$, $\om'$
and
$S$.

\begin{con}\label{con:boundary_rank_one} Let
$X$
be a compact Ptolemy space with properties 
($E$) and (I). Then
$X$
is M\"obius equivalent to the boundary at infinity of a rank one
symmetric space
$\K\hyp^k$
of noncompact type taken with a canonical M\"obius structure.
\end{con}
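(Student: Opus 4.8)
The plan is to reconstruct the filling rank one symmetric space directly from the Ptolemy/M\"obius data on $X$ and then to invoke rigidity of symmetric spaces to identify $X$ with its boundary. The organizing principle is that properties (E) and (I) together should force $X$ to be homogeneous under a rich group $G$ of M\"obius automorphisms, which I expect to turn out to be (an open subgroup of) $\operatorname{Isom}(\K\hyp^k)$ for an appropriate $\K$ and $k$.

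First I would use (E) and (I) to propagate structure across $X$. Starting from a single Ptolemy circle furnished by (E) and applying the space inversions $\phi_{\om,\om',S}$, which are M\"obius involutions, I would manufacture a large supply of Ptolemy circles and show that the group $G$ generated by all space inversions acts transitively on $X$, and in fact on suitable configurations of points. Composing two inversions associated with nested or disjoint metric spheres should produce the analogues of translations and dilations, so that after removing one point $\om$ one obtains a locally compact, homogeneous ``punctured'' space $X_\om=X\sm\{\om\}$ on which the stabilizer of $\om$ in $G$ acts by similarities.

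Second, I would analyze the infinitesimal structure of $X_\om$. The Ptolemy circles through $\om$ become unbounded curves in $X_\om$ playing the role of geodesics, and their mutual position, constrained by the Ptolemy equality together with the available dilations, should endow $X_\om$ with the structure of a graded nilpotent (Carnot) group: trivially graded, i.e.\ Euclidean, in the real case and of Heisenberg type in the complex case. Identifying this nilpotent group together with its dilation action is what pins down the pair $(\K,k)$. The final step is to show that $G$ is a Lie group coinciding with $\operatorname{Isom}(\K\hyp^k)$, after which the desired M\"obius equivalence $X\simeq\di\K\hyp^k$ follows from the standard correspondence between a rank one symmetric space and the M\"obius structure on its visual boundary (reconstructing $\K\hyp^k$, for instance, as the space of horospheres, or of visual metrics, carried by $X$).

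The hard part---and the reason that only the case $\K=\C$ can be settled rather than the full conjecture---is precisely the passage from the abstractly postulated space inversions to a genuine Lie group action, together with the verification that the emerging Carnot structure is one of the four listed model geometries and not some exotic homogeneous space. Controlling how circles and inversions interact finely enough to recognize the Heisenberg grading, and ruling out spurious alternatives, requires delicate configuration analysis that is tractable for $\C\hyp^n$ but does not yet extend uniformly to the quaternionic and octonionic geometries.
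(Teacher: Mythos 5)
The statement you are addressing is stated in the paper as a \emph{conjecture}, and the paper itself does not prove it in general: it establishes only the particular cases in which the fibers of the canonical fibration $\pi_\om$ are points ($p=0$, giving $\di\hyp^{n+1}$) or lines ($p=1$, giving $\di\C\hyp^k$) --- this is Theorem~\ref{thm:complex_hyperbolic}. Your outline does correctly anticipate the paper's strategy for those cases: compositions of space inversions yield homotheties (Proposition~\ref{pro:homothety_property}), which together with compactness produce shifts and two-point homogeneity (Proposition~\ref{pro:two_point_homogeneous}); the group $N_\om$ of shifts acts simply transitively on $X_\om$ and is a contractible, hence simply connected nilpotent Lie group (Lemma~\ref{lem:simply_transitivity_shifts}, Corollary~\ref{cor:nilponent_lie_group}); and in the case $p=1$ it is ultimately identified with a Heisenberg group carrying the Kor\'anyi gauge, whence the M\"obius equivalence with $\di\C\hyp^k$.

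As a proof, however, your text contains no content beyond the plan, and you say so yourself. Every step that actually carries weight in the paper is absent: the duality between Busemann and distance functions, Busemann flatness and the $C^1$-smoothness of distance functions along Ptolemy lines (Lemma~\ref{lem:smooth_convex}); the construction of the fibration $\pi_\om:X_\om\to B_\om$ with Euclidean base via horospheres, slopes, zigzag curves and the orthogonalization procedure (Proposition~\ref{pro:orthogonal_base}, Proposition~\ref{pro:base_euclidean}); the extension property $({\rm E}_2)$, obtained from a second-order estimate on circles (Lemma~\ref{lem:quaratic_excess}, Proposition~\ref{pro:transitive_circle}); and, for $p=1$, the area law of lifting, the canonical complex structure $J$ on $B_\om$, the computation of the distance function $D(a,b)^4=a^4+b^4$ and of the lifting constant $c=2$. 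Without these, the assertion that the mutual position of circles ``should endow $X_\om$ with a Carnot group structure of Heisenberg type'' is an aspiration rather than an argument; in particular nothing in your sketch rules out exotic nilpotent groups or identifies the grading, which is exactly the work occupying the bulk of the paper. For the quaternionic and octonionic cases the conjecture remains open, so no proof of the statement as literally posed can currently be written down; if you intend to prove only the real and complex cases, you must say so and then supply the missing analysis.
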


Our main result is the proof of the following particular case of Conjecture~\ref{con:boundary_rank_one},
which gives a M\"obius characterization of the boundary at infinity
$\di\hyp^k$
of a real hyperbolic space
$\hyp^k$
as well as of the boundary at infinity
$\di\C\hyp^k$
of a complex hyperbolic space 
$\C\hyp^k$.

\begin{thm}\label{thm:complex_hyperbolic} Let
$X$
be a compact Ptolemy space with properties 
($E$) and (I). Then
$X$
is homeomophic to a sphere
$S^n$, $n\ge 1$,
and for every
$\om\in X$
there is a fibration
$\pi_\om$
of
$X_\om=X\sm\om$
with fibers homeomophic to
$\R^p$
for some
$p$, $0\le p<n$,
such that for
$\om$, $\om'$
the fibrations
$\pi_\om$, $\pi_{\om'}$
are transformed to each other by any space inversion
$\phi:X\to X$
with 
$\phi(\om)=\om'$.
So the number
$p$
is a M\"obius invariant of
$X$.
In the case
$p=0$
the space 
$X$
is M\"obius equivalent to
$\di\hyp^{n+1}=\wh\R^n$.
In the case
$p=1$
the space
$X$ 
is M\"obius equivalent to
$\di\C\hyp^k$
with
$n=2k-1$, $k\ge 2$,
taken with a canonical M\"obius structure. 
\end{thm}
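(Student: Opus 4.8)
The plan is to recover the boundary geometry of the model directly from the circle and inversion data, in four stages: the topology, a canonically defined fibration, its Möbius invariance, and finally the identification of the model according to the value of $p$. Fix $\om\in X$ and regard $X_\om=X\sm\om$ as a metric space for a metric $d_\om$ in the Möbius structure with $\om$ as the point at infinity. By (E) there is a Ptolemy circle, and by (I) every metric sphere carries a space inversion; these inversions fix their sphere pointwise and interchange the two distinguished points. Composing inversions in concentric spheres produces homotheties, and composing inversions in different spheres produces the analogue of translations, so the group generated by space inversions acts on $X_\om$ by similarities and acts transitively. A Ptolemy circle through $\om$ becomes a complete geodesic line of $(X_\om,d_\om)$. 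Using this self-similar, geodesically complete, homogeneous structure I would show that $X_\om$ is a topological $\R^n$ for some $n\ge 1$, and hence that $X\cong S^n$.

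For the fibration, fix $\om$ and define, for each $x\in X_\om$,
\[
F_x=\set{y\in X_\om}{$y$ lies on every Ptolemy circle through $\om$ and $x$}.
\]
Since lying on all circles through $\om$ and $x$ is a symmetric relation, the sets $F_x$ partition $X_\om$, and I take $\pi_\om$ to be the quotient map $x\mapsto F_x$. The content of this step is to prove that each $F_x$ is closed and homeomorphic to some $\R^p$ with $0\le p<n$, and that $\pi_\om$ is a genuine fibration. The two extreme behaviours are already visible in the models: over $\di\hyp^{n+1}=\wh\R^n$ the circles through $\om$ are the affine lines, which separate points, so $F_x=\{x\}$ and $p=0$; over $\di\C\hyp^k$ the circles through $\om$ are the chains through the point at infinity, through $\om$ and $x$ there is a \emph{unique} such chain, and $F_x$ is exactly that chain, a line, so $p=1$.

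A space inversion $\phi$ with $\phi(\om)=\om'$ is Möbius, hence carries Ptolemy circles to Ptolemy circles and circles through $\om$ to circles through $\om'$. Therefore $\phi$ maps the family of circles through $\om,x$ onto the family through $\om',\phi(x)$, whence $\phi(F_x)=F_{\phi(x)}$; that is, $\phi$ transforms $\pi_\om$ into $\pi_{\om'}$. Because (I) furnishes a space inversion interchanging any two prescribed points, the fibre dimension is the same for every basepoint, so $p$ is a well-defined Möbius invariant of $X$.

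It remains to identify the model. If $p=0$ the fibres are points, so the circles through $\om$ separate $X_\om$; I would promote this to the statement that through any three points of $X$ there passes a Ptolemy circle, and then invoke \cite{FS2} to conclude that $X$ is Möbius equivalent to $\wh\R^n=\di\hyp^{n+1}$. If $p=1$ the fibres are lines and $\pi_\om$ is an $\R$-bundle over a base $B$ of dimension $n-1$; the similarity group of the first stage makes $X_\om$ into a two-step nilpotent Lie group with one-dimensional centre equal to the fibres, i.e.\ a Heisenberg-type group. The decisive and hardest part is to reconstruct the remaining structure on the base: one must produce from the Ptolemy data a compatible complex structure $J$ on $B$ together with the Carnot--Carathéodory metric, thereby forcing $n-1=2(k-1)$, so that $n=2k-1$ with $k\ge 2$, and identifying $(X_\om,d_\om)$, up to a similarity, with the Heisenberg group underlying $\di\C\hyp^k$. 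Recovering the complex structure and matching the two Möbius structures is where essentially all the work lies; the topology, the fibration, and its invariance are comparatively formal.
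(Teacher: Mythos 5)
Your definition of the fibres is where the argument breaks down. You set $F_x$ to be the set of $y$ lying on \emph{every} Ptolemy circle through $\om$ and $x$, and you justify the case $p=1$ by asserting that in $\di\C\hyp^k$ the Ptolemy circles through $\om$ are the chains, of which there is a unique one through $\om$ and $x$. That misidentifies the objects: a Ptolemy circle through $\om$ becomes a genuine geodesic line of $(X_\om,d_\om)$, so in the Heisenberg model the Ptolemy circles through $\om$ and $x$ are the many horizontal lines through $x$, one for essentially every horizontal direction, while the chain through $\om$ and $x$ is the $\C$-circle, on which the induced metric is a snowflake of $\R$ and which is therefore not a Ptolemy circle at all (indeed a $\C$-circle meets a Ptolemy circle in at most two points, Corollary~\ref{cor:3c_4c}). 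Since any two distinct Ptolemy circles meet in at most two points (Corollary~\ref{cor:weak_unique}) and there are many through $\om$ and $x$ once $\dim X\ge 2$, your $F_x$ is always the singleton $\{x\}$: the construction yields $p=0$ identically and cannot detect the complex hyperbolic case. Nor is there a purely incidence-theoretic substitute — ``$x$ and $y$ lie in no common Ptolemy circle through $\om$'' is also not the right relation, since two points in \emph{different} fibres of the true fibration need not be joined by a Ptolemy line either. The correct fibre is $F_x=\bigcap_{l\ni x}H_l$, the intersection of the horospheres through $x$ of all Ptolemy lines through $x$ (Section~\ref{sect:fibration}), and setting this up requires first proving that the space is Busemann flat and that Busemann functions are affine along Ptolemy lines (Lemma~\ref{lem:smooth_convex}, Proposition~\ref{pro:busemann_affine}); none of that machinery appears in your sketch.

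Two further problems. Your opening claim that a space inversion fixes its sphere pointwise contradicts condition (1) of the definition — an s-inversion has no fixed points and only preserves $S$ setwise — although the consequences you draw from it (homotheties from concentric spheres, shifts as limits) do survive under the correct statement. More seriously, for $p=1$ your proposal reduces to ``reconstruct the complex structure and match the metrics,'' which is precisely the content of Sections~\ref{sect:semi_c_planes}--\ref{sect:model_space}: self-duality, constancy of the displacement of vertical shifts, the unclosed parallelogram, the area law of lifting, the computation $D(a,b)^4=a^4+b^4$ and the lifting constant $c=2$. Deferring all of that leaves the theorem unproved in the only genuinely new case; and note that the metrics of the canonical M\"obius structure on $\di\C\hyp^k$ are Kor\'anyi-type gauges, not Carnot--Carath\'eodory metrics, so the target of the identification is also misstated.
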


In sections~\ref{sect:moebius_structures} and \ref{sect:properties_ptolemy} 
we give an introduction to M\"obius structures and Ptolemy spaces.
We emphasize that a Ptolemy space is not just a metric space, and 
there is no distinguished metric in its M\"obius structure.
This is a source of a duality phenomenon between Busemann and
distance functions which takes place in any Ptolemy space and which
cannot be even formulated for an individual metric space, see
Section~\ref{subsect:duality_dist_busemann}. The duality plays
an important role in our paper. 

The proof of Theorem~\ref{thm:complex_hyperbolic} consists of
two parts. In the first part, which occupies
sections~\ref{sect:many_circles_auto} -- \ref{sect:topology_space},
we prove Theorem~\ref{thm:basic_ptolemy}. That theorem
gives a much more detailed information about Ptolemy spaces
discussed in Conjecture~\ref{con:boundary_rank_one} than it is
formulated in Theorem~\ref{thm:complex_hyperbolic}. The second part
of the proof occupies 
sections~\ref{sect:semi_c_planes} -- \ref{sect:model_space} 
and it is dedicated to a particular case when fibers of fibrations
$\pi_\om$
are homeomorphic to 
$\R$.

{\em Acknowledgements.} We are thankful to
J.-H.~Eschenburg for consulting us on isometry groups
acting transitively on spheres. The first author
is also grateful to the University of Z\"urich for 
hospitality and support.

\tableofcontents

\section{M\"obius structures and Ptolemy spaces}
\label{sect:moebius_structures}
In this section we discuss basic notions of M\"obius geometry.

\subsection{M\"obius structures}

A quadruple
$Q=(x,y,z,u)$
of points in a set
$X$
is said to be {\em admissible} if no entry occurs three or
four times in 
$Q$.
Two metrics 
$d$, $d'$
on 
$X$ 
are {\em M\"obius equivalent} if for any admissible quadruple
$Q=(x,y,z,u)\sub X$
the respective {\em cross-ratio triples} coincide,
$\crt_d(Q)=\crt_{d'}(Q)$,
where
$$\crt_d(Q)=(d(x,y)d(z,u):d(x,z)d(y,u):d(x,u)d(y,z))\in\R P^2.$$
We actually consider {\em extended} metrics on
$X$
for which existence of an {\em infinitely remote} point
$\om\in X$
is allowed, that is,
$d(x,\om)=\infty$
for all
$x\in X$, $x\neq\om$.
We always assume that such a point is unique if exists, and that
$d(\om,\om)=0$.
We use notation
$X_\om:=X\sm\om$
and the standard conventions for the calculation with 
$\om=\infty$.
If 
$\infty$ 
occurs once in 
$Q$, 
say 
$u=\infty$,
then
$\crt_d(x,y,z,\infty)=(d(x,y):d(x,z):d(y,z))$.
If 
$\infty$ 
occurs twice, say 
$z=u=\infty$, 
then
$\crt_d(x,y,\infty,\infty)=(0:1:1)$.

A {\em M\"obius structure} on a set
$X$
is a class 
$\cM=\cM(X)$
of metrics on
$X$
which are pairwise M\"obius equivalent.

The topology considered on 
$(X,d)$ 
is the topology with the basis consisting of all open distance balls 
$B_r(x)$
around points in 
$x\in X_\om$ 
and the complements $X\sm D$ 
of all closed distance balls 
$D=\overline{B}_r(x)$. 
M\"obius equivalent metrics define
the same topology on
$X$.
When a M\"obius structure 
$\cM$
on
$X$
is fixed, we say that
$(X,\cM)$
or simply
$X$
is a {\em M\"obius space.}

A map
$f:X\to X'$
between two M\"obius spaces
is called {\em M\"obius}, if 
$f$ 
is injective and for all admissible quadruples
$Q\sub X$
$$\crt(f(Q))=\crt(Q),$$
where the cross-ratio triples are taken with respect to
some (and hence any) metric of the M\"obius structures
of
$X$, $X'$.
M\"obius maps are continuous. If a M\"obius map
$f:X\to X'$
is bijective, then 
$f^{-1}$
is M\"obius,
$f$
is homeomorphism, and the M\"obius
spaces
$X$, $X'$
are said to be {\em M\"obius equivalent}.

In general different metrics in a M\"obius structure
$\cM$ 
can look very different. However if two metrics have the same infinitely remote
point, then they are homothetic. Since this result is crucial 
for our considerations, we state it as a lemma. 

\begin{lem}\label{lem:homothety_infinite}
Let 
$\cM$ be a M\"obius structure on a set
$X$, 
and let
$d$, $d'\in\cM$
have the same infinitely remote point
$\om\in X$.
Then there exists 
$\la>0$,
such that
$d'(x,y)=\la d(x,y)$ 
for all 
$x$, $y\in X$.
\end{lem}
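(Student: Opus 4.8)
The plan is to exploit the infinitely remote point $\om$ to collapse each four-point cross-ratio into a ratio of three finite distances, and then to promote the resulting pointwise scaling factor to a single global constant by a connectivity argument.

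First I would fix three pairwise distinct points $x,y,z\in X_\om$ and apply the hypothesis that $d,d'$ are M\"obius equivalent to the admissible quadruple $Q=(x,y,z,\om)$. Since $\om$ is the common infinitely remote point of both metrics, the stated convention gives $\crt_d(Q)=(d(x,y):d(x,z):d(y,z))$ and $\crt_{d'}(Q)=(d'(x,y):d'(x,z):d'(y,z))$. Equality of these points in $\R P^2$ means the two coordinate triples are proportional; as $x,y,z$ are distinct all entries are strictly positive, so the proportionality factor is a \emph{positive} number $\la(x,y,z)>0$ with $d'(x,y)=\la(x,y,z)\,d(x,y)$, and likewise for the pairs $(x,z)$ and $(y,z)$.

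The second step is to show this factor does not in fact depend on the triple. If a pair of distinct points $a,b$ occurs in two triples, each yields a factor, say $\la_1$ and $\la_2$, with $d'(a,b)=\la_1\,d(a,b)=\la_2\,d(a,b)$; cancelling $d(a,b)>0$ gives $\la_1=\la_2$, so the factor attached to the pair $\{a,b\}$ is well defined and we write it $\la(a,b)$. The first step then reads $\la(a,b)=\la(a,c)=\la(b,c)$ for every triple of distinct points. To see that $\la$ is globally constant, take any two pairs: if they share a point, a single triple equates their factors; if they are disjoint, say $\{a,b\}$ and $\{c,d\}$, chain through the triples $\{a,b,c\}$ and $\{a,c,d\}$ to obtain $\la(a,b)=\la(a,c)=\la(c,d)$. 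This covers all cases whenever $|X_\om|\ge 3$; for $|X_\om|\le 2$ there is at most one pair and the claim is immediate. Denote the common value by $\la>0$.

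Finally I would dispose of the pairs involving $\om$ itself: by the conventions $d(x,\om)=\infty=d'(x,\om)$ and $d(\om,\om)=0=d'(\om,\om)$, both consistent with $d'=\la d$ for any $\la>0$. Hence $d'=\la d$ on all of $X$. I do not expect a serious obstacle here: the only points requiring care are the verification that the $\R P^2$-proportionality yields a strictly positive factor, which needs the distinctness of the points, and the bookkeeping of the low-cardinality degenerate cases. The essential mechanism is simply that fixing a common point at infinity reduces the M\"obius condition to similarity of triangles, after which connectivity of the triples on $X_\om$ forces one uniform scale.
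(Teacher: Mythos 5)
Your proof is correct and follows essentially the same route as the paper: both collapse the cross-ratio of $(x,y,z,\om)$ to the triple $(d(x,y):d(x,z):d(y,z))$ and propagate the resulting scale factor. You are somewhat more explicit than the paper about chaining the factor across disjoint pairs and about the degenerate low-cardinality cases, but the mechanism is identical.
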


\begin{proof}
Since otherwise the result is trivial, we can assume that
there are distinct points
$x$, $y\in X_\om$.
Take 
$\la>0$ 
such that
$d'(x,y)=\la d(x,y)$.
If 
$z\in X_\om$, 
then
$\crt_d(x,y,z,\om)=\crt_{d'}(x,y,z,\om)$, 
hence
$(d'(x,y):d'(x,z):d'(y,z))=(d(x,y):d(x,z):d(y,z)).$
Since 
$d'(x,y)=\la d(x,y)$
we therefore obtain
$d'(x,z)=\la d(x,z)$ 
and
$d'(y,z)=\la d(y,x)$.
\end{proof}

In what follows we always consider
$X_\om=X\sm\om$
as a metric space with a metric from the M\"obius structure
for which the point 
$\om$
is infinitely remote.

A classical example of a M\"obius space is the extended
$\wh\R^n=\R^n\cup\infty=S^n$, $n\ge 1$,
where the M\"obius structure is generated by some extended
Euclidean metric on
$\wh\R^n$,
and
$\R^n\cup\infty$
is identified with the unit sphere
$S^n\sub\R^{n+1}$
via the stereographic projection. Note that Euclidean metrics 
which are not homothetic to each other generate different 
M\"obius structures by the lemma above, which however are M\"obius equivalent.

\subsection{Ptolemy spaces}
\label{subsect:Ptolemy_spaces}

A M\"obius space
$X$
is called a {\em Ptolemy space}, if it satisfies the
Ptolemy property, that is, for all admissible quadruples  
$Q\sub X$
the entries of the respective cross-ratio triple
$\crt(Q)\in\R P^2$
satisfies the triangle inequality.
We can reformulate this property as follows.

Let 
$\Si$ 
be the subset of points 
$(a:b:c)\in \R P^2$, 
where
all entries 
$a$, $b$, $c$ 
are nonnegative or all entries are nonpositive. Note that 
$\Si$ 
can be identified with the standard 2-simplex,
$\set{(a,b,c)\in\R^3}{$a,b,c\ge 0,\, a+b+c=1$}$.
Let 
$\De\sub\Si$ 
be the set of points 
$(a:b:c)\in\Si$ 
such that the entries 
$a$, $b$, $c$ 
satisfy the triangle inequality. This is obviously well defined.
If we identify 
$\Si\sub\R P^2$ 
with the standard 2-simplex, i.e. the convex hull
of the unit vectors 
$e_1$, $e_2$, $e_3$, 
then 
$\De$ 
is the convex subset spanned by 
$(0,\frac{1}{2},\frac{1}{2})$,
$(\frac{1}{2},0,\frac{1}{2})$ 
and
$(\frac{1}{2},\frac{1}{2},0)$. 

The importance of the Ptolemy property comes from the following fact.
Given a metric
$d\in\cM(X)$
possibly with infinitely remote point
$\om\in X$ 
and a point
$z\in X_\om$,
the {\em metric inversion}, or m-inversion for brevity, of
$d$
of radius
$r>0$
with respect to
$z$
is a function
$d_z(x,y)=\frac{r^2d(x,y)}{d(z,x)d(z,y)}$
for all
$x$, $y\in X$
distinct from
$z$, $d_z(x,z)=\infty$
for all
$x\in X\sm\{z\}$
and
$d_z(z,z)=0$.
Using the standard convention we also have
$d_z(x,\om)=\frac{r^2}{d(x,z)}$.
A direct computation shows that
$d_z$
is M\"obius equivalent to
$d$.

\begin{rem}\label{rem:radius_one_inversion} When saying
about an m-inversion of a metric without specifying its radius, 
we mean that the radius is 1.
\end{rem}

In general
$d_z$
is not a metric because the triangle inequality may not
be satisfied. However, we have

\begin{pro}\label{pro:moeb_ptolemy}
A M\"obius structure
$\cM$ 
on a set
$X$ 
is Ptolemy if and only if for all
$z\in X$ 
there exists a metric
$d_z\in\cM$ 
with infinitely remote point
$z$. 
\end{pro}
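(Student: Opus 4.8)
The plan is to prove the equivalence in Proposition~\ref{pro:moeb_ptolemy} by establishing both directions, with the heart of the matter being a translation between the Ptolemy inequality for cross-ratio triples and the ordinary triangle inequality for m-inversions.

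\medskip

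First I would prove the easy direction: assume that for every $z\in X$ there is a metric $d_z\in\cM$ with infinitely remote point $z$. I want to show $\cM$ is Ptolemy, i.e. that for every admissible quadruple $Q=(x,y,z,u)$ the three entries of $\crt(Q)$ satisfy the triangle inequality. The key observation is that the cross-ratio triple is a M\"obius invariant, so I may compute it using whichever metric in $\cM$ is most convenient. Given the quadruple, I pick one of its entries, say $u$, and use the metric $d_u\in\cM$ for which $u$ is infinitely remote. Then by the convention $\crt_{d_u}(x,y,z,u)=(d_u(x,y):d_u(x,z):d_u(y,z))$, and since $d_u$ is an honest metric on $X_u$, the triangle inequality for the triple $d_u(x,y),d_u(x,z),d_u(y,z)$ holds immediately. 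One must check the degenerate cases where entries of $Q$ coincide, but admissibility guarantees no entry occurs three or four times, so after possibly renaming one can always arrange for a good choice of the infinitely remote point; these cases reduce to the boundary points of $\De$ and are routine.

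\medskip

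For the converse I assume $\cM$ is Ptolemy and fix $z\in X$; I must produce $d_z\in\cM$ with $z$ infinitely remote. Start from any metric $d\in\cM$. If $z$ is already infinitely remote for $d$ we are done, so assume $z\in X$ with $d(z,\cdot)$ finite, and form the m-inversion $d_z(x,y)=\frac{d(x,y)}{d(z,x)d(z,y)}$ (radius $1$, per Remark~\ref{rem:radius_one_inversion}), which by the direct computation mentioned in the text is M\"obius equivalent to $d$, hence lies in $\cM$, and has $z$ as its infinitely remote point by construction. The only thing to verify is that $d_z$ is genuinely a metric, i.e. satisfies the triangle inequality $d_z(x,y)\le d_z(x,u)+d_z(u,y)$ for all $x,y,u\in X_z$. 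Clearing the positive denominators $d(z,x),d(z,y),d(z,u)$, this inequality is exactly the statement that the triple $\bigl(d(x,y)d(z,u):d(x,u)d(z,y):d(u,y)d(z,x)\bigr)$ satisfies the triangle inequality --- and that triple is precisely $\crt_d(x,y,u,z)$ (up to the ordering of entries in the definition of $\crt$), so the Ptolemy property applied to the quadruple $(x,y,u,z)$ delivers it.

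\medskip

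The main obstacle I anticipate is bookkeeping rather than conceptual: matching the ordering convention in the definition of $\crt_d(Q)=(d(x,y)d(z,u):d(x,z)d(y,u):d(x,u)d(y,z))$ against the symmetric form of the triangle inequality needed for $d_z$, and carefully handling the admissible quadruples in which the infinitely remote point or one of the other points coincides with an entry of $Q$ (the $(0:1:1)$ and $(a:b:c)$-with-$\infty$ conventions). These degenerate configurations correspond to the edges and vertices of the simplex $\Si$ and must be checked to lie in $\De$, but each is a direct consequence of the conventions already fixed in the text, so no new idea is required.
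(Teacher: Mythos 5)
Your proposal is correct and follows essentially the same route as the paper: use the metric with the relevant point at infinity to read off the triangle inequality for the cross-ratio triple in one direction, and in the other direction recognize the triangle inequality for the m-inversion $d_z$ as the Ptolemy inequality for $\crt_d(x,y,z,u)$ (your "clearing denominators" step is just the paper's identity $(d_z(x,y):d_z(y,u):d_z(x,u))=\crt_{d_z}(x,y,z,u)=\crt_d(x,y,z,u)$ unwound). No gaps.
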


\begin{proof}
Assume that 
$\cM$ 
is Ptolemy and that
$z\in X$.
Choose some 
$d\in\cM$.
If 
$z$
is infinitely remote with respect to
$d$
then 
$d$
is our desired metric. If not we define
$d_z$
as the m-inversion (of radius
$r=1$)
of
$d$
with respect to
$z$.
Since for 
$x$, $y$, $u\in X\sm z$
$$(d_z(x,y):d_z(y,u):d_z(x,u))=\crt_{d_z}(x,y,z,u)
=\crt_d(x,y,z,u)\in\De$$
we see that 
$d_z$
satisfies the triangle inequality and hence
$d_z\in\cM$. 

If on the other hand for every
$z\in X$
there is a metric
$d_z\in\cM$ 
with infinitely remote point
$z$,
then for all
$x$, $y$, $u\in X\sm z$
and all 
$d\in\cM$
$$\crt_d(x,y,z,u)=\crt_{d_z}(x,y,z,u)
=(d_z(x,y):d_z(y,u):d_z(x,u))\in\De,$$
which implies the Ptolemy property.
\end{proof}

The classical example of Ptolemy space is
$\wh\R^n$
with a standard M\"obius structure as it follows from 
the proposition above. Here is the list some known results 
on metric spaces with Ptolemy property.
A real normed vector space, which is ptolemaic, is an inner product space
(Schoenberg, 1952, \cite{Sch});
a Riemannian locally ptolemaic space is nonpositively curved (Kay, 1963, \cite{Kay});
all Bourdon and Hamenst\"adt metrics on 
$\di Y$,
where
$Y$
is CAT($-1$), generate a Ptolemy space (Foertsch-Schroeder, 2006, \cite{FS1});
a geodesic metric space is CAT(0) if and only if it is ptolemaic and 
Busemann convex, a ptolemaic proper geodesic metric space is uniquely geodesic 
(Foertsch-Lytchak-Schroeder, 2007, \cite{FLS}); any Hadamard space ptolemaic, 
a complete Riemannian manifold is ptolemaic if and only if it is a Hadamard manifold, 
a Finsler ptolemaic manifold is Riemannian (Buckley-Falk-Wraith, 2009, \cite{BFW}). 
These results allow to suggest that the Ptolemy property is a sort
of a M\"obius invariant nonpositive curvature condition.

\subsection{Circles in Ptolemy spaces}

A Ptolemy circle in a Ptolemy space 
$X$ 
is a subset 
$\si\sub X$ 
homeomorphic to 
$S^1$  
such that for every quadruple
$(x,y,z,u)\in\si$
of distinct points the equality 
\begin{equation}\label{eq:PT_eq}
d(x,z)d(y,u)=d(x,y)d(z,u)+d(x,u)d(y,z)
\end{equation}
holds for some and hence for any metric 
$d$
of the M\"obius structure , where it is supposed that the pair
$(x,z)$
separates the pair
$(y,u)$,
i.e.
$y$
and
$u$
are in different components of
$\si\sm\{x,z\}$.
Recall the classical Ptolemy theorem that four points 
$x$, $y$, $z$, $u$ 
of the Euclidean plane lie on a circle (in this order) if and
only if their distances satisfy
the Ptolemy equality (\ref{eq:PT_eq}).
One can reformulate this via the cross ratio triple. A subset
$\si$ 
homeomorphic to
$S^1$ 
is a Ptolemy circle, if and only if for all admissible quadruples 
$(x,y,z,u)$ 
of points in
$\si$ 
we have
$\crt(x,y,z,u)\in\d\De$,
where the set 
$\De$
is defined in sect.~\ref{subsect:Ptolemy_spaces}.

Let
$\si$ 
be a Ptolemy circle passing through the infinitely remote point
$\om$
for some metric
$d\in\cM$
and let
$\si_\om=\si\sm\om$.
Then 
$\crt(x,y,z,\om)\in\d\De$
says that for 
$x$, $y$, $z\in\si_\om$
(in this order)
$d(x,y)+d(y,z)=d(x,z)$,
i.e. it implies that
$\si_\om$
is a geodesic, actually a complete geodesic isometric to
$\R$.

We recall the following facts from \cite{FS2}. Let 
$\si$
be a Ptolemy circle in a Ptolemy space and
let
$x_1$, $x_2$, $x_3\in\si$
be distinct points, then the map
$\si\to\d\De$, $t\mapsto\crt(x_1,x_2,x_3,t)$
is a homeomorphism. The inverse of this map
gives a canonical parametrization of
$\si$ 
(for given points
$x_1$, $x_2$, $x_3\in\si$).
By composing two of these canonical parameterizations we have:

\begin{pro} \label{pro:moebch-circ}
Let 
$\si$ 
and 
$\si'$ 
be Ptolemy circles. Let 
$x_1$, $x_2$, $x_3$ 
and 
$x'_1$, $x'_2$, $x'_3$ 
be distinct points on 
$\si$ 
respectively on 
$\si'$.
Then there exists a unique M\"obius homeomorphism
$\varphi:\si\to \si'$ 
with 
$\varphi(x_i)=x'_i$.
\qed
\end{pro}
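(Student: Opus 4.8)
The plan is to build $\varphi$ directly out of the two canonical parametrizations recalled from \cite{FS2} just above. Set $f:\si\to\d\De$, $f(t)=\crt(x_1,x_2,x_3,t)$, and $f':\si'\to\d\De$, $f'(t)=\crt(x_1',x_2',x_3',t)$; both are homeomorphisms, so $\varphi:=(f')^{-1}\circ f:\si\to\si'$ is a well-defined homeomorphism. Three things then remain to be checked: that $\varphi(x_i)=x_i'$, that $\varphi$ is M\"obius, and that it is the only such map.

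First I would dispose of the normalization $\varphi(x_i)=x_i'$ by a direct computation of the degenerate cross-ratios. For any metric $d$ one has $\crt_d(x_1,x_2,x_3,x_1)=(1:1:0)$, $\crt_d(x_1,x_2,x_3,x_2)=(1:0:1)$, and $\crt_d(x_1,x_2,x_3,x_3)=(0:1:1)$, since in each case one factor of the triple vanishes and the two surviving products agree by symmetry of $d$; the same three values are attained by $f'$ at $x_1',x_2',x_3'$. Hence $f(x_i)=f'(x_i')$ and $\varphi(x_i)=x_i'$. Uniqueness is equally short: if $g:\si\to\si$ is M\"obius and fixes $x_1,x_2,x_3$, then for every $t$ the M\"obius property together with $g(x_i)=x_i$ gives $\crt(x_1,x_2,x_3,g(t))=\crt(x_1,x_2,x_3,t)$, and since $f$ is injective this forces $g(t)=t$; applying this to $g=\psi^{-1}\circ\psi'$ for two candidate maps $\psi,\psi'$ yields $\psi=\psi'$.

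The main work is to show that $\varphi$ preserves cross-ratios, and the key is that on any Ptolemy circle the cross-ratio of four points is a \emph{universal} function of their $\d\De$-parameters. To make this precise I would pass to a metric $d$ having $x_3$ as infinitely remote point (such $d$ exists by Proposition~\ref{pro:moeb_ptolemy}); then $\si\sm x_3$ is a complete geodesic isometric to $\R$, and by Lemma~\ref{lem:homothety_infinite} this isometry is unique once we normalize $x_1\mapsto 0$, $x_2\mapsto 1$. Call it $s:\si\to\wh\R$ (with $s(x_3)=\infty$); being an isometry up to scale on $\si\sm x_3$ extended by $\infty$, it is M\"obius. A short computation identifies $f$ with the standard parametrization of $\wh\R$ composed with $s$, namely $f(t)=(1:|s(t)-1|:|s(t)|)=g_0(s(t))$, where $g_0(u)=(1:|u-1|:|u|)$ is itself a homeomorphism $\wh\R\to\d\De$ by the recalled fact; thus $s=g_0^{-1}\circ f$ is determined by $f$ alone. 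Consequently, for any four points the M\"obius invariance of $s$ gives $\crt_d(a,b,c,e)=\crt_{\wh\R}(s(a),s(b),s(c),s(e))=F(f(a),f(b),f(c),f(e))$ for the universal function $F(p_1,p_2,p_3,p_4)=\crt_{\wh\R}(g_0^{-1}(p_1),g_0^{-1}(p_2),g_0^{-1}(p_3),g_0^{-1}(p_4))$, which does not depend on the circle.

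The proposition then follows formally: carrying out the same construction on $\si'$ produces the same universal $F$, and since $f'\circ\varphi=f$ we get $\crt_{d'}(\varphi(a),\varphi(b),\varphi(c),\varphi(e))=F(f(a),f(b),f(c),f(e))=\crt_d(a,b,c,e)$ for every admissible quadruple, so $\varphi$ is M\"obius. The one place demanding genuine care---the main obstacle---is the identification $f=g_0\circ s$: it rests on choosing the geodesic coordinate $s$ with the correct orientation and scale so that the degenerate and finite cross-ratios match the standard Euclidean ones on $\wh\R$, and on verifying the infinitely remote cases (quadruples containing $x_3$) separately. Everything else is bookkeeping.
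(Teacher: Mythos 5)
Your construction of $\varphi$ as $(f')^{-1}\circ f$ is exactly the paper's argument (``by composing two of these canonical parameterizations''), and your verification of the normalization, the M\"obius property via the universal function $F=\crt_{\wh\R}\circ(g_0^{-1})^{\times 4}$, and the uniqueness is correct. You have simply filled in the details that the paper delegates to \cite{FS2}; there is nothing to object to.
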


In particular all Ptolemy circles are M\"obius equivalent.
The standard metric models of a circle are 
$(\wh\R,d)$,
where 
$d$
is the standard Euclidean metric, or
$(S^1,d_c)$,
where
$d_c$ 
is the chordal metric on 
$S^1$,
i.e. the metric induced by the standard embedding
$S^1\sub\R^2$
as a unit circle. These two standard realizations of a circle
are M\"obius equivalent via the  stereographic projection.
Note that by Lemma~\ref{lem:homothety_infinite} there is up to homothety only 
one metric on a circle with a infinitely remote point, while there are plenty 
of bounded metrics (for a description of all Ptolemy metrics on
$S^1$ 
see \cite{FS2}).

\section{Properties of Ptolemy spaces}
\label{sect:properties_ptolemy}

In this section we discuss various properties of Ptolemy spaces
which include duality between Busemann and distance functions,
and Busemann flat Ptolemy spaces.

\subsection{Duality between Busemann and distance functions}
\label{subsect:duality_dist_busemann}

Let
$X$ 
be a Ptolemy space,
$d$
a metric of the M\"obius structure with infinitely remote point
$\om$, $X_\om=X\sm\om$.
Recall that every Ptolemy circle
$\si\sub X$
that passes through
$\om$
is isometric w.r.t.
$d$
to a geodesic line. Such a line
$l=\si_\om$ 
is called a {\em Ptolemy} line. We fix
$\om'\in l$,
and let
$d'$
be the m-inversion of
$d$
w.r.t.
$\om'$.
Then
$d'$
is a metric of the M\"obius structure with infinitely remote point
$\om'$.
In particular,
$l'=\si_{\om'}$
is a Ptolemy line in
$X_{\om'}$.
One easily checks that
$d$
is the m-inversion of
$d'$
w.r.t.
$\om$,
that is, the inversion operation (of radius 1) is involutive.

With every oriented Ptolemy line 
$l\sub X_\om$
and every point 
$\om'\in l$
we associate a function
$b:X_\om\to\R$,
called a {\em Busemann function} of
$l$, 
as follows. Given
$x\in X_\om$,
the difference
$|xy|-|\om' y|$
is nonincreasing by triangle inequality as
$y\in l$
goes to infinity according the orientation of
$l$, $y>\om'$.
Thus the limit
$b(x)=\lim_{l\ni y\to\infty}(|xy|-|\om'y|)$
exists. Note that
$b(\om')=0$
and
$b(x)=-|\om'x|$
for all
$l\ni x>\om'$.

For any Ptolemy space
$X$
there is a remarkable duality between Busemann and distance functions
which is described as follows. Let
$c:\R\to X_{\om'}$
be a unit speed parameterization of
$l'$
with
$c(0)=\om$, $b^\pm:X_\om\to\R$
the {\em opposite} Busemann functions of
$l$,
that is, associated with opposite ends of
$l$,
which are normalized by
$b^\pm(\om')=0$
and
$b^+\circ c(t)<0$
for
$t>0$, $b^-\circ c(t)<0$
for
$t<0$.
Since
$d(x,\om')\cdot d'(x,\om)=1$
for every
$x\in X\sm\{\om,\om'\}$,
we have
$b^\pm\circ c(t)=\mp 1/t$
for all
$t\neq 0$.

\begin{pro}\label{pro:duality_dist_busemann} For all
$x\in X\sm\{\om,\om'\}$
we have
\begin{equation}\label{eq:duality}
b^\pm(x)=\frac{d^\pm}{dt}\ln d'(x,c(t))|_{t=0},
\end{equation} 
where
$\frac{d^\pm}{dt}$
is the right/$-$left derivative.
\end{pro}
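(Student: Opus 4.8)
The plan is to reduce the identity to a direct computation using the explicit formulas already assembled in the excerpt, together with the definition of the Busemann functions as limits of distance differences. The key relation we are handed is the inversion identity
\[
d(x,\om')\cdot d'(x,\om)=1\quad\text{for all }x\in X\sm\{\om,\om'\},
\]
which relates the two metrics $d$ and $d'$, and the normalization $b^\pm\circ c(t)=\mp 1/t$ for the Busemann functions along the line $l'$. First I would fix $x\in X\sm\{\om,\om'\}$ and express $d'(x,c(t))$ in terms of $d$ by applying the m-inversion formula at $\om'$: since $d'$ is the m-inversion of $d$ with respect to $\om'$ of radius $1$, one has $d'(x,c(t))=\frac{d(x,c(t))}{d(\om',x)\,d(\om',c(t))}$ whenever both $x$ and $c(t)$ lie in $X_{\om'}$. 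This converts the right-hand side of \eqref{eq:duality} into a logarithmic derivative of an expression built purely from the single metric $d$.

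Next I would compute $\frac{d^\pm}{dt}\ln d'(x,c(t))$ at $t=0$. Writing $\ln d'(x,c(t))=\ln d(x,c(t))-\ln d(\om',x)-\ln d(\om',c(t))$, the middle term is a constant in $t$ and so contributes nothing to the derivative. For the last term I would use that $c$ is a unit-speed parametrization of the Ptolemy line $l'$ with $c(0)=\om$; since $\om'\in l'$ and $l'$ is isometric to $\R$ with $d$ the metric having $\om$ infinitely remote — wait, here $c$ parametrizes $l'=\si_{\om'}$ and $\om=c(0)$, so I must be careful about which metric measures distances along $c$. The cleanest route is to stay with the relation $b^\pm\circ c(t)=\mp1/t$ and to recognize that the surviving contribution from $\ln d(x,c(t))$ at $t=0$ is exactly the Busemann function of the line $l$ evaluated at $x$, by the very definition $b(x)=\lim_{y\to\infty}(|xy|-|\om'y|)$ after reparametrizing the limit through $c$.

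The crux, and the step I expect to be the main obstacle, is matching the two parametrizations of the line and its dual correctly, so that the right versus left derivative at $t=0$ picks out $b^+$ versus $b^-$ with the correct signs. Concretely, as $t\to 0^+$ the point $c(t)$ runs off toward one end of $l'$ in $X_{\om'}$, which corresponds under the inversion to $\om$ being approached from one side; I would verify that the normalization $b^+\circ c(t)<0$ for $t>0$ and $b^-\circ c(t)<0$ for $t<0$ is consistent with $\frac{d^+}{dt}$ yielding $b^+$ and $\frac{d^-}{dt}$ yielding $b^-$. Once the sign bookkeeping is settled, the difference quotient $\frac{1}{t}\bigl(\ln d(x,c(t))-\ln d(x,c(0))\bigr)$ can be related to $\frac{1}{t}\bigl(d(x,c(t))-d(x,\om)\bigr)$ up to a factor coming from $d(x,\om)$ and the unit-speed condition, and in the limit this reproduces $b^\pm(x)$ via the standard fact that the Busemann function is the directional derivative of the distance function along the line. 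The whole argument is essentially a one-variable calculus computation; the only genuine subtlety is the orientation and sign conventions, which is why I would spell those out explicitly before differentiating.
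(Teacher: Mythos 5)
Your overall strategy --- convert $d'$ into $d$ via the inversion formula at $\om'$ and match the resulting difference quotient with the Busemann limit --- is the same as the paper's, but the execution as written has a genuine gap in the middle. The decomposition
\[
\ln d'(x,c(t))=\ln d(x,c(t))-\ln d(\om',x)-\ln d(\om',c(t))
\]
cannot be differentiated term by term at $t=0$: as $t\to 0$ the point $c(t)$ tends to $\om$, which is the \emph{infinitely remote} point of the metric $d$, so $d(x,c(t))\to\infty$ and $d(\om',c(t))=1/d'(\om,c(t))=1/|t|\to\infty$. Two of your three terms are individually singular, and only their combination is finite. For the same reason your proposed final step, relating $\frac1t\bigl(\ln d(x,c(t))-\ln d(x,c(0))\bigr)$ to $\frac1t\bigl(d(x,c(t))-d(x,\om)\bigr)$, is meaningless: $d(x,\om)=\infty$. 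The sentence ``the surviving contribution from $\ln d(x,c(t))$ at $t=0$ is exactly the Busemann function \dots by the very definition'' is precisely the assertion that has to be proved, and it concerns the combined quantity $\ln d(x,c(t))-\ln d(\om',c(t))$, not the first term alone.

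The missing computation is short. Using $d(x,y)=\frac{d'(x,y)}{d'(\om,x)d'(\om,y)}$, $d(x,\om')=1/d'(x,\om)$, $d'(x,\om)=d'(x,c(0))$ and $d'(\om,c(t))=|t|$, one gets the \emph{exact} identity
\[
d(x,c(t))-d(\om',c(t))=\frac{1}{|t|\,d'(x,c(0))}\bigl(d'(x,c(t))-d'(x,c(0))\bigr),
\]
valid for all $t\neq 0$. Letting $t\to 0^{\pm}$, the left side tends to $b^{\pm}(x)$ (this is where $c(t)\to\om$ means $c(t)$ escapes to infinity along $l$ in $X_\om$, in the direction fixed by the normalization $b^{\pm}\circ c(t)=\mp 1/t$), while the right side tends to the right derivative, respectively minus the left derivative, of $\ln d'(x,c(\cdot))$ at $0$; these one-sided derivatives exist because $t\mapsto d'(x,c(t))$ is convex by the Ptolemy inequality, a point you should also record since it is what makes the right-hand side of \eqref{eq:duality} well defined. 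Your sign analysis of which one-sided derivative picks out $b^+$ versus $b^-$ is correct and is the only part of the bookkeeping that survives unchanged.
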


\begin{proof} We first note that the function
$t\mapsto d'(x,c(t))$
is convex by the Ptolemy condition, and thus it has
the right and the left derivatives at every point. Hence, the
right hand side of Equation~(\ref{eq:duality}) is well
defined. By definition,
$d(x,y)=\frac{d'(x,y)}{d'(\om,x)d'(\om,y)}$
and
$d(x,\om')=\frac{1}{d'(x,\om)}$
for all
$x$, $y\in X_\om$.
Now, we compute 
\begin{eqnarray*}
d(x,c(t))-d(\om',c(t))&=&\frac{d'(x,c(t))}{d'(\om,x)d'(\om,c(t))}
  -\frac{1}{d'(\om,c(t))}\\
&=&\frac{1}{|t|d'(x,c(0))}
   \left(d'(x,c(t))-d'(x,c(0)\right)
\end{eqnarray*}
for all
$t\neq 0$,
because
$d'(x,\om)=d'(x,c(0))$
and
$d'(\om,c(t))=|t|$.
Since
$b^\pm(x)=\lim_{t\to\pm 0}(d(x,c(t))-d(\om',c(t)))$,
we obtain
$$b^\pm(x)=\frac{d^\pm}{dt}\ln d'(x,c(t))|_{t=0}.$$
\end{proof}

Given a Ptolemy circle
$\si\in X$
and distinct points 
$\om$, $\om'\in\si$,
we denote with
$D_{\si,\om}^{\om'}$
the subset in
$X_{\om'}$
which consists of all
$x$
such that
$\om$
is a closest to 
$x$
point in the geodesic line
$\si_{\om'}$
(w.r.t. the metric of
$X_{\om'}$).

\begin{lem}\label{lem:omega_closest_subset} Let
$X$
be a Ptolemy space. Then for every Ptolemy circle
$\si\sub X$
and each pair of distinct points
$\om$, $\om'\in\si$
we have
\begin{equation}\label{eq:dist_busemann}
D_{\si,\om}^{\om'}\cup\om'=B_{\si,\om'}^\om\cup\om,
\end{equation}
where
$B_{\si,\om'}^\om=\set{x\in X_\om}{$b^+(x)\ge 0\ \text{and}\ b^-(x)\ge 0$}$,
$b^\pm:X_\om\to\R$
are the opposite Busemann functions of the Ptolemy line
$\si_\om\sub X_\om$
with
$b^\pm(\om')=0$.
\end{lem}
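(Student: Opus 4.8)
The plan is to translate both sides of (\ref{eq:dist_busemann}) into conditions on the one-sided derivatives at $t=0$ of a single convex function. Fix $d'\in\cM$ with infinitely remote point $\om'$, and let $c\colon\R\to X_{\om'}$ be the unit speed parameterization of the Ptolemy line $\si_{\om'}$ with $c(0)=\om$, exactly as in Proposition~\ref{pro:duality_dist_busemann}. Put $g(t)=d'(x,c(t))$ for $x\in X\sm\{\om,\om'\}$. Since $\si_{\om'}$ is a complete geodesic isometric to $\R$ and $g$ is convex (as noted in the proof of that proposition), the point $\om=c(0)$ is a closest point of $\si_{\om'}$ to $x$ if and only if $g$ attains its minimum at $t=0$, i.e. if and only if the left derivative of $g$ at $0$ is $\le 0$ and the right derivative is $\ge 0$. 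This characterizes membership in $D_{\si,\om}^{\om'}$ for such $x$.

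For the right hand side I would invoke the duality of Proposition~\ref{pro:duality_dist_busemann}. Since $\ln$ is increasing and $g(0)=d'(x,\om)>0$ is finite, the one-sided derivatives of $t\mapsto\ln g(t)$ at $0$ carry the same signs as the corresponding one-sided derivatives of $g$. Reading (\ref{eq:duality}) with the convention that $\frac{d^+}{dt}$ is the right derivative and $\frac{d^-}{dt}$ is \emph{minus} the left derivative, one gets that $b^+(x)\ge 0$ is equivalent to the right derivative of $g$ at $0$ being $\ge 0$, while $b^-(x)\ge 0$ is equivalent to the left derivative of $g$ at $0$ being $\le 0$. Thus $x\in B_{\si,\om'}^\om$ is governed by exactly the same two inequalities as $x\in D_{\si,\om}^{\om'}$, and the two sets coincide on $X\sm\{\om,\om'\}$.

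It remains to treat the two special points. Directly, $\om'\in B_{\si,\om'}^\om$ because $b^\pm(\om')=0$, while $\om$ is its own closest point on $\si_{\om'}$, so $\om\in D_{\si,\om}^{\om'}$; adjoining $\om'$ to $D_{\si,\om}^{\om'}$ and $\om$ to $B_{\si,\om'}^\om$ therefore makes both unions contain $\{\om,\om'\}$. Combined with the previous paragraph, this yields the equality (\ref{eq:dist_busemann}).

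The only delicate point, and the place where an error is easiest to make, is the bookkeeping with signs: one must remember that $\frac{d^-}{dt}$ in (\ref{eq:duality}) already carries a minus sign relative to the ordinary left derivative, since this is precisely what converts $b^-(x)\ge 0$ into the condition that the left derivative of $g$ be $\le 0$, and hence makes the Busemann half-space $B_{\si,\om'}^\om$ coincide with the set of points whose nearest-point projection to $\si_{\om'}$ is $\om$. As a sanity check one can test both descriptions on the points $c(s)$ of $\si$ themselves, where $g(t)=|s-t|$ and $b^\pm(c(s))=\mp 1/s$, so that no $c(s)$ with $s\ne 0$ lies in either set, consistently on both sides.
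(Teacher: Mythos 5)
Your argument is correct and follows essentially the same route as the paper: both reduce the statement to the observation that, for the convex function $t\mapsto d'(x,c(t))$, having a minimum at $t=0$ is equivalent, via the duality formula~(\ref{eq:duality}), to the sign conditions $b^\pm(x)\ge 0$. Your extra care with the sign convention for $\frac{d^-}{dt}$ and the explicit handling of the exceptional points $\om$, $\om'$ only makes explicit what the paper leaves implicit.
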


\begin{proof} Denote with
$d'$
the metric of
$X_{\om'}$
and let
$c:\R\to X_{\om'}$
be the unit speed parameterization of the Ptolemy line
$\si_{\om'}\sub X_{\om'}$
such that
$c(0)=\om$
and
$b^\pm\circ c(t)=\mp 1/t$,
see the paragraph preceding Proposition~\ref{pro:duality_dist_busemann}.
For every
$x\in D_{\si,\om}^{\om'}$
we have 
$\frac{d^+}{dt}d'(x,c(t))_{t=0}\ge 0$
for the right derivative, and 
$-\frac{d^-}{dt}d'(x,c(t))_{t=0}\le 0$
for left derivative because
$t=0$
is a minimum point of the convex function
$t\mapsto d'(x,c(t))$.
Equation~(\ref{eq:duality}) implies that 
$x\in B_{\si,\om'}^\om$.

Assume that
$b^+(x)\ge 0$
and
$b^-(x)\ge 0$
for some
$x\in X\sm\{\om,\om'\}$.
Equation~(\ref{eq:duality}) implies that the right derivative
$\frac{d^+}{dt}d'(x,c(t))_{t=0}\ge 0$
and the left derivative
$-\frac{d^-}{dt}d'(x,c(t))_{t=0}\le 0$.
Thus
$t=0$
is a minimum point of the convex function
$t\mapsto d'(x,c(t))$
and hence
$x\in D_{\si,\om}^{\om'}$.
\end{proof}

\subsection{Busemann flat Ptolemy spaces}
\label{subsect:busemann_flat_ptolemy}

A Ptolemy space
$X$
is said to be {\em (Busemann) flat} if for every Ptolemy
circle
$\si\sub X$
and every point
$\om\in\si$,
we have
\begin{equation}\label{eq:busemann_flat}
 b^++b^-\equiv\const
\end{equation}
for opposite Busemann functions
$b^\pm:X_\om\to\R$
associated with Ptolemy line
$\si_\om$.
This property is equivalent to that any horospheres of
$b^+$, $b^-$
coincide whenever they have a common point. Thus the horosphere
$H_{\si,\om'}^\om\sub X_\om$
of
$\si_\om$
through
$\om'\in\si_\om$
is well defined in a flat Ptolemy space.

\begin{pro}\label{pro:busemann_flat} A Ptolemy space
$X$
is flat if and only for every
$\om\in X$
and every
$x\in X_\om$
the distance function
$d(x,\cdot)$
is $C^1$-smooth along any Ptolemy line
$l\sub X_\om$, $l\not\ni x$.
\end{pro}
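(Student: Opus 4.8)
\emph{Proof proposal.} The plan is to reduce both conditions, pointwise in $x$, to the single identity $b^+(x)+b^-(x)=0$, and then to observe that the two reductions are dual to each other under the exchange of $\om$ and $\om'$ furnished by Proposition~\ref{pro:duality_dist_busemann}. Fix a Ptolemy circle $\si$, distinct $\om,\om'\in\si$, and keep the notation of that proposition: $d'$ is the metric of $X_{\om'}$, $c:\R\to X_{\om'}$ the unit speed parameterization of $\si_{\om'}$ with $c(0)=\om$, and $b^\pm$ the opposite Busemann functions of $\si_\om$ with $b^\pm(\om')=0$. For $x\in X\sm\{\om,\om'\}$ the function $g(t)=d'(x,c(t))$ is positive and convex, so $\frac{d^\pm}{dt}\ln g|_{t=0}=\frac{1}{g(0)}\,\frac{d^\pm}{dt}g|_{t=0}$; since $g(0)=d'(x,\om)>0$, Equation~(\ref{eq:duality}) yields
\begin{equation*}
b^+(x)+b^-(x)=\frac{1}{d'(x,\om)}\left(\frac{d^+}{dt}g|_{t=0}+\frac{d^-}{dt}g|_{t=0}\right).
\end{equation*}
With the sign convention for $\frac{d^-}{dt}$ the bracket equals the ordinary right derivative of $g$ at $0$ minus the ordinary left one, so for every such $x$ one has the equivalence
\begin{equation*}
b^+(x)+b^-(x)=0\quad\Longleftrightarrow\quad d'(x,\cdot)\ \text{is}\ C^1\ \text{at the point}\ c(0)=\om\ \text{of}\ \si_{\om'}.
\end{equation*}

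For the implication (2)$\Rightarrow$flat I would fix an arbitrary Ptolemy circle $\si$ and $\om\in\si$, choose $\om'\in\si$, and normalize $b^\pm(\om')=0$. If $x\in X_\om\sm\si$ then $x\notin\si_{\om'}$, so by (2) the function $d'(x,\cdot)$ is $C^1$ along $\si_{\om'}$, in particular at $\om$; if $x\in\si_\om\sm\om'$ then $t\mapsto d'(x,c(t))$ is affine near $t=0$ (because $x\neq\om$) and hence $C^1$ there; and $b^+(\om')+b^-(\om')=0$ by normalization. By the equivalence above, $b^++b^-\equiv 0$ on $X_\om$, which is exactly (\ref{eq:busemann_flat}).

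For the converse, flat$\Rightarrow$(2), take $\om\in X$, $x\in X_\om$ and a Ptolemy line $l=\si_\om\sub X_\om$ with $x\notin l$; the goal is $C^1$-smoothness of $d(x,\cdot)$ along $l$. The key move is to apply the identity above with the roles of $\om$ and $\om'$ interchanged: for each $\eta\in l$ I put $\eta$ in place of $\om$, so that $d$ becomes the inverted metric, $\tilde c:\R\to X_\om$ is the unit speed parameterization of $l$ with $\tilde c(0)=\eta$, and $\tilde b^\pm$ are the Busemann functions of $\si_\eta$ normalized by $\tilde b^\pm(\om)=0$. Flatness applied to $\si$ at $\eta$ gives $\tilde b^++\tilde b^-\equiv\const$, and the constant is $\tilde b^+(\om)+\tilde b^-(\om)=0$; thus $\tilde b^+(x)+\tilde b^-(x)=0$, which by the equivalence means precisely that $d(x,\cdot)$ is differentiable at $\tilde c(0)=\eta$. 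Letting $\eta$ range over $l$ yields differentiability at every point of $l$, and since $d(x,\cdot)$ is convex along $l$ by the Ptolemy condition, differentiability everywhere upgrades to genuine $C^1$-smoothness along $l$.

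The conceptual crux, and the step I expect to need the most care, is this role-exchange: flatness is a statement about Busemann functions while (2) is a statement about distance functions, and it is exactly the duality of Proposition~\ref{pro:duality_dist_busemann} that converts one into the other. Two technical points must be watched. First, the logarithm in (\ref{eq:duality}) is harmless since $d'(x,\om)>0$, so it does not affect the vanishing of the one-sided derivatives, but one must keep the sign convention for $\frac{d^-}{dt}$ consistent throughout. Second is the passage from a single base point to the whole line: flatness is assumed at \emph{every} point of the circle, which is precisely what allows $\eta$ to vary over all of $l$ and hence delivers smoothness along $l$ rather than at one point.
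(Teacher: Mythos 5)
Your proposal is correct and follows essentially the same route as the paper: both directions are obtained by feeding the normalization $b^\pm(\om')=0$ into the duality formula of Proposition~\ref{pro:duality_dist_busemann}, reading $b^+(x)+b^-(x)=0$ as equality of the one-sided derivatives of $d'(x,c(\cdot))$ at $0$, and exchanging the roles of $\om$ and $\om'$ for the converse. Your extra remarks (the explicit treatment of $x\in\si$, and the convexity upgrade from everywhere-differentiable to $C^1$) are details the paper leaves implicit, not a different argument.
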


\begin{proof} Assume that distance functions are $C^1$-smooth
along Ptolemy lines. We fix
$\om\in X$, 
a Ptolemy line
$l\sub X_\om$,
and let
$b^\pm$
be opposite Busemann functions of
$l$.
We suppose W.L.G. that
$b^\pm(\om')=0$
for some point
$\om'\in l$.
Then
$b^++b^-=0$
along
$l$.
Equation~(\ref{eq:duality}) implies that in fact
$b^+(x)+b^-(x)=0$
for every
$x\in X_\om$. 
Thus
$X$
is flat.

Conversely, assume that 
$X$
is flat. Given
$\om'\in X$,
a Ptolemy line
$l'\in X_{\om'}$
and
$x\in X_{\om'}\sm l'$,
we show that the distance function
$d'(x,\cdot)$
in
$X_{\om'}$
is $C^1$-smooth along
$l'$
at every point
$\om\in l'$.

Let
$c:\R\to X_{\om'}$
by a unit speed parameterization of
$l'$
with
$c(0)=\om$, $b^\pm:X_\om\to\R$
the opposite Busemann function associated 
with the Ptolemy line
$l=(l'\cup\om')\sm\om\sub X_\om$
such that
$b^\pm(\om')=0$, $b^+\circ c(t)<0$
for all
$t>0$.
Then 
$b^++b^-\equiv 0$
by the assumption, and
by Proposition~\ref{pro:duality_dist_busemann} we have 
$\frac{d^+}{dt}d'(x,c(t))|_{t=0}=-\frac{d^-}{dt}d'(x,c(t))|_{t=0}$,
where
$\frac{d^+}{dt}$
is the right derivative
and
$-\frac{d^-}{dt}$
is the left derivative. Hence
$d'(x,\cdot)$
is $C^1$-smooth.
\end{proof}

By Proposition~\ref{pro:busemann_flat}, the duality equation~(\ref{eq:duality}) 
in a flat Ptolemy space
$X$
takes the following form
\begin{equation}\label{eq:smooth_duality}
b^\pm(x)=\pm\frac{d}{dt}\ln d'(x,c(t))|_{t=0}.
\end{equation}

\begin{exa}\label{exa:hyp} The Ptolemy space
$\wh\hyp^n$, $n\ge 2$,
generated by the real hyperbolic space
$\hyp^n$,
is not flat because the equality
$b^++b^-\equiv\const$
is violated in
$\hyp^n$.
(Recall that
$\hyp^n$
possesses the Ptolemy property and thus it generates
a Ptolemy space by taking all metrics on
$\wh\hyp^n$
which are M\"obius equivalent to the metric of
$\hyp^n$.)
Note that the distance function
$d(x,\cdot):\hyp^n\to\R$
is smooth for every
$x\in\hyp^n$
along any geodesic line
$l$, $x\not\in l\sub\hyp^n$.
This does not contradict Proposition~\ref{pro:busemann_flat} 
because the m-inversion of
$d$
with respect to any point
$x\in\hyp^n$
has a singularity at the infinity point of
$\wh\hyp^n$.
\end{exa}

In flat Ptolemy spaces, the duality between distance and 
Busemann functions is as follows.

\begin{lem}\label{lem:flat_duality} Let
$X$
be a flat Ptolemy space,
$\si\sub X$
a Ptolemy circle, and
$\om$, $\om'\sub\si$
distinct points. Let
$H_{\si,\om'}^\om\sub X_\om$
be the horosphere through
$\om'$
of the Ptolemy line
$\si_\om\sub X_\om$,
$D_{\si,\om}^{\om'}\sub X_{\om'}$
the set of all
$x\in X_{\om'}$
such that
$\om$
is the closest to
$x$
point in the Ptolemy line
$\si_{\om'}$.
Then
\begin{equation}\label{eq:horosphere_distance}
H_{\si,\om'}^\om\cup\om=D_{\si,\om}^{\om'}\cup\om'.
\end{equation}
\end{lem}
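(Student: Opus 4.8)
The plan is to combine the general (non-flat) duality result of Lemma~\ref{lem:omega_closest_subset} with the flatness hypothesis to upgrade the set identity. Lemma~\ref{lem:omega_closest_subset} already gives, for an arbitrary Ptolemy space, the equality
\[
D_{\si,\om}^{\om'}\cup\om'=B_{\si,\om'}^\om\cup\om,
\]
where $B_{\si,\om'}^\om=\{x\in X_\om:b^+(x)\ge 0\text{ and }b^-(x)\ge 0\}$ and $b^\pm$ are the opposite Busemann functions of $\si_\om$ normalized by $b^\pm(\om')=0$. So the entire task reduces to identifying the sublevel-type set $B_{\si,\om'}^\om$ with the horosphere $H_{\si,\om'}^\om$ under the flatness assumption.

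First I would invoke the flatness hypothesis in the form of Equation~(\ref{eq:busemann_flat}): since $b^++b^-\equiv\const$ along $X_\om$, and since the normalization $b^\pm(\om')=0$ forces this constant to be $0$, we get $b^-=-b^+$ identically on $X_\om$. Substituting this into the defining conditions of $B_{\si,\om'}^\om$, the two inequalities $b^+(x)\ge 0$ and $b^-(x)\ge 0$ become $b^+(x)\ge 0$ and $-b^+(x)\ge 0$, i.e. $b^+(x)=0$ (equivalently $b^-(x)=0$). Hence
\[
B_{\si,\om'}^\om=\{x\in X_\om:b^+(x)=0\}.
\]
This is precisely the horosphere of $\si_\om$ through $\om'$ (recall from Section~\ref{subsect:busemann_flat_ptolemy} that flatness is exactly the condition making the horosphere $H_{\si,\om'}^\om$ well defined, with the level sets of $b^+$ and $b^-$ coinciding). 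Thus $B_{\si,\om'}^\om=H_{\si,\om'}^\om$, and feeding this back into Lemma~\ref{lem:omega_closest_subset} yields
\[
H_{\si,\om'}^\om\cup\om=D_{\si,\om}^{\om'}\cup\om',
\]
which is the desired Equation~(\ref{eq:horosphere_distance}).

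I expect no genuine obstacle here: the substance of the duality is carried by Lemma~\ref{lem:omega_closest_subset}, and flatness does nothing more than collapse the two one-sided inequalities into a single equation, turning the ``closest-point region'' into an honest horosphere. The only point requiring a small amount of care is bookkeeping the normalization constant: one must check that the orientation conventions and the normalization $b^\pm(\om')=0$ are consistent between Lemma~\ref{lem:omega_closest_subset} and the definition of flatness, so that the constant in $b^++b^-\equiv\const$ is indeed $0$ rather than some nonzero value that would shift the level set off of $\om'$. Since $\om'\in\si_\om$ and both Busemann functions vanish there by construction, evaluating $b^++b^-$ at $\om'$ pins the constant to $0$, and the identification goes through cleanly.
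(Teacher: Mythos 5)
Your proof is correct and follows essentially the same route as the paper: the paper likewise reduces everything to Lemma~\ref{lem:omega_closest_subset} and then observes that flatness makes $B_{\si,\om'}^\om$ coincide with the horosphere $H_{\si,\om'}^\om$. Your version merely spells out the normalization bookkeeping ($b^++b^-\equiv 0$, hence the two inequalities collapse to $b^+(x)=0$) that the paper summarizes in one sentence.
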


\begin{proof} In a flat Ptolemy space we have
$H_{\si,\om'}^\om=B_{\si,\om'}^\om$
because level sets of opposite Busemann functions associated with 
a Ptolemy line coincide when they have a common point. On the other hand, by duality, Lemma~\ref{lem:omega_closest_subset}, we have
$B_{\si,\om'}^\om\cup\om=D_{\si,\om}^{\om'}\cup{\om'}$.
\end{proof}

\section{Ptolemy spaces with circles and many space inversions}
\label{sect:many_circles_auto}

We begin this section with discussion of what is a space
inversion of an arbitrary Ptolemy space.

\subsection{Space inversions}
\label{subsect:space_inversions}

A M\"obius automorphism
$\phi:X\to X$
of a Ptolemy space induces a map
$\phi^\ast:\cM\to\cM$, $(\phi^\ast d)(x,y)=d(\phi(x),\phi(y))$
for every metric
$d\in\cM$
and each
$x$, $y\in X$,
where
$\cM$
is the M\"obius structure of
$X$.
Note that a metric inversion of a bounded metric 
cannot be induced by any M\"obius automorphism
$X\to X$,
because a metric inversion w.r.t.
$\om\in X$
has
$\om$
as the infinitely remote point.

Given distinct
$\om$, $\om'\in X$,
we say that a subset
$S\sub X$
is a {\em metric sphere between}
$\om$, $\om'$,
if
$$S=\set{x\in X}{$d(x,\om)=r$}=S_r^d(\om)$$
for some metric
$d\in\cM$
with infinitely remote point 
$\om'$
and some
$r>0$.
Recall that any two such metrics
$d$, $d'\in\cM$
are proportional to each other,
$d'=\la d$
for some 
$\la>0$,
see Lemma~\ref{lem:homothety_infinite}. Then
$S_r^d(\om)=S_{\la r}^{d'}(\om)$.
Moreover, this notion is symmetric w.r.t. 
$\om$, $\om'$,
because any metric 
$d'\in\cM$
with infinitely remote point 
$\om$
is proportional to the m-inversion of
$d$
w.r.t.
$\om$,
and we can assume that
$d'$
is the m-inversion itself. Then
$S=\set{x\in X}{$d'(x,\om')=1/r$}$.

We define a {\em space inversion}, or s-inversion for brevity,
w.r.t. distinct
$\om$, $\om'\in X$
and a metric sphere 
$S\sub X$
between
$\om$, $\om'$
as a M\"obius automorphism
$\phi=\phi_{\om,\om',S}:X\to X$
such that
\begin{itemize}
 \item[(1)] $\phi$
is an involution,
$\phi^2=\id$,
without fixed points;
 \item[(2)] $\phi(\om)=\om'$ (and thus
$\phi(\om')=\om$);
 \item[(3)] $\phi$
preserves
$S$,
$\phi(S)=S$;
 \item[(4)] $\phi(\si)=\si$
for any Ptolemy circle
$\si\sub X$
through
$\om$, $\om'$.
\end{itemize}

\begin{rem}\label{rem:sinversion_motivation} Motivation of this
definition comes from the fact that in the case
$X=\di M$,
where
$M$
is a symmetric rank one space of noncompact type, any central symmetry
$f:M\to M$
with a center 
$o\in M$, $f(o)=o$,
induces a space inversion
$\di f=\phi_{\om,\om',S}:X\to X$,
where a geodesic line 
$l=\om\om'\sub Y$
with the end points
$\om$, $\om'$
passes through
$o$,
and
$S\sub X$
is a metric sphere between
$\om$, $\om'$,
see Proposition~\ref{pro:rank_one_basic_axioms}.
\end{rem}

\begin{rem}\label{rem:weak_unique} In general, there is no reason 
that an s-inversion
$\phi=\phi_{\om,\om',S}$
is uniquely determined by its data
$\om$, $\om'$, $S$.
However, if 
$\phi'$
is another s-inversion with the same data, then
it coincides with 
$\phi$
along any Ptolemy circle through
$\om$, $\om'$
because any M\"obius automorphism of a Ptolemy circle
is uniquely determined by values at three distinct points,
see Proposition~\ref{pro:moebch-circ}.
\end{rem}

\begin{lem}\label{lem:sinversion_minversion} Given distinct
$\om$, $\om'\in X$
and a metric sphere
$S\sub X$
between
$\om$, $\om'$,
for any metric 
$d\in\cM$
with infinitely remote point 
$\om'$, 
an s-inversion
$\phi=\phi_{\om,\om',S}$
induces the m-inversion of
$d$
w.r.t.
$\om$
of radius
$r=r(d)>0$, $(\phi^\ast d)(x,y)=\frac{r^2d(x,y)}{d(x,\om)d(y,\om)}$,
where 
$r$
is determined by
$S=S_r^d(\om)$,
and
$x$, $y\in X$
are not equal to
$\om$
simultaneously. The similar property holds true
for any metric 
$d'\in\cM$
with the infinitely remote point
$\om$.
\end{lem}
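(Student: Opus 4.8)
The plan is to show that the pulled-back metric $\phi^\ast d$ and an m-inversion of $d$ with respect to $\om$ are two elements of the M\"obius structure $\cM$ sharing the same infinitely remote point, and then to invoke the homothety lemma (Lemma~\ref{lem:homothety_infinite}) together with a single normalizing evaluation to pin down the radius.

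First I would record that $\phi^\ast d\in\cM$: since $\phi$ is a M\"obius automorphism, for every admissible quadruple $Q$ one has $\crt_{\phi^\ast d}(Q)=\crt_d(\phi(Q))=\crt_d(Q)$, so $\phi^\ast d$ is M\"obius equivalent to $d$. Next I would identify its infinitely remote point. A point $p$ is infinitely remote for $\phi^\ast d$ exactly when $d(\phi(x),\phi(p))=\infty$ for all $x\neq p$; as $x$ ranges over $X\sm p$ the image $\phi(x)$ ranges over $X\sm\phi(p)$, and since $\om'$ is the only infinitely remote point of $d$, this holds iff $\phi(p)=\om'$. Because $\phi$ is an involution with $\phi(\om')=\om$ by property~(2), this forces $p=\om$. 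On the other hand, the m-inversion $d_\om$ of $d$ with respect to $\om$ of radius $1$ is also a metric of $\cM$ with infinitely remote point $\om$.

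I would then apply Lemma~\ref{lem:homothety_infinite}: two metrics of $\cM$ with the same infinitely remote point are homothetic, so $\phi^\ast d=\la\,d_\om$ for some $\la>0$, which is precisely the assertion that $\phi^\ast d$ is the m-inversion of $d$ with respect to $\om$ of radius $r=\sqrt{\la}$. It remains to compute $r$ and to match it with the radius defining $S$. Writing $S=S_{r_0}^d(\om)$, I would evaluate both sides at a pair $(x,\om')$ with $x\in S$ arbitrary. Using $\phi(\om')=\om$ and $\phi(S)=S$ (property~(3)), the left side is $(\phi^\ast d)(x,\om')=d(\phi(x),\om)=r_0$, since $\phi(x)\in S$; using the standard convention $d_\om(x,\om')=1/d(x,\om)=1/r_0$, the right side equals $\la/r_0$. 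Comparing yields $\la=r_0^2$, hence $r=r_0$, exactly as claimed.

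Finally, the corresponding statement for a metric $d'$ with infinitely remote point $\om$ follows by the same argument with the roles of $\om$ and $\om'$ interchanged, using the symmetry of the notion of metric sphere between $\om$ and $\om'$ established in Section~\ref{subsect:space_inversions}. I do not expect a genuine obstacle here: the substantive content is entirely carried by Lemma~\ref{lem:homothety_infinite}, and the only points that require care are the correct bookkeeping of the infinitely remote point of $\phi^\ast d$ and the handling of the infinity conventions in the normalizing evaluation, both of which are routine once the homothety lemma has reduced the problem to fixing a single scalar.
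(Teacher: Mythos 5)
Your proof is correct and follows essentially the same route as the paper: identify $\om$ as the infinitely remote point of $\phi^\ast d$, invoke Lemma~\ref{lem:homothety_infinite} to write $\phi^\ast d=\la d_\om$, and fix $\la$ by one evaluation using $\phi(S)=S$. The only (harmless) difference is the normalizing pair: you evaluate at $(x,\om')$ with $x\in S$ via the infinity convention, whereas the paper evaluates at $(x,\phi(x))$, using that $\phi$ is a fixed-point-free involution.
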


\begin{proof} Since
$\phi(\om)=\om'$,
the point
$\om$
is infinitely remote for the metric
$\phi^\ast d$.
Thus
$\phi^\ast d=\la d'$
for some 
$\la>0$,
where
$d'$
is the m-inversion of
$d$
w.r.t.
$\om$,
$$(\phi^\ast d)(x,y)=\frac{\la d(x,y)}{d(x,\om)d(y,\om)}$$
for each
$x$, $y\in X$
which are not equal to
$\om$
simultaneously. We compute
$\la$
by taking
$x\in S$, $y=\phi(x)$.
Then
$\phi(y)=x$
by (1), and since
$(\phi^\ast d)(x,y)=d(x,y)$, $d(x,\om)=r=d(y,\om)$,
we have
$\la=r^2$. 
\end{proof}

Contrary to metric inversions which always exist, in general 
there is no reason for a space inversion to exist. If however
an s-inversion
$\phi=\phi_{\om,\om',S}:X\to X$
exists, and
$S=S_r^d(\om)$
for a metric 
$d\in\cM$
with infinitely remote point 
$\om'$,
then
$S=S_r^{\phi^\ast d}(\om')$.
This follows from Lemma~\ref{lem:sinversion_minversion}. Moreover, 
Lemma~\ref{lem:sinversion_minversion} implies that
$\phi^\ast(\phi^\ast d)=d$
for any metric 
$d\in\cM$
with infinitely remote point
$\om$
or
$\om'$.
Thus the property (1) agrees with Lemma~\ref{lem:sinversion_minversion}, 
and actually (1) refines the property
$\phi^\ast(\phi^\ast d)=d$.

\begin{lem}\label{lem:sphere_sinversion} For any metric sphere
$S'\sub X$
between
$\om$, $\om'$, 
we have
$\phi(S')$
is a metric sphere between
$\om$, $\om'$
for every s-inversion
$\phi=\phi_{\om,\om',S}:X\to X$.
More precisely, if
$S=S_r^d(\om)$, $S'=S_{r'}^d(\om)$,
then
$\phi(S')=S_{r^2/r'}^d(\om)$. 
\end{lem}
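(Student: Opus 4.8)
The plan is to reduce everything to the explicit m-inversion formula provided by Lemma~\ref{lem:sinversion_minversion} together with the fact that $\phi$ interchanges $\om$ and $\om'$. First I would fix a metric $d\in\cM$ with infinitely remote point $\om'$ and describe both metric spheres in terms of this single $d$. By definition of a metric sphere between $\om$, $\om'$, the sphere $S'$ has the form $S_{r'}^{d''}(\om)$ for some metric $d''$ with infinitely remote point $\om'$; since any two such metrics are proportional (Lemma~\ref{lem:homothety_infinite}), we may absorb the proportionality constant into the radius and write $S=S_r^d(\om)$ and $S'=S_{r'}^d(\om)$ for suitable $r$, $r'>0$, where $r$ is precisely the radius associated with $\phi$ through $S$ in the sense of Lemma~\ref{lem:sinversion_minversion}.

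Next I would take an arbitrary point $x\in S'$, so $d(x,\om)=r'$, and compute $d(\phi(x),\om)$. Since $\phi(\om')=\om$, we have $d(\phi(x),\om)=d(\phi(x),\phi(\om'))=(\phi^\ast d)(x,\om')$. By Lemma~\ref{lem:sinversion_minversion} the metric $\phi^\ast d$ is the m-inversion of $d$ w.r.t. $\om$ of radius $r$, and using the standard convention $d(x,\om')=\infty$ this gives $(\phi^\ast d)(x,\om')=r^2/d(x,\om)=r^2/r'$. Hence $\phi(x)\in S_{r^2/r'}^d(\om)$, which proves the inclusion $\phi(S')\subseteq S_{r^2/r'}^d(\om)$.

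For the reverse inclusion I would invoke that $\phi$ is an involution (property (1)): for any $y\in S_{r^2/r'}^d(\om)$ the identical computation yields $d(\phi(y),\om)=r^2/d(y,\om)=r'$, so $\phi(y)\in S'$ and therefore $y=\phi(\phi(y))\in\phi(S')$. This gives the equality $\phi(S')=S_{r^2/r'}^d(\om)$, and since $d$ has infinitely remote point $\om'$, the right-hand side is by definition a metric sphere between $\om$, $\om'$, which establishes the first (qualitative) assertion as well.

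The computation is short, so I expect no serious obstacle. The only point requiring genuine care is the correct handling of the infinitely remote point $\om'$ in the m-inversion formula: one must use the convention $(\phi^\ast d)(x,\om')=r^2/d(x,\om)$ rather than the generic expression $r^2 d(x,\om')/\bigl(d(x,\om)d(\om',\om)\bigr)$, since both factors $d(x,\om')$ and $d(\om',\om)$ are infinite. With that convention in place, the claimed radius $r^2/r'$ falls out immediately.
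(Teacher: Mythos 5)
Your proof is correct and follows essentially the same route as the paper: the key step in both is the identity $d(\phi(x),\om)=(\phi^\ast d)(x,\om')=r^2/d(x,\om)$ from Lemma~\ref{lem:sinversion_minversion}. Your explicit treatment of the reverse inclusion via the involution property is a detail the paper leaves implicit, but it is the same argument.
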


\begin{proof} For every
$x\in S'$,
by Lemma~\ref{lem:sinversion_minversion} we have
$$d(\phi(x),\om)=(\phi^\ast d)(x,\om')
  =\frac{r^2}{d(x,\om)}=r^2/r',$$
hence the claim.   
\end{proof}

In support of Conjecture~\ref{con:boundary_rank_one} we prove the following theorem which recovers
some basic features of
$\di\K\hyp^n$.
To formulate it, we introduce another important property which is
useful for many things.

\noindent
(${\rm E}_2$) Extension: any M\"obius map between any  Ptolemy circles in
$X$
extends to a M\"obius automorphism of
$X$.

\begin{thm}\label{thm:basic_ptolemy} Let
$X$
be a compact Ptolemy space with properties 
($E$) and (I) (see sect.~\ref{sect:introduction}). Then 
$X$
is homeomorphic to a sphere
$S^n$
for some
$n\ge 1$,
possesses the extension property (${\rm E}_2$), and for every
$\om\in X$
there is a 1-Lipschitz submetry
$\pi_\om:X_\om\to B_\om$
with the base
$B_\om$
isometric to an Euclidean space
$\R^k$, $0<k\le n$,
such that any M\"obius automorphism
$\phi:X\to X$
with
$\phi(\om)=\om'$
induces a homothety
$\ov\phi:B_\om\to B_{\om'}$
with
$\pi_{\om'}\circ\phi=\ov\phi\circ\pi_\om$.

The fibers of
$\pi_\om$
also called
$\K$-lines
are homeomorphic to
$\R^p$
for some
$p\ge 0$, $k+p=n$,
and for them the following properties hold

\begin{itemize}
 \item[($1_\K$)] given a $\K$-line
$F\sub X_\om$
and
$x\in X\sm F$,
there is a unique Ptolemy line
$l\sub X_\om$
through
$x$
that intersects
$F$;
 \item[($2_\K$)] given distinct $\K$-lines
$F$, $F'\sub X_\om$
and two  Ptolemy line that intersect both
$F$, $F'$,
if any other $\K$-line
$F''\sub X_\om$
intersects one of the Ptolemy lines, then it
necessarily intersects the other.
\end{itemize}

Furthermore, if
$k=1$,
then
$X=\wh\R$
is a Ptolemy circle.
\end{thm}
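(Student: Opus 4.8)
The plan is to build everything from the group generated by the space inversions guaranteed by $(I)$: first homogeneity, then a metric fibration, then its rigidity, and finally the topology.

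\emph{Step 1: homogeneity and the extension property $(\mathrm{E}_2)$.} By $(E)$ there is at least one Ptolemy circle $\si_0$. For distinct $\om,\om'\in\si_0$ and a metric sphere $S$ between them, $(I)$ provides an s-inversion $\phi_{\om,\om',S}$ whose restriction to $\si_0$ is, by Remark~\ref{rem:weak_unique} and Proposition~\ref{pro:moebch-circ}, a well defined fixed-point-free M\"obius involution interchanging $\om,\om'$. First I would show that, as $\om,\om'$ run through $\si_0$ and $S$ varies, these involutions generate all M\"obius self-maps of $\si_0$; since each such involution is the restriction of an automorphism of $X$, every M\"obius self-map of a circle extends to $X$. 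Composing s-inversions moves points, and I would use this to make the generated group act transitively on Ptolemy circles. Given a M\"obius map $f:\si\to\si'$ I then choose an automorphism $\Psi$ with $\Psi(\si)=\si'$ and, by the generation statement, an automorphism $h$ fixing $\si$ with $h|_\si=(\Psi|_\si)^{-1}\circ f$; then $\Psi\circ h$ extends $f$, which is $(\mathrm{E}_2)$.

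\emph{Step 2: flatness.} Next I would prove that $X$ is Busemann flat. Property $(I)$ for \emph{every} sphere between a fixed pair $\om,\om'$ yields, through Lemma~\ref{lem:sinversion_minversion}, a family of s-inversions whose pairwise compositions fix $\om,\om'$ and act along the Ptolemy line $\si_\om$ as M\"obius dilations, each rescaling the metric with infinitely remote point $\om$ by a factor $\la>0$. A direct computation with the defining limit shows that such a dilation scales $f:=b^++b^-$ homogeneously, $f\circ D_\la=\la f$, while the triangle inequality gives $f\ge 0$ and $f$ vanishes on the line; the only homogeneous nonnegative such $f$ is $f\equiv\const$, so $X$ is flat. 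Alternatively this is the $C^1$-smoothness criterion of Proposition~\ref{pro:busemann_flat}. Flatness makes horospheres well defined and upgrades the duality to the smooth form~(\ref{eq:smooth_duality}) and to the distance--horosphere identity of Lemma~\ref{lem:flat_duality}.

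\emph{Step 3: the submetry and the Euclidean base (the crux).} For fixed $\om$ I would define the $\K$-line through $y\in X_\om$ as the set of points at which every Busemann function of a Ptolemy line through $\om$ takes the same value as at $y$, and let $\pi_\om:X_\om\to B_\om$ be the quotient recording all these Busemann values, so that the fibers of $\pi_\om$ are exactly the $\K$-lines. The Busemann functions descend to $B_\om$, flatness makes them behave affinely, and the Ptolemy inequality makes the induced map an isometric embedding into a Euclidean target. \emph{This is where I expect the main obstacle}: to prove that $B_\om$ is genuinely \emph{Euclidean} $\R^k$ with $0<k\le n$, rather than merely some homogeneous flat Ptolemy space, I would combine the similarities produced in Steps~1--2 (the dilations about points and the translations they generate endow $B_\om$ with a transitive translation group, hence a normed geometry) with the Ptolemy property and the Schoenberg rigidity recalled in Section~\ref{subsect:Ptolemy_spaces}, namely that a ptolemaic normed space is an inner product space. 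That $\pi_\om$ is a $1$-Lipschitz submetry then follows from the construction. Equivariance is formal: an automorphism $\phi$ with $\phi(\om)=\om'$ carries Ptolemy lines through $\om$ to Ptolemy lines through $\om'$ and Busemann functions to Busemann functions, so it descends to $\ov\phi:B_\om\to B_{\om'}$ with $\pi_{\om'}\circ\phi=\ov\phi\circ\pi_\om$; since $\phi$ rescales metrics by a constant (Lemma~\ref{lem:homothety_infinite}), $\ov\phi$ is a homothety, and in particular $k$ is a M\"obius invariant.

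\emph{Step 4: topology, incidence, and the case $k=1$.} The fibers of $\pi_\om$, being common level sets of affine Busemann functions, are identified with some $\R^p$; since $B_\om=\R^k$ is contractible the bundle $\R^p\to X_\om\to\R^k$ is trivial, so $X_\om\cong\R^{k+p}=\R^n$, and as $X$ is compact with $X_\om=X\sm\om$ its one-point compactification gives $X\cong S^n$ with $k+p=n$. Properties $(1_\K)$ and $(2_\K)$ come from the submetry together with the affine geometry of the base: a Ptolemy line through $x$ meeting a $\K$-line $F$ projects to the unique geodesic of $B_\om$ through $\pi_\om(x)$ and the point $\pi_\om(F)$, which pins it down uniquely and gives $(1_\K)$, while the affine incidence of points and lines in $\R^k$ gives $(2_\K)$. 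Finally, when $k=1$ all Busemann functions of Ptolemy lines through $\om$ are proportional, so these lines are parallel and project isometrically onto $B_\om=\R$; here $(1_\K)$ forces a unique Ptolemy line through each point, and I would show, using the homogeneity from Step~1, that a nontrivial fiber ($p\ge1$) is incompatible with this similarity structure over a one-dimensional base, whence $p=0$, $n=1$, and $X=\wh\R$ is a single Ptolemy circle.
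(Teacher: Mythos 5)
Your overall architecture (s-inversions $\Rightarrow$ homogeneity $\Rightarrow$ flatness $\Rightarrow$ a fibration by common level sets of Busemann functions $\Rightarrow$ Euclidean base $\Rightarrow$ topology) is the same as the paper's, but several of your individual steps have genuine gaps, and one is simply wrong as stated.

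The flatness argument in Step~2 fails. From $f=b^++b^-\ge 0$, $f|_l\equiv 0$ and the homogeneity $f\circ D_\la=\la f$ you conclude $f\equiv\const$; but the function $x\mapsto\dist(x,l)$ is nonnegative, vanishes on $l$, is $1$-homogeneous under dilations fixing a point of $l$ and preserving $l$, and is even convex and Lipschitz --- yet it is not constant. So homogeneity plus nonnegativity plus vanishing on the line gives nothing. The paper instead proves flatness by showing distance functions are $C^1$ along Ptolemy lines (Lemma~\ref{lem:smooth_convex}): it approximates the line by Ptolemy circles through $x$ and $h_\la(z)$ (via Lemma~\ref{lem:limit_circle}) and uses the Ptolemy equality on those circles to kill the second difference $f(t)-2f(0)+f(-t)$. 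Some argument of this second-order type is unavoidable; your dilation argument cannot replace it.

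Two further steps are asserted where the paper has to work hard. First, in Step~4 you claim the fibers are homeomorphic to $\R^p$ ``being common level sets of affine Busemann functions'' and that $X_\om\to B_\om$ is a trivial bundle; neither follows from anything you have set up. The paper gets the topology by showing the group $N_\om$ of shifts acts \emph{simply} transitively on $X_\om$ (Lemma~\ref{lem:simply_transitivity_shifts}, itself a nontrivial compactness-versus-openness argument), that it admits a contracting automorphism, and then invoking Siebert's theorem to conclude $N_\om$ and the stabilizer $Z_\om$ of the fibration are simply connected nilpotent Lie groups; this is what makes $X_\om\cong\R^n$ and $F\cong\R^p$. Second, in Step~3 the existence half of $(1_\K)$ --- that some Ptolemy line through $x$ actually meets a given fiber $F$ --- is needed both for $B_\om$ to be geodesic and for your distance formula on the base, and it is the hardest part of Section~\ref{sect:fibration}: the paper builds it from zigzag curves and the orthogonalization procedure (Lemmas~\ref{lem:orthogonalization_procedure}--\ref{lem:linear_combination_busemann}), which also yield $\sum_i\al_i^2=1$ and hence the Euclidean formula $|\ov o\,\ov z|^2=\sum_i\ov b_i^2(\ov z)$ directly, without Schoenberg. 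Your alternative via Schoenberg's theorem would additionally require showing that the quotient metric on $B_\om$ is ptolemaic, which does not descend automatically under a submetry. Finally, the transitivity on Ptolemy circles in Step~1 (needed for $(\mathrm{E}_2)$) is not a soft consequence of composing inversions: the paper's proof (Proposition~\ref{pro:transitive_circle}) minimizes the slope over an orbit and rules out $\al>-1$ using the quadratic excess estimate of Lemma~\ref{lem:quaratic_excess}, which in turn needs rectifiability of circles; your sketch omits this entirely.
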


\begin{rem}\label{rem:real_case} In the case
$p=0$
the space
$X$
from Theorem~\ref{thm:basic_ptolemy} is M\"obius
equivalent to
$\wh\R^n=\di\hyp^{n+1}$
with
$n=\dim X$.
This proves Conjecture~\ref{con:boundary_rank_one}
for real hyperbolic spaces.
\end{rem}

\begin{rem}\label{rem:submetry} Recall that a map
$f:X\to Y$
between metric spaces is called a {\em submetry}
if for every ball 
$B_r(x)\sub X$
of radius 
$r>0$
centered at
$x$
its image
$f(B_r(x))$
coincides with the ball
$B_r(f(x))\sub Y$. 
\end{rem}

In what follows, we always consider the weak topology on
the group 
$\aut X$
of M\"obius automorphisms of
$X$,
i.e. a sequence
$\phi_i\in\aut X$
converges to
$\phi\in\aut X$, $\phi_i\to\phi$,
if and only if
$\phi_i(x)\to\phi(x)$
for every 
$x\in X$.

\subsection{M\"obius automorphisms of $X$}
\label{subsect:moebius_automorphisms}

In this section we establish some important additional
properties of a Ptolemy space
$X$
which follow from (E) and (I).

Given two distinct points
$\om$, $\om'\in X$,
we denote with
$C_{\om,\om'}$
the set of all the Ptolemy circles
$\si\sub X$
through
$\om$, $\om'$,
and with
$\Ga_{\om,\om'}$
the group of M\"obius automorphisms
$\phi:X\to X$
such that
$\phi(\om)=\om$, $\phi(\om')=\om'$, $\phi(\si)=\si$
and
$\phi$
preserves an orientation of
$\si$ 
for every
$\si\in C_{\om,\om'}$.

\begin{pro}\label{pro:homothety_property} Any Ptolemy space
$X$ 
with properties (E) and (I) possesses the following property

\noindent
(H) Homothety: for each distinct
$\om$, $\om'\in X$
the group
$\Ga_{\om,\om'}$
acts transitively on every arc of
$\si\sm\{\om,\om'\}$
for every circle
$\si\in C_{\om,\om'}$.
\end{pro}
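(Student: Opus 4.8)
The plan is to realize the required maps on $\si$ as compositions of two space inversions that share the pair $\om$, $\om'$ but use different metric spheres. Fix distinct $\om$, $\om'\in X$ and a Ptolemy circle $\si\in C_{\om,\om'}$. Choose a metric $d\in\cM$ with infinitely remote point $\om'$; then $\si_{\om'}=\si\sm\om'$ is a Ptolemy line on which $\om$ lies, and after an affine reparametrization we identify $\si$ with $\wh\R$ so that $\om'=\infty$, $\om=0$, and $d$ restricts to the standard metric on the line. For each $r>0$ the set $S_r=S_r^d(\om)$ is a metric sphere between $\om$, $\om'$, so property (I) furnishes a space inversion $\phi_r=\phi_{\om,\om',S_r}$. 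Since metric spheres $S_r^d(\om)$ exist for all $r>0$, we obtain a whole family of such inversions.

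First I would pin down the restriction $\phi_r|_\si$. By Lemma~\ref{lem:sinversion_minversion}, $\phi_r^\ast d$ is the m-inversion of $d$ w.r.t. $\om$ of radius $r$, so on the line $\si_{\om'}$ a direct computation shows that $\phi_r|_\si$ is the M\"obius involution $z\mapsto -r^2/z$: this map swaps $\om=0$ and $\om'=\infty$, has no real fixed point, and reproduces the m-inverted distances $r^2|x-y|/(|x||y|)$. Its values at three points determine it uniquely by Proposition~\ref{pro:moebch-circ}, so no information about $\phi_r$ off $\si$ is needed. A key observation is that $\phi_r|_\si$ \emph{preserves} the orientation of $\si$: being a fixed-point-free involution of the circle $\si\cong S^1$, it cannot reverse orientation, since an orientation-reversing involution of $S^1$ is conjugate to a reflection and hence has two fixed points.

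Next I would compose. For $r_1$, $r_2>0$ set $g=\phi_{r_2}\circ\phi_{r_1}$. Then $g(\om)=\phi_{r_2}(\om')=\om$ and $g(\om')=\phi_{r_2}(\om)=\om'$, and for every circle in $C_{\om,\om'}$ both factors preserve it and its orientation, whence $g$ does too; thus $g\in\Ga_{\om,\om'}$. Restricted to $\si$ one computes $g|_\si:z\mapsto (r_2/r_1)^2\,z$, a homothety with ratio $\la=(r_2/r_1)^2$. As $r_1$, $r_2$ range over $(0,\infty)$ this ratio takes every value in $(0,\infty)$. Finally, the two arcs of $\si\sm\{\om,\om'\}$ are the positive and the negative reals, and $z\mapsto\la z$ maps each arc onto itself transitively: for $t_1$, $t_2$ on the same arc, the choice $\la=t_2/t_1>0$ sends $t_1$ to $t_2$. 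Hence $\Ga_{\om,\om'}$ acts transitively on each arc, which is exactly property (H).

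The main obstacle I anticipate lies not in the computation but in the bookkeeping that guarantees $g$ genuinely belongs to $\Ga_{\om,\om'}$ — in particular the simultaneous orientation-preservation on \emph{every} circle of $C_{\om,\om'}$, which rests on the fixed-point-free-involution observation applied to each such circle, together with confirming that the restriction formula $\phi_r|_\si:z\mapsto -r^2/z$ follows cleanly from Lemma~\ref{lem:sinversion_minversion} without any global knowledge of $\phi_r$.
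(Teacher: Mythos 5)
Your proposal is correct and follows essentially the same route as the paper: compose two space inversions $\phi_{\om,\om',S}$ and $\phi_{\om,\om',S'}$ with the same poles but different spheres, use fixed-point-freeness to see each factor preserves the orientation of every circle in $C_{\om,\om'}$ (so the composition lies in $\Ga_{\om,\om'}$), and compute via Lemma~\ref{lem:sinversion_minversion} that the composition acts on $\si_{\om'}$ as a homothety with ratio $(r'/r)^2$, which is arbitrary. Your explicit identification of $\phi_r|_\si$ with $z\mapsto -r^2/z$ on $\wh\R$ is just a coordinate version of the paper's distance computation.
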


\begin{rem}\label{rem:homothety} If one of the points
$\om$, $\om'$
is infinitely remote for a metric 
$d$
of the M\"obius structure, then every 
$\ga\in\Ga_{\om,\om'}$
is a homothety w.r.t.
$d$.
This is why we use (H) for the notation of the property above.
\end{rem}

\begin{proof} We assume that 
$\om'$
is infinitely remote for a metric 
$d\in\cM$.
Then for any
$\si\in C_{\om,\om'}$
the curve
$\si_{\om'}=\si\sm\om'$
is a Ptolemy line w.r.t.
$d$,
and any
$\ga\in\Ga_{\om,\om'}$
acts on
$\si_{\om'}$
as a homothety preserving an orientation.

Composing s-inversions
$\phi=\phi_{\om,\om',S}$, $\phi'=\phi_{\om,\om',S'}$
of
$X$,
where  
$S$, $S'\sub X$
are spheres between
$\om$, $\om'$,
we obtain a M\"obius automorphism
$\ga=\phi'\circ\phi$
with properties
$\ga(\om)=\om$, $\ga(\om')=\om'$
and
$\ga(\si)=\si$
for any Ptolemy circle
$\si\in C_{\om,\om'}$.
Having no fixed point, both
$\phi$, $\phi'$
preserve orientations of
$\si$.
Hence, 
$\ga$
preserves its orientations, thus
$\ga$
acts on every arc of
$\si\sm\{\om,\om'\}$
as a homothety. That is,
$\ga\in\Ga_{\om,\om'}$.

Let
$r$, $r'>0$
be the radii of
$S$, $S'$
respectively w.r.t. the metric 
$d$, $S=S_r^d(\om)$, $S'=S_{r'}^d(\om)$.
Then for every
$x\in X\sm\{\om,\om'\}$
we have
$d(\phi(x),\om)=\frac{r^2}{d(x,\om)}$
and
$d(\ga(x),\om)=d(\phi'\circ\phi(x),\phi'(\om'))
=\frac{r'^2}{d(\phi(x),\om)}=(r'/r)^2d(x,\om)$.
Therefore, the dilatation coefficient of
$\ga$
equals
$\la:=(r'/r)^2$,
and it can be chosen arbitrarily by changing
$S$, $S'$
appropriately.
\end{proof}

\begin{cor}\label{cor:weak_unique} Any two distinct Ptolemy 
circles in a Ptolemy space with properties (E) and (I)
have in common at most two points.
\end{cor}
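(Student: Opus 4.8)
The plan is to show the contrapositive: if two Ptolemy circles $\si$, $\si'$ share three distinct points, then $\si=\si'$. Fix three common points $\om$, $\om'$, $x$. The engine of the argument is the homothety property (H) from Proposition~\ref{pro:homothety_property}: composing two space inversions $\phi_{\om,\om',S}$, $\phi_{\om,\om',S'}$ produces an automorphism $\ga\in\Ga_{\om,\om'}$ whose dilatation coefficient $\la=(r'/r)^2$ can be prescribed arbitrarily in $(0,\infty)$ and which satisfies $d(\ga(y),\om)=\la\,d(y,\om)$ for every $y$, where $d$ is a metric with infinitely remote point $\om'$. The crucial observation is that this single automorphism scales the distance to $\om$ by the same factor $\la$ on \emph{every} circle of $C_{\om,\om'}$ at once; it is a genuine homothety of $X_{\om'}$ centered at $\om$.

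First I would show that $\si$ and $\si'$ share the entire arc between $\om$ and $\om'$ passing through $x$. Both $\si_{\om'}$ and $\si'_{\om'}$ are Ptolemy lines through $\om$ on which $x$ sits at the same distance $t_0=d(x,\om)$. Since each space inversion is a fixed-point-free involution swapping $\om$ and $\om'$, it interchanges the two arcs of $\si$ cut out by $\om$, $\om'$; hence $\ga$ preserves each arc and acts on each of the lines $\si_{\om'}$, $\si'_{\om'}$ as $t\mapsto\la t$ on the ray containing $x$. Because $\ga$ is one map, $\ga(x)$ is simultaneously the point of $\si$ and the point of $\si'$ at distance $\la t_0$ from $\om$ on the $x$-side; letting $\la$ range over $(0,\infty)$ sweeps out the whole ray, so $\si$ and $\si'$ agree on this open arc, and therefore on its closure, the endpoints $\om$, $\om'$ being common.

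Next I would recover the complementary arc by reapplying the same argument with a different pair of base points. Choosing two distinct interior points $y$, $z$ of the already-shared arc, both lie on $\si\cap\si'$, while the common point $\om$ lies on the arc determined by $y$, $z$ that avoids $x$. Running the first step verbatim for the pair $y$, $z$ and the common point $\om$ --- now using a metric with $z$ infinitely remote and the group $\Ga_{y,z}$ --- shows that $\si$ and $\si'$ coincide on the entire ray of $\si_z$ issuing from $y$ on the $\om$-side. This ray is precisely the complement of the short sub-arc between $y$ and $z$, so it contains the whole complementary arc. Combining the two steps, $\si$ and $\si'$ agree everywhere and hence are equal.

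The main obstacle to watch is the orientation bookkeeping in the first step: one must confirm that $\ga$ fixes each ray of the line $\si_{\om'}$ rather than swapping them, which is exactly why the fixed-point-free, arc-swapping nature of the individual space inversions is needed, so that their composition preserves the arcs and the orientation. Once this is in place, the fact that a single homothety acts with a common dilatation across all circles through $\om$, $\om'$ does the rest, and the second step is a mechanical repetition of the first.
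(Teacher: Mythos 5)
Your proposal is correct and follows essentially the same route as the paper: property (H) (realized by composing two space inversions to get homotheties of every prescribed coefficient, which preserve each arc because the individual inversions are fixed-point-free) forces the orbit of the third common point to sweep out a full common arc, and a second application with a new pair of poles chosen inside that arc recovers the complementary arc. The only difference is cosmetic — you re-derive the homotheties from Proposition~\ref{pro:homothety_property} explicitly and pick two interior points $y$, $z$ as the new poles, where the paper keeps $\om'$ and one interior point $\om''$ — but the logic is identical.
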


\begin{proof} Assume
$\om$, $\om'$, $x\in\si\cap\si'$
are distinct common points of Ptolemy circles
$\si$, $\si'\sub X$.
We have
$\ga(x)\in\si\cap\si'$
for every
$\ga\in\Ga_{\om,\om'}$.
Then by property (H), the arcs of
$\si$
and 
$\si'$
between
$\om$, $\om'$
which contain
$x$
coincide. Taking
$\om''$
inside of this common arc and applying the same
argument to
$\om'$, $\om''$, $x=\om$,
we obtain 
$\si=\si'$. 
\end{proof}

\subsection{Busemann parallel lines, pure homotheties and shifts}
\label{subsect:parallel_lines_pure_homothethies_shifts}

In this section we assume that the Ptolemy space
$X$
possesses the properties (E) an (I). A some point, we
also assume that
$X$
is compact.

We say that Ptolemy lines
$l$, $l'\sub X_\om$
are {\em Busemann parallel} if 
$l$, $l'$
share Busemann functions, that is, any Busemann function
associated with
$l$
is also a Busemann function associated with
$l'$
and vice versa.

\begin{lem}\label{lem:unique_line} Let
$l$, $l'\sub X_\om$
be Ptolemy lines with a common point,
$o\in l\cap l'$, $b:X_\om\to\R$
a Busemann function of
$l$
with
$b(o)=0$.
Assume 
$b\circ c(t)=-t=b\circ c'(t)$
for all 
$t\ge 0$
and for appropriate unit speed parameterizations
$c$, $c':\R\to X_\om$
of
$l$, $l'$
respectively
with
$c(0)=o=c'(0)$.
Then
$l=l'$.
In particular, Busemann parallel Ptolemy lines coincide if
they have a common point.
\end{lem}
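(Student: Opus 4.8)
The plan is to use the duality between Busemann and distance functions
(Proposition~\ref{pro:duality_dist_busemann}) to convert the hypothesis
on the shared Busemann function into a statement about the behaviour of a
single distance function along the two lines, and then to exploit
convexity to force the lines to agree. First I would pass to the inverted
picture at one point. Let $\om'=c(1)\in l$ and let $d'$ be a metric with
infinitely remote point $\om'$, say the m-inversion of $d$ with respect to
$\om'$. Then $\si_{\om'}$ is a Ptolemy line in $X_{\om'}$, and the
point $\om$ becomes a genuine point of the space whose distance
function $d'(\cdot,\om)$ we can study along $l'$. The payoff of working in
$X_{\om'}$ is that the Busemann functions of $l$ and $l'$ in $X_\om$,
which by hypothesis agree up to order $t$ along both $c$ and $c'$, are
encoded (via Equation~(\ref{eq:duality})) as one-sided derivatives of
$\ln d'(\cdot,c(t))$ at the relevant base point.

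The key steps, in order, are as follows. (Step 1) Using the normalization
$b\circ c(t)=-t=b\circ c'(t)$ for $t\ge 0$ and the relation between
opposite Busemann functions, record that the forward Busemann function
$b^+$ associated with the common end of $l$ and $l'$ satisfies
$b^+(c(t))=b^+(c'(t))=-t$ for all $t\ge0$; since $b$ is a Busemann
function of both lines, the shared-end condition means $c$ and $c'$ are
asymptotic rays in the sense that $d(c(t),c'(t))$ stays bounded as
$t\to\infty$. (Step 2) Fix a point $x=c'(s)$ on $l'$ for some small $s>0$
and apply Proposition~\ref{pro:duality_dist_busemann} in $X_{\om'}$, or
more directly observe that $t\mapsto d(c(t),c'(s))$ is a convex function of
$t$ (Ptolemy convexity, as used in the proof of
Lemma~\ref{lem:omega_closest_subset}) whose asymptotic slope in the
forward direction is controlled by $b^+$. (Step 3) The equality
$b^+(c(t))=-t$ shows that $c$ is a forward-maximal geodesic ray for this
Busemann function, and the same holds for $c'$; a convex function on $\R$
that is affine with slope $-1$ along both rays and bounded difference at
infinity must force the two rays to coincide pointwise, because the only
way two unit-speed geodesics can share a Busemann function with identical
normalization and remain a bounded distance apart is to be equal. (Step 4)
Running this for all $t\ge0$ gives $c(t)=c'(t)$ on the forward half-line;
applying the opposite Busemann function $b^-$ and the same argument in the
backward direction (or using that $\om'=c(1)=c'(1)$ together with
uniqueness of Ptolemy circles through two points,
Corollary~\ref{cor:weak_unique}) yields $l=l'$.

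The main obstacle I expect is Step 3: turning ``bounded distance apart
plus identical Busemann normalization'' into genuine pointwise equality,
since in a general Ptolemy space one does not have strict convexity or a
reverse triangle inequality to separate nearby geodesics. The safe route
is to avoid asymptotic arguments altogether and instead localize: fix
$x=c'(s)$ with $s>0$ small and consider the convex function
$f(t)=d(x,c(t))$; by Equation~(\ref{eq:duality}) its one-sided derivatives
at $t=0$ are governed by $b^\pm(x)$, and the hypothesis
$b\circ c(t)=b\circ c'(t)$ pins down $f'(0)=-\cos\theta$ in a way that is
only compatible with $f$ attaining the value $d(x,x)=0$, i.e. with
$x=c(s)$. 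Concretely, I would show $d(c(s),c'(s))=0$ by proving that the
convex function $t\mapsto d(c(s),c(t))$ has a zero forced at $t=s$ from the
derivative data, which rules out any gap between the two lines. Once the
equality is established for a dense set of parameter values, continuity
(M\"obius maps are continuous, and distance functions are continuous in
the topology of $X$) upgrades it to $c\equiv c'$, hence $l=l'$, and the
``in particular'' clause about Busemann parallel lines with a common point
follows immediately.
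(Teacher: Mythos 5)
Your proposal does not close the argument, and the paper's own proof is far more elementary than the route you sketch. The paper simply concatenates $c|(-\infty,0]$ with $c'|[0,\infty)$ and shows this concatenation is a geodesic: by the triangle inequality $|c(-s)c'(t)|\le s+t$, while letting $t_i\to\infty$ along $l$ gives $|c'(t)c(t_i)|-t_i\to b\circ c'(t)=-t$, whence $|c(-s)c'(t)|\ge|c(-s)c(t_i)|-|c'(t)c(t_i)|\to s+t$. The concatenation is therefore a Ptolemy line sharing a ray with each of $l$, $l'$, and Corollary~\ref{cor:weak_unique} forces $l=l'$. Note that the asymptotic information $|c'(t)c(t_i)|-t_i\to -t$ is the entire content of the hypothesis; your stated intention to ``avoid asymptotic arguments altogether and localize'' discards exactly the input that makes the lemma true.

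Concretely, the gaps are these. First, in Step 1 the claim that $d(c(t),c'(t))$ stays bounded does not follow from sharing a Busemann function (the correct statement is sublinear divergence, which is Lemma~\ref{lem:busparallel_sublinear} --- proved \emph{after} and \emph{using} the present lemma), and Step 3's assertion that ``the only way two unit-speed geodesics can share a Busemann function with identical normalization and remain a bounded distance apart is to be equal'' is essentially the statement to be proved, not an argument for it. Second, the localized ``safe route'' fails on its own terms: for $f(t)=d(c'(s),c(t))$ the data $f(0)=s$, $f'(0^+)=-1$, $f$ convex, $1$-Lipschitz and nonnegative do \emph{not} force $f(s)=0$; the function $\max(s-t,\ep)$ has all these properties and positive minimum. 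Convexity yields lower bounds from tangent lines, whereas you need an upper bound, and that upper bound comes only from the behaviour of $f(t)-t$ as $t\to\infty$, i.e.\ from the Busemann limit. Third, Equation~(\ref{eq:duality}) is being applied at the wrong point: it computes one-sided derivatives of $\ln d'(x,c(t))$ at $c(0)=\om$ in the inverted space $X_{\om'}$, not derivatives of $d(x,c(t))$ at the finite point $o$ in $X_\om$; first-variation formulas at finite points (Lemma~\ref{lem:first_variation}) require Busemann flatness, which is established via Lemma~\ref{lem:smooth_convex} using shifts and limit circles that themselves depend on Lemma~\ref{lem:unique_line}, so that route is circular.
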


\begin{proof} We show that the concatenation of
$c|(-\infty,0]$
with
$c'|[0,\infty)$
is also a Ptolemy line. Then
$l=l'$
by Corollary~\ref{cor:weak_unique}. It suffices to show	that for
$s$, $t\ge 0$
we have
$|c(-s)c'(t)|=t+s$.
By triangle inequality we have
$|c(-s)c'(t)|\le t+s$.
Letting 
$t_i\to \infty$ 
we have
$|c'(t)c(t_i)|-t_i\to b\circ c'(t)=-t$.
Thus by triangle inequality again, we have
$$|c(-s)c'(t)| \ge |c(-s)c(t_i)| - |c'(t)c(t_i)| = (t_i+s)-|c'(t)c(t_i)|\to t+s.$$
Thus
$|c(-s)c'(t)|=t+s$.
\end{proof}

Next, we show that a sublinear divergence of Ptolemy lines
is equivalent for them to be Busemann parallel.

\begin{lem}\label{lem:busparallel_sublinear}
If two Ptolemy lines 
$l$, $l'\sub X_\om$
are Busemann parallel, then
they diverge at most sublinearly, that is
$|c(t)c'(t)|/|t|\to 0$
as
$|t|\to\infty$
for appropriate unit speed parameterizations
$c$, $c'$
of 
$l$, $l'$.

Conversely, if
$|c(t_i)c'(t_i)|/|t_i|\to 0$
for some sequences
$t_i\to\pm\infty$,
then the lines
$l$, $l'$
are Busemann parallel.
\end{lem}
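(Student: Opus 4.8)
The plan is to prove the two implications by quite different means: the converse (sublinear divergence $\Rightarrow$ Busemann parallel) is elementary, resting only on the Ptolemy inequality and convexity, while the direct implication (Busemann parallel $\Rightarrow$ sublinear divergence) is the hard part and will use the homothety property~(H) of Proposition~\ref{pro:homothety_property} together with compactness of $X$. Throughout I fix a metric of the M\"obius structure with infinitely remote point $\om$, take unit speed parameterizations $c,c'$ of $l,l'$, and record two standing facts that follow from the Ptolemy inequality applied to a point and three collinear points on a Ptolemy line: for every $x\in X_\om$ the function $t\mapsto|x\,c(t)|$ is convex, and consequently every Busemann function, being a decreasing limit of such convex functions minus constants, is convex along Ptolemy lines and $1$-Lipschitz. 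Write $b^\pm$ for the opposite Busemann functions of $l$ and $b'^\pm$ for those of $l'$; being Busemann parallel means $b'^+=b^++\const$ and $b'^-=b^-+\const$ with compatible orientations.

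For the converse I would first show that sublinear divergence forces $b^+$ to decrease at unit speed along $l'$. Indeed $f(t):=b^+(c'(t))$ is convex with $|f'|\le 1$, and along the sequence $t_i\to+\infty$ the $1$-Lipschitz estimate gives $|f(t_i)-b^+(c(t_i))|\le|c(t_i)c'(t_i)|=o(t_i)$, so $f(t_i)/t_i\to -1$; a convex function whose slope is bounded below by $-1$ and whose asymptotic slope is $-1$ must satisfy $f(t)=-t+K$ identically, with $K=b^+(c'(0))$. The $1$-Lipschitz inequality $b^+(x)\le|x\,c'(t)|+b^+(c'(t))$ then yields, letting $t\to\infty$, that $b^+\le b'^++K$, and symmetrically $b'^+\le b^++K'$ with $K'=b'^+(c(0))$; adding these gives $K+K'\ge 0$. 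To get the reverse inequality I apply the Ptolemy inequality to the quadruple $(c(0),c'(0),c(T),c'(T))$: using $|c(0)c'(T)|=T+K'+o(1)$, $|c'(0)c(T)|=T+K+o(1)$, $|c(0)c(T)|=|c'(0)c'(T)|=T$ and $|c(T)c'(T)|=o(T)$ along $T=t_i$, the inequality bounding the product of the two long diagonals reads $T^2+(K+K')T+o(T)\le \psi(0)\,o(T)+T^2$, whence $K+K'\le 0$. Thus $K+K'=0$, the bound $b^+-b'^+\in[-K',K]=\{K\}$ collapses, and $b^+-b'^+\equiv K$ is constant; the same argument with $t_i\to-\infty$ treats $b^-$, so $l,l'$ are Busemann parallel.

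For the direct implication I would argue by a blow-down. Normalize the parameterizations so that $b'^+=b^+$ exactly, hence $b^+(c(t))=b^+(c'(t))=-t$, and let $o=c(0)$. By property~(H) there is, for every $\la>0$, a M\"obius homothety $\ga_\la\in\Ga_{\om,o}$ centered at $o$ of ratio $\la$; since $\ga_\la$ preserves every Ptolemy circle through $\om,o$ it fixes $l$ (acting by $c(t)\mapsto c(\la t)$), and a direct computation shows it multiplies distances by $\la$ and hence rescales Busemann functions, $b^+\circ\ga_\la=\la\,b^+$. Setting $l'_\la:=\ga_\la(l')$ and reparameterizing to unit speed, the same computation gives that the positive Busemann function of $l'_\la$ is again exactly $b^+$, so $l'_\la$ is Busemann parallel to $l$ for every $\la$, its basepoint $\ga_\la(c'(0))$ lies at distance $\la\,\psi(0)$ from $o$, and the point of $l'_\la$ at $b^+$-level $-1$ lies at distance $\la\,\psi(1/\la)=\psi(T)/T$ (with $T=1/\la$) from $c(1)$. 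Letting $\la\to 0$ and using compactness of $X$ together with Corollary~\ref{cor:weak_unique} and the fact that three points determine a Ptolemy circle (Proposition~\ref{pro:moebch-circ}), the circles $l'_\la\cup\om$ subconverge to a Ptolemy circle whose finite part $l''$ is a Ptolemy line through $o$ on which $b^+$ still decreases at unit speed; Lemma~\ref{lem:unique_line} forces $l''=l$, so the level-$(-1)$ points converge to $c(1)$ and $\psi(T)/T\to 0$. Applying the same blow-down on the negative side gives sublinear divergence as $t\to-\infty$ as well.

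The main obstacle is this last limiting step in the direct implication: one must verify that the rescaled Ptolemy lines $l'_\la$ (equivalently the circles $l'_\la\cup\om$) actually converge to a \emph{nondegenerate} Ptolemy line — the three tracking points $\om$, $\ga_\la(c'(0))\to o$ and the level-$(-1)$ point (which stays in a bounded region since $\psi(T)/T\le 2+\psi(0)/T$) must have distinct limits — and that $b^+$ passes to the limit so that the hypotheses of Lemma~\ref{lem:unique_line} are met. Establishing that limits of Ptolemy circles are Ptolemy circles, and the continuity of Busemann functions under this convergence, is where the compactness of $X$ and the $1$-Lipschitz/convexity control must be combined carefully; everything else is bookkeeping with the Ptolemy inequality.
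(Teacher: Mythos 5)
Your proof is correct and follows essentially the same route as the paper's: the forward direction is the same blow-down by homotheties from property (H) followed by compactness and Lemma~\ref{lem:unique_line} (the paper phrases it as a contradiction with a positive lower bound on $\mu(t_i)/t_i$ rather than a direct limit), and the converse rests on the same application of the Ptolemy inequality to the quadruple $(c(0),c'(0),c(t_i),c'(t_i))$. The only cosmetic difference is in the converse's second step: where you extend from the base points to all of $X_\om$ by the $1$-Lipschitz sandwich $b^+\le b'^++K$, $b'^+\le b^++K'$ with $K+K'=0$, the paper instead applies the Ptolemy inequality a second time to the quadruple $(x,c(t_i),c'(t_i),o)$ for a general point $x$.
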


\begin{proof} Let
$c$, $c':\R\to X_\om$
be unit speed parameterizations of Busemann parallel lines
$l$, $l'\sub X_\om$
respectively, and a common Busemann function
$b:X_\om\to\R$
such that
$b\circ c(t)=b\circ c'(t)=-t$
for all
$t\in\R$.
Let
$\mu(t):=|c(t)c'(t)|$.
We claim that
$\mu(t)/|t|\to 0$
for
$t\to\pm\infty$.
Assume to the contrary, that W.L.G. there exists a sequence
$t_i\to\infty$ 
with
$\mu(t_i)/t_i\ge a >0$.

By the homothety property~(H) there exists a homothety
$\phi_i$
of
$X_\om$
with factor
$1/t_i$
such that
$\phi_i\circ c(s)=c(s/t_i)$
for all
$s\in\R$.
Note that
$c'_i(s)=\phi_i\circ c'(t_i s)$
is a unit speed parameterization of
the Ptolemy line
$\phi_i(l')$.
For fixed
$i$
we calculate
\begin{align*}
b\circ c'_i(t)) &= \lim_{s\to \infty}(|c'_i(t) c(s)|-s)
= \lim_{s\to \infty}(|\phi_i(c'(tt_i))c(s)|-s) \\
&= \lim_{s\to \infty}(|\phi_i(c'(tt_i))c(s/t_i)|-s/t_i)
= \lim_{s\to \infty}(|\phi_i(c'(tt_i))\phi_i(c(s))|-s/t_i)\\
&= \lim_{s\to \infty}\frac{1}{t_i}(|c'(tt_i)c(s)|-s)
=\frac{1}{t_i}b(c'(tt_i))=\frac{1}{t_i}(-tt_i )=-t
\end{align*}
for all 
$t\in\R$.
The Ptolemy lines
$\phi_i(l')$
subconverge to a Ptolemy line
$l''$
through
$c(0)$.
If
$c'':\R\to X$
is the limit unit speed parameterization of
$l''$,
then
$b\circ c''(t)=-t$
for all 
$t\in\R$,
and
$|c''(1)c(1)|\ge a>0$.
This contradicts Lemma \ref{lem:unique_line} by which
$l=l'$
and thus
$c''(t)=c(t)$
for all 
$t\in\R$.

Conversely, assume 
$c$, $c':\R\to X_\om$
are unit speed parameterizations of Ptolemy lines
$l$, $l'\sub X_\om$
with
$c(0)=o$, $c'(0)=o'$
such that
$b(o)=b(o')=0$
for the Busemann function
$b:X_\om\to\R$
of
$l$
with
$b\circ c(t)=-t$, $t\in\R$,
and
$\mu(t_i)/t_i\to 0$
for some sequence
$t_i\to\infty$,
where
$\mu(t)=|c(t)c'(t)|$.
Let
$b':X_\om\to\R$
be the Busemann function of
$l'$
with
$b'\circ c'(t)=-t$.
Applying the Ptolemy inequality to the cross-ration
triple
$\crt(Q_i)$
of the quadruple
$Q_i=(o,c(t_i),c'(t_i),o')$,
we obtain 
$$\left||oc'(t_i)||o'c(t_i)|-|oc(t_i)||o'c'(t_i)|\right|
  \le|oo'||c(t_i)c'(t_i)|.$$
Using
$|oc(t_i)|=t_i=|o'c'(t_i)|$,
$|o'c(t_i)|=b(o')+t_i+o(1)$,
$|oc'(t_i)|=b'(o)+t_i+o(1)$,
and
$|c(t_i)c'(t_i)|=\mu(t_i)=o(1)t_i$,
we obtain
$$|(b'(o)+t_i+o(1))(b(o')+t_i+o(1))-t_i^2|\le|oo'|o(1)t_i,$$
thus
$|b'(o)|\le o(1)$
and hence
$b'(o)=b(o')=0$.

Finally, for an arbitrary
$x\in X_\om$
consider the quadruple
$Q_{x,i}=(x,c(t_i),c'(t_i),o)$.
By the same argument as above, we have
$$\left||xc'(t_i)|t_i-|xc(t_i)||oc'(t_i)|\right|\le
  |ox|\mu(t_i).$$
Using
$|oc'(t_i)|=b'(o)+t_i+o(1)=t_i+o(1)$,
$|xc'(t_i)|=b'(x)+t_i+o(1)$,
$|xc(t_i)|=b(x)+t_i+o(1)$,
we finally obtain 
$|b'(x)-b(x)|\le o(1)$
and hence
$b(x)=b'(x)$.
Therefore, the lines
$l$, $l'$
are Busemann parallel.
\end{proof}

Now, we assume that our Ptolemy space
$X$
is compact. Given
$x$, $x'\in X_\om$,
we construct an isometry
$\eta_{xx'}:X_\om\to X_\om$
called a {\em shift} as follows. We take a sequence
$\la_i\to\infty$
and using the homothety property (H) for every 
$i$
consider homotheties
$\phi_i\in\Ga_{\om,x}$, $\psi_i\in\Ga_{\om,x'}$
with coefficient
$\la_i$.
Then
$\eta_i=\psi_i^{-1}\circ\phi_i$
is an isometry of
$X_\om$
for every
$i$
because the coefficient of the homothety
$\eta_i$
is 1. Furthermore, we have
$|\eta_i(x)x'|=\la_i^{-1}|xx'|\to 0$
as 
$i\to\infty$.
Since
$X$
is compact, the sequence
$\eta_i$
subconverges to an isometry
$\eta=\eta_{xx'}$
with
$\eta(x)=x'$.
The term shift for 
$\eta$
is justified by the following

\begin{lem}\label{lem:shift_busemann_parallel} A shift 
$\eta_{xx'}$
moves any Ptolemy line
$l$
through
$x$
to a Busemann parallel Ptolemy line 
$\eta_{xx'}(l)$
through
$x'$.
\end{lem}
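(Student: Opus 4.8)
The goal is to show that a shift $\eta=\eta_{xx'}$ carries any Ptolemy line $l$ through $x$ to a Ptolemy line through $x'$ that is Busemann parallel to $l$. The plan is to exploit the characterization of Busemann parallelism by sublinear divergence established in Lemma~\ref{lem:busparallel_sublinear}, reducing everything to an estimate on how far $\eta(l)$ drifts away from $l$ at large parameter.

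\medskip

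First I would fix notation following the construction of the shift. Recall $\eta$ is obtained as a subsequential limit of isometries $\eta_i=\psi_i^{-1}\circ\phi_i$, where $\phi_i\in\Ga_{\om,x}$ and $\psi_i\in\Ga_{\om,x'}$ are homotheties with common coefficient $\la_i\to\infty$ (w.r.t.\ the metric $d$ with infinitely remote point $\om$). Let $c:\R\to X_\om$ be a unit speed parameterization of $l$ with $c(0)=x$, and let $b$ be the Busemann function of $l$ with $b\circ c(t)=-t$. Since $\eta$ is an isometry of $X_\om$, the image $\eta(l)$ is again a Ptolemy line through $\eta(x)=x'$, parameterized at unit speed by $c':=\eta\circ c$ with $c'(0)=x'$. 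The task is then to verify that $l$ and $\eta(l)=l'$ satisfy the sublinear-divergence hypothesis $|c(t_i)c'(t_i)|/|t_i|\to 0$ along some sequence $t_i\to\pm\infty$, for then Lemma~\ref{lem:busparallel_sublinear} gives Busemann parallelism directly.

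\medskip

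The key geometric point is that $\phi_i$, being a homothety in $\Ga_{\om,x}$ fixing $x$ with factor $\la_i$, acts on the Ptolemy line $l$ through $x$ by $\phi_i\circ c(t)=c(t/\la_i)$ (this is the homothety property~(H) applied to a circle in $C_{\om,x}$ through $l$, exactly as used in the proof of Lemma~\ref{lem:busparallel_sublinear}). Hence $\phi_i$ fixes $l$ setwise and contracts it toward $x$. The remaining isometry $\psi_i^{-1}$ does not fix $x$, but because $|\eta_i(x)x'|=\la_i^{-1}|xx'|\to 0$, the displacement of $x$ under $\eta_i$ is negligible. The plan is to estimate $|c(t)\,\eta_i(c(t))|$ for a fixed large $t$ and $i\to\infty$: writing $\eta_i(c(t))=\psi_i^{-1}(\phi_i(c(t)))=\psi_i^{-1}(c(t/\la_i))$, the inner point $c(t/\la_i)$ is within $O(t/\la_i)$ of $x$, and applying the contraction/expansion behaviour of $\psi_i^{-1}\in\Ga_{\om,x'}$ together with the bounded motion of $x$ should control the limiting position $\eta(c(t))=c'(t)$ and show it stays within distance $o(|t|)$ of $c(t)$. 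Concretely I would pass to the limit $i\to\infty$ first (getting $c'$) and then let $|t|\to\infty$, using the compactness-based convergence $\eta_i\to\eta$ on the finitely many relevant points.

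\medskip

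The main obstacle is the interplay of the two limits: the shift $\eta$ itself is only a subsequential limit of the $\eta_i$, so one cannot naively substitute $\phi_i\circ c(t)=c(t/\la_i)$ into an expression already involving the limit $\eta$. The cleanest route is therefore to avoid estimating $c'$ pointwise via the $\eta_i$ and instead argue as follows: since $\eta$ is an isometry, $b\circ\eta^{-1}$ is a Busemann function of $l'=\eta(l)$ (pullbacks of Busemann functions under isometries are again Busemann functions of the image line), and $b\circ\eta^{-1}\circ c'(t)=b\circ c(t)=-t$. It then suffices to show the two Busemann functions $b$ and $b'=b\circ\eta^{-1}$ coincide. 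Using that $\eta$ is a limit of the $\eta_i$ and that each $\phi_i$ preserves the Busemann function $b$ up to the homothety scaling while $\psi_i$ does the same relative to its own line, one shows $b\circ\eta_i^{-1}(z)-b(z)\to 0$ for each fixed $z\in X_\om$; passing to the limit gives $b'=b$, i.e.\ $l$ and $l'$ share Busemann functions, which is precisely Busemann parallelism. Verifying the scaling bookkeeping for $b\circ\eta_i^{-1}$ — tracking how $b$ transforms under $\phi_i$ and $\psi_i^{-1}$ and showing the error is $O(\la_i^{-1})$ — is the technical heart of the argument.
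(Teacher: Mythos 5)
Your starting point is the right one -- both you and the paper reduce the statement to the sublinear-divergence criterion of Lemma~\ref{lem:busparallel_sublinear} -- but neither of your two routes actually closes the argument. In the first route you have the homotheties backwards: $\phi_i\in\Ga_{\om,x}$ has coefficient $\la_i\to\infty$, so $\phi_i\circ c(t)=c(\la_i t)$ \emph{expands} $l$ away from $x$, while $\psi_i^{-1}$ contracts by $\la_i^{-1}$ (this is forced by $|\eta_i(x)x'|=\la_i^{-1}|xx'|$). This is not just a sign slip: the mechanism that makes the proof work is precisely to push a fixed point $y=c(1)$ out to $y_i=\phi_i(y)=c(\la_i)$, where an assumed linear divergence $\mu(t)\ge at$ gives $|y_i\,c'(\la_i)|\ge a\la_i$, and then pull back by $\psi_i^{-1}$ (which fixes $l'$ and sends $c'(\la_i)$ to $c'(1)=y'$) to get the uniform lower bound $|\eta_i(y)\,y'|\ge a$, contradicting $\eta_i(y)\to\eta(y)=y'$. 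The double-limit worry you raise is dissolved by running the argument as a proof by contradiction: one only needs $\eta_i\to\eta$ at the single point $y$. You stop short of producing any such estimate.

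The second route, which you present as the actual proof, is circular. Writing $b\circ\eta_i^{-1}=\la_i^{-1}\,b\circ\psi_i$ (the $\phi_i$-step is fine since $\phi_i$ preserves $l$), the function $\la_i^{-1}b\circ\psi_i$ is a Busemann function of the line $\psi_i^{-1}(l)$ -- but $l$ passes through $x$, not $x'$, so $\psi_i\in\Ga_{\om,x'}$ does not preserve $l$, and to conclude $\la_i^{-1}b\circ\psi_i=b+\const$ you would need to know that $\psi_i^{-1}(l)$ is Busemann parallel to $l$. That is exactly the pure-homothety property of Lemma~\ref{lem:pure_homothety}, whose proof rests on Corollary~\ref{cor:busparallel_foliation}, which in turn is deduced from the very lemma you are proving. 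So the ``scaling bookkeeping'' you defer is not bookkeeping at all; it is the geometric content of the statement, and the paper supplies it only through the contradiction argument sketched above.
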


\begin{proof} We show that the line 
$l'=\eta_{xx'}(l)$
cannot have a linear divergence with 
$l$.
Assume to the contrary that
$\mu(t)\ge at$
for some 
$a>0$
and all
$t>0$,
where
$\mu(t)=|c(t)c'(t)|$, $c:\R\to X_\om$
is a unit speed parameterization of
$l$
with 
$c(0)=x$, $c'=\eta_{xx'}\circ c$.

Recall that
$\eta_{xx'}=\lim\eta_i$,
where
$\eta_i=\psi_i^{-1}\circ\phi_i$,
and
$\phi_i\in\Ga_{\om,x}$, $\psi_i\in\Ga_{\om,x'}$
are homotheties with the same coefficient
$\la_i\to\infty$.
By definition of the groups
$\Ga_{\om,x}$, $\Ga_{\om,x'}$,
we have
$\phi_i(l)=l$, $\psi_i(l')=l'$.
We take
$y=c(1)$, $y'=c'(1)$.
Then for 
$y_i=\phi_i(y)=c(\la_i)$
we have
$|y_ic'(\la_i)|=\mu(\la_i)\ge a\la_i$.
Thus for 
$y_i'=\psi_i^{-1}(y_i)$
the estimate
$|y_i'y'|=|\psi_i^{-1}(y_i)\psi_i^{-1}\circ c'(\la_i)|\ge a$
holds for all 
$i$
in contradiction with 
$y_i'\to y'$
as
$i\to\infty$.

Therefore, there are sequences
$t_i\to\pm\infty$, 
with 
$\mu(t_i)=o(1)|t_i|$.
By Lemma~\ref{lem:busparallel_sublinear} the lines
$l$, $l'$
are Busemann parallel. 
\end{proof}

From Lemma~\ref{lem:unique_line} and 
Lemma~\ref{lem:shift_busemann_parallel} we immediately obtain

\begin{cor}\label{cor:busparallel_foliation} Given a Ptolemy line
$l\sub X_\om$,
through any point 
$x\in X_\om$
there is a unique Ptolemy line
$l(x)$
Busemann parallel to
$l$.
\qed
\end{cor}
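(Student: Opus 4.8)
The plan is to obtain existence from the shift construction of Lemma~\ref{lem:shift_busemann_parallel} and uniqueness from Lemma~\ref{lem:unique_line}; both ingredients are already in hand, so the argument is short, and the only genuine care needed is a consistent normalization of the shared Busemann function.

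For existence, I would fix an arbitrary point $o\in l$ and consider the shift $\eta=\eta_{ox}:X_\om\to X_\om$, an isometry with $\eta(o)=x$. Since $l$ passes through $o$, Lemma~\ref{lem:shift_busemann_parallel} guarantees that $\eta(l)$ is a Ptolemy line through $x=\eta(o)$ that is Busemann parallel to $l$. Setting $l(x):=\eta(l)$ then produces the desired Busemann parallel line through $x$.

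For uniqueness, suppose $l_1$, $l_2\sub X_\om$ are Ptolemy lines through $x$, each Busemann parallel to $l$. Being Busemann parallel means sharing the entire family of associated Busemann functions; since $l_1$ and $l_2$ both share this family with $l$, they share it with each other, so $l_1$ and $l_2$ are Busemann parallel and meet at $x$. By the concluding clause of Lemma~\ref{lem:unique_line}, Busemann parallel lines with a common point coincide, whence $l_1=l_2$. Concretely, I would pick a common Busemann function $b$ and replace it by $b-b(x)$ so that $b(x)=0$ (the Busemann functions of an oriented line form a one-parameter family differing by additive constants, so this is again a Busemann function), then parameterize $l_1$, $l_2$ at unit speed from $x$ toward the end associated with $b$, obtaining $b\circ c_1(t)=-t=b\circ c_2(t)$ for $t\ge 0$; this is exactly the hypothesis of Lemma~\ref{lem:unique_line}.

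There is no substantial obstacle here: each half of the statement is a one-line consequence of a preceding lemma. The only bookkeeping point is the orientation/normalization just described, ensuring that the shared Busemann function decreases at the same unit rate along both candidate lines so that Lemma~\ref{lem:unique_line} applies verbatim. With this in place the corollary follows immediately, and the assignment $x\mapsto l(x)$ yields the Busemann parallel foliation of $X_\om$.
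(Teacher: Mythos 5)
Your proof is correct and follows exactly the route the paper intends: the paper states this corollary with no written proof precisely because it is the immediate combination of Lemma~\ref{lem:shift_busemann_parallel} (existence via the shift $\eta_{ox}$) and the final clause of Lemma~\ref{lem:unique_line} (uniqueness), which is what you carry out. Your normalization remark about the shared Busemann function is a sensible bit of bookkeeping but adds nothing beyond the paper's intended one-line argument.
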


Recall that any M\"obius map 
$\phi:X\to X$
with 
$\phi(\om)=\om$
for
$\om\in X$
acts on
$X_\om$
as a homothety. A homothety
$\phi:X_\om\to X_\om$
is said to be {\em pure} if it preserves any
foliation of
$X_\om$
by Busemann parallel Ptolemy lines.

\begin{lem}\label{lem:pure_homothety} For every
$o\in X_\om$
the group
$\Ga_{\om,o}$
consists of pure homotheties. In particular,
every shift of
$X_\om$
preserves any foliation of
$X_\om$
by Busemann parallel Ptolemy lines. 
\end{lem}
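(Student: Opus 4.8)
The plan is to reduce everything to one transformation rule for Busemann functions under homotheties, and then to exploit the rigidity built into $\Ga_{\om,o}$, namely that it fixes \emph{all} Ptolemy circles through $\om$ and $o$, hence all Ptolemy lines through $o$.

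First I would record the general fact that every homothety of $X_\om$ preserves Busemann parallelism as an equivalence relation. Let $\ga:X_\om\to X_\om$ be a homothety with coefficient $\la>0$, i.e. $|\ga(x)\ga(y)|=\la|xy|$; for $\ga\in\Ga_{\om,o}$ this holds by Lemma~\ref{lem:homothety_infinite}, since $\ga^\ast d$ and $d$ share the infinitely remote point $\om$. If $c$ is a unit speed parameterization of a Ptolemy line $l$ with associated Busemann function $b$, then $\tilde c(s)=\ga(c(s/\la))$ is a unit speed parameterization of $\ga(l)$, and directly from the definition of the Busemann function
\begin{equation*}
\tilde b(\ga(x))=\lim_{s\to\infty}\bigl(|\ga(x)\tilde c(s)|-s\bigr)=\lim_{t\to\infty}\la\bigl(|x\,c(t)|-t\bigr)=\la\,b(x),
\end{equation*}
so $\tilde b=\la\,(b\circ\ga^{-1})$ is a Busemann function of $\ga(l)$. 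Hence if $l$ and $l'$ share a Busemann function $b$, then $\ga(l)$ and $\ga(l')$ share $\la\,(b\circ\ga^{-1})$, and any additive constant merely rescales by $\la$; thus Busemann parallelism is preserved by $\ga$. (One could equally invoke the sublinear divergence criterion of Lemma~\ref{lem:busparallel_sublinear}, as a uniform rescaling by $\la$ preserves sublinear divergence.)

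To prove that $\Ga_{\om,o}$ consists of pure homotheties, fix $\ga\in\Ga_{\om,o}$. Every Ptolemy line $l\sub X_\om$ is by definition $l=\si_\om$ for a Ptolemy circle $\si$ through $\om$; if in addition $o\in l$, then $\si$ passes through both $\om$ and $o$, so $\si\in C_{\om,o}$, whence $\ga(\si)=\si$ and therefore $\ga(l)=l$. Thus $\ga$ fixes every Ptolemy line through $o$. Now let $\cF$ be any foliation of $X_\om$ by Busemann parallel Ptolemy lines and let $[l]$ denote the corresponding parallel class. By Corollary~\ref{cor:busparallel_foliation} the class $[l]$ contains a unique line $l(o)$ through $o$, and $\ga(l(o))=l(o)$. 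Since $\ga$ preserves Busemann parallelism by the general fact, for any leaf $m\in[l]$ the line $\ga(m)$ is Busemann parallel to $\ga(l(o))=l(o)$, so $\ga(m)\in[l]$. As $\ga$ is bijective it permutes the leaves of $\cF$, i.e. it preserves $\cF$. Since $\cF$ was arbitrary, $\ga$ is a pure homothety. For the ``in particular'' statement I would run the same scheme with $o$ replaced by the defining point of the shift: a shift $\eta=\eta_{xx'}$ is an isometry of $X_\om$ (coefficient $\la=1$), hence preserves Busemann parallelism by the general fact; and by Lemma~\ref{lem:shift_busemann_parallel} it sends the unique leaf $m(x)$ of a given foliation through $x$ to a Busemann parallel line, so $\eta(m(x))\in[m]$. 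Parallelism preservation then places $\eta(m')$ in $[m]$ for every leaf $m'\in[m]$, so $\eta$ preserves the foliation.

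I expect the only real subtlety to be the passage from ``$\ga$ preserves the parallelism relation'' to ``$\ga$ preserves each individual foliation'': preserving the relation shows merely that $\ga$ permutes the parallel classes, and it is precisely the rigidity of $\Ga_{\om,o}$ (fixing all circles through $\om$ and $o$, hence all lines through $o$) together with the unique parallel representative through $o$ furnished by Corollary~\ref{cor:busparallel_foliation} that forces this permutation to be the identity. For shifts the analogous pinning is supplied by Lemma~\ref{lem:shift_busemann_parallel}; routing the argument through it avoids any appeal to continuity of the foliation that a direct limiting argument through the approximants $\eta_i=\psi_i^{-1}\circ\phi_i$ would otherwise require.
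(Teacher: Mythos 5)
Your proof is correct and follows essentially the same route as the paper: both arguments rest on the transformation rule $b\mapsto\la^{-1}b\circ\ga$ for Busemann functions under a homothety, the fact that $\Ga_{\om,o}$ fixes every Ptolemy line through $o$ (being $\si_\om$ for some $\si\in C_{\om,o}$), and Corollary~\ref{cor:busparallel_foliation} to reduce each parallel class to its unique representative through $o$. Your handling of the ``in particular'' clause via Lemma~\ref{lem:shift_busemann_parallel} rather than a limiting argument over the approximants $\eta_i$ is a harmless (and slightly cleaner) variant of what the paper leaves implicit.
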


\begin{proof} Let
$l\sub X_\om$
be a Ptolemy line through
$o$, $b:X_\om\to\R$
a Busemann function of
$l$
with
$b(o)=0$.
Then
$b\circ\phi=\la b$
for every homothety
$\phi\in\Ga_{\om,o}$,
where
$\la>0$
is the coefficient of
$\phi$.
By Corollary~\ref{cor:busparallel_foliation}, any Busemann
function of any Ptolemy line 
$l(x)$
through
$x\in X_\om$
is a Busemann function of a line
$l$
through
$o$.
Therefore, every 
$\phi\in\Ga_{\om,o}$
preserves any Busemann function
$b$
of
$l(x)$
with
$b(o)=0$
in the sense that
$\la^{-1}b\circ\phi=b$,
where
$\la>0$
is the coefficient of
$\phi$. 
Since
$\la^{-1}b\circ\phi$
is a Busemann function of the Ptolemy line
$\phi^{-1}(l(x))$, 
we see that this line is Busemann parallel to
$l(x)$.
Thus
$\phi$
preserves the foliation
$l(x)$, $x\in X_\om$
by Busemann parallel Ptolemy lines. 
\end{proof}

A construction of a homothety from the group
$\Ga_{\om,\om'}$
given in Proposition~\ref{pro:homothety_property} is not
uniquely determined because to obtain a homothety
with the same coefficient 
$\la$
one can take a composition of different pairs of 
s-inversions. Thus for given
$x$, $x'\in X_\om$
a shift
$\eta_{xx'}$
is not uniquely determined. We give a refined construction
of shifts with property
$\eta_{xx'}\to\id$
as
$x\to x'$
which will be used in the proof of Lemma~\ref{lem:zigzag_change_basepoint}
below.

\begin{lem}\label{lem:shift_identity} For
$x$, $x'\in X_\om$
there is a shift
$\eta_{xx'}:X_\om\to X_\om$
with 
$\eta_{xx'}(x)=x'$
such that
$\eta_{xx'}\to\id$
as 
$x\to x'$.
\end{lem}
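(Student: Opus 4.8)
The plan is to refine the shift construction from the paragraph preceding the lemma so that the dependence on the pair $(x,x')$ becomes controlled, and in particular continuous at the diagonal. The crude construction fixes a single sequence $\la_i\to\infty$ and takes $\eta_i=\psi_i^{-1}\circ\phi_i$ with $\phi_i\in\Ga_{\om,x}$, $\psi_i\in\Ga_{\om,x'}$ homotheties of coefficient $\la_i$; the issue is that there is freedom in choosing the homotheties, and nothing yet ties the limit $\eta_{xx'}$ to $x'$ in a way that degenerates to the identity as $x'\to x$. So first I would make the choice of homotheties canonical in terms of the data. The natural way is to use the homothety property (H) to select, for each $i$, the \emph{unique} element $\phi_i\in\Ga_{\om,x}$ that moves a reference point along a chosen Ptolemy circle by the prescribed amount, and similarly for $\psi_i$; the key point is that $\Ga_{\om,x}$ acts simply transitively on an arc, so once we fix an auxiliary point the homothety of coefficient $\la_i$ is determined.

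Next I would estimate the displacement. Since $\eta_i$ is an isometry of $X_\om$ with $|\eta_i(x)\,x'|=\la_i^{-1}|xx'|\to 0$, the limit isometry $\eta=\eta_{xx'}$ satisfies $\eta(x)=x'$. The point to extract is a \emph{quantitative} version: I want to bound $|\eta_{xx'}(z)\,z|$ for an arbitrary $z\in X_\om$ in terms of $|xx'|$. The mechanism is that $\phi_i$ and $\psi_i$ differ only through the difference of their fixed second points $x$ versus $x'$, and as $x'\to x$ these two families of homotheties converge to one another. Concretely, I expect to show that for fixed $z$ the quantity $|\eta_i(z)\,z|$ is controlled by $\la_i^{-1}$ times a term that stays bounded as $x'\to x$, plus an error coming from the defect $|\phi_i(w)-\psi_i(w)|$ at a suitable auxiliary point $w$; letting $i\to\infty$ and then $x'\to x$ forces $\eta_{xx'}(z)\to z$. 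Because $X$ is compact, pointwise control is enough to conclude $\eta_{xx'}\to\id$ in the weak topology on $\aut X$ (which is the topology we always use on $\aut X$).

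The cleanest route, which I would actually pursue, is to fix the \emph{same} reference circle $\si\in C_{\om,x}\cap C_{\om,x'}$ by choosing $x'$ close to $x$ on a Ptolemy line through $x$, or more robustly to observe that by Corollary~\ref{cor:busparallel_foliation} the Busemann-parallel foliation gives a canonical identification along which the shift acts, and by Lemma~\ref{lem:shift_busemann_parallel} the shift $\eta_{xx'}$ preserves that foliation while moving $x$ to $x'$. Pairing this with Lemma~\ref{lem:pure_homothety}, which says elements of $\Ga_{\om,o}$ are pure homotheties, lets me normalize the construction so that $\eta_{xx'}$ is pinned down up to a transformation that itself tends to $\id$. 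Continuity of the whole assignment $(x,x')\mapsto\eta_{xx'}$ then follows from continuity of the selection of homotheties in (H) together with compactness of $\aut X$.

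\textbf{The main obstacle} I anticipate is the non-uniqueness of the shift, which the authors explicitly flag in the paragraph before the statement: different pairs of s-inversions yield homotheties of the same coefficient, so $\eta_{xx'}$ is a priori one of many isometries sending $x$ to $x'$, and a badly chosen family need \emph{not} limit to the identity as $x'\to x$. The real work is therefore not an estimate but a \emph{canonical choice}: I must pin down the homotheties $\phi_i,\psi_i$ by additional normalization (a fixed auxiliary Ptolemy circle or a fixed transversal point) so that the freedom collapses, and then verify that with this normalization the displacement of every point is genuinely $o(1)$ as $x\to x'$. Managing the interplay between the normalization choice and the foliation-preserving property, so that the limit does not depend on the auxiliary data in a discontinuous way, is where the argument must be handled with care.
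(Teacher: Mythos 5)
You correctly identify the crux --- the crude shift construction has too much freedom, and one must make the homotheties $\phi_i\in\Ga_{\om,x}$, $\psi_i\in\Ga_{\om,x'}$ canonical before continuity in $(x,x')$ can even be discussed --- but the normalization you propose does not work as stated. Property (H) gives only transitivity of $\Ga_{\om,x}$ on the arcs, not simple transitivity, so ``the unique element of $\Ga_{\om,x}$ moving a reference point by a prescribed amount'' is not well defined: nothing proved up to this point excludes a nontrivial stabilizer acting trivially on the chosen auxiliary circle but nontrivially elsewhere in $X_\om$. Moreover, even granting a well-defined selection, your key step (``continuity of the selection of homotheties in (H)'') is precisely the statement that needs proof, and the sketch never identifies the ingredient that delivers it; likewise the claimed bound ``$\la_i^{-1}$ times a bounded term plus a defect'' is not established, and for $x\neq x'$ the displacement of $\eta_i$ certainly does not decay in $i$, so the defect term must carry the whole estimate.

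The paper's mechanism is to pin the homotheties down using the \emph{uniqueness} half of axiom (I): take $\phi_i=\phi_{x,\om,S_i}\circ\phi_{x,\om,S}$ and $\psi_i=\phi_{x',\om,S'_i}\circ\phi_{x',\om,S'}$, where $S,S_i$ (resp.\ $S',S'_i$) are the metric spheres of radii $1$ and $\la_i$ around $x$ (resp.\ $x'$) in the fixed background metric. Each factor is then a uniquely determined s-inversion, and $\eta_i=\psi_i^{-1}\circ\phi_i$ is a composition of four of them; pairing them as $\phi_{x',\om,S'_i}\circ\phi_{x,\om,S_i}$ and $\phi_{x',\om,S'}\circ\phi_{x,\om,S}$, each pair tends to $\id$ as $x\to x'$ because the data spheres converge in the Hausdorff metric and s-inversions depend continuously on their data (via uniqueness and Lemma~\ref{lem:sphere_sinversion}). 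Finally --- a point your ``let $i\to\infty$ and then $x'\to x$'' glosses over --- since $\eta_{xx'}=\lim_i\eta_i$, one must know that $\eta_i\to\id$ \emph{uniformly} in $i$ as $x\to x'$; this comes from the radius-uniform estimate $\Hd(S_r^d(x),S_r^d(x'))\le|xx'|$. Without the uniqueness from (I) and this uniformity, the argument does not close.
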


\begin{proof} For
$\la_i\to\infty$
we denote with
$S=S_1(x)$, $S_i=S_{\la_i}(x)$
the metric spheres in
$X_\om$
centered at
$x$
of radius 1 and
$\la_i$
respectively. Similarly we put
$S'=S_1(x')$, $S_i'=S_{\la_i}(x')$.
Then
$\phi_i=\phi_{x,\om,S_i}\circ\phi_{x,\om,S}\in\Ga_{\om,x}$, 
$\psi_i=\phi_{x',\om,S_i'}\circ\phi_{x',\om,S'}\in\Ga_{\om,x'}$
are homotheties with the same coefficient
$\la_i^2$,
see the proof of Proposition~\ref{pro:homothety_property}.
The sequence of isometries
$\eta_i=\psi_i^{-1}\circ\phi_i:X_\om\to X_\om$
converges to a shift
$\eta:X_\om\to X_\om$
with
$\eta(x)=x'$.
We have
$$\eta_i=\phi_{x',\om,S'}\circ\phi_{x',\om,S_i'}\circ
   \phi_{x,\om,S_i}\circ\phi_{x,\om,S}$$
and
$\phi_{x',\om,S_i'}\circ\phi_{x,\om,S_i}\to\id$,
$\phi_{x',\om,S'}\circ\phi_{x,\om,S}\to\id$
as
$x\to x'$
because
$S_i\to S_i'$, $S\to S'$
in the Hausdorff metric, every s-inversion is uniquely determined by
its data according to our assumption, and by 
Lemma~\ref{lem:sphere_sinversion}, s-inversions preserve the family
of metric spheres between data points. Thus
$\eta_i\to\id$
for every
$i$
as
$x\to x'$.
Moreover, the convergence of metric spheres around
$x$
to metric spheres around
$x'$
in the Hausdorff metric is uniform in radius as
$x\to x'$, $\Hd(S_r^d(x),S_r^d(x'))\le|xx'|$
for every
$r>0$.
Therefore, 
$\eta_i\to\id$
uniformly in
$i$
as
$x\to x'$.
This implies
$\eta\to\id$
as
$x\to x'$.
\end{proof}

\subsection{Enhancing the existence property (E)}
\label{subsect:enhancing_E}

By property (E) formulated in sect.~\ref{sect:introduction}
we know that the space
$X$
contains at least one Ptolemy circle.

\begin{pro}\label{pro:two_point_homogeneous} Any compact Ptolemy
space with the inversion property (I) is two-point homogeneous,
that is, for each (ordered) pairs
$(x,y)$, $(x',y')$
of distinct points in
$X$
there is a M\"obius automorphism
$f:X\to X$
with
$f(x)=x'$, $f(y)=y'$. 
\end{pro}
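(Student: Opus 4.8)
The goal is to prove two-point homogeneity: given ordered pairs $(x,y)$ and $(x',y')$ of distinct points, find a M\"obius automorphism sending one to the other. Since M\"obius automorphisms form a group, it suffices to show that $\aut X$ acts transitively on ordered pairs of distinct points, and for this it is enough to prove two separate transitivity statements and compose: first that $\aut X$ acts transitively on points of $X$, and second that the stabilizer of a point acts transitively on the remaining points.

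\begin{proof}
The plan is to reduce the claim to the two statements: (a) for any two points $\om$, $\om'\in X$ there is an automorphism carrying $\om$ to $\om'$, and (b) for any fixed $\om$ and any two points $x$, $x'\in X_\om$ there is an automorphism fixing $\om$ and carrying $x$ to $x'$. Granting both, given $(x,y)$ and $(x',y')$ we first use (a) to send $x$ to $x'$ by some $f_1$; then $f_1(y)$ and $y'$ both lie in $X_{x'}$, and we use (b) with basepoint $x'$ to find $f_2$ fixing $x'$ and sending $f_1(y)$ to $y'$. The composition $f_2\circ f_1$ is the desired automorphism.

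For statement (a), I would exploit the existence property (E) together with the inversion property (I). By (E) there is at least one Ptolemy circle $\si_0\sub X$; choose distinct $\om$, $\om'\in\si_0$ and, since $X$ is a Ptolemy space, a metric sphere $S$ between them exists, so by (I) a space inversion $\phi_{\om,\om',S}$ exists and carries $\om$ to $\om'$. To reach an \emph{arbitrary} target point, I would use the shift isometries $\eta_{xx'}:X_\om\to X_\om$ constructed (using compactness and the homothety property (H), itself a consequence of (E) and (I) via Proposition~\ref{pro:homothety_property}) in Lemma~\ref{lem:shift_busemann_parallel}: a shift fixes $\om$ and moves any prescribed point $x$ to any prescribed point $x'$ in $X_\om$. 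Thus the orbit of any point under $\aut X$ contains every point sharing a common ``finite'' chart with it, and since a space inversion swaps $\om$ with $\om'$, these orbits link up to give transitivity on all of $X$.

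Statement (b) is the heart of the matter and I expect it to be the main obstacle. Fixing $\om$, a shift $\eta_{xx'}$ is precisely an isometry of $X_\om$ with $\eta_{xx'}(x)=x'$ that extends to a M\"obius automorphism fixing $\om$; its existence is guaranteed by the shift construction, which requires only compactness and (H). This directly furnishes the automorphism fixing $\om$ and sending $x$ to $x'$. The delicate point is that the shift is built as a subsequential limit $\eta=\lim\eta_i$ of isometries $\eta_i=\psi_i^{-1}\circ\phi_i$, so one must verify that the limit is genuinely a M\"obius automorphism of $X$ and not merely of $X_\om$; this is where compactness of $X$ and the weak topology on $\aut X$ enter, ensuring that the subsequential limit exists and preserves the M\"obius structure. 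Once (a) and (b) are established with these tools, the composition argument of the first paragraph completes the proof.
\end{proof}
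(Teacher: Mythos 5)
Your proof is correct and follows essentially the same route as the paper: apply a space inversion to carry $x$ to $x'$, then a shift of $X_{x'}$ (a M\"obius automorphism fixing $x'$, obtained as a limit of compositions of s-inversions) to carry the image of $y$ to $y'$. The only simplification available is in your step (a): property (I) applies to an \emph{arbitrary} pair of distinct points, so the inversion $\phi_{x,x',S}$ directly sends $x$ to $x'$, and your detour through a fixed Ptolemy circle and linking orbits via shifts is unnecessary.
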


\begin{proof} Applying an inversion, we can map 
$x$
to
$x'$.
Let
$y''$
be the image of
$y$
under the inversion. Then
$y''\neq x'$
by the assumption. We consider a metric of the M\"obius
structure with infinitely remote point 
$x'$.
By discussion above, there is a shift w.r.t. that metric which maps 
$y''$
to
$y'$.
The resulting composition gives a required M\"obius
automorphism.
\end{proof}

It immediately follows from Proposition~\ref{pro:two_point_homogeneous}
that the property (E) in any compact Ptolemy space with (I) 
is promoted to

\noindent
(E) Enhanced existence: through any two points in
$X$
there is a Ptolemy circle.

In what follow, we use this property under the name (E).

\subsection{Busemann functions on $X_\om$}
\label{subsect:busemann_functions}

The proof of Theorem~\ref{thm:basic_ptolemy} is based on
study of Busemann functions on
$X_\om$.
In this section we assume that a compact Ptolemy space
$X$
possesses the properties (E) and (I).

\begin{lem}\label{lem:limit_circle} Assume that
$x_i\to x$
in
$X$,
and a point
$\om\in X$
distinct from
$x$
is fixed. Then any Ptolemy circle
$l\sub X$
through
$\om$, $x$
is the (pointwise) limit of a sequence of Ptolemy circles
$l_i\sub X$
through
$\om$, $x_i$.
\end{lem}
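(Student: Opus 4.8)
The plan is to use the enhanced existence property (E) together with compactness of $X$ and the rigidity of Ptolemy circles under taking limits. First I would, for each $i$, invoke property (E) to produce a Ptolemy circle $l_i\sub X$ through the pair $\om$, $x_i$; such a circle exists since $\om\neq x_i$ for all large $i$ (as $x_i\to x\neq\om$). The goal is to extract from $(l_i)$ a subsequence converging pointwise to a Ptolemy circle through $\om$, $x$, and then to identify this limit with the given circle $l$. To make ``pointwise limit'' precise I would work in a fixed metric $d\in\cM$ with infinitely remote point $\om$, so that $\si_{i,\om}=l_i\sm\om$ and $l_\om=l\sm\om$ are Ptolemy lines, i.e. complete geodesics isometric to $\R$, and a limit of circles through $\om$ amounts to a limit of these geodesic lines in $X_\om$.

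The key step is a compactness-plus-closedness argument for Ptolemy lines. Fix a unit speed parameterization $c_i:\R\to X_\om$ of $\si_{i,\om}$ normalized so that $c_i(0)$ is the foot of $\om$-geodesic data consistently chosen (for instance, so that $c_i(0)\to$ a prescribed point, or so that $x_i$ corresponds to a bounded parameter value). Since $X$ is compact, the family $\{c_i(t)\}$ is precompact for each fixed $t$, and because each $c_i$ is $1$-Lipschitz (it is a unit speed geodesic), an Arzel\`a--Ascoli/diagonal argument over a countable dense set of parameters $t\in\R$ yields a subsequence for which $c_i\to c$ pointwise, with $c:\R\to X_\om$ again $1$-Lipschitz. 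The Ptolemy equality~(\ref{eq:PT_eq}) is a closed condition: for any four parameter values the cross-ratio triple lies in $\d\De$, and since cross-ratio triples depend continuously on the points and $\d\De$ is closed, the limit quadruples along $c$ again satisfy $\crt\in\d\De$. Hence $c$ parameterizes a Ptolemy line, its closure $\si=c(\R)\cup\om$ is a Ptolemy circle, and it passes through $\om$ by construction and through $x=\lim x_i$ since $x_i\in l_i$.

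The main obstacle is the final identification step: showing the limit circle can be taken to be the \emph{given} circle $l$ rather than merely \emph{some} circle through $\om$, $x$. Here I would exploit Corollary~\ref{cor:weak_unique}, which guarantees that two distinct Ptolemy circles meet in at most two points. To pin down $l$, I would arrange for the approximating circles $l_i$ to pass through a \emph{third} controlled point: using two-point homogeneity (Proposition~\ref{pro:two_point_homogeneous}) and the enhanced (E), I choose a fixed third point $z\in l\sm\{\om,x\}$ and a point $z_i\to z$ with $z_i$ lying on a circle through $\om$, $x_i$, so that $l_i$ may be taken through $\om$, $x_i$, $z_i$; by the uniqueness in Proposition~\ref{pro:moebch-circ} such a circle is the unique one through these three points. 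Passing to the limit, the resulting circle passes through $\om$, $x$, $z$, and since it shares these three distinct points with $l$, Corollary~\ref{cor:weak_unique} forces it to equal $l$. The delicate point to verify is that the $z_i$ can be chosen converging to $z$ while keeping $\om$, $x_i$, $z_i$ in general position (pairwise distinct), which follows from continuity of the circle constructions in Proposition~\ref{pro:moebch-circ} and the fact that the canonical parameterization $t\mapsto\crt(x_1,x_2,x_3,t)$ of a circle is a homeomorphism, so nearby data yield nearby circles in the pointwise sense.
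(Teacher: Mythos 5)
Your compactness step is fine as far as it goes: normalizing the unit speed parameterizations $c_i$ of $l_i\sm\om$ by $c_i(0)=x_i$, a diagonal argument produces a sublimit $c$ which is again a unit speed geodesic (since $|c_i(s)c_i(t)|=|s-t|$ passes to the limit) satisfying the Ptolemy equality, hence a Ptolemy line through $x$, so that $c(\R)\cup\om$ is a Ptolemy circle through $\om$, $x$. The genuine gap is the identification step, and your proposed fix is circular. Property (E), even in its enhanced form, only produces a Ptolemy circle through \emph{two} prescribed points; through three points $\om$, $x_i$, $z_i$ there is \emph{at most} one Ptolemy circle (Corollary~\ref{cor:weak_unique}), but in general there is none: already in the model $\di\C\hyp^k$ the union of all Ptolemy circles through $\om$ and $x_i$ is a proper subset of $X$ (the ``horizontal cone'' over $x_i$, of strictly smaller dimension), so a point $z\in l\sm\{\om,x\}$ will generically lie on no circle through $\om$, $x_i$. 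Hence the claim that one can choose $z_i\to z$ lying on circles through $\om$, $x_i$ does not follow from Proposition~\ref{pro:moebch-circ} (which concerns M\"obius maps between circles that are already given); it is essentially a restatement of the lemma you are trying to prove, namely that the circles through $\om$, $x_i$ accumulate on every point of $l$. Without that control, your $l_i$ are arbitrary circles through $\om$, $x_i$, and a sublimit is merely \emph{some} circle through $\om$, $x$ --- of which there is a whole positive-dimensional family.

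The paper avoids the selection problem by \emph{constructing} the approximating circles: it takes shifts $\eta_i:X_\om\to X_\om$ with $\eta_i(x)=x_i$ and sets $l_i=\eta_i(l)$. Then $l_i$ is automatically a Ptolemy line through $x_i$ that is Busemann parallel to $l$ (Lemma~\ref{lem:shift_busemann_parallel}, Lemma~\ref{lem:pure_homothety}), and any sublimit is a Busemann parallel line through $x=\lim x_i$, hence equals $l$ by the uniqueness statement of Lemma~\ref{lem:unique_line}. If you wish to keep your Arzel\`a--Ascoli framework, you must build some rigid datum tying $l_i$ to $l$ (such as Busemann parallelism) into the choice of $l_i$ from the outset rather than trying to recover it afterwards from a third point.
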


\begin{proof} In the space
$X_\om$
the circle
$l$
is a Ptolemy line (with infinitely remote point 
$\om$)
through
$x$.
Then the sequence
$l_i=\eta_i(l)$
of Ptolemy lines with
$x_i\in l_i$
converges to
$l$,
where
$\eta_i:X_\om\to X_\om$
is a shift with 
$\eta_i(x)=x_i$,
because the lines
$l_i$
are Busemann parallel to 
$l$
by Lemma~\ref{lem:pure_homothety}, and any sublimit
of the sequence
$\{l_i\}$
coincides with 
$l$
by Lemma~\ref{lem:unique_line}.
\end{proof}

We fix
$\om\in X$
and a metric 
$d$
of the M\"obius structure
such that
$\om$
is the infinitely remote point. Then every Ptolemy circle in
$X$
through
$\om$
is a Ptolemy line with respect to that metric. It immediately 
follows from the Ptolemy inequality that the distance function 
$d(z,\cdot)$
to a point 
$z\in X_\om$
is convex along any Ptolemy line, see
\cite{FS2}. 
Under the homothety property (H) we prove that
in fact 
$d(z,\cdot)$
is $C^1$-smooth.

\begin{lem}\label{lem:smooth_convex} In any compact Ptolemy space
$X$
with the homothety property~(H), the distance function
$d_z=d(z,\cdot):X_\om\to\R$
is convex and $C^1$-smooth along any Ptolemy line
$l\sub X_\om$
for any
$\om\in X$, $z\in X_\om\sm l$.
Therefore,
$X$
is Busemann flat.
\end{lem}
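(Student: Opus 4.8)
The plan is to prove that $d_z$ is convex and $C^1$-smooth along every Ptolemy line, and then invoke Proposition~\ref{pro:busemann_flat} to conclude flatness. Convexity is immediate from the Ptolemy inequality as already recalled, so the entire content is the $C^1$-regularity. By Proposition~\ref{pro:busemann_flat}, smoothness of distance functions along Ptolemy lines is \emph{equivalent} to flatness, so the two claims in the statement are really one; I would prove smoothness directly and read off flatness.

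\medskip

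First I would reduce the smoothness question to a statement about the two one-sided derivatives of $t\mapsto d(z,c(t))$ at a point of a Ptolemy line $l=c(\R)\sub X_\om$, where $c$ is a unit-speed parametrization and $z\notin l$. By the duality Proposition~\ref{pro:duality_dist_busemann}, these one-sided derivatives are exactly the values $b^\pm(z)$ of the opposite Busemann functions of a dual line, so $C^1$-smoothness at $c(0)$ amounts to the identity $b^+(z)+b^-(z)=0$ (after normalizing $b^\pm(c(0))=0$). Thus the real target is to show that opposite Busemann functions of a Ptolemy line sum to a constant; equivalently, that the convex function $t\mapsto d(z,c(t))$ has no corner. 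The strategy is to exploit the homothety property (H), which is the one ingredient not available in a general Ptolemy space and which distinguishes this setting.

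\medskip

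The key mechanism I expect to use is a \emph{blow-up} argument at a putative corner, entirely parallel to the proof of Lemma~\ref{lem:busparallel_sublinear}. Suppose the left and right derivatives of $d(z,c(t))$ at $t=0$ differ. Using (H), choose homotheties $\phi_i\in\Ga_{\om,o}$ (with $o=c(0)$) of coefficient $\la_i\to\infty$ that rescale $l$ so that $c\mapsto c(\cdot/\la_i)$; applying $\phi_i$ to the configuration magnifies a neighborhood of $o$. Because homotheties are M\"obius and preserve the Ptolemy structure, the rescaled distance functions $t\mapsto \la_i^{-1}d(\phi_i(z),c(t))$ are again convex along $l$ with the same one-sided slopes at $0$. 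Passing to a sublimit (using compactness of $X$, as in the shift construction) produces a limit Ptolemy line configuration in which the two one-sided slopes persist, yet the blown-up picture should force the distance function to become affine near $o$, i.e.\ the corner must flatten out. Concretely, I would show the limit point $z_\infty$ of $\phi_i(z)$ lies on a Ptolemy line whose Busemann functions coincide with $b^\pm$, and then apply Lemma~\ref{lem:unique_line} (Busemann-parallel lines through a common point coincide) to derive a contradiction with the corner, exactly as the divergence estimate was contradicted there.

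\medskip

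The main obstacle, I expect, is controlling the \emph{convergence of the rescaled distance functions and of the point $\phi_i(z)$} under the blow-up: one must verify that the limit of $t\mapsto \la_i^{-1}d(\phi_i(z),c(t))$ exists, is again a distance function (or Busemann function) for a genuine Ptolemy line in the compact space $X$, and that the one-sided slopes genuinely pass to the limit rather than collapsing. This is where compactness of $X$ and the uniform estimates on shifts from Lemma~\ref{lem:shift_identity} do the work, but matching the limiting object to a Ptolemy line so that Lemma~\ref{lem:unique_line} applies is delicate. Once the corner is excluded, $b^++b^-\equiv 0$ follows for the normalized functions, hence $b^++b^-\equiv\const$ in general, giving $C^1$-smoothness by Proposition~\ref{pro:duality_dist_busemann} and flatness by Proposition~\ref{pro:busemann_flat}.
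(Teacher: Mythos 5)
Your reduction is sound: convexity is immediate from the Ptolemy inequality, and by Proposition~\ref{pro:busemann_flat} together with the duality of Proposition~\ref{pro:duality_dist_busemann}, $C^1$-smoothness of distance functions along Ptolemy lines is equivalent to $b^++b^-\equiv\const$, so proving either claim yields the other. The blow-up via property (H) is also the right opening move, and it is in fact what the paper does: it takes homotheties $h_\la$ fixing $x=c(0)$ and preserving $l$, with coefficient $\la=1/t\to\infty$, so that $\om_\la=h_\la(z)\to\om$ and, by the homothety identity, $d(x,\om_\la)=f(0)/t$ and $d(c(\pm1),\om_\la)=f(\pm t)/t$, where $f=d_z\circ c$.

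The decisive step, however, is missing, and Lemma~\ref{lem:unique_line} cannot supply it. A convex function is perfectly entitled to have a corner; to exclude one you need an \emph{upper} bound on the second difference $f(t)+f(-t)-2f(0)$, i.e.\ a concavity-type estimate, and nothing in your toolkit produces one. Two concrete problems: (i) with your normalization, $\la_i^{-1}d(\phi_i(z),c(t))=f(t/\la_i)$, whose one-sided slopes at $0$ are $\la_i^{-1}f'_\pm(0)\to 0$ — the corner does not persist in the limit, it disappears; with the alternative normalization $d(\phi_i(z),c(t))-d(\phi_i(z),x)$ the corner persists but the limit is not identified with a Busemann function of any Ptolemy line. (ii) The limit of $\phi_i(z)$ is simply $\om$ (for expansions) or $c(0)$ (for contractions), so there is no pair of lines through a common point with matching Busemann functions to which Lemma~\ref{lem:unique_line} could be applied. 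The paper's mechanism is genuinely different at this point: by Lemma~\ref{lem:limit_circle} there are Ptolemy circles $l_\la$ through $x$ and $\om_\la$ converging to $l$; taking $x_t^\pm\in l_\la$ with $d(x,x_t^\pm)=1$, the four points $x_t^+,x,x_t^-,\om_\la$ lie in this cyclic order on $l_\la$, and the Ptolemy \emph{equality} together with the triangle inequality gives $2d(x,\om_\la)\ge d(x_t^+,\om_\la)+d(x_t^-,\om_\la)$. Since $x_t^\pm\to c(\pm1)$, this translates into $\bigl(f(t)+f(-t)-2f(0)\bigr)/t\to 0$, contradicting the lower bound $f'_+(0)-f'_-(0)>0$ that a corner forces. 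So the essential missing idea is the use of the circle structure — approximating circles from Lemma~\ref{lem:limit_circle} plus the Ptolemy equality on a quadruple of such a circle — to obtain concavity of $d(\cdot,\om_\la)$ at unit scale; the blow-up only serves to transport the corner to that scale.
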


\begin{proof}
Assume that 
$d_z$
is not $C^1$-smooth at some point
$x\in l$.
We fix an arclength parameterization 
$c:\R\to X_\om$
of
$l$
such that
$x=c(0)$.
Since
$f=d_z\circ c$
is convex, it has the left and the right derivatives at every point.
By assumption, these derivatives are different at
$t=0$.
It follows that
$$\liminf_{t\to 0}\frac{f(t)-2f(0)+f(-t)}{t}>0.$$
Now, using property~(H), we find for every
$\la>0$
a homothety
$h_\la:X_\om\to X_\om$
with coefficient
$\la$
that preserves the point
$x$
and the Ptolemy line
$l$, $h_\la(x)=x$, $h_\la(l)=l$.
Then
$d(x,h_\la(z))\to\infty$
as
$\la\to\infty$
and thus
$\om_\la=h_\la(z)\to\om$.
By Lemma~\ref{lem:limit_circle}, there is a Ptolemy circle
$l_\la$
through
$x$, $\om_\la$
such that
$l_\la\to l$
as
$\la\to\infty$
(maybe after passing to a subsequence). We put
$\la=1/t$
and consider points
$x_t^+$, $x_t^-\in l_\la$
separated by
$x$
with
$d(x,x_t^\pm)=1$.
Then, W.L.G.,
$x_t^\pm\to c(\pm 1)$
as
$t\to 0$.
The points
$x_t^+$, $x$, $x_t^-$, $\om_\la$
lie on the Ptolemy circle
$l_\la$
(in this order), thus
$$2d(x,\om_\la)\ge d(x,\om_\la)d(x_t^+,x_t^-)=d(x_t^+,\om_\la)+d(x_t^-,\om_\la)$$
by the Ptolemy equality. On the other hand,
$f(0)/t=d(x,\om_\la)$
and
$f(\pm t)/t=d(c(\pm 1),\om_\la)$.
Thus
$|d(x_t^\pm,\om_\la)-f(\pm t)/t|\le d(x_t^\pm,c(\pm 1))\to 0$
as
$t\to 0$.
Therefore,
$(f(t)-2f(0)+f(-t))/t\to 0$
as
$t\to 0$
in contradiction with our assumption. Now, 
$X$
is flat by Proposition~\ref{pro:busemann_flat}.
\end{proof}

A similar idea is used in the proof of the following

\begin{pro}\label{pro:busemann_affine} Given two Ptolemy lines
$l$, $l'\in X_\om$,
the Busemann functions of
$l$
are affine functions on
$l'$.
\end{pro}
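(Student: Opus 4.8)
The claim is that if $l$, $l'\subset X_\om$ are Ptolemy lines and $b:X_\om\to\R$ is a Busemann function of $l$, then $b$ restricted to $l'$ is an affine function of arclength. The plan is to fix a unit speed parameterization $c':\R\to X_\om$ of $l'$ and show that $t\mapsto b\circ c'(t)$ is affine. Since $X$ is flat by Lemma~\ref{lem:smooth_convex}, I may assume $b=b^+$ where $b^++b^-\equiv 0$; this is the key simplification that lets me work with a single smooth duality formula~(\ref{eq:smooth_duality}) rather than with one-sided derivatives. Because $b$ is a Busemann function of a line through $\om$ (infinitely remote), it is a limit of distance functions $d(\cdot,y)-d(o,y)$ as $y\to\infty$ along $l$, and such distance functions are convex along $l'$ by the Ptolemy inequality; hence $b\circ c'$ is convex. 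The whole point is therefore to show it is also concave, i.e. that its second difference vanishes.

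\textbf{The main mechanism.} To prove vanishing of the second difference I would reuse the scaling/limit-circle idea from the proof of Lemma~\ref{lem:smooth_convex}, but now applied to the Busemann function rather than to a distance function. Fix $x=c'(0)\in l'$. Using the homothety property~(H), for each $\la>0$ take a homothety $h_\la\in\Ga_{\om,x}$ with coefficient $\la$ fixing $x$; this scales the metric on $X_\om$ by $\la$ and, crucially, scales Busemann functions of lines through $x$ by $\la$ as well, since by Lemma~\ref{lem:pure_homothety} the maps in $\Ga_{\om,x}$ are pure homotheties and $b\circ h_\la=\la\, b$ (after normalizing $b(x)=0$). The strategy is to push a far-away point along $l$ toward $\om$ by $h_\la$, obtain via Lemma~\ref{lem:limit_circle} a Ptolemy circle $l_\la$ through $x$ and $\om_\la=h_\la(\cdot)$ approximating $l$, and then read off the second difference of $b\circ c'$ from the Ptolemy equality~(\ref{eq:PT_eq}) on $l_\la$, exactly as the second difference of $f=d_z\circ c$ was controlled in Lemma~\ref{lem:smooth_convex}. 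The scaling ensures that the first-order (linear) part cancels and only the convexity defect survives, which the Ptolemy equality forces to zero in the limit.

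\textbf{Reducing to distance functions.} More concretely, I would write $b$ as a limit of rescaled distance functions and interchange the affine property with the limit. For a point $z=c(s)$ far out on $l$, the function $t\mapsto d(c'(t),z)-d(x,z)$ is convex in $t$; I want its limit as $s\to\infty$ to be affine. Consider the second difference $b\circ c'(t)-2b\circ c'(0)+b\circ c'(-t)$. Applying the homothety $h_{1/t}$ which fixes $x$ and contracts by $1/t$, the three points $c'(\pm t),x$ map to points at unit distance from $x$ on the line $h_{1/t}(l')$, while $z$ far on $l$ maps to $\om_{1/t}\to\om$; the limit-circle argument then gives a Ptolemy circle through these rescaled points and $\om_{1/t}$, on which the Ptolemy equality relates the three distances. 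Because $b\circ h_{1/t}=t^{-1}b$, the rescaled second difference is $t^{-1}$ times the original, and the Ptolemy equality shows the rescaled version tends to $0$, forcing the original second difference to be $o(t)$; combined with convexity this yields affineness.

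\textbf{Anticipated obstacle.} The delicate point is the bookkeeping of which quantity scales by which power of $t$, and ensuring the linear term of $b\circ c'$ (which need not vanish) is correctly separated from the convexity defect so that only the latter is killed in the limit. Unlike in Lemma~\ref{lem:smooth_convex}, where $z$ was a genuine point at finite distance, here $b$ is already a horofunction, so I must verify that the limit circle $l_\la$ is controlled uniformly enough that the Ptolemy equality passes to the limit along $l'$ and not merely at the single point $x$. Handling this — essentially upgrading the pointwise second-difference estimate to the statement that $b\circ c'$ is \emph{globally} affine on all of $l'$, by applying the argument at every point of $l'$ and using continuity — will be the main technical hurdle.
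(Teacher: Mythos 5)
Your reduction to a second-difference estimate is where the argument breaks down. The rescaling/limit-circle mechanism of Lemma~\ref{lem:smooth_convex} yields, for the convex function $g=b\circ c'$, only that $g(t_0+t)-2g(t_0)+g(t_0-t)=o(t)$ as $t\to0$ at each $t_0$; for a convex function this is exactly differentiability at $t_0$, so running the argument at every point and ``using continuity'' shows that $g$ is $C^1$ --- which is all that Lemma~\ref{lem:smooth_convex} itself concludes --- and nothing more. It does not give affineness: $g(t)=t^2$ is convex with symmetric second difference $2t^2=o(t)$ everywhere. So the final inference ``second difference $o(t)$ plus convexity yields affineness'' is a non sequitur, and the ``main technical hurdle'' you flag at the end is not a technicality but the entire content of the proposition. (There is also a smaller slip in the setup: the approximating circles through $x$ and $\om_\la$ must be chosen, via Lemma~\ref{lem:limit_circle}, to converge to $l'$ --- the line along which you are evaluating $b$ --- not to $l$; a circle through $x\in l'$ cannot converge to $l$ when $x\notin l$, and the Ptolemy equality on it says nothing about $b\circ c'$ unless the points $c'(\pm t)$ are captured by it.)

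The paper closes this gap by working at a fixed finite scale rather than infinitesimally. Take $x,y\in l'$ with midpoint $m$; convexity gives $b(m)\le\frac{1}{2}(b(x)+b(y))$, and for the reverse inequality one writes $b=\lim_i\left(|\cdot\,\om_i|-|o\om_i|\right)$ with $\om_i\to\om$ along $l$, uses Lemma~\ref{lem:limit_circle} to produce Ptolemy circles $l_i$ through $x$ and $\om_i$ converging pointwise to $l'$, picks $y_i\to y$ and points $m_i\to m$ on $l_i$ with $|xm_i|=|m_iy_i|$, and applies the Ptolemy \emph{equality} on $l_i$ to the ordered quadruple $(x,m_i,y_i,\om_i)$: since $|m_iy_i|\ge\frac{1}{2}|xy_i|$, this gives $|\om_im_i|\ge\frac{1}{2}(|\om_ix|+|\om_iy_i|)$ and hence $b(m)\ge\frac{1}{2}(b(x)+b(y))$ in the limit. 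If you want to salvage your homothety idea, note that the relation $b\circ h_\la=\la b$ for $h_\la\in\Ga_{\om,x}$ (Lemma~\ref{lem:pure_homothety}), together with the fact that $h_\la$ preserves $l'$ and each of its rays from $x$, already shows that $b\circ c'-b(x)$ is \emph{linear on each ray} of $l'$ emanating from $x$; applying this at two distinct points of $l'$ forces the two slopes to coincide and gives affineness outright, with no second-difference estimate at all --- but that step is absent from your write-up.
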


\begin{proof} Indeed let
$b$
be a Busemann function
of
$l$.
Thus we can write
$b(x)=\lim_{i\to\infty}(|x\om_i|-|o\om_i|)$
for every
$x\in X_\om$,
where
$o\in l$
is some fixed point,
$\om_i\in l$,
and
$\om_i\to\om$.

Let
$x$, $y$, $m\in l'$,
such that
$|xm|=|my|=\frac{1}{2}|xy|$.
We have to show that
$b(m)=\frac{1}{2}(b(x)+b(y))$.
Since Busemann functions are convex, see \cite{FS2},
we have
$b(m)\le \frac{1}{2}(b(x)+b(y))$.

By Lemma~\ref{lem:limit_circle} there
exists a Ptolemy circle
$l_i$
through the points
$x$
and
$\om_i$,
such that the sequence
$l_i$
converges pointwise to the Ptolemy line
$l'$.
Thus there are points
$y_i\in l_i$,
such that
$y_i\to y$.
The points
$x,y_i$
divide
$l_i$
into two segments.
Choose
a point
$m_i$
in the segment which does
not contain
$\om_i$
in a way such that
$|xm_i|=|m_iy_i|$.
One easily sees
that
$m_i\to m$.

Since
the points
$x,m_i,y_i,\om_i$
are on a Ptolemy circle
(in this order),
we have
$$|\om_im_i|\cdot|xy_i|=|\om_ix|\cdot|m_iy_i|+|\om_iy_i|\cdot|m_ix|$$
and since
$|xm_i|=|m_iy_i|\ge\frac{1}{2}|xy_i|$
we see
$|\om_im_i|\ge \frac{1}{2}(|\om_i x|+|\om_i y_i|)$.
This implies in the limit
$b(m)\ge \frac{1}{2}(b(x)+b(y))$.
\end{proof}

\begin{lem}\label{lem:geoconvex_horosphere} For any Busemann function
$b:X_\om\to\R$
of any Ptolemy line
$l\sub X_\om$,
every horosphere
$H_t=b^{-1}(t)$, $t\in\R$,
is geodesically convex, that is, any Ptolemy line
$l'\sub X_\om$
having two distinct points 
$z$, $z'$ 
in common with
$H_t$
is contained in
$H_t$, $l'\sub H_t$. 
\end{lem}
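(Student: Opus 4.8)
The plan is to show that the Busemann function $b$ of the Ptolemy line $l$ restricts to an affine function along any Ptolemy line $l'$, from which geodesic convexity of the horospheres follows immediately. This is essentially a reformulation of Proposition~\ref{pro:busemann_affine}, which we may invoke directly: by that proposition the Busemann function $b$ of $l$ is affine on $l'$. Concretely, if $c':\R\to X_\om$ is a unit speed parameterization of $l'$, then $t\mapsto b\circ c'(t)$ is an affine function of $t$.

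Given this, suppose $z=c'(s)$ and $z'=c'(s')$ are two distinct points of $l'$ lying on the horosphere $H_t=b^{-1}(t)$, so that $b(z)=b(z')=t$ with $s\neq s'$. Since $b\circ c'$ is affine and takes the same value $t$ at the two distinct parameters $s$, $s'$, it must be constant equal to $t$ on all of $\R$. Hence $b\circ c'(\tau)=t$ for every $\tau\in\R$, which says precisely that every point of $l'$ lies in $H_t$, i.e. $l'\sub H_t$.

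The only subtlety is that Proposition~\ref{pro:busemann_affine} is stated for lines $l$, $l'\in X_\om$ with a fixed basepoint; here $l$ and $l'$ are arbitrary Ptolemy lines in the same $X_\om$, which is exactly the setting of that proposition, so there is nothing extra to check. I do not expect a genuine obstacle in this argument: the entire content has been front-loaded into the affinity statement of Proposition~\ref{pro:busemann_affine}, whose proof in turn rests on the homothety property~(H) (via the limiting-circle construction of Lemma~\ref{lem:limit_circle}) together with convexity of Busemann functions. The present lemma is then a one-line consequence of the elementary fact that an affine function on $\R$ taking one value twice is constant.
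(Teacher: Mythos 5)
Your proposal is correct and follows essentially the same route as the paper: the paper's proof also invokes Proposition~\ref{pro:busemann_affine} to get that $b$ is affine along $l'$ and then concludes that an affine function equal at two distinct parameters is constant, so $l'\sub H_t$. The paper additionally normalizes via Busemann flatness in a preliminary step, but the decisive argument is identical to yours.
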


\begin{proof} We put
$b^+=b$
and assume W.L.G. that
$b^+(z)=0=b^-(z)$,
where the Busemann function 
$b^-$
of
$l$
is opposite to
$b^+$. 
Then
$b^++b^-\equiv 0$
because
$X$
is Busemann flat, see Lemma~\ref{lem:smooth_convex}. Thus
$H_0=(b^+)^{-1}(0)=(b^-)^{-1}(0)$
is a common horosphere for
$b^+$, $b^-$.
Since horoballs, i.e. sublevel sets of Busemann functions, 
are convex, the geodesic segment
$zz'\sub l$
lies in
$H_0$.

By Proposition~\ref{pro:busemann_affine}, the function
$b$
is affine along
$l'$,
that is, 
$b\circ c(t)=\al t+\be$
for any arclength parameterization 
$c:\R\to l'$
of
$l'$
and some
$\al$, $\be\in\R$, $|\al|\le 1$.
We choose
$c$
so that
$c(0)=z$, $c(|zz'|)=z'$.
Then
$\be=0$
by the assumption
$b(z)=0$,
and
$0=b(z')=b\circ c(|zz'|)=\al|zz'|$.
Hence
$\al=0$
and
$b|l'\equiv 0$.
This shows that
$l'\sub H_0$.
\end{proof}

\subsection{Slope of two Ptolemy lines}
\label{subsect:slope}

By Proposition~\ref{pro:busemann_affine}, a Busemann
function associated with a Ptolemy line is affine along
any other Ptolemy line. We introduce a quantity which measures
a mutual position of Ptolemy lines in the space.

Let
$l$, $l'\sub X_\om$
be oriented Ptolemy lines.
We define the {\em slope} of
$l'$
w.r.t.
$l$
as the coefficient of a Busemann function
$b$
associated with
$l$
when restricted to
$l'$, $\slope(l';l)=\al$
if and only if
$b\circ c'(t)=\al t+\be$
for some
$\be\in\R$
and all
$t\in\R$,
where
$c':\R\to X_\om$
is a unit speed parameterization of
$l'$
compatible with its orientation. The quantity
$\slope(l';l)\in[-1,1]$
is well defined, i.e. it depends of the choice neither the Busemann
function
$b$ 
nor the parameterization
$c'$
(we assume that
$b$
is defined via a parameterization of
$l$
compatible with its orientation).
Note that the slope changes the sign when the orientation of
$l$
or
$l'$
is changed,
$$\slope(-l';l)=-\slope(l';l)\quad\text{and}\quad
  \slope(l';-l)=-\slope(l';l).$$
The first equality is obvious, while the second one holds because
$X$
is Busemann flat by Lemma~\ref{lem:smooth_convex}. 

By definition, we have
$\slope(l;l)=-1$
for any oriented Ptolemy line
$l\sub X_\om$.
More generally, let
$l$, $l'\sub X_\om$
be Busemann parallel Ptolemy lines. If an orientation of
$l$
is fixed, then a {\em compatible} orientation of
$l'$
is well defined. Indeed, we take a Busemann function 
$b$
of
$l$
such that
$b\to -\infty$
along
$l$
in the chosen direction. Since
$b$
is also a Busemann function of
$l'$,
the respective direction of
$l'$
such that
$b\to-\infty$
along
$l'$
is well defined, and it is independent of the choice of
$b$.

Now, if orientations of
$l$, $l'$
are compatible, then 
$\slope(l';l)=-1=\slope(l;l')$.

\begin{lem}\label{lem:paraline_busemann} Let
$l$, $l'\sub X_\om$
be Busemann parallel Ptolemy lines with compatible orientations. Then 
for any oriented Ptolemy line
$l''\sub X_\om$
we have
$\slope(l;l'')=\slope(l';l'')$.
\end{lem}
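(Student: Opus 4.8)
The plan is to combine two facts already at hand: the sublinear divergence of Busemann parallel lines (Lemma~\ref{lem:busparallel_sublinear}) and the $1$-Lipschitz property of Busemann functions. First I would fix a Busemann function $b:X_\om\to\R$ of $l''$ compatible with its orientation. By Proposition~\ref{pro:busemann_affine} the restriction of $b$ to each of $l$ and $l'$ is affine, and the two slopes $\slope(l;l'')$ and $\slope(l';l'')$ are exactly the coefficients of these two affine restrictions. So the whole statement reduces to showing that $b$ has the same linear rate of growth along $l$ as along $l'$.

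Next I would choose the parameterizations with care. Since $l$ and $l'$ are Busemann parallel with compatible orientations, Lemma~\ref{lem:busparallel_sublinear} supplies unit speed parameterizations $c$, $c':\R\to X_\om$ with $|c(t)c'(t)|/|t|\to 0$ as $|t|\to\infty$. These are precisely the parameterizations normalized by a common Busemann function $g$ of $l$, $l'$ through $g\circ c(t)=-t=g\circ c'(t)$, so the directions of increasing $t$ are exactly the compatible orientations demanded in the statement. Because the slope is insensitive to the base point of a parameterization (a shift $t\mapsto t+s$ alters only the additive constant), I may use these same $c$, $c'$ to read off the slopes, writing $b\circ c(t)=\al t+\be_0$ and $b\circ c'(t)=\al' t+\be_1$ with $\al=\slope(l;l'')$ and $\al'=\slope(l';l'')$.

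The conclusion then follows from a one-line limit. Since $b$ is $1$-Lipschitz (immediate from the triangle inequality applied to its defining limit), I would estimate $|b\circ c(t)-b\circ c'(t)|\le|c(t)c'(t)|$, which combined with the two affine expressions gives $|(\al-\al')t+(\be_0-\be_1)|=o(|t|)$. Dividing by $|t|$ and letting $|t|\to\infty$ forces $\al=\al'$, that is $\slope(l;l'')=\slope(l';l'')$.

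I expect no genuinely hard step here. The one point deserving attention is matching the parameterizations delivered by Lemma~\ref{lem:busparallel_sublinear} with the orientation convention built into the definition of slope; this is automatic once one observes that both are pinned by the same normalization $g\circ c=g\circ c'=-t$, and it is exactly here that the hypothesis of \emph{compatible} orientations (rather than arbitrary ones) is used. The remaining ingredients, affineness of $b$ along Ptolemy lines and the Lipschitz bound, are already established, so the argument is essentially a short estimate once this setup is in place.
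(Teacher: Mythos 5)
Your proof is correct and is essentially the paper's own argument: both reduce to the affineness of the Busemann function $b$ of $l''$ along $l$, $l'$, invoke Lemma~\ref{lem:busparallel_sublinear} for sublinear divergence under compatible orientations, and conclude via the $1$-Lipschitz estimate $|b\circ c(t)-b\circ c'(t)|\le|c(t)c'(t)|=o(|t|)$ followed by dividing by $t$. The only cosmetic difference is that the paper normalizes the parameterizations so that $b\circ c(0)=b\circ c'(0)$, whereas you let the additive constants differ and absorb them in the limit.
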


\begin{proof}
Let
$b:X_\om\to\R$
be a Busemann function associated with
$l''$.
Since
$b$
is affine along Ptolemy lines, there are
unit speed parameterizations
$c:\R\to l$, $c':\R\to l'$
compatible with the orientations of
$l$, $l'$
such that
$b\circ c(0)=b\circ c'(0)=:\be$.
Then
$b\circ c(t)=\al t+\be$, $b\circ c'(t)=\al' t+\be$
for some
$|\al|$, $|\al'|\le 1$
and all
$t\in\R$.
We show that
$\al=\al'$.

Since the orientations of
$l$, $l'$
are compatible, we have
$|c(t)c'(t)|=o(1)|t|$
as
$|t|\to\infty$
by Lemma~\ref{lem:busparallel_sublinear}. Let
$c'':\R\to l''$
be a unit speed parameterization such that
$b(x)=\lim_{s\to\infty}|c''(s)x|-s$, $x\in X_\om$.
Since
$||c''(s)c(t)|-|c''(s)c'(t)||\le|c(t)c'(t)|=o(1)t$
as
$t\to\infty$,
we have
$|b\circ c(t)-b\circ c'(t)|=o(1)t$
and hence
$$\al=\lim_{t\to\infty}b\circ c(t)/t
=\lim_{t\to\infty}b\circ c'(t)/t=\al'.$$
\end{proof}

Proposition~\ref{pro:busemann_affine} combined with duality gives
rise to a first variation formula to describe which we use
the following agreement. Let
$\si$, $\si'\sub X$
be Ptolemy circles meeting each other at two distinct points
$\om$
and
$\om'$, $\si\cap\si'=\{\om,\om'\}$,
which decompose the circles into (closed) arcs
$\si=\si_+\cup\si_-$
and
$\si'=\si_+'\cup\si_-'$.
The choice of
$\om$
as an infinitely remote point automatically introduces the orientation
of 
$\si$
as well as of
$\si'$
such that 
$\om'$
is the initial point of the arcs
$\si_+$, $\si_+'$,
while
$\om$
the final point of
$\si_+$, $\si_+'$,
and the similar agreement holds true for the choice of
$\om'$
as an infinitely remote point. Then the 
$\slope(\si_\om';\si_\om)$
of the Ptolemy line
$\si_\om'\sub X_\om$
w.r.t. the Ptolemy line
$\si_\om\sub X_\om$
is well defined.

\begin{lem}\label{lem:first_variation} Let
$\si$, $\si'\sub X$
be Ptolemy circles meeting each other at two distinct points
$\om$
and
$\om'$, $\si\cap\si'=\{\om,\om'\}$,
which decompose the circles into (closed) arcs
$\si=\si_+\cup\si_-$
and
$\si'=\si_+'\cup\si_-'$.
Let
$c:\R\to X_{\om'}$, $c':\R\to X_\om$
be the unit speed parameterizations of the oriented Ptolemy lines
$\si_{\om'}\sub X_{\om'}$, $\si_\om'\sub X_\om$
respectively compatible with the orientations such that
$c(0)=\om$, $c'(0)=\om'$.
Then
\begin{equation}\label{eq:first_variation}
\frac{d}{dt}d'(c'(s),c(t))|_{t=0}=\al\sign s
\end{equation}
for all
$s\neq 0$,
where
$d'$
is the metric of
$X_{\om'}$,
and 
$\al=\slope(\si_\om';\si_\om)$.
\end{lem}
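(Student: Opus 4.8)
The plan is to read the formula off the smooth duality equation~(\ref{eq:smooth_duality}) evaluated at the point $x=c'(s)$. Recall that $d'$ is the m-inversion of the metric $d$ of $X_\om$ with respect to $\om'$ (of radius $1$), that $c\colon\R\to X_{\om'}$ parameterizes $\si_{\om'}$ with $c(0)=\om$, and that $X$ is Busemann flat by Lemma~\ref{lem:smooth_convex}, so that~(\ref{eq:smooth_duality}) is available. Let $b^+\colon X_\om\to\R$ denote the Busemann function of the oriented Ptolemy line $\si_\om$ normalized by $b^+(\om')=0$ and singled out by $b^+\circ c(t)<0$ for $t>0$; I claim this is exactly the orientation-compatible Busemann function of $\si_\om$ used to define $\slope(\si_\om';\si_\om)$. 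For $s\neq 0$ the function $t\mapsto d'(c'(s),c(t))$ is positive and $C^1$, since $c'(s)\notin\si$ and $c(t)\in\si$ are distinct (because $\si\cap\si'=\{\om,\om'\}$), so~(\ref{eq:smooth_duality}) with the chain rule gives
\begin{equation*}
b^+(c'(s))=\frac{d}{dt}\ln d'(c'(s),c(t))\Big|_{t=0}
=\frac{1}{d'(c'(s),\om)}\,\frac{d}{dt}d'(c'(s),c(t))\Big|_{t=0}.
\end{equation*}

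It then remains to evaluate the two scalar factors, both of which are elementary. First, by Proposition~\ref{pro:busemann_affine} the function $b^+$ is affine along $\si_\om'$, so $b^+\circ c'(s)=\al s+\be$ with $\al=\slope(\si_\om';\si_\om)$; putting $s=0$ gives $\be=b^+(\om')=0$ and hence $b^+(c'(s))=\al s$. Second, since $c'$ has unit speed with respect to $d$ and $c'(0)=\om'$, we have $d(c'(s),\om')=|s|$, and the defining relation of the m-inversion gives $d'(c'(s),\om)=1/d(c'(s),\om')=1/|s|$. Solving the displayed identity for the derivative and substituting then yields
\begin{equation*}
\frac{d}{dt}d'(c'(s),c(t))\Big|_{t=0}=d'(c'(s),\om)\,b^+(c'(s))=\frac{1}{|s|}\cdot\al s=\al\sign s.
\end{equation*}

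The one delicate point, which I expect to be the main obstacle, is the sign of $\al$, i.e. the orientation bookkeeping justifying the claim above that the normalized $b^+$ coincides with the orientation-compatible Busemann function of $\si_\om$. The conventions attaching orientations to $\si$ through the choice of $\om$, respectively of $\om'$, as infinitely remote point induce \emph{opposite} cyclic orientations of the circle, so the identification is not automatic. I would settle it by tracing the two arcs: for $t>0$ the point $c(t)$ lies on $\si_+$, where the orientation-compatible Busemann function equals $-d(\om',c(t))<0$, matching the normalization $b^+\circ c(t)<0$ and thus fixing the sign; the opposite choice would produce $-\al\sign s$. With this identification in place, the two displayed computations constitute the entire argument.
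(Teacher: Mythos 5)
Your proof is correct and follows essentially the same route as the paper's: use Proposition~\ref{pro:busemann_affine} to get $b^+\circ c'(s)=\al s$, feed this into the smooth duality equation~(\ref{eq:smooth_duality}), and substitute $d'(c'(s),\om)=1/|s|$. The only difference is that you spell out the orientation bookkeeping identifying the normalized $b^+$ with the orientation-compatible Busemann function of $\si_\om$, which the paper leaves to the conventions fixed in the paragraph preceding the lemma; your resolution of that point is correct.
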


\begin{rem}\label{rem:first_variation} We emphasize that
(\ref{eq:first_variation}) is a typical duality equality
where the left hand side is computed in the space
$X_{\om'}$,
while the right hand side is computed in the opposite space
$X_\om$.
\end{rem}

\begin{proof} By Proposition~\ref{pro:busemann_affine}, 
the Busemann function
$b$
of
$\si_\om$
with
$b(\om')=0$
is affine along the Ptolemy line
$\si_\om'\sub X_\om$.
Thus
$b\circ c'(s)=\al s$
for 
$\al=\slope(\si_\om';\si_\om)$
and all
$s\in\R$
because
$b\circ c'(0)=b(\om')=0$.
Since
$X$
is Busemann flat by Lemma~\ref{lem:smooth_convex},
Equation~(\ref{eq:smooth_duality}) applied to
$b^+=b$
gives
$$\al s=b\circ c'(s)=\frac{d}{dt}\ln d'(c'(s),c(t))|_{t=0}
 =\frac{1}{d'(c'(s),\om)}\frac{d}{dt}d'(c'(s),c(t))|_{t=0}$$
for all
$s\neq 0$. 
Using 
$d'(c'(s),\om)=1/|s|$,
we obtain the required equality.
\end{proof}

In the situation with two Ptolemy circles intersecting at two
distinct points as in Lemma~\ref{lem:first_variation}, we have
four a priori different slopes. The duality and existence of 
s-inversions allows to reduce this number to one.

\begin{lem}\label{lem:opposite_slopes} Let
$\si$, $\si'\sub X$
be Ptolemy circles meeting each other at two distinct points
$\om$
and
$\om'$, $\si\cap\si'=\{\om,\om'\}$,
which decompose the circles into (closed) arcs
$\si=\si_+\cup\si_-$
and
$\si'=\si_+'\cup\si_-'$.
Then
$$\slope(\si_\om';\si_\om)=\slope(\si_{\om'};\si_{\om'}').$$
\end{lem}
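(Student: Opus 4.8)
The plan is to deduce the identity from the first variation formula (Lemma~\ref{lem:first_variation}) together with the description of a Busemann function as a limit of distance functions, so that the s-inversions only enter through flatness. Write $\al=\slope(\si_\om';\si_\om)$ and $\be=\slope(\si_{\om'};\si_{\om'}')$, and keep the unit speed parameterizations $c:\R\to X_{\om'}$ of $\si_{\om'}$ with $c(0)=\om$ and $c':\R\to X_\om$ of $\si_\om'$ with $c'(0)=\om'$ from Lemma~\ref{lem:first_variation}, compatibly oriented. By that lemma, $\frac{d}{dt}d'(c'(s),c(t))|_{t=0}=\al\,\sign s$, so for every $s>0$ this first variation equals $\al$, \emph{independently} of $s$.

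Next I would compute $\be$ through the Busemann function $b$ of the Ptolemy line $\si_{\om'}'\sub X_{\om'}$ normalized by $b(\om)=0$. By the definition of the slope and Proposition~\ref{pro:busemann_affine}, $b$ is affine along $\si_{\om'}$ and $b(c(t))=\be\,t$. The key observation is that the points $c'(s)$ run along $\si'$ to the infinitely remote point $\om'$ of $X_{\om'}$ as $s\to 0^+$, i.e. along the very end of $\si_{\om'}'$ that defines $b$; hence, taking $\om=c(0)$ as reference point, $b(c(t))=\lim_{s\to0^+}\bigl(d'(c(t),c'(s))-d'(\om,c'(s))\bigr)$.

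Finally I would pass to the limit using convexity rather than an interchange of limit and derivative. For fixed $s>0$ set $F_s(t)=d'(c(t),c'(s))-d'(\om,c'(s))$; this is convex in $t$ (the distance to a fixed point is convex along a Ptolemy line), satisfies $F_s(0)=0$, and, by the first paragraph and the symmetry of $d'$, has $F_s'(0)=\al$. Convexity then gives $F_s(t)\ge \al\,t$ for all $t$, and letting $s\to0^+$ yields $\be\,t\ge\al\,t$ for all $t\in\R$, whence $\be=\al$. The step I expect to require the most care is the orientation bookkeeping: since the induced orientations of $\si$ and $\si'$ are opposite for the choices $\om$ and $\om'$ as infinitely remote points, one must check that the end of $\si'$ reached as $s\to0^+$ is exactly the one whose Busemann function computes $\be$, so that the sign $\sign s$ in Lemma~\ref{lem:first_variation} produces $+\al$ and not $-\al$; the conventions fixed just before Lemma~\ref{lem:first_variation} are arranged for precisely this. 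Everything rests on the flatness of $X$ (Lemma~\ref{lem:smooth_convex}), which is what makes both Lemma~\ref{lem:first_variation} and the affinity of $b$ available.
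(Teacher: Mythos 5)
Your proof is correct and follows essentially the same route as the paper's: both rest on the first variation formula of Lemma~\ref{lem:first_variation} together with realizing the Busemann function of $\si_{\om'}'$ normalized at $\om$ as the monotone limit of $d'(\cdot,c'(s))-d'(\om,c'(s))$ as $s\to 0^+$, and your orientation bookkeeping matches the paper's conventions. The only difference is the endgame: the paper deduces $\slope(\si_{\om'};\si_{\om'}')\le\slope(\si_\om';\si_\om)$ from the $t>0$ side and gets the reverse inequality by interchanging $\om$ with $\om'$ and $\si$ with $\si'$, whereas you use convexity of $F_s$ to obtain $F_s(t)\ge\al t$ for both signs of $t$ and conclude equality in one pass.
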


\begin{proof} Denote
$\al=\slope(\si_\om';\si_\om)$
and
$\al'=\slope(\si_{\om'};\si_{\om'}')$. 
As in Lemma~\ref{lem:first_variation} let
$c:\R\to X_{\om'}$, $c':\R\to X_\om$
be the unit speed parameterizations of the oriented Ptolemy lines
$\si_{\om'}\sub X_{\om'}$, $\si_\om'\sub X_\om$
respectively compatible with the orientations such that
$c(0)=\om$, $c'(0)=\om'$.
Let
$b':X_{\om'}\to\R$
be the Busemann function associated with 
$\si_{\om'}'$
such that
$b'(\om)=0$
(according to our agreement,
$b'$
is computed via a parameterization of
$\si_{\om'}'$
which is opposite in orientation to that of
the parameterization 
$c'$). Then
$b'\circ c(t)=\al' t$
for all 
$t\in\R$
by the definition of
$\al'=\slope(\si_{\om'};\si_{\om'}')$.
Thus
$$\al' t=b'\circ c(t)\le d'(c'(s),c(t))-d'(c'(s),\om)$$
for all
$s>0$
and all (sufficiently small)
$t\in\R$,
where
$d'$
is the metric of
$X_{\om'}$.
The last inequality holds because the right hand side 
decreases monotonically to
$b'\circ c(t)$
for every fixed
$t$
as
$s\to 0$.
Applying Equality~(\ref{eq:first_variation}), we obtain
$\al'\le\al$.
Interchanging
$\om$
with
$\om'$
and
$\si$
with
$\si'$,
we have
$\al\le\al'$
by the same argument. Hence, the claim.
\end{proof}

Using Lemma~\ref{lem:opposite_slopes}, the first variation
formula~(\ref{eq:first_variation}) can be rewritten as follows
\begin{equation}\label{eq:re_first_variation}
\frac{d}{dt}d'(c'(s),c(t))|_{t=0}=\al'\sign s
\end{equation}
for all
$s\neq 0$,
where
$\al'=\slope(\si_{\om'};\si_{\om'}')$.
Now, the both sides of (\ref{eq:re_first_variation})
are computed in the same space
$X_{\om'}$.

Lemma~\ref{lem:opposite_slopes} implies the symmetry of the slope
w.r.t. the arguments. 

\begin{lem}\label{lem:slope_symmetry} For any oriented Ptolemy lines
$l$, $l'\sub X_\om$
we have
$\slope(l';l)=\slope(l;l')$.
\end{lem}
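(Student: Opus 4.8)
The plan is to deduce the symmetry from Lemma~\ref{lem:opposite_slopes} by transporting its right-hand side, which is computed in $X_{\om'}$, back to $X_\om$ by means of a space inversion. First I would dispose of two degenerate cases. If $l=l'$ the claim is trivial, since $\slope(l;l)=-1$. If $l$ and $l'$ are Busemann parallel, then choosing compatible orientations and applying Lemma~\ref{lem:paraline_busemann} with reference line $l$ gives $\slope(l';l)=\slope(l;l)=-1$, while $\slope(l;l')=\slope(l;l)=-1$ because $l$ and $l'$ share Busemann functions; hence the two slopes agree.

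For the remaining case I would first reduce to the situation where $l$ and $l'$ meet in a point. Using Corollary~\ref{cor:busparallel_foliation} and Lemma~\ref{lem:shift_busemann_parallel} I replace $l'$ by the line $\hat l'$ that is Busemann parallel to $l'$ (with compatible orientation) and passes through a chosen point $x\in l$. Since $\hat l'$ and $l'$ share Busemann functions, $\slope(l;\hat l')=\slope(l;l')$, and by Lemma~\ref{lem:paraline_busemann} they have the same slope with respect to any reference line, so $\slope(\hat l';l)=\slope(l';l)$. Thus it suffices to prove the symmetry for $l$ and $\hat l'$. As $l$ and $l'$ are not Busemann parallel, $\hat l'\neq l$, and by Corollary~\ref{cor:weak_unique} the Ptolemy circles $\si=l\cup\om$ and $\si'=\hat l'\cup\om$ meet in exactly the two points $\om$ and $\om'=x$.

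Now I would apply Lemma~\ref{lem:opposite_slopes} to $\si,\si'$ with the intersection points $\om,\om'$, obtaining $\slope(\si'_\om;\si_\om)=\slope(\si_{\om'};\si'_{\om'})$, the left slope being computed in $X_\om$ and the right one in $X_{\om'}$. The heart of the argument is to identify the right-hand side with $\slope(\si_\om;\si'_\om)$. For this I take a space inversion $\phi=\phi_{\om,\om',S}$, which exists by property (I). By properties (2) and (4), $\phi$ swaps $\om$ and $\om'$ and preserves $\si,\si'$ setwise, so $\phi(\si_{\om'})=\si_\om$ and $\phi(\si'_{\om'})=\si'_\om$. Since $\phi$ is an involution, $\phi^\ast d$ has infinitely remote point $\om'$, hence by Lemma~\ref{lem:sinversion_minversion} and Lemma~\ref{lem:homothety_infinite} the map $\phi$ is a homothety from $(X_{\om'},d')$ onto $(X_\om,d)$; a homothety scales Busemann functions by a constant and carries Ptolemy lines to Ptolemy lines, hence preserves slopes. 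Consequently $\slope(\si_{\om'};\si'_{\om'})=\slope(\si_\om;\si'_\om)$, and combining with Lemma~\ref{lem:opposite_slopes} yields $\slope(l';l)=\slope(\si'_\om;\si_\om)=\slope(\si_\om;\si'_\om)=\slope(l;l')$.

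The main obstacle is the orientation bookkeeping in this last identity. The conventions fixed in the agreement preceding Lemma~\ref{lem:first_variation} assign $\si$ opposite orientations in $X_\om$ and in $X_{\om'}$, while $\phi$ acts on the circle $\si$ as a fixed-point-free involution of $S^1$, which is orientation preserving and carries the arc $\si_+$ to $\si_-$. I would verify that these two effects combine so that $\phi$ sends the $X_{\om'}$-convention orientation of $\si$ to the reverse of its $X_\om$-convention orientation, and likewise for $\si'$. Both arguments of the slope then change sign, and because $\slope(-a;-b)=\slope(a;b)$ the two sign changes cancel, so the displayed identification holds with the convention orientations and the proof closes.
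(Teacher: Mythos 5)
Your argument is correct and is essentially the paper's own proof: after reducing to the case where $l$ and $l'$ meet at a point $\om'$, both derive the symmetry by combining Lemma~\ref{lem:opposite_slopes} with the fact that the space inversion $\phi_{\om,\om',S}$ is a slope-preserving homothety between $X_\om$ and $X_{\om'}$ carrying $\si_\om$, $\si'_\om$ to $\si_{\om'}$, $\si'_{\om'}$ (up to a simultaneous orientation reversal that cancels). You are merely more explicit than the paper about the WLOG reduction and the orientation bookkeeping.
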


\begin{proof} We assume W.L.G. that
$l\cap l'=\om'$
and represent
$l=\si_\om$, $l'=\si_\om'$
for Ptolemy circles
$\si=l\cup\om$, $\si'=l'\cup\om$.
Then
$\si\cap\si'=\{\om,\om'\}$.
Let
$S\sub X$
be a sphere between
$\om$, $\om'$, $\phi=\phi_{\om,\om',S}:X\to X$
the s-inversion w.r.t.
$\om$, $\om'$, $S$.
Then
$\phi$
preserves any Ptolemy circle though
$\om$, $\om'$
and its orientations. In particular,
$\phi(\si_\om)=\si_{\om'}$
and
$\phi(\si_\om')=\si_{\om'}'$.
We assume that
$S=S_1^d(\om')$,
where
$d\in\cM$
is the metric of
$X_\om$.
Then the metric 
$d'=\phi^\ast d$
is the m-inversion of
$d$,
and vice versa, see Lemma~\ref{lem:sinversion_minversion}.
It follows that the map
$\phi:(X_\om,d)\to (X_{\om'},d')$
is an isometry.

Now, we have
$$\slope(\si_{\om'}';\si_{\om'})=
  \slope(\phi(\si_\om');\phi(\si_\om))=\slope(\si_\om';\si_\om)
  =\slope(l';l).$$
Using Lemma~\ref{lem:opposite_slopes} we obtain
$\slope(l;l')=\slope(\si_\om;\si_\om')
=\slope(\si_{\om'}';\si_{\om'})=\slope(l';l)$. 
\end{proof}

From now on, we use notation
$\slope(l,l')$
for the slope instead of
$\slope(l;l')$.
We say that Ptolemy lines
$l$, $l'\sub X_\om$
are {\em orthogonal} if
$\slope(l',l)=0$.
By Lemma~\ref{lem:slope_symmetry} this is a symmetric
relation. For orthogonal lines we also use notation
$l\bot l'$.

\subsection{Tangent lines to a Ptolemy circle}
\label{subsect:tangent_lines}

A Ptolemy line 
$l\sub X_\om$
is {\em tangent} to a Ptolemy circle 
$\si\sub X_\om$
at a point
$x\in\si$
if for every
$y\in\si$
sufficiently close to
$x$
we have
$\dist(y,l)=o(|xy|)$.

\begin{pro}\label{pro:tangent_rcircle} Every Ptolemy circle 
$\si\sub X_\om$
possesses a unique tangent Ptolemy line
$l$ 
at every point 
$x\in\si$. 
\end{pro}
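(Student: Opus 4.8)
The plan is to construct the tangent line by an infinitesimal \emph{blow-up} of $\si$ centered at $x$, using the homothety property~(H) (Proposition~\ref{pro:homothety_property}), and then to derive both tangency and uniqueness from the scaling behaviour of the distance to a Ptolemy line. Fix a metric $d\in\cM$ with infinitely remote point $\om$, so that every Ptolemy line in $X_\om$ is a geodesic. By (H) and Remark~\ref{rem:homothety}, for each $\la>0$ there is $g_\la\in\Ga_{\om,x}$ which fixes $\om$ and $x$ and acts on $(X_\om,d)$ as a homothety of factor $\la$ centered at $x$, so that $d(g_\la a,g_\la b)=\la\,d(a,b)$. I consider the Ptolemy circles $\si_\la=g_\la(\si)$, all of which pass through $x$. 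Since any $z\in\si\sm x$ satisfies $d(g_\la z,x)=\la\,d(z,x)\to\infty$, we have $g_\la z\to\om$ in the compact space $X$; thus as $\la\to\infty$ the circles $\si_\la$ ``open up'', one of their points escaping to $\om$. Using compactness of $X$ together with the closedness of the Ptolemy condition $\crt\in\d\De$ and Lemma~\ref{lem:limit_circle}, I would show that $\si_\la$ subconverges to a Ptolemy circle through $x$ and $\om$, whose trace $l:=l^{*}$ in $X_\om$ is a Ptolemy line through $x$. This $l$ is the candidate tangent line.

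Next I would verify tangency from a single scaling identity. Because $l\cup\om$ is a Ptolemy circle through $\om$ and $x$, and every element of $\Ga_{\om,x}$ preserves such circles, we have $g_\la(l)=l$; hence $\dist(g_\la y,l)=\la\,\dist(y,l)$ for all $y$. Given $y\in\si$ close to $x$, set $\la=1/d(y,x)$, so that $w:=g_\la(y)\in S_1(x)\cap\si_\la$. The convergence $\si_\la\to l$ on the unit sphere gives $\dist(w,l)\to0$ as $y\to x$, and therefore $\dist(y,l)=\dist(w,l)\,d(y,x)=o(|xy|)$, which is exactly tangency.

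Uniqueness I would obtain directly from Corollary~\ref{cor:weak_unique}. Suppose $l$ and $l'$ are both tangent to $\si$ at $x$. Choose $y_j\to x$ on a fixed side of $\si$ with $d(y_j,x)=1/\la_j$, and let $a\in S_1(x)$ be a subsequential limit of the ``directions'' $w_j=g_{\la_j}(y_j)$. Tangency of $l$ gives $\dist(w_j,l)=\la_j\,\dist(y_j,l)=\la_j\,o(1/\la_j)\to0$, so $a\in l$, and likewise $a\in l'$. As $a\neq x$ (it lies on $S_1(x)$) and $a\neq\om$, the Ptolemy circles $l\cup\om$ and $l'\cup\om$ share the three points $\om$, $x$, $a$, whence $l=l'$ by Corollary~\ref{cor:weak_unique}. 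The same argument shows that any tangent line must contain every such limit point $a$, so all tangent lines coincide.

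The main obstacle is the passage from \emph{subsequential} blow-up convergence to a genuine limit $\si_\la\to l$ as $\la\to\infty$; equivalently, that the rescaled direction $w(y)=g_{1/d(y,x)}(y)$ has a unique limit as $y\to x$. This is precisely what is needed to make the tangency estimate valid for all $y\to x$ rather than along one sequence, so here existence and uniqueness are intertwined. I would resolve it by showing that every subsequential blow-up limit is a Ptolemy line through $x$ and $\om$, and that any two such limits must coincide: they share $\om$ and $x$, and the common-point argument of the previous paragraph (supplemented, if two limits diverged linearly, by the unique-line and sublinear-divergence results, Lemmas~\ref{lem:unique_line} and~\ref{lem:busparallel_sublinear}) forces a third common point on $S_1(x)$, so Corollary~\ref{cor:weak_unique} again gives equality. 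With the full limit in hand, the tangency estimate above applies uniformly and the proposition follows.
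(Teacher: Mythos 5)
Your route is genuinely different from the paper's: you build the tangent line as a blow-up limit of $\si$ under homotheties $g_\la\in\Ga_{\om,x}$, whereas the paper inverts the metric at $x$, turns $\si\sm x$ into a Ptolemy line of $X_x$, and takes $l$ to be the unique line through $\om$ Busemann parallel to it (Corollary~\ref{cor:busparallel_foliation}), with tangency falling out of sublinear divergence (Lemma~\ref{lem:busparallel_sublinear}) via the inversion formula. Your uniqueness argument is correct and attractive: if $l$, $l'$ are both tangent, the rescaled points $w_j=g_{\la_j}(y_j)$ accumulate at some $a\in S_1(x)$ lying on both lines, and $l\cup\om$, $l'\cup\om$ then share the three points $\om$, $x$, $a$, so Corollary~\ref{cor:weak_unique} applies. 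This is arguably more elementary than the paper's uniqueness step.

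The existence half, however, has a genuine gap, and it is exactly the one you flag: tangency requires the \emph{full} limit $\si_\la\to l$, and your proposed repair does not close it. Two distinct subsequential limits $l=\lim_j\si_{\la_j}$ and $l'=\lim_i\si_{\mu_i}$ are only known to be ``tangent along their own subsequences'', so the three-common-point argument cannot be applied to them; and linear divergence of two lines through $x$ does not ``force a third common point'' — it is \emph{sublinear} divergence that forces coincidence, and that is precisely what has to be proved. The effective way to prove it is to pass to $X_x$: each subsequential limit corresponds to a Ptolemy line through $\om$ which one checks diverges sublinearly from $\si\sm x$, hence is Busemann parallel to it and therefore unique by Corollary~\ref{cor:busparallel_foliation} — at which point you have reconstructed the paper's proof, and the blow-up scaffolding becomes redundant. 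Two smaller points: Lemma~\ref{lem:limit_circle} does not give subconvergence of the $\si_\la$ to a nondegenerate circle (it concerns the Busemann-parallel circles through $\om$ and $x_i$); you would instead need the Ptolemy-equality estimates $d(\ga(s),\ga(t))=|t-s|+O(st/\la)$ on the rescaled arcs to see that the limit is a genuine complete geodesic. And an element of $\Ga_{\om,x}$ with coefficient $\la$ need not be unique and need not act canonically on $\si$ (which does not pass through $\om$), so ``$\si_\la$'' depends on a choice of the family $g_\la$ that must be fixed coherently.
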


\begin{proof} Let
$d$
be the metric of
$X_\om$, $d'$
the metric inversion of
$d$
w.r.t.
$x$, $d'(y,z)=\frac{d(y,z)}{d(y,x)d(z,x)}$.
Then
$x$
is infinitely remote for
$d'$,
and
$\si\sm x\sub X_x$
is a Ptolemy line w.r.t. the metric
$d'$
on
$X_x$.
By Corollary~\ref{cor:busparallel_foliation}, there is
a unique Ptolemy line 
$\wt l\sub X_x$
through
$\om$
which is Busemann parallel to the line
$\si\sm x$.
Then
$l=(\wt l\cup x)\sm\om\sub X_\om$
is a Ptolemy line through
$x$.
We show that
$l$
is tangent to
$\si$
at
$x$.
 
We fix on
$\si\sm x$
and
$\wt l$
compatible orientations, see sect.~\ref{subsect:slope},
and choose
$y\in\si$, $y'\in l$
with sufficiently small positive
$t=d(x,y)=d(x,y')$
according the orientations. Recall that
$d$
is also the metric inversion of
$d'$
w.r.t.
$\om$,
and that
$d(x,z)=1/d\;'(\om,z)$
for every
$z\in X\sm\{x,\om\}$.
Then
$$d(y,y')=\frac{d\;'(y,y')}{d\;'(\om,y)d\;'(\om,y')}
         =t\frac{d\;'(y,y')}{1/t}.$$
By Lemma~\ref{lem:busparallel_sublinear},
$\frac{d\;'(y,y')}{1/t}\to 0$
as
$t\to 0$,
hence
$d(y,y')=o(t)$,
and thus
$l$
is tangent to
$\si$
at
$x$.

If 
$l'\sub X_\om$
is another tangent line to
$\si$
at
$x$,
then reversing the argument above we observe that
the Ptolemy lines
$\wt l$, $\wt l'=(l'\cup\om)\sm x\sub X_x$
through 
$\om$
diverge sublinearly and thus they are Busemann parallel
again by Lemma~\ref{lem:busparallel_sublinear}. It follows
that
$\wt l=\wt l'$
and
$l=l'$. 
\end{proof}

Now, we reformulate Corollary~\ref{cor:busparallel_foliation}
as follows.

\begin{cor}\label{cor:tangent_unique} Given a Ptolemy line 
$l\sub X_\om$
and a point 
$x\in l$,
for any other point
$y\in X_\om$
there exists a unique Ptolemy circle
$\si\sub X_\om$
through
$y$
tangent to
$l$
at 
$x$.
In particular, if
$y\in l$,
then
$\si=l$. 
\end{cor}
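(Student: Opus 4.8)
The plan is to transport the entire question to the M\"obius space $X_x$ by inverting at $x$: there the condition ``tangent to $l$ at $x$'' becomes ``Busemann parallel to a fixed line,'' and the statement reduces to Corollary~\ref{cor:busparallel_foliation}. This is why the corollary is advertised as a reformulation of that one.

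First I would pass from the metric $d$ of $X_\om$ to the m-inversion $d'$ of $d$ with respect to $x$, so that $x$ becomes the infinitely remote point of $X_x=X\sm x$. Under this inversion every Ptolemy circle $\si$ through $x$ turns into a Ptolemy line $\si\sm x\sub X_x$, and the line $l$ (together with $\om$) turns into the Ptolemy line $\wt l=(l\cup\om)\sm x\sub X_x$ through $\om$, exactly as set up in the proof of Proposition~\ref{pro:tangent_rcircle}.

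The key step is the equivalence, already contained in Proposition~\ref{pro:tangent_rcircle} and its proof: a Ptolemy circle $\si$ through $x$ is tangent to $l$ at $x$ if and only if the Ptolemy line $\si\sm x\sub X_x$ is Busemann parallel to $\wt l$. Indeed, the computation $d(y,y')=o(|xy|)$ in $X_\om$ translates under the inversion into sublinear divergence of $\si\sm x$ and $\wt l$ in $X_x$, which by Lemma~\ref{lem:busparallel_sublinear} is equivalent to being Busemann parallel. Requiring $\si$ to pass through the given $y$ is the same as requiring the line $\si\sm x$ to pass through $y$; note that $y\in X\sm\{\om,x\}$, so $y$ is a genuine finite point of $X_x$. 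Thus the problem reduces to producing a Ptolemy line through $y$ in $X_x$ that is Busemann parallel to $\wt l$. By Corollary~\ref{cor:busparallel_foliation}, applied in $X_x$ with its infinitely remote point $x$, such a line $\wt l(y)$ exists and is unique, and setting $\si=\wt l(y)\cup x$ gives the desired circle; its uniqueness follows from the uniqueness of $\wt l(y)$.

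For the degenerate case $y\in l$ we have $y\in\wt l$, so the unique Busemann parallel line to $\wt l$ through $y$ is $\wt l$ itself, whence $\si\sm x=\wt l$ and $\si=\wt l\cup x=l\cup\om$, i.e. $\si=l$ in the convention identifying a line with the circle through $\om$ it spans. The only delicate point is the bidirectional tangency $\leftrightarrow$ Busemann-parallel equivalence, but since both directions are precisely what the proof of Proposition~\ref{pro:tangent_rcircle} supplies, I expect no genuine obstacle: the corollary really is Corollary~\ref{cor:busparallel_foliation} read through the inversion at $x$.
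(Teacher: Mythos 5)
Your proposal is correct and is essentially the paper's own argument: the paper's proof consists precisely of passing to a metric with infinitely remote point $x$ and invoking Corollary~\ref{cor:busparallel_foliation}, with the tangency $\leftrightarrow$ Busemann-parallel dictionary supplied by the proof of Proposition~\ref{pro:tangent_rcircle}. You have merely written out the details that the paper leaves implicit.
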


\begin{proof} Consider a metric of the M\"obius structure on
$X$
with the infinitely remote point 
$x$
and apply Corollary~\ref{cor:busparallel_foliation}. 
\end{proof}

\section{Fibration 
$\pi_\om:X_\om\to B_\om$}
\label{sect:fibration}

Given
$x\in X_\om$,
we define
$$F_x=\bigcap_{l\ni x}H_l,$$
where the intersection is taken over all the Ptolemy lines
$l\sub X_\om$
through
$x$,
$H_l$
is the horosphere through 
$x$
of a Busemann function associated with
$l$
(since
$X$
is Busemann flat,
$H_l$
is independent of choice of a Busemann function).

\begin{lem}\label{lem:fiber_def} For any
$y\in F_x$
we have
$F_y=F_x$.
\end{lem}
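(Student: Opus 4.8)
The plan is to reinterpret the defining intersection so that membership $y\in F_x$ becomes a manifestly symmetric and transitive relation on $X_\om$; then $F_y=F_x$ follows automatically whenever $y\in F_x$, with no estimate or limiting argument.

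First I would record what $y\in F_x$ means concretely. Since $X$ is Busemann flat (Lemma~\ref{lem:smooth_convex}), the horosphere $H_l$ through $x$ of a Ptolemy line $l\ni x$ is exactly the level set $\{z\in X_\om: b(z)=b(x)\}$ of any Busemann function $b$ associated with $l$, independently of the choice of end or of normalization. Hence $y\in F_x$ if and only if $b(y)=b(x)$ for every Busemann function $b$ of every Ptolemy line through $x$. The decisive step is then to replace the index set ``Ptolemy lines through $x$'' by the set of Busemann-parallel classes of Ptolemy lines. By Corollary~\ref{cor:busparallel_foliation} each parallel class has a unique representative through $x$, so the lines through $x$ are in bijection with these classes; and by the very definition of Busemann parallelism all lines in a fixed class $\cC$ share one and the same family of Busemann functions, hence the same horosphere foliation. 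Writing $b_\cC$ for a Busemann function of the class $\cC$, this reindexing yields
\[
F_x=\bigcap_{l\ni x}H_l=\bigcap_{\cC}\{z\in X_\om: b_\cC(z)=b_\cC(x)\},
\]
so that $y\in F_x$ precisely when $b_\cC(y)=b_\cC(x)$ for every parallel class $\cC$.

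This last condition — agreement of the values $b_\cC$ across all classes $\cC$ — is reflexive, symmetric and transitive, and its equivalence class of $x$ is exactly $F_x$. Therefore $y\in F_x$ places $x$ and $y$ in the same equivalence class, which is to say $F_y=F_x$. The only point requiring care, and the step I would dwell on, is the legitimacy of the reindexing: one must verify that the value comparison $b_\cC(\,\cdot\,)=b_\cC(x)$ attached to the representative $l\ni x$ of a class coincides with the one attached to any other line $l'\in\cC$ (in particular the representative through $y$). This is precisely the content of ``a class shares its Busemann functions'' combined with the flatness description of $H_l$ as a level set; once it is in place, the equivalence-relation structure does all the work and the conclusion is immediate.
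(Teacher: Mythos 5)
Your proof is correct and rests on exactly the same two ingredients as the paper's: Corollary~\ref{cor:busparallel_foliation} (unique representative of each Busemann-parallel class through each point) and the fact that parallel lines share Busemann functions, hence horospheres. Recasting this as a reindexing of the intersection over parallel classes, so that membership becomes a manifestly symmetric and transitive condition, is just a cleaner packaging of the argument the paper gives line by line.
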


\begin{proof}
By Corollary~\ref{cor:busparallel_foliation}, for every Ptolemy line
$l$
through
$x$
there is a unique Ptolemy line
$l'$
through
$y$
such that
$l$, $l'$
are Busemann parallel. Let
$b$
be a Busemann function of
$l$
such that
$b(x)=0$.
Then
$b(y)=0$
because
$y$
lies in the horosphere through
$x$
of
$b$.
Hence
$H_l=H_{l'}$
because
$b$
is a Busemann function also of
$l'$
and thus
$F_y=F_x$. 
\end{proof}

By Lemma~\ref{lem:fiber_def}, the sets
$F_x$, $F_{x'}$
coincide or are disjoint for any
$x$, $x'\in X_\om$.
We let
$B_\om=\set{F_x}{$x\in X_\om$}$
and define
$\pi_\om:X_\om\to B_\om$
by
$\pi_\om(x)=F_x$.
Therefore, the fibers
$F_b=\pi_\om^{-1}(b)$, $b\in B_\om$,
form a partition of
$X_\om$, $B_\om$
is the factor-space of this partition, and
$\pi_\om$
is the respective factor-map. A fiber
$F$
of
$\pi_\om$
is also called a $\K$-{\em line}. 

\begin{lem}\label{lem:induced_base_map}
For any
$\om$, $\om'\in X$,
any M\"obius automorphism
$\phi:X\to X$
with
$\phi(\om)=\om'$
induces a bijection
$\ov\phi:B_\om\to B_{\om'}$
such that
$\pi_{\om'}\circ\phi=\ov\phi\circ\pi_\om$.
\end{lem}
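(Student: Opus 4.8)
The plan is to show that $\phi$ carries each fiber $F_x\sub X_\om$ onto the fiber $F_{\phi(x)}\sub X_{\om'}$; once this is established, setting $\ov\phi(F):=\phi(F)$ for a fiber $F$ gives a well-defined map $B_\om\to B_{\om'}$ with $\pi_{\om'}\circ\phi=\ov\phi\circ\pi_\om$, and applying the same to $\phi^{-1}$ shows $\ov\phi$ is a bijection. To make the fiber definition transportable, I first record that $\phi$ is a homothety between the relevant metric spaces. Fix metrics $d\in\cM$ with infinitely remote point $\om$ (used to define $\pi_\om$) and $d'\in\cM$ with infinitely remote point $\om'$ (used to define $\pi_{\om'}$). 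Since $\phi(\om)=\om'$, the pulled-back metric $\phi^\ast d'$ has $\om$ as its infinitely remote point, so by Lemma~\ref{lem:homothety_infinite} we have $\phi^\ast d'=\la d$ for some $\la>0$; that is, $\phi:(X_\om,d)\to(X_{\om'},d')$ is a homothety with factor $\la$.

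Next I transport the three ingredients in the definition of $F_x$. First, $\phi$ maps Ptolemy circles to Ptolemy circles (being M\"obius, it preserves the characterization $\crt\in\d\De$), and since $\phi(\om)=\om'$ a Ptolemy line $l=\si\sm\om$ through $x$ maps to the Ptolemy line $\phi(l)=\phi(\si)\sm\om'$ through $\phi(x)$; using $\phi^{-1}$ this gives a bijection between the Ptolemy lines through $x$ and those through $\phi(x)$. Second, if $b$ is the Busemann function of $l$ normalized by $b(x)=0$, then the homothety relation $d'(\phi(\cdot),\phi(\cdot))=\la d(\cdot,\cdot)$ shows that $b':=\la\,(b\circ\phi^{-1})$ is a Busemann function of $\phi(l)$ with $b'(\phi(x))=0$. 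Third, it follows that $\phi$ carries the horosphere $H_l=b^{-1}(0)$ through $x$ onto the horosphere $(b')^{-1}(0)=H_{\phi(l)}$ through $\phi(x)$ (recall $H_l$ is well defined since $X$ is Busemann flat by Lemma~\ref{lem:smooth_convex}).

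Combining these, and using that $\phi$ is a bijection of $X$ and hence commutes with intersections, I compute
$$\phi(F_x)=\phi\Big(\bigcap_{l\ni x}H_l\Big)=\bigcap_{l\ni x}\phi(H_l)=\bigcap_{l'\ni\phi(x)}H_{l'}=F_{\phi(x)},$$
the third equality combining the bijective correspondence $l\mapsto l'=\phi(l)$ between Ptolemy lines through $x$ and through $\phi(x)$ with the identity $\phi(H_l)=H_{\phi(l)}$. Hence $\ov\phi(F_x):=\phi(F_x)=F_{\phi(x)}$ is a fiber of $\pi_{\om'}$, the assignment depends only on the fiber $F_x$ and not on the chosen representative $x$, and $\ov\phi\circ\pi_\om(x)=F_{\phi(x)}=\pi_{\om'}\circ\phi(x)$, as required. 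Applying the same construction to $\phi^{-1}$ (a M\"obius automorphism with $\phi^{-1}(\om')=\om$) produces an inverse to $\ov\phi$, so $\ov\phi$ is a bijection. I expect no serious obstacle: the content lies entirely in the homothety normalization via Lemma~\ref{lem:homothety_infinite} and in the routine verification that a homothety carries Busemann functions to (rescaled) Busemann functions and hence horospheres to horospheres. The only point that needs slight care is that $l\mapsto\phi(l)$ really is a bijection between the lines through $x$ and those through $\phi(x)$, since this is exactly what licenses exchanging $\phi$ with the intersection defining the fiber.
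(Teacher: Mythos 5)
Your proof is correct and follows essentially the same route as the paper's: both arguments rest on Lemma~\ref{lem:homothety_infinite} to see that $\phi:(X_\om,d)\to(X_{\om'},d')$ is a homothety, hence carries Ptolemy lines to Ptolemy lines and Busemann functions to rescaled Busemann functions, so that fibers map to fibers. You merely spell out in more detail the intersection bookkeeping ($\phi(F_x)=F_{\phi(x)}$) that the paper leaves implicit.
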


\begin{proof}
It follows from Lemma~\ref{lem:homothety_infinite} that
for any metrics
$d$, $d'$
of the M\"obius structure with infinitely remote points
$\om$, $\om'$
respectively, the map
$\phi:(X_\om,d)\to(X_{\om'},d')$
is a homothety. Thus
$\phi$
maps any Ptolemy line 
$l\sub X_\om$
to the Ptolemy line
$l'=\phi(l)\sub X_{\om'}$,
and
$b\circ\phi^{-1}$
is proportional a Busemann function of
$l'$
for any Busemann function
$b$
of
$l$.
It follows that
$\phi$
induces a bijection
$\ov\phi:B_\om\to B_\om$
such that
$\ov\phi\circ\pi_\om=\pi_\om\circ\phi$. 
\end{proof}

\begin{proof}[Proof of property ($1_\K$): uniqueness]
Let
$F\sub X_\om$
be a $\K$-line, and let
$x\in X\sm F$.
We show that there is at most one Ptolemy line in
$X_\om$
through
$x$
that meets
$F$.
Assume that there are Ptolemy lines
$l$, $l'\in X_\om$
through
$x$
that intersect
$F$.
Let
$c:\R\to l$, $c':\R\to l'$
be unit speed parameterizations such that
$c(0)=x=c'(0)$,
and
$c(s)\in F$, $c'(s')\in F$
for some
$s$, $s'>0$.
For the Busemann function
$b:X_\om\to\R$, $b(y)=\lim_{t\to-\infty}|yc(t)|-|t|$,
of
$l$
we have
$b(c(s))=s=b(c'(s'))$
because
$c(s)$, $c'(s')\in F$
and 
$F$
is a fiber of the fibration
$\pi_\om:X_\om\to B_\om$.
The function
$t\mapsto |c'(s')c(t)|-(|t|+s)$
is nonincreasing and it converges to
$b(c'(s'))-s=0$
as
$t\to-\infty$,
thus
$s'=|c'(s')c(0)|\ge s=|c(s)x|$.
Interchanging 
$l$ 
and
$l'$
we obtain 
$s\ge s'$
by the same reason. Hence
$s=s'$.
Since the Busemann function
$b$
is affine along
$l'$
by Proposition~\ref{pro:busemann_affine}
and it takes the equal values
$0=b\circ c(0)$
and
$b\circ c(s)=s=b\circ c'(s)$
along
$l$, $l'$
at two different parameter points, we have
$b\circ c(t)=b\circ c'(t)$
for every
$t\in\R$.
By Lemma~\ref{lem:unique_line},
$l=l'$.
\end{proof}

\subsection{\Semik-planes}
\label{subsect:semi_kplanes}

We fix
$\om\in X$
and a metric from the M\"obius structure with infinitely remote point
$\om$.
For a Ptolemy line
$l\sub X_\om$
we put
$M=M_l:=\cup F$,
where the union is taken over all the fibers
$F\sub X_\om$
of the fibration
$\pi_\om:X_\om\to B_\om$
which intersect
$l$, $F\cap l\not=\es$
(the Ptolemy line
$l$
has at most one point in common with any fiber
$F\sub X_\om$
because it intersects only once any its horosphere). The set
$M_l\sub X_\om$
is called a {\em\semik-plane} over
$l$.
Since different fibers of
$\pi_\om$
are disjoint, we have if
$M_l\cap F\not=\es$
for some fiber
$F$
of 
$\pi_\om$,
then
$F\sub M_l$
and there is a uniquely determined point
$x\in l$
such that
$x\in F$,
i.e.
$F$
is a member of the family of fibers that form
$M_l$.

\begin{lem}\label{lem:rfoliation_semik} Through every point
$x$
of a \semik-line
$M_l$
there is a uniquely determined Ptolemy line
$l'$
that meets every $\K$-line of
$M_l$
and moreover
$l'\sub M_l$
is Busemann parallel to
$l$.
Furthermore, any two $\K$-lines $F$, $F'$
of 
$M_l$
are equidistant in the sense that the segments of any two
Ptolemy lines
$l$, $l'\sub M_l$
between
$F$, $F'$
have equal lengths.
\end{lem}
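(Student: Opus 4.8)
The plan is to take $l'$ to be the unique Ptolemy line through $x$ that is Busemann parallel to $l$, whose existence and uniqueness are granted by Corollary~\ref{cor:busparallel_foliation}; I will then verify that this $l'$ has the asserted properties (and that it is the only line through $x$ meeting all $\K$-lines of $M_l$). Fix a common Busemann function $b$ of $l$ and $l'$ and unit speed parameterizations $c,c'$ with compatible orientations, normalized so that $b\circ c(t)=b\circ c'(t)=-t$. Since $b$ is affine of slope $-1$ along each of $l,l'$ (Proposition~\ref{pro:busemann_affine}), each line meets every horosphere $b^{-1}(\const)$ exactly once, and the $\K$-lines of $M_l$ are precisely the fibers $F_{c(t)}$, $t\in\R$, with $F_{c(t)}\sub b^{-1}(-t)$. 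Because a Ptolemy line meets any fiber at most once, both assertions of the first part reduce to the single claim that the Busemann-value correspondence is fiber preserving, i.e.
\begin{equation*}
c'(t)\in F_{c(t)}\quad\text{for all }t.
\end{equation*}
Indeed, granting this, $l'$ meets $F_{c(t)}$ (at $c'(t)$) for every $t$, and each point $c'(t)$ of $l'$ then lies in the $\K$-line $F_{c(t)}$ of $M_l$, whence $l'\sub M_l$.

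To prove the displayed claim I would introduce a shift. Pick $y_0\in l\cap F_x$ (possible since $x\in M_l$) and set $c(0)=y_0$, $c'(0)=x$; note that $x\in F_{y_0}$ forces $b(y_0)=b(x)=0$, so this normalization is consistent. Let $\si=\eta_{y_0x}$ be a shift with $\si(y_0)=x$. By Lemma~\ref{lem:shift_busemann_parallel} it carries $l$ to the Busemann parallel line through $x$, which is $l'$; being an orientation preserving isometry with $\si(c(0))=c'(0)$, it satisfies $\si\circ c=c'$. Moreover $\si$ is a M\"obius automorphism fixing $\om$, hence permutes the fibers (Lemma~\ref{lem:induced_base_map}); since $\si(F_{y_0})=F_{\si(y_0)}=F_x=F_{y_0}$, it fixes the fiber $F:=F_{y_0}$ setwise, and in particular $\si^{-1}(y_0)\in F_{y_0}$.

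The heart of the argument is to show that $\si$ preserves, \emph{exactly}, the Busemann function of every Ptolemy line through $y_0$. For an arbitrary Ptolemy line $m$ with chosen basepoint $o_m\in m$, Lemma~\ref{lem:pure_homothety} gives that $\si^{-1}(m)$ is Busemann parallel to $m$, so a computation with the isometry $\si$ and the limit defining $b_m$ (tracking orientations, so that the same end is used) yields $b_m\circ\si=b_m-b_m(\si^{-1}(o_m))$. When $m=m_0$ passes through $y_0$, take $o_{m_0}=y_0$ with $b_{m_0}(y_0)=0$; then $\si^{-1}(y_0)\in F_{y_0}\sub b_{m_0}^{-1}(0)$ forces the additive constant to vanish, so $b_{m_0}\circ\si=b_{m_0}$. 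Now fix $t$ and let $m$ be any Ptolemy line through $c(t)$. Its Busemann parallel transport $m_0$ through $y_0$ (Corollary~\ref{cor:busparallel_foliation}) shares a Busemann function $\beta$ with $m$, and $\beta\circ\si=\beta$ by the previous step; hence $\beta(c'(t))=\beta(\si\,c(t))=\beta(c(t))$, i.e. $c'(t)$ lies on the horosphere of $m$ through $c(t)$. Intersecting over all $m\ni c(t)$ gives $c'(t)\in F_{c(t)}$, as required.

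Finally the equidistance is immediate: any Ptolemy line $L\sub M_l$ meeting every $\K$-line is Busemann parallel to $l$, so $b$ is affine of slope $-1$ along $L$, and a unit speed arc of $L$ between fibers $F\sub b^{-1}(a)$ and $F'\sub b^{-1}(a')$ has length $|a-a'|$ independently of $L$. The only genuine obstacle is the exactness in the third paragraph: a priori the shift $\si$ could shift each Busemann function by a constant, and everything hinges on ruling this out for lines through $y_0$, which is exactly what the fiber invariance $\si(F_{y_0})=F_{y_0}$ delivers.
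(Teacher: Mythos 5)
Your proof is correct, and its skeleton coincides with the paper's: both take $l'$ to be the unique Busemann parallel line through $x$ from Corollary~\ref{cor:busparallel_foliation}, reduce everything to the claim $c'(t)\in F_{c(t)}$, and deduce equidistance from the fact that the fibers of $M_l$ sit in level sets of a Busemann function that is affine of slope $-1$ along every line of the foliation. Where you genuinely diverge is in the proof of the central claim. The paper argues directly with Busemann functions: for any line $l''$ through $c(t)$ with Busemann function $b''$, the parallel transport of $l''$ to $c(0)$ shows $b''$ is constant on $F_x$, so $b''(c(0))=b''(c'(0))$; then Lemma~\ref{lem:paraline_busemann} gives $b''\circ c$ and $b''\circ c'$ the same (affine) slope, hence $b''\circ c\equiv b''\circ c'$, and in particular $c'(t)\in F_{c(t)}$. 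You instead route the argument through the shift $\si=\eta_{y_0x}$, proving that a shift carrying $y_0$ to a point of the same fiber preserves \emph{every} Busemann function exactly (not merely up to a constant), and then push $c(t)$ to $c'(t)=\si(c(t))$. This is essentially an early, self-contained instance of what the paper later records as Lemmas~\ref{lem:homothety_vert_shift} and \ref{lem:vert_shift}; there is no circularity, since your ingredients (Lemmas~\ref{lem:pure_homothety}, \ref{lem:shift_busemann_parallel}, \ref{lem:induced_base_map}) all precede the present lemma. The paper's route is a little shorter and avoids shifts altogether; yours buys a reusable fact (exact invariance of Busemann functions under fiber-preserving shifts) that the paper needs anyway later on.

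Two small points to tighten. First, the uniqueness of $l'$ does not follow merely from ``a Ptolemy line meets any fiber at most once''; you should invoke the uniqueness half of property ($1_\K$), proved just before this lemma, which says that through $x$ there is at most one Ptolemy line meeting a given $\K$-line not containing $x$. Second, the identity $\si\circ c=c'$ requires knowing that $\si$ maps the oriented line $l$ to $l'$ with \emph{compatible} orientation; this is immediate from $b\circ\si=b+c_b$ for a Busemann function $b$ of $l$ (so that $b\to-\infty$ along $\si\circ c$ as $t\to+\infty$), but it deserves a sentence rather than the bare phrase ``orientation preserving isometry''.
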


\begin{proof}
By Corollary~\ref{cor:busparallel_foliation}, 
there is a unique Ptolemy line
$l'$
through
$x$
which is Busemann parallel to
$l$.
Consider compatible unit speed parameterizations
$c:\R\to l$, $c':\R\to l'$
such that
$c(0)\in F_x$, $c'(0)=x$,
where
$F_x\sub M_l$
is the $\K$-line through
$x$.

Let
$F$
be a $\K$-line of
$M_l$.
Then by definition
$c(t)\in F$
for some
$t\in\R$.
We show that
$c'(t)\in F$.
Let
$l''$
be a Ptolemy line through
$c(t)$, $b''$
a Busemann function of
$l''$
with
$b''(c(t))=0$.
We show that
$c'(t)$
lies in the zero level set of
$b''$, $b''(c'(t))=0$.

By Corollary~\ref{cor:busparallel_foliation}, there is
a Ptolemy line through
$c(0)$
for which
$b''$
is a Busemann function. Then the $\K$-line
$F_x$
lies in a level set of
$b''$,
in particular,
$b''(c(0))=b''(c'(0))=:\be$.
By Lemma~\ref{lem:paraline_busemann} we have
$b''\circ c(s)=\al s+\be=b''\circ c'(s)$
for all
$s\in\R$.
In particular,
$b''(c'(t))=b''(c(t))=0$,
hence
$c'(t)\in F$.
This also shows that the $\K$-lines
$F$, $F_x$
are equidistant. Moreover, this argument shows that for every
$s\in\R$,
the point
$c'(s)$
lies in the $\K$-line
$F_s$
through
$c(s)$,
thus
$l'\sub M_l$.
\end{proof}

\begin{lem}\label{lem:geoconvex_semik} Every \semik-plane
$M\sub X_\om$
is geodesically convex, i.e., every Ptolemy line
$l'\sub X_\om$
that meets
$M$
in two different points is contained in
$M$, $l'\sub M$.
\end{lem}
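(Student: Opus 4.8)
The plan is to prove convexity by transporting $l'$ onto $l$ along the level sets of the Busemann function of $l$ and checking that this transport preserves $\K$-lines. Let $l'$ be a Ptolemy line meeting $M=M_l$ in two distinct points $z,z'$; we must show $l'\sub M$, i.e.\ that the $\K$-line through every point of $l'$ meets $l$. The engine of the argument is Proposition~\ref{pro:busemann_affine}: every Busemann function of every Ptolemy line restricts to an affine function on every Ptolemy line. Combined with the fact that each such Busemann function is constant on each $\K$-line (immediate from the definition of $F_x$ together with Corollary~\ref{cor:busparallel_foliation}, which produces through any point a Busemann parallel line), this reduces the whole statement to the elementary fact that two affine functions of one real variable that agree at two distinct points agree everywhere.

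First I fix a Busemann function $b$ of $l$ and unit speed parameterizations $c\colon\R\to l$, $c'\colon\R\to l'$ with $b\circ c(s)=-s$ and $c'(0)=z$, $c'(\tau_0)=z'$ for some $\tau_0>0$. Since $b$ is affine along $l'$, we have $b\circ c'(\tau)=\al\tau+\be$. As $b|_l$ is a bijection onto $\R$, for each $\tau$ there is a unique point $\Phi(\tau):=c(-\al\tau-\be)\in l$ with $b(\Phi(\tau))=b(c'(\tau))$; this defines a ``vertical projection'' $\Phi$ of $l'$ onto $l$ which is affine in $\tau$. Now fix an arbitrary Ptolemy line $m'$ and let $b_{m'}$ be a Busemann function of it. Both $\tau\mapsto b_{m'}(c'(\tau))$ and $\tau\mapsto b_{m'}(\Phi(\tau))=b_{m'}(c(-\al\tau-\be))$ are affine in $\tau$: the first by Proposition~\ref{pro:busemann_affine} applied along $l'$, the second by the same proposition applied along $l$ precomposed with the affine reparameterization $\tau\mapsto-\al\tau-\be$. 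At $\tau\in\{0,\tau_0\}$ the point $c'(\tau)$ lies in $M$, so its $\K$-line meets $l$; since $b$ separates the points of $l$, the intersection point is forced to be $\Phi(\tau)$, whence $c'(\tau)$ and $\Phi(\tau)$ share a $\K$-line and $b_{m'}$ takes equal values on them. Two affine functions agreeing at the distinct points $0$ and $\tau_0$ coincide, so $b_{m'}(c'(\tau))=b_{m'}(\Phi(\tau))$ for all $\tau$.

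Since $m'$ was arbitrary, for every $\tau$ the points $c'(\tau)$ and $\Phi(\tau)$ are assigned the same value by every Busemann function of every Ptolemy line, hence $\Phi(\tau)\in\bigcap_{m'\ni c'(\tau)}H_{m'}=F_{c'(\tau)}$; as $\Phi(\tau)\in l$, that $\K$-line meets $l$, so $c'(\tau)\in M$, and therefore $l'\sub M$. The step I expect to require the most care is the characterization of $\K$-lines by Busemann values---that points on which all Busemann functions agree necessarily lie on the same fiber $F_x=\bigcap_{l\ni x}H_l$---and its use in pinning down the two endpoints. It is also what lets me avoid the naive guess that $M$ equals the intersection of the horospheres orthogonal to $l$: that intersection is convex but in general strictly larger than $M$, since it imposes no compatibility between the displacements recorded by the non-orthogonal Busemann functions. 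Exploiting affineness along $l'$ and anchoring at the two points $z,z'\in M$ supplies precisely the missing compatibility.
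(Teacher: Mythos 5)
Your proof is correct, but it takes a different route from the paper's. The paper disposes of this lemma in three lines: by Lemma~\ref{lem:rfoliation_semik} there is a Ptolemy line $l_1\sub M$ through $x\in l'\cap M$ that meets every $\K$-line of $M$, so both $l_1$ and $l'$ pass through $x$ and meet the fiber $F_{x'}$; the uniqueness part of property ($1_\K$), proved just before, then forces $l'=l_1\sub M$. Your argument bypasses both of those ingredients and works directly from Proposition~\ref{pro:busemann_affine}: you build the ``vertical projection'' $\Phi$ of $l'$ onto $l$ along the level sets of $b$, check via the two anchor points that every Busemann function agrees on $c'(\tau)$ and $\Phi(\tau)$ (two affine functions agreeing at two points), and conclude $\Phi(\tau)\in F_{c'(\tau)}$, hence $F_{c'(\tau)}$ meets $l$. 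All the supporting facts you invoke are available and correctly used: every Busemann function is constant on every fiber (the content of Lemma~\ref{lem:fiber_def} via Corollary~\ref{cor:busparallel_foliation}), Busemann functions are affine on Ptolemy lines, $b$ is injective on $l$, and a Ptolemy line meets each fiber at most once. In effect you have re-proved, inlined, the same two-affine-functions trick that the paper uses to establish the uniqueness half of ($1_\K$); the paper's proof buys brevity by quoting that lemma, while yours buys self-containedness and additionally identifies explicitly which fiber of $M$ each point of $l'$ lands in. What your version does not give (and does not need to) is that $l'$ is Busemann parallel to $l$, which falls out of the paper's argument for free.
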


\begin{proof} Let
$x$, $x'\in l'\cap M$
be different points. By Lemma~\ref{lem:rfoliation_semik} there is
a Ptolemy line 
$l\sub M$
through
$x$.
Both
$l$, $l'$
meet the fiber 
$F_{x'}\sub M$
of the fibration
$\pi_\om$
through
$x'$.
Then
$l=l'$ 
by the uniqueness part of property ($1_\K$).
\end{proof}

\begin{proof}[Proof of property ($2_\K$)]
Let
$F$, $F'\sub X_\om$
be distinct fibers of the fibration
$\pi_\om:X_\om\to B_\om$.
Assume that Ptolemy lines
$l$, $l'\sub X_\om$
intersect both
$F$, $F'$,
and let
$F''$
be a $\K$-line that intersects
$l$.
We show that
$F''$
intersects also
$l'$.

Let
$M_l$
be the \semik-plane over 
$l$.
Then
$F$, $F'$, $F''\sub M_l$
by our assumption. We have
$l'\sub M_l$
by Lemma~\ref{lem:geoconvex_semik}. Hence
$l'$
intersects
$F''$
by Lemma~\ref{lem:rfoliation_semik}.
\end{proof}

\subsection{Zigzag curves}
\label{subsect:zigzag}

We fix
$\om\in X$
and consider a metric on
$X_\om$
with infinitely remote point
$\om$.
Let
$l\sub X_\om$
be an oriented Ptolemy line. By Corollary~\ref{cor:busparallel_foliation},
there is a foliation
$l(x)$, $x\in X_\om$
of the space
$X_\om$
by Ptolemy lines, which are Busemann parallel to
$l$.
Moreover, every member
$l(x)$
of the foliation has a well defined orientation compatible
with that of
$l$,
see sect.~\ref{subsect:slope}.

\begin{lem}\label{lem:parallel_couple} Let
$l_1$, $l_2\sub X_\om$
be oriented Ptolemy lines which induce respective
foliations of
$X_\om$.
We start moving from
$x\in X_\om$
along
$l_1(x)$
by some distance
$s_1\ge 0$
up to a point
$y$,
and then switch to
$l_2(y)$
and move along it by some distance
$s_2\ge 0$
up to a point
$z$.
Next, we move from
$x'\in X_\om$
along
$l_2(x')$
by the distance
$s_2$
up to a point
$y'$,
and then switch to
$l_1(y')$
and move along it by the distance
$s_1$
up to a point
$z'$,
where we always move in the directions prescribed by 
the orientations. If
$x$, $x'$
lie in a $\K$-line
$F\sub X_\om$,
then
$z$, $z'$
also lie in one and the same $\K$-line
$F'\sub X_\om$.
\end{lem}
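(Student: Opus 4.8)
The plan is to sidestep the tempting but harder route of showing that the two leaf-flows commute as maps of $X_\om$ (which would amount to an abelianness statement about the base $B_\om$), and instead to test membership in a common $\K$-line directly against all Busemann functions. The key reformulation I would record first is: for $u$, $v\in X_\om$ one has $F_u=F_v$ if and only if $b(u)=b(v)$ for the Busemann function $b$ of \emph{every} Ptolemy line in $X_\om$. Indeed, if $F_u=F_v$ and $m\sub X_\om$ is any Ptolemy line, then by Corollary~\ref{cor:busparallel_foliation} there is a leaf $m(u)$ of the foliation induced by $m$ through $u$; its Busemann function is a Busemann function of $m$, and by definition $F_u$ is contained in the horosphere $H_{m(u)}=b^{-1}(b(u))$, so $v\in F_v=F_u$ forces $b(v)=b(u)$. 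Conversely, if all Busemann functions agree on $u$ and $v$, then $v$ lies in every horosphere $H_l$ through $u$ of every Ptolemy line $l\ni u$, whence $v\in F_u$ and $F_v=F_u$ by Lemma~\ref{lem:fiber_def}.

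The heart of the proof is then a one-line computation. Fix an arbitrary Ptolemy line $m\sub X_\om$ with Busemann function $b$, and set $\al_1=\slope(l_1,m)$, $\al_2=\slope(l_2,m)$. By Proposition~\ref{pro:busemann_affine} the function $b$ is affine along every Ptolemy line, and by Lemma~\ref{lem:paraline_busemann} its slope along a leaf $l_i(w)$ of the foliation induced by $l_i$ equals $\slope(l_i,m)=\al_i$, independently of the leaf, provided the leaf carries the orientation compatible with $l_i$. Since all moves in the construction are made in the compatible (positive) directions, displacing any point $w$ by distance $s$ along the $l_i$-foliation changes $b$ by exactly $\al_i s$. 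Applying this along each of the two paths, and using $b(x)=b(x')$ (valid by the reformulation above, as $x$, $x'$ lie in the common $\K$-line $F$), I obtain
\begin{equation*}
b(z)=b(x)+\al_1 s_1+\al_2 s_2=b(x')+\al_2 s_2+\al_1 s_1=b(z').
\end{equation*}
As $m$ was arbitrary, the reformulation yields $F_z=F_{z'}$, which is the claim.

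The only genuine care needed, rather than a real obstacle, is the orientation bookkeeping: one must use that Busemann-parallel leaves carry well-defined compatible orientations and that the slope is a \emph{signed} quantity which is constant across the whole foliation, so that the two increments appearing in $b(z)$ and $b(z')$ are literally the same pair of numbers $\al_1 s_1$ and $\al_2 s_2$ written in the two orders. Lemma~\ref{lem:paraline_busemann} is precisely what eliminates any dependence of these increments on the base point of the leaf, and hence what makes the expression for $b(z)$ symmetric in the two pairs $(l_1,s_1)$ and $(l_2,s_2)$; note in fact that even an error in the overall sign convention would be harmless, since the final equality only requires that the two sums agree term by term. This is the conceptual reason the statement is an equality of $\K$-lines and not of points, and it is what lets me avoid the commutativity-of-flows argument entirely.
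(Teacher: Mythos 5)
Your proof is correct and is essentially the paper's own argument: both rest on Proposition~\ref{pro:busemann_affine} (Busemann functions are affine along Ptolemy lines) and Lemma~\ref{lem:paraline_busemann} (the slope depends only on the foliation, not the leaf), so that $b(z)$ and $b(z')$ are both equal to $b(x)+\al_1s_1+\al_2s_2$ for every Busemann function $b$. Your explicit preliminary reformulation of ``lying in a common $\K$-line'' as agreement of all Busemann functions is only stated implicitly in the paper, but it is the same mechanism.
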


\begin{proof} Let
$c_1$, $c_2:\R\to X_\om$
be the unit speed parameterizations of
$l_1(x)$, $l_2(x')$
respectively compatible with the orientations such that
$c_1(0)=x$, $c_2(0)=x'$.
We also consider the unit speed parameterizations
$c_1'$, $c_2':\R\to X_\om$
of
$l_1(y')$, $l_2(y)$
respectively compatible with the orientations such that
$c_1'(0)=y'$, $c_2'(0)=y$.
Then
$c_1(s_1)=y=c_2'(0)$, $c_2(s_2)=y'=c_1'(0)$
and
$c_2'(s_2)=z$, $c_1'(s_1)=z'$.

Let
$b$
be a Busemann function of a Ptolemy line
$l\sub X_\om$
which vanishes along
$F$,
in particular,
$b(x)=0=b(x')$.
By Proposition~\ref{pro:busemann_affine},
$b$
is affine along any Ptolemy line in
$X_\om$,
in particular,
$b\circ c_1(t)=\al_1t$, $b\circ c_2(t)=\al_2t$
for some
$\al_i$
which by Lemma~\ref{lem:paraline_busemann}
only depends on
$l_i$, $i=1,2$,
and for all
$t\in\R$. 
Thus we have
$b(z')=b\circ c_1'(s_1)=\al_1s_1+\al_2s_2$
and similarly
$b(z)=b\circ c_2'(s_2)=\al_2s_2+\al_1s_1$.
Hence any Busemann function on
$X_\om$
takes the same value at the points
$z$
and 
$z'$,
i.e. these points lie in a common $\K$-line
$F'$.
\end{proof}

Given a base point
$o\in X_\om$,
a finite ordered collection
$\cL=\{l_1,\dots,l_k\}$
of oriented Ptolemy lines in
$X_\om$,
and a collection
$S=\{s_1,\dots,s_k\}$
of nonnegative numbers with
$s_1+\dots+s_k>0$,
we construct a sequence
$\ga_p=\ga_p(o,\cL,S)\sub X_\om$, $p\ge 1$,
of piecewise geodesic curves through
$o$
as follows. Recall that we have 
$k$ 
foliations of
$X_\om$
by oriented Ptolemy lines 
$l_1(x),\dots,l_k(x)$, $x\in X_\om$,
which are Busemann parallel with compatible orientations to
$l_1,\dots,l_k$
respectively.

The curve
$\ga_p$
starts at
$o=v_p^0$
for every
$p\ge 1$.
We move along
$l_1(o)$
by the distance
$s_1/2^{p-1}$
up to the point
$v_p^1\in l_1(v_p^0)$,
then switch to the line
$l_2(v_p^1)$
and move along it by the distance
$s_2/2^{p-1}$
up to the point
$v_p^2$
etc. On the
$i$th 
step, for
$1\le i\le k$, 
we move along the line
$l_i(v_p^{i-1})$
by the distance
$s_i/2^{p-1}$
in the direction prescribed by the orientation of the line
up to the point
$v_p^i\in l_i(v_p^{i-1})$.
Starting with the point
$v_p^k$
we then repeat this procedure only taking the subindices for
$l_i$, $s_i$
modulo
$k$
for all integer
$i\ge k+1$.

This produces the sequence
$v_p^n$
of vertices of
$\ga_p$
for all
$n\ge 0$.
For integer
$n<0$
the vertices
$v_p^n$
are determined in the same way with all the orientations
of the lines
$l_1,\dots,l_k$
reversed, with the starting line
$l_k(o)$,
and with the ordered collections
$\ov{\cL}=\{l_k,\dots,l_1\}$
of lines, and
$\ov S=\{s_k,\dots,s_1\}$
of numbers.

Every curve
$\ga_p$
receives the arclength parameterization, for which
we use the same notation
$\ga_p:\R\to X_\om$,
with
$\ga_p(0)=o$.
Then for every
$m\in\Z$, $1\le i\le k$,
we have
$\ga_p(t_p^n)=v_p^n$
is a vertex of
$\ga_p$,
where
$n=k(m-1)+i$, 
$t_p^n=[(s_1+\dots+s_i)m+(s_{i+1}+\dots+s_k)(m-1)]/2^{p-1}$
(the sum
$(s_{i+1}+\dots+s_k)$
is assumed to be zero for
$i=k$).

It follows from Lemma~\ref{lem:parallel_couple}
by induction that for every 
$n=km\in\Z$,
the vertices
$v_p^n=\ga_p(t_p^n)$
of
$\ga_p$
and
$v_{p+1}^{2n}=\ga_{p+1}(t_{p+1}^{2n})$
of
$\ga_{p+1}$
lie in a common $\K$-line in
$X_\om$
for every
$p\ge 1$.
From this one easily concludes that the sequence of the projected curves
$\pi_\om(\ga_p)\sub B_\om$
converges (pointwise in the induced topology). At this stage,
we do not have tools to prove that the sequence
$\ga_p$
itself converges in
$X_\om$.
However, we need a limiting object of
$\ga_p$.
Thus, for instance, we fix a nonprincipal ultra-filter on
$\Z$
and say that
$\ga=\lim\ga_p$
w.r.t. that ultra-filter. By this we mean that
$\ga(t)=\lim\ga_p(t)$
for every 
$t\in\R$.
The curve
$\ga=\ga(o,\cL,S)$
is called a {\em zigzag} curve, and it is obtained together with 
the limiting parameterization
$\ga:\R\to X_\om$, $\ga(0)=o$,
which in general is not an arclength parameterization.

\begin{lem}\label{lem:busemann_affine_zigzag} Every Busemann function
$b:X_\om\to\R$
is affine along any zigzag curve
$\ga$,
that is, the function
$b\circ\ga:\R\to\R$
is affine. Moreover, if
$\ga=\ga(o,\cL,S)$
for a base point
$o\in X_\om$,
some ordered collection
$\cL=\{l_1,\dots,l_k\}$
of oriented Ptolemy lines in
$X_\om$, 
and a collection
$S=\{s_1,\dots,s_k\}$
of nonnegative numbers with
$s_1+\dots+s_k>0$,
and
$b(o)=0$,
then
$b\circ\ga(t)=\be t$
for all
$t\in\R$,
where
$\be=\sum_i\al_is_i/\sum_is_i$,
$\al_i=\slope(l_i,l)$, $i=1,\dots,k$,
and
$l\sub X_\om$
is the oriented Ptolemy line for which the function
$b$
is associated.
\end{lem}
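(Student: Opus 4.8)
The plan is to reduce the whole statement to the behaviour of $b$ along the individual geodesic segments making up the approximating piecewise geodesic curves $\ga_p$, and then to pass to the limit at the level of the scalar function $b\circ\ga_p$, whose convergence turns out to be classical so that the ultrafilter plays no essential role.

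First I would record the per-segment behaviour. By Proposition~\ref{pro:busemann_affine} the Busemann function $b$ associated with $l$ is affine along every Ptolemy line, and by Lemma~\ref{lem:paraline_busemann} its slope along any member $l_i(x)$ of the foliation Busemann parallel to $l_i$ (with compatible orientation) equals $\slope(l_i,l)=\al_i$, independently of $x$. Hence moving a distance $s\ge 0$ along any such $l_i(x)$ in the direction prescribed by the orientation changes $b$ by exactly $\al_i s$. Since every segment of $\ga_p$ is of this form, $b\circ\ga_p$ is piecewise affine in arclength with slopes drawn from $\{\al_1,\dots,\al_k\}$, all of absolute value $\le 1$.

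Next I would accumulate these increments. Writing $L_p=\sum_i s_i/2^{p-1}$ for the period length, the displacement of $b$ over the full period ending at the vertex $v_p^{km}=\ga_p(t_p^{km})$ is $m\sum_i\al_i s_i/2^{p-1}$, while $t_p^{km}=mL_p$; thus $b(v_p^{km})-b(o)=\be\,t_p^{km}$ with $\be=\sum_i\al_i s_i/\sum_i s_i$. For an arbitrary $t>0$ the point $\ga_p(t)$ lies after $\lfloor t/L_p\rfloor$ complete periods plus a partial one of arclength $<L_p$; since $|\al_i|\le 1$ and $|\be|\le 1$, the partial period perturbs $b$ by at most $L_p$, whence $|b(\ga_p(t))-b(o)-\be t|\le 2L_p\to 0$ as $p\to\infty$. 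The range $t<0$ is handled by the symmetric construction: reversing the orientations flips each slope to $-\al_i$, but the parameter now decreases, so the ratio $\De b/\De t$ is again $\al_i$ and the identical estimate gives $b(\ga_p(t))\to b(o)+\be t$ for $t<0$ as well.

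Finally I would pass to the limit. Busemann functions are $1$-Lipschitz, hence continuous, and by definition $\ga(t)=\lim_{\cU}\ga_p(t)$ for the chosen ultrafilter $\cU$, so $b(\ga(t))=\lim_{\cU}b(\ga_p(t))$. Because the ordinary limit $b(\ga_p(t))\to b(o)+\be t$ already exists, the ultrafilter limit agrees with it, giving $b\circ\ga(t)=b(o)+\be t$ for all $t\in\R$. This is affine, which proves the first assertion, and normalizing $b(o)=0$ yields $b\circ\ga(t)=\be t$, which is the second. The only delicate point is the orientation bookkeeping for the slopes, both across the foliations and for $t<0$; the ultrafilter, by contrast, causes no trouble, since we only ever track the scalar quantity $b\circ\ga_p$, for which convergence is in the ordinary sense.
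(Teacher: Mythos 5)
Your proof is correct and follows essentially the same route as the paper's: compute $b$ along each geodesic edge of $\ga_p$ using affineness of Busemann functions and the invariance of the slope under Busemann parallelism, accumulate the increments over periods to get $b\circ\ga_p(t)=\be t+O(2^{-p})$, and pass to the limit using continuity of $b$. Your explicit $2L_p$ error bound and the remarks on the $t<0$ bookkeeping and on the ultrafilter being harmless are just slightly more detailed versions of what the paper does.
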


\begin{proof} We assume that
$\ga=\lim\ga_p$.
Since
$\ga_p$
is piecewise geodesic for every
$p\ge 1$,
the function
$b\circ\ga_p:\R\to\R$
is piecewise affine. Recall that the points
$v_p^n=\ga_p(t_p^n)$
are vertices of
$\ga_p$,
where
$t_p^n=[(s_1+\dots+s_i)m+(s_{i+1}+\dots+s_k)(m-1)]/2^{p-1}$
for 
$n=k(m-1)+i\in\Z$.
Thus we have by induction
$$b\circ\ga_p(t_p^n)=
  [(\al_1s_1+\dots+\al_is_i)m
  +(\al_{i+1}s_{i+1}+\dots+\al_ks_k)(m-1)]/2^{p-1}$$
for
$n=k(m-1)+i\in\Z$.
Hence,
$b\circ\ga_p(t_p^n)=\be t_p^n+o(1)$
as
$p\to\infty$.
Since the step
$t_p^{n+1}-t_p^n\le\max_is_i/2^{p-1}\to 0$
as
$p\to\infty$,
we conclude that
$b\circ\ga_p\to b\circ\ga$
pointwise as
$p\to\infty$,
and
$b\circ\ga(t)=\be t$
for all
$t\in\R$.
\end{proof}

Lemma~\ref{lem:busemann_affine_zigzag} gives a strong evidence
in support of the expectation that a zigzag curve under natural
assumptions actually is a Ptolemy line. However we need additional 
arguments for the proof of this.

For
$\om$, $o\in X$,
the group
$\Ga_{\om,o}$
consists of homotheties
$\phi:X_\om\to X_\om$
with
$\phi(o)=o$
such that
$\phi(l)=l$
for every Ptolemy line
$l\sub X_\om$
through
$o$
preserving an orientation of
$l$,
and moreover by property (H),  
$\Ga_{\om,o}$
acts transitively on the open rays of
$l$
with the vertex
$o$,
see Proposition~\ref{pro:homothety_property}.

\begin{lem}\label{lem:zigzag_preserved} The homothety
$\phi\in\Ga_{\om,o}$
with the coefficient
$\la=1/2$
leaves invariant a zigzag curve
$\ga=\ga(o,\cL,S)$
for any base point 
$o\in X_\om$,
any ordered collection of oriented Ptolemy lines
$\cL=\{l_1,\dots,l_k\}$
in
$X_\om$,
and any collection
$S=\{s_1,\dots,s_k\}$
of nonnegative numbers with
$s_1+\cdots+s_k>0$.
\end{lem}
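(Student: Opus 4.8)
The plan is to establish the exact self-similarity relation $\phi\circ\ga_p=\ga_{p+1}$ between consecutive approximating curves, and then to push it through the ultralimit defining $\ga$. First I would record the properties of $\phi$ that I need. Since $\phi\in\Ga_{\om,o}$, it fixes $o$ and acts on $X_\om$ as a homothety with coefficient $\la=1/2$, hence scales every distance by $1/2$; by Lemma~\ref{lem:pure_homothety} it is a \emph{pure} homothety, so for each $i$ it maps the foliation $\{l_i(x)\}$ to itself preserving the orientations. These three facts (fixes $o$, halves all distances, preserves each oriented foliation) are exactly the data entering the zigzag recipe.

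The combinatorial heart is the claim $\phi(v_p^n)=v_{p+1}^n$ for all $p\ge 1$ and all $n\in\Z$, which I would prove by induction on $n$. For $n=0$ both sides equal $o$. For the inductive step, $v_p^{n+1}$ is obtained from $v_p^n$ by moving along $l_i(v_p^n)$ (with the cyclic index $i$ taken modulo $k$) a distance $s_i/2^{p-1}$; applying $\phi$, and using that it halves distances, carries $l_i(v_p^n)$ to $l_i(\phi(v_p^n))$ and preserves orientation, the image is the point reached from $\phi(v_p^n)=v_{p+1}^n$ by moving along $l_i(v_{p+1}^n)$ a distance $s_i/2^{p}$ — which is precisely the recipe producing $v_{p+1}^{n+1}$. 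Negative indices follow from the symmetric construction with reversed orientations. Because $\phi$ sends each geodesic edge of $\ga_p$ to the corresponding edge of $\ga_{p+1}$ scaling lengths by $1/2$, this upgrades to $\phi\circ\ga_p=\ga_{p+1}$ as curves, i.e. $\phi(\ga_p(t))=\ga_{p+1}(t/2)$ for all $t$.

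It then remains to pass to the limit. As $\phi$ is continuous it commutes with the ultralimit, so $\phi(\ga(t))=\lim_\omega\phi(\ga_p(t))=\lim_\omega\ga_{p+1}(t/2)$, and I want this to equal $\ga(t/2)=\lim_\omega\ga_p(t/2)$. This is the main obstacle: the desired value is the ultralimit of the \emph{shifted} sequence $p\mapsto p+1$, and ultralimits need not be invariant under an index shift. To control it I would first show the two candidate limits lie in a common $\K$-line. Normalising any Busemann function by $b(o)=0$, the proof of Lemma~\ref{lem:pure_homothety} gives $b\circ\phi=\tfrac12 b$, while Lemma~\ref{lem:busemann_affine_zigzag} gives $b\circ\ga(t)=\be t$; hence $b(\phi(\ga(t)))=\tfrac12\be t=b(\ga(t/2))$ for every such $b$, so $\phi(\ga(t))$ and $\ga(t/2)$ have the same image under $\pi_\om$.

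To upgrade this to genuine equality I would use the full-cycle relation coming from Lemma~\ref{lem:parallel_couple}: for a dyadic parameter $u$ the projections $\pi_\om(\ga_p(u))$ stabilise for large $p$, so the entire tail $\{\ga_p(u)\}$ lies in a single fiber, and the exact self-similarity $\phi(\ga_p)=\ga_{p+1}$ makes the whole approximating construction invariant under $p\mapsto p+1$; together these force $\lim_\omega\ga_{p+1}(u)=\lim_\omega\ga_p(u)$, hence $\phi(\ga(t))=\ga(t/2)$ at dyadic $t$, and by continuity of the maps involved at all $t$. I expect this identification of the shifted ultralimit inside the common fiber to be the delicate point, where the stabilisation of base projections and the self-similarity $\phi(\ga_p)=\ga_{p+1}$ must be combined; everything else is the routine homothety bookkeeping of the second paragraph. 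The conclusion is that $\phi$ carries $\ga$ to itself up to the reparametrisation $t\mapsto t/2$, i.e. $\phi$ leaves the zigzag curve $\ga$ invariant.
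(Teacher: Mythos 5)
Your skeleton is the paper's own. The inductive identity $\phi(v_p^n)=v_{p+1}^n$, hence $\phi\circ\ga_p(t)=\ga_{p+1}(t/2)$, is exactly the paper's first step (stated there as $\phi(\ga_p)=\ga_{p+1}$ and $\phi(v_p^{km})=v_{p+1}^{km}$), and the reduction to dyadic parameters whose vertex sequences lie in common fibers is its second. Your additional observation that $b\circ\phi=\tfrac12 b$ together with Lemma~\ref{lem:busemann_affine_zigzag} places $\phi(\ga(t))$ and $\ga(t/2)$ in one and the same fiber is correct, and is a mild variant of what the paper extracts from Lemma~\ref{lem:parallel_couple}.

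The step you yourself flag is where the argument is not yet a proof: ``together these force $\lim_\omega\ga_{p+1}(u)=\lim_\omega\ga_p(u)$'' is an assertion, not a deduction. Knowing that the whole tail $\{\ga_p(u)\}_p$ lies in a single fiber $F$ only places both ultralimits in $F$; the points $\ga_p(u)$ genuinely move inside that fiber as $p$ grows (already in the Heisenberg model, rearranging one full cycle of step $s_i/2^{p-1}$ into two cycles of step $s_i/2^{p}$ changes the position within the $\C$-line), so without a Cauchy-type estimate on $|\ga_p(u)\,\ga_{p+1}(u)|$ the ultralimit of the shifted sequence need not coincide with that of the unshifted one. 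The self-similarity does not close this either, since it relates level $p$ at parameter $u$ to level $p+1$ at parameter $u/2$, not at $u$; and the paper explicitly disclaims, at this stage, having the tools to prove that $\ga_p$ converges. You should be aware, though, that the published proof is no more detailed at exactly this spot: it observes that $\phi$ carries the vertex sequence $v(q)$ onto $v(q/2)$ while halving mutual distances and then simply asserts $\phi(x)=\lim v'(q)\in\ga$. So you have reproduced the paper's argument, including its thinnest link; genuinely closing it would require showing that the shifted and unshifted limits inside the common fiber coincide, which neither your write-up nor the paper's does explicitly.
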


\begin{proof} Let 
$\ga_p$, $p\ge 1$,
be the sequence of piecewise geodesic curves in
$X_\om$
used in the construction of
$\ga=\lim\ga_p$.
For
$p\ge 1$,
we let
$v_p^{km}$, $m\in\Z$,
be the sequence of vertices of
$\ga_p$, $\ov v_p^{km}=\pi_\om(v_p^{km})$ 
the sequence of respective fibers of the fibration
$\pi_\om:X_\om\to B_\om$.
Recall that the sequences
$\set{\ov v_p^{km}}{$m\in\Z$}$, $p\ge 1$,
approximate the projection
$\pi_\om(\ga)$
of 
$\ga$,
that is,
$\pi_\om(\ga)$
coincides with the closure of the union
$\cup_p\set{\ov v_p^{km}}{$m\in\Z$}$.

We have
$\phi(\ga_p)=\ga_{p+1}$
and
$\phi(v_p^{km})=v_{p+1}^{km}$
by the construction of
$\ga_p$
and Lemma~\ref{lem:pure_homothety}. For every dyadic number
$q=m/2^r$, $m\in\Z$, $r\ge 0$,
the sequence
$v(q)=\set{v_p^{q_p}}{$p\ge r+1$}$,
where
$q_p=2^{p-(r+1)}\cdot km$, 
lies in a common fiber
$F=F(q)$
of
$\pi_\om$.
Thus 
$\phi$
maps this sequence into the sequence 
$v'(q)=\set{v_{p+1}^{q_p'}}{$p\ge r+1$}\sub\phi(F)=F(q/2)$,
where
$q_p'=2^{p-r}(km/2)$,
shrinking the mutual distances by the factor
$1/2$.
Hence for the limit point
$x=\lim v(q)\in\ga$
of any limiting procedure giving
$\ga=\lim\ga_p$
we have
$\phi(x)=\lim v'(q)\in\ga$.
The points of type
$x=\lim v(q)$
with dyadic
$q$
are dense in
$\ga$,
thus 
$\phi$
preserves
$\ga$, $\phi(\ga)=\ga$.
\end{proof}

\begin{lem}\label{lem:zigzag_change_basepoint} Let
$\ga=\ga(o,\cL,\cS)\sub X_\om$
be a zigzag curve with base point
$o\in X_\om$,
where
$\cL=\{l_1,\dots,l_k\}$, $\cS=\{s_1,\dots,s_k\}$, 
$s_1+\dots+s_k>0$.
Then for any
$o'\in\ga$
we have
$\ga(o',\cL,\cS)=\ga$,
i.e. any zigzag curve 
$\ga$
is independent of a choice of
its base point
$o$.
\end{lem}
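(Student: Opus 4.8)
The plan is to combine two ingredients --- the shift-equivariance of the zigzag construction and a finite-level translation identity for the approximating curves $\ga_p$ --- and then to pass to the limit using Lemma~\ref{lem:shift_identity}. First I would record the shift-equivariance. If $\eta:X_\om\to X_\om$ is any shift, then by Lemma~\ref{lem:pure_homothety} it preserves each oriented foliation $\{l_i(x)\}_x$ Busemann parallel to $l_i$, and being an isometry it preserves the prescribed distances $s_i$. Hence $\eta$ carries the piecewise geodesic $\ga_p(o,\cL,S)$ onto $\ga_p(\eta(o),\cL,S)$ vertex by vertex, and passing to the ultra-filter limit gives $\eta(\ga(o,\cL,S))=\ga(\eta(o),\cL,S)$. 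In particular, for $o'\in\ga$ and any shift $\eta$ with $\eta(o)=o'$ we get $\ga(o',\cL,S)=\eta(\ga)$, the right-hand side being independent of the choice of such $\eta$. It therefore suffices to produce, for each $o'\in\ga$, a shift with $\eta(o)=o'$ leaving $\ga$ invariant.

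The key combinatorial observation at finite level is that, for every period-vertex $v_p^{km}$ (index a multiple of $k$), starting the zigzag procedure from $v_p^{km}$ with the same data $\cL$, $S$ reproduces $\ga_p$ itself, merely reparameterized: $\ga_p(v_p^{km},\cL,S)(t)=\ga_p(o,\cL,S)(t+t_p^{km})$. This holds because after each full block $l_1,\dots,l_k$ the procedure restarts with $l_1$, so the bi-infinite vertex sequence $\{v_p^n\}_{n\in\Z}$ is governed by $k$-periodic local transition rules and hence looks the same from any period boundary (both forward and, with the reversed conventions, backward).

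I would then treat dyadic base points first. The points $x(q)=\lim v(q)$ for dyadic $q$ are dense in $\ga$ (as in the proof of Lemma~\ref{lem:zigzag_preserved}), and for $o'=x(q_0)$ one may choose period-vertices $o_p=v_p^{q_p}$ with $o_p\to o'$ and $t_p^{q_p}\to T_0:=(\sum_i s_i)q_0$. Applying shift-equivariance at level $p$ with the shift $\eta_{o_p o'}$ of Lemma~\ref{lem:shift_identity} gives $\ga_p(o',\cL,S)=\eta_{o_p o'}\big(\ga_p(o,\cL,S)(\,\cdot\,+t_p^{q_p})\big)$. Letting $p\to\infty$: the approximating curves are $1$-Lipschitz in their arclength parameter, so $\ga_p(o,\cL,S)(\,\cdot\,+t_p^{q_p})$ and $\ga_p(o,\cL,S)(\,\cdot\,+T_0)$ have the same limit $\ga(\,\cdot\,+T_0)$; and $\eta_{o_p o'}\to\id$ by Lemma~\ref{lem:shift_identity}, uniformly on the bounded sets that matter since isometries are equicontinuous. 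This yields $\ga(o',\cL,S)(t)=\ga(t+T_0)$, i.e. $\ga(o',\cL,S)=\ga$ as subsets. An arbitrary $o'\in\ga$ is then approximated by dyadic $x(q_j)\to o'$; since $\ga(o',\cL,S)=\eta_{x(q_j)o'}(\ga(x(q_j),\cL,S))=\eta_{x(q_j)o'}(\ga)\to\ga$ while the left-hand side is independent of $j$, we conclude $\ga(o',\cL,S)=\ga$.

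The main obstacle is the interchange of limits in the dyadic step: the base point $o_p$, the correcting shift $\eta_{o_p o'}$, and the parameter shift $t_p^{q_p}$ all vary with $p$, so one must argue that replacing the moving base point $o_p$ by the fixed $o'$ is harmless. This is precisely what Lemma~\ref{lem:shift_identity} was designed for ($\eta_{o_p o'}\to\id$ as $o_p\to o'$); combined with the $1$-Lipschitz arclength parameterization of the $\ga_p$ and equicontinuity of isometries on bounded sets, it controls the double limit. A secondary point to handle with care is the fiber subtlety: $x(q)$ is a priori only known to lie in a common $\K$-line with the vertices $v_p^{q_p}$, so the argument must be phrased via the honest ultra-filter limits and the density of $\{x(q)\}$ rather than any exact equality $x(q)=v_p^{q_p}$.
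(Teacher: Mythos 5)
Your proposal is correct and follows essentially the same route as the paper: treat dyadic points of $\ga$ first by realizing them as limits of period-vertices $v_p^{q_p}$, transport $\ga_p$ by the shifts $\eta_{v_p^{q_p}o'}\to\id$ of Lemma~\ref{lem:shift_identity}, pass to the limit via the standard three-$\ep$ estimate, and finish by density of dyadic points. You are merely more explicit than the paper about the $k$-periodicity of the vertex sequence and the reparameterization by $t_p^{q_p}$, both of which the paper uses tacitly in the step $\eta_p(\ga_p)=\ga_p'$.
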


\begin{proof} We first consider the case
$o'=\ga(t_q)$
is a {\em dyadic} point with dyadic
$q=m/2^r$, $m\in\Z$, $r\ge 0$,
and 
$t_q=(s_1+\dots+s_k)q$,
for the canonical parameterization
$t\mapsto\ga(t)$
of
$\ga$.
Then 
$o'$
is an accumulation point of the vertices
$v_p=v_p^{q_p}=\ga_p(t_p^{q_p})$, $p\ge r+1$,
where
$q_p=2^{p-(r+1)}\cdot km$
and
$t_p^{q_p}=(s_1+\dots+s_k)(q_p/k)/2^{p-1}=t_q$,
of approximating piecewise geodesic curves
$\ga_p$, $\ga=\lim\ga_p$
(recall that the sequence
$v(q)=\set{v_p^{q_p}}{$p\ge r+1$}$
lies in a fiber
$F(q)\sub X_\om$
of the projection
$\pi_\om$,
see the proof of Lemma~\ref{lem:zigzag_preserved}).
That is,
$o'=\lim v_p$
for our limiting procedure. 

By Lemma~\ref{lem:shift_identity}, there is a shift
$\eta_p=\eta_{v_po'}:X_\om\to X_\om$
with
$\eta_p(v_p)=o'$
and
$\lim\eta_p=\id$.
Then
$\eta_p(\ga_p)=\ga_p'$,
where
$\ga_p'=\ga_p(o',\cL,\cS)$
is the piecewise geodesic curve with the base point 
$o'$
approximating the zigzag curve
$\ga'=\ga(o',\cL,\cS)$, $\ga'=\lim\ga_p'$.
Now for an arbitrary point
$x\in\ga$, $x=\ga(t)$,
we have
$x=\lim\ga_p(t)$.
We put
$x'=\ga'(t)=\lim\ga_p'(t)$.
Then for an arbitrary
$\ep>0$
we have
$|x\ga_p(t)|<\ep$, $|x'\ga_p'(t)|<\ep$,
and
$|x\eta_p(x)|<\ep$
for sufficiently large
$p$.
The last estimate holds since
$\lim\eta_p=\id$.
Using
$|\eta_p(x)\ga_p'(t)|=|\eta_p(x)\eta_p\circ\ga_p(t)|
 =|x\ga_p(t)|$,
we obtain 
$$|xx'|\le|x\eta_p(x)|+|\eta_p(x)\ga_p'(t)|+|\ga_p(t)x'|\le 3\ep,$$
thus
$x=x'$,
that is,
$\ga=\ga'$.

For a general case, the point
$o'=\ga(t)$
can be approximated by dyadic ones,
$t_q\to t$.
Then respective piecewise geodesic curves
$\ga_{p,q}'$
with dyadic base points
$\ga(t_q)$
approximate pointwise the curve
$\ga_p'$
with the base point 
$o'$
for every 
$p\ge 1$.
Thus
$\ga'=\ga$
also in that case.
\end{proof}

\begin{pro}\label{pro:zigzag_geodesic} Every zigzag curve
$\ga\sub X_\om$
is either a geodesic and hence a Ptolemy line, or it degenerates
to a point. More precisely, if
$\ga=\ga(o,\cL,\cS)$
for some base point 
$o\in X_\om$
and collections
$\cL=\{l_1,\dots,l_k\}$
of oriented Ptolemy lines in
$X_\om$, $\cS=\{s_1,\dots,s_k\}$
of nonnegative numbers with
$s_1+\dots+s_k>0$,
then
$\ga$
is degenerate if and only if
$\sum_i\al_is_i=0$
for every oriented Ptolemy line
$l\sub X_\om$,
where
$\al_i=\slope(l_i,l)$, $i=1,\dots,k$.
\end{pro}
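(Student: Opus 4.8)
The plan is to prove Proposition~\ref{pro:zigzag_geodesic} by showing that a zigzag curve $\ga$ on which some Busemann function is non-constant must be a complete geodesic, hence a Ptolemy line by Corollary~\ref{cor:weak_unique} (any subset homeomorphic to $S^1$ that satisfies the Ptolemy equality is a circle; equivalently, a complete geodesic line in $X_\om$ is a Ptolemy line). The whole argument exploits the self-similarity of $\ga$ established in Lemma~\ref{lem:zigzag_preserved} together with the basepoint-independence of Lemma~\ref{lem:zigzag_change_basepoint}.

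\emph{First I would} dispose of the degenerate case. By Lemma~\ref{lem:busemann_affine_zigzag}, every Busemann function $b$ of an oriented Ptolemy line $l$ satisfies $b\circ\ga(t)=\be t$ with $\be=\sum_i\al_is_i/\sum_is_i$ and $\al_i=\slope(l_i,l)$, once $b(o)=0$. If $\sum_i\al_is_i=0$ for \emph{every} oriented Ptolemy line $l\sub X_\om$, then every Busemann function is constant along $\ga$. Since a point $x\in X_\om$ is separated from $o$ by some horosphere (concretely, pick any Ptolemy line through $o$ not containing $x$ and use that its Busemann function distinguishes level sets), constancy of all Busemann functions forces $\ga$ to lie in a single $\K$-line; but then, since $F_x$ consists of points on which \emph{all} Busemann functions through $o$ agree, $\ga$ cannot move away from $o$ at all and collapses to $\{o\}$. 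Conversely, if $\sum_i\al_is_i\neq 0$ for some $l$, then $b\circ\ga(t)=\be t$ with $\be\neq0$, so $\ga$ is non-constant, which is the case I now treat.

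\emph{The core step} is to show that in the non-degenerate case $\ga$ is a geodesic, i.e.\ $|\ga(t)\ga(t')|=|t-t'|$ after renormalizing the limiting parameterization to unit speed. Here the key leverage is the homothety of Lemma~\ref{lem:zigzag_preserved}: the element $\phi\in\Ga_{\om,o}$ with coefficient $\la=1/2$ satisfies $\phi(\ga)=\ga$ and, being a homothety of $(X_\om,d)$, contracts all distances by $1/2$. Combined with Lemma~\ref{lem:zigzag_change_basepoint}, which makes the dilation available at \emph{every} point of $\ga$, this self-similarity means that the length of $\ga$ restricted to any dyadic parameter interval scales correctly. Concretely, I would compare the distance $|\ga(t)\ga(t')|$ with the length along the approximating piecewise-geodesic $\ga_p$: the triangle inequality gives $|\ga(t)\ga(t')|\le|t-t'|$ (the limiting parameter measures accumulated length along the foliated segments), while the lower bound comes from the affine function $b\circ\ga(t)=\be t$, since any $1$-Lipschitz function with slope $\be$ forces $|\ga(t)\ga(t')|\ge|\be|\,|t-t'|$. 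To upgrade the lower bound from $|\be|$ to the full speed I would iterate the $\la=1/2$ self-similarity: fixing basepoints along $\ga$ via Lemma~\ref{lem:zigzag_change_basepoint} and applying $\phi$ repeatedly shows that the ratio $|\ga(t)\ga(t')|/|t-t'|$ is invariant under dyadic rescaling and hence constant, and the value of this constant is pinned down to $1$ by testing against the Busemann function associated to a line $l$ realizing the supremum of $|\be|$ over all $l$ (for which the affine lower bound is sharp).

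\emph{The main obstacle} I anticipate is the lower bound on $|\ga(t)\ga(t')|$, equivalently ruling out that the limiting (ultrafilter) curve $\ga$ ``shrinks'' under the limit so that the limiting parameterization overstates the true distance. The affine Busemann estimate only yields $|\ga(t)\ga(t')|\ge|\be|\,|t-t'|$ for a single line $l$, and a priori $|\be|$ could be strictly less than the speed of $\ga$; the self-similar scaling argument is what forces equality, but making it rigorous requires care because $\ga$ is defined only through a non-principal ultrafilter and the approximating curves $\ga_p$ need not converge in $X_\om$. I would therefore phrase the scaling comparison entirely in terms of the invariant ratio and the dyadic points $v(q)$, whose images under $\phi$ are controlled by Lemma~\ref{lem:zigzag_preserved}, so that the geodesic property is extracted from the dyadic-dense skeleton and then extended by the continuity supplied by Lemma~\ref{lem:zigzag_change_basepoint}. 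Once $\ga$ is shown to be a complete geodesic line, it is a Ptolemy line, completing the proof.
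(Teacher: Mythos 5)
Your overall strategy --- exploiting the $\la=1/2$ homotheties supplied by Lemma~\ref{lem:zigzag_preserved} at every point of $\ga$ (via Lemma~\ref{lem:zigzag_change_basepoint}) to force self-similarity and hence geodesy --- is the same as the paper's, which implements it as a midpoint argument: $\phi\in\Ga_{\om,o}$ and $\phi'\in\Ga_{\om,o'}$ with coefficient $1/2$ both preserve $\ga$, the points $\phi(o')$ and $\phi'(o)$ are identified using additivity and monotonicity of length along $\ga$, and the resulting midpoints yield that $\ga$ is a geodesic or a point. Your ``invariant ratio'' phrasing skips exactly this identification step: scale-invariance at $\ga(t)$ gives $|\ga(t)\ga(t+r/2)|=\tfrac12|\ga(t)\ga(t+r)|$, but to conclude that $\ga(t+r/2)$ is a genuine metric midpoint you must also know it coincides with the half-distance point produced by the homothety centered at $\ga(t+r)$; that is the content the paper extracts from rectifiability of the approximating curves. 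Also, ``pinning the constant to $1$'' is both unnecessary (a constant-speed geodesic is a geodesic) and, without your renormalization caveat, false: by Lemma~\ref{lem:uspeed_parameter_zigzag} the canonical parameterization has speed $\sqrt{\sum_is_i^2}/\sum_is_i<1$ in general.

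The genuine gap is in your treatment of the degenerate direction. From $\sum_i\al_is_i=0$ for all $l$ you correctly conclude that every Busemann function is constant on $\ga$, i.e.\ $\ga\sub F_o$; but your next assertion, that this forces $\ga$ to collapse to $\{o\}$, does not follow. The fiber $F_o=\bigcap_{l\ni o}H_l$ is precisely the set of points on which all Busemann functions through $o$ take their value at $o$, and it is homeomorphic to $\R^p$ with $p$ possibly positive, so containment in $F_o$ does not preclude a nonconstant curve. The paper closes this by deriving the degeneracy criterion \emph{from} the dichotomy: if $\ga$ is nondegenerate it is a Ptolemy line $l_0$, and testing Lemma~\ref{lem:busemann_affine_zigzag} against the Busemann function of $l_0$ itself (which is nonconstant along $l_0$) yields $\sum_i\slope(l_i,l_0)s_i\neq0$. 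You should reorganize your proof so that the dichotomy is established first and the degeneracy criterion is read off from it, rather than attempting to prove degeneracy directly from the vanishing of all Busemann slopes.
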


\begin{proof} We first show that for each
$o$, $o'\in\ga$
there is a midpoint
$x\in\ga$.
By Lemma~\ref{lem:zigzag_change_basepoint} and
Lemma~\ref{lem:zigzag_preserved}, homotheties
$\phi\in\Ga_{\om,o}$, $\phi'\in\Ga_{\om,o'}$
with coefficient
$\la=1/2$
both preserve
$\ga$, $\phi(\ga)=\ga=\phi'(\ga)$.
Then for 
$x=\phi(o')\in\ga$
we have
$|ox|=|oo'|/2$,
and similarly for
$x'=\phi'(o)\in\ga$
we have
$|x'o'|=|oo'|/2$.
Furthermore, the length of the segment of
$\ga$
between
$o$, $x$
is half of the length of the segment between
$o$, $o'$, $L([ox]_\ga)=L([oo']_\ga)/2$.
Thus
$L([xo']_\ga)=L([oo']_\ga)/2$
by additivity of the length. Then
$L([x'o']_\ga)=L([oo']_\ga)/2=L([xo']_\ga)$
and hence
$x'=x$
by monotonicity of the length, and
$x$
is the required midpoint.

It follows that the segment of
$\ga$
between its any two points is geodesic. Since 
$\ga$
is invariant under the nontrivial homothety
$\phi\in\Ga_{\om,o}$,
we see that 
$\ga$
is a Ptolemy line unless it degenerates to a point.

If 
$\ga$
is degenerate, then any Busemann function
$b:X_\om\to\R$
is constant along 
$\ga$.
By Lemma~\ref{lem:busemann_affine_zigzag}, we have
$\sum_i\al_is_i=0$
for every oriented Ptolemy line 
$l\sub X_\om$, 
where
$\al_i=\slope(l_i,l)$, $i=1,\dots,k$.
Conversely, if
$\ga$
is nondegenerate, then 
$l=\ga(\R)$
is a Ptolemy line in
$X_\om$
by the first part of the proof,
and the associated Busemann function 
$b:X_\om\to\R$
is nonconstant along
$l$.
By Lemma~\ref{lem:busemann_affine_zigzag}, we have
$b\circ\ga(t)=\be t$
for the canonical parameterization of
$\ga$
with
$\be=\sum_i\al_is_i/\sum_is_i$,
$\al_i=\slope(l_i,l)$, $i=1,\dots,k$.
Thus
$\sum_i\al_is_i\neq 0$.
\end{proof}

\begin{rem}\label{rem:distinct_fiber_zigzag} Assume
$\ga_1$
is a piecewise geodesic curve (with finite number
of edges) between different fibers in
$X_\om$, $\ga_1(0)\in F$, $\ga_1(s_1+\dots+s_k)\in F'$,
$F\neq F'$,
where
$s_1,\dots,s_k$
are the lengths of its edges. Then the respective zigzag curve
$\ga=\lim\ga_p$
is not degenerate. This follows from
$\ga(0)\in F$, $\ga(s_1+\dots+s_k)\in F'$
by construction of the approximating sequence
$\ga_1,\dots,\ga_p,\dots\to\ga$. 
\end{rem}

Now, we compute a unit speed parameterization a zigzag
curve
$\ga=\ga(o,\cL,S)$
assuming for simplicity that the collection
$\cL$
consists of mutually orthogonal Ptolemy lines.

\begin{lem}\label{lem:uspeed_parameter_zigzag} Let
$\ga=\ga(o,\cL,S)$
be a zigzag curve in
$X_\om$,
where the collection
$\cL=\{l_1,\dots,l_k\}$
consists of mutually orthogonal oriented Ptolemy
lines,
$l_i\bot l_j$
for 
$i\neq j$.
Then
$$|o\ga(t)|=\frac{\sqrt{\sum_is_i^2}}{\sum_is_i}|t|$$
for all
$t\in\R$,
where, we recall,
$t\mapsto\ga(t)$
is the canonical parameterization, and
$S=\{s_1,\dots,s_k\}$. 
In particular,
$\ga$
is nondegenerate.
\end{lem}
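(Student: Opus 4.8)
The plan is to combine Proposition~\ref{pro:zigzag_geodesic} with two applications of Lemma~\ref{lem:busemann_affine_zigzag} and the slope symmetry of Lemma~\ref{lem:slope_symmetry}, reducing the whole computation to a single self-consistent equation for the speed of $\ga$.

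First I would dispose of the degenerate case. By Proposition~\ref{pro:zigzag_geodesic}, $\ga$ degenerates to a point if and only if $\sum_i\al_is_i=0$ for every oriented Ptolemy line $l\sub X_\om$, where $\al_i=\slope(l_i,l)$. Choosing $l=l_j$ and invoking orthogonality, $\slope(l_i,l_j)=0$ for $i\neq j$ while $\slope(l_j,l_j)=-1$, so $\sum_i\al_is_i=-s_j$, which is nonzero for some $j$ since $\sum_is_i>0$. Hence $\ga$ is nondegenerate and, by the same proposition, a Ptolemy line. In particular there is a constant $c>0$ with $|o\ga(t)|=c|t|$ for all $t$ (the speed of the canonical parameterization), and it remains only to identify $c=\sqrt{\sum_is_i^2}/\sum_is_i$.

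Next I would set $l=\ga(\R)$ oriented by increasing $t$, and compute the relevant slopes in two ways. Denote by $c_l(s)=\ga(s/c)$ the arclength parameterization of $l$ compatible with its orientation. On one hand, for the Busemann function $b$ of $l$ itself with $b(o)=0$ we have $b\circ c_l(s)=-s$, hence $b\circ\ga(t)=-ct$; comparing with Lemma~\ref{lem:busemann_affine_zigzag} applied to $b$ gives $-c=\be=\sum_i\al_is_i/\sum_is_i$ with $\al_i=\slope(l_i,l)$. On the other hand, for the Busemann function $b_i$ of $l_i$ with $b_i(o)=0$, Lemma~\ref{lem:busemann_affine_zigzag} together with orthogonality ($\slope(l_j,l_i)=0$ for $j\neq i$ and $\slope(l_i,l_i)=-1$) yields $b_i\circ\ga(t)=\be_it$ with $\be_i=-s_i/\sum_js_j$. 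Reparameterizing by arclength, $b_i\circ c_l(s)=(\be_i/c)s$, so by definition $\slope(l,l_i)=\be_i/c$; and by the symmetry of the slope (Lemma~\ref{lem:slope_symmetry}) $\al_i=\slope(l_i,l)=\slope(l,l_i)=\be_i/c=-s_i/(c\sum_js_j)$.

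Finally I would substitute $\al_i=-s_i/(c\sum_js_j)$ into $-c=\sum_i\al_is_i/\sum_is_i$, obtaining
$$-c=\frac{1}{\sum_is_i}\sum_i\frac{-s_i}{c\sum_js_j}\,s_i=\frac{-\sum_is_i^2}{c\left(\sum_is_i\right)^2},$$
whence $c^2=\sum_is_i^2/\left(\sum_is_i\right)^2$ and $c=\sqrt{\sum_is_i^2}/\sum_is_i$, as claimed. The only genuine subtlety — and the step I would watch most carefully — is the bookkeeping of orientations and the passage between the canonical parameter $t$ and arclength $s=ct$, since the argument hinges on the speed $c$ appearing simultaneously in the identity $-c=\be$ for $\ga$'s own Busemann function and in the relation $\al_i=\be_i/c$ furnished by slope symmetry. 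Once these two occurrences of $c$ are correctly matched, the resulting self-consistent equation pins down $c$ and, as a byproduct, reconfirms the nondegeneracy of $\ga$.
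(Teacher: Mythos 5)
Your proposal is correct and follows essentially the same route as the paper's proof: identify the speed $c$ (the paper's $\la$) via homothety invariance, equate $-c$ with $\be=\sum_i\al_is_i/\sum_is_i$ from Lemma~\ref{lem:busemann_affine_zigzag}, extract $\al_i=-s_i/(c\sum_js_j)$ from the Busemann functions of the $l_i$ together with slope symmetry, and solve the resulting equation for $c$. The only (minor, harmless) difference is that you dispose of degeneracy up front via the criterion of Proposition~\ref{pro:zigzag_geodesic}, whereas the paper obtains nondegeneracy as a byproduct of $\la^2>0$.
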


\begin{proof} By Proposition~\ref{pro:zigzag_geodesic},
$l=\ga(\R)$
is a Ptolemy line or it degenerates to a point, in particular,
$l$
is invariant for every pure homothety
$\phi:X_\om\to X_\om$
with
$\phi(o)=o$.
From this one easily finds that there is
$\la\in[0,1]$
such that
$|o\ga(t)|=\la|t|$
for all 
$t\in\R$.
Let
$b$
be the Busemann function of
$l$
normalized by
$b(o)=0$
and
$b(t)<0$
for
$t>0$
(if
$l$
is degenerate, then
$b\equiv 0$
by definition). By Lemma~\ref{lem:busemann_affine_zigzag},
$-|o\ga(t)|=b\circ\ga(t)=\be t$
with
$\be=\sum_i\al_is_i/\sum_is_i$
for all 
$t\ge 0$,
where
$\al_i=\slope(l_i,l)$, $i=1,\dots,k$.
Let
$\be_i$, $i=1,\dots,k$,
be the Busemann function of
$l_i$
normalized by
$b_i(o)=0$.
Using symmetry of the slope (Lemma~\ref{lem:slope_symmetry}), 
$\al_i=\slope(l,l_i)$,
we obtain 
$\be_it=b_i\circ\ga(t)=\al_i\la t$
for all
$t\ge 0$,
where by Lemma~\ref{lem:busemann_affine_zigzag} again, 
$$\be_i=\sum_j\slope(l_j,l_i)s_j/\sum_js_j
  =-s_i/\sum_js_j$$
for
$i=1,\dots,k$.
Thus
$\la\be=-\frac{\sum_is_i^2}{(\sum_is_i)^2}$
and
$\la^2 t=\la|o\ga(t)|=t\frac{\sum_is_i^2}{(\sum_is_i)^2}$
for all
$t\ge 0$.
Therefore,
$\la^2=\sum_is_i^2/(\sum_is_i)^2>0$.
In particular,
$l$
is nondegenerate.
\end{proof}

\subsection{Orthogonalization procedure}
\label{subsect:existence}

As usual, we fix
$\om\in X$
and a metric 
$d$
of the M\"obius structure with infinitely
remote point 
$\om$.

\begin{pro}\label{pro:orthogonal_base} There is a finite
collection
$\cL_\bot$
of mutually orthogonal Ptolemy lines such that for every
$x\in X_\om$
the fiber
$F\sub X_\om$
through
$x$
of the fibration
$\pi_\om:X_\om\to B_\om$
is represented as
$F=\cap_{l\in\cL_\bot}H_l$,
where
$H_l$
is the horosphere of
$l$
through
$x$. 
\end{pro}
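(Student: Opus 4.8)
The plan is to fix $\om$ and a metric $d$ of the M\"obius structure with infinitely remote point $\om$, to produce the collection $\cL_\bot=\{l_1,\dots,l_k\}$ as a \emph{maximal} family of mutually orthogonal Ptolemy lines through a chosen base point $o\in X_\om$, and then to transport the resulting description of the fiber $F_o$ to every other point by a shift. Throughout I use that the slope is a symmetric form with $\slope(l,l)=-1$ (Lemma~\ref{lem:slope_symmetry}), so that $\slope(l,l')=0$ behaves like orthogonality for a negative inner product on the directions at $o$, and that every Busemann function is affine along every Ptolemy line (Proposition~\ref{pro:busemann_affine}) and $1$-Lipschitz. Since $F_o=\bigcap_{l\ni o}H_l$ by definition, the inclusion $F_o\sub\bigcap_iH_{l_i}$ is automatic; the entire content is the reverse inclusion, i.e. that finitely many orthogonal Busemann functions $b_{l_1},\dots,b_{l_k}$ already cut out $F_o$.

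I would build the family greedily: start with any Ptolemy line $l_1\ni o$ and, having chosen mutually orthogonal $l_1,\dots,l_j$, adjoin any Ptolemy line $l_{j+1}\ni o$ with $\slope(l_{j+1},l_i)=0$ for all $i\le j$, stopping when no such line exists. \emph{Finiteness} is clean: if $e_i\in l_i$ is the unit point with $b_{l_i}(e_i)=-1$, orthogonality gives $b_{l_i}(e_j)=b_{l_i}(o)=0$ for $j\ne i$, so the $1$-Lipschitz property yields $|e_ie_j|\ge|b_{l_i}(e_i)-b_{l_i}(e_j)|=1$. Thus $\{e_i\}$ is $1$-separated in the sphere $S_1^d(o)$, which is compact (a closed subset of $X$ not meeting $\om$), so the family $\cL_\bot=\{l_1,\dots,l_k\}$ is finite.

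The core is to show that a maximal orthogonal family captures every Busemann function, and this rests on the zigzag calculus. Given an oriented line $m\ni o$, orient each $l_i$ so that $\al_i:=\slope(m,l_i)\ge0$ and form the residual zigzag $\ga'=\ga(o,\{m,l_1,\dots,l_k\},\{1,\al_1,\dots,\al_k\})$. By Lemma~\ref{lem:busemann_affine_zigzag} its slope with respect to each $l_j$ is a positive multiple of $\al_j+\al_j\slope(l_j,l_j)=0$, so $\ga'\bot l_j$ for all $j$, while its slope with respect to $m$ is a positive multiple of $\sum_i\al_i^2-1$. By maximality $\ga'$ cannot be a nondegenerate Ptolemy line orthogonal to all $l_i$, so Proposition~\ref{pro:zigzag_geodesic} forces the Parseval identity $\sum_i\slope(m,l_i)^2=1$ for every direction $m$; a short computation with Lemma~\ref{lem:busparallel_sublinear} then shows $m$ is Busemann parallel to the zigzag $\ga_m$ over $\cL_\bot$ realizing the vector $(\al_1,\dots,\al_k)$, whence $b_m=b_{\ga_m}$. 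Finally, applying Lemma~\ref{lem:busemann_affine_zigzag} to an arbitrary $m$ along an arbitrary zigzag $\ga=\ga(o,\cL_\bot,\{s_i\})$ gives the exact linear identity
\[
\slope(m,\ga)=\frac{\sum_i s_i\,\slope(m,l_i)}{\sqrt{\sum_i s_i^2}}=\sum_i c_i\,\slope(m,l_i),\qquad c_i=-\slope(\ga,l_i),
\]
so $h:=b_\ga-\sum_i c_ib_{l_i}$ has slope $0$ along every line through $o$; since slope is a parallel invariant (Lemma~\ref{lem:paraline_busemann}) and every Ptolemy line is Busemann parallel to one through $o$ (Corollary~\ref{cor:busparallel_foliation}), $h$ is constant along \emph{every} Ptolemy line in $X_\om$, with $h(o)=0$.

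The main obstacle is to upgrade ``$h$ is constant along every Ptolemy line and $h(o)=0$'' to ``$h\equiv0$''; this is exactly the assertion that $X_\om$ is connected by finite chains of Ptolemy lines issuing from $o$, and it is the crux. I would prove it by showing the line-connected component $R$ of $o$ is open and closed: closedness from Lemma~\ref{lem:limit_circle} (lines through a fixed point are limits of nearby lines) together with the refined shifts $\eta_{xx'}\to\id$ of Lemma~\ref{lem:shift_identity}, and openness from the fact that the parallel foliations through a point sweep out a neighborhood, using the \semik-plane structure and the zigzag approximation $\ga=\lim\ga_p$. Granting $h\equiv0$, every $b_m$ with $m\ni o$ equals $b_{\ga_m}=\sum_i c_ib_{l_i}$ and hence vanishes on $\bigcap_iH_{l_i}$; therefore $\bigcap_iH_{l_i}\sub F_o$, giving equality at $o$. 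For an arbitrary $x\in X_\om$, take a shift $\eta$ with $\eta(o)=x$: by Lemma~\ref{lem:pure_homothety} it preserves each parallel foliation $l_i(\cdot)$ and maps fibers to fibers, so $b_{l_i}\circ\eta^{-1}=b_{l_i}+\const$ and $F_x=\eta(F_o)=\bigcap_i\{\,b_{l_i}=b_{l_i}(x)\,\}$, which is precisely $\bigcap_{l\in\cL_\bot}H_l$ with $H_l$ the horosphere of $l$ through $x$. Thus the single finite collection $\cL_\bot$ works simultaneously for all basepoints.
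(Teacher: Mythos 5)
Your construction of $\cL_\bot$, the finiteness argument, the zigzag orthogonalization, the Parseval identity $\sum_i\slope(m,l_i)^2=1$, and the representation of an arbitrary Busemann function as $\sum_ic_ib_{l_i}$ all track the paper's own route through Lemmas~\ref{lem:orthogonal_bound}, \ref{lem:orthogonalization_procedure}, \ref{lem:max_orthogonal} and \ref{lem:linear_combination_busemann}; your $1$-separation argument via the $1$-Lipschitz property of $b_{l_i}$ is in fact a slightly cleaner variant of the paper's duality argument in Lemma~\ref{lem:orthogonal_bound}. The problem is the step you yourself flag as the crux, and there the proposal has a genuine gap.

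Your plan to show that the set $R$ of points reachable from $o$ by finite chains of Ptolemy lines is open (or dense) cannot work at this stage of the development. The union of the Ptolemy lines through a point is only $k$-dimensional ($k=\dim B_\om$), so to make $R$ open one must be able to move in the fiber directions, i.e.\ one needs nontrivial holonomy of the canonical distribution around closed polygons. That non-integrability is exactly what is established much later (Corollary~\ref{cor:nonintegrability}, Lemma~\ref{lem:unclosed_triangle}), and its proof rests on property ($1_\K$), whose existence part is itself deduced from Proposition~\ref{pro:orthogonal_base}; invoking it here would be circular, and without it nothing rules out that $\ov R$ is a proper ``horizontal leaf'' on which $h$ vanishes while $h\not\equiv 0$ off it. The paper avoids this entirely by changing the class of curves: since $h$ is Lipschitz and constant along the tangent Ptolemy line of any Ptolemy circle at any of its points (Proposition~\ref{pro:tangent_rcircle}), a first-order approximation argument shows $h$ is constant along every Ptolemy circle in $X_\om$, and the \emph{enhanced} existence property (E) -- through any two points of $X$ there is a Ptolemy circle, from Proposition~\ref{pro:two_point_homogeneous} -- connects $o$ to an arbitrary $x\in X_\om$ by a single Ptolemy circle, giving $h(x)=h(o)=0$ directly. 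That passage from lines to circles is the missing idea; with it your argument closes, and the transport of the conclusion from $o$ to an arbitrary basepoint by a shift is fine as you wrote it.
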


We first note that the cardinality of any collection
of mutually orthogonal Ptolemy lines in
$X_\om$
is uniformly bounded above.

\begin{lem}\label{lem:orthogonal_bound} There is
$N\in\N$
such that the cardinality of any collection
$\cL$
of mutually orthogonal Ptolemy lines in
$X_\om$
is bounded by
$N$, $|\cL|\le N$.
\end{lem}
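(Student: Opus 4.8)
The plan is to convert a collection of mutually orthogonal Ptolemy lines into a packing problem on a single fixed compact sphere, and to extract the bound from compactness of $X$. First I would reduce to the case in which all lines of $\cL$ pass through one common point. Fix a base point $o\in X_\om$ and, for each $l\in\cL$, replace $l$ by the unique Ptolemy line $l(o)$ through $o$ that is Busemann parallel to $l$ (Corollary~\ref{cor:busparallel_foliation}). By Lemma~\ref{lem:paraline_busemann} together with the symmetry of the slope (Lemma~\ref{lem:slope_symmetry}), the slope depends only on the Busemann-parallel classes of its two arguments, so the lines $l(o)$ are again mutually orthogonal; and they are pairwise distinct, since two Busemann parallel lines have slope $-1\neq 0$ and hence cannot be orthogonal. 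Thus without loss of generality $\cL=\{l_1,\dots,l_k\}$ with all $l_i$ passing through $o$.

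The key estimate comes next. Choose unit speed parameterizations $c_i:\R\to X_\om$ with $c_i(0)=o$, set $x_i=c_i(1)\in S_1^d(o)$, and let $b_i:X_\om\to\R$ be the Busemann function of $l_i$ normalized by $b_i(o)=0$ and $b_i\circ c_i(t)=-t$. Since $X$ is Busemann flat (Lemma~\ref{lem:smooth_convex}) the slope is well defined, and $l_i\bot l_j$ means $\slope(l_j,l_i)=0$, i.e. $b_i$ is constant along $l_j$; as $c_j(0)=o$ this constant equals $b_i(o)=0$, so $b_i(x_j)=0$ for $j\neq i$. On the other hand $b_i(x_i)=b_i\circ c_i(1)=-1$. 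Because every Busemann function is $1$-Lipschitz, this yields $|x_ix_j|\ge|b_i(x_i)-b_i(x_j)|=1$ for all $i\neq j$.

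Finally, the counting. The points $x_1,\dots,x_k$ all lie on the metric sphere $S_1^d(o)$ and are pairwise at distance at least $1$. Since $X$ is compact, the complements of closed balls are open by definition of the topology, so closed balls of $(X_\om,d)$ are closed in $X$ and hence compact; therefore $S_1^d(o)$ is a compact, and in particular totally bounded, metric space. Its covering number $N$ at scale $\tfrac12$ is finite and depends only on $(X,\om,d)$, and any two of the $x_i$ must lie in distinct members of a $\tfrac12$-net, since points in a common member are at distance less than $1$. Hence $k\le N$, which is the desired bound. The only genuinely delicate point is the first paragraph, namely verifying that orthogonality is a relation between Busemann-parallel classes so that the reduction to a common base point is legitimate; once the lines share the point $o$, the $1$-Lipschitz Busemann estimate and the compactness of $S_1^d(o)$ conclude the argument at once.
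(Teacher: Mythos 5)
Your proof is correct, and its skeleton matches the paper's: reduce to lines through a common point, show that the points at distance $1$ from that point along the various lines are pairwise $1$-separated, and conclude by a packing argument on the compact unit sphere. The one place where you genuinely diverge is the separation estimate. The paper passes to the inverted space $X_x$ and invokes the duality between horospheres and closest-point sets (Lemma~\ref{lem:flat_duality}) to conclude that $x$ is the nearest point of $l_i$ to any point of $l_j$, whence $|a'a|\ge|a'x|=1$. You instead stay in $X_\om$ and use only that $b_i$ is affine along $l_j$ with coefficient $\slope(l_j,l_i)=0$ (Proposition~\ref{pro:busemann_affine}), so $b_i\equiv 0$ on $l_j$ while $b_i(x_i)=-1$, and the $1$-Lipschitz property of Busemann functions gives $|x_ix_j|\ge 1$. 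This is more elementary and self-contained, at the cost of proving a slightly weaker geometric fact (separation of the chosen points rather than the closest-point statement). You are also more explicit than the paper about two points it leaves implicit: that orthogonality descends to Busemann-parallel classes (via Lemma~\ref{lem:paraline_busemann} and Lemma~\ref{lem:slope_symmetry}), which legitimizes the reduction to a common base point, and that the sphere $S_1^d(o)$ is compact in the stated topology. The only cosmetic slip is in the net argument: two points within $\tfrac12$ of a common net point are at distance $\le 1$, not $<1$, so you should either use open balls of radius $\tfrac12$ or a $\tfrac13$-net; this does not affect the conclusion.
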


\begin{proof} We fix 
$x\in X_\om$
and assume W.L.G. that all the lines of
$\cL$
pass through
$x$.
For every line
$l\in\cL$
we fix a point on
$l$
at the distance 1 from
$x$. 
Let
$A\sub X_\om$
be the set of obtained points. By compactness of
$X$
and homogeneity of
$X_\om$
it suffices to show that the distance
$|aa'|\ge 1$
for each distinct
$a$, $a'\in A$.
We have
$a\in l$, $a'\in l'$
for some distinct lines
$l$, $l'\in\cL$.
Since
$\slope(l',l)=0$,
the lines
$l$, $l'$
are also orthogonal at the infinite remote point
$\om$
according Lemma~\ref{lem:opposite_slopes} and 
Lemma~\ref{lem:slope_symmetry}. Thus in the space
$X_x$
with infinitely remote point 
$x$,
the Ptolemy line 
$l'\sm x\sub X_x$
lies in the horosphere 
$H$
of the line
$l\sm x\sub X_x$
through
$\om$.
By duality, see Lemma~\ref{lem:flat_duality}, the point 
$x\in l$
is closest on the line 
$l$
to any fixed point of 
$l'\sub X_\om$, 
in particular
$|a'a|\ge|a'x|=1$. 
\end{proof}

Next, we describe an orthogonalization procedure.

\begin{lem}\label{lem:orthogonalization_procedure} Let
$l_1,\dots,l_k$
be a collection of mutually orthogonal Ptolemy lines in
$X_\om$, $l_i\bot l_j$
for
$i\neq j$.
Given a Ptolemy line 
$l\sub X_\om$,
through any
$o\in X_\om$
there is a zigzag curve
$\ga=\ga(o,\cL,S)$,
where
$\cL=\{l_1,\dots,l_k,l\}$
is an ordered collection of oriented Ptolemy lines,
$S=\{s_1,\dots,s_{k+1}\}$
a collection of nonnegative numbers with
$s_1+\dots+s_{k+1}>0$,
which is orthogonal to
$l_1,\dots,l_k$, $\ga(\R)=l_{k+1}\bot l_i$
for
$i=1,\dots,k$.
Furthermore, if
$\sum_1^k\al_i^2\neq 1$,
where
$\al_i=\slope(l,l_i)$,
then
$\ga$
is nondegenerate, and
$l_{k+1}$
is a Ptolemy line. 
\end{lem}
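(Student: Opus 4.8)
The plan is to realize the orthogonalization as a single zigzag construction with Gram--Schmidt coefficients, and then to read off both the orthogonality and the nondegeneracy from Lemma~\ref{lem:busemann_affine_zigzag} and Proposition~\ref{pro:zigzag_geodesic}.

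First I would fix the orientations and coefficients. Orient $l$ arbitrarily and, for each $i\le k$, orient $l_i$ so that $\al_i=\slope(l,l_i)\ge 0$; this is possible since reversing the orientation of a line flips the sign of its slope, while the relations $\slope(l_i,l_j)=0$ (for $i\neq j$) and $\slope(l_i,l_i)=-1$ do not depend on orientations. I then take $\cL=\{l_1,\dots,l_k,l\}$ and set $s_i=\al_i$ for $i\le k$ together with $s_{k+1}=1$. These numbers are nonnegative and satisfy $\sum_i s_i=1+\sum_{i=1}^k\al_i>0$, so the zigzag curve $\ga=\ga(o,\cL,S)$ is well defined.

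Next I would apply Lemma~\ref{lem:busemann_affine_zigzag} to each reference line. For the Busemann function $b_j$ of $l_j$ ($j\le k$, normalized by $b_j(o)=0$) the only nonzero contributions to the coefficient of $b_j\circ\ga$ come from the $j$-th line, with $\slope(l_j,l_j)=-1$, and from $l$, with $\slope(l,l_j)=\al_j$; hence that coefficient is proportional to $-s_j+\al_j s_{k+1}=-\al_j+\al_j=0$, so $b_j$ is constant along $\ga$. For the Busemann function of $l$ itself, symmetry of the slope (Lemma~\ref{lem:slope_symmetry}) gives $\slope(l_i,l)=\al_i$, so the corresponding coefficient is proportional to $\sum_{i=1}^k\al_i s_i-s_{k+1}=\sum_{i=1}^k\al_i^2-1$.

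Finally I would invoke Proposition~\ref{pro:zigzag_geodesic}. A degenerate zigzag has every Busemann function constant along it, so nonvanishing of $\sum_1^k\al_i^2-1$ forces $\ga$ to be nondegenerate whenever $\sum_1^k\al_i^2\neq 1$, and then $l_{k+1}=\ga(\R)$ is a Ptolemy line. Since the canonical parameterization of a nondegenerate zigzag is an affine reparameterization of arclength (its homothety invariance gives $|o\ga(t)|=\la|t|$ for some $\la>0$), the constancy of $b_j$ along $\ga$ is exactly the statement $\slope(l_{k+1},l_j)=0$, i.e.\ $l_{k+1}\bot l_j$ for every $j\le k$. The one substantive point is the choice $s_i=\al_i$, which makes every coefficient $\be_j$ vanish identically and exhibits $\sum_1^k\al_i^2=1$ as precisely the locus where the Gram--Schmidt residual degenerates; I expect the main care to lie in the orientation bookkeeping that keeps all $s_i\ge 0$ while forcing the slopes entering Lemma~\ref{lem:busemann_affine_zigzag} to be the intended values $\al_i$, $-1$, and $0$.
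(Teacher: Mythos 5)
Your proposal is correct and follows essentially the same route as the paper: the same orientation convention $\al_i=\slope(l,l_i)\ge 0$, the same Gram--Schmidt weights (the paper merely normalizes yours by $1/(1+\sum_i\al_i)$ so that $\sum_i s_i=1$, which changes nothing since the coefficients in Lemma~\ref{lem:busemann_affine_zigzag} are scale-invariant in $S$), and the same use of Lemma~\ref{lem:busemann_affine_zigzag} together with slope symmetry and Proposition~\ref{pro:zigzag_geodesic} to get both the orthogonality and the nondegeneracy criterion.
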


\begin{proof} We fix an orientation of
$l$
and for every
$i=1,\dots,k$
we choose an orientation of
$l_i$
so that
$\al_i=\slope(l,l_i)\ge 0$,
and put
$\al:=\sum_i\al_i\ge 0$.
For any zigzag curve
$\ga=\ga(o,\cL,S)$
in
$X_\om$,
where
$\cL=\{l_1,\dots,l_k,l\}$, $S=\{s_1,\dots,s_{k+1}\}$,
for 
$i=1,\dots,k$
and for the Busemann function
$b_i$
of
$l_i$
with
$b_i(o)=0$,
by Lemma~\ref{lem:busemann_affine_zigzag} we have 
$b_i\circ\ga(t)=\be_it$
for all 
$t\in\R$,
where
$\be_i=(-s_i+\al_is_{k+1})/(s_1+\dots+s_{k+1})$.
Then putting
$s_i=\frac{\al_i}{1+\al}$, $i=1,\dots,k$,
$s_{k+1}=\frac{1}{1+\al}$,
we have
$s_1+\dots s_{k+1}=1$
and
$\be_i=0$
for every
$i=1,\dots,k$.
Thus 
$\ga$
is orthogonal to
$l_1,\dots,l_k$,
and it gives us a required Ptolemy line
$l_{k+1}=\ga(\R)$
unless
$\ga$
degenerates.
 
Let
$b$
be the Busemann function of
$l$
with 
$b(o)=0$. 
By Lemma~\ref{lem:busemann_affine_zigzag} we have
$b\circ\ga(t)=\be t$
for all 
$t\in\R$
with
$\be=\sum_1^k\slope(l_i,l)s_i-s_{k+1}$.
Using the symmetry of the slope, we see that
$\slope(l_i,l)=\slope(l,l_i)=\al_i$
and thus
$\be=(\sum_1^k\al_i^2-1)/(1+\al)$.
If
$\sum_1^k\al_i^2\neq 1$,
then this shows that
$\ga$
is nondegenerate.
\end{proof}

We say the a collection
$\{l_1,\dots,l_k\}$
mutually orthogonal Ptolemy lines in
$X_\om$
is {\em maximal} if there is no Ptolemy line in
$X_\om$
which is orthogonal to every
$l_1,\dots,l_k$.
By Lemma~\ref{lem:orthogonal_bound}, such a collection exists.

\begin{lem}\label{lem:max_orthogonal} Let
$\{l_1,\dots,l_k\}$
be a maximal collection of mutually orthogonal Ptolemy lines in
$X_\om$.
Then every Ptolemy line 
$l\sub X_\om$
can be represented as a zigzag curve
$\ga(o,\cL,S)$
with
$o\in l$
for a collection
$\cL=\{l_1,\dots,l_k\}$
of oriented Ptolemy lines and a collection 
$S=\{s_1,\dots,s_k\}$
of nonnegative numbers with
$s_1+\dots+s_k>0$.
Furthermore, we have
$\sum_i\al_i^2=1$,
where
$\al_i=\slope(l,l_i)$, $i=1,\dots,k$.
\end{lem}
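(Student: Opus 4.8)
The plan is to extract the relation $\sum_i\al_i^2=1$ from maximality by means of the orthogonalization procedure, and then to use this relation to build a zigzag over $l_1,\dots,l_k$ whose trace is exactly $l$; the heart of the matter is an equality case of the Cauchy--Schwarz inequality. First I would establish the ``furthermore'' assertion. Fix $o\in l$ and apply Lemma~\ref{lem:orthogonalization_procedure} to the collection $l_1,\dots,l_k$ and the line $l$, obtaining a zigzag $\ga(o,\cL,S)$ with $\cL=\{l_1,\dots,l_k,l\}$ that is orthogonal to each $l_i$. By that lemma, if $\sum_1^k\al_i^2\neq1$ (where $\al_i=\slope(l,l_i)$) then $\ga$ is nondegenerate and $l_{k+1}=\ga(\R)$ is a Ptolemy line orthogonal to every $l_i$, contradicting the maximality of $\{l_1,\dots,l_k\}$. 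Hence $\ga$ must degenerate, and therefore $\sum_1^k\al_i^2=1$.

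Next I would produce the representation. Orient each $l_i$ so that $\al_i=\slope(l,l_i)\ge0$ and put $s_i=\al_i$, $S=\{s_1,\dots,s_k\}$; then $\sum_is_i=\sum_i\al_i>0$ since $\sum_i\al_i^2=1$. Let $\ga=\ga(o,\{l_1,\dots,l_k\},S)$ and let $b$ be the Busemann function of the oriented line $l$ with $b(o)=0$. By the symmetry of the slope (Lemma~\ref{lem:slope_symmetry}) we have $\slope(l_i,l)=\al_i$, so Lemma~\ref{lem:busemann_affine_zigzag} gives $b\circ\ga(t)=\be t$ with $\be=\sum_i\al_i^2/\sum_i\al_i=1/\sum_i\al_i\neq0$. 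In particular $\ga$ is nonconstant along $b$, hence nondegenerate, and so a Ptolemy line by Proposition~\ref{pro:zigzag_geodesic}.

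It remains to identify $\ga(\R)$ with $l$. Since the $l_i$ are mutually orthogonal, Lemma~\ref{lem:uspeed_parameter_zigzag} gives the speed $\la=\sqrt{\sum_is_i^2}/\sum_is_i=1/\sum_i\al_i=\be$; this identity $\be=\la$ is precisely the equality case $(\sum_i\al_is_i)^2=(\sum_i\al_i^2)(\sum_is_i^2)$ of Cauchy--Schwarz, forced by the proportional choice $s_i=\al_i$. Consequently the unit-speed parameterization $c'$ of $\ga$ satisfies $b\circ c'(u)=(\be/\la)u=u$, so reversing orientation yields a unit-speed parameterization $\hat c'$ of the trace $\ga(\R)$ with $\hat c'(0)=o$ and $b\circ\hat c'(u)=-u$. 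The unit-speed parameterization $c$ of $l$ compatible with its orientation satisfies $c(0)=o$ and $b\circ c(u)=-u$ as well, so Lemma~\ref{lem:unique_line} forces $l=\ga(\R)$. Thus $\ga=\ga(o,\{l_1,\dots,l_k\},S)$ represents $l$, as claimed.

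I expect the main obstacle to be the careful matching of orientation and sign conventions that brings the slope computation into the exact normalized form $b\circ c(t)=-t=b\circ c'(t)$ required by Lemma~\ref{lem:unique_line}; Busemann flatness (Lemma~\ref{lem:smooth_convex}), which underlies both the well-definedness of the slope and the affine additivity used in Lemma~\ref{lem:busemann_affine_zigzag}, is invoked throughout. Conceptually, however, the whole argument amounts to reading $(\al_1,\dots,\al_k)$ as the unit direction cosines of $l$ in the orthonormal frame $l_1,\dots,l_k$ and recognizing $s_i=\al_i$ as the Cauchy--Schwarz equality case.
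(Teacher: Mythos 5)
Your argument is correct, and its first half coincides with the paper's: both derive $\sum_i\al_i^2=1$ by running the orthogonalization procedure of Lemma~\ref{lem:orthogonalization_procedure} on $\{l_1,\dots,l_k,l\}$ and letting maximality force the resulting orthogonal zigzag to degenerate. Where you genuinely diverge is in identifying $\ga(\R)$ with $l$. The paper keeps the weights $s_i=\al_i/(1+\al)$ produced by the orthogonalization procedure, uses Remark~\ref{rem:distinct_fiber_zigzag} to see that the reduced $k$-edge polygon and the discarded last edge $\si$ (which lies on a line Busemann parallel to $l$) join the same pair of $\K$-lines $F$, $F'$, and then obtains Busemann parallelism of $\ga(\R)$ and $l$ from the semi-$\K$-plane structure (Lemma~\ref{lem:rfoliation_semik}) before finishing with Lemma~\ref{lem:unique_line}. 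You instead take $s_i=\al_i$ and compare the Busemann coefficient $\be=\sum_i\al_i^2/\sum_i\al_i$ from Lemma~\ref{lem:busemann_affine_zigzag} with the speed $\la=\sqrt{\sum_i\al_i^2}/\sum_i\al_i$ from Lemma~\ref{lem:uspeed_parameter_zigzag}; the equality $\be=\la$ (the Cauchy--Schwarz equality case, available precisely because $s_i\propto\al_i$ and $\sum_i\al_i^2=1$) shows that $b$ has slope $1$ along the unit-speed trace, so Lemma~\ref{lem:unique_line} applies directly. This bypasses the fiber machinery entirely and makes the direction-cosine picture explicit; the one small step worth spelling out is that passing from $|o\ga(t)|=\la|t|$ to a unit-speed reparameterization $c'(u)=\ga(u/\la)$ uses that the canonical parameterization has constant speed, which follows from base-point independence (Lemma~\ref{lem:zigzag_change_basepoint}) together with injectivity of the affine function $b\circ\ga$. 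There is no circularity, since Lemma~\ref{lem:uspeed_parameter_zigzag} is established independently of the present lemma.
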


\begin{proof} We apply the orthogonalization procedure
described in Lemma~\ref{lem:orthogonalization_procedure} to
the collection
$\{l_1,\dots,l_k,l\}$
and construct a zigzag curve
$\ga_l=\ga_l(o,\cL_l,S_l)$,
where
$\cL_l=\{l_1,\dots,l_k,l\}$
is the collection above of oriented Ptolemy lines,
$S=\{s_1,\dots,s_{k+1}\}$
for an appropriate choice of entries described there.
Since
$\ga_l$
of orthogonal to
$l_1,\dots,l_k$,
we conclude from maximality of
$\{l_1,\dots,l_k\}$
that 
$\ga_l$
degenerates and moreover
$\sum_i\al_i^2=1$
by Lemma~\ref{lem:orthogonalization_procedure}.
 
According to Remark~\ref{rem:distinct_fiber_zigzag},
the ends of the piecewise geodesic curve
$\ga_{l,1}$
with 
$k+1$
edges
$\si_1,\dots,\si_k,\si$
on Ptolemy lines Busemann parallel to
$l_1,\dots,l_k,l$
respectively with
$|\si_i|=s_i$, $i=1,\dots,k$, $|\si|=s_{k+1}$,
lie in one and the same fiber ($\K$-line)
$F\sub X_\om$, 
that is,
$o=\ga_{l,1}(0)$
and 
$x=\ga_{l,1}(s+s_{k+1})\in F$,
where
$s=s_1+\dots+s_k$.
Thus the reduced piecewise geodesic curve
$\ga_1=\si_1\cup\dots\cup\si_k$
and the last edge
$\si$
of
$\ga_{l,1}$
have the ends in the same fibers
$F$, $F'$,
where
$F'$
is the fiber through
$\ga_{l,1}(s)=\ga_1(s)$.

Then the zigzag curve
$\ga=\ga(o,\cL,S)$
is nondegenerate, where 
$\cL=\{l_1,\dots,l_k\}$, $S=\{s_1,\dots,s_k\}$,
and it gives a Ptolemy line 
$\ga(\R)\sub X_\om$
though
$o$
which hits the fiber
$F'$.
Since the Ptolemy line
$l'$
containing the segment
$\si$
is Busemann parallel to
$l$
and intersects the fibers
$F$, $F'$,
the line 
$\ga(\R)$
is Busemann parallel to
$l$,
see Lemma~\ref{lem:rfoliation_semik}. Hence
$\ga(\R)=l$
by uniqueness, see Lemma~\ref{lem:unique_line}. 
\end{proof}

\begin{lem}\label{lem:linear_combination_busemann} Let
$\cL=\{l_1,\dots,l_k\}$
be a maximal collection of mutually orthogonal oriented Ptolemy lines in
$X_\om$, $S=\{s_1,\dots,s_k\}$
a collection of nonnegative numbers with 
$s_1+\dots+s_k=1$, $b$, $b_i:X_\om\to\R$
the Busemann functions of the zigzag curve
$\ga=\ga(o,\cL,S)$,
Ptolemy line 
$l_i$
with
$b(o)=0=b_i(o)$
respectively,
$i=1,\dots,k$.
Then
$\la b=\sum_is_ib_i$,
where
$\la=\sqrt{\sum_is_i^2}$. 
\end{lem}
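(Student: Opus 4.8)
The plan is to introduce the difference $g:=\la b-\sum_i s_ib_i$ and to prove $g\equiv 0$ on $X_\om$. Both $b$ and each $b_i$ are Busemann functions, so by Proposition~\ref{pro:busemann_affine} and Lemma~\ref{lem:busemann_affine_zigzag} the function $g$ is affine along every Ptolemy line and along every zigzag curve in $X_\om$. Moreover $g(o)=\la b(o)-\sum_i s_ib_i(o)=0$ by the normalizations. Thus it suffices to control the slope of $g$ along Ptolemy lines and then to propagate the value $0$ from $o$ to all of $X_\om$.

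The core step is the slope computation. Write $l=\ga(\R)$; by Lemma~\ref{lem:uspeed_parameter_zigzag} we have $|o\ga(t)|=\la|t|$ (recall $\sum_i s_i=1$), so $l$ is a genuine Ptolemy line whose unit speed parameterization is $u\mapsto\ga(u/\la)$. Fix any oriented Ptolemy line $l''\sub X_\om$ with Busemann function $b''$ normalized by $b''(o)=0$. Along a unit speed parameterization of $l''$ the function $b$ changes at rate $\slope(l'',l)$ and $b_i$ at rate $\slope(l'',l_i)$, so the slope of $g$ along $l''$ equals $\la\,\slope(l'',l)-\sum_i s_i\,\slope(l'',l_i)$. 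To evaluate $\slope(l'',l)$ I use duality: applying Lemma~\ref{lem:busemann_affine_zigzag} to $b''$ gives $b''\circ\ga(t)=\bigl(\sum_i s_i\,\slope(l_i,l'')\bigr)t$, and since the unit speed parameterization of $l$ is $\ga(\,\cdot\,/\la)$ this yields $\slope(l,l'')=\tfrac1\la\sum_i s_i\,\slope(l_i,l'')$. Applying the symmetry of the slope (Lemma~\ref{lem:slope_symmetry}) on both sides gives $\la\,\slope(l'',l)=\sum_i s_i\,\slope(l'',l_i)$, whence the slope of $g$ along $l''$ vanishes. Therefore $g$ is \emph{constant} along every Ptolemy line in $X_\om$.

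It remains to pass from ``constant along each line'' to ``globally constant''. First, every Busemann function $\be$ of a line $l_0$ is constant on each fiber of $\pi_\om$: by Corollary~\ref{cor:busparallel_foliation} there is a line $l_0(x)$ through $x$ Busemann parallel to $l_0$, so $\be$ is a Busemann function of $l_0(x)$ as well, and the fiber $F_x=\bigcap_{l\ni x}H_l$ is contained in the horosphere of $l_0(x)$ through $x$, i.e.\ in the level set $\{\be=\be(x)\}$. Hence $g$ is constant on every fiber. Finally, moving from $o$ successively along the foliation directions $l_1(\,\cdot\,),\dots,l_k(\,\cdot\,)$ produces broken geodesics along each edge of which $g$ is constant; by orthogonality, moving a signed distance along $l_i$ changes $b_i$ (at rate $\slope(l_i,l_i)=-1$) and leaves the $b_j$, $j\neq i$, unchanged, so such a broken geodesic realizes an arbitrary prescribed tuple $(b_1,\dots,b_k)\in\R^k$ at its endpoint. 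Since $\{l_1,\dots,l_k\}$ is a \emph{maximal} orthogonal collection, these $k$ directions exhaust the transverse directions to the fibers, so every fiber is joined to $F_o$ by such a broken geodesic; combined with constancy of $g$ on fibers this forces $g\equiv g(o)=0$, i.e.\ $\la b=\sum_i s_ib_i$. The main obstacle is exactly this last reachability statement: that the broken geodesics built from the maximal frame meet every fiber of $\pi_\om$, which is precisely where maximality of $\{l_1,\dots,l_k\}$ is essential (and where one must rule out a fiber transverse to all the $l_i$, contradicting maximality via the orthogonalization procedure of Lemma~\ref{lem:orthogonalization_procedure}).
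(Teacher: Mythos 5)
Your computation that $g=\la b-\sum_i s_i b_i$ has zero slope along \emph{every} oriented Ptolemy line $l''$ is correct, and it actually streamlines the corresponding part of the paper's proof: the paper only verifies that the difference vanishes along the lines $l_1,\dots,l_k$ through $o$ and must then propagate constancy to Busemann parallel lines and invoke Lemma~\ref{lem:max_orthogonal} to reach arbitrary lines, whereas your application of Lemma~\ref{lem:busemann_affine_zigzag} to $b''$, combined with $|o\ga(t)|=\la|t|$ and the symmetry of the slope, yields $\la\,\slope(l'',l)=\sum_i s_i\,\slope(l'',l_i)$ for all $l''$ at once, hence constancy of $g$ on every Ptolemy line.

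The gap is in the globalization. Your final step requires that every fiber of $\pi_\om$ be joined to $F_o$ by a broken geodesic with edges Busemann parallel to $l_1,\dots,l_k$, which you justify by saying that maximality makes these directions ``exhaust the transverse directions to the fibers''. At this point in the paper that reachability statement is not available: it is essentially the existence half of property~($1_\K$) together with Proposition~\ref{pro:orthogonal_base} (that the fibers are \emph{exactly} the common level sets of $b_1,\dots,b_k$), and both of these are proved \emph{after}, and by means of, the present lemma, so invoking them here is circular. Maximality of $\{l_1,\dots,l_k\}$ only excludes a Ptolemy line orthogonal to all the $l_i$; it does not by itself show that the frame-parallel broken geodesics starting at $o$ meet every fiber, since at this stage one only knows the inclusion $F\sub\cap_j H_j$ and not equality, so hitting the right tuple $(b_1,\dots,b_k)$ does not yet guarantee landing in the intended fiber. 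The paper closes the argument differently: having $h$ constant on every Ptolemy line, it uses Proposition~\ref{pro:tangent_rcircle} and a first-order approximation along tangent lines to conclude that $h$ is constant along every Ptolemy \emph{circle} in $X_\om$, and then the enhanced existence property (E) --- through any two points of $X$ there is a Ptolemy circle --- joins $o$ to an arbitrary $x\in X_\om$ and gives $h(x)=h(o)=0$. Replacing your last paragraph by this circle argument repairs the proof; the rest stands.
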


\begin{proof} We denote by
$h=\la b-\sum_is_ib_i$
the function
$X_\om\to\R$
with
$h(o)=0$,
which is an affine function on every Ptolemy line on
$X_\om$.
First, we check that 
$h$
vanishes along 
$l_1,\dots,l_k$
(assuming that these lines pass through
$o$).
Indeed,
$\sum_is_ib_i(z)=s_jb_j(z)$
for every
$z\in l_j$
because
$l_i\bot l_j$
for
$i\neq j$.
Since
$b$
is a Busemann function of
$l$,
it is affine on
$l_j$
with the coefficient
$\slope(l_j,l)$,
$b(z)=-\slope(l_j,l)b_j(z)$.
Using symmetry of the slope, we obtain
$$\slope(l_j,l)=\slope(l,l_j)=\frac{1}{\la}\sum_i\slope(l_i,l_j)s_i
  =-s_j/\la,$$
see the proof of Lemma~\ref{lem:uspeed_parameter_zigzag}.
Thus
$\la b(z)=s_jb_j(z)$,
and
$h(z)=0$.

Next, we show that if
$h$
is constant of a Ptolemy line
$l$, 
then it is constant on every Ptolemy line
$l'$
that is Busemann parallel to
$l$
(with maybe a different value). By Lemma~\ref{lem:busparallel_sublinear} 
we know that
$l$, $l'$
diverge at most sublinearly,  
and also that
$h$
is affine on
$l'$.
Thus 
$h|l'$
cannot be nonconstant because 
$h$
is a Lipschitz function on
$X_\om$.

It follows that 
$h$
vanishes on every piecewise geodesic curve with origin
$o$
and with edges Busemann parallel to the lines
$l_1,\dots,l_k$.
Hence,
$h$
vanishes along any zigzag curve of type
$\ga(o,\cL,S)$.
Using Lemma~\ref{lem:max_orthogonal}, we conclude that
$h$
is constant along any Ptolemy line in
$X_\om$.

By Proposition~\ref{pro:tangent_rcircle}, every Ptolemy
circle possesses a unique tangent line, which is certainly
a Ptolemy line, at every point. Using standard approximation
arguments, we see that
$h$
is constant along any Ptolemy circle in
$X_\om$.
By the existence property (E), every
$x\in X_\om$
is connected with 
$o$
by a Ptolemy circle. Thus
$h(x)=0$
and
$\la b=\sum_ib_i$. 
\end{proof}

\begin{proof}[Proof of Proposition~\ref{pro:orthogonal_base}]
Let
$\cL_\bot=\{l_1,\dots,l_k\}$
be a maximal collection of mutually orthogonal Ptolemy lines in
$X_\om$.
This means that we actually consider respective foliations of
$X_\om$
by Busemann parallel Ptolemy lines.
For any
$x\in X_\om$,
for the fiber
$F$
and for the respective lines from
$\cL_\bot$
through 
$x$,
we have by definition
$F\sub\cap_jH_j$,
where
$H_j\sub X_\om$
is the horosphere of
$l_j$
through
$x$.
It follows from Lemma~\ref{lem:max_orthogonal} and
Lemma~\ref{lem:linear_combination_busemann} that any
Busemann function
$b:X_\om\to\R$
with 
$b(x)=0$
is a linear combination of the Busemann functions
$b_1,\dots,b_k$
of the lines
$l_1,\dots,l_k$
which vanish at
$x$.
Thus 
$b$
vanishes on
$\cap_jH_j$,
and therefore
$F=\cap_jH_j$.
\end{proof}

\begin{proof}[Proof the property ($1_\K$)] Given a fiber 
($\K$-line) $F\sub X_\om$
and
$x\in X_\om\sm F$,
we show that there is a Ptolemy line 
$l\sub X_\om$
through
$x$
that hits
$F$.
Uniqueness of
$l$
is proved at the end of sect.~\ref{sect:fibration}.

Using Proposition~\ref{pro:orthogonal_base}, we represent
$F=\cap_{l\in\cL_\bot}H_l$,
where
$\cL_\bot$
is a finite collection of mutually orthogonal Ptolemy lines,
$H_l$
a horosphere of
$l$.
Choosing appropriate orientations of the members of
$\cL_\bot$,
we can assume that
$H_l=b_l^{-1}(0)$
and
$b_l(x)\ge 0$
for every
$l\in\cL_\bot$,
where
$b_l:X_\om\to\R$
is a Busemann function of
$l$.
Moving from
$x$
in an appropriate direction along a Ptolemy line,
which is Busemann parallel to 
$l\in\cL_\bot$
with
$b_l(x)>0$,
we reduce the value of
$b_l$
to zero keeping up every other Busemann function
$b_{l'}$, $l'\in\cL_\bot$,
constant. Repeating this procedure, we connect
$x$
with 
$F$
by a piecewise geodesic curve with at most
$|\cL_\bot|$
edges. Now, the zigzag construction produces a required
Ptolemy line through
$x$
that hits
$F$.
\end{proof}

\subsection{Properties of the base $B_\om$}
\label{subsect:base_properties}

We fix
$\om\in X$
and a metric 
$d$
from the M\"obius structure for which
$\om$
is infinitely remote. We also use notation
$|xy|=d(x,y)$
for the distance between
$x$, $y\in X_\om$.

\begin{lem}\label{lem:homothety_vert_shift} Let
$\phi:X_\om\to X_\om$
be a pure homothety with
$\phi(o)=o$, $o\in X_\om$.
Then
$\phi$
preserves the fiber ($\K$-line)
$F$
through
$o$, $\phi(F)=F$.
In particular, every shift
$\eta_{xx'}:X_\om\to X_\om$
with
$x$, $x'\in F$
preserves
$F$.
\end{lem}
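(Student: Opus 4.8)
The plan is to deduce both assertions from the single fact that any M\"obius automorphism of $X$ fixing $\om$ permutes the fibers of $\pi_\om$, which is exactly the content of Lemma~\ref{lem:induced_base_map}.

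First I would record that a pure homothety $\phi:X_\om\to X_\om$ is, by the definition preceding Lemma~\ref{lem:pure_homothety}, the restriction to $X_\om$ of a M\"obius automorphism of $X$ with $\phi(\om)=\om$; thus Lemma~\ref{lem:induced_base_map} applies with $\om'=\om$. That lemma produces a bijection $\ov\phi:B_\om\to B_\om$ with $\pi_\om\circ\phi=\ov\phi\circ\pi_\om$, which says precisely that $\phi(F_z)=F_{\phi(z)}$ for every $z\in X_\om$, where $F_z=\pi_\om^{-1}(\pi_\om(z))$ is the $\K$-line through $z$. Since $o\in F$ and $\phi(o)=o$, taking $z=o$ gives $\phi(F)=F_{\phi(o)}=F$, which is the first assertion. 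Note that all that is really used here is that $\phi$ is a M\"obius automorphism fixing both $\om$ and $o$; purity enters only to guarantee that $\phi$ is such an automorphism.

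For the \emph{in particular} statement I would first observe that a shift $\eta=\eta_{xx'}$ fixes $\om$: it is obtained as a limit $\eta=\lim\eta_i$ of isometries $\eta_i=\psi_i^{-1}\circ\phi_i$ with $\phi_i\in\Ga_{\om,x}$ and $\psi_i\in\Ga_{\om,x'}$, each of which fixes $\om$; being an isometry of $(X_\om,d)$ it extends to a M\"obius automorphism of $X$ with $\eta(\om)=\om$. Applying Lemma~\ref{lem:induced_base_map} again yields $\eta(F_z)=F_{\eta(z)}$ for all $z$. Now $\eta(x)=x'$ together with $x,x'\in F$ forces $F_x=F=F_{x'}$, whence $\eta(F)=\eta(F_x)=F_{\eta(x)}=F_{x'}=F$.

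If one prefers an argument that exploits purity directly and avoids quoting Lemma~\ref{lem:induced_base_map}, one can instead transform Busemann functions: for a homothety $\phi$ with coefficient $\la$ one checks that $b\circ\phi$ equals, up to an additive constant, $\la$ times a Busemann function of $\phi^{-1}(l)$, and purity guarantees that $\phi^{-1}(l)$ is Busemann parallel to $l$ and hence carries the same Busemann functions; evaluating the resulting identity on $F=\bigcap_{l\ni o}H_l$ then gives $\phi(F)=F$. The only delicate point in this second route is the bookkeeping of orientations and of which end of $l$ is used. This is harmless because $X$ is Busemann flat (Lemma~\ref{lem:smooth_convex}), so the horosphere through $o$ of a Ptolemy line is independent of the chosen end, and the normalization $b(o)=0$ together with $\phi(o)=o$ pins down the additive constant. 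I expect this orientation/normalization bookkeeping to be the only place requiring care; via Lemma~\ref{lem:induced_base_map} the statement is essentially immediate.
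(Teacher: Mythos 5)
Your proposal is correct, and your main argument takes a genuinely different (and more economical) route than the paper's. The paper proves the first assertion by invoking the identity $b\circ\phi=\la b$ from Lemma~\ref{lem:pure_homothety} for every Busemann function $b$ with $b(o)=0$ --- this is exactly where purity is used, since one needs $b\circ\phi$ to be a Busemann function of the \emph{same} line and the normalization at the fixed point $o$ to kill the additive constant --- and then observes that every such $b$ vanishes on $\phi(F)$, whence $\phi(F)=F$; your second, sketched route is precisely this argument. Your primary route instead quotes Lemma~\ref{lem:induced_base_map}: any M\"obius automorphism fixing $\om$ permutes the fibers of $\pi_\om$, so one fixing $o$ as well preserves the fiber through $o$. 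This is legitimate (Lemma~\ref{lem:induced_base_map} precedes the present lemma, so there is no circularity), and it correctly isolates the observation that purity is not actually needed for the conclusion --- only membership in $\aut X$ together with the two fixed points $\om$ and $o$. For the \emph{in particular} statement the paper merely says it ``follows from the definition'' of $\eta_{xx'}$ (i.e.\ the shift is a limit of compositions $\psi_i^{-1}\circ\phi_i$ of pure homotheties fixing points of $F$, each of which preserves the closed set $F$ by the first part); your direct application of Lemma~\ref{lem:induced_base_map} to the shift itself, using $\eta(x)=x'$ and $F_x=F=F_{x'}$, is cleaner and avoids the limit argument altogether.
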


\begin{proof} As in Lemma~\ref{lem:pure_homothety}, we have
$\la b\circ\phi=b$
for any Busemann function
$b:X_\om\to\R$
with 
$b(o)=0$,
where
$\la$
is the coefficient of the homothety
$\phi$.
Since
$b(x)=0$
for every
$x\in F$,
we see that
$b\circ\phi(x)=0$,
and thus
$\phi(x)\in F$,
that is,
$\phi(F)=F$.
The assertion about a shift
$\eta_{xx'}$
follows now from the definition of
$\eta_{xx'}$.
\end{proof}

\begin{lem}\label{lem:vert_shift} Let
$\eta:X_\om\to X_\om$
be a shift that preserves a $\K$-line
$F\sub X_\om$.
Then 
$\eta$
preserves any other $\K$-line 
$F'\sub X_\om$.
\end{lem}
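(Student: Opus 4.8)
The plan is to encode each $\K$-line as a common level set of finitely many Busemann functions and to show that a shift acts on these functions merely by additive constants; preserving one fiber then forces all of these constants to vanish, which is exactly what is needed.

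First I would fix a maximal collection $\cL_\bot=\{l_1,\dots,l_k\}$ of mutually orthogonal oriented Ptolemy lines, which exists by Lemma~\ref{lem:orthogonal_bound}, together with the associated Busemann functions $b_1,\dots,b_k\colon X_\om\to\R$. By Proposition~\ref{pro:orthogonal_base}, every $\K$-line is a common level set of these functions: the fiber through a point $x$ is $F_x=\{y\in X_\om : b_i(y)=b_i(x),\ i=1,\dots,k\}$. In particular two $\K$-lines coincide precisely when the tuples $(b_1,\dots,b_k)$ agree on them.

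Next I would analyze how $\eta$ acts on each $b_i$. Since $\eta$ is a shift, it preserves each foliation of $X_\om$ by Ptolemy lines Busemann parallel to $l_i$ (Lemma~\ref{lem:pure_homothety}), carrying every leaf to a Busemann parallel leaf with compatible orientation (Lemma~\ref{lem:shift_busemann_parallel}). Because $\eta$ is an isometry, $b_i\circ\eta^{-1}$ is a Busemann function of the image leaf, hence again a Busemann function of the same oriented foliation; two such differ only by an additive constant, so $b_i\circ\eta=b_i+d_i$ for a constant $d_i\in\R$ independent of the point. Getting the orientation right is the point I expect to require the most care: if $\eta$ could reverse the orientation of a foliation we would instead obtain $b_i\circ\eta=-b_i+d_i$, and then preserving one fiber would place no constraint on the others. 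This is excluded because $\eta$ is a limit of the orientation-preserving homotheties $\psi_i^{-1}\circ\phi_i$ out of which shifts are built.

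Finally, I would read off the conclusion. Choosing $x\in F$ and using $\eta(F)=F$ gives $\eta(x)\in F=F_x$, so $b_i(x)+d_i=b_i(\eta(x))=b_i(x)$, forcing $d_i=0$ for every $i$. Thus $b_i\circ\eta=b_i$ for all $i$, and hence $b_i\circ\eta^{-1}=b_i$ as well. Then for an arbitrary $x'\in X_\om$ we get $b_i(\eta^{\pm1}(x'))=b_i(x')$ for all $i$, so $\eta$ maps $F_{x'}$ onto itself. Therefore $\eta$ preserves every $\K$-line $F'=F_{x'}$, as claimed.
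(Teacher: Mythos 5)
Your proof is correct and follows essentially the same route as the paper: the key step in both is that a shift transforms every Busemann function by an additive constant ($b\circ\eta=b+c_b$, since $\eta^{-1}(l)$ is Busemann parallel to $l$), and preserving one fiber forces all these constants to vanish. The only cosmetic difference is that you reduce to the finite orthogonal family of Proposition~\ref{pro:orthogonal_base}, whereas the paper argues with all Busemann functions at once; both are valid here.
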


\begin{proof} Let
$b:X_\om\to\R$
be a Busemann function associated with an (oriented)
Ptolemy line
$l\sub X_\om$.
Then for any isometry
$\eta:X_\om\to X_\om$
the function
$b\circ\eta$
is a Busemann function associated with 
Ptolemy line 
$\eta^{-1}(l)$.
Thus for an arbitrary shift
$\eta:X_\om\to X_\om$,
we have
$b\circ\eta=b+c_b$,
where
$c_b\in\R$
is a constant depending on
$b$,
because the line 
$\eta^{-1}(l)$
is Busemann parallel to
$l$,
hence the function
$b\circ\eta$
is also a Busemann function of
$l$,
and the functions
$b$, $b\circ\eta$
differ by a constant. 

In our case, when
$\eta$
preserves a $\K$-line, this constant is zero,
$c_b=0$,
thus
$b\circ\eta=b$
for any Busemann function
$b:X_\om\to X_\om$.
Therefore,
$\eta$
preserves any $\K$-line.
\end{proof}

We define
$$d(F,F')=\inf\set{|xx'|}{$x\in F,\ x'\in F'$}$$
for $\K$-lines
$F$, $F'\sub X_\om$.

\begin{lem}\label{lem:klines_distance} Given $\K$-lines
$F$, $F'\sub X_\om$,
and
$x\in F$,
there is
$x'\in F'$
such that
$d(F,F')=|xx'|$.
\end{lem}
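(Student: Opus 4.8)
The plan is to produce, for a given $x\in F$, a nearest point of $F'$ by exploiting that shifts act on $X_\om$ by isometries preserving every $\K$-line. The starting observation is that for any $p\in F$ a shift $\eta_{px}:X_\om\to X_\om$ exists (here we use that $X$ is compact), and that by Lemma~\ref{lem:homothety_vert_shift} such a shift preserves $F$, whence by Lemma~\ref{lem:vert_shift} it preserves $F'$ as well. Since $\eta_{px}$ is an isometry with $\eta_{px}(p)=x$, it carries any $q\in F'$ to a point $x':=\eta_{px}(q)\in F'$ with $|xx'|=|pq|$.

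From this I would first deduce that $\dist(x,F')=d(F,F')$ for every $x\in F$. The inequality $\dist(x,F')\ge d(F,F')$ is trivial, since the right-hand side is an infimum over a larger set of pairs. For the reverse inequality, given $p\in F$ and $q\in F'$ the construction above yields $x'\in F'$ with $|xx'|=|pq|$, so $\dist(x,F')\le|pq|$; taking the infimum over all such pairs gives $\dist(x,F')\le d(F,F')$. In particular the distance from $x$ to $F'$ is independent of the choice of $x\in F$ and equals $d(F,F')$.

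It remains to show that this infimum is attained. Fix $x\in F$ and choose a minimizing sequence $x'_n\in F'$ with $|xx'_n|\to\dist(x,F')=d(F,F')$. The numbers $|xx'_n|$ are bounded, so no subsequence of $x'_n$ can converge to $\om$: a convergence $x'_n\to\om$ would force $|xx'_n|\to\infty$ by the description of neighbourhoods of $\om$ (complements of closed distance balls). Since $X$ is compact, after passing to a subsequence $x'_n\to x'\in X_\om$. By the definition of $\K$-lines at the start of Section~\ref{sect:fibration} (and Proposition~\ref{pro:orthogonal_base}) the fiber $F'$ is an intersection of horospheres, each of which is a level set of a continuous Busemann function; hence $F'$ is closed in $X_\om$ and $x'\in F'$. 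Continuity of the metric on $X_\om$ then gives $|xx'|=\lim_n|xx'_n|=d(F,F')$, as required.

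The steps are elementary once the shift mechanism is in place; the one point I regard as the crux is the interplay of Lemmas~\ref{lem:homothety_vert_shift} and~\ref{lem:vert_shift}, namely that a shift moving one point of $F$ to another automatically preserves the distinct $\K$-line $F'$. This is exactly what lets the minimizing configuration be slid so that its $F$-endpoint lands on the prescribed point $x$, converting the a priori two-sided infimum $d(F,F')$ into the one-sided distance $\dist(x,F')$; the remaining attainment is a routine compactness argument, using only that bounded sequences cannot escape to $\om$ and that $F'$ is closed.
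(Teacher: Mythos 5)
Your proof is correct and follows essentially the same route as the paper: the paper also takes a minimizing sequence of pairs, uses Lemma~\ref{lem:vert_shift} (shifts preserving $F$ preserve $F'$) to normalize the $F$-endpoint to $x$, and then extracts a convergent subsequence in $F'$ by compactness. You merely spell out the details the paper leaves implicit, namely that a bounded sequence cannot accumulate at $\om$ and that $F'$ is closed as an intersection of horospheres.
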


\begin{proof} Let
$x_i\in F$, $x_i'\in F'$
be sequences with 
$|x_ix_i'|\to d(F,F')$.
Using Lemma~\ref{lem:vert_shift}, we can assume that
$x_i=x$
for all
$i$.
Then the sequence
$x_i'$
is bounded, and by compactness of
$X$
it subconverges to
$x'\in F'$
with
$|xx'|=d(F,F')$.
\end{proof}

\begin{lem}\label{lem:line_dist_klines} For any $\K$-lines
$F$, $F'\sub X_\om$,
we have
$d(F,F')=|xx'|$,
where
$x=l\cap F$, $x'=l\cap F'$,
and 
$l$
is any Ptolemy line in
$X_\om$
that meets both
$F$, $F'$.
\end{lem}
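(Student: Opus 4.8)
I would prove the two inequalities $d(F,F')\le|xx'|$ and $d(F,F')\ge|xx'|$ separately, the first being immediate and the second reducing to a single Busemann-function estimate. First note that a Ptolemy line meets each fiber of $\pi_\om$ in at most one point (as recorded in the discussion of \semik-planes), so $x=l\cap F$ and $x'=l\cap F'$ are well defined; I may assume $F\neq F'$, hence $x\neq x'$, since otherwise the statement is trivial. The upper bound is then free: because $x\in F$ and $x'\in F'$, the definition of $d(F,F')$ as an infimum gives $d(F,F')\le|xx'|$.

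For the lower bound the idea is to use a Busemann function $b$ of $l$ as a $1$-Lipschitz "height" that separates $F$ from $F'$ by exactly $|xx'|$. The crucial observation is that both fibers lie in level sets of $b$. Indeed, by the very definition $F_y=\cap_{l''\ni y}H_{l''}$ of a $\K$-line as an intersection of horospheres over \emph{all} Ptolemy lines through $y$, and since $l$ itself passes through $x\in F$ and through $x'\in F'$, the line $l$ occurs in the defining intersection of both $F=F_x$ and $F'=F_{x'}$. Hence $F$ lies in the horosphere of $l$ through $x$, i.e. $F\sub b^{-1}(b(x))$, and likewise $F'\sub b^{-1}(b(x'))$. (These horospheres are well defined because $X$ is Busemann flat, Lemma~\ref{lem:smooth_convex}.) Thus $b\equiv b(x)$ on $F$ and $b\equiv b(x')$ on $F'$.

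Now I would combine two standard properties of $b$: it is $1$-Lipschitz, and along its own line $l$ it is affine of slope $\pm1$, so that $|b(x)-b(x')|=|xx'|$ for the two points $x,x'\in l$. Then for arbitrary $y\in F$ and $y'\in F'$,
\begin{equation*}
|yy'|\ge|b(y)-b(y')|=|b(x)-b(x')|=|xx'|,
\end{equation*}
using $b(y)=b(x)$ and $b(y')=b(x')$ from the previous step. Taking the infimum over $y\in F$, $y'\in F'$ yields $d(F,F')\ge|xx'|$, which together with the upper bound gives $d(F,F')=|xx'|$.

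This simultaneously settles independence of the chosen line: the quantity $d(F,F')$ does not refer to $l$, so once $|xx'|=d(F,F')$ holds for every Ptolemy line $l$ meeting both $F$ and $F'$, all such segment lengths coincide automatically. I expect no serious obstacle here; the only points needing care are the identifications $F\sub b^{-1}(b(x))$ and $F'\sub b^{-1}(b(x'))$, which rest on reading the definition of $F_y$ correctly, and the sharp normalization that $b$ has slope exactly $\pm1$ along $l$ so that the $1$-Lipschitz estimate becomes an equality precisely on $l$. An alternative route through Lemma~\ref{lem:klines_distance} together with the equidistance of $\K$-lines inside a \semik-plane (Lemma~\ref{lem:rfoliation_semik}) would also work, but it is longer, whereas the Busemann-function argument avoids the \semik-plane machinery entirely.
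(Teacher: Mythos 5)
Your proof is correct. The core mechanism is the same as the paper's: a fiber through a point of $l$ lies in the horosphere of $l$ through that point, and the $1$-Lipschitz Busemann function, being affine of slope $\pm1$ along $l$ itself, forces any point of that horosphere to be at distance at least $|xx'|$ from $x$. The difference is in the packaging. The paper first invokes Lemma~\ref{lem:rfoliation_semik} to get independence of $l$, then Lemma~\ref{lem:klines_distance} to produce a point $x''\in F'$ actually realizing the infimum $d(F,F')$ (which requires the compactness of $X$ and the vertical-shift machinery), and only then applies the horosphere estimate to $x''$. You instead bound $|yy'|\ge|xx'|$ uniformly over all $y\in F$, $y'\in F'$ and pass to the infimum, so you never need the infimum to be attained; independence of $l$ then comes for free, as you note. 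This makes your argument slightly more self-contained — it bypasses both auxiliary lemmas — at the cost of using the containment of \emph{both} fibers in level sets of $b$ rather than just one. Both containments are legitimate readings of the definition $F_y=\bigcap_{l''\ni y}H_{l''}$, since $l$ passes through both $x$ and $x'$, so there is no gap.
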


\begin{proof} By Lemma~\ref{lem:rfoliation_semik}, the distance
$|xx'|$
is independent of the choice of
$l$.
By definition
$|xx'|\ge d(F,F')$.
By Lemma~\ref{lem:klines_distance}, there is
$x''\in F'$
with
$|xx''|=d(F,F')$.
The horosphere 
$H$
of (a Busemann function associated with)
$l$
through
$x'$
contains
$F'$,
in particular,
$x''\in H$.
Then
$|xx''|\ge|xx'|$
and hence,
$d(F,F')=|xx'|$.
\end{proof}

Let
$\pi_\om:X_\om\to B_\om$
be the canonical fibration, see sect.~\ref{sect:fibration}.
For
$b\in B_\om$,
we denote with
$F_b=\pi_\om^{-1}(b)$
the $\K$-line over 
$b$.
For
$b$, $b'\in B_\om$
we put
$|bb'|:=|xx'|$,
where
$x=l\cap F_b$, $x'=l\cap F_{b'}$,
and
$l\sub X_\om$
is any Ptolemy line that meets both
$F_b$
and
$F_{b'}$.
By property ($1_\K$),
such a line 
$l$
exists, by Lemma~\ref{lem:line_dist_klines}, the number
$|bb'|$
is well defined, and the function
$(b,b')\mapsto|bb'|$
is a metric on
$B_\om$.
This metric is said to be {\em canonical}.

\begin{pro}\label{pro:base_metric} The canonical projection
$\pi_\om:X_\om\to B_\om$
is a 1-Lipschitz submetry with respect to the canonical
metric on
$B_\om$.
Furthermore,
$B_\om$
is a geodesic metric space with the property
that through any two distinct points
$b$, $b'\in B_\om$
there is a unique geodesic line in
$B_\om$.
\end{pro}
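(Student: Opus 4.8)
The first two assertions are immediate from the distance formula for $\K$-lines. By Lemma~\ref{lem:line_dist_klines} the canonical metric satisfies $|bb'|=d(F_b,F_{b'})$, the infimal distance between the corresponding fibers. Hence for $x,y\in X_\om$ we have $|\pi_\om(x)\pi_\om(y)|=d(F_x,F_y)\le|xy|$, since $x\in F_x$ and $y\in F_y$; this is the $1$-Lipschitz property, which yields $\pi_\om(B_r(x))\sub B_r(\pi_\om(x))$. For the reverse inclusion take $b'$ with $|\pi_\om(x)b'|<r$; by Lemma~\ref{lem:klines_distance} there is $y\in F_{b'}$ with $|xy|=d(F_x,F_{b'})=|\pi_\om(x)b'|<r$, so $b'=\pi_\om(y)\in\pi_\om(B_r(x))$. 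Thus $\pi_\om$ is a submetry. Since $X$ is compact, closed metric balls of $X_\om$ are bounded away from $\om$ and hence compact, so $X_\om$ is proper; applying the submetry to such balls shows $B_\om$ is proper as well, in particular complete.

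The geodesic statements rest on one observation: $\pi_\om$ restricts to an isometric embedding on every Ptolemy line. Indeed, if $l\sub X_\om$ is a Ptolemy line and $y,y'\in l$, then $l$ meets the fibers $F_y$, $F_{y'}$ at $y$, $y'$, so Lemma~\ref{lem:line_dist_klines} gives $|\pi_\om(y)\pi_\om(y')|=d(F_y,F_{y'})=|yy'|$; as $l\cong\R$, its image $\ov l:=\pi_\om(l)$ is a geodesic line in $B_\om$. Given distinct $b,b'$, choose $x\in F_b$ and let $l$ be the unique Ptolemy line through $x$ meeting $F_{b'}$ (property $(1_\K)$), with $q=l\cap F_{b'}$; then $b,b'\in\ov l$ and $\pi_\om([x,q])$ is a shortest curve of length $|xq|=d(F_b,F_{b'})=|bb'|$, so $B_\om$ is geodesic. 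Moreover $\ov l$ depends only on $b,b'$: if $l'$ is any Ptolemy line meeting both $F_b,F_{b'}$, then by property $(1_\K)$ and Lemma~\ref{lem:rfoliation_semik} it lies in the \semik-plane $M_l$ and meets precisely the $\K$-lines constituting $M_l$, whence $\pi_\om(l')=\pi_\om(M_l)=\pi_\om(l)=\ov l$ and $\pi_\om^{-1}(\ov l)=M_l$.

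It remains to prove uniqueness, and since $B_\om$ is complete it suffices to show midpoints are unique. Write $L=|bb'|$, fix $x\in F_b$ and keep $l$, $q$ as above; let $p$ be the midpoint of $[x,q]$ on $l$, so $\ov m:=\pi_\om(p)$ is the midpoint of $\ov l$. Let $m$ be any midpoint of $b,b'$. Using property $(1_\K)$ and Lemma~\ref{lem:line_dist_klines} one lifts the broken path $b\to m\to b'$ to Ptolemy segments: a segment of length $L/2$ from $x$ to a point $x_m\in F_m$, followed by a segment of length $L/2$ from $x_m$ to a point $q'\in F_{b'}$. Since $L=d(F_b,F_{b'})\le|xq'|\le L$, this concatenation is a geodesic of $X_\om$ and $q'$ is a nearest point of $F_{b'}$ to $x$. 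The plan is to identify $x_m$ with $p$: first, the nearest point of $F_{b'}$ to $x$ is unique, which I would extract from the first variation formula~(\ref{eq:first_variation}) together with the $C^1$-smoothness of distance functions along Ptolemy lines (Lemma~\ref{lem:smooth_convex}), forcing the minimizing direction to be that of $l$ and hence $q'=q$; second, $X_\om$ is proper and ptolemaic, hence uniquely geodesic, so the geodesic from $x$ to $q$ is the Ptolemy segment on $l$ and therefore $x_m=p$. Then $m=\pi_\om(x_m)=\pi_\om(p)=\ov m$, so the midpoint is unique. Iterating over dyadic parameters and passing to the limit shows that the geodesic segment between $b$ and $b'$ is unique and equals the relevant arc of $\ov l$; as any geodesic line through $b,b'$ contains this segment and extends it, uniqueness of segments on every subarc forces it to coincide with $\ov l$.

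The main obstacle is precisely the last reduction, namely controlling the lift of a $B_\om$-geodesic inside $X_\om$; everything else is bookkeeping with the two distance lemmas. Concretely it comes down to (i) uniqueness of the nearest point on a $\K$-line and (ii) unique geodesics in $X_\om$. For (i) I would combine the duality/first-variation machinery with the geodesic convexity of the \semik-plane $M_l$ (Lemma~\ref{lem:geoconvex_semik}) and the equidistance of its $\K$-lines (Lemma~\ref{lem:rfoliation_semik}), which pin the nearest point to the foot of the transversal Ptolemy line; for (ii) I would invoke that a proper ptolemaic geodesic space is uniquely geodesic. Once (i) and (ii) are secured, the geodesic line through two points is forced to be the projection of the canonical \semik-plane $M_l$, and is therefore unique.
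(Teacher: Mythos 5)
Your treatment of the first three assertions is correct and essentially identical to the paper's: the $1$-Lipschitz property and the ball inclusions come from Lemma~\ref{lem:line_dist_klines} and Lemma~\ref{lem:klines_distance}, and geodesicity of $B_\om$ comes from the fact that $\pi_\om$ is isometric on Ptolemy lines together with property ($1_\K$). The divergence is in the uniqueness of geodesic lines, which the paper disposes of in one sentence and which you rightly identify as the real content. Your reduction to uniqueness of midpoints in the complete space $B_\om$, and the lift of the broken path $b\to m\to b'$ to a concatenation of two Ptolemy segments of total length $L=|bb'|=|xq'|$, are sound. But the two auxiliary facts you then need are not available, and one of them is invoked incorrectly.

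The more serious problem is (ii): the Foertsch--Lytchak--Schroeder theorem says a proper ptolemaic \emph{geodesic} space is uniquely geodesic, and $X_\om$ is not a geodesic space when $p>0$ — two distinct points of a single $\K$-line admit no midpoint at all (for $p=1$ this is visible from Proposition~\ref{pro:euclid_square_fiber} and the Kor\'anyi gauge; at the stage of the paper where Proposition~\ref{pro:base_metric} sits, geodesicity of $X_\om$ is in any case nowhere established). So you cannot conclude that the geodesic from $x$ to $q$ is unique; for two midpoints $m,m'$ of $x,q$ the Ptolemy inequality alone only gives $|mm'|\le|xq|/2$. Claim (i) is also unsupported: a $\K$-line is not a Ptolemy line, so Lemma~\ref{lem:smooth_convex} says nothing about convexity of $d(x,\cdot)$ along $F_{b'}$, and a first-variation identity at the foot point $q$ does not exclude a second point of $F_{b'}$ at the same minimal distance. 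A route that does close the gap replaces nearest points by Busemann functions: every fiber lies in a level set of every Busemann function (Proposition~\ref{pro:orthogonal_base}), each Busemann function is $1$-Lipschitz and descends to $B_\om$, and the functions $\ov b_1,\dots,\ov b_k$ of a maximal orthogonal family satisfy $|\ov o\,\ov z|^2=\sum_i\ov b_i(\ov z)^2$ (this is the computation in Proposition~\ref{pro:base_euclidean}), identifying $B_\om$ with a Euclidean space, where uniqueness of geodesics is automatic. As written, your argument for the last assertion does not go through.
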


\begin{proof}
It follows from Lemma~\ref{lem:line_dist_klines} that the map
$\pi_\om$
is 1-Lipschitz. Let
$D=D_r(o)$
be the metric ball in
$X_\om$
of radius
$r>0$
centered at a point
$o\in X_\om$, $D'\sub B_\om$
the metric ball of the same radius
$r$
centered at
$\pi_\om(o)$.
The inclusion
$D'\sub\pi_\om(D)$
follows from the definition of the metric of
$B_\om$.
The opposite inclusion
$D'\sups\pi_\om(D)$
holds because
$\pi_\om$
is 1-Lipschitz. Thus
$\pi_\om:X_\om\to B_\om$
is a 1-Lipschitz submetry.

Furthermore, by Lemma~\ref{lem:line_dist_klines},
the projection
$\pi_\om$
restricted to every Ptolemy line in
$X_\om$
is isometric, and thus by property~($1_\K$), the base
$B_\om$
is a geodesic metric space. Moreover, it follows
from ($1_\K$) that through any two distinct
points
$b$, $b'\in B_\om$
there is a unique geodesic line in
$B_\om$.
\end{proof}

\begin{cor}\label{cor:pi_submetry} For any homothety
$\phi:X_\om\to X_\om$,
the induced map
$\pi_\ast(\phi):B_\om\to B_\om$
is a homothety with the same dilatation coefficient.
\qed
\end{cor}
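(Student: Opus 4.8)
The plan is to deduce the statement directly from Lemma~\ref{lem:induced_base_map} together with the observation that the canonical metric on $B_\om$ is read off from Ptolemy lines. First I would recall that, in the terminology of this paper, a homothety $\phi:X_\om\to X_\om$ is the restriction of a M\"obius automorphism of $X$ fixing $\om$, so that Lemma~\ref{lem:induced_base_map} applies with $\om'=\om$ and yields the induced bijection $\ov\phi=\pi_\ast(\phi):B_\om\to B_\om$ satisfying $\pi_\om\circ\phi=\ov\phi\circ\pi_\om$. Write $\la>0$ for the dilatation coefficient of $\phi$ with respect to the chosen metric $d$ on $X_\om$.

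Next, for distinct $b$, $b'\in B_\om$ I would unwind the definition of the canonical metric. By property~($1_\K$) there is a Ptolemy line $l\sub X_\om$ meeting both fibers $F_b$, $F_{b'}$ in points $x$, $x'$, and by Lemma~\ref{lem:line_dist_klines} we have $|bb'|=|xx'|$ independently of the choice of $l$. Since $\phi$ maps Ptolemy lines to Ptolemy lines and satisfies $\pi_\om\circ\phi=\ov\phi\circ\pi_\om$, the line $\phi(l)$ is a Ptolemy line meeting the fibers $F_{\ov\phi(b)}$, $F_{\ov\phi(b')}$ in the points $\phi(x)$, $\phi(x')$. Applying Lemma~\ref{lem:line_dist_klines} once more, this time to the transported line $\phi(l)$, gives $|\ov\phi(b)\,\ov\phi(b')|=|\phi(x)\phi(x')|$.

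Finally, because $\phi$ is a homothety of $(X_\om,d)$ with coefficient $\la$, we have $|\phi(x)\phi(x')|=\la|xx'|=\la|bb'|$, whence $|\ov\phi(b)\,\ov\phi(b')|=\la|bb'|$ for all $b$, $b'\in B_\om$. Combined with the bijectivity supplied by Lemma~\ref{lem:induced_base_map}, this exhibits $\ov\phi=\pi_\ast(\phi)$ as a homothety of $B_\om$ with the same dilatation coefficient $\la$, as claimed. There is no substantial obstacle here; the only point requiring care is that the value $|bb'|$ must not depend on the auxiliary line used to compute it, which is precisely what Lemma~\ref{lem:line_dist_klines} guarantees, so that the image line $\phi(l)$ legitimately computes the distance between the image fibers.
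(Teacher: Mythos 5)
Your proof is correct and is essentially the argument the paper intends: the corollary is stated with no written proof because it follows immediately from the definition of the canonical metric on $B_\om$ via Ptolemy lines (Lemma~\ref{lem:line_dist_klines} and property~($1_\K$)) together with the fact that $\phi$ maps fibers to fibers and Ptolemy lines to Ptolemy lines (Lemma~\ref{lem:induced_base_map}). Your unwinding of these ingredients matches the paper's implicit reasoning exactly.
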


\begin{pro}\label{pro:base_euclidean} The base
$B_\om$
is isometric to an Euclidean
$\R^k$
for some 
$k\le\dim X$. 
\end{pro}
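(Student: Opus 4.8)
The plan is to realize $B_\om$ as a Euclidean space by means of \emph{descended Busemann coordinates}, with the Pythagorean identity for slopes (Lemma~\ref{lem:max_orthogonal}) as the source of the Euclidean norm. Fix $\om$ and a metric with infinitely remote point $\om$, and choose a maximal collection $\cL_\bot=\{l_1,\dots,l_k\}$ of mutually orthogonal Ptolemy lines through a point $o\in X_\om$; such a collection exists by Lemma~\ref{lem:orthogonal_bound}. Let $b_i\colon X_\om\to\R$ be the Busemann function of $l_i$ with $b_i(o)=0$. Each fiber of $\pi_\om$ lies in a horosphere of $b_i$ (the line through a given point that is Busemann parallel to $l_i$ has $b_i$ as a Busemann function, and its horosphere contains the fiber), so $b_i$ is constant on fibers and descends to a function $\wh b_i\colon B_\om\to\R$. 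I set $\Phi=(\wh b_1,\dots,\wh b_k)\colon B_\om\to\R^k$ with $\Phi(\pi_\om(o))=0$, and I claim $\Phi$ is an isometry onto Euclidean $\R^k$.

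The key step is to show that $\Phi$ carries unit-speed geodesics of $B_\om$ to unit-speed straight lines of $\R^k$. Every geodesic of $B_\om$ lifts to a Ptolemy line: given two of its points, property $(1_\K)$ furnishes a Ptolemy line joining the corresponding fibers, and by uniqueness of geodesics in $B_\om$ (Proposition~\ref{pro:base_metric}) its projection is the given geodesic. On such a lift $l$ the map $\pi_\om$ is isometric (Proposition~\ref{pro:base_metric}), and by Proposition~\ref{pro:busemann_affine} each $b_i$ is affine along $l$ with coefficient $\slope(l,l_i)$. Hence $\Phi$ is affine along the geodesic with velocity $(\slope(l,l_1),\dots,\slope(l,l_k))$, whose Euclidean norm is $\sqrt{\sum_i\slope(l,l_i)^2}=1$ by Lemma~\ref{lem:max_orthogonal}.

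It then follows that $\Phi$ is distance preserving: since $B_\om$ is geodesic, any two points are joined by a geodesic, whose $\Phi$-image is a straight segment of equal length, so $|\Phi(b)-\Phi(b')|=|bb'|$ and in particular $\Phi$ is injective. For surjectivity I observe that the image is a full affine subspace of $\R^k$: along $l_i$ one has $\slope(l_i,l_i)=-1$ and $\slope(l_i,l_j)=0$ for $j\neq i$, so $\Phi(\pi_\om(l_i))$ is the entire $i$th coordinate axis; moreover, since geodesics of $B_\om$ are complete (Proposition~\ref{pro:base_metric}), the image contains the whole straight line through any two of its points. A subset of $\R^k$ closed under this operation is an affine subspace, and containing $0$ together with the $k$ coordinate axes it must be all of $\R^k$. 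Thus $\Phi$ is a bijective isometry and $B_\om$ is isometric to Euclidean $\R^k$. Finally $k=\dim B_\om\le\dim X_\om=\dim X$, because $\pi_\om$ is a submetry of $X_\om$ onto $\R^k$.

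The Euclidean character is the immediate payoff of the slope identity $\sum_i\slope(l,l_i)^2=1$, so I expect the main work to lie not there but in the two bookkeeping points above: verifying that every geodesic of $B_\om$ is the projection of a Ptolemy line (so that slopes are available along it), and the surjectivity argument via geodesic completeness. Should the direct argument stall, an alternative is to first establish that $B_\om$ is a normed vector space, using the transitive action on $B_\om$ of the isometries induced by shifts together with the induced homotheties of Corollary~\ref{cor:pi_submetry}, and then to identify the norm as Euclidean through the diagonal length $\sqrt{\sum_i s_i^2}$ computed in Lemma~\ref{lem:uspeed_parameter_zigzag}.
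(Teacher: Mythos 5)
Your proof is correct and follows essentially the same route as the paper's: descended Busemann coordinates $(\ov b_1,\dots,\ov b_k)$ built from a maximal orthogonal collection, with the identity $\sum_i\slope(l,l_i)^2=1$ of Lemma~\ref{lem:max_orthogonal} supplying the Euclidean norm along projected Ptolemy lines. The only cosmetic differences are that the paper gets injectivity from the separation property of Proposition~\ref{pro:orthogonal_base} and surjectivity by re-running the zigzag argument from the proof of ($1_\K$), whereas you deduce injectivity from the distance formula and surjectivity from an affine-closure argument using completeness of geodesic lines in $B_\om$; both versions are sound.
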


\begin{proof} Any Busemann function
$b:X_\om\to\R$
is affine on Ptolemy lines by Proposition~\ref{pro:busemann_affine}.
By definition,
$b$
is constant on the fibers of
$\pi_\om$,
thus it determines a function
$\ov b:B_\om\to\R$
such that
$\ov b\circ\pi_\om=b$.
This function is affine on geodesic lines in
$B_\om$
because every geodesic line
$\ov l\sub B_\om$
is of the form
$\ov l=\pi_\om(l)$
for some Ptolemy line
$l\sub X_\om$,
and each unit speed parameterization
$c:\R\to X_\om$
of
$l$
induces the unit speed parameterization
$\ov c=\pi_\om\circ c$
of
$\ov l$.
Then
$\ov b\circ\ov c=\ov b\circ\pi_\om\circ c=b\circ c$
is an affine function on
$\R$.

We fix a base point 
$o\in X_\om$
and a maximal collection
$\cL=\{l_1,\dots,l_k\}$
of mutually orthogonal Ptolemy lines of
$X_\om$
through
$o$.
Let
$b_1,\dots,b_k$
be Busemann functions of the lines
$l_1,\dots,l_k$
respectively which vanish at
$o$.
We denote with 
$\ov l_i$
the projection of
$l_i$
to
$B_\om$,
and with
$\ov b_i:B_\om\to\R$
the function corresponding to
$b_i$, $i=1,\dots,k$.
By Proposition~\ref{pro:orthogonal_base}, the functions
$b_1,\dots,b_k$
separates fibers in
$X_\om$.
Thus the functions
$\ov b_1,\dots,\ov b_k$
separates points of 
$B_\om$,
that is, for each
$z$, $z'\in B_\om$
there is 
$i$
with
$\ov b_i(z)\neq\ov b_i(z')$.
Therefore, the continuous map
$h:B_\om\to\R^k$, $h(z)=(\ov b_1(z),\dots,\ov b_k(z))$
is injective. This map is surjective by the same argument
as in the proof of the property ($1_\K$), and it introduces
coordinates on
$B_\om$.
We compute the distance on
$B_\om$
in these coordinates. Applying a shift if necessary,
see Corollary~\ref{cor:pi_submetry}, we consider W.L.G. the distance
$|\ov o\,\ov z|$
for every
$z\in X_\om$,
where
$\ov z=\pi_\om(z)$.
By ($1_\K$) there is a unique Ptolemy line 
$l\sub X_\om$
through
$o$
that hits the fiber
$F_z$
through
$z$.
It follows from our definitions that for
$\al_i=\slope(l,l_i)$
we have
$\ov b_i(\ov z)=\al_i|\ov o\,\ov z|$, $i=1,\dots,k$.
By Lemma~\ref{lem:max_orthogonal},
$\sum_i\al_i^2=1$,
thus
$|\ov o\,\ov z|^2=\sum_i\ov b_i^2(\ov z)$.
This shows that 
$B_\om$
is isometric to an Euclidean
$\R^k$.
We have
$k\le\dim X$,
because
$\pi_\om:X_\om\to B_\om$
is a 1-Lipschitz submetry. 
\end{proof}

\section{Extension of M\"obius automorphisms of circles}
\label{sect:extension_circles}

\subsection{Distance and arclength parameterizations of a circle}
\label{subsect:dist_unit_speed_param}

In this section, we establish existence of a distance
parameterization in a Ptolemy circle and study its 
relationship with an arclength parameterization. A distance
parameterization is convenient to obtain an important estimate
(\ref{eq:quadratic_reduce}) below. On the other hand, in 
an application of this estimate we use computation of slops,
which are most convenient to do in an arclength parameterization.

In what follows, we consider a (bounded) Ptolemy circle
$\si\sub X_\om$
and points 
$x$, $y\in\si$
with
$a:=|xy|>0$.

\begin{lem}\label{lem:dist_parameter} Let 
$\si_+$, $\si_-$
be the two components of
$\si\sm\{x,y\}$.
Then for all
$0<t<a$ 
there exists exactly one point
$x_t^+\in\si_+$
(resp. 
$x_t^-\in\si_-$) 
with
$|xx_t^+|=|xx_t^-|=t$.
Therefore
$\ga:(-a,a)\to\si$
with
$\ga(0)=x$, $\ga(t)=x_t^+$
for 
$t>0$,
and
$\ga(t)=x_{-t}^-$
for 
$t<0$
parameterizes a neighborhood of
$x$ 
in 
$\si$.
\end{lem}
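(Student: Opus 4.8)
The plan is to \emph{linearize} the bounded circle $\si$ by performing a metric inversion at $x$, and then read off the behaviour of the distance-from-$x$ function from the convexity of a point-to-line distance. Let $d'$ be the m-inversion of the metric $d$ of $X_\om$ with respect to $x$ (of radius $1$). Then $x$ is infinitely remote for $d'$, and by the discussion in Section~\ref{subsect:Ptolemy_spaces} the set $L:=\si\sm x$ is a Ptolemy line in $X_x$, i.e. a complete geodesic isometric to $\R$; I fix a $d'$-unit-speed parametrization $c:\R\to L$. Since $\si\sub X_\om$ is bounded we have $\om\notin\si$, so $\om$ is a genuine point of $X_x$ lying off the line $L$. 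Put $g(s)=d'(\om,c(s))$ and recall the reciprocity $d(x,u)=1/d'(\om,u)$ for the radius-one inversion (the standard convention $d_x(u,\om)=1/d(x,u)$), so that $f(s):=d(x,c(s))=1/g(s)$.

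First I would record the three properties of $g$ that drive everything. By the Ptolemy inequality the distance to the point $\om$ is convex along the Ptolemy line $L$ (as in Section~\ref{subsect:duality_dist_busemann}, cf.\ \cite{FS2}), so $g$ is convex; it is continuous and strictly positive; and $g(s)\to\infty$ as $s\to\pm\infty$, because both ends of $L$ converge to $x$, whence $d(x,c(s))\to 0$ and $g=1/d(x,c(\cdot))\to\infty$. Consequently $f=1/g$ is continuous, tends to $0$ at both ends of $L$, and the sublevel set $K=\{\,s:\ g(s)\le 1/a\,\}=\{\,s:\ f(s)\ge a\,\}$ is a nonempty compact interval $[\al,\be]$: it is convex as a sublevel set of a convex function, bounded since $g\to\infty$, and closed by continuity.

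The key step is the monotonicity that follows from convexity: on $(-\infty,\al]$ the function $g$ is strictly decreasing and on $[\be,\infty)$ it is strictly increasing. Indeed, $g(\al)=1/a$ since $\al=\inf K$, and if $s_1<s_2<\al$ had $g(s_1)\le g(s_2)$, then the secant from $s_1$ to $s_2$ would have nonnegative slope, so convexity gives $g(\al)\ge g(s_2)>1/a=g(\al)$ (using $g(s_2)>1/a$ because $s_2<\al$), a contradiction; the right end is symmetric. Hence $f=1/g$ is a strictly increasing bijection $(-\infty,\al)\to(0,a)$ and a strictly decreasing bijection $(\be,\infty)\to(0,a)$, while $f\ge a$ on $[\al,\be]$, so no value of $(0,a)$ is attained on $[\al,\be]$.

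Finally I would translate this to the two arcs. The point $y=c(s_y)$ satisfies $g(s_y)=1/a$, so $s_y\in K=[\al,\be]$, and removing $y$ splits $L$ into the arcs $\si_+=c((-\infty,s_y))$ and $\si_-=c((s_y,\infty))$. Because $s_y\ge\al$, the only values of $f$ in $(0,a)$ on $\si_+$ occur on $(-\infty,\al)$, where $f$ is a bijection onto $(0,a)$; thus every $t\in(0,a)$ is attained exactly once on $\si_+$, giving the unique point $x_t^+$. Symmetrically, using $s_y\le\be$ and the ray $(\be,\infty)$, every $t\in(0,a)$ is attained exactly once on $\si_-$, giving $x_t^-$; the degenerate case $\al=\be$ is covered verbatim. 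The resulting map $\ga$ is injective and continuous, as the inverse of the continuous strictly monotone $f$ on each ray, with $\ga(t)\to x$ as $t\to 0^\pm$ (then $s\to\mp\infty$ and $c(s)\to x$), so it parametrizes a neighborhood of $x$ in $\si$. The only genuine obstacle is the monotonicity needed for \emph{uniqueness}; it is exactly here that the linearizing inversion pays off, turning the distance-from-$x$ function on the bounded circle into the reciprocal of a convex point-to-line distance, after which everything else is routine.
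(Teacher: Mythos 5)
Your proof is correct, but it takes a genuinely different route from the paper's. The paper argues directly on the bounded circle: for $x<p<q<y$ in order on $\si_+$ with $b=|xp|<a$, the Ptolemy equality $a\la_a+b\la_b=c\la_c$ combined with the triangle inequality $\la_c\le\la_a+\la_b$ gives $c\ge\frac{\la_a}{\la_a+\la_b}a+\frac{\la_b}{\la_a+\la_b}b>b$, so the distance from $x$ is strictly increasing along $\si_+$ as long as it stays below $a$; uniqueness follows in three lines and existence is continuity. You instead linearize by the m-inversion at $x$, observe that $f=d(x,c(\cdot))$ becomes the reciprocal of the convex function $g=d'(\om,c(\cdot))$ on the Ptolemy line $\si\sm x\sub X_x$, and extract strict monotonicity of $g$ off the compact sublevel interval $K=\{g\le 1/a\}$. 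All the ingredients you use (the reciprocity $d(x,u)=1/d'(\om,u)$, convexity of point-to-line distance via the Ptolemy inequality, $\om\notin\si$ for a bounded circle) are available at this point in the paper, and your handling of the endpoints of $K$ and of the degenerate case $\al=\be$ is sound. The trade-off: the paper's four-point computation is shorter and entirely self-contained, while your argument buys a cleaner global picture (each arc decomposes into a ray where $f$ is a strictly monotone bijection onto $(0,a)$ and a piece where $f\ge a$), which also makes the continuity of $\ga$ and the fact that its image is an open neighborhood of $x$ immediate. Both ultimately rest on the same engine, the Ptolemy relation for a quadruple on the circle, packaged once directly and once through the convexity lemma.
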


\begin{proof} The existence of a point
$x_t^+ \in\si_+$
with
$|xx_t^+|=t$
is clear by continuity. To prove uniqueness consider points
$x<p<q<y$ 
in this order on
$\si_+$
and assume
$b:=|xp|<a=|xy|$.
Let
$c:=|xq|$, $\la_a:=|pq|$, $\la_b:=|qy|$, $\la_c:=|py|$.

The Ptolemy equality and the triangle inequality give
$$a\la_a+b\la_b=c\la_c\le c(\la_a+\la_b).$$
Therefore
$$c \ge \frac{\la_a}{\la_a+\la_b}a+\frac{\la_b}{\la_a+\la_b}b >b$$
where the last equality holds, since 
$a>b$.
In particular
$|xp|\neq |xq|$.
\end{proof}

In what follows, we use the parameterization
$\ga:(-a,a)\to\si$
of a neighborhood of
$x\in\si$,
and call it a {\em distance} parameterization.

\begin{lem}\label{lem:smooth_concave} The function
$g(t):=|\ga(t)y|$
is concave and $C^1$-smooth on
$(-a,a)$.
\end{lem}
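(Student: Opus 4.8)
Let me recall the situation. We have a bounded Ptolemy circle $\si \subset X_\om$, distinct points $x, y \in \si$ with $a := |xy| > 0$, and the distance parameterization $\ga:(-a,a)\to\si$ centered at $x$, so that $\ga(0)=x$ and $|x\ga(t)|=|t|$ for all $t\in(-a,a)$. The function under study is $g(t)=|\ga(t)y|$. I want to show $g$ is concave and $C^1$-smooth.

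=== BEGIN PROOF PROPOSAL ===

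\begin{proof}[Proof sketch]
The plan is to transfer both claims to statements about distance functions along Ptolemy lines, where convexity and $C^1$-smoothness are already available from Lemma~\ref{lem:smooth_convex}. The key device is an m-inversion centered at $x$, which straightens the bounded circle $\si$ into a Ptolemy line and converts the ``distance-to-$y$'' function into a distance function along that line.

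First I would invert at $x$. Let $d'$ be the m-inversion of $d$ with respect to $x$, so that $x$ becomes the infinitely remote point of $(X_x,d')$, and $\si\sm x$ becomes a Ptolemy line $l\sub X_x$. Recall the inversion formula $d'(u,v)=d(u,v)/(d(u,x)d(v,x))$ for $u,v\neq x$, together with the dual relation $d(x,z)=1/d'(\om,z)$ coming from the involutivity of m-inversion (both used already in the proof of Proposition~\ref{pro:tangent_rcircle}). Under $\ga$, the distance parameter satisfies $d(x,\ga(t))=|t|$, so $d'(\om,\ga(t))=1/|t|$; thus the image points $\ga(t)$ sweep out the Ptolemy line $l$ as $t$ ranges over $(-a,a)\sm\{0\}$, and this gives $l$ an arclength parameterization $c(u)=\ga(t(u))$ after a reparameterization $u\mapsto t$. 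The point $y\in\si$ maps to a fixed point $y\in X_x\sm l$ (note $y\neq x$ and $y$ is not infinitely remote for $d'$).

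Next I would express $g$ through $d'$. Writing out the inversion,
\begin{equation*}
g(t)=d(\ga(t),y)=\frac{d'(\ga(t),y)}{d'(\om,\ga(t))\,d'(\om,y)}
 =\frac{|t|\,d'(\ga(t),y)}{d'(\om,y)},
\end{equation*}
since $d'(\om,\ga(t))=1/|t|$ and $d'(\om,y)$ is a positive constant. The factor $d'(\ga(t),y)$ is precisely the distance in $X_x$ from the moving point $\ga(t)\in l$ to the fixed point $y\notin l$, which by Lemma~\ref{lem:smooth_convex} is a convex, $C^1$-smooth function along $l$. The remaining analytic content is the passage through the inversion: the relation above realizes $g$ as (a constant multiple of) $|t|$ times a convex smooth function of the corresponding arclength parameter on $l$, and one checks by a direct computation using the duality identity that this combination produces exactly a concave function of $t$. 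I expect the cleanest route is to differentiate, using that $u=u(t)$ is a smooth increasing reparameterization away from $t=0$ and that the only potential singularity is at $t=0$ where $|t|$ is nonsmooth.

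The main obstacle, as anticipated, will be the point $t=0$, i.e.\ at $x$ itself. Away from $x$ everything is smooth because $d'(\cdot,y)$ is $C^1$ along $l$ and the reparameterization is smooth. At $t=0$ the factor $|t|$ has a corner, so I must verify that the product nonetheless has matching one-sided derivatives. Here I would invoke the first variation formula of Lemma~\ref{lem:first_variation} (equivalently Equation~(\ref{eq:smooth_duality}) in the Busemann flat space $X$, valid by Lemma~\ref{lem:smooth_convex}): the one-sided derivatives of $d'(\ga(t),y)$ as $\ga(t)$ passes through $x=c(0)$ along $l$ are governed by a slope, and the symmetric $\pm\,\slope$ behavior forced by flatness cancels exactly the sign jump introduced by $|t|$. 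Thus the one-sided derivatives of $g$ at $0$ agree, giving $C^1$-smoothness across $x$. Concavity then follows either from the explicit second-derivative computation away from $0$ together with the corner analysis at $0$, or more robustly by a Ptolemy-equality argument directly on $\si$: for a midpoint $m$ of an arc one compares $g$ at the endpoints with $g(m)$ using the Ptolemy equality on the four concyclic points, mirroring the midpoint computation already carried out in Proposition~\ref{pro:busemann_affine} and in Lemma~\ref{lem:smooth_convex}. I would present the smoothness via the inversion-plus-flatness argument and obtain concavity as the geometric counterpart of the convexity of $d'(\cdot,y)$, reflected by the inversion.
\end{proof}

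=== END PROOF PROPOSAL ===
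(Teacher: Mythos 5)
There is a genuine gap, and it sits at the very center of your reduction. After the m\nobreakdash-inversion at $x$, the circle $\si$ (which contains \emph{both} $x$ and $y$) becomes the Ptolemy line $l=\si\sm x\sub X_x$, and therefore $y$ lies \emph{on} $l$, not in $X_x\sm l$ as you assert. Consequently Lemma~\ref{lem:smooth_convex} says nothing here: the quantity $d'(\ga(t),y)$ is just the arclength distance between two points of the geodesic $l$, which is tautologically (piecewise) linear in the arclength parameter. Your identity $g(t)=|t|\,d'(\ga(t),y)/d'(\om,y)$ is correct but is merely the inversion formula rewritten; all of the analytic content is hidden in the change of variables between the distance parameter $t=|x\ga(t)|$ and the arclength parameter $u$ on $l$. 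That reparameterization is governed by $d'(\om,c(u))=1/|t|$ with $\om\notin l$, a function that Lemma~\ref{lem:smooth_convex} only guarantees to be convex and $C^1$; inverting it requires a non-vanishing derivative, and this potential failure can occur at \emph{any} $t$, not only at $t=0$ where you localize the difficulty. So the proposal neither establishes concavity nor $C^1$-smoothness as it stands.

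For comparison, the paper gets concavity purely combinatorially: the Ptolemy equality on the concyclic quadruple $(x,\ga(t_1),\ga(t_2),y)$ yields the identity $t_2g(t_1)-t_1g(t_2)=a\,|\ga(t_1)\ga(t_2)|$, and the triangle inequality for the right-hand side translates directly into monotonicity of difference quotients of $g$ (your ``midpoint'' gesture is in this spirit but is not carried out). For smoothness the paper invokes Lemma~\ref{lem:smooth_convex} in the correct configuration: along the \emph{tangent} Ptolemy line to $\si$ at $\ga(t_0)$ (Proposition~\ref{pro:tangent_rcircle}), where $x$ and $y$ genuinely lie off the line by Corollary~\ref{cor:tangent_unique}; the case $t_0\neq 0$ then still needs an inverse-function argument whose non-degeneracy is extracted from the already-proved concavity. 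If you want to keep an inversion-based route, you would have to invert at a point of $\si$ \emph{other than} $x$ and $y$, or otherwise supply the missing regularity of $t\mapsto u(t)$; as written, the argument does not go through.
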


\begin{proof} For
$-a\le t_1<t_2\le a$
the Ptolemy equality for the points
$x,\ga(t_1),\ga(t_2),y$
implies
$$t_2g(t_1)-t_1g(t_2)=a|\ga(t_1)\ga(t_2)|.$$
Thus for 
$-a\le t_1<t_2<t_3\le a$
the triangle inequality 
$|\ga(t_1)\ga(t_3)|\le|\ga(t_1)\ga(t_2)|+|\ga(t_2)\ga(t_3)|$
implies
$$t_3g(t_2)-t_2g(t_3)+t_2g(t_1)-t_1g(t_2)\,\ge\,t_3g(t_1)-t_1g(t_3)$$
which is equivalent to
$$\frac{g(t_2)-g(t_1)}{t_2-t_1}\,\ge\, \frac{g(t_3)-g(t_2)}{t_3-t_2}.$$
Therefore,
$g$
is concave. It follows, in particular, that
$g$
has the left 
$g_-'(t)$
and the right derivative 
$g_+'(t)$
at every
$t\in(-a,a)$, $g_-'(t)\ge g_+'(t)$
and these derivatives are nonincreasing,
$g_+'(t)\ge g_-'(t')$
for
$t<t'$.
Furthermore,
$g_-'(t)\to g_-'(t')$
as
$t\nearrow t'$,
$g_+'(t')\to g_+'(t)$
as
$t'\searrow t$.
These are standard well known facts about concave functions, 
see e.g. \cite{H-UL}.

We fix
$t_0\in(-a,a)$
and consider the Ptolemy line 
$l\sub X_\om$
tangent to
$\si$
at
$x_0=\ga(t_0)$.
We assume that
$l$
is oriented and that its orientation is compatible with
the orientation of
$\si$
given by the distance parameterization
$\ga$.
Let
$c:\R\to X_\om$
be the unit speed parameterization of
$l$
compatible with the orientation,
$c(0)=x_0$.
By Corollary~\ref{cor:tangent_unique},
$y\not\in l$. 
By Lemma~\ref{lem:smooth_convex}, the function
$\wt g(s)=|c(s)y|$, $s\in\R$,
is
$C^1$-smooth.
If
$t_0=0$,
then
$g_-'(t_0)=\frac{d\wt g}{ds}(0)=g_+'(t_0)$,
because
$l$
is tangent to 
$\si$
at
$x_0=x$,
and thus
$g$
is differentiable at
$t_0=0$.

Consider now the case
$t_0\neq 0$.
Then again by Corollary~\ref{cor:tangent_unique},
$x\not\in l$,
thus the function
$\wt f(s)=|c(s)x|$, $s\in\R$,
is $C^1$-smooth.
We show that
$\frac{d\wt f}{ds}(0)\neq 0$.
We suppose W.L.G. that
$t_0>0$.
Then for all
$t_1\in(0,t_0)$
sufficiently close to
$t_0$
we have
$\frac{d\wt h}{ds}(0)\neq 0$,
where
$\wt h(s)=|x_1c(s)|$, $x_1=\ga(t_1)$.
We fix such a point 
$t_1$, 
and using Lemma~\ref{lem:dist_parameter}
consider the distance parameterization of a neighborhood of
$x_0=\ga(t_0)$
in
$\si$, $|z(\tau)x_1|=\tau$
for all
$z\in\si$
sufficiently close to
$x_0$.
Then 
$t=t(\tau)$
and the function
$f(\tau)=|x\ga\circ t(\tau)|$
is concave by the first part of the proof. 
Since the functions
$\wt f(s)=|xc(s)|$, $\wt h(s)=|x_1c(s)|$
are $C^1$-smooth, and
$\frac{d\wt h}{ds}(0)\neq 0$,
the function 
$\wt f(\tau)=\wt f\circ\wt h^{-1}(\tau)$
is $C^1$-smooth
in a neighborhood of
$\tau_0=|x_1x_0|$
by the inverse function theorem. Therefore,
$f_-'(\tau_0)=\frac{d\wt f}{ds}(0)=f_+'(\tau_0)$
because
$l$
is tangent to
$\si$
at
$x_0$.
The assumption
$\frac{d\wt f}{ds}(0)=0$
implies
$\frac{df}{d\tau}(\tau_0)=0$.
By concavity, 
$\tau_0$
is a maximum point of the function
$f(\tau)$, 
and there are different
$\tau$, $\tau'$
arbitrarily close to
$\tau_0$
with
$f(\tau)=f(\tau')$.
This contradicts properties of the parameterization
$\ga(\tau)=\ga\circ t(\tau)$.
Hence,
$\frac{d\wt f}{ds}(0)\neq 0$.

Again, by the inverse function theorem, the function
$\wt g(t)=\wt g\circ\wt f^{-1}(t)$
is
$C^1$-smooth 
in a neighborhood of
$t_0$.
However,
$\frac{d\wt g}{dt}(t_0)$
coincides with the left as well as with the right
derivative of the function
$g$
at
$t_0$
because
$l$
is tangent to
$\si$
at
$x_0$.
Therefore,
$g$
is differentiable at
$t_0$.
It follows from continuity properties of
one-sided derivatives of concave functions that the derivative
$g'$
is continuous, i.e.,
$g$
is 
$C^1$-smooth.
\end{proof}

\begin{lem}\label{lem:circle_rectifiable} Every Ptolemy circle
$\si\sub X_\om$
is rectifiable and
$$L(xx')=|xx'|+o(|xx'|^2)$$
as
$x'\to x$
in
$\si$,
where
$L(xx')$
is the length of the (smallest) arc
$xx'\sub\si$.
\end{lem}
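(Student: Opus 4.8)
The plan is to work with the distance parameterization $\ga\colon(-a,a)\to\si$ of a neighborhood of $x$ furnished by Lemma~\ref{lem:dist_parameter}, where $a=|xy|$ for an auxiliary point $y\in\si$, together with the identity
\[
a\,|\ga(t_1)\ga(t_2)|=t_2\,g(t_1)-t_1\,g(t_2),\qquad -a\le t_1<t_2\le a,
\]
established in the proof of Lemma~\ref{lem:smooth_concave}, where $g(t)=|\ga(t)y|$. Setting $t_1=0$ gives $|\ga(0)\ga(t)|=t$, so the chord joining $x=\ga(0)$ and $x'=\ga(t)$ has length exactly $t$. Hence it suffices to estimate the length $L(t)$ of the arc $\ga([0,t])$ and to prove $L(t)=t+o(t^2)$; the statement then follows with $|xx'|=t$, treating the arc on the other side of $x$ symmetrically.

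First I would extract the metric speed of $\ga$ from the identity. Rewriting the chord as $a^{-1}\bigl((t_2-t_1)g(t_1)-t_1(g(t_2)-g(t_1))\bigr)$ and using that $g$, $g'$ are bounded on $[0,t_0]$ ($g$ is $C^1$ by Lemma~\ref{lem:smooth_concave}) yields a Lipschitz bound $|\ga(t_1)\ga(t_2)|\le C|t_1-t_2|$; covering $\si$ by finitely many such neighborhoods (compactness) then gives rectifiability. Passing to the limit $t_2\to t_1$ in the difference quotient shows that $\ga$ has the continuous metric speed $v(s)=a^{-1}\bigl(g(s)-s\,g'(s)\bigr)$. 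Either by the standard length formula for Lipschitz curves in a metric space, or by recognizing the inscribed polygonal sums directly as Riemann and Riemann--Stieltjes sums converging to $a^{-1}\int_0^t g-a^{-1}\int_0^t s\,g'(s)\,ds$ (legitimate since $g\in C^1$), I would conclude $L(t)=\int_0^t v(s)\,ds$.

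It then remains to expand $v$ near $0$. Since $g(0)=|xy|=a$ we get $v(0)=1$. Writing $g(s)-a=\int_0^s g'$ and $s\,g'(s)=s\,g'(0)+s\bigl(g'(s)-g'(0)\bigr)$, the numerator of $v(s)-1$ equals $\int_0^s(g'(u)-g'(0))\,du-s\bigl(g'(s)-g'(0)\bigr)$, so with $\omega(s)=\sup_{[0,s]}|g'-g'(0)|\to0$ one has $|v(s)-1|\le 2a^{-1}\omega(s)\,s$. Integrating,
\[
|L(t)-t|=\Bigl|\int_0^t (v(s)-1)\,ds\Bigr|\le \frac{2\omega(t)}{a}\int_0^t s\,ds=\frac{\omega(t)}{a}\,t^2=o(t^2),
\]
which, combined with $|xx'|=t$, gives $L(xx')=|xx'|+o(|xx'|^2)$.

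The main obstacle I anticipate is the length identity $L(t)=\int_0^t v(s)\,ds$: one must justify carefully that the limit of inscribed polygonal lengths as the mesh tends to $0$ equals both the curve length and the integral, using rectifiability together with the $C^1$-regularity of $g$ to control the Riemann--Stieltjes sums. Once this is in place, the asymptotic is a routine first-order expansion, and notably no computation of $g'(0)$ (i.e.\ of slopes) is required, since only $g(0)=a$ and $g\in C^1$ enter.
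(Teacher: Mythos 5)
Your proof is correct, and it is essentially the paper's argument recast. Both proofs rest on the same ingredients: the distance parameterization of Lemma~\ref{lem:dist_parameter} and the $C^1$-smoothness of $g(t)=|\ga(t)y|$ from Lemma~\ref{lem:smooth_concave}, followed by a first-order expansion of an exact integral formula for arc length. The only real difference is how that formula is obtained: the paper passes to the inverted metric $|\cdot|_y$ on $X_y$, where $\si\sm y$ is a geodesic, and writes $L(r)=\int_0^{r/g(r)}g^2(s)\,ds$ in the arclength parameter $s=t/g(t)$ of $X_y$; you stay in $X_\om$ and read the chord lengths off the Ptolemy identity $a|\ga(t_1)\ga(t_2)|=t_2g(t_1)-t_1g(t_2)$, arriving at $L(t)=a^{-1}\int_0^t\bigl(g(s)-sg'(s)\bigr)\,ds$. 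The change of variables $s=t/g(t)$ shows the two integrals coincide, so nothing is lost or gained beyond presentation. The step you flag as delicate is unproblematic: the inscribed polygonal sums are \emph{exactly} $a^{-1}\sum_i\bigl((t_{i+1}-t_i)g(t_i)-t_i(g(t_{i+1})-g(t_i))\bigr)$ by the chord identity, and since $g\in C^1$ these converge, as the mesh tends to $0$, both to the arc length (standard for continuous rectifiable curves) and to the stated Riemann plus Riemann--Stieltjes integral. Your closing estimate via the modulus of continuity of $g'$ is if anything cleaner than the paper's explicit Taylor expansion, in which the $g'(0)$-terms must cancel by hand.
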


\begin{proof} We fix 
$y\in\si$, $y\neq x$,
and introduce a distance parameterization
$\ga:(-a,a)\to\si$
of a neighborhood of
$x=\ga(0)$
in
$\si$.
Rescaling the metric of
$X_\om$
we assume that
$a=|xy|=1$
for simplicity of computations. We use notation
$d(z,z')=|zz'|$
for the distance in
$X_\om$,
and
$|zz'|_y$
for the distance in
$X_y$,
assuming that
$$|zz'|_y=\frac{|zz'|}{|zy||z'y|}$$
is the metric inversion of the metric 
$d$.

Recall that 
$\si\sm y$
is a Ptolemy line in
$X_y$.
Thus for a given
$r\in(0,1)$,
and for every partition
$0=t_0\le\dots\le t_n=r$
we have
$$\sum_i|\ga(t_i)\ga(t_{i+1})|\le\La|xx_r|_y,$$
where
$\La=\max\set{g^2(t)}{$0\le t\le r$}$, $g(t)=|\ga(t)y|$,
$x_r=\ga(r)$.
Hence
$\si$
is rectifiable and
$L(xx_r)\le\La|xx_r|_y$.
Moreover, using
$|\ga(t_i)\ga(t_{i+1})|=g(t_i)g(t_{i+1})|\ga(t_i)\ga(t_{i+1})|_y$,
we actually have
$$L(r)=L(xx_r)=\int_0^{r/g(r)}g^2(s)ds,$$
where
$g(s)=g\circ t(s)$
with
$s=\frac{t}{g(0)g(t)}=t/g(t)$.
Recall that the function
$g(t)$
is $C^1$-smooth by Lemma~\ref{lem:smooth_concave}. Then
$ds=\frac{g(t)-tg'(t)}{g^2(t)}dt$
and
$\frac{dt}{ds}=\frac{g^2(t)}{g(t)-tg'(t)}$,
in particular,
$\frac{dt}{ds}(0)=1$.

Using developments
$g(s)=1+\frac{dg}{ds}(0)s+o(s)$,
$g^2(s)=1+2\frac{dg}{ds}(0)s+o(s)$,
we obtain
$$L(r)=\frac{r}{g(r)}+\frac{dg}{ds}(0)\frac{r^2}{g^2(r)}+o(r^2),$$
where
$\frac{dg}{ds}(0)=g'(0)\frac{dt}{ds}(0)=g'(0)$.
Since
$g(r)=1+g'(0)r+o(r)$, $g^2(r)=1+2g'(0)r+o(r)$,
we finally have
$$L(r)=r(1-g'(0)r)+g'(0)r^2(1-2g'(0)r)+o(r^2)=r+o(r^2).$$
Hence
$L(xx')=|xx'|+o(|xx'|^2)$
as
$x'\to x$
in
$\si$. 
\end{proof}

\subsection{Slope of Ptolemy circles}
\label{subsect:slope_circles}

If oriented Ptolemy circles
$\si$, $\si'\sub X$
are disjoint, then their slope is not determined. Assume now that
$\om\in\si\cap\si'$.
Then 
$\slope_\om(\si,\si)\in[-1,1]$
is defined as the slope of oriented Ptolemy lines
$\si\sm x$, $\si'\sm x\sub X_\om$.
This is well defined and symmetric,
$\slope_\om(\si,\si')=\slope_\om(\si',\si)$,
by Lemma~\ref{lem:slope_symmetry}. In the case
$\slope_\om(\si,\si')=\pm 1$,
the Ptolemy circles are tangent to each other at
$\om$,
having compatible ($-1$) or opposite ($+1$) orientations.
This means that in any space 
$X_{\om'}$
with 
$\om'\neq\om$,
the Ptolemy circles
$\si\sm\om'$, $\si'\sm\om'\sub X_{\om}$
have a common tangent line at
$\om$.

More generally, let
$l$, $l'\sub X_{\om'}$
be the tangent lines to
$\si$, $\si'$
respectively at
$\om$, 
oriented according to the orientations of
$\si$, $\si'$.
Then
$\slope_\om(\si,\si')=\slope(l,l')$
because in the space 
$X_\om$
the Ptolemy line
$\si\sm\om$
is Busemann parallel to
$l\sm\om$,
and
$\si'\sm\om$
is Busemann parallel to
$l'\sm\om$,
and we can apply Lemma~\ref{lem:paraline_busemann}.

Furthermore, if
$\si$, $\si'$
have two distinct common points,
$\om$, $\om'\in\si\cap\si'$,
then
$\slope_\om(\si,\si')=\slope_{\om'}(\si,\si')$.
This follows from Lemma~\ref{lem:opposite_slopes}.

\begin{lem}\label{lem:quaratic_excess} Assume that a (bounded) oriented
Ptolemy circle
$\si\sub X_\om$
has two different points in common with a Ptolemy line 
$l\sub X_\om$, $x$, $y\in\si\cap l$,
and the line 
$l$
is oriented from
$y$
to
$x$.
Let
$\si_+\sub\si$
be the arc of
$\si$
from
$x$
to
$y$
chosen according to the orientation of
$\si$.
Let
$x_t$, $y_t$
be the distance parameterizations of neighborhoods of
$x$, $y$
respectively
such that
$x_t$, $y_t\in\si_+$
for 
$t>0$, $|x_tx|=t=|y_ty|$.
Furthermore, let
$b^\pm:X_\om\to\R$
be the opposite Busemann functions of
$l$
normalized by
$b^+(x)=0$, $b^+(y)=-a$, $b^-(x)=-a$, $b^-(y)=0$,
where
$a=|xy|$.
Then
\begin{equation}\label{eq:quadratic_reduce}
b^+(x_t)+b^-(y_t)\le 2\al t-\frac{1}{a}(1-\al^2)t^2
\end{equation}
for all 
$t>0$
in the domain of the parameterizations, where
$\al=\slope_x(\si,l)=\slope_y(\si,l)$. 
\end{lem}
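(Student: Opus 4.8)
The plan is to reduce the whole estimate to a single application of the Ptolemy equality on the four concyclic points $x,x_t,y_t,y$, combined with the $1$-Lipschitz property of Busemann functions, the concavity of distance along $\si$, and Busemann flatness. Set $g(t)=|x_ty|$ and $\tilde g(t)=|xy_t|$; these are precisely the functions of Lemma~\ref{lem:smooth_concave} for the distance parameterizations at $x$ and at $y$, so each is concave and $C^1$ with $g(0)=\tilde g(0)=a$.

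First I would record the Ptolemy equality. Since $x,x_t,y_t,y$ lie on $\si$ in this cyclic order, (\ref{eq:PT_eq}) reads $|xy_t|\,|x_ty|=|xx_t|\,|y_ty|+|xy|\,|x_ty_t|$, and using $|xx_t|=|y_ty|=t$, $|xy|=a$ this becomes
\[
a\,|x_ty_t|=g(t)\tilde g(t)-t^2 .
\]
Next I would pass from the intrinsic distance $|x_ty_t|$ to the Busemann functions of $l$. Because $X$ is Busemann flat (Lemma~\ref{lem:smooth_convex}) and $b^+(x)+b^-(x)=-a$, we have $b^++b^-\equiv-a$, hence $b^+(x_t)+b^-(y_t)=b^+(x_t)-b^+(y_t)-a$. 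As $b^+$ is $1$-Lipschitz, $b^+(x_t)-b^+(y_t)\le|x_ty_t|$, so the displayed identity gives
\[
b^+(x_t)+b^-(y_t)\le\frac{g(t)\tilde g(t)-t^2}{a}-a .
\]

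Finally I would feed in the first-order behaviour together with concavity. The point is the identification $g'(0)=\tilde g'(0)=\al$: since $\si$ is tangent to its tangent line at $x$ (Proposition~\ref{pro:tangent_rcircle}) and $d(\cdot,y)$ is $C^1$ along Ptolemy lines (Lemma~\ref{lem:smooth_convex}), $g'(0)$ is the first variation of the distance to $y$ along that tangent line, which equals the slope $\slope_x(\si,l)=\al$ by the first-variation formula (\ref{eq:first_variation}); the same computation at $y$ gives $\tilde g'(0)=\al$. Concavity then yields $g(t)\le a+\al t$ and $\tilde g(t)\le a+\al t$, and both bounds are positive in the admissible range $t<a$ (as $|\al|\le1$), so $0\le g(t)\tilde g(t)\le(a+\al t)^2$. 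Substituting,
\[
b^+(x_t)+b^-(y_t)\le\frac{(a+\al t)^2-t^2}{a}-a=2\al t-\frac1a(1-\al^2)t^2 ,
\]
which is (\ref{eq:quadratic_reduce}).

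I expect the only delicate step to be the identification $g'(0)=\tilde g'(0)=\al$, that is, matching the slope (defined through Busemann functions of $l$, hence through points of $l$ at infinity) with the first derivatives of the genuine distance functions $|x_ty|$ and $|xy_t|$ along $\si$; this is where the orientation conventions of $\si$ and $l$ and the duality between $X_\om$ and the inverted spaces must be tracked with care. Everything else is a direct substitution: the creative point is simply to combine the four ingredients, since the triangle inequality hidden in the $1$-Lipschitz estimate and in the concavity bounds $g,\tilde g\le a+\al t$ is exactly what converts the \emph{equality} of Ptolemy into the one-sided estimate (\ref{eq:quadratic_reduce}), with the factor $1-\al^2$ emerging automatically from $(a+\al t)^2-t^2$.
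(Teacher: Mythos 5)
Your proof is correct and follows essentially the same route as the paper: the same Ptolemy equality on $(x,x_t,y_t,y)$, the same identification $g'(0)=\tilde g'(0)=\al$ via tangent lines and the first-variation formula, and the same concavity bounds $g,\tilde g\le a+\al t$. The only cosmetic difference is the final step: the paper bounds $|x_ty_t|$ from below by the distance $a+b^+(x_t)+b^-(y_t)$ between the horospheres through $x_t$ and $y_t$, while you phrase the same inequality through Busemann flatness ($b^++b^-\equiv-a$) and the $1$-Lipschitz property of $b^+$ --- these are equivalent.
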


\begin{proof} By Lemma~\ref{lem:smooth_concave} the functions
$g(t)=|x_ty|$, $f(t)=|y_tx|$
are $C^1$-smooth and concave. Furthermore, their first derivatives
at 0,
$g'(0)$
and
$f'(0)$,
coincide with first derivatives of the distance functions to the 
respective tangent lines,
$g'(0)=\wt g'(0)$
and
$f'(0)=\wt f'(0)$,
where
$\wt g(s)=|c_x(s)y|$, $\wt f(s)=|c_y(s)x|$,
and the unit speed parameterizations of the tangent lines
$l_x$
to
$\si$
at
$x$
and
$l_y$
to
$\si$
at 
$y$
are chosen compatible with the distance parameterizations
$x_t$, $y_t$
so that
$c_x(0)=x$, $c_y(0)=y$.

Using that
$l$
is oriented from
$y$
to
$x$,
and
$\si_+$
from
$x$
to
$y$
and applying equation~(\ref{eq:re_first_variation}), we find
$\wt g'(0)=\slope(l_x,l)$.
By the same equation~(\ref{eq:re_first_variation}) we have
$\wt f'(0)=\slope(l_y,-l)=-\slope(l_y,l)$.
The sign
$-1$
appears because the orientation of
$l$
from
$x$
to
$y$
is opposite to the chosen orientation.
Note that the orientation of
$l_x$
is compatible with that of
$\si$,
while the orientation of
$l_y$
is opposite to that of 
$\si$.
Therefore,
$g'(0)=\al=\slope(\si,l)$
and
$f'(0)=-\slope(l_y,l)=\slope(\si,l)=\al$.

Using concavity we obtain
$g(t)\le g(0)+g'(0)t=a+\al t$
and similarly
$f(t)\le a+\al t$
for all 
$0\le t<a$.
The Ptolemy equality applied to the ordered quadruple
$(x,x_t,y_t,y)\sub\si$
gives
$g(t)f(t)=t^2+a|x_ty_t|$,
hence
$$|x_ty_t|\le a+2\al t-\frac{1}{a}(1-\al^2)t^2.$$

Let
$H_t^+$, $H_t^-$
be the horospheres of
$b^+$, $b^-$
through
$x_t$, $y_t$
respectively,
$x_t\in H_t^+$, $y_t\in H_t^-$.
Since
$X$
is Busemann flat, see Lemma~\ref{lem:smooth_convex},
$H_t^+$
is also a horosphere of
$b^-$
and
$H_t^-$
is a horosphere of
$b^+$.
Thus
$|x_ty_t|\ge\xi$,
where
$\xi$
is the distance between the points
$l\cap H_t^+$, $l\cap H_t^-$,
$\xi=a+b^+(x_t)+b^-(y_t)$.
Therefore,
$$b^+(x_t)+b^-(y_t)\le 2\al t-\frac{1}{a}(1-\al^2)t^2.$$
\end{proof}

\subsection{Extension property}
\label{subsect:extension_property}

Surprisingly, the proof of the extension property
(${\rm E}_2$) is based on study of second order properties 
of Ptolemy circles like Lemma~\ref{lem:quaratic_excess}.

\begin{pro}\label{pro:extesion_property} Any compact Ptolemy space 
with properties (E) and (I) possesses the extension property
(${\rm E}_2$), see sect.~\ref{subsect:space_inversions}.
\end{pro}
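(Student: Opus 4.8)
The plan is to reduce the extension problem to the construction of a single family of ``rotations'' and then to produce these using the second-order estimate~(\ref{eq:quadratic_reduce}). First I record a reformulation of (${\rm E}_2$). A M\"obius map $f:\si\to\si'$ between Ptolemy circles is determined by its values at three distinct points (Proposition~\ref{pro:moebch-circ}), so (${\rm E}_2$) is equivalent to the statement that $\aut X$ acts transitively on ordered triples of distinct points lying on a common Ptolemy circle. Indeed, given such a triple on $\si$ and its intended image on $\si'$, any automorphism $\phi$ realizing the triple must send $\si$ to $\si'$: the circles $\phi(\si)$ and $\si'$ share three points, and two distinct Ptolemy circles meet in at most two points by Corollary~\ref{cor:weak_unique}; then $\phi|_\si$ and $f$ agree at three points and hence coincide.

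Next I reduce this transitivity. By two-point homogeneity (Proposition~\ref{pro:two_point_homogeneous}) I may fix the first two points of the triple, calling them $\om$, $\om'$, so that it remains to move the third point within $\bigcup_{\si\in C_{\om,\om'}}\si\sm\{\om,\om'\}$ by automorphisms fixing $\om$ and $\om'$. Property~(H) (Proposition~\ref{pro:homothety_property}) already moves a point transitively along each arc of a fixed circle $\si\in C_{\om,\om'}$, so what is missing, and what I must construct, are \emph{rotations}: M\"obius automorphisms fixing $\om$ and $\om'$ that carry one circle $\si\in C_{\om,\om'}$ to another $\si'\in C_{\om,\om'}$. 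Passing to $X_\om$ and normalizing the scaling by a homothety from~(H), such a rotation is precisely an isometry of $X_\om$ fixing $\om'$ that maps the Ptolemy line $l=\si_\om$ to $l'=\si'_\om$ by the natural arclength identification, and the single invariant controlling the pair $(l,l')$ is the slope $\al=\slope(l,l')$.

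To build a rotation I would assemble it as a subsequential limit, in the weak topology on $\aut X$, of compositions of the automorphisms already available — homotheties from the groups $\Ga_{\om,p}$ and shifts (Lemma~\ref{lem:shift_identity}) — arranged so as to push $l$ toward $l'$ through circles of intermediate slope; compactness of $X$ guarantees a limiting map, and a non-degenerate pointwise limit of M\"obius maps is again M\"obius since cross-ratio triples depend continuously on the points. The role of the quadratic excess estimate~(\ref{eq:quadratic_reduce}) is to supply the rigidity that makes this limit behave correctly: it pins down, to second order, how a Ptolemy circle of slope $\al$ through $\om$, $\om'$ bends away from a line (the coefficient $-\frac{1}{a}(1-\al^2)$), so that matching this second-order data along the approximating sequence forces the limit to be a genuine circle-to-circle isometry rather than a degenerate or merely first-order-correct map. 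Here Busemann flatness (Lemma~\ref{lem:smooth_convex}) and the Euclidean submetry $\pi_\om:X_\om\to B_\om\cong\R^k$ (Propositions~\ref{pro:base_metric} and~\ref{pro:base_euclidean}) provide the linear backbone along which the construction is organized, since each line through $\om'$ is recorded by its slope vector, a unit vector in $\R^k$.

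The step I expect to be the main obstacle is exactly the existence of these rotations. Nothing in properties (E) and (I) directly produces automorphisms beyond s-inversions, their products (the homotheties of~(H)), and shifts, and each of these either fixes every circle of $C_{\om,\om'}$ setwise or moves lines only within their Busemann-parallel class; none rotates one circle through $\om$, $\om'$ onto a genuinely different one. Extracting the extra symmetry from the second-order geometry encoded in~(\ref{eq:quadratic_reduce}), and then checking that the resulting limit is a non-degenerate M\"obius automorphism restricting to $f$ on $\si$, is where the real work lies.
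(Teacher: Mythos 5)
Your reduction of (${\rm E}_2$) to transitivity of $\aut X$ on ordered triples of distinct points lying on a common Ptolemy circle is correct, and you have also correctly located both the crux (producing automorphisms that carry one circle of $C_{\om,\om'}$ to a genuinely different one) and the key technical input (the quadratic excess estimate~(\ref{eq:quadratic_reduce})). But the proposal stops exactly where the proof has to begin: the ``rotations'' are never constructed, and your own closing paragraph concedes that extracting this extra symmetry is where the real work lies. A subsequential limit of compositions of homotheties and shifts ``arranged so as to push $l$ toward $l'$ through circles of intermediate slope'' is not yet an argument — you give no mechanism by which such compositions change the slope in a controlled way, no reason the attainable slopes are dense or connected, and no criterion forcing the limit to be the particular circle-to-circle map you need rather than some other map of the same slope. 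This is a genuine gap at the central step.

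The paper closes this gap not by building a rotation but by a variational argument. Fix $x,y\in\si$, let $A$ be the set of $\aut X$-images of an oriented circle $\si_0$ that pass through $x,y$, and set $\al=\inf\{\slope(\si,\si'):\si'\in A\}$. A compactness lemma for non-degenerate sequences of M\"obius maps (Lemma~\ref{lem:nondegenerate_morphism}) shows the infimum is attained, and a shift argument shows $\al<1$. If $\al>-1$, then every tangent line to $\si$ along the arc $\si_+$, transported into $A$ by shifts without changing its slope, has slope $\ge\al$ with the minimizing line $l$; integrating this first-order bound along the arc gives $b^+(x_t)+b^-(y_t)\ge 2\al t$, which contradicts the strict second-order deficit $b^+(x_t)+b^-(y_t)\le 2\al t-\frac{1}{a}(1-\al^2)t^2$ of~(\ref{eq:quadratic_reduce}). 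Hence $\al=-1$, which forces the minimizing circle to coincide with $\si$ as an oriented circle, and transitivity follows with no rotation ever being exhibited directly. Note also that your assertion that shifts ``move lines only within their Busemann-parallel class'' and therefore cannot help is misleading: Busemann-parallel lines meeting a bounded circle $\si$ at different points generally have different slopes with $\si$, and this is precisely what makes the orbit $A$ rich enough for the infimum argument to work.
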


\begin{lem}\label{lem:extend_cirle_map} Any M\"obius
automorphism of any Ptolemy circle
$\si\sub X$
preserving orientations extends to a M\"obius automorphism of
$X$. 
\end{lem}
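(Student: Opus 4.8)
The plan is to reduce the statement to realizing a generating set. Since a Ptolemy circle is M\"obius equivalent to $\wh\R=S^1$, its orientation-preserving M\"obius automorphisms form a group isomorphic to $\operatorname{PSL}(2,\R)$ (cf. Proposition~\ref{pro:moebch-circ}). The restrictions $g|_\si$ of those global automorphisms $g\in\aut X$ with $g(\si)=\si$ are closed under composition and inversion, so it suffices to exhibit, for each member of a generating set of $\operatorname{PSL}(2,\R)$, a global automorphism preserving $\si$ whose restriction to $\si$ is that generator. Given an arbitrary orientation-preserving $f\colon\si\to\si$, writing $f$ as a product of generators and composing the corresponding global automorphisms then yields a M\"obius automorphism of $X$ restricting to $f$.

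To set up coordinates, fix distinct $o,\om\in\si$ and a metric $d$ of the M\"obius structure with infinitely remote point $\om$, and identify $\si_\om\sub X_\om$ with $\R$ via its unit speed parameterization sending $o$ to $0$; thus $\si=\wh\R$ with $\om=\infty$ and $o=0$. I realize three standard families of generators. Dilations $t\mapsto\la t$, $\la>0$: as $\si\in C_{o,\om}$, every $\ga\in\Ga_{o,\om}$ preserves $\si$ together with its orientation and acts on $\si_\om$ as a homothety centered at $o$, and by the homothety property~(H) (Proposition~\ref{pro:homothety_property}) the factor runs through all of $(0,\infty)$. Translations $t\mapsto t+c$, $c\in\R$: for $x,x'\in\si_\om$ form the shift $\eta_{xx'}=\lim\psi_i^{-1}\circ\phi_i$ with $\phi_i\in\Ga_{\om,x}$, $\psi_i\in\Ga_{\om,x'}$ homotheties of a common factor $\la_i\to\infty$; since $\si$ passes through $\om,x$ and through $\om,x'$, both $\phi_i$ and $\psi_i$ preserve $\si$, hence so does each $\eta_i=\psi_i^{-1}\circ\phi_i$ and the limit $\eta_{xx'}$, and on $\si_\om$ the composition of a homothety of factor $\la_i$ centered at $x$ with one of factor $1/\la_i$ centered at $x'$ converges to the translation $t\mapsto t+(x'-x)$. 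Varying $o$ and $x,x'$, these two families produce every orientation-preserving affine map of $\R=\si_\om$, i.e. the full stabilizer of $\om$ in $\operatorname{PSL}(2,\R)$.

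Finally, by the inversion property~(I) choose a space inversion $\phi=\phi_{o,\om,S}$; by property~(4) in the definition of a space inversion it preserves $\si$, while $\phi(o)=\om$ and the absence of fixed points force $\phi|_\si$ to be the orientation-preserving M\"obius involution of $\wh\R$ interchanging $0$ and $\infty$, namely $t\mapsto-b/t$ for some $b>0$ (in agreement with the m-inversion description of Lemma~\ref{lem:sinversion_minversion}). A classical factorization writes every element of $\operatorname{PSL}(2,\R)$ not fixing $\om$ as a composition of translations, a dilation and one such inversion, while the elements fixing $\om$ are already affine; hence the realized restrictions exhaust $\operatorname{PSL}(2,\R)=\text{M\"ob}^{+}(\si)$, which is exactly the assertion.

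The one delicate point is the translation step: one must verify that the shift $\eta_{xx'}$ genuinely preserves $\si$ and restricts to a translation rather than a reflection. This is guaranteed by the compatibility of orientations built into $\Ga_{\om,x}$, $\Ga_{\om,x'}$ and by the fact that the approximating isometries $\psi_i^{-1}\circ\phi_i$ already preserve $\si$; equivalently, one may invoke uniqueness of Busemann parallel lines (Corollary~\ref{cor:busparallel_foliation}) to conclude $\eta_{xx'}(\si_\om)=\si_\om$. Everything else is a direct appeal to properties~(H) and~(I) or to the classical generation of $\operatorname{PSL}(2,\R)$.
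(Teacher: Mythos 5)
Your proof is correct, but it takes a genuinely different route from the paper's. The paper identifies the orientation-preserving M\"obius group of $\si$ with $\operatorname{Isom}^+(\hyp^2)$ and uses the fact that this group is generated by \emph{central symmetries}, i.e.\ by s-inversions of $\si$; since every s-inversion of $\si$ (for any pair $\om,\om'\in\si$ and any radius) is visibly the restriction of a space inversion $\phi_{\om,\om',S}$ of $X$ supplied by property~(I), the extension is immediate. Your argument instead uses a Bruhat-type generating set of $\operatorname{PSL}(2,\R)$ --- dilations, translations, and one fixed-point-free involution --- realizing the dilations through $\Ga_{o,\om}$ via property~(H), the translations through the shifts $\eta_{xx'}=\lim\psi_i^{-1}\circ\phi_i$, and the involution through a single space inversion. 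Both are sound; the trade-off is that the paper's version needs only property~(I) and a single type of generator, with no limiting procedure and no appeal to compactness, whereas your translation step leans on the shift construction of Section~4.3 (which requires compactness of $X$ and a pointwise-limit argument to see that $\eta_{xx'}$ still preserves $\si$ and restricts to $t\mapsto t+(x'-x)$ rather than something degenerate). You do handle that delicate point correctly --- each approximating $\eta_i$ preserves $\si$ because $\si\in C_{\om,x}\cap C_{\om,x'}$, and $\si_\om$ is closed, so the limit preserves it too --- so the argument goes through; it is simply longer than necessary given that (I) already hands you a generating set of extendable maps in one step.
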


\begin{proof} We represent
$\si$
as the boundary at infinity of the real hyperbolic plane,
$\si=\di\hyp^2$.
Then the group 
$G$
of preserving orientations M\"obius automorphisms of
$\si$
is identified with the group of preserving orientations
isometries of
$\hyp^2$.
The last is generated by central symmetries, and
any central symmetry of
$\hyp^2$
induces an s-inversion of
$\si$.
Thus
$G$
is generated by s-inversions of
$\si$.

Now, any s-inversion of
$\si$
can be obtained as follows. Take distinct
$\om$, $\om'\in\si$
and a metric sphere
$S\sub X$
between
$\om$, $\om'$.
Then an s-inversion
$\phi=\phi_{\om,\om',S}:X\to X$
restricts to an s-inversion of
$\si$.
Thus any M\"obius automorphism of
$\si$
from
$G$
extends to a M\"obius automorphism of
$X$.
\end{proof}

The group
$\aut X$
of M\"obius automorphisms of
$X$
is noncompact: a sequence of homotheties of
$X_\om$
with coefficients
$\la_i\to\infty$
and with the same fixed point has no converging subsequences.
However, we have the following standard compactness result.

\begin{lem}\label{lem:nondegenerate_morphism} Assume that
for a nondegenerate triple
$T=(x,y,z)\sub X$
and for a sequence 
$\phi_i\in\aut X$
the sequence
$T_i=\phi_i(T)$
converges to a nondegenerate triple
$T'=(x',y',z')\sub X$.
Then there exists
$\phi\in\aut X$
with
$\phi(T)=T'$.
\end{lem}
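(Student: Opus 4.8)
The plan is to first \emph{normalize} the maps $\phi_i$ so that they fix a pair of points, thereby reducing the statement to the convergence of a sequence of homotheties with bounded factors, and then to extract a limit by Arzel\`a--Ascoli. Write $T_i=(x_i,y_i,z_i)=\phi_i(T)$, so that $x_i\to x'$, $y_i\to y'$, $z_i\to z'$ with $x',y',z'$ pairwise distinct. First I would construct automorphisms $\alpha_i\in\aut X$ with $\alpha_i(z_i)=z'$, $\alpha_i(x_i)=x'$ and $\alpha_i\to\id$. Following the proof of Proposition~\ref{pro:two_point_homogeneous} this is done in two steps: working in $X_{x'}$ (where $z_i$ and $z'$ are finite points for large $i$, since $z_i\to z'\neq x'$), Lemma~\ref{lem:shift_identity} yields a shift $\beta_i\in\aut X$ fixing $x'$ with $\beta_i(z_i)=z'$ and $\beta_i\to\id$; then, working in $X_{z'}$, a shift $\gamma_i\in\aut X$ fixing $z'$ with $\gamma_i(\beta_i(x_i))=x'$ and $\gamma_i\to\id$. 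Since shifts are isometries, hence equicontinuous, the composition $\alpha_i=\gamma_i\circ\beta_i$ also tends to $\id$. Set $\psi_i=\alpha_i\circ\phi_i$; then $\psi_i(z)=z'$, $\psi_i(x)=x'$, and $\psi_i(y)=\alpha_i(y_i)\to y'$.

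Next, fix metrics $d,d'\in\cM$ whose infinitely remote points are $z$ and $z'$ respectively. Since $\psi_i(z)=z'$, both $z$ and $z'$ are infinitely remote for $\psi_i$, so by Lemma~\ref{lem:homothety_infinite} the map $\psi_i\colon(X_z,d)\to(X_{z'},d')$ is a homothety of some factor $\la_i>0$. From $\psi_i(x)=x'$ and $\psi_i(y)\to y'$ we get $\la_i\,d(x,y)=d'(x',\psi_i(y))\to d'(x',y')$, and since $d(x,y),d'(x',y')\in(0,\infty)$ this forces $\la_i\to\la:=d'(x',y')/d(x,y)\in(0,\infty)$. Thus the $\psi_i$ are uniformly Lipschitz on $d$-bounded sets and all send $x\mapsto x'$. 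Applying Arzel\`a--Ascoli on the compact closed balls $\overline{B_R(x)}\subset X_z$ (compact as closed subsets of $X$ not containing $z$) and passing to a subsequence, $\psi_i$ converges uniformly on $d$-bounded sets to a map $\psi\colon X_z\to X_{z'}$ which is a homothety of factor $\la$, hence injective, with $\psi(x)=x'$ and $\psi(y)=y'$. Extend $\psi$ to $X$ by $\psi(z)=z'$; this is continuous at $z$ because $d(u,x)\to\infty$ as $u\to z$ forces $d'(\psi(u),x')=\la\,d(u,x)\to\infty$, i.e. $\psi(u)\to z'$.

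It remains to see that $\psi$ is a bijection and M\"obius. Applying the same argument to the inverses $\psi_i^{-1}\colon(X_{z'},d')\to(X_z,d)$, which are homotheties of factor $\la_i^{-1}\to\la^{-1}$ sending $x'\mapsto x$, produces a subsequential limit $\chi\colon X_{z'}\to X_z$ with $\psi\circ\chi=\id$ and $\chi\circ\psi=\id$ by the uniform-on-compacta convergence; hence $\psi$ is a homeomorphism of $X$. Finally $\psi^{\ast}d'=\la\,d$ on $X_z$, so $\psi^{\ast}d'$ is M\"obius equivalent to $d$ and $\psi$ matches the infinitely remote points $z\mapsto z'$; therefore $\psi$ is a M\"obius automorphism, $\psi\in\aut X$, and by construction $\psi(T)=T'$, as required. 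I expect the main technical point to be the normalization $\alpha_i\to\id$, where one must invoke the \emph{refined} shift construction of Lemma~\ref{lem:shift_identity} to guarantee that the correcting automorphisms genuinely tend to the identity; once the problem is reduced to homotheties with factors bounded away from $0$ and $\infty$, the compactness extraction is routine.
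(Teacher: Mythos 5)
Your argument is correct, but it takes a genuinely different route from the paper. The paper's proof is a two-step compactness argument with no normalization: it takes a pointwise (nonprincipal ultrafilter) limit $\phi$ of the $\phi_i$ directly, using only compactness of $X$, and then proves injectivity of $\phi$ purely from the invariance of cross-ratio triples --- for $u\notin T$ the quadruple $(T,u)$ has nondegenerate cross-ratio, which is preserved by each $\phi_i$, so no accumulation point of $\phi_i(u)$ can lie in $T'$; iterating with a second point $v$ shows $\phi_i(u)$ and $\phi_i(v)$ have no common accumulation point, whence the limit is an injective M\"obius map. You instead first correct the $\phi_i$ by shifts (via Lemma~\ref{lem:shift_identity}) so that two of the three points are exactly matched, recognize the normalized maps as homotheties $(X_z,d)\to(X_{z'},d')$ via Lemma~\ref{lem:homothety_infinite}, pin down the factors $\la_i\to\la\in(0,\infty)$ using the third point, and conclude by Arzel\`a--Ascoli. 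Your route costs more machinery (it invokes property (I) through the refined shift construction, and needs the small verifications that $\beta_i(x_i)\to x'$ and that closed bounded sets in $X_z$ are compact, both of which do go through), but it buys locally uniform convergence and an explicit, airtight treatment of bijectivity and of why the limit is M\"obius --- points the paper's proof leaves rather implicit when it passes from ``injective pointwise limit'' to ``M\"obius automorphism.'' Both proofs are valid in the context of the lemma, where $X$ is a compact Ptolemy space with (E) and (I).
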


\begin{proof} For every
$u\in X\sm T$
the quadruple
$Q=(T,u)$
is nondegenerate in the sense that its cross-ratio triple
$\crt(Q)=(a:b:c)$
has no zero entry. Since
$\crt(\phi_i(Q))=\crt(Q)$,
any accumulation point 
$u'$
of the sequence
$u_i=\phi_i(u)$
is not in
$T'$.
Thus for the nondegenerate triple
$S=(x,y,u)$
any sublimit 
$S'=(x',y',u')$
of the sequence
$S_i=\phi(S)$
is nondegenerate. Applying the same argument to any
$v\in X\sm S$,
we observe that the sequences
$u_i$, $v_i=\phi_i(v)$
have no common accumulation point. This shows that
any limiting map
$\phi$
of the sequence
$\phi_i$,
obtained e.g. by taking a nonprincipal ultra-filter limit,
is injective, and hence it is a M\"obius automorphism of
$X$
with
$\phi(T)=T'$. 
\end{proof}

\begin{pro}\label{pro:transitive_circle} The group of
M\"obius automorphisms of
$X$
acts transitively on the set of the oriented Ptolemy circles in
$X$. 
In particular, for any oriented circle
$\si\sub X$
there is a M\"obius automorphism
$\phi:X\to X$
such that
$\phi(\si)=\si$
and 
$\phi$
reverses the orientation of
$\si$.
\end{pro}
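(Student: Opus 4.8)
The plan is to reduce everything to matching three points. By Corollary~\ref{cor:weak_unique}, two distinct Ptolemy circles meet in at most two points; hence if some $\phi\in\aut X$ carries three distinct points $x_1,x_2,x_3\in\si$, listed in the cyclic order of the orientation of $\si$, to points $x_1',x_2',x_3'\in\si'$ listed in the cyclic order of $\si'$, then $\phi(\si)$ is a Ptolemy circle sharing three points with $\si'$, so $\phi(\si)=\si'$, and since $\phi$ respects the cyclic order it carries the orientation of $\si$ to that of $\si'$. Thus transitivity on oriented circles follows once I realize an arbitrary matching of an ordered triple on $\si$ with an ordered triple on $\si'$. The concluding ``in particular'' is then the special case $\si'=\si$ with the reversed orientation: matching a positively ordered triple on $\si$ to the same three points in the opposite cyclic order gives $\phi$ with $\phi(\si)=\si$ reversing orientation.

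First I would use two--point homogeneity, Proposition~\ref{pro:two_point_homogeneous}, to fix the first two points: choose $f\in\aut X$ with $f(x_1)=x_1'=:\om$ and $f(x_2)=x_2'=:\om'$. Writing $\tau=f(\si)$, both $\tau$ and $\si'$ now lie in $C_{\om,\om'}$, and it remains to find an automorphism fixing $\om,\om'$ that carries $\tau$ onto $\si'$; once this is done, the homothety group $\Ga_{\om,\om'}$, which by property (H) (Proposition~\ref{pro:homothety_property}) acts transitively on each arc of $\si'\sm\{\om,\om'\}$, slides the image of the third point to $x_3'$. In the metric of $X_\om$ the circles $\tau,\si'$ become Ptolemy lines $\tau_\om,\si_\om'$ through $\om'$, and by Proposition~\ref{pro:tangent_rcircle} (a line is its own tangent, and tangent lines are unique) a member of $C_{\om,\om'}$ is determined by its tangent direction at $\om'$. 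So the problem reduces to producing, inside the stabilizer of $\{\om,\om'\}$, a ``rotation'' carrying the direction of $\tau_\om$ at $\om'$ onto that of $\si_\om'$.

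Producing these rotations is the main obstacle, since neither shifts nor the homotheties of $\Ga_{\om,\om'}$ change the direction of a line (shifts move a line only to a Busemann parallel one by Corollary~\ref{cor:busparallel_foliation}, and elements of $\Ga_{\om,\om'}$ preserve every circle through $\om,\om'$). I would attack it by a compactness argument organized as an open--closed dichotomy for the orbit of $\tau$ under the stabilizer of $\{\om,\om'\}$ inside $C_{\om,\om'}$, which one checks is connected. Closedness is supplied by Lemma~\ref{lem:nondegenerate_morphism}: a sequence of such automorphisms moving $\tau$ toward a limiting circle has, after normalising by $\Ga_{\om,\om'}$, a convergent subsequence whose limit again fixes $\om,\om'$ and lands on the limit. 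For openness I would establish local transitivity: given $\tau$ and a nearby $\tau'\in C_{\om,\om'}$, apply Proposition~\ref{pro:two_point_homogeneous} to send a point of $\tau$ near $\om'$ to the corresponding point of $\tau'$, and control the resulting automorphism near $\om'$ by the second order estimate of Lemma~\ref{lem:quaratic_excess} together with Busemann flatness (Lemma~\ref{lem:smooth_convex}). The quadratic term there measures exactly how a circle bends away from its chords in terms of its slope, and this is what should force the normalised limit to fix both $\om$ and $\om'$ while rotating the tangent direction by the prescribed small amount. This is the delicate step, and the place where the $C^1$--smoothness and quadratic estimates of Section~\ref{subsect:slope_circles} are really needed.

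Finally I would assemble the pieces: the rotation carries $\tau_\om$ onto $\si_\om'$, hence $\tau$ onto $\si'$, since a circle in $C_{\om,\om'}$ is pinned down by its tangent direction at $\om'$; then $\Ga_{\om,\om'}$ places the third point. Composing with $f$ yields $\phi\in\aut X$ matching $(x_1,x_2,x_3)$ with $(x_1',x_2',x_3')$ in cyclic order, so $\phi(\si)=\si'$ with matching orientation by the first paragraph. Transitivity on oriented circles follows, and the orientation--reversing statement is its self--case. I note that Lemma~\ref{lem:extend_cirle_map}, which realizes the full $\mathrm{PSL}_2(\R)$ of orientation preserving self--maps of a single circle as restrictions of automorphisms of $X$, can streamline the final sliding step in place of property (H); but it does not change direction, so the rotation construction above remains unavoidable.
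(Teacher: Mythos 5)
Your reduction to matching an ordered triple, via Corollary~\ref{cor:weak_unique} and Proposition~\ref{pro:two_point_homogeneous}, is fine, and you have correctly identified the real difficulty: after pinning down $\om,\om'$, one must move between distinct members of $C_{\om,\om'}$, which neither shifts nor $\Ga_{\om,\om'}$ can do. But your resolution of that difficulty is not a proof. The open--closed scheme rests on two unestablished claims. First, you assert that $C_{\om,\om'}$ is connected (``which one checks is connected''); at this stage of the development nothing in the paper gives you that, and it is not needed for the result. Second, and more seriously, the openness step --- the only place where a genuine ``rotation'' would be produced --- is left entirely to a heuristic: you say the quadratic term of Lemma~\ref{lem:quaratic_excess} ``should force'' a normalised limit to rotate the tangent direction by a prescribed small amount, and you yourself flag this as the delicate step. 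No mechanism is given for constructing an automorphism that fixes $\om,\om'$ and changes the tangent direction at $\om'$; two--point homogeneity applied to a nearby point of $\tau'$ gives an automorphism with no control over where it sends $\om$ and $\om'$, and Lemma~\ref{lem:quaratic_excess} is an inequality about a single circle against a single line, not a perturbation statement about the automorphism group. As written, the central difficulty is restated rather than overcome.

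For contrast, the paper avoids constructing rotations altogether by a variational argument: it sets $\al=\inf\{\slope(\si,\si'):\si'\in A\}$ over the orbit $A$ of $\si_0$ through the two fixed points, attains the infimum by the compactness Lemma~\ref{lem:nondegenerate_morphism}, shows $\al<1$ by a shift argument, and then derives a contradiction with Lemma~\ref{lem:quaratic_excess} if $-1<\al<1$: minimality forces $\slope(\si,l_{x'})\ge\al$ for the Busemann parallel lines through every point of the arc, and integrating this along $\si$ yields $b^+(x_t)+b^-(y_t)\ge 2\al t$, which contradicts the strictly negative quadratic term $-\frac{1}{a}(1-\al^2)t^2$ in (\ref{eq:quadratic_reduce}). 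Hence $\al=-1$, i.e.\ the extremal circle is tangent to $\si$ at the common points with compatible orientation, and Corollary~\ref{cor:tangent_unique} then forces the two circles to coincide. The extremality is exactly what converts the second--order estimate into a usable contradiction; if you want to salvage your scheme, you should replace the open--closed dichotomy by this extremal principle.
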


\begin{proof} We fix an oriented Ptolemy circle
$\si\sub X$
and distinct points
$x$, $y\in\si$.
For an oriented circle
$\si_0\sub X$
we denote with 
$A$
the set of all the circles
$\phi(\si_0)$, $\phi\in\aut X$,
with the induced orientation which pass also through
$x$
and
$y$.
Let
$$\al=\inf\set{\slope(\si,\si')}{$\si'\in A$}.$$
By two-point homogeneity property, see Proposition~\ref{pro:two_point_homogeneous},
$A\neq\es$.
Applying Lemma~\ref{lem:nondegenerate_morphism} we find
$\si'\in A$
with
$\slope(\si,\si')=\al$.
Next, we show that
$\al<1$.
Applying a shift we first make
$\si_0$
disjoint with 
$\si$.
Taking a point 
$\om\in\si_0$
as infinitely remote, we consider all Ptolemy lines in
$X_\om$
which are Busemann parallel to
$\si_0\sm\om$
and intersect
$\si$.
Since 
$\si$
is bounded in
$X_\om$,
at least one of them,
$l$,
is not tangent to
$\si$.
Then
$\slope(\si,l)<1$.
This 
$l$
can be obtained from
$\si_0\sm\om$
by a shift. Applying another shift to
$l$
in the space 
$X_{\om'}$
with
$\om'\in\si\cap l$
(this does not change the slope), we can assume that
$x\in\si\cap l$.
Repeating this in the space 
$X_x$,
we find
$\wt\si\in A$
with 
$\slope(\si,\wt\si)<1$.
Thus
$\al<1$.

We show that
$\al=-1$.
Then 
$\si=\phi(\si_0)$
as oriented Ptolemy circles for some
$\phi\in\aut X$,
which would complete the proof.

Assume that
$\al>-1$.
The points
$x$, $y$
subdivide each of the circles
$\si$, $\si'$
into two arcs. We choose an arc
$\si_+\sub\si$
leading from
$x$
to
$y$
according to the orientation of
$\si$, 
and an arc
$\si_+'\sub\si'$
leading from
$y$
to
$x$
according to the orientation of
$\si'$.
Taking a point 
$\om\in\si'$
inside of the opposite to
$\si_+'$
arc, we see that
$l=\si'\sm\om$
is a Ptolemy line in the space
$X_\om$
oriented from
$y$
to
$x$.

Given
$x'\in\si_+$,
for every Ptolemy line 
$l_{x'}\sub X_\om$
through
$x'$,
which is Busemann parallel to
$l$
and is oriented as
$l$,
we have
$\slope(\si,l_{x'})\ge\al$
by the definition of
$\al$,
because by the same argument as above
$l_{x'}$
can be put in the set 
$A$
without changing the slope.

Let
$b^\pm:X_\om\to\R$
be the opposite Busemann functions of
$l$
normalized by
$b^+(x)=0$, $b^+(y)=-a$, $b^-(x)=-a$, $b^-(y)=0$,
where
$a=|xy|$.
Using Lemma~\ref{lem:circle_rectifiable} we consider
for a sufficiently small
$\ep>0$
arclength parameterizations
$c_x$, $c_y:(-\ep,\ep)\to\si$
with
$c_x(0)=x$, $c_y(0)=y$, $c_x(s)$, $c_y(s)\in\si_+$
for
$s>0$,
of neighborhoods of
$x$, $y$
respectively in
$\si$.
Since Busemann functions on
$X_\om$
are affine and hence differentiable along Ptolemy lines,
and since the derivative
$\frac{db^+\circ c_x}{ds}(s)$
coincides with the derivative of
$b^+$
along the tangent line to
$\si$
at
$c_x(s)$,
we have
$$\frac{db^+\circ c_x}{ds}(s)=\slope(\si,l_{c(s)})\ge\al.$$
For a sufficiently small
$t>0$
let
$x_t\in\si_+$
be a point at the distance
$t$
from
$x$, $|x_tx|=t$, $x_t=c_x(\tau)$
for some
$\tau=\tau(t)$.
By integrating we obtain 
$b^+(x_t)\ge\al L(xx_t)\ge\al t$.
A similar argument shows that
$b^-(y_t)\ge\al t$,
where
$y_t=c_y(\tau')$
for some
$\tau'=\tau'(t)$, $|yy_t|=t$.
Therefore,
$b^+(x_t)+b^-(y_t)\ge 2\al t$.
This contradicts the estimate~(\ref{eq:quadratic_reduce})
of Lemma~\ref{lem:quaratic_excess}. Thus
$\al=-1$.
\end{proof}

\begin{proof}[Proof of Proposition~\ref{pro:extesion_property}]
Given a M\"obius map
$\psi:\si\to\si'$
between Ptolemy circles
$\si$, $\si'\sub X$,
we choose orientations of
$\si$, $\si'$
so that
$\psi$
preserves the orientations. By Proposition~\ref{pro:transitive_circle}
there is
$\phi\in\aut X$
with 
$\phi(\si)=\si'$
preserving the orientations. Then
$\phi^{-1}\circ\psi:\si\to\si$
preserves the orientation of
$\si$,
and hence it extends by Lemma~\ref{lem:extend_cirle_map} to
$\phi'\in\aut X$,
$\phi'|\si=\phi^{-1}\circ\psi$.
Then
$\phi\circ\phi'\in\aut X$
is a required M\"obius automorphism.
\end{proof}

\section{Topology of the space $X$ and of $\K$-lines}
\label{sect:topology_space}

\subsection{Groups of shifts}
\label{subsect:group_shifts}

Recall that by Lemma~\ref{lem:pure_homothety} a shift
$\eta:X_\om\to X_\om$
is an isometry that preserves every foliation of 
$X_\om$
by (oriented) Busemann parallel Ptolemy lines. Clearly, the shifts of
$X_\om$
form a group which we denote with 
$N_\om$.
Then
$N_\om$
is a subgroup of the group
$\aut X$
of the M\"obius automorphisms of
$X$.

\begin{lem}\label{lem:simply_transitivity_shifts} The group
$N_\om$
acts simply transitively on
$X_\om$. 
\end{lem}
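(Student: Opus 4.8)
The plan is to treat transitivity and freeness separately. Transitivity is essentially already in hand: for any $x$, $x'\in X_\om$ the construction preceding Lemma~\ref{lem:shift_busemann_parallel} produces a shift $\eta_{xx'}\in N_\om$ with $\eta_{xx'}(x)=x'$, so $N_\om$ acts transitively. It remains to prove that the action is free, i.e. that the stabilizer of a point is trivial: if $\eta\in N_\om$ satisfies $\eta(o)=o$ for some $o\in X_\om$, then $\eta=\id$. Throughout I use that every element of $N_\om$ is an isometry of $X_\om$ preserving each (oriented) foliation of $X_\om$ by Busemann parallel Ptolemy lines (Lemma~\ref{lem:pure_homothety}), together with the fact that a shift carries every Ptolemy line to a Busemann parallel one (Lemma~\ref{lem:shift_busemann_parallel}).

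First I would record two consequences of $\eta(o)=o$. Any Ptolemy line $l\ni o$ is mapped by $\eta$ to a Busemann parallel line through $\eta(o)=o$, hence to $l$ itself by Lemma~\ref{lem:unique_line}; since $\eta$ is an orientation preserving isometry of $l\cong\R$ fixing $o$, it fixes $l$ pointwise. Thus $\eta$ fixes pointwise every Ptolemy line through $o$. Moreover, for any Busemann function $b$ the function $b\circ\eta$ is a Busemann function of the Busemann parallel line $\eta^{-1}(l)$, hence of $l$ as well, so $b\circ\eta=b+c$; evaluating at $o$ gives $c=0$, i.e. $b\circ\eta=b$ for every Busemann function $b$. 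Since Busemann functions are constant on fibers and separate fibers (Proposition~\ref{pro:orthogonal_base}), this shows $\eta$ preserves every $\K$-line and induces the identity on the base $B_\om$ (compare Lemma~\ref{lem:induced_base_map} and Corollary~\ref{cor:pi_submetry}). The remaining content is therefore the action of $\eta$ inside the fibers.

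The decisive step is to exhibit $\eta$ as commuting with the leaf-flows of the parallel foliations. For an oriented Ptolemy line $l$ let $T_l^s\colon X_\om\to X_\om$ send each point $p$ to the point at signed arclength $s$ along the unique leaf $l(p)$ through $p$ (Corollary~\ref{cor:busparallel_foliation}); this is a bijection with inverse $T_l^{-s}$. Because $\eta$ is an isometry that maps the oriented leaf $l(p)$ onto the oriented leaf $l(\eta(p))$ preserving arclength and orientation, it intertwines these flows: $\eta\circ T_l^s=T_l^s\circ\eta$ for all $l$ and all $s$. Combined with $\eta(o)=o$, this yields $\eta(g(o))=g(o)$ for every finite composition $g$ of leaf-flows. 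Hence $\eta$ fixes the whole orbit $R(o)=\{\,g(o)\,\}$ of $o$ under the group generated by the $T_l^s$.

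It remains to prove that $R(o)=X_\om$; this is the main obstacle. The flows reach every fiber easily: by property $(1_\K)$ a single Ptolemy line through $o$ meets any prescribed fiber, and moving along its leaf-flow lands $o$ in that fiber, so $\pi_\om(R(o))=B_\om$. To move inside a fiber one uses commutators: by Lemma~\ref{lem:parallel_couple} a closed ``parallelogram'' $T_{l_1}^{-s_1}T_{l_2}^{-s_2}T_{l_1}^{s_1}T_{l_2}^{s_2}$ carries $o$ to a point of the same $\K$-line $F_o$ (the base displacement cancels), and varying $l_1,l_2,s_1,s_2$ sweeps out the fiber directions. I would finish by showing that, near any point, the leaf-flows together with these first commutators provide a local parametrization of $X_\om$ (the flows supplying the $k$ base directions via the submetry $\pi_\om$ and the commutators the $p$ fiber directions), so that $R(o)$ is open; since all orbits are then open, hence closed, and $X_\om$ is connected (any two points lie on a common Ptolemy circle by the enhanced property (E)), we conclude $R(o)=X_\om$ and therefore $\eta=\id$. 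The technical heart is precisely this bracket-generation, i.e. transitivity inside the fibers, which is where the nilpotent (e.g. Heisenberg) nature of $N_\om$ enters.
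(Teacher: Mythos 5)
Your treatment of transitivity and your first two reductions are fine and agree with what the paper uses: the constructed shifts give transitivity, and a shift $\eta$ with $\eta(o)=o$ must fix pointwise every Ptolemy line through $o$ and act trivially on Busemann functions, hence on the base. The problem is the final step. Everything is made to rest on the claim that the orbit $R(o)$ of $o$ under the leaf-flows $T_l^s$ is open, which you propose to get from ``bracket-generation'': that commutators of leaf-flows sweep out the fiber directions near $o$. This is precisely the hard content, and it is not available at this point in the paper -- in fact it cannot be obtained without circularity. That the commutators move $o$ inside its $\K$-line at all is equivalent to non-integrability of the canonical distribution, which is proved only in Corollary~\ref{cor:nonintegrability} via Proposition~\ref{pro:nonintegrable_canonical_distribution}, whose proof explicitly reuses the argument of Lemma~\ref{lem:simply_transitivity_shifts}; and that the commutator displacements fill out a neighbourhood of $o$ in the fiber presupposes knowing the fiber is a connected group (a copy of $\R^p$), which is Proposition~\ref{pro:fiber_nilpotent_lie_group} -- itself a consequence of simple transitivity. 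Even granting a set of fiber directions hit by commutators, deducing that $R(o)$ is \emph{open} in $X_\om$ would need an implicit-function/invariance-of-domain type statement that the purely metric setting does not yet provide. The analogue of $Z_\om=[N_\om,N_\om]$ is only established much later (Proposition~\ref{pro:nil_transitive}) and only for $p=1$.

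The paper closes the freeness argument by an entirely different, global device that you should compare with. It considers the full fixed-point set $V$ of $\eta$ (not just the flow-orbit of $o$), observes that $V$ is closed, contains every Ptolemy line it meets, and contains every Ptolemy circle meeting it in two points (via uniqueness of tangent circles, Corollary~\ref{cor:tangent_unique}). If $V\neq X$, a space inversion sending a point of $X\sm V$ to $\om$ turns $V$ into a \emph{compact} subset $W\sub X_\om$ with the same circle-saturation property; then $\pi_\om(W)\sub B_\om$ is compact, but the tangent-circle property forces it to be open, contradicting $B_\om\cong\R^k$. This duality-plus-compactness trick is what replaces your bracket-generation step, and it works without knowing anything yet about the internal structure of the fibers. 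As written, your proof has a genuine gap at exactly the point you flag as ``the technical heart.''
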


\begin{proof} Given
$x$, $x'\in X_\om$,
the shift
$\eta_{xx'}$
moves
$x$
to
$x'$, $\eta_{xx'}(x)=x'$,
by construction, see sect.~\ref{subsect:parallel_lines_pure_homothethies_shifts}. Thus
$N_\om$
acts transitively on
$X_\om$.

Assume that
$\eta(x)=x$
for some shift
$\eta:X_\om\to X_\om$
and some
$x\in X_\om$.
We denote with 
$V$
the fixed point set of
$\eta$, $\eta(y)=y$
for every
$y\in V$.
We show that
$V=X$.
Note that every Ptolemy line 
$l\sub X_\om$,
which meets
$V$,
is contained in
$V$
because the isometry
$\eta$
preserves every foliation of
$X_\om$
by oriented Busemann parallel Ptolemy lines.
Next, we note that every Ptolemy circle
$\si\sub X_\om$,
which meets
$V$
at two different points, is contained in
$V$.
Indeed, the tangent to 
$\si$
lines at these points are contained in
$V$.
However,
$\si$
is uniquely determined by its tangent line and any other
its point, see Corollary~\ref{cor:tangent_unique}.

Assume that
$V\neq X$, and let
$\om'\in X\sm V$.
Since 
$V$
is closed,
$\om'$
is contained in
$X\sm V$
together with a neighborhood
$U$
of
$\om'$.
Let
$\phi:X\to X$
is a space inversion with 
$\phi(\om')=\om$, $W=\phi(V)$.
Then
$W$
misses the neighborhood
$\phi(U)$
of
$\om$,
and thus
$W$
is compact in
$X_\om$.
Furthermore,
$W$
contains together with any two different points every
Ptolemy circle through these points. 
The image
$\ov W=\pi_\om(W)\sub B_\om$
under the canonical projection
$\pi_\om:X_\om\to B_\om$
is compact since
$W$
is compact in
$X_\om$.
On the other hand, given
$z\in W$
and a Ptolemy line 
$l\sub X_\om$
through
$z$,
there is a Ptolemy circle
$\si\sub W$
through
$z$
with the tangent line 
$l$.
Indeed, for every
$z'\in W$, $z'\neq z$,
by Corollary~\ref{cor:tangent_unique}
there is a unique circle through
$z$, $z'$
that is tangent to
$l$.
This circle is contained in
$W$
by properties of
$W$.
It follows that 
$\ov W$
is open in
$B_\om$
and thus
$\ov W=B_\om$.
This contradicts the fact that 
$\ov W$
is compact.

Therefore,
$V=X$, 
and thus
$\eta=\id$,
i.e. the group
$N_\om$
acts simply transitively on
$X_\om$.
\end{proof}

We fix
$o\in X_\om$
and using Lemma~\ref{lem:simply_transitivity_shifts} identify
$N_\om$
with
$X_\om$
by
$\eta\mapsto\eta(o)$.
Then
$N_\om$
is a locally compact topological group.

An automorphism
$\tau:N_\om\to N_\om$
is said to be {\em contractible} if for every
$\eta\in N_\om$
we have
$\lim_{n\to\infty}\tau^n(\eta)=\id$.
If
$N_\om$
admits a contractible automorphism, then 
$N_\om$
is also said to be {\em contractible}.

\begin{lem}\label{lem:contract_auto} There is a contractible
automorphism
$\tau:N_\om\to N_\om$. 
\end{lem}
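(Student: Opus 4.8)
The plan is to produce a contractible automorphism by conjugating with a contracting pure homothety. First I fix the base point $o\in X_\om$ used to identify $N_\om$ with $X_\om$, and I invoke Proposition~\ref{pro:homothety_property} to choose a homothety $\phi\in\Ga_{\om,o}$ whose dilatation coefficient is some $\la\in(0,1)$ (the coefficient can be prescribed arbitrarily in that construction, and by Lemma~\ref{lem:pure_homothety} the resulting $\phi$ is automatically pure). I then define $\tau(\eta)=\phi\circ\eta\circ\phi^{-1}$. This is visibly a group automorphism of $\aut X$, so the two things left to verify are that $\tau$ restricts to an automorphism of the subgroup $N_\om$, and that $\tau^n(\eta)\to\id$ for every $\eta\in N_\om$.

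For the first point it suffices to check that $\tau$ carries each generating shift back into $N_\om$. Writing a shift as $\eta_{xx'}=\lim\psi_i^{-1}\circ\phi_i$ with $\phi_i\in\Ga_{\om,x}$ and $\psi_i\in\Ga_{\om,x'}$ homotheties of a common coefficient $\la_i\to\infty$, I conjugate term by term. The key observation is that $\phi\circ\phi_i\circ\phi^{-1}\in\Ga_{\om,\phi(x)}$: it fixes $\om$ and $\phi(x)$, and for any Ptolemy circle $\si$ through $\om,\phi(x)$ the circle $\phi^{-1}(\si)$ passes through $\om,x$, hence is preserved (with its orientation) by $\phi_i$, so $\si$ is preserved by the conjugate. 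Likewise $\phi\circ\psi_i^{-1}\circ\phi^{-1}\in\Ga_{\om,\phi(x')}$. Consequently $\tau(\eta_{xx'})$ is again a limit of the shift-defining form and is precisely the shift moving $\phi(x)$ to $\phi(x')$, so $\tau(\eta_{xx'})\in N_\om$. Running the same argument with $\phi^{-1}$ in place of $\phi$ shows $\tau^{-1}(N_\om)\subseteq N_\om$, whence $\tau\in\aut(N_\om)$.

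For contractibility I compute the orbit of the base point. Since $\phi$ fixes $o$, we get $\tau^n(\eta)(o)=\phi^n\bigl(\eta(o)\bigr)$, and because $\phi$ is a homothety of coefficient $\la<1$ fixing $o$ we have $d\bigl(o,\phi^n(\eta(o))\bigr)=\la^n\,d\bigl(o,\eta(o)\bigr)\to 0$, so $\tau^n(\eta)(o)\to o=\id(o)$. It then remains to upgrade this pointwise statement at the single point $o$ to convergence $\tau^n(\eta)\to\id$ in the weak topology on $N_\om$. This is exactly where Lemma~\ref{lem:shift_identity} is used: under the identification $N_\om\cong X_\om$, $\eta\mapsto\eta(o)$, the inverse map $x\mapsto\eta_{ox}$ is continuous, since for $x_i\to x$ the composition $\eta_{ox_i}\circ\eta_{ox}^{-1}=\eta_{xx_i}$ tends to $\id$ by Lemma~\ref{lem:shift_identity} (simple transitivity makes $\eta_{xx_i}$ the unique shift with $x\mapsto x_i$) and $N_\om$ is a topological group. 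Thus the identification is a homeomorphism, and $\tau^n(\eta)(o)\to o$ forces $\tau^n(\eta)\to\id$, so $\tau$ is contractible.

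The main obstacle is the stability of $N_\om$ under conjugation: one must confirm that conjugating the building blocks $\phi_i\in\Ga_{\om,x}$ lands back in a group $\Ga_{\om,\phi(x)}$ of the same type, which hinges on $\phi$ fixing $\om$ and respecting the family of Ptolemy circles and their orientations. The secondary subtlety is the passage from pointwise convergence at the single point $o$ to weak convergence of the automorphisms, which is supplied by the homeomorphism $N_\om\cong X_\om$ via Lemma~\ref{lem:shift_identity}.
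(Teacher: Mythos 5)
Your proof is correct and follows essentially the same route as the paper: the paper also defines $\tau(\eta)=\phi\circ\eta\circ\phi^{-1}$ for a pure homothety $\phi$ fixing $o$ with coefficient $\la\in(0,1)$, and concludes from $\tau^n(\eta)(o)=\phi^n(\eta(o))\to o$ together with simple transitivity that $\tau^n(\eta)\to\id$. The only differences are expository: the paper checks that conjugates of shifts are shifts directly via the foliation-preserving characterization of Lemma~\ref{lem:pure_homothety} rather than through the groups $\Ga_{\om,x}$, and it leaves implicit the upgrade from convergence at $o$ to weak convergence, which you correctly supply via Lemma~\ref{lem:shift_identity} and Lemma~\ref{lem:simply_transitivity_shifts}.
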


\begin{proof} We take any pure homothety
$\phi:X_\om\to X_\om$
with
$\phi(o)=o$
and with the coefficient
$\la\in(0,1)$.
Then we define
$\tau(\eta)=\phi\circ\eta\circ\phi^{-1}$.
The map 
$\eta'=\tau(\eta):X_\om\to X_\om$
is an isometry preserving every foliation of
$X_\om$
by Busemann parallel Ptolemy lines, i.e.
$\eta'$
is a shift, and it is clear that
$\tau$
is an automorphism of
$N_\om$.

For the sequence of shifts
$\eta_n=\tau^n(\eta)$
we have
$\eta_n(o)=\phi^n\circ\eta(o)\to o$
as
$n\to\infty$.
Thus
$\eta_n$
converges to a shift
$\eta_\infty$
with
$\eta_\infty(o)=o$,
hence,
$\eta_\infty=\id$. 
\end{proof}

\begin{cor}\label{cor:nilponent_lie_group} The group
$N_\om$
is a simply connected nilpotent Lie group. In particular, the space 
$X_\om$
is homeomorphic to
$\R^n$,
and the space 
$X$
is homeomorphic to the sphere
$S^n$
with 
$n=\dim X$. 
\end{cor}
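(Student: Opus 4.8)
The plan is to deduce the entire statement from the single structural input supplied by Lemma~\ref{lem:contract_auto}, namely that $N_\om$ carries a contractible automorphism $\tau$, combined with connectedness of $N_\om$. Recall from Lemma~\ref{lem:simply_transitivity_shifts} and the identification $\eta\mapsto\eta(o)$ that $N_\om$ is homeomorphic to $X_\om$ as a topological space, so it suffices to verify that $X_\om$ is connected. This follows at once from the (enhanced) existence property (E): any two distinct points $x$, $x'\in X_\om$ lie on a common Ptolemy circle $\si\sub X$, and either $\si$ itself (when $\om\notin\si$) or the arc $\si\sm\om\cong\R$ (when $\om\in\si$) is a connected subset of $X_\om$ containing both $x$ and $x'$. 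Hence $X_\om$, and therefore $N_\om$, is connected.

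Next I would invoke the structure theory of locally compact groups admitting a contractive automorphism (Siebert). Such a group splits as a direct product of a connected, simply connected nilpotent Lie group and a totally disconnected group; since $N_\om$ is connected, the totally disconnected factor is trivial and $N_\om$ is itself a simply connected nilpotent Lie group. The two mechanisms behind this are that contractivity of $\tau$ forbids arbitrarily small subgroups, so $N_\om$ is Lie by the solution of Hilbert's fifth problem (Gleason--Montgomery--Zippin, Yamabe), and that the differential $d\tau$ then has all eigenvalues of modulus $<1$, which forces the Lie algebra to be nilpotent while the associated contraction retracts $N_\om$ to a point, yielding simple connectedness. This gives the first assertion of the corollary.

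Finally, a simply connected nilpotent Lie group of dimension $n$ is diffeomorphic to $\R^n$ via the exponential map, so $X_\om\cong N_\om\cong\R^n$. Since $X$ is compact Hausdorff and $X_\om=X\sm\om$ is open in $X$, the space $X$ is the one-point compactification of $X_\om$: the natural map $X\to(X_\om)^+$ carrying $\om$ to the point at infinity and fixing $X_\om$ is a continuous bijection from a compact space to a Hausdorff space, hence a homeomorphism. As the one-point compactification of $\R^n$ is $S^n$, we conclude that $X$ is homeomorphic to $S^n$, and comparing topological dimensions gives $n=\dim X$.

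The main obstacle is the middle step: importing the classification theorem and checking that its hypotheses genuinely hold here. One must confirm that the weak topology making $N_\om$ locally compact is simultaneously the one for which $\tau$ is contractive (Lemma~\ref{lem:contract_auto}) and for which $N_\om\cong X_\om$ as spaces, so that ``connected plus contractive automorphism'' may legitimately be fed into the structure theorem. The resulting nilpotency and simple connectedness are not elementary but rest on the deep structure theory of locally compact groups.
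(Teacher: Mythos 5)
Your proof is correct and follows essentially the same route as the paper: establish that $N_\om$ is connected and locally compact (via its identification with $X_\om$), invoke the contractible automorphism from Lemma~\ref{lem:contract_auto}, and apply Siebert's structure theorem to conclude that $N_\om$ is a simply connected nilpotent Lie group, with the sphere statement following from the one-point compactification of $\R^n$. You supply more detail than the paper does (the connectedness of $X_\om$ via property (E) and the compactification argument are left implicit there), but the underlying argument is the same.
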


\begin{proof} The group
$N_\om$
is connected and locally compact because the space 
$X_\om$
is. By Lemma~\ref{lem:contract_auto},
$N_\om$
is contractible. Then by \cite[Corollary~2.4]{Sieb}
$N_\om$
is a simply connected nilpotent Lie group. 
\end{proof}

We denote with 
$Z_\om$
a subgroup in
$N_\om$
which consists of all shifts
$\eta\in N_\om$
acting identically on the base 
$B_\om$, $\pi_\ast(\eta)=\id$,
where
$\pi_\ast(\eta):B_\om\to B_\om$
is the shift induced by the projection
$\pi_\om:X_\om\to B_\om$,
see Corollary~\ref{cor:pi_submetry}. Every
$\eta\in Z_\om$
preserves every fiber ($\K$-line) of 
$\pi_\om$,
see Lemma~\ref{lem:homothety_vert_shift} and 
Lemma~\ref{lem:vert_shift}.

\begin{pro}\label{pro:fiber_nilpotent_lie_group} The group
$Z_\om$
acts simply transitively on every $\K$-line
$F\sub X_\om$,
and thus it is a contractible, connected, locally compact 
topological group. Therefore,
$Z_\om$
is a simply connected nilpotent Lie group, and
$F$
is homeomorphic to
$\R^p$
for some
$0\le p<n$. 
\end{pro}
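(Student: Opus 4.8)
The plan is to identify $Z_\om$ as precisely the group of shifts that move points within a single fiber, and then to transport the structural argument of Corollary~\ref{cor:nilponent_lie_group} (from $N_\om$) down to $Z_\om$. First I would prove transitivity of $Z_\om$ on a $\K$-line $F$. Fix $x$, $x'\in F$ and consider the shift $\eta=\eta_{xx'}\in N_\om$ with $\eta(x)=x'$. As in the proof of Lemma~\ref{lem:vert_shift}, for every Busemann function $b:X_\om\to\R$ we have $b\circ\eta=b+c_b$ with a constant $c_b$, since $\eta^{-1}(l)$ is Busemann parallel to the line $l$ defining $b$. Evaluating at $x$ gives $c_b=b(x')-b(x)$, which vanishes because $x$, $x'$ lie in a common fiber and hence every Busemann function takes equal values at them (Proposition~\ref{pro:orthogonal_base}). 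Thus $b\circ\eta=b$ for all Busemann functions $b$; passing to the induced affine functions on $B_\om$, which form a coordinate system by Proposition~\ref{pro:base_euclidean}, this forces $\pi_\ast(\eta)=\id$, i.e. $\eta\in Z_\om$. Therefore $Z_\om$ acts transitively on $F$, and the same computation applies verbatim to every fiber.

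For simple transitivity I would simply note that $Z_\om\sub N_\om$ and that $N_\om$ acts simply transitively on $X_\om$ by Lemma~\ref{lem:simply_transitivity_shifts}; in particular every point stabilizer in $N_\om$ is trivial, so a fortiori $Z_\om$ acts with trivial stabilizers. Combined with the previous step, $Z_\om$ acts simply transitively on each $\K$-line $F$. Fixing $o\in F$, the orbit map $\eta\mapsto\eta(o)$ is then a bijection $Z_\om\to F$; under the identification $N_\om\cong X_\om$ of Lemma~\ref{lem:simply_transitivity_shifts} it is a homeomorphism onto $F$. Moreover $Z_\om=\set{\eta}{$\pi_\ast(\eta)=\id$}$ is closed in $N_\om$ (pointwise limits preserve the relations $b\circ\eta=b$), and since $F=\pi_\om^{-1}(\pi_\om(o))$ is closed in the locally compact space $X_\om\cong\R^n$, the group $Z_\om$ is locally compact.

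To obtain the Lie structure I would mirror Lemma~\ref{lem:contract_auto}. Choosing a pure homothety $\phi:X_\om\to X_\om$ with $\phi(o)=o$ and coefficient $\la\in(0,1)$, the map $\phi$ preserves $F$ by Lemma~\ref{lem:homothety_vert_shift} and normalizes $Z_\om$, because $\pi_\ast(\phi\eta\phi^{-1})=\id$ whenever $\pi_\ast(\eta)=\id$; thus $\tau(\eta)=\phi\circ\eta\circ\phi^{-1}$ is an automorphism of $Z_\om$ with $\tau^n(\eta)(o)=\phi^n(\eta(o))\to o$, hence $\tau^n(\eta)\to\id$. So $Z_\om$ is contractible. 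For connectedness I would use that $\pi_\ast:N_\om\to B_\om$ is a continuous surjective homomorphism onto the translation group of $B_\om\cong\R^k$ with kernel $Z_\om$, so $Z_\om$ is a closed subgroup of the nilpotent Lie group $N_\om$ and $N_\om\to N_\om/Z_\om\cong B_\om$ is a fibre bundle with fibre $Z_\om$; since $N_\om$ and $\R^k$ are connected and simply connected by Corollary~\ref{cor:nilponent_lie_group}, the long exact homotopy sequence forces $Z_\om$ to be connected. Then \cite{Sieb} applies exactly as in Corollary~\ref{cor:nilponent_lie_group}, so $Z_\om$ is a simply connected nilpotent Lie group, diffeomorphic to some $\R^p$, and hence so is $F$. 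A dimension count $n=\dim N_\om=\dim Z_\om+\dim(N_\om/Z_\om)=p+k$ together with $k\ge1$ (Theorem~\ref{thm:basic_ptolemy}) yields $0\le p<n$.

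I expect the main obstacle to be the connectedness of $Z_\om$, which is exactly what is needed to invoke \cite{Sieb}: purely group-theoretically, the kernel of a continuous homomorphism out of a connected group need not be connected, so I rely on the fibre-bundle $Z_\om\to N_\om\to B_\om$ and its homotopy exact sequence. A secondary point, used already in the transitivity step, is that $b\circ\eta=b$ for all Busemann functions $b$ genuinely forces $\pi_\ast(\eta)=\id$ on $B_\om$; this is where the fact that the descended Busemann functions are Euclidean coordinates on $B_\om$ (Proposition~\ref{pro:base_euclidean}) is essential.
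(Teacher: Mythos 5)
Your proposal is correct and follows the paper's skeleton almost exactly (transitivity of $Z_\om$ on fibers via vertical shifts, simple transitivity from Lemma~\ref{lem:simply_transitivity_shifts}, contractibility by conjugating with a pure homothety as in Lemma~\ref{lem:contract_auto}, then Siebert's theorem and the dimension count $n=k+p$ with $k\ge 1$). The one place where you genuinely diverge is the connectedness of $Z_\om$, and it is precisely the step where the paper does its only real geometric work: the paper proves that $F$ is \emph{linearly connected} directly, by taking a Ptolemy circle $\si$ through two given points of $F$ (enhanced property (E)) and using the uniqueness part of ($1_\K$) to define a continuous map $\si\to F$ whose image joins the two points. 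You instead pass to the exact sequence of the principal bundle $Z_\om\to N_\om\to N_\om/Z_\om\cong\R^k$ and read off $\pi_0(Z_\om)=0$ from $\pi_1(\R^k)=0$ and $\pi_0(N_\om)=0$. Your route is sound, but note what it leans on: Cartan's closed subgroup theorem to make $Z_\om$ a Lie subgroup of $N_\om$ (so that the quotient is a bundle), the identification of $\pi_\ast(N_\om)$ with the full translation group of $B_\om$, and the open mapping theorem to see that $N_\om/Z_\om\to\R^k$ is a homeomorphism; all of this is standard but should be said. It also makes the subsequent appeal to \cite{Sieb} redundant for $Z_\om$ (a closed connected subgroup of a simply connected nilpotent Lie group is already a simply connected nilpotent Lie group), whereas the paper's direct geometric proof of connectedness of $F$ is self-contained and independent of the Lie structure of $N_\om$. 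Your transitivity argument, which re-derives the content of Lemmas~\ref{lem:homothety_vert_shift} and \ref{lem:vert_shift} from the constancy of Busemann functions on fibers and the fact that the descended Busemann functions separate points of $B_\om$, is equivalent to the paper's citations and is fine.
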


\begin{proof} The group
$Z_\om$
acts transitively on
$F$
by Lemma~\ref{lem:homothety_vert_shift}. The action is
simply transitive by Lemma~\ref{lem:simply_transitivity_shifts}.
We fix 
$o\in F$
and identify
$Z_\om$
with
$F$
by
$\eta\mapsto\eta(o)$.
By the same argument as in Lemma~\ref{lem:contract_auto} we see
that the group
$Z_\om$
is contractible. Furthermore,
$F$
is locally compact. Given
$x$, $x'\in F$,
there is a Ptolemy circle
$\si\sub X_\om$
through
$x$, $x'$.
By ($1_\K$), through any point 
$z\in\si$
there is a uniquely determined Ptolemy line that hits
$F$.
This defines a continuous map 
$\si\to F$.
Thus
$F$
is linearly connected. Hence,
$Z_\om$
is a contractible, locally compact, connected topological group.
By \cite[Corollary~2.4]{Sieb},
$Z_\om$
is a simply connected nilpotent Lie group, and thus
$F$
is homeomorphic to
$\R^p$
for some
$0\le p\le n$.
In fact 
$p<n$
because
$X$
contains Ptolemy circles and thus
$k=\dim B_\om>0$,
while
$n=k+p$.
\end{proof}

To complete the proof of Theorem~\ref{thm:basic_ptolemy} it 
remains to show that if
$k=1$,
then
$X=\wh\R$.
In the following section, we establish a more general fact from which 
this property follows immediately.

\subsection{Non-integrability of the canonical distribution}
\label{subsect:nonintegrability}

Given
$o\in X_\om$, $\ov x\in B_\om$,
by the property~($1_\K$) there is a unique
$x\in F_{\ov x}=\pi_\om^{-1}(\ov x)$
that is connected with
$o$
by a geodesic segment
$ox$.
The point
$x$
is called the {\em lift} of
$\ov x$
with respect to
$o$,
and we use notation
$x=\lift_o(\ov x)$.
This defines an embedding
$\lift_o:B_\om\to X_\om$
with
$\pi_\om\circ\lift_o=\id$
for every
$o\in X_\om$.
We denote
$D_o=\lift_o(B_\om)$.
The embedding
$\lift_o$
is radially isometric,
$|o\lift_o(\ov x)|=|\pi(o)\ov x|$
for every
$\ov x\in B_\om$.
Though there is no reason for
$\lift_o$
as well as for the projection
$\pi_\om|D_o$
to be isometric, the map
$\lift_o$
is continuous which follows the uniqueness property of
($1_\K$) and compactness of
$X$.

The family of subspaces
$D_o$, $o\in X_\om$,
is called the (canonical) {\em distribution} on
$X_\om$.
We say that the canonical distribution
$\cD=\set{D_o}{$o\in X_\om$}$
on
$X_\om$
is {\em integrable} if for any
$o\in X_\om$
and any
$o'\in D_o$,
the subspaces
$D_o$
and
$D_{o'}$
of
$X_\om$
coincide,
$D_o=D_{o'}$.
For example, if the base 
$B_\om$
is one-dimensional, then the canonical distribution
$\cD$
is obviously integrable.

\begin{pro}\label{pro:nonintegrable_canonical_distribution} Assume
that the canonical distribution on
$X_\om$
is integrable for every
$\om\in X$.
Then
$p=0$, 
i.e. every fiber of every projection
$\pi_\om$
is a point, and the space
$X$
is M\"obius equivalent to
$\wh\R^n$
with
$n=\dim X$.
\end{pro}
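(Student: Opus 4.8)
**The plan is to show that integrability forces each fiber to be a point, and then invoke the already-proved real-hyperbolic characterization.**

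My first step would be to exploit the integrability hypothesis to produce, for each $\om\in X$, a genuine subspace structure on $X_\om$. If $\cD$ is integrable, then the relation $o'\in D_o$ is symmetric and transitive (by the coincidence $D_o=D_{o'}$), so the subspaces $D_o$ partition $X_\om$ into maximal integral leaves. Each leaf $D_o$ is a union of Ptolemy lines through $o$ hitting every $\K$-line it meets, and by construction $\pi_\om|D_o:D_o\to B_\om$ is a continuous bijection which is radially isometric from $o$. The key geometric content I would extract is that $D_o$ is totally geodesic: if a Ptolemy line $l\sub X_\om$ meets $D_o$ in two points, then by integrability and property~($1_\K$) the whole of $l$ lies in $D_o$. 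I would prove this by transferring to the metric $d'$ with infinitely remote point one of the two intersection points and using that $D_{o'}=D_o$ for $o'\in D_o$.

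The crucial reduction is then to show the leaf $D_o$ is itself M\"obius-invariantly a copy of $B_\om$, i.e.\ $\pi_\om|D_o$ is an isometry, not merely a radial isometry. Here I would argue that since every Ptolemy line meeting $D_o$ twice is contained in $D_o$, and since $\pi_\om$ restricted to any Ptolemy line is isometric by Lemma~\ref{lem:line_dist_klines}, the projection $\pi_\om|D_o$ sends Ptolemy lines isometrically to geodesics of $B_\om$. Combined with Proposition~\ref{pro:base_metric} (unique geodesics in $B_\om$) and the fact that $B_\om\cong\R^k$ is geodesically complete by Proposition~\ref{pro:base_euclidean}, this forces $D_o$ to be a totally geodesic Euclidean subspace of $X_\om$ on which $\pi_\om$ is an isometry onto $B_\om$. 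In particular $D_o$ contains, through any of its points, a Ptolemy circle whose tangent line lies in $D_o$, so $D_o$ is closed under the circle-through-two-points operation of the kind used in Lemma~\ref{lem:simply_transitivity_shifts}.

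The main obstacle is to rule out $p>0$, i.e.\ to show the fibers degenerate to points. The idea is that if some fiber $F=\pi_\om^{-1}(\ov x)$ were nontrivial, then picking $o\in X_\om$ and the lift $x=\lift_o(\ov x)\in D_o\cap F$, every other point of $F$ would lie off $D_o$; applying a space inversion $\phi$ sending a chosen point of $F\sm D_o$ to $\om$ and using Lemma~\ref{lem:induced_base_map} to transport distributions, I would produce two distinct integral leaves through a common point, contradicting integrability. Concretely, I expect to show that the union $\cup_o D_o$ of a single leaf together with a $\K$-line would generate, via shifts and space inversions, all of $X$ while remaining inside one leaf in each chart, forcing $p=0$; this is where the delicate interplay between the $N_\om$-action (Lemma~\ref{lem:simply_transitivity_shifts}) and integrability must be handled carefully. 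Once $p=0$, every fiber of every $\pi_\om$ is a single point, so $k=n=\dim X$ and we are in the case of Remark~\ref{rem:real_case}; thus $X$ is M\"obius equivalent to $\wh\R^n=\di\hyp^{n+1}$, completing the proof.
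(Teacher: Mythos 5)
Your first two steps are sound and match the paper: integrability makes the leaves $D_o$ a partition of $X_\om$, each leaf is geodesically closed, and $\pi_\om|D_o$ is a global isometry onto $B_\om\cong\R^k$ (the paper gets this directly: for $o',x\in D_o=D_{o'}$ one has $x\in D_{o'}$, so $|o'x|=|\pi_\om(o')\pi_\om(x)|$ by radial isometry at $o'$). The gap is in the decisive step, where you must rule out $p>0$. Your proposed contradiction --- ``two distinct integral leaves through a common point'' --- is not substantiated, and it cannot be manufactured within a single chart: integrability says precisely that the leaves of $X_\om$ form a partition, so $D_o$ and $D_y$ for $y$ in the fiber through $x\in D_o$, $y\neq x$, are automatically \emph{disjoint} rather than clashing. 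To get a clash across charts via a space inversion you would first need to know that the completed leaves (``foliating spheres'' $D_o\cup\{\om\}$) are chart-independent objects and that two of them meet in at most one point; that is established in the paper, but it does not by itself yield your contradiction, and you give no concrete construction of the offending configuration. You explicitly flag this as the delicate point, which is an admission that the proof is not there.

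What the paper actually does at this point is a compactness-versus-openness argument that is absent from your proposal. Assuming $p>0$, take distinct $x,x'$ in a fiber $F\sub X_\om$ and let $B$ be the foliating sphere through $x,x'$. One checks $\om\notin B$ (otherwise $B\sm\om$ would be a single leaf of the foliation of $X_\om$, which meets each fiber exactly once, or, as the paper argues, would be forced into $F$ by Lemma~\ref{lem:geoconvex_horosphere}, while $F$ contains no Ptolemy line). Hence $B$ is compact in $X_\om$ and $\ov B=\pi_\om(B)$ is compact in $B_\om$. But $\ov B$ is also open: through every $z\in B$ and tangent to every Ptolemy line through $z$ there is, by Corollary~\ref{cor:tangent_unique}, a Ptolemy circle contained in $B$, so $\ov B$ contains a neighbourhood of $\pi_\om(z)$. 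Since $B_\om\cong\R^k$ is connected and noncompact, $\ov B=B_\om$ is impossible for a compact set. This is the same mechanism used in Lemma~\ref{lem:simply_transitivity_shifts}, and some argument of this kind (or an equally concrete substitute) is what your proof is missing. The final reduction to Remark~\ref{rem:real_case} once $p=0$ is fine.
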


\begin{proof} We first show that 
$\pi_\om:D_o\to B_\om$
is an isometry for every
$o\in X_\om$.
Recall that the map
$\pi_\om:D_o\to B_\om$
is radially isometric. For any
$o'$, $x\in D_o$
we have
$x\in D_{o'}$
because
$D_o=D_{o'}$.
Thus
$|\pi_\om(o')\pi_\om(x)|=|o'x|$,
and the map
$\pi_\om:D_o\to B_\om$
is an isometry.
 
It follows that through any two distinct points
$\om$, $o\in X$
there is a uniquely determined subspace
$B\sub X$,
the induced M\"obius structure of which is
the canonical M\"obius structure of the sphere
$S^k$,
where
$k$
is the dimension of any base
$B_\om$, $\om\in X$.
Any such a sphere is called a {\em foliating sphere}.
 
Next we show that
two different foliating spheres
$B$, $B'\sub X$
have at most one point in common. For
$\om\in B\cap B'$
consider a metric of the M\"obius structure such that
$\om$
is the infinitely remote point. Then
$B\sm\{\om\}$, $B'\sm\{\om\}$
are disjoint being the members of the foliation of
$X_\om$
by foliating spheres.

Now, we exploit the same idea as in the proof of
Lemma~\ref{lem:simply_transitivity_shifts}. Assume
$p>0$.
Then there are different
$x$, $x'\in F$,
and let
$B\sub X$
the foliating sphere through
$x$, $x'$.
We have
$\om\not\in B$
since otherwise
$B\sm\om$
is a member of the foliation of
$X_\om$,
and
$B\sm\om$
is covered by Ptolemy lines
$l$
through
$x$, $x'$.
Then by Lemma~\ref{lem:geoconvex_horosphere} 
$B\sm\om$
must lie in
$F$.
However
$F$
contains no Ptolemy line by construction. 

Thus
$B\sub X_\om$
is compact, and its projection
$\ov B=\pi_\om(B)\sub B_\om$
is compact. On the other hand, given
$z\in B$
and a Ptolemy line 
$l\sub X_\om$
through
$z$,
there is a Ptolemy circle
$\si\sub B$
through
$z$
with the tangent line 
$l$.
Indeed, for every
$z'\in B$, $z'\neq z$,
by Corollary~\ref{cor:tangent_unique}
there is a unique circle through
$z$, $z'$
that is tangent to
$l$.
This circle is contained in
$B$
because
$B$
is M\"obius equivalent to
$\wh\R^k$
and it contains with any two points every Ptolemy
circle in
$X$
through these points. It follows that 
$\ov B$
is open in
$B_\om$
and thus
$\ov B=B_\om$.
This contradicts the fact that 
$\ov B$
is compact in
$B_\om$.

Hence
$p=0$, $n=k$,
and
$X$
is M\"obius equivalent to
$\wh\R^n$
with 
$n=\dim X$.
\end{proof}

\begin{cor}\label{cor:nonintegrability} Assume
$p>0$,
that is, fibers of the canonical projections
$\pi_\om$, $\om\in X$,
are nondegenerate. Then the canonical distribution
on
$X_\om$
is non-integrable for every
$\om\in X$.
\end{cor}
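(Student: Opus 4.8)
The plan is to obtain this corollary as the contrapositive of Proposition~\ref{pro:nonintegrable_canonical_distribution}, supplemented by a homogeneity argument. Proposition~\ref{pro:nonintegrable_canonical_distribution} shows that integrability of $\cD$ on $X_\om$ for \emph{every} $\om\in X$ forces $p=0$. Hence if $p>0$ the distribution must fail to be integrable for at least one base point $\om$. The only real content beyond the Proposition is therefore to upgrade ``non-integrable for some $\om$'' to ``non-integrable for every $\om$'', which I would do by showing that integrability of the canonical distribution does not depend on the choice of $\om$.

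To establish this independence, I would fix $\om$, $\om'\in X$ and invoke two-point homogeneity (Proposition~\ref{pro:two_point_homogeneous}) to pick a M\"obius automorphism $f:X\to X$ with $f(\om)=\om'$. With respect to metrics having $\om$, $\om'$ as infinitely remote points, $f$ restricts to a homothety $X_\om\to X_{\om'}$ by Lemma~\ref{lem:homothety_infinite}, so it sends geodesic segments to geodesic segments and Ptolemy lines to Ptolemy lines; by Lemma~\ref{lem:induced_base_map} it also covers a bijection $\ov f:B_\om\to B_{\om'}$ with $\pi_{\om'}\circ f=\ov f\circ\pi_\om$, hence maps each fiber of $\pi_\om$ onto a fiber of $\pi_{\om'}$. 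The key point is that $f$ then intertwines the lift constructions: for $o\in X_\om$ and $\ov x\in B_\om$ the point $f(\lift_o(\ov x))$ lies in the fiber $F_{\ov f(\ov x)}$ and is joined to $f(o)$ by the $f$-image of the segment from $o$ to $\lift_o(\ov x)$, so by the uniqueness in property $(1_\K)$ it must equal $\lift_{f(o)}(\ov f(\ov x))$. Consequently $f(D_o)=D_{f(o)}$ for all $o$, so $f$ carries the distribution on $X_\om$ bijectively onto that on $X_{\om'}$ while preserving the incidence relation $o'\in D_o$. This yields at once that $\cD$ is integrable on $X_\om$ if and only if it is integrable on $X_{\om'}$.

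With independence in hand the corollary follows: were $\cD$ integrable on $X_\om$ for some $\om$, it would be integrable for every base point, and Proposition~\ref{pro:nonintegrable_canonical_distribution} would give $p=0$, contradicting $p>0$. I expect the homothety/lift bookkeeping of the middle paragraph to be the only delicate step --- everything hinges on checking that $f$ respects geodesic segments, fibers, and hence the lift map, so that the uniqueness clause of $(1_\K)$ applies; the rest is a formal contraposition.
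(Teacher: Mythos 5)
Your proposal is correct and follows exactly the paper's own route: invoke two-point homogeneity (Proposition~\ref{pro:two_point_homogeneous}) to transfer integrability between base points, then contrapose Proposition~\ref{pro:nonintegrable_canonical_distribution}. The paper states the transfer step without detail; your middle paragraph merely supplies the bookkeeping (automorphisms preserve fibers, geodesics, and hence lifts by the uniqueness in $(1_\K)$) that the authors leave implicit.
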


\begin{proof} By Proposition~\ref{pro:two_point_homogeneous}
the space
$X$
is 2-point homogeneous. It follows
that if the canonical distribution on
$X_\om$
is integrable for some
$\om$,
then this is true for every
$\om\in X$.
Then 
$p=0$
by Proposition~\ref{pro:nonintegrable_canonical_distribution}.
This contradicts our assumption.
\end{proof}

The next corollary follows immediately from 
Proposition~\ref{pro:nonintegrable_canonical_distribution},
and it completes the proof of Theorem~\ref{thm:basic_ptolemy}.

\begin{cor}\label{cor:onedimensional_base} If the base
$B_\om$
of
$X_\om$
is one-dimensional (this is independent of
$\om\in X$), 
then
$p=0$
and
$X=\wh\R$.
\qed
\end{cor}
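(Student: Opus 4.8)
The plan is to deduce the statement directly from Proposition~\ref{pro:nonintegrable_canonical_distribution}: since that proposition shows that integrability of the canonical distribution (for every $\om$) forces $p=0$, it suffices to verify that a one-dimensional base renders the distribution integrable. I would first record that $k=\dim B_\om$ does not depend on $\om$, because any M\"obius automorphism $\phi$ with $\phi(\om)=\om'$ induces a homothety $\ov\phi:B_\om\to B_{\om'}$ by Lemma~\ref{lem:induced_base_map}, and $X$ is two-point homogeneous by Proposition~\ref{pro:two_point_homogeneous}; hence the hypothesis $k=1$ holds simultaneously for all $\om$.

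The key observation to establish is that, when $k=1$, each lift $D_o=\lift_o(B_\om)$ is a single Ptolemy line. Indeed, $B_\om$ is isometric to $\R$ by Proposition~\ref{pro:base_euclidean}, and by Proposition~\ref{pro:base_metric} the restriction of $\pi_\om$ to any Ptolemy line $l\sub X_\om$ is isometric. If $l$ passes through $o$, then $\pi_\om|l$ is an isometric embedding of $\R$ into the complete geodesic line $B_\om=\R$, hence a bijection, so $l$ meets every fiber $F_{\ov x}=\pi_\om^{-1}(\ov x)$ in exactly one point. By the uniqueness clause of property~($1_\K$), $l$ is then the unique Ptolemy line through $o$ meeting $F_{\ov x}$, so the lift satisfies $\lift_o(\ov x)=l\cap F_{\ov x}$ for every $\ov x\in B_\om$. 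As $\ov x$ ranges over $B_\om$ the points $l\cap F_{\ov x}$ exhaust $l$, whence $D_o=l$.

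Integrability would then follow at once: given $o'\in D_o=l$, the line $l$ is a Ptolemy line through $o'$ meeting every fiber, so the same reasoning yields $D_{o'}=l=D_o$. Thus the canonical distribution on $X_\om$ is integrable for every $\om$, and Proposition~\ref{pro:nonintegrable_canonical_distribution} delivers $p=0$. Consequently $n=k+p=1$, and $X$ is M\"obius equivalent to $\wh\R^1=\wh\R$, i.e. $X$ is a Ptolemy circle. There is no serious obstacle here; the only point needing care is the identification $D_o=l$, which combines the isometry of $\pi_\om$ along Ptolemy lines with the uniqueness half of property~($1_\K$).
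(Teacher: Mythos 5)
Your proposal is correct and follows essentially the same route as the paper: the paper derives this corollary immediately from Proposition~\ref{pro:nonintegrable_canonical_distribution}, having already remarked (just before that proposition) that a one\-dimensional base makes the canonical distribution ``obviously integrable.'' Your only addition is to spell out that obviousness --- identifying $D_o$ with the unique Ptolemy line through $o$ via the surjectivity of the isometric embedding $\pi_\om|l:\R\to B_\om=\R$ and the uniqueness clause of ($1_\K$) --- which is a sound and welcome elaboration of the same argument.
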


\section{Semi$\text{-}\C$-planes}
\label{sect:semi_c_planes}

Starting from this place, we consider in what follows
a compact Ptolemy space 
$X$
that satisfies the assumption of 
Theorem~\ref{thm:basic_ptolemy} and its conclusion for the case
$p=1$,
i.e. when fibers of the canonical projection
$\pi_\om:X_\om\to B_\om$
are one-dimensional for every
$\om\in X$,
and therefore they are homeomorphic to
$\R$.
This also means that the notation
$\K$
is replaced for 
$X$
by
$\C$,
e.g. fibers are $\C$-lines, the properties
($1_\K$), ($2_\K$)
become
($1_\C$), ($2_\C$)
respectively, \semik-planes are called \semi-planes, 
etc. It follows from Corollary~\ref{cor:onedimensional_base} that
$\dim B_\om=k\ge 2$, $\om\in X$.

\subsection{Foliations of \semi-planes}
\label{subsect:foliations_semi_planes}

Let
$l\sub X_\om$
be a Ptolemy line,
$M=M_l$
the respective \semi-plane, see sect.~\ref{subsect:semi_kplanes}.
Any fiber ($\C$-line)
$F\sub X_\om$
of
$\pi_\om$,
that meets
$l$,
is contained in
$M$
by definition of
$M$.
We fix such an
$F\sub M$.
By Lemma~\ref{lem:rfoliation_semik}, every Ptolemy line in
$X_\om$
through any point of
$F$,
which is Busemann parallel to
$l$,
is contained in
$M$
and hits every other $\C$-line 
$F'\sub M$.
Furthermore, any two $\C$-lines
$F$, $F'\sub M$
are equidistant in the sense that the segments of any two
Ptolemy lines
$l$, $l'\sub M$
between
$F$, $F'$
have equal lengths.

\begin{pro}\label{pro:semiplane_homeo} The map
$\psi:l\times F\to M$, $\psi(x,y)=F_x\cap l_y$,
where
$F_x\sub M$
is the
$\C$-line
through
$x\in l$, $l_y\sub M$
is the Ptolemy line through
$y\in F$,
is a homeomorphism, in particular
$M$
is homeomorphic to
$\R^2$.
\end{pro}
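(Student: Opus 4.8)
The plan is to show that $\psi$ is a continuous bijection with continuous inverse, exploiting the product structure that the foliations of $M=M_l$ by $\C$-lines and by Busemann-parallel Ptolemy lines provide. Throughout I fix a base $\C$-line $F\sub M$ and the line $l\sub M$; by Lemma~\ref{lem:rfoliation_semik} every $\C$-line of $M$ meets $l$ in exactly one point, and through every point of $M$ there is a unique Ptolemy line in $M$ Busemann parallel to $l$ hitting every $\C$-line of $M$. These two facts say precisely that the $\C$-lines $\{F_x\}_{x\in l}$ and the parallel Ptolemy lines $\{l_y\}_{y\in F}$ are two transverse foliations of $M$ whose leaves from different families meet in a single point. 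That single intersection point is exactly $\psi(x,y)=F_x\cap l_y$, so $\psi$ is well defined.

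First I would verify bijectivity. Given any $z\in M$, let $F_z$ be its $\C$-line and $l_z$ the unique parallel Ptolemy line through $z$; set $x=F_z\cap l$ and $y=l_z\cap F$. These intersections are nonempty by Lemma~\ref{lem:rfoliation_semik} (the parallel line $l_z$ meets every $\C$-line of $M$, in particular $F$; and $F_z$ meets $l$ since $F_z\sub M$ is a $\C$-line of $M$), and each is a single point because a Ptolemy line meets a given $\C$-line at most once and two Ptolemy lines through $z$ that both hit a common $\C$-line coincide by uniqueness in property ($1_\C$). Then $\psi(x,y)=z$, giving surjectivity; injectivity follows because the pair $(x,y)$ is recovered from $z$ by this same recipe. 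Hence $\psi$ is a bijection with inverse $z\mapsto(F_z\cap l,\ l_z\cap F)$.

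Next comes continuity, which I expect to be the main obstacle, since neither $\psi$ nor its inverse is claimed to respect the metric and one must control how the leaves of the two foliations vary. For continuity of $\psi^{-1}$ I would use that the projection $\pi_\om$ is continuous, so $z\mapsto F_z$ varies continuously, and that the assignment of the unique Busemann-parallel Ptolemy line through a moving point is continuous; the latter rests on Lemma~\ref{lem:unique_line} together with Lemma~\ref{lem:busparallel_sublinear}, which force any sublimit of the parallel lines $l_{z_i}$ through $z_i\to z$ to coincide with $l_z$, so the intersection points $F_{z_i}\cap l$ and $l_{z_i}\cap F$ converge to $F_z\cap l$ and $l_z\cap F$. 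For continuity of $\psi$ itself I would argue that if $x_i\to x$ in $l$ and $y_i\to y$ in $F$, then the $\C$-lines $F_{x_i}$ converge (again via the shift construction and uniqueness, cf. Lemma~\ref{lem:limit_circle} and Lemma~\ref{lem:shift_busemann_parallel}) and the parallel lines $l_{y_i}$ converge, so their single intersection points converge by a compactness-plus-uniqueness argument inside the locally compact space $X_\om$.

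Finally, since $F$ and $l$ are each homeomorphic to $\R$ — the former by Proposition~\ref{pro:fiber_nilpotent_lie_group} in the present case $p=1$, the latter because any Ptolemy line is isometric to $\R$ — the product $l\times F$ is homeomorphic to $\R^2$, and therefore so is $M$. The delicate point throughout is the joint continuity of the two leafwise intersection maps; I would make it rigorous by repeatedly invoking the uniqueness statements (Lemma~\ref{lem:unique_line}, property ($1_\C$)) to pin down limits, and the compactness of $X$ to extract convergent subsequences, rather than by any explicit metric estimate.
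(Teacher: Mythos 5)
Your overall strategy -- bijectivity from the two transverse foliations of $M$ via Lemma~\ref{lem:rfoliation_semik} and the uniqueness in ($1_\C$), then continuity by pinning down sublimits of lines with uniqueness statements and compactness -- is the same as the paper's. The one place where you explicitly wave away what the paper actually needs is your closing remark that you would avoid ``any explicit metric estimate.'' The paper's continuity argument hinges on precisely such an estimate, namely the equidistance clause of Lemma~\ref{lem:rfoliation_semik}: for $y_i\to y$ in $F$ and $z_i=l_{y_i}\cap F_x$, the identity $|y_iz_i|=|yz|$ is what keeps the candidate intersection points in a bounded subset of $X_\om$, so that compactness of $X$ produces a sublimit $z'\in F_x$ rather than allowing $z_i\to\om$; only then can uniqueness (a pointwise limit of geodesics is a geodesic, plus ($1_\C$)) force $z'=z$. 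Without some boundedness input your ``compactness-plus-uniqueness'' step has nothing to apply uniqueness to, since in the compact space $X$ the intersection points could a priori escape to the infinitely remote point. The same equidistance also gives the paper its clean route to \emph{joint} continuity: $\psi_y=\lift_y\circ\pi_\om|l$ is an isometry, so $|\psi(x',y')\psi(x,y')|=|x'x|$, and the triangle inequality reduces joint continuity to the separate continuity of $\psi_x$. I would therefore not drop the metric estimate -- it is the load-bearing ingredient -- but with it inserted your argument closes and coincides with the paper's.
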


\begin{proof} The map
$\psi$
is well defined and bijective by properties of the projection
$\pi_\om:X_\om\to B_\om$,
of the \semi-plane
$M$,
and by ($1_\C$). Since
$X$
is Hausdorff, to show that
$\psi$
is a homeomorphism, it suffices to show
that
$\psi$
is continuous.

Assume that
$F\ni y_i\to y\in F$, $z=l_y\cap F_x$,
$z_i=l_i\cap F_x$,
where
$l_y$, $l_i=l_{y_i}\sub M$
are Ptolemy lines through
$y$, $y_i$
respectively,
$F_x\sub M$
is the
$\C$-line
through
$x\in l$.
Since
$|y_iz_i|=|yz|$
by the equidistant property, the sequence
$|zz_i|$
is bounded, thus
$z_i\in F_x$
subconverges to some
$z'\in F_x$.
Since a pointwise limit of geodesics in any metric
space is a geodesic, we see that
$l_i\to l'$
pointwise, where
$l'\sub X_\om$
is a Ptolemy line through
$y$, $z'$.
By ($1_\C$),
$l'=l_y\sub M$,
hence
$z'=z=\lim z_i$.
This shows that the map
$\psi_x:F\to F_x$, $\psi_x(y)=\psi(x,y)$,
is continuous for every
$x\in l$.

The map
$\psi_y:l\to l_y$ 
given by
$\psi_y(x)=\psi(x,y)$
coincides with the isometry
$\lift_y\circ\pi_\om|l:l\to l_y$.
Now the map
$\psi$
is continuous at every point 
$(x,y)\in l\times F$
because

\begin{align*}
|\psi(x',y')\psi(x,y)|&\le|\psi(x',y')\psi(x,y')|
+|\psi(x,y')\psi(x,y)|\\
 &=|x'x|+|\psi_x(y')\psi_x(y)|\to 0
\end{align*}
as
$(x',y')\to(x,y)$.
\end{proof}

It follows from Proposition~\ref{pro:semiplane_homeo}
that every \semi-plane
$M\sub X_\om$
carries two foliations, one by
$\C$-lines
and the other by Ptolemy lines. Thus fixing an order on a
$\C$-line
$F\sub M$,
we have a well defined order on any other
$\C$-line
$F'\sub M$
compatible with the Ptolemy line foliation of
$M$.
Therefore, given a Ptolemy line
$l\sub M$,
the notion of a half-plane of
$M$
bounded by
$l$
is well defined, and there are two half-planes
bounded by
$l$
whose union is
$M$.
Any homothety
$\phi:M\to M$
with
$\phi(l)=l$
either preserves each of the two half-plane bounded by
$l$
or it permutes them.

\subsection{Flips of \semi-planes and self-duality}
\label{subsect:flips_semi_planes}

By the extension property ($E_2$), every flip
$\phi:l\to l$
of a Ptolemy line
$l\sub X_\om$
extends to an isometry of
$X_\om$
for which we use the same notation
$\phi$.
Since
$\phi$
preserves the line
$l$,
the \semi-plane
$M$
that contains
$l$
is also preserved by
$\phi$
and
$\phi:M\to M$
is an isometry with a fixed point
$o\in l\sub M$.
If
$\phi$
preserves each of the half-planes bounded by
$l$,
then we say that
$\phi:M\to M$
is a flip.

\begin{pro}\label{pro:flip_extension} For every \semi-plane
$M\sub X_\om$
every flip
$\phi:l\to l$
of any Ptolemy line
$l\sub M$
extends to an isometry of
$X_\om$,
which is a flip on
$M$.
\end{pro}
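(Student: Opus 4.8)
The plan is to extend the given flip by property~(${\rm E}_2$) and then to read off its behaviour on $M$ from the two foliations furnished by Proposition~\ref{pro:semiplane_homeo}. First I would take the flip $\varphi_0:l\to l$, a reflection of the Ptolemy line $l$ about its fixed point $o$, and extend it by Proposition~\ref{pro:extesion_property} to a M\"obius automorphism of $X$ fixing $\om$; write $\varphi$ for the induced isometry of $X_\om$. Since $\varphi(l)=l$ and $\varphi(\om)=\om$, it preserves the fibration $\pi_\om$ by Lemma~\ref{lem:induced_base_map} and maps Ptolemy lines to Ptolemy lines; as $\varphi(l)=l$ it preserves the \semi-plane $M=M_l$ together with both of its foliations, the one by $\C$-lines and the one by Ptolemy lines Busemann parallel to $l$. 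Identifying $M\cong l\times F$ as in Proposition~\ref{pro:semiplane_homeo}, where $F$ is the $\C$-line through $o$, the relation $\varphi(F_x\cap l_y)=F_{\varphi(x)}\cap l_{\varphi(y)}$ shows that $\varphi$ acts on $M$ as a product $\varphi_l\times\varphi_F$, with $\varphi_l$ the reflection of $l$ about $o$ and $\varphi_F$ an isometry of $F\cong\R$ fixing $o$. Thus $\varphi_F$ is either the identity or the reflection of $F$ about $o$.

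This reduces the statement to arranging $\varphi_F=\id$. Indeed, if $\varphi_F=\id$ then $\varphi$ leaves the $F$-coordinate unchanged, so it preserves each half-plane of $M$ bounded by $l$ and is a flip on $M$; if instead $\varphi_F$ is the reflection of $F$, then $\varphi$ is the central symmetry of $M$ about $o$ and interchanges the half-planes. Equivalently, the goal is to produce an isometry of $X_\om$ fixing the $\C$-line $F$ pointwise and reversing $l$ --- a reflection of $M$ across $F$, which is precisely a flip on $M$. If the extension $\varphi$ obtained above is the central symmetry, it suffices to correct it by an isometry that fixes the circle $l\cup\om$ pointwise and reverses the transverse $\C$-direction, i.e. a reflection of $M$ across $l$ inducing $\id_l\times(\text{reflection of }F)$; the composition of this reflection with the central symmetry is the desired flip.

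The hard part, which I expect to be the main obstacle, is the construction of one of these transverse reflections. No such symmetry exists in the real-hyperbolic case $p=0$ (there the fibers are points), so the argument must genuinely use that $F$ is one-dimensional and carries a well-defined order, as recorded after Proposition~\ref{pro:semiplane_homeo}. The plan is to build the reflection equivariantly: using the simple transitivity of the shift group $N_\om$ on $X_\om$ (Lemma~\ref{lem:simply_transitivity_shifts}) and of its subgroup $Z_\om$ along each $\C$-line (Proposition~\ref{pro:fiber_nilpotent_lie_group}) to define the candidate involution first along $F$ and then across the Ptolemy lines of $M$, and finally to verify that it is M\"obius by testing it on Ptolemy circles, where automorphisms are determined by three points (Proposition~\ref{pro:moebch-circ}) and extend by~(${\rm E}_2$). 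To control the action on $l$ and the transverse directions I would compute slopes via the first variation formula~(\ref{eq:first_variation}) and the slope calculus of Section~\ref{subsect:slope}, so as to confirm that the resulting isometry fixes $F$ pointwise while reversing $l$. Establishing that this equivariant recipe actually yields a well-defined isometry --- rather than merely a self-map of $M$ --- is the delicate point on which the whole argument turns.
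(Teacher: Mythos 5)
Your reduction is sound and matches the opening of the paper's own argument: any $({\rm E}_2)$-extension $\ov\phi$ of the flip is an isometry of $X_\om$ preserving $M$ and both of its foliations, so everything hinges on whether $\ov\phi$ preserves or reverses the order on the transverse $\C$-line $F$ through the fixed point $o$; in the bad case one must correct $\ov\phi$ by an isometry of $X_\om$ fixing $l$ pointwise and reversing $F$. But this is exactly where your proposal stops being a proof: the construction of that transverse reflection (equivalently, of an order-preserving extension) is only announced as a ``plan,'' and you yourself flag its well-definedness as unresolved. Moreover, the tools you propose for it are not available here without circularity: the vertical flip of Proposition~\ref{pro:vert_flip}, which is precisely the transverse reflection you want, is built in Section~\ref{subsect:exist_vert_flip} out of the coordinates, the complex structure and the distance function $D$, all of which rest on consequences of Proposition~\ref{pro:flip_extension} itself (Lemma~\ref{lem:rectangle_side_length}, Corollary~\ref{cor:selfduality}, Lemma~\ref{lem:equidist_lines}, \dots). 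An equivariant recipe on $M$ via $N_\om$ and $Z_\om$ yields at best a self-map of $M$, and nothing in your sketch forces it to extend to a M\"obius automorphism of $X$.

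The paper resolves the dichotomy by a global connectedness argument that your proposal does not contain. Using $\dim B_\om\ge 2$, Lemma~\ref{lem:path_outside} joins $p\in l$ to $p'=\phi(p)$ by a path $\si(t)$ lying outside $M$ with $|o\si(t)|\equiv|pp'|/2$; this produces a continuous family of isometries $\phi_t:l\to l_t$ interpolating between $\id_l$ and $\phi$. Either some $t$ admits an isometry of $X_\om$ fixing $l_t$ pointwise whose restriction to $F$ is a flip --- which, conjugated by an extension of $\phi_t$, is exactly the transverse correction you need --- or else the set $A$ of parameters $t$ for which every extension of $\phi_t$ restricts to $\id_F$ is shown to be nonempty, closed and open (openness via subconvergence of extensions, in the spirit of Lemma~\ref{lem:nondegenerate_morphism}), hence all of $[0,1]$, so $\phi=\phi_1$ itself admits an extension acting trivially on $F$ and therefore flipping $M$. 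Supplying an argument of this kind is the missing content of your proof.
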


We begin the proof with following

\begin{lem}\label{lem:path_outside} Let
$M\sub X_\om$
be a \semi-plane,
$l\sub M$
a Ptolemy line. Given distinct
$p$, $p'\in l$,
there exists a continuous path
$\si:[0,1]\to X_\om$
between
$p$, $p'$, $\si(0)=p$, $\si(1)=p'$,
such that
$\si((0,1))\sub X_\om\sm M$,
and
$o\si(t)$
is a geodesic segment in
$X_\om$
of length
$|pp'|/2$
for the midpoint
$o\in pp'$
and for all
$t\in[0,1]$.
\end{lem}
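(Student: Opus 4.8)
The plan is to carry out the entire construction inside the \emph{radial cone} $D_o=\lift_o(B_\om)\sub X_\om$ of geodesic segments emanating from the midpoint $o$ of $pp'$, where the problem collapses to an essentially trivial one. Set $r=|pp'|/2=|op|=|op'|$ and let $S=\set{z\in X_\om}{$|oz|=r$}$ be the metric sphere of radius $r$ about $o$. Since $l$ is a Ptolemy line through $o$, uniqueness in ($1_\C$) shows that for $x\in l$, $x\neq o$, the line $l$ is the unique Ptolemy line through $o$ meeting the fiber $F_{\pi_\om(x)}$, whence $\lift_o(\pi_\om(x))=l\cap F_{\pi_\om(x)}=x$. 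Thus $l=\lift_o(\pi_\om(l))\sub D_o$, and in particular $p,p'\in D_o\cap S$.

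The key step is the identification $M\cap D_o=l$. Because $M=M_l$ is the union of the $\C$-lines meeting $l$ and each such fiber is collapsed by $\pi_\om$ to a point of $\pi_\om(l)=:\ov l$, we have $\pi_\om(M)=\ov l$. If $x\in M\cap D_o$ then $\pi_\om(x)\in\ov l$ and $x=\lift_o(\pi_\om(x))\in\lift_o(\ov l)=l$; conversely $l\sub M\cap D_o$. Hence $M\cap D_o=l$, and therefore $M\cap(D_o\cap S)=l\cap S=\{p,p'\}$ is a two-point set.

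Next I would record the topology of $D_o\cap S$. The lift $\lift_o\colon B_\om\to D_o$ is a homeomorphism (it is the two-sided inverse of the continuous map $\pi_\om|_{D_o}$) and it is radially isometric, so $D_o\cap S=\lift_o(\set{\ov x\in B_\om}{$|\pi_\om(o)\,\ov x|=r$})$. By Proposition~\ref{pro:base_euclidean} the base $B_\om$ is isometric to a Euclidean $\R^k$, so the set on the right is a round $(k-1)$-sphere in $B_\om$; consequently $D_o\cap S$ is homeomorphic to $S^{k-1}$. In the case $p=1$ treated here, Corollary~\ref{cor:onedimensional_base} gives $k\ge 2$, so $S^{k-1}$ is a connected manifold of dimension $\ge 1$.

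Finally, since $D_o\cap S$ is a connected manifold and $\{p,p'\}$ is a two-point subset, I choose a simple arc $\si\colon[0,1]\to D_o\cap S$ with $\si(0)=p$, $\si(1)=p'$ meeting $\{p,p'\}$ only at its endpoints (e.g. a minor great-circle arc through $p$, $p'$). As $\si(t)\in D_o\cap S$ for all $t$, and $M\cap(D_o\cap S)=\{p,p'\}$, the interior automatically satisfies $\si((0,1))\sub X_\om\sm M$; and because each $\si(t)\in D_o$, the definition of $\lift_o$ gives that $o\si(t)$ is a geodesic segment of length $|\pi_\om(o)\,\pi_\om(\si(t))|=|o\si(t)|=r=|pp'|/2$. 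The only substantive point is the identification $M\cap D_o=l$: once the problem is transported into $D_o$ by the radially isometric lift, the apparent obstruction of separating a sphere by the curve $M\cap S$ is replaced by avoiding two points $\{p,p'\}$ on $S^{k-1}$, which never disconnects in dimension $\ge 1$. I would expect that verifying $\lift_o$ is a genuine homeomorphism and that the geodesic-segment lengths are read off correctly from the radial isometry is the one place demanding care.
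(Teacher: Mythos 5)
Your proof is correct, but it takes a genuinely different route from the paper's. The paper never passes to the radial cone: it picks a $\C$-line $F''$ outside $M$ (possible since $\dim B_\om\ge 2$ and $\pi_\om(M)$ is a line), forms the two auxiliary \semi-planes containing $F\cup F''$ and $F''\cup F'$, parameterizes the $\C$-lines $F_t$ in the resulting two strips by $t\in[0,1]$ with $F_0=F$, $F_{1/2}=F''$, $F_1=F'$, and defines $\si(t)$ as the point at distance $|pp'|/2$ from $o$ on the unique Ptolemy line $l_t$ joining $o$ to $F_t$; avoidance of $M$ comes from the incidence fact that distinct \semi-planes share at most one common $\C$-line, and continuity from compactness plus uniqueness of the segment $o\si(t)$. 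You instead transport everything into $D_o=\lift_o(B_\om)$ via the radially isometric homeomorphism $\lift_o$, prove the clean identity $M\cap D_o=l$ (your verification via $\pi_\om(M)=\ov l$ and $\lift_o(\ov l)=l$ is sound, resting on the uniqueness half of ($1_\C$)), and reduce the problem to joining two points on the Euclidean sphere $S^{k-1}\sub B_\om\cong\R^k$, $k\ge 2$, by a simple arc. Your version makes the role of $k\ge 2$ completely transparent and replaces the paper's incidence argument for $\si((0,1))\cap M=\es$ (which the paper in fact leaves somewhat implicit) by the trivial observation that two points do not disconnect a sphere of positive dimension; the price is that you lean on the full Euclidean identification of $B_\om$ (Proposition~\ref{pro:base_euclidean}) and on the continuity of $\lift_o$, both of which are, however, already available at this point in the paper. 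One cosmetic slip: $\pi_\om(p)$ and $\pi_\om(p')$ are antipodal on the sphere of radius $|pp'|/2$, so there is no ``minor'' great-circle arc between them — but any half great circle, or indeed any simple arc, does the job.
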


\begin{proof} It follows from the property~($1_\C$)
that for each distinct $\C$-lines 
$F$, $F'\sub X_\om$
there is a unique \semi-plane
$M\sub X_\om$
that contains
$F$, $F'$,
and if distinct \semi-planes
$M$, $M'\sub X_\om$
intersect, then their intersection
$M\cap M'$
is a unique common $\C$-line.

Let
$F$, $F'\sub X_\om$
be the $\C$-lines
through
$p$, $p'$
respectively. Since the dimension of the base
$B_\om$
is at least 2, and every \semi-plane in
$X_\om$
projects down to a line in the base, there is a point
$x\in X_\om\sm M$.
Let
$F''$
be the $\C$-line
through
$x$.
We denote with
$M'$, $M''$
the \semi-planes
that contain the pairs
$F$, $F''$
and
$F'$, $F''$
respectively. Then
$F$, $F'\sub M$
bound a strip
$FF'\sub M$
foliated by $\C$-lines.
Similarly,
$FF''\sub M'$, $F'F''\sub M''$
are respective strips foliated by $\C$-lines.

We parameterize the set of the $\C$-lines
of
$FF''\cup F'F''$
by the segment
$[0,1]$
in the natural way,
$t\mapsto F_t$,
so that
$F_0=F$, $F_{1/2}=F''$, $F_1=F'$.
Then the $\C$-line
$F_t$
is disjoint with the \semi-plane
$M$
for all
$0<t<1$
by our construction. By the property~($1_\C$),
for each
$F_t$, $0\le t\le 1$,
there is a unique Ptolemy line
$l_t\sub X_\om$
connecting
$o$
with
$F_t$
(note that
$l_0=l=l_1$).

Now, the point
$\si(t)\in l_t$, $0\le t\le 1$,
is uniquely determined by conditions
$|o\si(t)|=|pp'|/2$,
and
$\si(t)$
lies on the subray of
$l_t$
with the vertex
$o$
that intersects
$F_t$.
In particular,
$\si(0)=p$, $\si(1)=p'$
and
$\si((0,1))\sub X_\om\sm M$.
Furthermore,
$o\si(t)$
is a subsegment of the Ptolemy line
$l_t$
in
$X_\om$.

Note that by Proposition~\ref{pro:semiplane_homeo} the map
$t\mapsto F_t$
is continuous in the sense that if
$t_i\to t$,
then
$F_{t_i} \to F_t$
pointwise. Thus continuity of the map
$\si:[0,1]\to X_\om$
follows from compactness of
$X$
and the fact that
$o\si(t)$
is the unique geodesic segment in
$X_\om$
between
$o$
and
$F_t$
for every
$t\in[0,1]$.
\end{proof}

\begin{proof}[Proof of Proposition~\ref{pro:flip_extension}] Let
$o\in l$
be the fixed point of the flip
$\phi:l\to l$,
$F\sub M$
the $\C$-line
through
$o$.
Since
$\phi:l\to l$
is an isometry, every its extension
$\ov\phi:X_\om\to X_\om$
is an isometry, the \semi-plane
$M$
is invariant under
$\ov\phi$, $\ov\phi(M)=M$,
by definition of
$M$,
and
$\ov\phi(F)=F$
because there is only one $\C$-line through
$o$.
If the restriction
$\ov\phi|F$
is the identity of
$F$
for some extension
$\ov\phi$
of
$\phi$,
then
$\ov\phi:M\to M$
is a flip.

We fix
$p\in l$, $p\neq o$,
and put
$p'=\phi(p)$.
Let
$\si:[0,1]\to X_\om$
be the path between
$p=\si(0)$
and
$p'=\si(1)$
constructed in Lemma~\ref{lem:path_outside}. We denote by
$l_t$
the Ptolemy line in
$X_\om$
that contains the segment
$o\si(t)$,
and by
$\phi_t:l\to l_t$
the isometry with
$\phi_t(o)=o$, $\phi_t(p)=\si(t)$, $t\in[0,1]$.
Then
$\phi_0=\id_l$
and
$\phi_1=\phi$.

Assume that an isometry
$\al_t:X_\om\to X_\om$
fixes the Ptolemy line
$l_t$
pointwise for some
$t\in[0,1]$
and the restriction
$\al_t|F:F\to F$
is a flip. Using the same notation
$\phi_t:X_\om\to X_\om$
for an extension of
$\phi_t:l\to l_t$
that exists by ($E_2$), we note that
$\be_t=\phi_t^{-1}\circ\al_t\circ\phi_t:X_\om\to X_\om$
preserves
$M$,
fixes the Ptolemy line
$l$
pointwise and
$\be_t:F\to F$
is a flip. Then for every extension
$\ov\phi$
of
$\phi$
that is not a flip on
$M$,
the isometry
$\ov\phi\circ\be_t:M\to M$
is a flip that extends
$\phi$.

Thus we assume that for every
$t\in[0,1]$,
every isometry
$\al_t:X_\om\to X_\om$,
that fixes the Ptolemy line
$l_t$
pointwise, restricts to the identity of
$F$, $\al_t|F=\id_F$.
The set
$$A=\set{t\in[0,1]}{$\ov\phi|F=\id_F\ \text{for every extension}\
\ov\phi_t:X_\om\to X_\om\ \text{of}\ \phi_t$}$$
is closed in
$[0,1]$
by continuity and
$0\in A$
by our assumption. We show that
$A$
is open in
$[0,1]$.
If not, then there is a sequence
$t_i\in[0,1]\sm A$
converging to some
$t\in A$,
and for every
$i$
there is an extension
$\ov\phi_i:X_\om\to X_\om$
of
$\phi_{t_i}$
such that
$\ov\phi_i|F:F\to F$
is a flip. The sequence
$\{\ov\phi_i\}$
subconverges to an isometry
$\psi:X_\om\to X_\om$
such that
$\psi|l=\phi_t$
and
$\psi|F:F\to F$
is a flip. This contradicts the condition
$t\in A$.
Thus
$A=[0,1]$
and
$\phi=\phi_1$
extends to the required flip
$\ov\phi:M\to M$.
\end{proof}

Given a Ptolemy circle
$\si\sub X$
and distinct
$\om$, $\om'\in\si$,
recall that the set 
$B_{\si,\om'}^\om$
consists of all
$x\in X_\om$
with
$b^\pm(x)\ge 0$,
where
$b^\pm:X_\om\to\R$
are the opposite Busemann functions of the Ptolemy line
$\si\sm\om\sub X_\om$
with
$b^\pm(\om')=0$,
see sect.~\ref{subsect:duality_dist_busemann}. In a Busemann 
flat Ptolemy space,
$B_{\si,\om'}^\om=H_{\si,\om'}^\om$
is the horosphere of
$\si\sm\om$
through
$\om'$.
Furthermore,  
$D_{\si,\om}^{\om'}$
is the subset in
$X_{\om'}$
which consists of all
$x$
such that
$\om$
is a closest to 
$x$
point in the geodesic line
$\si\sm\om'$
(w.r.t. the metric of
$X_{\om'}$).
By duality, see Lemma~\ref{lem:omega_closest_subset}, we have
$$B_{\si,\om'}^\om\cup\om=D_{\si,\om}^{\om'}\cup\om',$$
or in the case of a flat space by Lemma~\ref{lem:flat_duality}
$$H_{\si,\om'}^\om\cup\om=D_{\si,\om}^{\om'}\cup\om'.$$
It follows that
$$B_{\om'}^\om\cup\om=D_\om^{\om'}\cup\om',$$
where
$B_{\om'}^\om$
is the intersection of
$B_{\si,\om'}^\om$
taken over all the Ptolemy circles 
$\si$
in
$X$
containing both
$\om$, $\om'$, $D_\om^{\om'}$
is the intersection of
$D_{\si,\om}^{\om'}$
taken over all the Ptolemy circles 
$\si\sub X$
containing both
$\om$, $\om'$.
In the case of a flat space this equality takes the form
$$H_{\om'}^\om\cup\om=D_\om^{\om'}\cup\om',$$
where 
$H_{\om'}^\om=F$
is the fiber through
$\om'$
of the canonical projection
$\pi_\om:X_\om\to B_\om$.
In other words, for the fiber ($\C$-line)
$F\sub X_\om$
through
$\om'$,
every point 
$x\in F\sm\om'$
has the property that 
$\om$
is a closest point to
$x$
on the Ptolemy line 
$\si\sm\om'\sub X_{\om'}$
in the space
$X_{\om'}$
for every Ptolemy circle
$\si\sub X$
through
$\om$, $\om'$.

\begin{cor}\label{cor:selfduality} The space 
$X$
is self-dual in the sense that
$H_{\om'}^\om=D_{\om'}^\om$
for each distinct
$\om$, $\om'\in X$.
That is, every point
$x$
of the $\C$-line
$F\sub X_\om$
through
$\om'$
has the property that 
$\om'$
is a closest to
$x$
point on every Ptolemy line in
$X_\om$
through
$x$,
and vice versa.
\end{cor}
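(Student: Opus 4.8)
The plan is to reduce the statement to the single geometric fact that the two canonical fibrations $\pi_\om$ and $\pi_{\om'}$ fit together, and then to establish that fact by means of a space inversion combined with a connectedness argument. First I would record the dual description of the distance set $D_{\om'}^\om$. Applying Lemma~\ref{lem:flat_duality} to every Ptolemy circle $\si$ through $\om$, $\om'$ with the roles of $\om$ and $\om'$ interchanged, and intersecting over all such $\si$, yields $H_\om^{\om'}\cup\om'=D_{\om'}^\om\cup\om$, where $H_\om^{\om'}\sub X_{\om'}$ is the $\C$-line (fiber of $\pi_{\om'}$) through $\om$. Since $D_{\om'}^\om$ contains $\om'$ but not $\om$, the same being true of $H_{\om'}^\om$, this shows that the asserted equality $H_{\om'}^\om=D_{\om'}^\om$ is equivalent to
$$H_{\om'}^\om\cup\om=H_\om^{\om'}\cup\om'.$$
In other words, everything reduces to proving that the compactified $\C$-line through $\om'$ in $X_\om$ and the compactified $\C$-line through $\om$ in $X_{\om'}$ are one and the same subset $C\sub X$; equivalently, that a point $y\neq\om,\om'$ lies in the $\pi_\om$-fiber of $\om'$ if and only if it lies in the $\pi_{\om'}$-fiber of $\om$.

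Next I would bring in a space inversion $\phi=\phi_{\om,\om',S}$, which exists by property~(I). As in the proof of Lemma~\ref{lem:slope_symmetry}, for a suitable sphere $S$ the map $\phi$ is an isometry $X_\om\to X_{\om'}$ that interchanges $\om$ and $\om'$ and preserves every Ptolemy circle through $\om,\om'$. By Lemma~\ref{lem:induced_base_map} it intertwines the two fibrations, so it carries the $\C$-line $H_{\om'}^\om$ (through $\om'$) onto the $\C$-line through $\phi(\om')=\om$, that is $\phi(H_{\om'}^\om)=H_\om^{\om'}$. Writing $C_1=H_{\om'}^\om\cup\om$ and $C_2=H_\om^{\om'}\cup\om'$ we thus obtain $\phi(C_1)=C_2$, and both $C_1$ and $C_2$ are topological circles passing through $\om$ and $\om'$.

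The final step is to upgrade this set-level correspondence to genuine equality. Since $C_1$ and $C_2$ are both circles, a single inclusion already forces $C_1=C_2$. The homothety group $\Ga_{\om,\om'}$ fixes $\om,\om'$ and preserves each fibration, hence preserves \emph{both} $C_1$ and $C_2$; moreover, by the homothety property~(H) (Proposition~\ref{pro:homothety_property}) it acts transitively on each of the two open arcs of $C_1\sm\{\om,\om'\}$, and likewise on those of $C_2$. Consequently, if $C_1\cap C_2$ contains even one point $w\neq\om,\om'$, then the whole arc of $C_1$ through $w$ lies in $C_2$, and together with $\{\om,\om'\}\sub C_1\cap C_2$ this gives $C_1\sub C_2$, whence $C_1=C_2$ and the corollary follows. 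This is in the spirit of the clopen arguments of Lemma~\ref{lem:simply_transitivity_shifts} and Proposition~\ref{pro:nonintegrable_canonical_distribution}.

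The main obstacle is therefore to produce such a common point, equivalently to rule out that $C_1$ and $C_2$ meet only at $\om,\om'$. Here the symmetry $\phi$ is of no help by itself, since it merely \emph{exchanges} $C_1$ and $C_2$ and has no fixed points; what is needed is a first-order comparison of the two fibrations along the shared circle. I would obtain this by passing to the chart in which $\om'$ is infinitely remote and analyzing $C_1$ and $C_2$ near $\om'$: using the description of $\C$-lines as closest-point sets coming from the duality (Lemma~\ref{lem:flat_duality}), the fact that a $\C$-line is tangent to no Ptolemy line (Corollary~\ref{cor:tangent_unique}), and the isometric identification provided by $\phi$, one checks that $C_1$ and $C_2$ share their tangent direction at $\om'$. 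Combined with the $\Ga_{\om,\om'}$-invariance and the transitive scaling above, this tangential coincidence should force the two $\Ga_{\om,\om'}$-orbit circles to coincide. Making this first-order matching rigorous — genuinely controlling how $\pi_\om$ and $\pi_{\om'}$ agree to first order, rather than invoking the bare symmetry $\phi$ — is the crux of the proof.
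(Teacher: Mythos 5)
Your reduction via duality is sound: interchanging $\om$ and $\om'$ in Lemma~\ref{lem:flat_duality} and intersecting over all circles does give $H_\om^{\om'}\cup\om'=D_{\om'}^\om\cup\om$, so the corollary is indeed equivalent to the equality of the two $\C$-circles $C_1=H_{\om'}^\om\cup\om$ and $C_2=H_\om^{\om'}\cup\om'$; this is exactly how the paper closes its own proof. The problem is that the substantive half of your argument is missing. You acknowledge that the space inversion only \emph{exchanges} $C_1$ and $C_2$, and you defer the production of a common point $w\neq\om,\om'$ to an unspecified ``first-order comparison'' of the two fibrations at $\om'$ --- but that comparison is precisely the content of the corollary, and nothing in the paper up to this point defines a tangent direction to a $\C$-circle or lets you match such directions. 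In addition, your clopen step misuses property (H): Proposition~\ref{pro:homothety_property} gives transitivity of $\Ga_{\om,\om'}$ on arcs of \emph{Ptolemy} circles through $\om,\om'$, whereas $C_1$ is a $\C$-circle (which by Corollary~\ref{cor:3c_4c} is never a Ptolemy circle). To get transitivity on the arcs of $C_1\sm\{\om,\om'\}$ you would additionally need that a homothety fixing $\om'$ preserves each component of $F\sm\om'$ and that distance from $\om'$ is injective on each component; the latter is only available after Proposition~\ref{pro:euclid_square_fiber}, which comes later.

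The paper's actual argument is shorter and purely local: fix $x\in H_{\om'}^\om$ and an arbitrary Ptolemy line $l\sub X_\om$ through $\om'$; by Proposition~\ref{pro:flip_extension} the flip of $l$ at $\om'$ extends to an isometry of $X_\om$ which is a flip on the \semi-plane $M_l$, and (by the construction in that proposition) restricts to the identity on the $\C$-line through $\om'$, hence fixes $x$. If $z\in l$ is a closest point to $x$, then so is its image $z'$ under the flip, and convexity of $d(x,\cdot)$ along $l$ makes the whole segment $zz'\ni\om'$ consist of closest points. This gives $H_{\om'}^\om\sub D_{\om'}^\om$ directly, and your duality identity plus the $\om\leftrightarrow\om'$ symmetry then upgrades the inclusion to equality. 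I would suggest replacing your orbit/tangency scheme by this flip-and-convexity argument; without it, the proposal does not constitute a proof.
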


\begin{proof} We show that 
$H_{\om'}^\om\sub D_{\om'}^\om$.
We fix
$x\in H_{\om'}^\om$
and let 
$l\sub X_\om$
be a Ptolemy line through
$\om'$.
There is
$z\in l$
closest to
$x$
among all the points of
$l$.
By Proposition~\ref{pro:flip_extension} there is an isometry
$\phi:X_\om\to X_\om$
with 
$\phi(x)=x$
that flips
$l$
at
$\om'$.
Then 
$z'=\phi(z)\in l$
is also closest to
$x$.
By convexity of the distance function from
$x$
along
$l$, 
the segment
$zz'\sub l$
consists of points closest to
$x$.
However,
$\om'\in zz'$.
Hence
$H_{\om'}^\om\sub D_{\om'}^\om$.
By duality,
$D_{\om'}^\om\cup\om=H_\om^{\om'}\cup\om'$.
Thus
$H_{\om'}^\om\cup\om\sub H_\om^{\om'}\cup\om'$.
Interchanging
$\om$
with
$\om'$,
we obtain 
$H_{\om'}^\om\cup\om=H_\om^{\om'}\cup\om'=D_{\om'}^\om\cup\om$,
i.e.
$H_{\om'}^\om=D_{\om'}^\om$,
and
$X$
is self-dual. 
\end{proof}

We put
$\wh F=F\cup\om$
for every fiber
$F$
of
$\pi_\om$.
Then
$\wh F$
is a compact subset of
$X$
called a {\em $\C$-circle}. Since
$F$
is homeomorphic to
$\R$, $\wh F$
is homeomorphic to
$S^1$.
It follows from Lemma~\ref{lem:induced_base_map} 
that the collection of all the $\C$-circles in
$X$
is invariant under any M\"obius automorphism of
$X$.

\begin{lem}\label{lem:C-circles}
For every $\C$-circle
$\wh F\sub X$
and every point
$\om\in\wh F$,
the set
$F=\wh F\sm\om$
is a fiber of the fibration 
$\pi_\om:X_\om\to B_\om$.
\end{lem}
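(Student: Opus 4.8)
The plan is to read the statement off from the self-duality of $X$ proved in Corollary~\ref{cor:selfduality}, which already packages the symmetry between $\pi_\om$ and $\pi_{\om'}$ that the lemma asserts. First I would dispose of the degenerate case $\om'=\om$: there $\wh F\sm\om=F$ is a fiber of $\pi_\om$ by the very definition of a $\C$-circle, so I may assume $\om'\neq\om$. Then $\om'\in\wh F\sm\om=F$, and since the fibers of $\pi_\om$ partition $X_\om$, the fiber $F$ is precisely the $\C$-line through $\om'$; that is, $F=H_{\om'}^\om$ in the notation of sect.~\ref{subsect:flips_semi_planes}.

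Next I would combine two identities. The first is the flat duality relation
$$H_{\om'}^\om\cup\om=D_\om^{\om'}\cup\om',$$
which holds because $X$ is Busemann flat by Lemma~\ref{lem:smooth_convex} and which follows from Lemma~\ref{lem:flat_duality} upon intersecting over all Ptolemy circles through $\om$ and $\om'$. The second is Corollary~\ref{cor:selfduality} applied with the roles of $\om$ and $\om'$ interchanged, giving $D_\om^{\om'}=H_\om^{\om'}$, where $H_\om^{\om'}$ is by definition the fiber of $\pi_{\om'}$ through $\om$. Substituting the second identity into the first yields
$$\wh F=F\cup\om=H_{\om'}^\om\cup\om=H_\om^{\om'}\cup\om'.$$
Since $H_\om^{\om'}\sub X_{\om'}$ does not contain $\om'$, I can then read off $\wh F\sm\om'=H_\om^{\om'}$, which is exactly the fiber of $\pi_{\om'}$ through $\om$, completing the argument.

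All the substance is thus carried by the duality results established earlier, and I expect no genuine obstacle beyond careful bookkeeping of the super- and subscripts: one must match $H_{\om'}^\om$ (the fiber of $\pi_\om$ through $\om'$) and $H_\om^{\om'}$ (the fiber of $\pi_{\om'}$ through $\om$) to the correct halves of the two duality equations, and keep flatness in force so that the sets $B_{\cdot}^{\cdot}$ may be replaced by the horospheres $H_{\cdot}^{\cdot}$. In fact the very chain $H_{\om'}^\om\cup\om=H_\om^{\om'}\cup\om'$ appears as an intermediate step inside the proof of Corollary~\ref{cor:selfduality}, so an alternative and even shorter route is to quote that line verbatim; either way the lemma is an immediate consequence.
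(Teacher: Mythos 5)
Your argument is correct and is essentially the paper's own proof: both rest on the chain ``fiber through the other point $=H=D$ (self-duality, Corollary~\ref{cor:selfduality}) and $D\cup\{\text{pt}\}=H\cup\{\text{pt}\}$ (flat duality)'', the only difference being that you take the defining point of $\wh F$ to be $\om$ and transfer to an arbitrary $\om'$, whereas the paper runs the same identities in the opposite direction. Your closing observation that the identity $H_{\om'}^\om\cup\om=H_\om^{\om'}\cup\om'$ already appears inside the proof of Corollary~\ref{cor:selfduality} is also accurate.
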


\begin{proof} We have
$\wh F=F'\cup\om'$
for some 
$\om'\in X$,
where
$F'$
is a fiber of the fibration
$\pi_{\om'}:X_{\om'}\to B_{\om'}$.
We suppose that
$\om'\neq\om$
since otherwise the assertion is trivial. Then
$\om\in F'$
and
$F'=H_\om^{\om'}$.
By self-duality, see Corollary~\ref{cor:selfduality},
$H_\om^{\om'}=D_\om^{\om'}$.
Using duality we obtain 
$\wh F=F'\cup\om'=H_{\om'}^\om\cup\om$.
Thus
$F=\wh F\sm\om=H_{\om'}^\om$
is the fiber of
$\pi_\om$
through 
$\om'$.
\end{proof}

\begin{cor}\label{cor:3c_4c} In addition to ($1_\C$) and ($2_\C$),
see Theorem~\ref{thm:basic_ptolemy}, the space 
$X$
has the following basic properties
\begin{itemize}
 \item[($3_\C$)] through any two distinct points in
$X$
there is a unique $\C$-circle;
 \item[($4_\C$)] any $\C$-circle and any  Ptolemy circle in
$X$
have at most two points in common.
\end{itemize}
\end{cor}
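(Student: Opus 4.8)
The plan is to derive both statements directly from Lemma~\ref{lem:C-circles}, which says that for any point $\om$ lying on a $\C$-circle $\wh F$, the set $F=\wh F\sm\om$ is a fiber of the fibration $\pi_\om:X_\om\to B_\om$. For $(3_\C)$ I would first settle existence: given distinct $x$, $y\in X$, take $\om=x$ and let $F$ be the fiber of $\pi_x:X_x\to B_x$ through $y$, which is well defined since $y\in X_x$ and the fibers partition $X_x$ (Lemma~\ref{lem:fiber_def}); then $\wh F=F\cup x$ is a $\C$-circle containing both $x$ and $y$. For uniqueness, suppose $\wh F_1$, $\wh F_2$ are $\C$-circles through $x$ and $y$. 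Setting $\om=x$, Lemma~\ref{lem:C-circles} turns $F_1=\wh F_1\sm x$ and $F_2=\wh F_2\sm x$ into fibers of $\pi_x$, and both contain $y$. Since distinct fibers are disjoint (Lemma~\ref{lem:fiber_def}), we get $F_1=F_2$, hence $\wh F_1=\wh F_2$.

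For $(4_\C)$ I would argue by contradiction. Suppose a $\C$-circle $\wh F$ and a Ptolemy circle $\si$ have three distinct common points $\om$, $\om'$, $\om''$. Choosing $\om$ as the base point, Lemma~\ref{lem:C-circles} makes $F=\wh F\sm\om$ a fiber of $\pi_\om$, while $\si_\om=\si\sm\om$ is a Ptolemy line in $X_\om$; the remaining points $\om'$, $\om''$ lie both in $F$ and on $\si_\om$. The key observation is that $F$ coincides with the fiber $F_{\om'}=\bigcap_{l\ni\om'}H_l$ through $\om'$ (see sect.~\ref{sect:fibration}), where the intersection runs over all Ptolemy lines $l\sub X_\om$ through $\om'$. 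As $\si_\om$ is one such line, $F\sub H_{\si_\om}$, the horosphere of $\si_\om$ through $\om'$. But a Busemann function of $\si_\om$ is strictly monotone along $\si_\om$ (its slope with itself is $-1$), so $\si_\om$ meets $H_{\si_\om}$ only at $\om'$; thus $\si_\om\cap F=\{\om'\}$, contradicting $\om''\in\si_\om\cap F$. Hence $\wh F$ and $\si$ share at most two points.

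I do not expect a serious obstacle: once Lemma~\ref{lem:C-circles} and the self-duality of Corollary~\ref{cor:selfduality} are in place, the corollary is a repackaging of the elementary fact that a Ptolemy line meets any fiber (hence any horosphere containing it) in at most one point. The only point requiring care is bookkeeping with the infinitely remote point, namely choosing the shared point $\om$ consistently as the base so that $F$ genuinely becomes a fiber of $\pi_\om$ and $\si_\om$ genuinely becomes a Ptolemy line; after that the horosphere monotonicity argument closes both the uniqueness in $(3_\C)$ and the intersection bound in $(4_\C)$ with no further input.
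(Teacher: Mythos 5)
Your proof is correct and follows essentially the same route as the paper: ($3_\C$) comes from Lemma~\ref{lem:C-circles} together with the fact that fibers of $\pi_x$ partition $X_x$, and ($4_\C$) reduces, after passing to $X_\om$ for a common point $\om$, to the observation that a Ptolemy line meets the fiber through one of its points at most once because that fiber lies in the line's own horosphere. The paper phrases this last step by citing Lemma~\ref{lem:geoconvex_horosphere}, but the content is identical to your monotonicity argument.
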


\begin{proof} ($3_\C$). Given distinct
$\om$, $\om'\in X$,
let
$F\sub X_\om$
be the fiber of
$\pi_\om:X_\om\to B_\om$
through
$\om'$.
Then
$\wh F=F\cup\om$
is a $\C$-circle through
$\om$, $\om'$.
By Lemma~\ref{lem:C-circles},
$F'=\wh F\sm\om'$
is the fiber of
$\pi_{\om'}:X_{\om'}\to B_{\om'}$
through
$\om$.
Since the fibers of
$\pi_\om$, $\pi_{\om'}$
through given points are uniquely determined, it follows that
$\wh F$
is a unique $\C$-circle through
$\om$, $\om'$.

($4_\C$). Let
$\om\in\wh F\cap\si$
be a common point of a $\C$-circle
$\wh F\sub X$
and a Ptolemy circle
$\si\sub X$.
Then 
$l=\si\sm\om\sub X_\om$
is a Ptolemy line,
and by Lemma~\ref{lem:C-circles},
$F=\wh F\sm\om$
is a fiber of 
$\pi_\om:X_\om\to B_\om$.
It follows from the definition of fibers of
$\pi_\om$
and Lemma~\ref{lem:geoconvex_horosphere} that
$F$
and
$l$
have at most one point in common. Hence, the claim. 
\end{proof}

\subsection{Both foliations of \semi-planes are equidistant}
\label{subsect:both_foliation_equidistant}

We already know that the foliation of a \semi-plane
$M\sub X_\om$
by $\C$-lines is equidistant. Now, Ptolemy lines
$l$, $l'$
in 
$M$
are called {\em equidistant}
if the distance between
$x=l\cap F$
and
$x'=l'\cap F$
is independent of a
$\C$-line
$F\sub M$.

\begin{lem}\label{lem:equidist_lines} All the Ptolemy lines in every
\semi-plane
$M\sub X$
are pairwise equidistant.
\end{lem}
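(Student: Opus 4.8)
The plan is to reduce the statement to the case of two \emph{Busemann parallel} lines and then to exploit the reflective symmetries supplied by Proposition~\ref{pro:flip_extension}, whose crucial feature is that they fix a whole $\C$-line pointwise. So let $l$, $l'\sub M$ be two Ptolemy lines; I would first check that both are Busemann parallel to the line defining $M$, hence to each other. Indeed, fixing $l'$ and a point $x_0\in l'$, Corollary~\ref{cor:busparallel_foliation} gives a Ptolemy line $h$ through $x_0$ Busemann parallel to the defining line, and $h\sub M$ meets every $\C$-line of $M$ by Lemma~\ref{lem:rfoliation_semik}. For a second point $y\in l'$ the fiber $G$ through $y$ is a $\C$-line of $M$, and both $l'$ and $h$ pass through $x_0$ and meet $G$; the uniqueness clause of property ($1_\C$) then forces $l'=h$. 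Thus every Ptolemy line of $M$ is Busemann parallel to $l$.

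Next I would set up synchronized parameterizations. Fix a unit speed parameterization $c:\R\to l$ and, for each $t$, let $F_t\sub M$ be the $\C$-line through $c(t)$ and $c'(t)=l'\cap F_t$ the unique intersection point. Since $\pi_\om$ restricts to an isometry on every Ptolemy line of $M$ (Proposition~\ref{pro:base_metric}) and sends $c(t)$ and $c'(t)$ to the same point of $\pi_\om(M)$, the map $c':\R\to l'$ is again unit speed. Writing $\mu(t)=|c(t)c'(t)|$, the assertion to be proved is exactly that $\mu$ is constant.

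The key step is, for each $p\in\R$, to analyse the flip $\phi_p:X_\om\to X_\om$ furnished by Proposition~\ref{pro:flip_extension} whose restriction to $l$ is the reflection $c(t)\mapsto c(2p-t)$ about $c(p)$: it is an isometry preserving $M$ and restricting to the identity on the $\C$-line $F_p$ through its centre. As an isometry of $X_\om$ carries $\C$-lines to $\C$-lines and $\phi_p(c(t))=c(2p-t)$, I obtain $\phi_p(F_t)=F_{2p-t}$. Moreover $\phi_p(l')$ is a Ptolemy line Busemann parallel to $\phi_p(l)=l$ passing through the fixed point $c'(p)=l'\cap F_p$, so $\phi_p(l')=l'$ by the uniqueness in Corollary~\ref{cor:busparallel_foliation}. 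Combining these facts,
\[
\phi_p(c'(t))=\phi_p(l')\cap\phi_p(F_t)=l'\cap F_{2p-t}=c'(2p-t).
\]

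Since $\phi_p$ is an isometry, these identities yield $\mu(2p-t)=|\phi_p(c(t))\,\phi_p(c'(t))|=\mu(t)$ for all $t$ and all $p$; that is, $\mu$ is symmetric about every point of its domain. Taking $t=0$ and $p=s/2$ gives $\mu(s)=\mu(0)$ for every $s$, so $\mu$ is constant and $l$, $l'$ are equidistant. The genuine content is already located in Proposition~\ref{pro:flip_extension}, namely the nontrivial fact that a flip may be arranged to fix its central $\C$-line pointwise; I expect the only remaining points needing care to be the reduction to Busemann parallel lines through property ($1_\C$) and the verification that $\phi_p$ stabilizes $l'$, after which the symmetry-about-every-point of $\mu$ makes the conclusion immediate.
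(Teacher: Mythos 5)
Your proof is correct, but it takes a genuinely different route from the paper's. The paper first derives self-duality (Corollary~\ref{cor:selfduality}) from Proposition~\ref{pro:flip_extension}, uses it to see that the synchronized points $c(t)\in l$, $c'(t)\in l'$ are mutually closest points on the opposite line, and then applies the Ptolemy inequality to the quadruple $(c(t),c(t'),c'(t'),c'(t))$ to obtain the second-order bound $|\mu(t)-\mu(t')|\le m(t-t')^2$; constancy of $\mu$ then follows by subdividing an interval into $k$ equal pieces and letting $k\to\infty$. You bypass both the self-duality step and the quantitative estimate: the flip at $c(p)$ fixes the central $\C$-line $F_p$ pointwise, hence fixes $c'(p)$, hence stabilizes $l'$ by uniqueness of the Busemann parallel through a point (Corollary~\ref{cor:busparallel_foliation}), and so $\mu$ is symmetric about every parameter value and therefore constant. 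All the ingredients you invoke are available at this point (the reduction to Busemann parallel lines via Lemma~\ref{lem:rfoliation_semik} and the uniqueness clause of ($1_\C$), the fact that isometries fixing $\om$ permute fibers via Lemma~\ref{lem:induced_base_map}), and your reading of Proposition~\ref{pro:flip_extension} as producing an extension that fixes the central $\C$-line pointwise is exactly how the paper itself uses it (e.g.\ in Corollary~\ref{cor:selfduality} and Lemma~\ref{lem:rectangle_side_length}). Your argument is shorter and purely symmetry-based; the paper's analytic route costs more but its main byproduct, self-duality, is needed elsewhere anyway, so neither approach saves work globally.
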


\begin{proof} Let
$l$, $l'\sub M$
be Ptolemy lines,
$c$, $c':\R\to M$
unit speed parameterizations of
$l$, $l'$
respectively with 
$c(0)$, $c'(0)\in F$,
where 
$F\sub M$
is a $\C$-line. We assume using equidistant property of
$\C$-lines that 
$c$, $c'$
are compatible in the sense that the points
$c(t)$, $c'(t)$
lie in one and the same $\C$-line 
$F_t\sub M$
for every
$t\in\R$.
We put
$\mu(t)=|c(t)c'(t)|$.
By self-duality,
$c'(t)$
is a closest to
$c(t)$
point on
$l'$,
and vice versa,
$c(t)$
is a closest to
$c'(t)$
point on
$l$
for every
$t\in\R$.
Thus
$|c(t)c'(t')|$, $|c'(t)c(t')|\ge\max\{\mu(t),\mu(t')\}$
for each
$t$, $t'\in\R$.
Applying the Ptolemy inequality to the quadruple
$(c(t),c(t'),c'(t'),c'(t))$,
we obtain 
$$\max\{\mu(t),\mu(t')\}^2\le|c(t)c'(t')||c'(t)c(t')|
  \le\mu(t)\mu(t')+(t-t')^2.$$
We show that
$\mu(a)=\mu(0)$
for every
$a\in\R$.
Assume W.L.G. that
$a>0$
and put
$m=1/\min_{0\le s\le a}\mu(s)$.
Then
$|\mu(t)-\mu(t')|\le m(t-t')^2$
for each
$0\le t,t'\le a$.
Now
$$\mu(a)-\mu(0)=\mu(s)-\mu(0)+\mu(2s)-\mu(s)+\dots+\mu(a)-\mu((k-1)s),$$
where
$s=a/k$
for 
$k\in\N$.
It follows
$|\mu(a)-\mu(0)|\le mks^2=ma^2/k\to 0$
as
$k\to\infty$.
Hence,
$\mu(a)=\mu(0)$.
\end{proof}

\begin{lem}\label{lem:rectangle_side_length} For any
$x,y\in F$, $x',y'\in F'$,
where
$F$, $F'$
are
$\C$-lines
in a \semi-plane
$M$,
such that
$x,x'\in l$, $y,y'\in l'$,
where
$l$, $l'\sub M$
are
Ptolemy lines,
we have
$|xy|=|x'y'|=:a$, $|xx'|=|yy'|=:b$,
$|xy'|=|x'y|=:c$,
and
$c^2\le a^2+b^2$.
\end{lem}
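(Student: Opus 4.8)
The plan is to read off the two side-equalities directly from the two equidistance properties of the foliations of $M$, to obtain the diagonal equality from a carefully chosen flip, and then to extract the inequality from a single application of the Ptolemy inequality.

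First I would record the forced identifications $x=l\cap F$, $x'=l\cap F'$, $y=l'\cap F$, $y'=l'\cap F'$ (the degenerate cases $F=F'$ or $l=l'$ being trivial). The equality $|xy|=|x'y'|$ is then precisely the statement that the Ptolemy lines $l$, $l'$ are equidistant, Lemma~\ref{lem:equidist_lines}: the distance between $l\cap F$ and $l'\cap F$ is independent of the $\C$-line $F\sub M$. Dually, $|xx'|=|yy'|$ is the equidistance of the $\C$-lines $F$, $F'$ furnished by Lemma~\ref{lem:rfoliation_semik}, since the segments $xx'\sub l$ and $yy'\sub l'$ are exactly the segments of the two Ptolemy lines between $F$ and $F'$. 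This settles $a$ and $b$.

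For the diagonal equality $|xy'|=|x'y|$ I would exhibit an isometry of $X_\om$ that interchanges $x\leftrightarrow x'$ and $y\leftrightarrow y'$. Let $m\in l$ be the midpoint of $xx'$ and let $\phi$ be the flip of $l$ fixing $m$; by Proposition~\ref{pro:flip_extension} it extends to an isometry of $X_\om$ which is a flip on $M$, that is, it preserves each half-plane of $M$ bounded by $l$. As an isometry of $X_\om$ fixing $\om$ it maps Ptolemy lines to Ptolemy lines and, by Lemma~\ref{lem:induced_base_map}, $\C$-lines to $\C$-lines, so it permutes both foliations of $M$. In the coordinates $\psi:l\times F\to M$ of Proposition~\ref{pro:semiplane_homeo} (first coordinate naming the $\C$-line by its trace on $l$, second naming the Ptolemy line by its trace on $F$), the flip reverses the first coordinate about $m$, whence $\phi(F)=F'$ and $\phi(F')=F$. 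The crucial point is that $\phi$ fixes every Ptolemy line of $M$ setwise: being half-plane preserving, the involution it induces on the second coordinate fixes the value corresponding to $l$ and preserves the order of $F$, and an orientation-preserving involution of $\R$ with a fixed point is the identity. Hence $\phi$ acts as $(t,v)\mapsto(b-t,v)$, so $\phi(x)=x'$ and $\phi(y')=y$, giving $|xy'|=|\phi(x)\phi(y')|=|x'y|=:c$.

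Finally I would apply the Ptolemy inequality to the quadruple $(x,x',y',y)$. Its cross-ratio triple is
$\crt(x,x',y',y)=\bigl(|xx'|\,|y'y|:|xy'|\,|x'y|:|xy|\,|x'y'|\bigr)=(b^2:c^2:a^2)\in\De$,
and the triangle inequality satisfied by the middle entry of a point of $\De$ yields $c^2\le a^2+b^2$. I expect the main obstacle to be the diagonal step, specifically the verification that $\phi$ does not shear the Ptolemy foliation; this is exactly where the half-plane–preserving clause in the definition of a flip (and thus in Proposition~\ref{pro:flip_extension}) does the essential work, forcing the induced involution of the Ptolemy-line parameter to be trivial.
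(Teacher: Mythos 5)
Your proposal is correct and follows essentially the same route as the paper: the two equidistance lemmas give $a$ and $b$, the flip supplied by Proposition~\ref{pro:flip_extension} (at the midpoint of $xx'$) gives $|xy'|=|x'y|$, and one application of the Ptolemy inequality to the quadruple $(x,x',y',y)$ gives $c^2\le a^2+b^2$. The only difference is that you spell out in detail why the flip fixes each Ptolemy line of $M$ setwise and hence permutes the four points as required, a verification the paper leaves implicit.
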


\begin{proof} We have
$|xx'|=|yy'|$
because $\C$-lines in
$M$
are equidistant, and
$|xy|=|x'y'|$
because Ptolemy lines in
$M$
are also equidistant. By Proposition~\ref{pro:flip_extension}, 
there exists a flip
$\phi:M\to M$
that permutes
$x$, $y$
and
$x'$, $y'$
respectively,
$\phi(x)=x'$, $\phi(y)=y'$.
Thus
$|xy'|=|x'y|$.
Applying the Ptolemy inequality, we obtain
$c^2\le a^2+b^2$.
\end{proof}

For any two fibers
$F_b$, $F_{b'}$
of
$\pi_\om$,
by ($1_\C$) and Lemma~\ref{lem:equidist_lines}, we have
the canonically determined isometry
$\mu_{bb'}:F_b\to F_{b'}$.

\begin{lem}\label{lem:continuity_project_fiber} The isometries
$\mu_{bb'}:F_b\to F_{b'}$
depend continuously of
$b$, $b'\in B_\om$,
that is, for
$b_i\to b'$
and for any
$x\in F_b$,
we have
$\mu_{bb_i}(x)\to\mu_{bb'}(x)$.
\end{lem}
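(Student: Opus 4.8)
The plan is to make the map $\mu_{bb'}$ completely explicit and then run a compactness argument. By construction (property ($1_\C$) together with Lemma~\ref{lem:equidist_lines}), for $x\in F_b$ the point $\mu_{bb'}(x)$ is $l_x\cap F_{b'}$, where $l_x\sub X_\om$ is the \emph{unique} Ptolemy line through $x$ that meets $F_{b'}$; indeed the Ptolemy line through $x$ in the \semi-plane spanned by $F_b,F_{b'}$ meets $F_{b'}$, and by ($1_\C$) it is the only such line. Writing $x'=\mu_{bb'}(x)$ and $x_i=\mu_{bb_i}(x)$, and letting $l_i$ be the unique Ptolemy line through $x$ meeting $F_{b_i}$ (so $x_i=l_i\cap F_{b_i}$), I would prove $x_i\to x'$ by showing that the sequence $(x_i)$ is bounded and that every subsequential limit equals $x'$.

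First I would establish boundedness. Since $\pi_\om$ restricted to any Ptolemy line is an isometry (Proposition~\ref{pro:base_metric}, Lemma~\ref{lem:line_dist_klines}) and $x,x_i\in l_i$ with $\pi_\om(x)=b$, $\pi_\om(x_i)=b_i$, one gets $|xx_i|=|bb_i|$. As $b_i\to b'$ these numbers converge to $|bb'|$, so the $x_i$ all lie within a fixed distance of $x$; by compactness of $X$ they stay in a compact subset of $X_\om$ and hence subconverge.

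Next I would identify the limit. Passing to a subsequence with $x_i\to\bar x$, continuity of the $1$-Lipschitz map $\pi_\om$ and $\pi_\om(x_i)=b_i\to b'$ give $\pi_\om(\bar x)=b'$, i.e. $\bar x\in F_{b'}$. Parameterize each $l_i$ by arclength with $c_i(0)=x$ and $c_i(|bb_i|)=x_i$. The curves $c_i$ are $1$-Lipschitz through the common point $x$, so by Arzel\`a--Ascoli (balls in $X_\om$ are compact) a further subsequence converges uniformly on compacta to a unit-speed curve $c$; its image $l=c(\R)$ is a Ptolemy line through $x$, since a pointwise limit of Ptolemy lines is again a Ptolemy line (as used in Lemma~\ref{lem:busparallel_sublinear}). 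Because $|bb_i|\to|bb'|$ we also have $c(|bb'|)=\lim c_i(|bb_i|)=\bar x$, so $l$ passes through $\bar x\in F_{b'}$. Thus $l$ is \emph{a} Ptolemy line through $x$ meeting $F_{b'}$; by the uniqueness clause of ($1_\C$) it coincides with $l_x$, whence $\bar x=l_x\cap F_{b'}=x'$. Since every subsequential limit is $x'$ and the sequence is bounded, $x_i\to x'$.

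The only delicate point is this identification step, and specifically the assertion that the limiting geodesic $l$ is a genuine \emph{Ptolemy} line (so that the uniqueness in ($1_\C$) may be invoked) rather than merely a geodesic line; I would settle this by citing the already-used fact that pointwise limits of Ptolemy lines are Ptolemy lines. Conceptually the whole statement is summarized by the identity $\mu_{bb'}(x)=\lift_x(b')$, so that Lemma~\ref{lem:continuity_project_fiber} amounts to the continuity of the lift map $\lift_x\colon B_\om\to X_\om$ recorded in Section~\ref{subsect:nonintegrability}; the argument above is a self-contained proof of that continuity in the present situation.
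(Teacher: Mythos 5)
Your proof is correct and is essentially the paper's argument written out in full: the paper's own proof is a three-line remark invoking exactly the same three ingredients (pointwise limits of geodesics are geodesics, compactness of $X$, and the uniqueness clause of ($1_\C$) to identify the limit line). Your extra care about the limit being a genuine Ptolemy line, and the boundedness via $|xx_i|=|bb_i|$, just make explicit what the paper leaves implicit.
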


\begin{proof} If a sequence of geodesic segments in a
metric space pointwise converges, then the limit is
also a geodesic segment. Together with
uniqueness of Ptolemy lines in
$X_\om$
and compactness of
$X$,
this implies the claim.
\end{proof}

Now, we fix an order of
$F_b$
and define the order of
$F_{b'}$
via the isometry
$\mu_{bb'}$.
This gives a simultaneously
determined order
$O$
on all the fibers of
$\pi$.

\begin{lem}\label{lem:fiber_orient} The order
$O$
is well defined and independent of the choice of
$b\in B_\om$.
\end{lem}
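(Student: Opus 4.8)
The plan is to produce a single coherent order, i.e. a choice of order on every fiber for which \emph{all} the canonical isometries $\mu_{bb'}$ are order preserving; both assertions of the lemma are then immediate. Note first that a fiber $F_b$ is homeomorphic to $\R$ (Proposition~\ref{pro:fiber_nilpotent_lie_group}), so an order on it is a topological orientation and the homeomorphism $\mu_{bb'}:F_b\to F_{b'}$ is either order preserving or order reversing. Fix a reference point $b_\ast\in B_\om$ together with an order on $F_{b_\ast}$, and set $O(b)=(\mu_{b_\ast b})_\ast O(b_\ast)$ for $b\neq b_\ast$, as in the construction preceding the lemma. For distinct $b_1,b_2$ let $\ep(b_1,b_2)\in\{+1,-1\}$ record whether $\mu_{b_1b_2}$ carries $O(b_1)$ to $O(b_2)$; thus $\ep(b_\ast,b)=+1$ for all $b$ by definition, and $\ep(b_1,b_2)=\ep(b_2,b_1)$ since $\mu_{b_2b_1}=\mu_{b_1b_2}^{-1}$. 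Coherence of $O$ is exactly the statement $\ep\equiv+1$.

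The first key step is a composition law along geodesics of $B_\om$. If $b_1,b_2,b_3$ lie on one geodesic of $B_\om$, then $F_{b_1},F_{b_2},F_{b_3}$ all lie in a single \semi-plane $M$: two distinct $\C$-lines determine a unique \semi-plane, and a \semi-plane $M_l$ is the union of all fibers lying over the geodesic $\pi_\om(l)$ (which is an affine line, $\pi_\om$ being isometric on Ptolemy lines), here the common geodesic through the $b_i$. By Lemma~\ref{lem:rfoliation_semik} the point $\mu_{b_1b_2}(x)$ lies on the unique Ptolemy line of $M$ through $x$, and by ($1_\C$) that same line carries $x$ to $F_{b_3}$; hence $\mu_{b_2b_3}\circ\mu_{b_1b_2}=\mu_{b_1b_3}$ on collinear triples. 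In particular, choosing $b_1,b_2$ distinct from $b_\ast$ on a geodesic through $b_\ast$ gives $\mu_{b_\ast b_2}=\mu_{b_1b_2}\circ\mu_{b_\ast b_1}$, so $\mu_{b_\ast b_2}^{-1}\circ\mu_{b_1b_2}\circ\mu_{b_\ast b_1}=\id$ and therefore $\ep(b_1,b_2)=+1$ for such a pair.

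The second step is continuity. By Lemma~\ref{lem:continuity_project_fiber} the family $\mu_{bb'}$ depends continuously on its arguments, so the self-homeomorphism $\mu_{b_\ast b_2}^{-1}\circ\mu_{b_1b_2}\circ\mu_{b_\ast b_1}$ of $F_{b_\ast}\cong\R$ varies continuously with $(b_1,b_2)$ on the domain $D=\set{(b_1,b_2)}{$b_1,b_2\neq b_\ast,\ b_1\neq b_2$}$; since a homeomorphism of $\R$ is strictly monotone and its monotone type is a discrete invariant that cannot jump along a continuous injective family, $\ep$ is locally constant on $D$. Now $k=\dim B_\om\ge 2$ by Corollary~\ref{cor:onedimensional_base}, so $B_\om\setminus\{b_\ast\}\cong\R^k\setminus\{0\}$ is connected, and removing the diagonal (of codimension $k\ge 2$) from its square leaves $D$ connected. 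A locally constant function on a connected space is constant, and by the collinear computation its value is $+1$. Hence $\ep\equiv+1$ on $D$; combined with $\ep(b_\ast,\cdot)\equiv+1$ this yields $\ep\equiv+1$ for all distinct pairs, i.e. every $\mu_{b_1b_2}$ preserves $O$.

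This coherence is precisely the assertion that $O$ is well defined. For independence of the reference point, if one starts instead from any $b'$ with the order $O(b')$, coherence gives $(\mu_{b'b})_\ast O(b')=O(b)$ for every $b$, so the same $O$ is produced; the only residual freedom is the binary choice of order on a single fiber, which flips $O$ globally. The main obstacle is the composition law $\mu_{b_2b_3}\circ\mu_{b_1b_2}=\mu_{b_1b_3}$ for collinear triples, which ties the two transverse foliations of a \semi-plane together and, together with $k\ge 2$, is what lets the continuity argument propagate a consistent order across all of $B_\om$.
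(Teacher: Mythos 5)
Your proof is correct and uses essentially the same mechanism as the paper: continuity of the canonical isometries $\mu_{bb'}$ (Lemma~\ref{lem:continuity_project_fiber}) together with connectedness/contractibility of the relevant parameter space forces the order-preserving/reversing sign of the holonomy to be constant. The only real difference is the anchor point — the paper evaluates the sign at a degenerate configuration, whereas you anchor it at a collinear triple via the (correct) composition law $\mu_{b_2b_3}\circ\mu_{b_1b_2}=\mu_{b_1b_3}$ inside a single \semi-plane.
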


\begin{proof} The base
$B_\om$
is contractible. Using Lemma~\ref{lem:continuity_project_fiber},
we see that the order of
$F_{b''}$
induced by
$\mu_{bb''}$
coincides with the order induced by
$\mu_{b'b''}\circ\mu_{bb'}$
for each
$b'$, $b''\in B_\om$.
Hence, the claim.
\end{proof}

Let
$G_\om$
be the group of all M\"obius automorphisms of
$X$
fixing
$\om$
and preserving the order
$O$.
Every
$\phi\in G_\om$
acts on
$X_\om$
as a homothety. By Corollary~\ref{cor:pi_submetry}, we have
a homomorphism
$\pi_\ast$
of
$G_\om$
into the group of homotheties of
$B_\om$.
We denote
$H=\pi_\ast(G_\om)$.

\section{Isometry group of $X_\om$}
Fix
$\om\in X$
and a metric from the M\"obius structure of
$X$
such that
$\om$
is the infinitely remote point. Recall that
$N_\om\sub\aut X$
is the group of shifts of
$X_\om$,
see sect.~\ref{subsect:group_shifts}.

\begin{lem}\label{lem:shift_preserve_order} Every shift
$\eta\in N_\om$
preserves the order
$O$
on $\C$-lines in
$X_\om$.
\end{lem}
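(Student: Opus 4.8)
The plan is to recognise the orientation behaviour of a shift on the $\C$-lines as a locally constant function on the connected group $N_\om$, and then to pin its value down at the identity.

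First I would record the structural properties of a shift $\eta\in N_\om$. Since $\eta$ fixes $\om$ and is a M\"obius automorphism, Lemma~\ref{lem:induced_base_map} provides an induced bijection $\ov\eta=\pi_\ast(\eta)$ of $B_\om$ with $\pi_\om\circ\eta=\ov\eta\circ\pi_\om$, so $\eta$ carries each $\C$-line $F_b$ isometrically onto $F_{\ov\eta(b)}$. As an isometry of $X_\om$ fixing $\om$ it maps Ptolemy lines to Ptolemy lines, and hence \semi-planes to \semi-planes, so it transports the Ptolemy line foliation of the \semi-plane spanned by $F_b,F_{b'}$ to that of the \semi-plane spanned by their images. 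This yields the commutation relation $\eta\circ\mu_{bb'}=\mu_{\ov\eta(b)\ov\eta(b')}\circ\eta$ for the canonical isometries of Lemma~\ref{lem:equidist_lines}. Because every $\mu_{bb'}$ is $O$-preserving by the definition of the order $O$, this relation forces $\eta$ to have the same orientation behaviour on every $\C$-line, so I may attach to $\eta$ a single sign $\ep(\eta)\in\{+1,-1\}$, equal to $+1$ exactly when $\eta$ restricts to an $O$-preserving isometry of the $\C$-lines.

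Next I would prove that $\ep:N_\om\to\{+1,-1\}$ is locally constant. Fixing an order-compatible unit speed parameterization $c:\R\to F_b$, the composite $\rho_\eta=(\mu_{b,\ov\eta(b)}\circ c)^{-1}\circ(\eta|F_b)\circ c$ is an isometry $\rho_\eta(t)=\ep(\eta)t+\be(\eta)$ of $\R$. For a sequence $\eta_i\to\eta$ in the weak topology of $N_\om$ one has $\eta_i(c(t))\to\eta(c(t))$ for each $t$, and $\ov\eta_i(b)\to\ov\eta(b)$ since $\pi_\om$ is continuous; Lemma~\ref{lem:continuity_project_fiber} then gives $\mu_{b,\ov\eta_i(b)}^{-1}\to\mu_{b,\ov\eta(b)}^{-1}$ pointwise. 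Combining these, $\rho_{\eta_i}(t)\to\rho_\eta(t)$ for every $t$, so the slopes $\ep(\eta_i)$ converge to $\ep(\eta)$; a convergent sequence in the discrete set $\{+1,-1\}$ is eventually constant, whence $\ep$ is locally constant.

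Finally I would invoke Corollary~\ref{cor:nilponent_lie_group}, by which $N_\om$ is a simply connected, hence connected, nilpotent Lie group, together with the obvious value $\ep(\id)=+1$; a locally constant function on a connected space is constant, so $\ep\equiv+1$ and every shift preserves $O$. I expect the only substantive step to be the continuity argument, where one must merge the pointwise convergence of the shifts with the continuity of the canonical fibre isometries supplied by Lemma~\ref{lem:continuity_project_fiber}; the commutation relation and the connectedness of $N_\om$ are then essentially bookkeeping on top of results already in hand.
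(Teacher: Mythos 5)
Your proof is correct, but it reaches the conclusion by a genuinely different route from the paper. The paper anchors the sign at a \emph{pure homothety}: a space inversion permuting $\om,\om'$ preserves the unique $\C$-circle $\wh F$ through them (Corollary~\ref{cor:3c_4c}), and, being a fixed-point-free involution of a topological circle, must preserve its orientation; hence the composition $\ga=\phi'\circ\phi$ preserves $O$ on the fiber through its fixed point, ``by continuity'' on all fibers, and finally shifts inherit the property as limits of such compositions. You instead anchor the sign at the identity of $N_\om$: the conjugation relation $\eta\circ\mu_{bb'}=\mu_{\ov\eta(b)\ov\eta(b')}\circ\eta$ (which follows from uniqueness in ($1_\C$) and Lemma~\ref{lem:equidist_lines}) gives a single sign $\ep(\eta)$, local constancy of $\ep$ follows from pointwise convergence plus Lemma~\ref{lem:continuity_project_fiber}, and connectedness of $N_\om$ finishes it. What your route buys is independence from the space-inversion machinery (Corollary~\ref{cor:3c_4c} and the decomposition of homotheties into inversions) and a cleaner statement of the ``by continuity'' step that the paper leaves implicit; what the paper's route buys is the stronger intermediate fact that pure homotheties -- not just shifts -- preserve $O$, which it needs elsewhere (e.g.\ Lemma~\ref{lem:homothety_vert_shift} and the displacement arguments) and which does not follow from your argument, since the group of order-preserving homotheties fixing a point need not be connected a priori.

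Two small points to tighten. First, you cannot speak of a \emph{unit speed} parameterization of a $\C$-line: at this stage the fiber metric is unknown (and by Proposition~\ref{pro:euclid_square_fiber} it is ultimately a snowflaked metric, so fibers are not rectifiable). All you need is an order-compatible homeomorphism $c:\R\to F_b$; then $\rho_\eta$ is a monotone bijection of $\R$ and its monotonicity type is detected by two sample points, which is enough for the convergence argument. Second, your local constancy is with respect to pointwise convergence, while Corollary~\ref{cor:nilponent_lie_group} gives connectedness of $N_\om$ under the identification $\eta\mapsto\eta(o)$ with $X_\om$; to reconcile the two, note that $x\mapsto\eta_{ox}$ is continuous into the weak topology by Lemma~\ref{lem:shift_identity} together with simple transitivity (Lemma~\ref{lem:simply_transitivity_shifts}), so $N_\om$ with the weak topology is a continuous image of the connected space $X_\om$ and hence connected. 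With these adjustments the argument is complete.
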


\begin{proof} Let
$\phi:X\to X$
be a space inversion that permutes
$\om$, $\om'\in X$.
By Corollary~\ref{cor:3c_4c}($3_\C$), 
$\phi$
preserves the (unique) $\C$-circle
$\wh F$
through
$\om$, $\om'$, $\phi(\wh F)=\wh F$.
Since
$\phi$
has no fixed point,
$\phi$
also preserves orientations of
$\wh F$.
Let
$\ga:X_\om\to X_\om$
be a pure homothety with 
$\ga(\om')=\om'$.
Recall that
$\ga=\phi'\circ\phi$
for some space inversions
$\phi$, $\phi'$
which both permute
$\om$, $\om'$.
It follows that 
$\ga(\wh F)=\wh F$
and
$\ga$ 
preserves the order
$O$
of the $\C$-line
$F=\wh F\sm\om$
through
$\om'$. 

By Lemma~\ref{lem:pure_homothety},
$\ga$
preserves every foliation of
$X_\om$
by Busemann parallel Ptolemy lines, thus
$\ga(F')$
is a $\C$-line for every $\C$-line
$F'\sub X_\om$.
Now, by continuity
$\ga$
preserves the order
$O$
on $\C$-lines in
$X_\om$.
Then it follows from the definition of shifts that every shift
$\eta:X_\om\to X_\om$
preserves the order
$O$.
\end{proof}

\begin{lem}\label{lem:horizontal_shift} Given a Ptolemy line
$l\sub X_\om$, $x$, $x'\in l$,
the shift
$\eta:X_\om\to X_\om$
with
$\eta(x)=x'$
preserves every Ptolemy line
$l'\sub M$, $\eta(l')=l'$,
where
$M\sub X_\om$
is the \semi-plane containing
$l$.
In particular, every isometry
$\mu_{bb'}:F_b\to F_{b'}$
between $\C$-lines
$F_b$, $F_{b'}\sub X_\om$
extends to shift
$\eta:X_\om\to X_\om$.
\end{lem}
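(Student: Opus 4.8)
The plan is to reduce everything to the \semi-plane $M=M_l$ and to exploit that $\eta$, being a shift, respects both foliations of $M$ together with the simultaneous order $O$. First I would record the equivariance of the shift $\eta=\eta_{xx'}$ (which is the \emph{unique} shift sending $x$ to $x'$, by Lemma~\ref{lem:simply_transitivity_shifts}). Since $x'\in l$, the line $l$ through $x$ is sent by $\eta$ to the Busemann parallel line through $x'=\eta(x)$; uniqueness of Busemann parallel lines (Corollary~\ref{cor:busparallel_foliation}) gives $\eta(l)=l$, and because a shift preserves the orientation of each Busemann parallel foliation, $\eta|_l$ is an orientation-preserving isometry of $l\cong\R$, i.e. a translation, say by displacement $\delta$. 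Next, exactly as in the proof of Lemma~\ref{lem:vert_shift}, $b\circ\eta=b+c_b$ for every Busemann function $b$, so $\eta$ carries horospheres to horospheres and hence carries the $\C$-line through any $p$ to the $\C$-line through $\eta(p)$. Combined with $\eta(l)=l$, this shows $\eta$ maps each $\C$-line of $M$ (those meeting $l$) to a $\C$-line of $M$, so $\eta(M)=M$.

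For the main assertion I would argue pointwise. Fix a Ptolemy line $l'\sub M$. By Lemma~\ref{lem:equidist_lines} the lines $l,l'$ are equidistant, so on every $\C$-line $F\sub M$ the points $p_F=l\cap F$ and $q_F=l'\cap F$ lie at a fixed distance $c=\dist(l,l')$. Index the $\C$-lines of $M$ by $\beta=p_F\in l$, writing $F^\beta$ and $q^\beta=l'\cap F^\beta$. Using the order $O$, I would record the signed position of $q^\beta$ relative to $\beta$ on $F^\beta$; since $\beta\mapsto\beta$ and $\beta\mapsto q^\beta$ parametrize $l,l'$ continuously and $O$ varies continuously and consistently (Lemma~\ref{lem:continuity_project_fiber}, Lemma~\ref{lem:fiber_orient}), this signed position is a continuous $\{\pm c\}$-valued function, hence constant; say $q^\beta$ is always $+c$ above $\beta$. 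Now invoke Lemma~\ref{lem:shift_preserve_order}: $\eta$ preserves $O$, so the order-preserving isometry $\eta|_{F^\beta}\colon F^\beta\to F^{\beta+\delta}$ sends $q^\beta$, lying $+c$ above $\beta$, to the unique point lying $+c$ above $\eta(\beta)=\beta+\delta$, namely $q^{\beta+\delta}\in l'$. Thus $\eta(q^\beta)=q^{\beta+\delta}$ for all $\beta$, and since $\{q^\beta:\beta\in l\}=l'$ and $\beta\mapsto\beta+\delta$ is onto $l$, I conclude $\eta(l')=l'$.

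The \emph{in particular} clause is then immediate. Given distinct fibers $F_b,F_{b'}$, pick $x\in F_b$ and set $x'=\mu_{bb'}(x)$; by ($1_\C$) there is a unique Ptolemy line $l$ through $x$ meeting $F_{b'}$, it passes through $x'$, and $F_b,F_{b'}\sub M=M_l$. For $\eta=\eta_{xx'}$ one has $\eta(F_b)=F_{b'}$, and for any $y\in F_b$ the Ptolemy line $l_y\sub M$ through $y$ is preserved by the first part, so $\eta(y)\in\eta(l_y)\cap\eta(F_b)=l_y\cap F_{b'}=\mu_{bb'}(y)$; hence $\eta|_{F_b}=\mu_{bb'}$.

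The one genuinely delicate point is excluding the orientation-reversing alternative $\eta(q^\beta)=q^{\beta-\delta}$, i.e. that $\eta$ might flip each $\C$-line; this is precisely what the order-preservation Lemma~\ref{lem:shift_preserve_order}, together with the continuity and global consistency of $O$ (Lemmas~\ref{lem:continuity_project_fiber} and~\ref{lem:fiber_orient}), is designed to rule out. Everything else is formal once $\eta(l)=l$ and the $\C$-line equivariance of $\eta$ are established.
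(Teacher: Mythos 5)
Your argument is correct and follows essentially the same route as the paper: establish $\eta(M)=M$, then combine the equidistance of Ptolemy lines in $M$ (Lemma~\ref{lem:equidist_lines}) with order-preservation of shifts (Lemma~\ref{lem:shift_preserve_order}) to pin down where $\eta$ sends points of $l'$. The only cosmetic difference is that the paper transports a single point $y=l'\cap F$ to $y'=l'\cap F'$ and then invokes uniqueness of the Ptolemy line of $M$ through $y'$, whereas you transport every point of $l'$ directly; both work, and your treatment of the ``in particular'' clause is the obvious one the paper leaves implicit.
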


\begin{proof} Let
$F$, $F'\sub X_\om$
be the $\C$-lines
through
$x$, $x'$
respectively. Then
$F$, $F'\sub M$.
We put
$y=l'\cap F$, $y'=l'\cap F'$
and note that
$|xy|=|x'y'|$
by Lemma~\ref{lem:equidist_lines}.

We have
$\eta(M)=M$,
thus
$\eta(l')\sub M$.
By Lemma~\ref{lem:shift_preserve_order},
$\eta$
preserves the order
$O$,
thus
$\eta(y)=y'$
and
$y'\in\eta(l')$.
Since the Ptolemy line in
$M$
through
$y'$
is unique, we obtain
$\eta(l')=l'$. 
\end{proof}

\subsection{Isometries of $\C$-lines}
\label{subsect:iso_clines}

A shift
$\eta:X_\om\to X_\om$
is said to be {\em vertical} if it induces the identity
of the base 
$B_\om$, $\pi_\ast(\eta)=\id$.

\begin{pro}\label{pro:const_displacement_cline} The displacement
function of every vertical shift
$\eta:X_\om\to X_\om$
is constant.
\end{pro}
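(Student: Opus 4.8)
The plan is to show that the displacement function $x\mapsto|x\eta(x)|$ of a vertical shift $\eta$ is constant along every Ptolemy line and along every $\C$-line, and then to connect arbitrary points of $X_\om$ by chains of such lines. First I would record the structural features of a vertical shift. Since $\eta$ induces the identity on $B_\om$, it preserves every $\C$-line (fiber of $\pi_\om$), while by Lemma~\ref{lem:shift_busemann_parallel} it carries every Ptolemy line $l$ to a Busemann parallel Ptolemy line $\eta(l)$. Fixing such an $l$ and the \semi-plane $M=M_l$ over it, each $\C$-line $F$ meeting $l$ satisfies $\eta(F)=F$; hence $\eta(l)=\{\eta(z):z\in l\}\sub\bigcup_{z\in l}F_z=M$, so $\eta(M)=M$ and $\eta(l)$ is a Ptolemy line of $M$ Busemann parallel to $l$.

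The heart of the argument is constancy along Ptolemy lines. Take distinct $x,y\in l$ with $\C$-lines $F_x,F_y\sub M$. Because $\eta$ is vertical and $\eta(l)$ meets every $\C$-line of $M$ in exactly one point (Lemma~\ref{lem:rfoliation_semik}), we have $\eta(x)=F_x\cap\eta(l)$ and $\eta(y)=F_y\cap\eta(l)$. Thus $|x\eta(x)|$ and $|y\eta(y)|$ are precisely the separations of the two Ptolemy lines $l$ and $\eta(l)$ measured along $F_x$ and along $F_y$. These separations are equal by the equidistance of Ptolemy lines in a \semi-plane (Lemma~\ref{lem:equidist_lines}), so the displacement is constant along $l$, and hence along every Ptolemy line.

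Next I would prove constancy along a $\C$-line. By Proposition~\ref{pro:fiber_nilpotent_lie_group} the group $Z_\om$ of vertical shifts is a one-dimensional simply connected Lie group, hence isomorphic to $(\R,+)$ and abelian, acting simply transitively on each fiber $F$. Given $x,z\in F$, choose $\zeta\in Z_\om$ with $\zeta(x)=z$; then $|z\eta(z)|=|\zeta(x)\,\eta\zeta(x)|=|\zeta(x)\,\zeta\eta(x)|=|x\eta(x)|$, using that $\eta$ and $\zeta$ commute and $\zeta$ is an isometry. Globalization is now routine: if $x,z\in X_\om$ lie on the same $\C$-line we are done by the previous step, and otherwise their distinct $\C$-lines $F_x,F_z$ lie in a unique common \semi-plane $M$ (the fact recorded in the proof of Lemma~\ref{lem:path_outside}); the Busemann parallel Ptolemy line through $x$ in $M$ meets $F_z$ at a point $w$, whence $|x\eta(x)|=|w\eta(w)|$ by the Ptolemy-line step and $|w\eta(w)|=|z\eta(z)|$ by the $\C$-line step.

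The hard part will be the second step: recognizing that the displacement of a vertical shift at a point equals the intrinsic separation, within the \semi-plane, of the two Busemann parallel Ptolemy lines $l$ and $\eta(l)$, so that the already-established equidistance of Ptolemy lines (itself a consequence of self-duality and the flip extension, Lemma~\ref{lem:equidist_lines}) can be applied. Once the problem is reduced to this \semi-plane picture, the remaining input is only the abelianness coming from the nilpotent Lie group structure and a one-line connectivity argument.
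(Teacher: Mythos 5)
Your argument is correct, but it takes a genuinely different route from the paper's. The paper first proves constancy of the displacement along a fixed $\C$-line $F$ by approximating the vertical shift $\eta$ with powers $\tau_{T_i}^{k_i}$ of lifting isometries of triangles of arbitrarily small perimeter (each of which has constant displacement by Lemma~\ref{lem:triangle_lift}), passing to a sublimit and invoking simple transitivity of $N_\om$ (Lemma~\ref{lem:simply_transitivity_shifts}); only then does it spread the conclusion over $X_\om$ using equidistance of Ptolemy lines in \semi-planes. You invert the order and replace the hard step: constancy along Ptolemy lines follows directly from Lemma~\ref{lem:equidist_lines} once you note that $\eta(l)$ is the Busemann parallel line of $M_l$ through $\eta(x)$ and that $\eta(x)=F_x\cap\eta(l)$, while constancy along a $\C$-line follows from the abelianness of $Z_\om$, extracted from Proposition~\ref{pro:fiber_nilpotent_lie_group}: in the case $p=1$ at hand, $Z_\om$ is a simply connected nilpotent Lie group homeomorphic to $F\cong\R$, hence isomorphic to $(\R,+)$. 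This is shorter and bypasses both the limiting argument and the need for unclosed triangles (Lemma~\ref{lem:unclosed_triangle}); the price is that it is tied to one-dimensionality of the fibers, whereas the paper's approximation scheme does not hinge on $Z_\om$ being abelian and, as a by-product, forges the link between vertical shifts and the lifting isometries $\tau_T$ that is reused later (e.g.\ in Lemma~\ref{lem:commute_shift_prescribed} and the area law of lifting). One cosmetic point: for a general element of $N_\om$ the fact that $\eta(l)$ is Busemann parallel to $l$ is part of how $N_\om$ is characterized via Lemma~\ref{lem:pure_homothety}, rather than an application of Lemma~\ref{lem:shift_busemann_parallel}, which concerns the specific shifts $\eta_{xx'}$.
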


Let
$T\sub B_\om$
be a {\em pointed} oriented triangle,
i.e., we assume that an orientation and a vertex
$o$
of
$T$
are fixed. Then
$T$
determines a map
$\tau_T:F\to F$,
where
$F\sub X_\om$
is the fiber of
$\pi_\om$
over
$o$, $F=\pi_\om^{-1}(o)$.
Given
$x\in F$,
we lift to
$X_\om$
the sides of
$T$
in the cyclic order according to the orientation and starting
with
$o$
which is initially lifted to
$x$.
Then
$\tau_T(x)\in F$
is the resulting lift of the triangle sides.

\begin{lem}\label{lem:triangle_lift} The map
$\tau_T:F\to F$
is an isometry that preserves the order and has the constant
displacement function.
\end{lem}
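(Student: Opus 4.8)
The plan is to realize $\tau_T$ as a composition of the canonical fiber isometries $\mu_{bb'}$ and then to exploit equivariance under the vertical shifts. Write the vertices of $T$ as $o,p,q\in B_\om$ in the cyclic order prescribed by the orientation, and let $F_o=F$, $F_p$, $F_q$ be the corresponding fibers of $\pi_\om$. First I would check that the one–step lift attached to a side, say the segment from $o$ to $p$, coincides with the canonical isometry $\mu_{op}\colon F_o\to F_p$. Indeed, for $x\in F_o$ the lift of that side is by definition the sub-segment, issuing from $x$, of the unique Ptolemy line through $x$ meeting $F_p$, whose existence and uniqueness is property~($1_\C$) (note $x\notin F_p$ since $o\neq p$). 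As $F_o,F_p$ span a unique \semi-plane $M$ and $M$ is geodesically convex (Lemma~\ref{lem:geoconvex_semik}), this line lies in $M$ and meets $F_p$ in the point on the same Ptolemy-line leaf of $M$ as $x$, which is exactly $\mu_{op}(x)$. Iterating over the three sides yields $\tau_T=\mu_{qo}\circ\mu_{pq}\circ\mu_{op}$. Being a composition of isometries between fibers, $\tau_T$ is an isometry of $F$; and since each $\mu_{bb'}$ preserves the order $O$ by the very definition of $O$ (Lemma~\ref{lem:fiber_orient}), so does $\tau_T$. This disposes of the first two assertions.

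The heart of the matter is constancy of the displacement $x\mapsto|x\,\tau_T(x)|$, and for this the plan is to prove that $\tau_T$ commutes with every vertical shift $\eta\in Z_\om$. The key step is the $Z_\om$-equivariance of each $\mu_{ab}$. Given $\eta\in Z_\om$ and $x\in F_a$, the shift $\eta$ carries the Ptolemy line $l$ through $x$ meeting $F_b$ to a Busemann-parallel Ptolemy line $\eta(l)$ through $\eta(x)$ (Lemma~\ref{lem:pure_homothety}); as $\eta$ fixes every fiber setwise, $\eta(l)$ meets $F_b$ in $\eta(l\cap F_b)$. By the uniqueness clause of~($1_\C$), $\eta(l)$ is the very line defining $\mu_{ab}(\eta(x))$, whence $\mu_{ab}(\eta(x))=\eta(\mu_{ab}(x))$, i.e.\ $\mu_{ab}\circ\eta=\eta\circ\mu_{ab}$. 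Composing the three factors (the same global $\eta\in Z_\om$ restricting to each fiber) gives $\tau_T\circ\eta=\eta\circ\tau_T$ for all $\eta\in Z_\om$.

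To conclude I would invoke Proposition~\ref{pro:fiber_nilpotent_lie_group}: the group $Z_\om$ acts simply transitively on $F$ by isometries of $X_\om$. Hence for $x,x'\in F$ there is $\eta\in Z_\om$ with $\eta(x)=x'$, and using that $\eta$ is an isometry commuting with $\tau_T$,
\[
|x'\,\tau_T(x')|=|\eta(x)\,\tau_T(\eta(x))|=|\eta(x)\,\eta(\tau_T(x))|=|x\,\tau_T(x)|,
\]
so the displacement function of $\tau_T$ is constant on $F$.

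I expect the main obstacle to be the careful identification of the abstractly defined lift with the canonical isometry $\mu_{ab}$, together with the equivariance computation; both rest on pushing the uniqueness part of~($1_\C$) against the fact that shifts send Ptolemy lines to Busemann-parallel ones while preserving each fiber. A secondary point requiring vigilance is that the argument must not lean on Proposition~\ref{pro:const_displacement_cline}, which this lemma is meant to feed into; the plan above avoids any such circularity, using only the simple transitivity of $Z_\om$ on a single fiber.
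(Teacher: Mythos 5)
Your proposal is correct and follows essentially the same route as the paper: decompose $\tau_T$ as a composition of the canonical fiber isometries $\mu_{bb'}$ for the isometry and order-preservation claims, and then conjugate by a vertical shift carrying $x$ to $x'$, which commutes with the lifting construction, to get constancy of the displacement. The only cosmetic difference is that you invoke the simple transitivity of $Z_\om$ on $F$ from Proposition~\ref{pro:fiber_nilpotent_lie_group} and prove the commutation via the uniqueness clause of ($1_\C$), while the paper takes the shift $\eta_{xx'}$ directly, sees it is vertical by Lemma~\ref{lem:homothety_vert_shift}, and concludes $\eta(\tau_T(x))=\tau_T(x')$ from the equidistance of Ptolemy lines in \semi-planes.
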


\begin{proof} The map
$\tau_T$
is obtained as a composition of three
$\C$-line
isometries of type
$\mu_{bb'}$,
see sect.~\ref{subsect:both_foliation_equidistant}. Any isometry
$\mu_{bb'}$
preserves the order
$O$,
see Lemma~\ref{lem:fiber_orient}. Thus
$\tau_T:F\to F$
is an isometry preserving the order.

Given
$x$, $x'\in F$,
we let
$y=\tau_T(x)$, $y'=\tau_T(x')$.
There is a shift
$\eta:X_\om\to X_\om$
with 
$\eta(x)=x'$.
By Lemma~\ref{lem:homothety_vert_shift}, 
$\eta$
is vertical, and by Lemma~\ref{lem:shift_preserve_order} it
preserves the order
$O$.
Thus
$\eta$
preserves
$F$,
every \semi-plane
$M\sub X_\om$
and maps isometrically every Ptolemy line
$l\sub M$
to an equidistant Ptolemy line
$l'\sub M$.
Then it follows from definition of
$\tau_T$
that
$\eta(y)=y'$.
Therefore
$|x'y'|=|xy|$,
and the displacement function of
$\tau_T$
is constant.
\end{proof}

\begin{lem}\label{lem:unclosed_triangle} For every
$b\in B_\om$
there is a pointed oriented triangle
$T\sub B_\om$,
for which the map
$\tau_T:F\to F$, $F=\pi_\om^{-1}(b)$,
is not identical.
\end{lem}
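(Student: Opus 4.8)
The plan is to argue by contraposition, deducing integrability of the canonical distribution from triviality of all holonomy maps $\tau_T$ and then invoking Corollary~\ref{cor:nonintegrability} (available since $p=1>0$). So I would suppose, to the contrary, that there is a point $b_0\in B_\om$ such that $\tau_T=\id$ for \emph{every} pointed oriented triangle $T\sub B_\om$ with vertex $b_0$, and derive a contradiction.

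First I would promote this to all triangles, not just those based at $b_0$. Recall that lifting an edge $\ov e=bb'$ of a triangle, starting at $x\in F_b$, means following the unique Ptolemy line through $x$ projecting to the geodesic $\ov e$ (property $(1_\C)$ together with uniqueness of geodesics in $B_\om$, Proposition~\ref{pro:base_euclidean}); its endpoint on $F_{b'}$ is exactly $\mu_{bb'}(x)$. Thus $\tau_T$ is the composition of the three fiber isometries $\mu$ along the sides of $T$. By Lemma~\ref{lem:simply_transitivity_shifts} the group $N_\om$ acts simply transitively on $X_\om$, so $\pi_\ast(N_\om)$ acts transitively on $B_\om$; moreover each shift maps a Ptolemy line to a Busemann parallel one (Lemma~\ref{lem:pure_homothety}), hence $\pi_\ast(\eta)$ sends every geodesic to a parallel geodesic and is therefore a translation of $\R^k$. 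Since a shift $\eta$ conjugates edge-lifts to edge-lifts of the translated edges and restricts to an order-preserving isometry between fibers, one gets $\tau_{\pi_\ast(\eta)(T)}=\eta\circ\tau_T\circ\eta^{-1}$ as maps of $\C$-lines; an order-preserving isometry of $F\cong\R$ conjugates a translation to the same translation, so $\tau_T$ is trivial if and only if $\tau_{\pi_\ast(\eta)(T)}$ is. Translating an arbitrary pointed oriented triangle to have vertex $b_0$, I conclude $\tau_T=\id$ for \emph{every} pointed oriented triangle in $B_\om$.

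Next I would deduce integrability. Lifting of piecewise-geodesic paths in $B_\om$ from a point of a fiber is well defined by composing the isometries $\mu_{bb'}$, which are invertible with $\mu_{b'b}=\mu_{bb'}^{-1}$ and depend continuously on the fibers (Lemma~\ref{lem:continuity_project_fiber}). As $B_\om\cong\R^k$ is simply connected, any piecewise-geodesic loop bounds a triangulated disk, and the holonomy around the loop equals the product of the triangle holonomies along this filling, the interior edges cancelling in opposite pairs. Since every triangle holonomy is trivial, so is the holonomy around every loop, whence the lift of a path depends only on its endpoints. This path-independent lift starting at $o\in X_\om$ coincides with the radial lift $\lift_o$ (the straight segment being one admissible path). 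Now if $o'\in D_o$, say $o'=\lift_o(\ov o')$, then concatenating a path to $\ov o'$ with one from $\ov o'$ to $\ov y$ and using path-independence gives $\lift_{o'}(\ov y)=\lift_o(\ov y)$ for all $\ov y\in B_\om$, so $D_{o'}=D_o$. Thus the canonical distribution on $X_\om$ is integrable, contradicting Corollary~\ref{cor:nonintegrability}. This contradiction shows that no such $b_0$ exists, i.e.\ for every $b\in B_\om$ there is a pointed oriented triangle $T$ with $\tau_T\neq\id$.

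The main obstacle is the third paragraph: one must set up the lift of piecewise-geodesic paths as a composition of the fiber isometries $\mu$, verify the cancellation of interior edges over a triangulated filling of a loop (legitimate precisely because $B_\om$ is simply connected with unique geodesics), and then identify the resulting path-independent horizontal lift with the radial lift $\lift_o$ that defines $D_o$. The auxiliary points—conjugation invariance of holonomy under shifts and the identification of edge-lifts with the maps $\mu_{bb'}$—are routine given the equidistance of both foliations (Lemma~\ref{lem:equidist_lines}) and property $(1_\C)$.
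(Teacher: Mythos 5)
Your proof is correct, and it rests on the same key input as the paper's --- Corollary~\ref{cor:nonintegrability} --- but it runs the implication in the opposite direction. The paper argues directly: non-integrability supplies points $o$, $o'$, $x$ with $o',x\in D_o$ but $x\notin D_{o'}$, and the triangle with vertices $\pi_\om(o)$, $\pi_\om(o')$, $\pi_\om(x)$ is already unclosed (lifting its first two sides from $o$ lands at $\lift_{o'}(\pi_\om(x))\neq x$, and the uniqueness in ($1_\C$) then forces the third lift to miss $o$); a shift then moves this one triangle to the prescribed base point, exactly as in your promotion step. You instead prove the contrapositive, which obliges you to establish the global statement that triviality of \emph{all} triangle holonomies yields path-independent lifts (via the coning/telescoping identity $\mu_{b_{i}b_0}=\mu_{b_0b_i}^{-1}$ and cancellation of interior edges) and hence integrability of the distribution. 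That extra step is sound --- the telescoping is a purely algebraic consequence of $(1_\C)$ and needs only the transitivity of shifts, not simple connectivity per se --- but it is strictly more work than the paper requires, since the paper only needs to exhibit one bad triangle from the explicit witnesses rather than the full equivalence between closedness of all triangles and integrability. What your route buys is that equivalence itself, which makes the holonomy interpretation of the canonical distribution explicit; what the paper's route buys is brevity.
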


\begin{proof} By Corollary~\ref{cor:nonintegrability}, the
canonical distribution 
$\cD$
on
$X_\om$
is not integrable. This means that there are
$o\in X_\om$, $o'$, $x\in D_o$
such that
$x\not\in D_{o'}$.
Then the points
$b'=\pi_\om(o)$, $\pi_\om(o')$, $\pi_\om(x)\in B_\om$
are vertices of a pointed oriented triangle
$T'$
for which the map
$\tau_{T'}:F'\to F'$
is not identical, where
$F'=\pi_\om^{-1}(b')$.
There is a shift 
$\eta:X_\om\to X_\om$
with
$\eta(F')=F$.
The shift
$\eta$
induces a shift
$\pi_\ast(\eta):B_\om\to B_\om$
of the base. Then
$T=\pi_\ast(\eta)(T')$
is a required triangle in
$B_\om$.
\end{proof}

\begin{proof}[Proof of Proposition~\ref{pro:const_displacement_cline}]
We fix a $\C$-line
$F\sub X_\om$
and first show that the displacement function of
$\eta$
is constant along 
$F$.

Let
$\la(T)$
be the perimeter of a pointed oriented triangle
$T\sub B_\om$
with the base point
$b=\pi_\om(F)$.
By triangle inequality, we have
$|x\tau_T(x)|\le\la(T)$
for every
$x\in F$.
Note that
$\tau_T^k=\tau_{T^k}$
for every
$k\in\Z$,
where
$T^k$
means that the triangle
$T$
is passed around
$|k|$
times in the direction prescribed by the sign of
$k$.
Thus the displacement function of
$\tau_T^k$
is also constant by Lemma~\ref{lem:triangle_lift}.

Using Lemma~\ref{lem:unclosed_triangle} and applying
appropriate homotheties of
$X_\om$,
we can find a pointed oriented triangle
$T$
with the base point
$b$,
arbitrarily small perimeter and non-identical isometry
$\tau_T:F\to F$.
By construction,
$\tau_T$
is a composition of three isometries of type
$\mu_{bb'}:F_b\to F_{b'}$.
Then by Lemma~\ref{lem:horizontal_shift},
$\tau_T$
extends to an isometry from the group
$N_\om$,
which is a vertical shift. We use the same notation
$\tau_T$
for this extension, 
$\tau_T:X_\om\to X_\om$.

Given
$x\in F$,
there is a sequence
$T_i$
of pointed oriented triangles with the base point
$b$
and
$\la(T_i)\to 0$,
and a sequence
$k_i\in\Z$
such that
$\tau_i(x)=\tau_{T_i}^{k_i}(x)\to\eta(x)$.
Then
$\tau_i:X_\om\to X_\om$
subconverges to a vertical shift
$\tau:X_\om\to X_\om$
with
$\tau(x)=\eta(x)$
that preserves the order and has the constant
displacement along
$F$.
Using Lemma~\ref{lem:simply_transitivity_shifts} we conclude that
$\eta=\tau$ 
has constant displacement along
$F$.

Now, the displacement of
$\eta$
is constant because
$\eta$
preserves every $\C$-line in
$X_\om$
and Ptolemy lines in \semi-planes are equidistant by
Lemma~\ref{lem:equidist_lines}.
\end{proof}

\subsection{Existence of an unclosed parallelogram}
\label{subsect:unclosed_par}

Let
$P\sub B_\om$
be a {\em pointed} oriented parallelogram,
i.e., we assume that an orientation and a vertex
$o$
of
$P$
are fixed. Similarly to the map
$\tau_T$
discussed in sect.~\ref{subsect:iso_clines}, we have a map
$\tau_P:F\to F$,
where
$F\sub X_\om$
is the fiber of
$\pi$
over
$o$, $F=\pi^{-1}(o)$.
Namely, given
$x\in F$,
we lift the sides of
$P$
to
$X_\om$
in the cyclic order according the orientation and starting
with
$o$
which is initially lifted to
$x$.
Then
$\tau_P(x)\in F$
is the resulting lift of the parallelogram sides.
We say that an pointed oriented parallelogram
$P\sub B_\om$
is {\em closed}, if the map
$\tau_P:F\to F$
is identical,
$\tau_P=\id_F$.
This property depends of the choice neither of
the base vertex nor of the orientation of
$P$.
Thus we can speak about closed or unclosed parallelograms
as well as closed or unclosed triangles in
$B_\om$.
By Lemma~\ref{lem:unclosed_triangle}, there exists
an unclosed triangle.

We need the following fact.

\begin{lem}\label{lem:transitive_action} Let
$G$
be a closed subgroup of the orthogonal group
$\bO(n)$, $n\ge 2$,
which acts transitively on the sphere
$S^{n-1}\sub\R^n$.
Then for any 2-dimensional subspace
$L\sub\R^n$
there is
$g\in G$
such that
$g(v)=-v$
for every
$v\in L$.
\end{lem}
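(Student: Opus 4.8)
The plan is to reduce to the connected case and then invoke the classification of compact connected Lie groups acting transitively on spheres. First I would pass to the identity component $G_0$: since $S^{n-1}$ is connected and the orbits of the open subgroup $G_0$ are open (hence also closed), $G_0$ already acts transitively on $S^{n-1}$, and it suffices to produce the required element inside $G_0$. The action is effective, because an element of $\bO(n)$ fixing the whole sphere pointwise is $\id$; so $G_0$ is a compact connected Lie group acting effectively and transitively on $S^{n-1}$. By the classification of such actions (Montgomery--Samelson, Borel), $G_0$ is one of $\mathrm{SO}(n)$; $\mathrm{U}(m)$ or $\mathrm{SU}(m)$ with $n=2m$; $\mathrm{Sp}(m)$, $\mathrm{Sp}(m)\mathrm{U}(1)$ or $\mathrm{Sp}(m)\mathrm{Sp}(1)$ with $n=4m$; $G_2$ with $n=7$; $\mathrm{Spin}(7)$ with $n=8$; or $\mathrm{Spin}(9)$ with $n=16$. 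Note also that it suffices to find $g$ with $g(e_1)=-e_1$ and $g(e_2)=-e_2$ for one orthonormal basis $e_1,e_2$ of $L$, since such a $g$ is automatically $-\id$ on all of $L$.

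The main simplification is that for most groups on the list $-\id_n\in G_0$, and then $g=-\id_n$ works for every $L$. Indeed $-\id_n$ is the scalar $-1$ in $\mathrm{U}(m)$ and in $\mathrm{Sp}(m)$, hence lies in every group containing one of these; it lies in $\mathrm{SO}(n)$ when $n$ is even (as $\det(-\id_n)=(-1)^n$); and it is the image of the nontrivial central element of $\mathrm{Spin}(7)$, resp.\ $\mathrm{Spin}(9)$, under the real spin representation. Thus only three cases remain: $\mathrm{SO}(n)$ with $n$ odd, $\mathrm{SU}(m)$ with $m$ odd, and $G_2$.

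For $\mathrm{SO}(n)$ I would simply take $g$ to be $-\id$ on $L$ and $\id$ on $L^{\perp}$; since $-\id_L$ is a rotation by $\pi$ we get $\det g=1$, so $g\in\mathrm{SO}(n)$. For $\mathrm{SU}(m)$ with $m$ odd (hence $m\ge 3$, as $\mathrm{SU}(1),\mathrm{SU}(2)$ are excluded) I would let $W\subset\C^m$ be the complex span of $L$, of complex dimension $d\le 2<m$, put $g=-\id$ on $W$, and choose $g$ on the complex complement $W^{\perp}$ to be any unitary map of determinant $(-1)^{d}$; then $\det_\C g=(-1)^{d}(-1)^{d}=1$, so $g\in\mathrm{SU}(m)$, while $g|_L=-\id_L$ since $L\subset W$. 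The delicate case is $G_2\subset\mathrm{SO}(7)$, where $-\id_7\notin G_2$: here I would use that the isotropy group of a unit vector $e_1$ is $\mathrm{SU}(3)$, acting on the tangent space $e_1^{\perp}\cong\C^3$ transitively on $S^5$. This makes the $G_2$-action on $S^6$ two-point homogeneous, and applying two-point homogeneity to the orthonormal pairs $(e_1,e_2)$ and $(-e_1,-e_2)$ (which have the same inner product $0$) yields $g\in G_2$ with $g(e_1)=-e_1$ and $g(e_2)=-e_2$, i.e.\ $g|_L=-\id_L$.

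The main obstacle is exactly the case $G_2$, together with the bookkeeping of which groups on the list contain $-\id_n$: in the $G_2$ case one cannot produce the element from a central element and must instead exploit the transitivity of the isotropy representation (equivalently, two-point homogeneity). Everything else is either a one-line central-element argument or a direct determinant-corrected construction, so once the classification is in hand the proof is essentially a finite case check.
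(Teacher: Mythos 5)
Your proof is correct and follows the same overall strategy as the paper: reduce to the classification of compact connected groups acting transitively and effectively on $S^{n-1}$ and then check each group on the list. Two points of comparison are worth noting. First, in the case $G_2$ the paper argues via the octonions: it picks a basic triple $(e_1,e_2,e_3)$ of imaginary octonions with $e_1,e_2\in L\cap S^6$ and uses the fact that $G_2$ carries any basic triple to any other by a unique automorphism, sending $(e_1,e_2,e_3)$ to $(-e_1,-e_2,e_3)$; you instead deduce two-point homogeneity of the $G_2$-action on $S^6$ from the transitivity of the isotropy representation of $SU(3)$ on $S^5$ and apply it to the pairs $(e_1,e_2)$ and $(-e_1,-e_2)$. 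Both arguments are valid; the paper's is more explicit, yours avoids octonion algebra at the cost of invoking the structure of the isotropy subgroup. Second, your treatment of $SU(m)$ is actually more careful than the paper's: the paper asserts that $SU(m)$ contains $-\id$, which is false when $m$ is odd (then $\det_{\C}(-\id)=-1$), and your determinant-corrected construction on the complex span of $L$ is the right fix (and also covers $SO(n)$ with $n$ odd in the same spirit). Your preliminary reduction to the identity component is a useful explicit step that the paper leaves implicit.
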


\begin{proof} A list of all compact connected Lie groups acting
transitively and effectively on the sphere
$S^{n-1}$
is obtained in \cite{MS}, \cite{Bor}. It consists of
$SO(n)$, $U(m)$, $SU(m)$
with
$n=2m$, $Sp(m)Sp(1)$, $Sp(m)U(1)$, $Sp(m)$
with
$n=4m$, $G_2$
with
$n=7$, $\text{Spin}(7)$
with
$n=8$, $\text{Spin}(9)$
with
$n=16$,
see \cite[sect.~7.13]{Bes}. The required property is obvious for
$SO(n)$.
The groups
$U(m)$, $SU(m)$
with
$n=2m$
include the element
$-\id$.
This is also true for
$Sp(m)$
with
$n=4m$
and
$\text{Spin}(7)$, $\text{Spin}(9)$.

The group
$G_2$
is the automorphism group of the octonions
$\bO$
which acts on imaginary octonions
$\im\bO=\R^7$
in a way that any {\em basic triple}
$e_1$, $e_2$, $e_3\in\im\bO$
is moved to any other basic triple by uniquely
determined automorphism
$g\in G_2$,
see \cite{Ba}. Here
$e_1$
can be chosen arbitrarily with
$e_1^2=-1$,
$e_2$
with
$e_2^2=-1$
anticommutes with
$e_1$, $e_1e_2=-e_2e_1$,
and
$e_3$
with
$e_3^2=-1$
anticommutes with
$e_1$, $e_2$
and
$e_1e_2$.
We can take basic triples
$(e_1,e_2,e_3)$
and
$(e_1',e_2',e_3')$
so that
$e_1,e_2\in L\cap S^6\sub\im\bO$,
$e_1'=-e_1$, $e_2'=-e_2$, $e_3'=e_3$
and put
$g(e_1)=e_1'$, $g(e_2)=e_2'$, $g(e_3)=e_3'$.
This defines a required
$g\in G_2$
with
$g|L=-\id_L$.
\end{proof}

\begin{lem}\label{lem:unclosed_parall} There exists a
unclosed parallelogram in
$B_\om$.
\end{lem}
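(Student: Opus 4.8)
The plan is to argue by contradiction. Assuming that every pointed oriented parallelogram in $B_\om$ is closed, I will deduce that every pointed oriented triangle is closed as well, contradicting Lemma~\ref{lem:unclosed_triangle}.

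First I record a holonomy. By Lemma~\ref{lem:triangle_lift}, and by the analogous statement for parallelograms (where $\tau_P$ is again a composition of the isometries $\mu_{bb'}$), each of the maps $\tau_T,\tau_P\colon F\to F$ is an order-preserving isometry of the $\C$-line $F\cong\R$ with constant displacement, hence a translation; by Proposition~\ref{pro:const_displacement_cline} this displacement is independent of the point. I thus attach to a directed polygonal loop its signed displacement $h(\cdot)\in\R$. Concatenation of based loops corresponds to composition of translations, so $h$ is additive; by Lemma~\ref{lem:horizontal_shift} it is invariant under shifts of $X_\om$; and reversing orientation inverts the translation, so $h$ is antisymmetric under orientation reversal. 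For $u,v\in B_\om$ let $P_{u,v}$ be the pointed parallelogram $o,\ o+u,\ o+u+v,\ o+v$, and let $T_1=(o,o+u,o+u+v)$, $T_2=(o,o+u+v,o+v)$ be the two triangles cut off by the diagonal $o\,(o+u+v)$. Since that diagonal is traversed once in each direction, $h(P_{u,v})=h(T_1)+h(T_2)$.

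The crucial step is a central-symmetry argument. For a nondegenerate triangle $L=\operatorname{span}(u,v)$ is a $2$-plane, and the reflection $\si_m\colon x\mapsto(u+v)-x$ of $L$ fixes the center $m=(u+v)/2$, is orientation preserving on $L$, and carries the directed loop $T_1$ onto the directed loop $T_2$. Using two-point homogeneity (Proposition~\ref{pro:two_point_homogeneous}), the isometries in $G_\om$ fixing $o$ induce a closed subgroup of $\bO(k)$ which is transitive on the sphere $S^{k-1}\sub B_\om$. Lemma~\ref{lem:transitive_action} then provides an element $g$ of this subgroup with $g|_L=-\id_L$; composing $g$ with the shift by $u+v$ yields an isometry $G$ of $X_\om$ whose induced map on $L$ is exactly $\si_m$, so that $G(T_1)=T_2$ as directed loops. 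Because the transitive groups in the classification are connected, $g$ lies in the identity component and preserves the order $O$; shifts preserve $O$ by Lemma~\ref{lem:shift_preserve_order}, hence so does $G$. An order-preserving isometry of $X_\om$ conjugates the lift of a directed loop (a translation of the fibre by its signed holonomy) to the lift of its image, with the same signed displacement; therefore $h(T_2)=h(G(T_1))=h(T_1)$. Under the standing hypothesis $h(P_{u,v})=0$, this forces $2h(T_1)=h(T_1)+h(T_2)=0$, i.e. $h(T_1)=0$. Since every pointed oriented triangle $(o,p,q)$ is of the form $T_1$ for $u=p-o$, $v=q-p$, and closedness is independent of base vertex and orientation, every triangle is closed, contradicting Lemma~\ref{lem:unclosed_triangle}. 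Hence an unclosed parallelogram exists.

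The main obstacle is this realization-and-sign step: one must be sure that the automorphism implementing $-\id$ on the $2$-plane \emph{preserves} rather than \emph{reverses} the order $O$ on the $\C$-lines, so that conjugation fixes the holonomy sign instead of negating it (negation would only give the vacuous $h(T_1)=-h(T_1)+0$ compatible with any value through the parallelogram relation). This is precisely where the connectedness of the transitive subgroups in Lemma~\ref{lem:transitive_action} and the transitivity of the isotropy action on $S^{k-1}$ enter; the earlier global consistency of the order $O$ (Lemma~\ref{lem:fiber_orient}) and of the signed displacement (Proposition~\ref{pro:const_displacement_cline}) make the holonomy a genuine invariant of the directed loop in $B_\om$.
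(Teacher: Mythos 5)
Your argument is correct and is essentially the paper's proof: both rest on realizing the central symmetry of the $2$-plane by an order-preserving isometry of $X_\om$ via Lemma~\ref{lem:transitive_action}, so that the two triangles of a parallelogram have equal holonomy and $\tau_P=\tau_T^2$; the paper just runs this directly (building $P=T\cup g(T)$ from an unclosed triangle $T$) rather than by contradiction. One small note: the transitivity of the order-preserving isotropy group on directions at $o$ comes from property $({\rm E}_2)$ and the argument of Proposition~\ref{pro:flip_extension}, not from two-point homogeneity, and since that group consists of order-preserving maps by definition, your connectedness digression is unnecessary.
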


\begin{proof} 
Using property~($\rm{E}_2$) and argue as in the proof of 
Proposition~\ref{pro:flip_extension}, one shows that 
the stabilizer of any
$o\in X_\om$
in the isometry group of
$X_\om$
preserving the order acts transitively on the set
of the directed Ptolemy lines through
$o$.
Let
$T\sub B_\om$
be an unclosed triangle. By Lemma~\ref{lem:transitive_action},
there is an isometry
$\ov g:X_\om\to X_\om$
preserving the order such that
$g=\pi_\ast(\ov g):B_\om\to B_\om$
is the central symmetry of the plane
$L\sub B_\om$
containing
$T$
with respect to the midpoint
$\ov o$
of some side of
$T$.

Let
$b$
be a common vertex of the triangles
$T$, $g(T)$
and the parallelogram
$P=T\cup g(T)$, $F=F_b\sub X_\om$
the fiber of
$\pi_\om$
over
$b$.
The isometries
$\tau_T$
and
$\tau_{g(T)}$
of
$F$
coincide,
$\tau_{g(T)}=\tau_T$,
because the isometry
$\ov g:X_\om\to X_\om$
preserves the order. Then one easily sees that
the isometry
$\tau_P:F\to F$
satisfies
$\tau_P=\tau_T^2\neq\id_F$.
Therefore, the parallelogram
$P$
is unclosed.
\end{proof}

\subsection{The maximal unipotent subgroup}
\label{subsect:max_unipotent}

Recall that the group
$N_\om$
of the shifts
$X_\om\to X_\om$
acts on
$X_\om$
simply transitively (Lemma~\ref{lem:simply_transitivity_shifts})
and that the subgroup
$Z_\om\sub N_\om$
of vertical shifts acts simply transitively of every
$\C$-line 
$F\sub X_\om$
(Proposition~\ref{pro:fiber_nilpotent_lie_group}).
In context of rank one symmetric spaces 
$Y=\K\hyp^n$
of noncompact type the group 
$N_\om$
is called a {\em maximal unipotent subgroup} of the isometry group 
of the space
$Y$.

We have
$[N_\om,N_\om]\sub Z_\om$
because any two shifts of the base
$B_\om$
commute.

\begin{lem}\label{lem:z_in_center} The group
$Z_\om$
lies in the center of
$N_\om$.
\end{lem}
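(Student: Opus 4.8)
The plan is to exploit that in the present case $p=1$ the fibre group $Z_\om$ is one-dimensional, and to combine this with the fact that conjugation in $N_\om$ preserves the displacement functions of vertical shifts.

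First I would record the structural facts. Since $p=1$, every $\C$-line $F\sub X_\om$ is homeomorphic to $\R$, and by Proposition~\ref{pro:fiber_nilpotent_lie_group} the group $Z_\om$ acts simply transitively on $F$; identifying $Z_\om$ with $F$ by the orbit map $\eta\mapsto\eta(o)$ shows that $Z_\om$ is a one-dimensional simply connected Lie group, hence $Z_\om\cong(\R,+)$. Fix a one-parameter parametrization $Z_\om=\{z_t:t\in\R\}$ with $z_sz_t=z_{s+t}$. Because $\pi_\ast(\eta z\eta^{-1})=\pi_\ast(\eta)\,\pi_\ast(z)\,\pi_\ast(\eta)^{-1}=\id$ for every $z\in Z_\om$ and $\eta\in N_\om$, the subgroup $Z_\om$ is normal in $N_\om$, so conjugation defines an action of $N_\om$ by continuous automorphisms of $Z_\om\cong\R$. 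Every continuous automorphism of $(\R,+)$ is multiplication by a nonzero scalar, hence $\eta z_t\eta^{-1}=z_{\la(\eta)t}$ for a continuous homomorphism $\la:N_\om\to\R\sm\{0\}$. Since $N_\om$ is connected (it is homeomorphic to $X_\om\cong\R^n$) and $\la(\id)=1$, the image of $\la$ is a connected subset of $\R\sm\{0\}$ containing $1$, so $\la(\eta)\in\R_{>0}$ for all $\eta$.

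Next I would bring in the displacement. By Proposition~\ref{pro:const_displacement_cline} each vertical shift $z_t$ has constant displacement $c(t):=|x\,z_t(x)|$, independent of $x\in X_\om$. Conjugation preserves this constant: for $\eta\in N_\om$, writing $y=\eta^{-1}(x)$ and using that $\eta$ is an isometry,
\[
|x\,(\eta z_t\eta^{-1})(x)|=|\eta^{-1}(x)\,z_t(\eta^{-1}(x))|=|y\,z_t(y)|=c(t),
\]
so that $c(\la(\eta)t)=c(t)$ for all $t\in\R$. I would then show that $c(t)\to\infty$ as $t\to\infty$: the orbit map $t\mapsto z_t(x)$ is a continuous bijection, and hence a homeomorphism, of $\R$ onto $F$, so $z_t(x)$ tends to an end of $F$ as $t\to+\infty$; since $\wh F=F\cup\om$ is a $\C$-circle both of whose ends run into $\om$, we get $z_t(x)\to\om$ and therefore $c(t)=|x\,z_t(x)|\to\infty$.

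Putting these together finishes the argument: fix $\eta$ and set $\la=\la(\eta)>0$. Iterating $c(\la t)=c(t)$ gives $c(\la^n t_0)=c(t_0)$ for every $n\ge 0$ and every $t_0>0$; if $\la>1$ then $\la^nt_0\to\infty$ while $c$ stays equal to $c(t_0)$, contradicting $c(t)\to\infty$, so $\la\le 1$. Applying the same reasoning to $\eta^{-1}$, for which $\la(\eta^{-1})=\la^{-1}$, yields $\la\ge 1$, whence $\la=1$. Thus $\eta z_t\eta^{-1}=z_t$ for all $\eta\in N_\om$ and all $t$, i.e.\ $Z_\om$ lies in the center of $N_\om$. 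The step I expect to require the most care is the growth $c(t)\to\infty$, since this is where the global topology of the $\C$-circle (compactness of $X$ and the fact that both ends of $F$ limit to $\om$) is used; the remaining steps are formal once $Z_\om\cong\R$ and the conjugation-invariance of displacement are in hand.
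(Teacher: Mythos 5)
Your argument is correct, but it takes a genuinely different route from the paper's. The paper's proof is a two-line local argument: by Proposition~\ref{pro:const_displacement_cline} the displacement $\de_\zeta$ of any $\zeta\in Z_\om$ is constant, so for $\al\in N_\om$ the two points $\zeta\al(x)$ and $\al\zeta(x)$ lie on the same $\C$-line through $\al(x)$, at the same distance $\de_\zeta$ from $\al(x)$ and on the same side with respect to the order $O$ (both $\zeta$ and $\al$ preserve $O$); hence they coincide, giving $[\zeta,\al]=\id$ pointwise. You instead argue globally: using $p=1$ you identify $Z_\om\cong(\R,+)$ via Proposition~\ref{pro:fiber_nilpotent_lie_group}, note that $Z_\om=\ker(\pi_\ast|_{N_\om})$ is normal, encode conjugation as a character $\la:N_\om\to\R_{>0}$, and kill $\la$ by playing the conjugation-invariance of the constant displacement $c(t)$ against its divergence $c(t)\to\infty$, which you correctly extract from the fact that both ends of the fiber $F$ converge to $\om$ in the compact space $X$. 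Both proofs rest on Proposition~\ref{pro:const_displacement_cline}; the paper's is shorter and does not use one-dimensionality of the fibers, whereas yours needs $p=1$ essentially (to know that the continuous automorphisms of $Z_\om$ are scalars) but is legitimate here since the whole section assumes $p=1$, and it has the merit of making the conjugation action completely explicit. The only points you gloss over --- continuity of $\la$ (used in the connectedness argument for $\la>0$) and properness of the orbit map $t\mapsto z_t(x)$ --- are routine given that $N_\om$ is a topological group acting by isometries; alternatively, $\la>0$ follows at once from Lemma~\ref{lem:shift_preserve_order}, since shifts preserve the order $O$ on $\C$-lines.
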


\begin{proof} By Proposition~\ref{pro:const_displacement_cline}, 
the displacement function
$\de_\zeta$
of
$\zeta$
is constant for every
$\zeta\in Z_\om$.
Thus for every
$\al\in N_\om$, $x\in X_\om$,
we have
$\de_\zeta(x)=\de_\zeta(\al(x))$.
The points
$x$, $\al(x)$, $\zeta\al(x)$, $\al\zeta(x)$
lie in one and the same \semi-plane, thus
$\zeta\al(x)=\al\zeta(x)$.
It follows that
$[\zeta,\al]=\id_{X_\om}$,
i.e.,
$\zeta$
lies in the center of
$N_\om$.
\end{proof}

\begin{lem}\label{lem:commute_shift_prescribed} Given
$a>0$
and a Ptolemy line
$l\sub X_\om$,
there are
$\al$, $\be\in N_\om$
such that
$\al(l)=l$
and the displacement of
$\ga=[\al,\be]$
equals
$a$.
\end{lem}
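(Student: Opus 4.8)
The plan is to realize $\ga$ as a commutator in the nilpotent Lie group $N_\om$ and to exploit that $N_\om$ is $2$-step nilpotent, so that the commutator depends bilinearly on its arguments and its displacement scales linearly. First I would record the algebraic structure: by Corollary~\ref{cor:nilponent_lie_group}, $N_\om$ is a simply connected nilpotent Lie group; since $[N_\om,N_\om]\sub Z_\om$ and $Z_\om$ is central (Lemma~\ref{lem:z_in_center}), the group is $2$-step nilpotent. Writing $\mathfrak n$ for its Lie algebra and $\mathfrak z$ for the algebra of $Z_\om$, we have $[\mathfrak n,\mathfrak n]\sub\mathfrak z$ with $\mathfrak z$ central, and the Baker--Campbell--Hausdorff formula collapses to $[\exp A,\exp B]=\exp([A,B])$ for all $A,B\in\mathfrak n$. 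By Proposition~\ref{pro:fiber_nilpotent_lie_group} the group $Z_\om\cong\R$ acts simply transitively by isometries on each $\C$-line $F\cong\R$, so the displacement of $\exp(r\zeta)$---constant on $X_\om$ by Proposition~\ref{pro:const_displacement_cline}---equals $|r|$ times that of $\exp\zeta$; in particular it is an absolute value on $\mathfrak z$ vanishing only at $0$.

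Next I would produce the first factor $\al$ with $\al(l)=l$. By Lemma~\ref{lem:horizontal_shift} the horizontal shifts along $l$ form a one-parameter subgroup of $N_\om$ preserving $l$; let $A\in\mathfrak n$ be its generator and put $\al=\exp A$, so that $\al(l)=l$ automatically. It then remains to find $B\in\mathfrak n$ with $[A,B]\neq 0$, for then $\be_t=\exp(tB)$ gives $[\al,\be_t]=\exp(t[A,B])$, a vertical shift whose displacement equals $|t|\,d_0$ with $d_0=\text{disp}(\exp[A,B])>0$; choosing $t=a/d_0$ and $\be=\be_t$ yields $\ga=[\al,\be]$ with displacement exactly $a$ while $\al(l)=l$.

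The heart of the argument is therefore to show that the linear map $\operatorname{ad}_A\colon\mathfrak n\to\mathfrak z$, $B\mapsto[A,B]$, does not vanish for this particular $A$. Equivalently, writing $\omega$ for the induced skew form on $B_\om\cong\mathfrak n/\mathfrak z$ (the commutator form of the $2$-step group), the direction of $\bar l=\pi_\om(l)$ must not lie in the radical of $\omega$. I would argue this by symmetry: the stabilizer of a point in the order-preserving isometry group of $X_\om$ acts transitively on directed Ptolemy lines (as established in the proof of Lemma~\ref{lem:unclosed_parall}), it normalizes $N_\om$, and its induced action on $B_\om$ is by orthogonal maps carrying $\omega$ equivariantly to itself. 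Hence the radical of $\omega$ is an invariant subspace of $B_\om$; if it contained the direction of $\bar l$ it would, by transitivity on directions, be all of $B_\om$, forcing $\omega\equiv 0$ and every parallelogram in $B_\om$ to be closed. This contradicts Lemma~\ref{lem:unclosed_parall}, since the holonomy map $\tau_P$ of an unclosed parallelogram $P$ is precisely the commutator of the two shifts realizing its sides (each side being lifted by a shift, via Lemma~\ref{lem:horizontal_shift}) and is a nontrivial vertical shift. Therefore $\operatorname{ad}_A\neq 0$, completing the construction.

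The main obstacle I anticipate is this last step---ruling out that the direction of $\bar l$ lies in the radical of the commutator form---which forces one to combine the transitivity on directions with the equivariance of $\omega$ and with the identification of the parallelogram holonomy $\tau_P$ as a genuine group commutator; all the remaining steps are formal consequences of the $2$-step nilpotent structure and of the linearity of the displacement on $Z_\om$.
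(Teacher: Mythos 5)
Your overall strategy is sound and genuinely different in organization from the paper's. The paper argues concretely: it takes an unclosed parallelogram $P$ (Lemma~\ref{lem:unclosed_parall}), applies an isometry and a homothety so that one side of $P$ lies on $\pi_\om(l)$ and the displacement of $\tau_P$ is exactly $a$, and then reads off $\al$, $\be$ as the shifts along the two sides, so that $\ga=[\al,\be]$ realizes $\tau_P$. You instead encode everything in the $2$-step nilpotent structure of $N_\om$ and reduce the lemma to the nonvanishing of $\operatorname{ad}_A$ for the generator $A$ of the shifts along $l$; your invariance argument for the radical of the commutator form (transitivity of the stabilizer on directions, equivariance of $\omega$, contradiction with Lemma~\ref{lem:unclosed_parall} if the radical were all of $B_\om$) is correct and uses exactly the same inputs as the paper's repositioning of $P$, just phrased linear-algebraically. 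Both proofs rely, with little detail, on identifying the parallelogram holonomy $\tau_P$ with the group commutator of the side shifts, so that is not a point against you.

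There is, however, one concrete error: the displacement of $\exp(r\zeta)$ is \emph{not} $|r|$ times that of $\exp\zeta$. A simply transitive isometric action of $\R$ on a $\C$-line $F$ does not force linear displacement, because $F$ with its induced metric is not a geodesic (it contains no Ptolemy line); in fact Proposition~\ref{pro:euclid_square_fiber} --- proved only after this lemma --- gives $\de(\zeta^n)^2=n\,\de(\zeta)^2$, so the displacement of $\exp(t[A,B])$ grows like $\sqrt{|t|}$, and your choice $t=a/d_0$ produces displacement $\sqrt{a d_0}$ rather than $a$. The conclusion still follows, but the explicit formula must be replaced by either (i) an intermediate value argument: $t\mapsto\de(\exp(t[A,B]))$ is continuous, vanishes only at $t=0$, and is unbounded because the orbit map $Z_\om\to F$ is a homeomorphism onto the unbounded set $F$; or (ii) the paper's device of conjugating by a pure homothety $h_\la$, which multiplies the displacement exactly by $\la$ and so lets you hit $a$ on the nose.
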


\begin{proof} By Lemma~\ref{lem:unclosed_parall},
there is an unclosed parallelogram
$P\sub B_\om$.
Applying an appropriate homothety
$\phi:X_\om\to X_\om$
if necessary, we can assume that the Ptolemy line
$l\sub X_\om$
projects down to the line
$\pi_\om(l)\sub B_\om$
that contains a side
$b''b\sub P$,
and the displacement of
$\tau_P:F\to F$
is
$a$, $|x\tau_P(x)|=a$
for every
$x\in F$,
where
$F\sub X_\om$
is the fiber of
$\pi_\om$
over a vertex of
$P$,
cp. Lemma~\ref{lem:triangle_lift}.

Let
$b'\in P$
be the vertex adjacent to
$b''$
and opposite to
$b$.
We consider the Ptolemy line
$l'\sub X_\om$
through
$o=l\cap F_{b''}$
that project down to the line
$\pi_\om(l')\sub B_\om$
containing the side
$b''b'$
of
$P$.
There are shifts
$\al$, $\be\in N_\om$,
which leave invariant the Ptolemy lines
$l$, $l'$
respectively, such that
$\pi_\ast(\al)(b'')=b$, $\pi_\ast(\be)(b'')=b'$.

Since the parallelogram
$P$
is unclosed, we have
$\al\be\neq\be\al$,
and
$\de_\ga(o)=a$,
where
$\de_\ga:X_\om\to\R$
is the displacement
of
$\ga=[\al,\be]$.
By Proposition~\ref{pro:const_displacement_cline}, the displacement
$\de_\ga$
is constant, thus
$\de_\ga(x)=a$
for every
$x\in X_\om$.
\end{proof}

\begin{pro}\label{pro:nil_transitive} The group
$N_\om$
is nilpotent and the group
$Z_\om$
is the center of
$N_\om$.
Moreover
$Z_\om=[N_\om,N_\om]$.
\end{pro}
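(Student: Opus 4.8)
The plan is to combine the two inclusions already in hand, $[N_\om,N_\om]\sub Z_\om$ (since any two shifts of the base commute) and $Z_\om\sub Z(N_\om)$ (Lemma~\ref{lem:z_in_center}), and then to upgrade both containments to equalities using Lemma~\ref{lem:commute_shift_prescribed}. The first consequence is immediate: from $[N_\om,N_\om]\sub Z_\om\sub Z(N_\om)$ the iterated commutator $[N_\om,[N_\om,N_\om]]$ is trivial, so the lower central series of $N_\om$ terminates after two steps and $N_\om$ is nilpotent of class at most $2$. In particular every commutator lies in the central subgroup $Z_\om$, which acts simply transitively on each $\C$-line $F\cong\R$ (Proposition~\ref{pro:fiber_nilpotent_lie_group}); identifying $Z_\om$ with $\R$ by signed displacement (well defined because shifts preserve the order $O$, Lemma~\ref{lem:shift_preserve_order}), each commutator is measured by a single real number, namely the constant displacement of Proposition~\ref{pro:const_displacement_cline}.

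To prove $Z_\om=[N_\om,N_\om]$ I would argue that $[N_\om,N_\om]$, as a subgroup of $Z_\om\cong\R$, already realizes every displacement. Indeed, Lemma~\ref{lem:commute_shift_prescribed} produces for each $a>0$ a commutator $\ga=[\al,\be]$ whose displacement equals $a$; its inverse $[\be,\al]$ is the shift of $Z_\om$ with opposite signed displacement $-a$. Hence $[N_\om,N_\om]$ meets every signed displacement value in $\R$, and being a subgroup it must be all of $Z_\om$.

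For $Z_\om=Z(N_\om)$ it remains to prove $Z(N_\om)\sub Z_\om$, i.e. that no shift outside $Z_\om$ is central. First I would record that $\pi_\ast(N_\om)$ is abelian (shifts of the base commute) and acts transitively on $B_\om\cong\R^k$, hence simply transitively, so it is exactly the translation group of $\R^k$ and $N_\om/Z_\om\cong\R^k$. Since $N_\om$ is $2$-step nilpotent with $[N_\om,N_\om]\sub Z_\om$ and $Z_\om$ central, the commutator depends only on classes modulo $Z_\om$ and descends to a well defined alternating bilinear form $A\colon (N_\om/Z_\om)\times(N_\om/Z_\om)\to Z_\om$, $A(\bar\al,\bar\be)=[\al,\be]$, whose radical is precisely $Z(N_\om)/Z_\om$. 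Thus the assertion reduces to nondegeneracy of $A$: for every nonzero direction $u\in\R^k$ there must be some $\be$ with $A(u,\pi_\ast(\be))\neq0$. Given such a $u$, choose a Ptolemy line $l\sub X_\om$ whose projection $\pi_\om(l)$ has direction $u$ (every direction occurs, as Ptolemy lines project isometrically onto geodesics of $B_\om$). Lemma~\ref{lem:commute_shift_prescribed} supplies $\al,\be$ with $\al(l)=l$ and $[\al,\be]$ of positive displacement; because $\al$ preserves $l$, its projection $\pi_\ast(\al)$ is a translation along $u$, and it is nonzero (otherwise $\al\in Z_\om$ would be central and the commutator trivial). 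Therefore $A(\pi_\ast(\al),\pi_\ast(\be))\neq0$ with $\pi_\ast(\al)$ a nonzero multiple of $u$, so $u$ avoids the radical. As $u$ was arbitrary, $A$ is nondegenerate and $Z(N_\om)=Z_\om$.

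The routine group-theoretic bookkeeping (bilinearity of $A$ and the radical–center correspondence) is standard for $2$-step nilpotent groups and I expect no difficulty there. The main obstacle is the geometric input behind nondegeneracy: verifying that the hypothesis $\al(l)=l$ in Lemma~\ref{lem:commute_shift_prescribed} forces $\pi_\ast(\al)$ to be a nonzero translation exactly along the direction of $\pi_\om(l)$, together with the fact that every direction in $B_\om$ is carried by some Ptolemy line. Both rest on $\pi_\om$ being a submetry that restricts to an isometry on Ptolemy lines (Proposition~\ref{pro:base_metric}) and on property $(1_\C)$, which guarantees the existence of lifts in all directions.
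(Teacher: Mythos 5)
Your proof is correct. For the identity $Z_\om=[N_\om,N_\om]$ it is essentially the paper's argument: commutators realize every positive displacement by Lemma~\ref{lem:commute_shift_prescribed}, and an element of $Z_\om$ is pinned down by its signed displacement because $Z_\om$ acts simply transitively on a $\C$-line. For $Z(N_\om)\sub Z_\om$ you take a genuinely different route. The paper argues pointwise: given a central $\al'$, it composes with a vertical shift so that $\al'(l)=l$, conjugates $\al'$ by a pure homothety fixing $o\in l$ until $|o\,\al'(o)|=|o\,\al(o)|$ for the non-central $\al$ supplied by Lemma~\ref{lem:commute_shift_prescribed}, and concludes $\al'=\al$ from simple transitivity (Lemma~\ref{lem:simply_transitivity_shifts}), a contradiction unless $\al'$ fixes $o$. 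You instead package the commutators into the form $A$ on $N_\om/Z_\om$ and prove its radical trivial. Both proofs rest on the same geometric input, applied to a line $l$ in a prescribed direction; they differ in how non-centrality is propagated from the particular $\al$ with $\al(l)=l$ to an arbitrary element projecting to the same direction. You use $\R$-linearity of $A$, so that the radical is a linear subspace and misses all of $\R u$ once it misses one nonzero multiple of $u$; the paper rescales the candidate central element directly by a homothety. The one point you should make explicit is that your step needs genuine $\R$-bilinearity of $A$, not just the biadditivity that follows from centrality of commutators — biadditivity alone only excludes integer multiples of $\pi_\ast(\al)$ from the radical. This does hold because $N_\om$ is a simply connected $2$-step nilpotent Lie group (Corollary~\ref{cor:nilponent_lie_group}), where the group commutator agrees with the Lie bracket; alternatively, conjugating your pair $\al,\be$ by a pure homothety of the appropriate coefficient reproduces the paper's trick and avoids Lie theory altogether. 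With that settled your argument is complete, and it is arguably more structural: exhibiting $A$ as a nondegenerate alternating form is precisely what later identifies $N_\om$ as a Heisenberg group, whereas the paper's version is more elementary and self-contained.
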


\begin{proof}  Assume
$\al'\in N_\om$
commutes with every
$\be\in N_\om$.
We show that
$\al'\in Z_\om$.
Together with Lemma~\ref{lem:z_in_center} this
implies that
$Z_\om$
is the center of
$N_\om$.

Composing
$\al'$
with an appropriate
$\nu\in Z_\om$
if necessary, we can assume that
$\al'(l)=l$
for some Ptolemy line
$l\sub X_\om$.
It suffices to show that
$\al'(o)=o$
for some
$o\in l$.
By Lemma~\ref{lem:commute_shift_prescribed}, there are
$\al$, $\be\in N_\om$
such that
$\al(l)=l$
and the displacement
$\de_\ga=a$
for a given
$a>0$,
where
$\ga=[\al,\be]$.
Suppose that
$|o\al'(o)|\neq 0$.
Conjugating
$\al'$
by an appropriate pure homothety of
$X_\om$
preserving 
$o$,
we can assume that
$|o\al(o)|=|o\al'(o)|$.
Replacing
$\al'$
with
$(\al')^{-1}$
if necessary, we can also assume 
$\al'(o)=\al(o)$.
Then
$\al'=\al$.
But this contradicts the assumption that
$\al'$
commutes with every
$\be\in N_\om$.
Hence
$\al'(o)=o$.

Let
$a>0$
be the displacement of some
$\zeta\in Z_\om$, $\zeta\neq\id$.
Again by Lemma~\ref{lem:commute_shift_prescribed},
there are
$\al$, $\be\in N_\om$
such that the displacement of
$\ga=[\al,\be]$
equals
$a$.
Then
$\zeta$
coincides with
$\ga$
or
$\ga^{-1}$.
Thus
$Z_\om=[N_\om,N_\om]$.
\end{proof}

\section{Area law of lifting and metrics of
$\C$-lines}

\subsection{Lifting of polygons}
\label{subsect:lift_polygons}

Let
$P\sub B_\om$
be an oriented parallelogram. Fixing a vertex
$b\in P$,
we obtain a preserving the order isometry
$\tau_B:F\to F$,
where
$F\sub X_\om$
is the fiber of
$\pi_\om:X_\om\to B$
over
$b$,
see sect.~\ref{subsect:unclosed_par}.
It is convenient to associate with
$P$
an extension of
$\tau_P$
to
$X_\om$
which is defined as
$\tau_P(x)=\nu(x)$
for every
$x\in X_\om$,
where 
$\nu:X_\om\to X_\om$
is a vertical shift, 
$\pi_\ast(\nu)=\id_{B_\om}$,
while restricted to
$F$
coincides with
$\tau_P$.
The isometry
$\nu$
exists by Lemma~\ref{lem:horizontal_shift}, and it
is unique by Lemma~\ref{lem:simply_transitivity_shifts}. 
We use the same notation for the extension
$\tau_P:X_\om\to X_\om$
and call it a {\em lifting isometry}.

Furthermore, for every isometry
$\phi\in G_\om$
(recall that such an isometry preserves
$\om$
and the order
$O$)
we have according to Proposition~\ref{pro:const_displacement_cline},
$\tau_{P'}=\tau_P$,
where
$P'=\pi_\ast(\phi)(P)$.
In particular, the map
$\tau_P$
is not changed if we replace the parallelogram
$P$
by any its shifted copy
$P'\sub B$.

The vertical shift
$\tau_P:X_\om\to X_\om$
lies in the group
$Z_\om$,
see sect.~\ref{subsect:max_unipotent}, for every parallelogram
$P\sub B_\om$.
Since
$Z_\om$
is commutative, we have
$\tau_P\circ\tau_{P'}=\tau_{P'}\circ\tau_P$
for any parallelograms and even for any closed
oriented polygons
$P$, $P'\sub B_\om$.

Let
$Q\sub B_\om$
be a closed, oriented polygon. Adding a segment
$qq'\sub B_\om$
between points
$q$, $q'\in Q$
we obtain closed, oriented polygons
$P$, $P'$
such that
$Q\cup qq'=P\cup P'$,
the orientations of
$P$, $P'$
coincide with that of
$Q$
along
$Q$,
and the segment
$qq'=P\cap P'$
receives from
$P$, $P'$
opposite orientations. In this case we use notation
$Q=P\cup P'$.

\begin{lem}\label{lem:adding_lifts} In the notation above we have
$\tau_Q=\tau_{P'}\circ\tau_P$.
\end{lem}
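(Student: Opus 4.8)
The plan is to realize every $\tau_R$ (for a closed oriented polygon $R\sub B_\om$) as the holonomy of lifting its boundary, and then to exploit that the extra edge $qq'$ is lifted in opposite directions in $P$ and in $P'$, so that its two contributions are inverse to each other and cancel.

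First I would formalize the lifting operator. For an ordered chain $\ga=(b_0,b_1,\dots,b_m)$ of points of $B_\om$ joined by geodesic segments, set $L_\ga=\mu_{b_{m-1}b_m}\circ\cdots\circ\mu_{b_0b_1}\colon F_{b_0}\to F_{b_m}$, where the canonical isometries $\mu_{bb'}\colon F_b\to F_{b'}$ are those of sect.~\ref{subsect:both_foliation_equidistant}. By the definition of the lift of a polygon, $\tau_R|F_{b_0}=L_{\d R}$ for the boundary loop $\d R$ of $R$ based at a vertex $b_0$. Two elementary properties are needed: lifting respects concatenation, $L_{\ga\cdot\ga'}=L_{\ga'}\circ L_\ga$ whenever $\ga$ ends where $\ga'$ begins (immediate from the definition); and lifting a reversed segment inverts the corresponding isometry, $\mu_{b'b}=\mu_{bb'}^{-1}$ (the lift of a segment from $x\in F_b$ follows the unique Ptolemy line through $x$ over that segment by ($1_\C$), and lifting back along the same segment follows the same Ptolemy line, returning to $x$). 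The same uniqueness gives $\mu_{bb''}=\mu_{b'b''}\circ\mu_{bb'}$ whenever $b'$ lies on the segment $bb''$; hence subdividing an edge does not change $L_\ga$, and I may assume from the outset that $q$ and $q'$ are vertices of all of $Q$, $P$, $P'$.

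Now I would decompose the loops, computing all three holonomies with base vertex $q$ (legitimate since $\tau_R\in Z_\om$ does not depend on the chosen base vertex: changing the base conjugates the holonomy by an element of $N_\om$, which acts trivially on the central subgroup $Z_\om$ by Proposition~\ref{pro:nil_transitive}). The points $q,q'$ split $Q$ into the arc $A$ running from $q$ to $q'$ and the arc $B$ running from $q'$ to $q$, both carrying the orientation of $Q$, so that $\d Q=A\cdot B$. Writing $s$ for the added segment oriented from $q$ to $q'$ and $\bar s$ for its reverse, the orientation conventions in the statement give $\d P=A\cdot\bar s$ and $\d P'=s\cdot B$; in particular the edge $qq'$ is traversed as $\bar s$ in $P$ and as $s$ in $P'$. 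By the concatenation rule together with $L_{\bar s}=L_s^{-1}$ this yields, as isometries of $F_q$,
$$\tau_P|F_q=L_{\bar s}\circ L_A=L_s^{-1}\circ L_A,\qquad \tau_{P'}|F_q=L_B\circ L_s,\qquad \tau_Q|F_q=L_B\circ L_A,$$
whence
$$\tau_{P'}\circ\tau_P\,\big|\,F_q=(L_B\circ L_s)\circ(L_s^{-1}\circ L_A)=L_B\circ L_A=\tau_Q|F_q.$$
Only associativity and the cancellation $L_s^{-1}\circ L_s=\id$ are used here; no commutativity of $Z_\om$ is needed at this step.

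Finally I would upgrade this fiberwise identity to an identity of shifts of $X_\om$. All three maps $\tau_P,\tau_{P'},\tau_Q$ are vertical shifts, i.e. elements of $Z_\om$ (their construction via Lemma~\ref{lem:horizontal_shift}), and $Z_\om$ acts simply transitively on $F_q$ by Proposition~\ref{pro:fiber_nilpotent_lie_group}. Hence an element of $Z_\om$ is determined by its restriction to $F_q$, and the equality on $F_q$ forces $\tau_{P'}\circ\tau_P=\tau_Q$ as isometries of $X_\om$. The only delicate point in the whole argument is the bookkeeping of orientations, ensuring that the shared edge $qq'$ appears as $s$ in $P'$ and as $\bar s$ in $P$; once this is pinned down the cancellation of the shared edge is automatic.
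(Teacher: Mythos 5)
Your proof is correct and follows essentially the same route as the paper: both arguments lift $P$ and then $P'$ from a common base vertex $q$ and observe that the two opposite traversals of the added segment $qq'$ cancel, so the composite lift coincides with the lift of $Q$. Your version merely makes explicit the bookkeeping (concatenation, $\mu_{b'b}=\mu_{bb'}^{-1}$, and the passage from the fiber $F_q$ to all of $X_\om$ via $Z_\om$) that the paper treats informally.
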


\begin{proof} We fix
$q\in Q\cap P\cap P'$
as the base point. Moving from
$q$
along
$Q$
in the direction prescribed by the orientation of
$Q$,
we also move along one of
$P$, $P'$
according to the induced orientation. We assume W.L.G.
that this is the polygon
$P$.
In that way, we first lift
$P$
to
$X_\om$
starting with some point
$o\in F$,
where
$F$
is the fiber of the projection
$\pi_\om:X_\om\to B_\om$
over
$q$,
such that the side
$q'q\sub P$
is the last one while lifting
$P$.
Now, we lift
$P'$
to
$X_\om$
starting with
$o'=\tau_P(o)\in F$
moving first along the side
$qq'\sub P'$.
Then clearly the resulting lift of
$P'$
gives
$\tau_Q(o)=\tau_{P'}(o')\in F$.
Thus
$\tau_Q=\tau_{P'}\circ\tau_P$.
\end{proof}

\subsection{Area law of lifting}
\label{subsect:area_law_lift}

Let
$P_0\sub B_\om$
be a parallelogram. Applying if necessary
a homothety from the group
$H=\pi_\ast(G_\om)$,
we assume W.L.G. that
$\area P_0=1$.
Furthermore, cutting
$P_0$
by a line in the plane of
$P_0$
and gluing back the obtained pieces shifted
appropriately, one easily transforms
$P_0$
to a rectangle
$P_1$,
which therefore satisfies
$\tau_{P_1}=\tau_{P_0}$
by Lemma~\ref{lem:adding_lifts}. 

Let
$L\sub B_\om$
be the 2-dimensional subspace containing
$P_1$.
We fix two mutually orthogonal directions in
$L$
such that the sides of
$P_1$
are parallel to them,
and call one of them the {\em horizontal} direction and the other one
{\em vertical} direction.

We denote by
$\de(P)$
the displacement of any parallelogram
$P\sub B_\om$, $\de(P)=|x\tau_P(x)|$
for every
$x\in X_\om$,
see Proposition~\ref{pro:const_displacement_cline}, and put
$c_0:=\de(P_0)=\de(P_1)\ge 0$.

We denote by
$\cP_1$
the class of all the rectangles in
$L$
of area 1 with horizontal and vertical sides such that
$\de(P)=c_0$
for every
$P\in\cP_1$.
We write this equality as
\begin{equation}\label{eq:area_law}
\de(P)^2=c_0^2\cdot\area P
\end{equation}
and call
$c_0$
the {\em lifting constant} of the class
$\cP_1$.
Equality~(\ref{eq:area_law}) is called the {\em area law
of lifting}. Note that
$P_1\in\cP_1$.

\begin{lem}\label{lem:operations} The class
$\cP_1$
is closed under the following operations with rectangles:
\begin{itemize}
\item[(a)] a shift in
$L$;
\item[(b)] cutting by finitely many parallel horizontal or
vertical lines and gluing back the
shifted pieces;
\item[(c)] taking the limit of a convergent sequence of rectangles.
\end{itemize}
\end{lem}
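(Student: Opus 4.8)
The plan is to exploit two structural facts about the lifting isometry $\tau_P$: it is additive under subdivision of polygons (Lemma~\ref{lem:adding_lifts}), and it is unchanged when $P$ is translated within $L$ (as recorded in sect.~\ref{subsect:lift_polygons}, since every shift of $B_\om$ lies in $H=\pi_\ast(G_\om)$ and $\tau_{P'}=\tau_P$ for $P'=\pi_\ast(\phi)(P)$, $\phi\in G_\om$). Throughout I use that $\tau_P\in Z_\om$, that $Z_\om$ is commutative (Proposition~\ref{pro:nil_transitive}), and that $\de(P)=|x\tau_P(x)|$ is independent of $x$ (Proposition~\ref{pro:const_displacement_cline}). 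Since the area is manifestly preserved by each of the three operations, in every case it remains only to check that the displacement stays equal to $c_0$, equivalently that the lifting isometry is unchanged (for (a), (b)) or varies continuously (for (c)).

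For (a) this is immediate: a translate $P'$ of $P$ has $\area P'=\area P=1$ and $\tau_{P'}=\tau_P$ by translation invariance, so $\de(P')=c_0$ and $P'\in\cP_1$.

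For (b) I would first cut $P$ by the given family of parallel lines into rectangular pieces $P_1,\dots,P_n$. Iterating Lemma~\ref{lem:adding_lifts} along the successive cutting segments yields $\tau_P=\tau_{P_n}\circ\cdots\circ\tau_{P_1}$; by commutativity of $Z_\om$ the order of the factors is irrelevant. Reassembling the translated pieces into a new rectangle $P'$ of the same area and side directions and applying the same decomposition gives $\tau_{P'}=\tau_{P_n'}\circ\cdots\circ\tau_{P_1'}$, where $P_i'$ is the translated copy of $P_i$. But $\tau_{P_i'}=\tau_{P_i}$ by (a), so the two products agree and $\tau_{P'}=\tau_P$; hence $\de(P')=c_0$. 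The key point is that $\tau$ factors through the unordered collection of shapes of the pieces and ignores their placement, which is exactly what additivity, translation invariance, and commutativity deliver.

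For (c), suppose $P_i\to P$ with $P_i\in\cP_1$; convergence of rectangles of area $1$ with fixed side directions means convergence of the four vertices. Writing the vertices of a rectangle as $b_0,b_1,b_2,b_3$, lifting the four sides identifies $\tau_P$ with the composition $\mu_{b_3b_0}\circ\mu_{b_2b_3}\circ\mu_{b_1b_2}\circ\mu_{b_0b_1}$ of canonical $\C$-line isometries. By Lemma~\ref{lem:continuity_project_fiber} each factor $\mu_{bb'}$ depends continuously on $b$, $b'$, so $\tau_{P_i}(x)\to\tau_P(x)$ for every $x\in X_\om$, whence $\de(P_i)=|x\tau_{P_i}(x)|\to|x\tau_P(x)|=\de(P)$. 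Since $\de(P_i)=c_0$ and $\area P=\lim\area P_i=1$, we obtain $P\in\cP_1$. I expect the only delicate point to be the bookkeeping in (b) — correctly tracking the orientations of the cutting segments so that Lemma~\ref{lem:adding_lifts} applies at each stage — but commutativity of $Z_\om$ removes any dependence on this ordering and makes the argument robust.
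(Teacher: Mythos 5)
Your proposal is correct and follows essentially the same route as the paper: (a) via translation invariance of $\tau_P$, (b) via Lemma~\ref{lem:adding_lifts} together with commutativity of $Z_\om$, and (c) via continuity of $P\mapsto\tau_P$. The only difference is that you justify the continuity in (c) explicitly by writing $\tau_P$ as a composition of the maps $\mu_{bb'}$ and invoking Lemma~\ref{lem:continuity_project_fiber}, a detail the paper leaves implicit.
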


\begin{proof} Operation (a) preserves the class
$\cP_1$
because every shift
$\ga:B_\om\to B_\om$
is of the form
$\ga=\pi_\ast(\zeta)$,
where
$\zeta:X_\om\to X_\om$
is a shift, and thus
$\tau_P=\tau_{\ga(P)}$
for every parallelogram
$P\sub B_\om$.
Using Lemma~\ref{lem:adding_lifts}, we see that
operation~(b) preserves the class
$\cP_1$.
Operation~(c) preserves the class
$\cP_1$
because the map
$\tau_P:X_\om\to X_\om$
depends continuously on
$P$.
\end{proof}

\begin{lem}\label{lem:unit_square} The class
$\cP_1$
includes a unit square
$Q_0\sub L$.
\end{lem}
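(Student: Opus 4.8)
The plan is to start from the rectangle $P_1\in\cP_1$ (which lies in $\cP_1$ by construction, with $\tau_{P_1}=\tau_{P_0}$ and $\de(P_1)=c_0$), denote its horizontal and vertical side lengths by $a$ and $b$ so that $ab=\area P_1=1$, and then deform $P_1$ to an axis-parallel unit square while remaining inside $\cP_1$ by repeatedly invoking Lemma~\ref{lem:operations}. The elementary move I would use is a cut-and-stack that rescales the aspect ratio by a perfect square. Cutting $P_1$ into $m$ equal horizontal strips (each of size $a\times(b/m)$) by $m-1$ horizontal lines and reassembling the shifted strips side by side along the horizontal direction produces the rectangle $[0,ma]\times[0,b/m]$, again of area $1$, whose aspect ratio (width over height) equals $m^2\,(a/b)$. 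This reassembly is exactly an instance of operation (b) of Lemma~\ref{lem:operations} (the strips are the shifted pieces, and they tile the new rectangle without gaps because the strips are equal), so the resulting rectangle again lies in $\cP_1$. Symmetrically, cutting $P_1$ into $m$ equal vertical strips and stacking them vertically multiplies the aspect ratio by $1/m^2$ and again stays in $\cP_1$.

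Composing these two moves, from $P_1$ I can reach, for every positive rational $r=m/n$, a rectangle in $\cP_1$ of area $1$ whose aspect ratio is $(a/b)\,r^2$. Since the set $\{\,r^2 : r\in\mathbb{Q}_{>0}\,\}$ is dense in $(0,\infty)$, the set of attainable aspect ratios is dense in $(0,\infty)$; in particular there is a sequence of unit-area rectangles $R_i\in\cP_1$ whose aspect ratios tend to $1$. Writing the horizontal and vertical side lengths of $R_i$ as $s_i$ and $1/s_i$ (possible since $\area R_i=1$), density forces $s_i\to 1$, hence also $1/s_i\to 1$.

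Finally, I would use operation (a) to normalize the position of each $R_i$ so that a fixed corner sits at one common point of $L$; then the normalized rectangles converge to the axis-parallel unit square $Q_0\sub L$ with that corner and unit side. By operation (c) of Lemma~\ref{lem:operations} the limit $Q_0$ lies in $\cP_1$, which is the claim. The only point demanding genuine care is verifying that each cut-and-stack is covered by Lemma~\ref{lem:operations}(b), i.e. that the translated pieces reassemble into an honest rectangle; this is precisely why the strips are taken to be of equal size, so that they tile a rectangle exactly. I expect this bookkeeping, together with the density observation and the check that the normalized $R_i$ converge in the sense required by operation (c), to be the only substantive part of the argument.
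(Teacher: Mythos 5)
Your proposal is correct and follows essentially the same route as the paper: both use the cut-and-stack move (operation (b)) to multiply the aspect ratio of a unit-area rectangle by $(m/n)^2$ and then pass to the axis-parallel unit square via operation (c). The only cosmetic difference is that the paper iterates the move with a contraction estimate $1<b'/a'\le\la\,b/a$ forcing monotone convergence to $Q_0$, whereas you invoke density of $\{(m/n)^2\}$ in $(0,\infty)$ to produce directly a sequence of rectangles with aspect ratio tending to $1$; both are valid.
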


\begin{proof} Given a rectangle
$P\sub L$
with horizontal and vertical sides, and integer
$m$, $n\ge 1$,
we construct a new rectangle
$P'\sub L$
using operation~(b) as follows. First, we subdivide
$P$
into
$m$
pairwise congruent rectangles cutting it by
$(m-1)$
horizontal lines and gluing back the shifted pieces
into a horizontal row. Second, we subdivide the obtained
rectangle
$\wt P$
into
$n$
pairwise congruent rectangles cutting
$\wt P$
by
$(n-1)$
vertical lines and gluing back the shifted pieces into
a vertical column. This gives
the resulting rectangle
$P'=:\Psi_{m,n}(P)$.
Note that if
$P\in\cP_1$,
then
$\Psi_{m,n}(P)\in\cP_1$
for each integer
$m$, $n\ge 1$
by Lemma~\ref{lem:operations}. Furthermore, if
$a=a(P)$
is the length of the horizontal sides of
$P$
and
$b=b(P)$
is the length of the vertical sides of
$P$,
then
$$a'=a(P')=\frac{m}{n}a,\ b'=b(P)=\frac{n}{m}b.$$

Assume W.L.G. that
$a<b$.
Then there are integer
$m>n\ge 1$
such that
$$\frac{1}{2}\left(1+\frac{b}{a}\right)\le\frac{m^2}{n^2}
<\frac{b}{a}.$$
Thus
$$1<\frac{b'}{a'}\le\la\frac{b}{a}<\frac{b}{a},$$
where
$\la^{-1}=\frac{1}{2}(1+\frac{b}{a})>1$.
This generates a sequence of rectangles
$P_i$, $P_{i+1}=\Psi_{m_i,n_i}(P_i)$.
By the choice of
$m_i$, $n_i$
this sequence cannot have accumulation points different from
$Q_0$,
thus
$P_i\to Q_0$.
Therefore
$Q_0\in\cP_1$
by operation~(c).
\end{proof}

\begin{cor}\label{cor:each_par_p} Every rectangle
$P\sub L$
of area 1 with horizontal and vertical sides is in the class
$\cP_1$, $P\in\cP_1$.
\end{cor}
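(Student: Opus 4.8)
The plan is to build every area-one rectangle out of the unit square $Q_0\in\cP_1$ furnished by Lemma~\ref{lem:unit_square}, using the transformations $\Psi_{m,n}$ introduced in the proof of that lemma together with operations (a)--(c) of Lemma~\ref{lem:operations}. Since every member of $\cP_1$, as well as the target rectangle $P$, has area $1$, a rectangle with horizontal and vertical sides lying in $L$ is determined up to a shift in $L$ by the length $a>0$ of its horizontal side, the vertical side then having length $1/a$. Thus it suffices to realize in $\cP_1$, up to operation (a), every value $a>0$.

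First I would record the effect of $\Psi_{m,n}$ on side lengths established in the proof of Lemma~\ref{lem:unit_square}: if $P$ has horizontal side $a$ and vertical side $b$, then $\Psi_{m,n}(P)$ has horizontal side $\frac{m}{n}a$ and vertical side $\frac{n}{m}b$, and $\Psi_{m,n}$ preserves $\cP_1$ for all integers $m,n\ge 1$ because it is a composition of cuts and glues by horizontal and vertical lines, i.e.\ of instances of operation (b). Applying $\Psi_{m,n}$ to $Q_0$, which has $a=1$, produces an area-one rectangle with horizontal side $m/n$. As $m/n$ ranges over all positive rationals, this shows that every area-one rectangle with rational horizontal side belongs to $\cP_1$ (after a shift by operation (a) to place it wherever desired).

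For an arbitrary rectangle $P\subset L$ with horizontal side $a>0$ I would then choose positive rationals $q_i\to a$ and consider the rectangles $R_i\in\cP_1$ with horizontal side $q_i$ and vertical side $1/q_i$, all sharing a fixed corner with $P$ by operation (a). Since $q_i\to a$ and $1/q_i\to 1/a$, we have $R_i\to P$, so $P\in\cP_1$ by operation (c). This gives the claim.

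I do not expect a genuine obstacle: the statement is essentially a bookkeeping consequence of Lemma~\ref{lem:unit_square}, once one observes that the reachable horizontal side lengths $m/n$ exhaust the positive rationals. The only point requiring a little care is the final limiting step, where one must ensure that the convergence $R_i\to P$ invoked in operation (c) is precisely the notion under which $\tau_P$ depends continuously on $P$ (Lemma~\ref{lem:operations}(c))---namely convergence of the rectangles' positions and side lengths---which holds here since both the horizontal and vertical side lengths converge.
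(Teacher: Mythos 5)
Your proof is correct and follows essentially the same route as the paper: both arguments start from the unit square $Q_0\in\cP_1$ of Lemma~\ref{lem:unit_square}, use that the maps $\Psi_{m,n}$ preserve $\cP_1$ via operation (b), and conclude by the limit operation (c). The only (harmless) difference is one of packaging: you generate all rational horizontal side lengths $m/n$ directly from $Q_0$ and then approximate an arbitrary $a>0$ by rationals, whereas the paper inverts the specific sequence $\Psi_{m_i,n_i}$ constructed in the proof of Lemma~\ref{lem:unit_square} to produce a sequence in $\cP_1$ converging to $P$.
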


\begin{proof} According to Lemma~\ref{lem:operations},
it suffices to show that
$P$
can be obtained from
$Q_0$
by operations (a)--(c). By the proof of
Lemma~\ref{lem:unit_square}, there is a sequence of
integer pairs
$(m_i,n_i)$
such that the sequence of rectangles
$P_i$,
where
$P_1=P$, $P_{i+1}=\Psi_i(P_i)$
and
$\Psi_i=\Psi_{m_i,n_i}$,
converges to
$Q_0$, $P_i\to Q_0$.
For every integer
$i\ge 1$
we inductively define a rectangle
$Q_i$
such that
$Q_i=\Phi_i(Q_{i-1})$,
where
$\Phi_i=(\Psi_i\circ\dots\circ\Psi_1)^{-1}$.
Since
$\Phi_i^{-1}(P)=P_i\to Q_0$,
we have
$Q_i\to P$.
Furthermore,
$Q_i\in\cP_1$
for every
$i\ge 1$
because
$Q_0\in\cP_1$
by Lemma~\ref{lem:unit_square}. Therefore
$P\in\cP_1$.
\end{proof}

We denote with
$\cP$
the class of all the rectangles
$P\sub L$
with horizontal and vertical sides that satisfy
the area law of lifting,
$\de(P)^2=c_0^2\cdot\area P$,
with the lifting constant
$c_0$.

\begin{pro}\label{pro:area_law_all} Every rectangle
$P\sub L$
with horizontal and vertical sides is in the class
$\cP$.
\end{pro}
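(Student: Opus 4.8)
The plan is to promote the normalized area law, already established for unit-area rectangles in Corollary~\ref{cor:each_par_p}, to rectangles of arbitrary area by tracking how the displacement $\de(P)$ and the area transform under a homothety. The crucial point is that both quantities scale cleanly: a homothety with factor $\la$ multiplies $\de(P)$ by $\la$ and $\area P$ by $\la^2$, so the relation $\de(P)^2=c_0^2\cdot\area P$ is homothety-invariant.

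First I would fix a point $o\in X_\om$ with $\pi_\om(o)\in L$ and, for $\la>0$, take a pure homothety $h_\la\in\Ga_{\om,o}$ with coefficient $\la$; such a homothety exists by property~(H), see Proposition~\ref{pro:homothety_property}, it preserves the order $O$ by the argument of Lemma~\ref{lem:shift_preserve_order}, so $h_\la\in G_\om$, and $g_\la:=\pi_\ast(h_\la)$ is a homothety of $B_\om$ with the same coefficient $\la$ by Corollary~\ref{cor:pi_submetry}. Since $h_\la$ fixes every Ptolemy line through $o$, the map $g_\la$ fixes every geodesic line through $\ov o=\pi_\om(o)$ and is therefore the radial scaling $\ov x\mapsto\ov o+\la(\ov x-\ov o)$ of $B_\om\cong\R^k$; in particular it preserves $L$ together with the horizontal and vertical directions. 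Thus for a rectangle $P\sub L$ with horizontal and vertical sides, $P'=g_\la(P)$ is again such a rectangle and $\area P'=\la^2\area P$.

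Next I would establish the conjugation formula $\tau_{P'}=h_\la\circ\tau_P\circ h_\la^{-1}$. Because $h_\la$ carries $\C$-lines to $\C$-lines and Ptolemy lines to Ptolemy lines and preserves the order $O$, it sends the lift of the sides of $P$ starting at a point $x$ to the lift of the sides of $P'=g_\la(P)$ starting at $h_\la(x)$; hence $\tau_{P'}\circ h_\la=h_\la\circ\tau_P$ on the fiber over $\ov o$. Both $\tau_{P'}$ and $h_\la\circ\tau_P\circ h_\la^{-1}$ are vertical shifts (the latter has dilatation coefficient $1$ and induces the identity on $B_\om$), so they coincide on all of $X_\om$ by simple transitivity, Lemma~\ref{lem:simply_transitivity_shifts}. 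Evaluating the displacement at $x'=h_\la(x)$ and using that $h_\la$ multiplies distances by $\la$,
\begin{equation*}
\de(P')=|x'\,\tau_{P'}(x')|=|h_\la(x)\,h_\la(\tau_P(x))|=\la\,|x\,\tau_P(x)|=\la\,\de(P),
\end{equation*}
which is the asserted scaling of the displacement; here Proposition~\ref{pro:const_displacement_cline} guarantees that the value is independent of the chosen $x$.

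Finally, given an arbitrary rectangle $P\sub L$ with horizontal and vertical sides and $A=\area P$, I would apply the above with $\la=1/\sqrt{A}$. Then $\area P'=1$, so $P'\in\cP_1$ by Corollary~\ref{cor:each_par_p} and $\de(P')=c_0$; combining this with the scaling formula gives $c_0=\de(P')=\de(P)/\sqrt{A}$, that is $\de(P)^2=c_0^2\cdot\area P$, so $P\in\cP$. The only delicate point is the conjugation identity together with the admissibility of $h_\la$ (that it lies in $G_\om$ and that $g_\la$ preserves $L$ and the two distinguished directions); once this is in place the computation is routine, precisely because displacements are constant and hence base-point independent.
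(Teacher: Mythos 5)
Your proposal is correct and follows essentially the same route as the paper: both use a pure homothety $h_\la$ fixing a point over $L$, note that its projection to $B_\om$ is a radial scaling preserving $L$ and the horizontal/vertical directions, establish the scaling $\de(P')=\la\,\de(P)$, and then reduce to the unit-area case of Corollary~\ref{cor:each_par_p}. Your conjugation identity $\tau_{P'}=h_\la\circ\tau_P\circ h_\la^{-1}$ is a welcome justification of the displacement scaling that the paper only asserts.
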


\begin{proof} By property~(H) and Lemma~\ref{lem:pure_homothety}
for every
$o\in X_\om$
and every
$\la>0$
there is a pure homothety
$h_\la:X_\om\to X_\om$
with 
$h_\la(o)=o$
and with coefficient
$\la$.
Then
$h_\la$
preserves every foliation of 
$X_\om$
by Busemann parallel Ptolemy lines. Thus its projection to the base,
$\ov h_\la=\pi_\ast(h_\la):B_\om\to B_\om$,
is a homothety with coefficient
$\la$
that fixes 
$\ov o=\pi_\om(o)$,
and
$\ov h_\la(l)=l$
for every (geodesic) line 
$l\sub B_\om$
through
$\ov o$.

We take
$o\in X_\om$
with 
$\ov o\in L$.
Then
$\ov h_\la$
preserves 
$L$
and horizontal and vertical directions on it.
Furthermore, for every
$P'=\ov h_\la(P)$, $P\sub L$
is a polygon, we have
$\de(P')=\la\de(P)$.
Taking
$P\in\cP_1$
we obtain
$$\de(P')^2=\la^2\de(P)^2=c_0^2\cdot\area P'.$$
Thus
$P'\in\cP$.
However, by Corollary~\ref{cor:each_par_p} every rectangle
$P'\sub L$
with horizontal and vertical sides can be represented as
$P'=\ov h_\la(P)$
for some
$P\in\cP_1$
and some
$\la>0$.
\end{proof}

\subsection{Metrics on
$\C$-lines}

Recall that every
$\C$-line
$F\sub X_\om$
is a fiber of the projection
$\pi_\om:X_\om\to B_\om$.

\begin{pro}\label{pro:euclid_square_fiber} Given
$x$, $y$, $z\in F$, $y$
lies between
$x$
and
$z$
with respect to the order on a $\C$-line
$F$,
we have
$$|xz|^2=|xy|^2+|yz|^2.$$
\end{pro}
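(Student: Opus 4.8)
The plan is to read the desired identity as a statement about the central group $Z_\om$ and to derive it from the area law of lifting (Proposition~\ref{pro:area_law_all}). Recall that in the present case $p=1$, so by Proposition~\ref{pro:fiber_nilpotent_lie_group} the group $Z_\om$ is a one-dimensional simply connected Lie group, hence isomorphic to $(\R,+)$ and in particular abelian; by Proposition~\ref{pro:nil_transitive} it is the center of $N_\om$ and it acts simply transitively on the $\C$-line $F$. Every $\zeta\in Z_\om$ is a vertical shift, so by Proposition~\ref{pro:const_displacement_cline} its displacement function is constant; I write $\de(\zeta)=|u\,\zeta(u)|$, independent of $u\in X_\om$. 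By Lemma~\ref{lem:shift_preserve_order} the action of $Z_\om$ preserves the order $O$ on $F$ (which is well defined by Lemma~\ref{lem:fiber_orient}), so fixing $o\in F$ the map $\zeta\mapsto\zeta(o)$ is an order-preserving bijection onto $F$. I encode the fiber metric by the signed displacement $s(\zeta)=\pm\de(\zeta)$, with sign $+$ when $\zeta(o)$ follows $o$ in the order and $-$ otherwise; then $|s(\zeta)|=\de(\zeta)$ and $s$ is an order-preserving bijection $Z_\om\to\R$.

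The key quantity is $c(\zeta):=s(\zeta)\,|s(\zeta)|$, which I claim is a homomorphism $Z_\om\to(\R,+)$. To see this I realize central elements as lifting isometries of rectangles. For a positively oriented rectangle $P\sub L$ the area law gives $\de(\tau_P)^2=c_0^2\,\area P$. Here $c_0>0$: by Lemma~\ref{lem:unclosed_parall} there is an unclosed parallelogram, i.e.\ one with $\tau_P\neq\id$ and $\de(\tau_P)>0$, which after reshaping into a rectangle of the same area and the same lifting isometry forces $c_0>0$ via the area law. As $P$ ranges over positively oriented rectangles, $\tau_P$ depends continuously on $P$ and never equals $\id$ for $\area P>0$; since this family is connected, $s(\tau_P)$ keeps a constant sign, which I normalize to be positive. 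Thus $s(\tau_P)=c_0\sqrt{\area P}>0$ and $c(\tau_P)=c_0^2\,\area P$, and as $\area P$ exhausts $(0,\infty)$ every $\zeta$ with $s(\zeta)>0$ equals $\tau_P$ for a positively oriented rectangle with $\area P=c(\zeta)/c_0^2$.

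Next I establish additivity on the positive cone. Given $\zeta_1,\zeta_2\in Z_\om$ with $s(\zeta_i)>0$, choose rectangles $P_1,P_2\sub L$ with a common horizontal side length and with $\area P_i=c(\zeta_i)/c_0^2$; since the lifting isometry of a positively oriented rectangle depends only on its area, we have $\tau_{P_i}=\zeta_i$. Stacking $P_2$ above $P_1$ along the shared side yields a single rectangle $Q=P_1\cup P_2$ with $\area Q=\area P_1+\area P_2$, and Lemma~\ref{lem:adding_lifts} together with commutativity of $Z_\om$ gives $\tau_Q=\tau_{P_1}\tau_{P_2}=\zeta_1\zeta_2$. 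Hence
\begin{equation*}
c(\zeta_1\zeta_2)=c(\tau_Q)=c_0^2\,\area Q=c_0^2\,\area P_1+c_0^2\,\area P_2=c(\zeta_1)+c(\zeta_2),
\end{equation*}
so that $s(\zeta_1\zeta_2)^2=s(\zeta_1)^2+s(\zeta_2)^2$ whenever $\zeta_1,\zeta_2$ are positive.

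Finally I apply this to the three points. Assume without loss of generality that $x,y,z$ occur in this order on $F$, and let $\zeta_1,\zeta_2\in Z_\om$ be the unique elements with $\zeta_1(x)=y$ and $\zeta_2(y)=z$; since the action preserves $O$, both satisfy $s(\zeta_i)>0$. Because displacements are constant, $|xy|=\de(\zeta_1)=s(\zeta_1)$, $|yz|=\de(\zeta_2)=s(\zeta_2)$, and $|xz|=\de(\zeta_2\zeta_1)=s(\zeta_2\zeta_1)$, so the additivity above yields
\begin{equation*}
|xz|^2=s(\zeta_2\zeta_1)^2=s(\zeta_1)^2+s(\zeta_2)^2=|xy|^2+|yz|^2 .
\end{equation*}
The main obstacle I anticipate is the careful passage from the area law, which is phrased only for rectangles with fixed horizontal and vertical directions, to a genuine additive homomorphism on $Z_\om$: one must select the two rectangles with a common base so that their union is again a rectangle, and control orientations so that stacking corresponds to the group product in the positive direction. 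Once this is set up, the identity is an immediate consequence of the area law and the commutativity of $Z_\om$.
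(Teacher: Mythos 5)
Your proposal is correct and follows essentially the same route as the paper's proof: both arguments obtain $c_0>0$ from the unclosed parallelogram of Lemma~\ref{lem:unclosed_parall}, realize the shifts along $F$ as lifting isometries of rectangles via the area law (Proposition~\ref{pro:area_law_all}), and stack two rectangles with a common horizontal side so that Lemma~\ref{lem:adding_lifts} turns additivity of area into additivity of squared displacements. Your extra packaging of this as a homomorphism $c:Z_\om\to(\R,+)$ and the connectedness argument for the constancy of the sign of $s(\tau_P)$ only make explicit the orientation bookkeeping that the paper leaves implicit.
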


\begin{proof} By Lemma~\ref{lem:unclosed_parall} there is
an unclosed parallelogram
$P\sub B_\om$.
Thus by Proposition~\ref{pro:area_law_all} there is 
a two-dimensional subspace 
$L\sub B_\om$
with horizontal and vertical directions satisfying 
the area law of lifting with a lifting constant 
$c_0>0$.
W.L.G. we can assume that 
$F=\pi_\om^{-1}(o)$
for some point 
$o\in L$.
By Proposition~\ref{pro:area_law_all}, for every
$a>0$
a rectangle
$P_a\sub L$
with the horizontal sides of length 1 and the vertical
sides of length
$a$
belongs to the class
$\cP$, $\de(P_a)^2=c_0^2\cdot a$.
We put
$a=|xy|^2/c_0^2$, $b=|yz|^2/c_0^2$
and represent the rectangle
$P_{a+b}$
as the union of
$P_a$
and
$P_b$
with a common horizontal side. Then
$\tau_{P_{a+b}}=\tau_{P_b}\circ\tau_{P_a}$
by Lemma~\ref{lem:adding_lifts}. Since
$\de(P_a)=c_0\sqrt a=|xy|$,
we have
$\tau_{P_a}(x)=y$
and similarly
$\tau_{P_b}(y)=z$.
Therefore
$\tau_{P_{a+b}}(x)=z$,
and we obtain
$$|xz|^2=\de(P_{a+b})^2=c_0^2(a+b)=\de(P_a)^2+\de(P_b)^2
=|xy|^2+|yz|^2.$$
\end{proof}

\section{Canonical complex structure on the base}
\label{sect:complex_structure}

In this section we show that the base
$B_\om$
possesses a complex structure uniquely determined by
the geometry of the space 
$X_\om$.
This complex structure is said to be {\em canonical}.

\subsection{Functional $\xi_u$}
\label{subsect:functional_xi}

We fix a base point
$o\in B_\om$
and regard
$B_\om$
as an Euclidean vector space identifying
$u\in B_\om$
with the vector
$\overrightarrow{ou}$.
Given a unit vector
$u\in B_\om$, $|u|:=|ou|=1$,
we let
$u^\bot\sub B_\om$
be the orthogonal complement to
$u$.
For every vector
$v\in u^\bot$
we denote with
$u\wedge v\sub B_\om$
the oriented rectangle spanned by
$u$, $v$,
and with
$\tau_{u\wedge v}:X_\om\to X_\om$
the respective lifting isometry, see sect.~\ref{subsect:lift_polygons}.
We define the sign of
$v$
with respect to
$u$
as
$$\sign_u(v)=\begin{cases}
+1,&x<\tau_{u\wedge v}(x)\\
0,&x=\tau_{u\wedge v}(x)\\
-1,&\tau_{u\wedge v}(x)<x
          \end{cases}$$
for some and hence any
$x\in X_\om$.
Now, we define a function
$\xi_u:u^\bot\to\R$
by
$$\xi_u(v)=\sign_u(v)\cdot\de(u\wedge v)^2,$$
where
$\de(u\wedge v)$
is the displacement of
$\tau_{u\wedge v}$, $\de(u\wedge v)=|x\tau_{u\wedge v}(x)|$
for some and hence any
$x\in X_\om$.

The following lemma will be used in the proof of
additivity of
$\xi_u$.

\begin{lem}\label{lem:lift_additivity} Let
$P=xyzu$, $Q=xyz'u'$, $Q'=u'z'zu$
be oriented rectangles in
$B_\om$
such that
$z'u'=Q\cap Q'$
is the common side of
$Q$
and
$Q'$.
Then for the closed oriented polygon
$Q\cup Q'=xyz'zuu'x\sub B_\om$
the lifting isometries
$\tau_P$, $\tau_{Q\cup Q}:X_\om\to X_\om$
coincide,
$\tau_P=\tau_{Q\cup Q}$.
\end{lem}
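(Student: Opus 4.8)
The plan is to reduce everything to the canonical $\C$-line isometries $\mu_{bb'}$ and to exploit a single additivity property of them along a geodesic. Recall from sect.~\ref{subsect:both_foliation_equidistant} and sect.~\ref{subsect:lift_polygons} that the lifting isometry of an oriented polygon with consecutive vertices $b_0,b_1,\dots,b_{m-1}$ (and $b_m=b_0$) is the composition $\mu_{b_{m-1}b_0}\circ\dots\circ\mu_{b_1b_2}\circ\mu_{b_0b_1}$, where each $\mu_{b_ib_{i+1}}\colon F_{b_i}\to F_{b_{i+1}}$ lifts the edge $b_ib_{i+1}$ along the unique Ptolemy line of the \semi-plane over that edge. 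The whole point is that $Q\cup Q'$ is nothing but $P$ with two extra vertices inserted on two of its sides, so the statement amounts to: inserting a collinear vertex does not change the lift.

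First I would establish the additivity property: if $b''$ lies on the geodesic segment between $b$ and $b'$ in $B_\om$, then $\mu_{bb'}=\mu_{b''b'}\circ\mu_{bb''}$. Let $l\sub X_\om$ be the unique Ptolemy line projecting onto the geodesic through $b$ and $b'$. By Proposition~\ref{pro:base_metric} the restriction $\pi_\om|l$ is isometric, so $l$ meets $F_b$, $F_{b''}$ and $F_{b'}$, and by ($1_\C$) the same full line $l$ defines all three maps, which are therefore realized inside the one \semi-plane $M_l$. For any $p\in F_b$, the Ptolemy line $l_p\sub M_l$ through $p$ Busemann parallel to $l$ meets $F_b$, $F_{b''}$, $F_{b'}$ in points $p$, $p''$, $p'$ by Lemma~\ref{lem:rfoliation_semik}; hence $\mu_{bb''}(p)=p''$, $\mu_{b''b'}(p'')=p'$ and $\mu_{bb'}(p)=p'$, giving $\mu_{b''b'}\circ\mu_{bb''}=\mu_{bb'}$.

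Next I would apply this to the three rectangles. Positioning $P=xyzu$ so that $xy$ and $zu$ are opposite sides, $Q=xyz'u'$ shares the side $xy$ and $Q'=u'z'zu$ shares the side $zu$, while their common side $z'u'$ is parallel to $xy$; consequently $z'$ lies on the segment $yz$ and $u'$ lies on the segment $ux$, each strictly between its endpoints. Traversing $Q\cup Q'=xyz'zuu'x$ yields
$$\tau_{Q\cup Q'}=\mu_{u'x}\circ\mu_{uu'}\circ\mu_{zu}\circ\mu_{z'z}\circ\mu_{yz'}\circ\mu_{xy},$$
whereas $\tau_P=\mu_{ux}\circ\mu_{zu}\circ\mu_{yz}\circ\mu_{xy}$. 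The additivity with $b''=z'\in yz$ gives $\mu_{z'z}\circ\mu_{yz'}=\mu_{yz}$, and with $b''=u'\in ux$ gives $\mu_{u'x}\circ\mu_{uu'}=\mu_{ux}$; substituting these into the displayed formula collapses $\tau_{Q\cup Q'}$ to $\tau_P$.

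The main obstacle is the additivity step, and within it the verification that the three maps $\mu_{bb'}$, $\mu_{bb''}$, $\mu_{b''b'}$ are induced by one and the same \semi-plane $M_l$ — this is exactly what allows a single Busemann parallel line $l_p$ to realize the entire composition point by point. This hinges on the isometric projection of Ptolemy lines to $B_\om$ (Proposition~\ref{pro:base_metric}) together with the uniqueness in ($1_\C$); once these are in hand, the remainder is routine bookkeeping with the edge labels, and no appeal to the order on $\C$-lines is even needed.
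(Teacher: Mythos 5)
There is a genuine gap: your reading of the configuration is too narrow, and the step ``consequently $z'$ lies on the segment $yz$ and $u'$ lies on the segment $ux$'' is false in general. The hypotheses only force $z'=y+v$, $u'=x+v$ and $z=y+w$, $u=x+w$ for vectors $v,w$ orthogonal to $y-x$ (this is what makes all three quadrilaterals rectangles); nothing makes $v$ a positive multiple of $w$, so the three rectangles need not be coplanar and $z'$ need not lie on the segment $yz$. This generality is exactly what the lemma is for: in the proof of Proposition~\ref{pro:linear_functional} it is applied with $w=v+v'$ for linearly independent $v,v'$, in which case $Q\cup Q'=xyz'zuu'x$ is a genuine (skew) hexagon rather than $P$ with collinear vertices inserted. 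Your argument, correct as far as it goes, treats only the degenerate planar case, where it reduces to the collinear-insertion fact and carries essentially no new content; it cannot be substituted for the lemma in its intended use.

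The paper's proof handles the general case by a different mechanism: it writes $\tau_{Q\cup Q'}^{-1}\circ\tau_P$ as the product of the lifting isometries of the two triangles $\Delta=uxu'$ and $\Delta'=zz'y$, observes that $\Delta'$ is the shift of $\Delta$ by the vector $y-x$ taken with the opposite orientation, and then cancels the two contributions using the shift-invariance of lifting isometries (sect.~\ref{subsect:lift_polygons}) and the fact that these isometries lie in the commutative group $Z_\om$. Your intermediate additivity claim $\mu_{bb'}=\mu_{b''b'}\circ\mu_{bb''}$ for $b''$ on the segment between $b$ and $b'$ is correct and adequately justified via Proposition~\ref{pro:base_metric}, Lemma~\ref{lem:rfoliation_semik} and ($1_\C$), but it addresses a different, and easier, phenomenon than the one this lemma is about.
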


\begin{proof} The lifting isometries
$\tau_P$
and
$\tau_{Q\cup Q'}$
differ by lifting isometries
$\tau_{\De}$, $\tau_{\De'}$,
$$\tau_{Q\cup Q'}^{-1}\circ\tau_P=\tau_{\De'}\circ\tau_\De,$$
where
$\De=uxu'$, $\De'=zz'y$.
However, the triangle
$\De'$
is a shifted copy of the triangle
$\De$,
and
$\De$, $\De'$
enter the formula above with opposite orientations.
Thus their contributions cancel out, and we have
$\tau_P=\tau_{Q\cup Q}$.
\end{proof}

\begin{pro}\label{pro:linear_functional}
The function
$\xi_u:u^\bot\to\R$
is a nonzero linear functional for every unit vector
$u\in B_\om$.
\end{pro}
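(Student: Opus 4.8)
The plan is to recognize $\xi_u$ as the composition of the assignment $v\mapsto\tau_{u\wedge v}$ with a fixed group isomorphism $\Theta\colon Z_\om\to\R$, and then to prove additivity and homogeneity separately. First I would fix a $\C$-line $F\sub X_\om$ and set $\Theta(\zeta)=\sign(\zeta)\cdot\de(\zeta)^2$ for $\zeta\in Z_\om$, where $\de(\zeta)=|x\zeta(x)|$ is the displacement on $F$, which is constant by Proposition~\ref{pro:const_displacement_cline}, and $\sign(\zeta)$ records whether $\zeta$ moves points forward or backward in the order $O$. Since $Z_\om$ acts simply transitively on $F$ (Proposition~\ref{pro:fiber_nilpotent_lie_group}), $\Theta$ is a bijection onto $\R$; and it is a homomorphism, because for $\zeta_1,\zeta_2\in Z_\om$ the three points $x$, $\zeta_1(x)$, $\zeta_2\zeta_1(x)$ lie on $F$ with $|x\,\zeta_1(x)|=\de(\zeta_1)$ and $|\zeta_1(x)\,\zeta_2\zeta_1(x)|=\de(\zeta_2)$, so Proposition~\ref{pro:euclid_square_fiber} applied to the point lying between the other two yields $\Theta(\zeta_2\zeta_1)=\Theta(\zeta_1)+\Theta(\zeta_2)$ in every sign configuration. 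By the very definition of $\xi_u$ we have $\xi_u(v)=\Theta(\tau_{u\wedge v})$, and $\tau_{u\wedge v}\in Z_\om$ is central (sect.~\ref{subsect:lift_polygons} and Proposition~\ref{pro:nil_transitive}).

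The additivity $\xi_u(v_1+v_2)=\xi_u(v_1)+\xi_u(v_2)$ then reduces, via the homomorphism $\Theta$ and commutativity of $Z_\om$, to the identity $\tau_{u\wedge(v_1+v_2)}=\tau_{u\wedge v_2}\circ\tau_{u\wedge v_1}$. This is exactly what Lemma~\ref{lem:lift_additivity} delivers once the rectangles are chosen correctly: taking the base point $o$ as origin I would put $P=u\wedge(v_1+v_2)$ with vertices $o,u,u+v_1+v_2,v_1+v_2$, $Q=u\wedge v_1$ with vertices $o,u,u+v_1,v_1$, and $Q'=v_1+(u\wedge v_2)$ with vertices $v_1,u+v_1,u+v_1+v_2,v_1+v_2$. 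All three are honest rectangles because $v_1,v_2,v_1+v_2\in u^\bot$; the side from $u+v_1$ to $v_1$ is common to $Q$ and $Q'$; and the two leftover triangles $\De$, $\De'$ from the proof of Lemma~\ref{lem:lift_additivity} are translates of one another by $u$ with opposite orientation, so they cancel. Hence $\tau_P=\tau_{Q\cup Q'}=\tau_{Q'}\circ\tau_Q$ by Lemma~\ref{lem:lift_additivity} and Lemma~\ref{lem:adding_lifts}; since a shift does not change the lifting isometry (sect.~\ref{subsect:lift_polygons}) we get $\tau_{Q'}=\tau_{u\wedge v_2}$ and $\tau_Q=\tau_{u\wedge v_1}$, which is the desired identity.

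For homogeneity it suffices to work in a single direction: for $v\neq 0$ the rectangles $u\wedge v$ and $u\wedge(\la v)$ lie in the one plane $\mathrm{span}(u,v)$, so applying the area law (Proposition~\ref{pro:area_law_all}) in that plane gives $\de(u\wedge\la v)^2=\la\,\de(u\wedge v)^2$ for $\la>0$, with the sign of $v$ unchanged, whence $\xi_u(\la v)=\la\,\xi_u(v)$; the case $\la<0$ follows from $\tau_{u\wedge(-v)}=\tau_{u\wedge v}^{-1}$, i.e. $\xi_u(-v)=-\xi_u(v)$. Additivity together with this real homogeneity shows that $\xi_u$ is linear. Finally, to see $\xi_u\neq 0$ I would invoke Lemma~\ref{lem:unclosed_parall} for an unclosed parallelogram, convert it to an unclosed rectangle by cutting and regluing (Lemma~\ref{lem:adding_lifts}), and then use that the stabilizer of $o$ acts transitively on directions together with a homothety from $H$ (as in the proof of Lemma~\ref{lem:unclosed_parall} and the area-law section) to rotate one side onto $u$ and rescale it to unit length; since isometries of the base induced by elements of $G_\om$ and homotheties preserve $\de(\cdot)\neq 0$, the resulting rectangle is $u\wedge v$ for some $v\in u^\bot\sm\{0\}$ with $\tau_{u\wedge v}\neq\id$, so $\xi_u(v)\neq 0$. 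The main obstacle is the additivity step: pinning down the exact rectangle configuration that makes the two leftover triangles of Lemma~\ref{lem:lift_additivity} cancel, which is what turns the genuinely three-dimensional relation among rectangles lying in three distinct planes into a clean equation in the abelian group $Z_\om$.
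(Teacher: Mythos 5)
Your proof is correct and follows essentially the same route as the paper: additivity via the identity $\tau_{u\wedge(v+v')}=\tau_{u\wedge v'}\circ\tau_{u\wedge v}$ (Lemmas~\ref{lem:lift_additivity} and \ref{lem:adding_lifts}) combined with the Pythagorean relation of Proposition~\ref{pro:euclid_square_fiber} on the fiber, homogeneity via the area law, and non-vanishing via an unclosed parallelogram and transitivity of the stabilizer on directions. Packaging the sign-and-squared-displacement bookkeeping into the homomorphism $\Theta\colon Z_\om\to\R$, and spelling out the exact rectangle configuration feeding into Lemma~\ref{lem:lift_additivity}, are only organizational refinements of the argument the paper gives.
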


\begin{proof}
First, we show that
$\xi_u$
is additive,
$\xi_u(v+v')=\xi_u(v)+\xi_u(v')$
for all
$v$, $v'\in u^\bot$.
We denote
$w=v+v'$.
It follows from Lemma~\ref{lem:lift_additivity} and
Lemma~\ref{lem:adding_lifts} that
$\tau_{u\wedge w}=\tau_{u\wedge v'}\circ\tau_{u\wedge v}$.
We fix a fiber
$F\sub X_\om$,
a point
$x\in F$,
and assume W.L.G. that
$x<\tau_{u\wedge w}(x)$
(this assumption depends neither on
$F$
nor on
$x\in F$).
If
$x<\tau_{u\wedge v}(x)<\tau_{u\wedge w}(x)$,
then
$\sign_u(w)=\sign_u(v)=\sign_u(v')=1$
by definition and
$\de(u\wedge w)^2=\de(u\wedge v)^2+\de(u\wedge v')^2$
by Proposition~\ref{pro:euclid_square_fiber}.
In the opposite case we have W.L.G. that
$\tau_{u\wedge v'}(x)<x<\tau_{u\wedge w}(x)<\tau_{u\wedge v}(x)$
and thus
$\sign_u(w)=\sign_u(v)=-\sign_u(v')$,
$\de(u\wedge w)^2=\de(u\wedge v)^2-\de(u\wedge v')^2$.
In both cases this gives
$\xi_u(w)=\xi_u(v)+\xi_u(v')$.

Next, we show that 
$\xi_u$
is homogeneous. We have 
$\sign_u(\la v)=\sign(\la)\cdot\sign_u(v)$
for each
$v\in u^\bot$, $\la\in\R$,
because the orientation of the rectangle
$u\wedge(\la v)$
depends on the sign of
$\la$.
Thus using the area law of lifting, which holds 
by Proposition~\ref{pro:area_law_all} in the plane
$L$
spanned by
$u$, $v$,
with some constant
$c_0\ge 0$,
we have
$$\xi_u(\la v)=\sign(\la)\sign_u(v)c_0^2|\la||v|=
\la\sign_u(v)c_0^2|v|=\la\xi_u(v)$$
for every
$v\in u^\bot$
and every
$\la\in\R$.
Thus
$\xi_u:u^\bot\to\R$
is linear.

Finally, there are unclosed rectangles in
$B_\om$
and the subgroup of isometries in
$H=\pi_\ast(G_\om)$
preserving
$o$
acts transitively on the unit sphere
$S(o)$. 
Then there is a unit vector
$v\in u^\bot$ 
such that
$\sign_u(v)=1$.
Thus 
$\xi_u\neq 0$.
\end{proof}

We put
$c=\sup_Q\de(Q)$,
where the supremum is taken over all the unit squares
$Q\sub B$,
and call
$c$
the {\em lifting constant} of
$B$.

\begin{lem}\label{lem:xi_norm}
For every unit vector
$u\in B_\om$
the norm of the linear functional
$\xi_u:u^\bot\to\R$
is
$|\xi_u|=c^2$.
\end{lem}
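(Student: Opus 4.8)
The plan is to rewrite both the operator norm $|\xi_u|$ and the lifting constant $c$ as suprema of displacements of unit squares, and then to identify these two suprema using the rotational symmetry of the displacement $\de$.

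First I would record what $|\xi_u|$ is. Since $\xi_u$ is a linear functional on the Euclidean space $u^\bot$ by Proposition~\ref{pro:linear_functional}, its norm is $|\xi_u|=\sup\{|\xi_u(v)|:v\in u^\bot,\ |v|=1\}$. For a unit vector $v\in u^\bot$ the rectangle $u\wedge v$ is a \emph{unit square}, so by the very definition of $\xi_u$ we have $|\xi_u(v)|=|\sign_u(v)|\cdot\de(u\wedge v)^2=\de(u\wedge v)^2$ (the two cases $\sign_u(v)=\pm1$ and $\sign_u(v)=0$ both give this). Hence $|\xi_u|=\sup_{v\in u^\bot,\,|v|=1}\de(u\wedge v)^2$. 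As each $u\wedge v$ is one of the unit squares over which $c$ is the supremum, $\de(u\wedge v)\le c$, and this immediately yields the easy half $|\xi_u|\le c^2$.

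For the reverse inequality I would use that $\de$ is invariant under the order-preserving isometries of $B_\om$ fixing $o$. Precisely, for an isometry $\phi\in G_\om$ one has $\tau_{\pi_\ast(\phi)(P)}=\tau_P$ (sect.~\ref{subsect:lift_polygons}), so $\de(g(P))=\de(P)$ for every $g$ in the group $K$ of isometries in $H=\pi_\ast(G_\om)$ fixing $o$ and every rectangle $P\sub B_\om$; note that if $g\in K$ is an isometry then its lift $\phi$ is an isometry as well, by Corollary~\ref{cor:pi_submetry}, so the cited formula does apply. The decisive input is that $K$ acts transitively on the unit sphere $S(o)$, exactly as used in the proof of Proposition~\ref{pro:linear_functional}. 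Given an arbitrary unit square $Q=a\wedge b$ with $a,b$ orthonormal, I pick $g\in K$ with $g(a)=u$; then $g(Q)=u\wedge g(b)$ is again a unit square with $g(b)\in u^\bot$ and $|g(b)|=1$, while $\de(g(Q))=\de(Q)$. Thus every displacement of a unit square is realized by one of the form $\de(u\wedge v)$, which gives $c=\sup_Q\de(Q)=\sup_{v\in u^\bot,\,|v|=1}\de(u\wedge v)$ and therefore $c^2=|\xi_u|$.

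The single load-bearing step is the transitivity of $K$ on $S(o)$ together with the $K$-invariance of $\de$: this is what lets an arbitrary unit square be rotated into the form $u\wedge v$ without changing its displacement, collapsing the two suprema onto one another. The remaining ingredients---that a unit square has two orthonormal unit sides, so $|\xi_u(v)|=\de(u\wedge v)^2$, and that $\de$ is genuinely a function of the oriented rectangle via Proposition~\ref{pro:const_displacement_cline}---are already in place. A useful byproduct worth recording is that the argument in fact shows $\de(Q)$ depends only on $\area Q$ and not on the plane or orientation of $Q$, so that the lifting constant $c$ is a well-defined invariant of $B_\om$.
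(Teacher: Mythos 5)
Your proof of the lemma itself is correct and follows essentially the same route as the paper: both arguments rest on the invariance of $\de$ under the order-preserving isometries fixing $o$ together with the transitivity of that group on $S(o)$. The only difference is that the paper first produces an extremal unit square $Q_0$ with $\de(Q_0)=c$ by compactness and continuity of $Q\mapsto\tau_Q$ and then rotates that one square into the form $u\wedge v'$, whereas you rotate every unit square into that form, which identifies the two suprema without needing the supremum to be attained; this is a harmless (arguably cleaner) reorganization, not a different method.

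However, the ``byproduct'' you record in your last sentence is false and should be deleted. Transitivity on $S(o)$ lets you prescribe the image of \emph{one} side of the square, $g(a)=u$, but gives no control over $g(b)$ beyond $g(b)\in u^\bot$; different unit squares are carried to different rectangles $u\wedge v$, and $\de(u\wedge v)^2=|\xi_u(v)|$ genuinely depends on $v$. Indeed, for $\dim B_\om>2$ the kernel of the nonzero linear functional $\xi_u$ is nontrivial, so there are unit squares with $\de=0$ alongside unit squares with $\de=c$; the dependence of $\de(Q)$ on the $2$-plane spanned by $Q$ is precisely what makes the complex structure $J$ well defined (cf.\ Remark~\ref{rem:complex_structure}, where complex lines are characterized as the planes on which the lifting constant is maximal). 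What your rotation argument actually shows is only that the \emph{set} of displacements of all unit squares coincides with $\{\de(u\wedge v):\,v\in u^\bot,\ |v|=1\}$, which is all the lemma requires.
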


\begin{proof}
By Proposition~\ref{pro:linear_functional},
$\xi_u\neq 0$.
Thus there is a unique unit vector
$v\in u^\bot$
with
$\xi_u(v)=|\xi_u|\le c^2$.

The lifting constant
$c$
can be computed by taking the supremum
$\sup_Q\de(Q)$
over the compact set of all unit squares
$Q\sub B_\om$
having the base point
$o$
as a vertex. By continuity of the lift
$Q\mapsto\tau_Q$,
there is a unit square
$Q_0$
with
$\de(Q_0)=c$.
Applying to
$Q_0$
an isometry of type
$\pi_\ast(\phi):B_\om\to B_\om$
if necessary, we can assume that
$Q_0=u\wedge v'$,
where
$v'\in u^\bot$, $\sign_u(v')=1$.
Then
$\xi_u(v')=c^2$.
It follows that
$v'=v$
and
$|\xi_u|=c^2$.
\end{proof}

\subsection{Complex structure on the base}
\label{subsect:complex_structure}

By Lemma~\ref{lem:xi_norm}, for every unit vector
$u\in B_\om$
there is a unique
$v\in u^\bot$, $|v|=1$, $\sign_u(v)=1$,
such that
$\xi_u(v)=c^2$.
This define a map
$J:S(o)\to S(o)$, $J(u)=v$,
where
$S(o)\sub B_\om$
is the unit sphere centered at
$o$.
We put
$J(o)=o$
and extend
$J$
on
$B$
by homogeneity,
$J(u)=|u|J(u/|u|)$
for every
$u\in B_\om$.

\begin{lem}\label{lem:commute_const_displacement}
Let
$h:S(o)\to S(o)$
be a map commuting with a subgroup 
$G\sub\bO(k)$, $k=\dim B_\om$,
acting transitively on
$S(o)$
(we do not require that
$h$
is an isometry). Then the displacement of
$h$
is constant. 
\end{lem}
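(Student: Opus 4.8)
The plan is to derive the constancy of the displacement purely from the symmetry forced by commutation with the transitive group $G$, without using any regularity of $h$ itself. Recall that the displacement of a self-map $h$ of $S(o)$ at a point $u\in S(o)$ is the number $\de_h(u)=|u\,h(u)|$, the distance between $u$ and its image, measured in the ambient Euclidean space $B_\om$ (equivalently, one could use the geodesic distance on $S(o)$; either works below). The goal is to show that $\de_h$ does not depend on $u$.

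First I would record the only structural fact needed: every $g\in G\sub\bO(k)$ is an orthogonal transformation of $B_\om$ fixing $o$, hence an isometry, so that $|g(a)\,g(b)|=|a\,b|$ for all $a$, $b\in B_\om$, and in particular $g$ maps $S(o)$ to $S(o)$. Next I would invoke transitivity: given arbitrary $u$, $u'\in S(o)$, choose $g\in G$ with $g(u)=u'$. Using that $h$ commutes with $g$, we obtain $h(u')=h(g(u))=g(h(u))$.

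Combining these two observations gives at once
\begin{equation*}
\de_h(u')=|u'\,h(u')|=|g(u)\,g(h(u))|=|u\,h(u)|=\de_h(u),
\end{equation*}
where the third equality is exactly the isometry property of $g$. Since $u$ and $u'$ were arbitrary, $\de_h$ is constant on $S(o)$, as claimed.

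There is essentially no obstacle here; the entire content is the remark that commutation with an isometric transitive action transports the displacement from any point to any other, and that orthogonal maps preserve distances. The only point deserving a word of care is to state explicitly which distance defines the displacement and to note that it is preserved by the elements of $G\sub\bO(k)$ in either the chordal Euclidean or the intrinsic spherical sense; this is precisely why the hypothesis that $h$ be an isometry can be dropped, the whole effect coming from $G$ rather than from $h$.
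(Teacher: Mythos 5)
Your argument is correct and coincides with the paper's own proof: both use transitivity to pick $g\in G$ with $g(u)=u'$, apply commutation to get $h(u')=g(h(u))$, and conclude by the distance-preservation of the orthogonal map $g$. No issues.
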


\begin{proof}
Given
$x$, $y\in S(o)$,
there is 
$g\in G$
with
$g(x)=y$.
Then
$|yh(y)|=|g(x)h(g(x))|=|g(x)g(h(x))|=|xh(x)|$.
\end{proof}

\begin{pro}\label{pro:complex_structure} The map
$J:B_\om\to B_\om$
is a complex structure on
$B_\om$,
that is,
$J$
is a linear isometry with
$J^2=-\id$.
Moreover,
every M\"obius automorphism
$\phi:X_\om\to X_\om$
respecting the order of the
$\C$-lines in
$X_\om$
preserves
$J$, $\pi_\ast(\phi)\circ J=J\circ\pi_\ast(\phi)$.
\end{pro}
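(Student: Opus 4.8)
The plan is to show that the maps $v\mapsto\xi_u(v)$ are the restrictions of a single alternating bilinear form $\omega$ on $B_\om$, and then to read off every property of $J$ by linear algebra. First I would promote the scalar invariant of a rectangle to a genuine $2$-form. Since fibers are one-dimensional, $Z_\om$ is a one-parameter group (Proposition~\ref{pro:fiber_nilpotent_lie_group}), and by Proposition~\ref{pro:euclid_square_fiber} the squared displacement is additive under composition along a $\C$-line; thus $Z_\om\cong(\R,+)$ canonically via $\zeta\mapsto\pm\de(\zeta)^2$, the sign taken according to the order $O$. Under this identification the lifting map $P\mapsto\tau_P$ becomes an $\R$-valued functional on oriented polygons which is additive under subdivision (Lemma~\ref{lem:adding_lifts}, Lemma~\ref{lem:lift_additivity}), alternating under reversal of orientation, and translation-invariant since shifts act trivially on lifts. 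Define $\omega(u,v)$ to be the value of this functional on the oriented parallelogram spanned by $u$ and $v$. Homogeneity $\omega(\la u,v)=\la\,\omega(u,v)$ follows from the area law of lifting (Proposition~\ref{pro:area_law_all}), which makes $\omega$ proportional to signed area in each $2$-plane; combined with additivity this gives that $\omega$ is bilinear and alternating, and by construction $\omega(u,v)=\xi_u(v)$ whenever $u\bot v$.

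Next I would represent $\omega$ by an operator. Writing $\omega(u,v)=\langle Au,v\rangle$ defines a linear map $A:B_\om\to B_\om$, and alternation of $\omega$ gives $A^\ast=-A$. Since $Au\in u^\bot$, the norm computation of Lemma~\ref{lem:xi_norm} says precisely that $|Au|=c^2$ for every unit $u$, i.e. $|Au|=c^2|u|$ for all $u$. Hence $J:=c^{-2}A$ is a linear, norm-preserving map, so it is orthogonal, and $J^\ast=-J$. An orthogonal antisymmetric operator satisfies $\id=J^\ast J=-J^2$, whence $J^2=-\id$; thus $J$ is a complex structure. It then remains to check that this $J$ is the map of the statement: for unit $u$ the functional $\xi_u(v)=\langle Au,v\rangle$ on $u^\bot$ attains its maximum over the unit sphere at $v=Au/|Au|=Ju$, with maximal value $|Au|=c^2>0$, forcing $\sign_u(Ju)=+1$ — exactly the defining property of $J(u)$.

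Finally, for equivariance I would use that $\omega$ is translation-invariant, so $A$ and $J$ are the same constant-coefficient operator at every base point, and it suffices to treat the linear part of $\pi_\ast(\phi)$. For an order-respecting $\phi$ with homothety coefficient $\la$, conjugation carries the lift of a polygon $P$ to the lift of $\pi_\ast(\phi)(P)$ and, since conjugation by a $\la$-homothety multiplies displacements by $\la$ and hence the $\R$-coordinate of $Z_\om$ by $\la^2$, one obtains $\omega(gu,gv)=\la^2\omega(u,v)$ for $g=\pi_\ast(\phi)$ (Corollary~\ref{cor:pi_submetry}, Proposition~\ref{pro:const_displacement_cline}). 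Writing $g=\la R$ with $R$ orthogonal, this reads $R^{-1}AR=A$, so $R$ and $g$ commute with $A$ and with $J=c^{-2}A$; the translational part commutes with the constant-coefficient $J$ automatically. This yields $\pi_\ast(\phi)\circ J=J\circ\pi_\ast(\phi)$.

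I expect the main obstacle to be the first step: upgrading the individually linear functionals $\xi_u$ from Proposition~\ref{pro:linear_functional} to a single bilinear alternating form, that is, proving linearity of $A$ in its first argument. The delicate point is that first-slot additivity forces one to lift \emph{non-orthogonal} parallelograms and to know that the lifting map is genuinely additive there; this is where Lemma~\ref{lem:adding_lifts} and Lemma~\ref{lem:lift_additivity}, the area law (Proposition~\ref{pro:area_law_all}), and the one-dimensionality of $Z_\om$ — so that $Z_\om\cong\R$ is abelian and the invariant is a constant $2$-form — must be combined carefully. Once bilinearity and the norm identity $|\xi_u|=c^2$ are in hand, all remaining assertions are routine linear algebra.
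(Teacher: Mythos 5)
Your proof is correct, but it reaches the conclusion by a genuinely different route for the hardest part of the statement, namely the linearity and isometry of $J$. The paper first proves $J^2=-\id$ directly (the kernel $K_u=\ker\xi_u$ is $J$-invariant and $\sign_v(u)=-\sign_u(v)$ forces $J(J(u))=-u$), then proves equivariance, and only at the end establishes linearity by building a canonical basis $\{u_i,v_i=J(u_i)\}$, defining an auxiliary linear $J_0$ agreeing with $J$ on it, and showing $J=J_0$ using the transitive group $G$ together with Lemma~\ref{lem:commute_const_displacement}. You instead globalize the functionals $\xi_u$ into a single alternating bilinear form $\omega$ (using $Z_\om\cong(\R,+)$ via signed squared displacement, which is indeed an additive isomorphism by Proposition~\ref{pro:euclid_square_fiber} and Proposition~\ref{pro:const_displacement_cline}), represent it as $\omega(u,v)=\langle Au,v\rangle$ with $A$ skew-adjoint and $|Au|=c^2|u|$ by Lemma~\ref{lem:xi_norm}, and then everything --- linearity, orthogonality, $J^2=-\id$, and equivariance via $\omega(gu,gv)=\la^2\omega(u,v)$ --- is routine linear algebra. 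Your approach buys a cleaner and more conceptual endgame and dispenses with the canonical-basis induction and with Lemma~\ref{lem:commute_const_displacement}; its price is the bilinearity of $\omega$, which you rightly flag as the delicate step. Note, though, that this step is easier than you fear: you do not need to lift genuinely non-orthogonal parallelograms from scratch, since $\omega(u,v)$ depends only on the component of $v$ orthogonal to $u$ (shearing, i.e.\ cutting and regluing as in sect.~\ref{subsect:area_law_lift} and Lemma~\ref{lem:lift_additivity}), second-slot additivity on $u^\bot$ is exactly Proposition~\ref{pro:linear_functional}, and first-slot additivity then follows from the antisymmetry $\omega(v,u)=-\omega(u,v)$, which is immediate from the orientation reversal of $u\wedge v$ versus $v\wedge u$. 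The equivariance argument is essentially the same in both proofs (both reduce to the case $\ov\phi(o)=o$ and use that order-preserving homotheties scale $\de$ by $\la$ and preserve signs).
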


\begin{proof} 
For 
$u\in S(o)$
let
$K_u\sub u^\bot$
be the kernel of
$\xi_u$, $K_u=\ker\xi_u$.
By definition,
$v=J(u)=\grad\xi_u/|\grad\xi_u|$,
thus
$K_u\sub u^\bot\cap v^\bot$.
Since
$\dim K_u=\dim u^\bot-1=\dim(u^\bot\cap v^\bot)$,
we have
$K_u=u^\bot\cap v^\bot=K_v$,
that is, the kernel
$K_u$
is invariant under
$J$, $J(K_u)=K_v=K_u$.
Furthermore,
$\sign_v(u)=-\sign_u(v)$
because the rectangles
$u\wedge v$
and
$v\wedge u$
have opposite orientations. We conclude that
$J(v)=-u$,
i.e.,
$J^2=-\id$.

Let
$\phi:X_\om\to X_\om$
be a M\"obius automorphism that respects the order
$O$
of the
$\C$-lines in
$X_\om$.
Recall that then
$\phi$
is a homothety, because
$\phi$
preserves the infinitely remote point
$\om$.
Applying if necessary a shift, we can assume W.L.G. that the homothety
$\ov\phi=\pi_\ast(\phi):B_\om\to B_\om$
also preserves the base point
$o$, $\ov\phi(o)=o$.
Given
$u$, $v\in S(o)$, $v\in u^\bot$,
we denote by
$u'$, $v'\in S(o)$
the unit vectors
$u'=\ov\phi(u)/|\ov\phi(u)|$, $v'=\ov\phi(v)/|\ov\phi(v)|$.
Then
$v'\in u'^\bot$.
Furthermore, for the displacements
$\de(u\wedge v)$, $\de(u'\wedge v')$
we have
$$|\ov\phi(u)|\cdot|\ov\phi(v)|\cdot\de(u'\wedge v')=\de(\ov\phi(u)\wedge\ov\phi(v))
=\la^2\de(u\wedge v),$$
where
$\la>0$
is the homothety coefficient of
$\phi$,
the first equality follows from 
Proposition~\ref{pro:area_law_all} and the second one 
follows from the fact that the homothety
$\ov\phi:B_\om\to B_\om$
is the projection of the homothety
$\phi:X_\om\to X_\om$.
Thus
$\de(u'\wedge v')=\de(u\wedge v)$.
Since
$\phi$
preserves the order
$O$
of the
$\C$-lines
in
$X_\om$,
we have
$\sign_{u'}(v')=\sign_u(v)$
and thus
$\xi_{u'}(v')=\xi_u(v)$.
It follows
$\ov\phi(K_u)=K_{u'}$.
This means that
$\ov\phi\circ J(u)=\la J(u')$,
and we obtain
$$J\circ\ov\phi(u)=J(\la u')=\la J(u')=\ov\phi\circ J(u),$$
that is,
$J$
is preserved by any M\"obius automorphism
$\phi:X_\om\to X_\om$.

It remains to show that
$J:B_\om\to B_\om$
is a linear isometry. For every 
$u\in S(o)$,
the 2-dimensional subspace
$L_u\sub B_\om$
spanned by
$u$, $J(u)$,
as well as its orthogonal complement
$K_u=L_u^\bot\sub B_\om$,
is
$J$-invariant,
$J(L_u)=L_u$, $J(L_u^\bot)=L_u^\bot$.
By induction over the dimension of
$B_\om$,
we construct an orthonormal basic
$b=\{u_1,v_1,\dots,u_k,v_k\}$
of
$B_\om$,
where
$u_1=u$, $v_i=J(u_i)$, $i=1,\dots,k$.
In particular, the dimension of
$B_\om$
is even,
$\dim B=2m$.
Now, we define a linear map
$J_0:B_\om\to B_\om$
by
$J_0(u_i)=v_i$, $J_0(v_i)=-u_i$
for
$i=1,\dots,k$,
that is,
$J_0$
coincides with 
$J$
on the basis
$b$.

Let
$G$
be a subgroup of isometries of type
$\ov\phi=\pi_\ast(\phi):B_\om\to B_\om$, $\ov\phi(o)=o$,
where
$\phi:X_\om\to X_\om$
is a M\"obius automorphism that respects the order
$O$
of the 
$\C$-lines 
in
$X_\om$.
Since
$J_0$
coincides with
$J$
on the complex lines spanned by
$u_i$, $v_i$, $i=1,\dots,k$,
and
$J$
commutes with every
$\ov\phi\in G$,
we obtain that the isometry
$$g=\ov\phi\circ J_0\circ\ov\phi^{-1}\circ J_0^{-1}:
B_\om\to B_\om$$
is identical on every complex line above. These complex
lines span
$B_\om$,
thus
$g=\id$
for every
$\ov\phi\in G$,
that is, the group
$G$
centralizes
$J_0$.
By property (${\rm E}_2$),
$G$
acts transitively on
$S(o)$.

The map 
$h=J\circ J_0^{-1}:S(o)\to S(o)$
commutes with every
$g\in G$
because
$J$
and
$J_0$
do. Then by Lemma~\ref{lem:commute_const_displacement}
the displacement of
$h$
is constant. Since
$h(u_i)=u_i$
for every
$i=1,\dots,k$,
we obtain
$h=\id$
and
$J=J_0$.
\end{proof}

\begin{rem}\label{rem:complex_structure}
A {\em complex line} in
$B_\om$
is a 2-dimensional subspace
$L$
invariant under
$J$, $J(L)=L$.
By definition of
$J$,
the lifting constant for
$L$
takes the maximal value over all 2-dimensional subspaces.
\end{rem}

\section{Coordinates in 
$X_\om$}
\label{sect:coordinates}

For a given
$\om\in X$
we fix as usual a metric from the M\"obius structure
with infinitely remote point
$\om$.
We also fix an order
$O$
on
$X_\om$, 
a base point
$o\in X_\om$,
and identify the base
$B_\om$
with Euclidean space
$\R^k$, $k=\dim B_\om$,
with origin
$\pi_\om(o)$.
With
$\mu:X_\om\to F_o$
we denote the projection onto the fiber
$F_o=\pi_\om^{-1}(\pi_\om(o))$
of
$\pi_\om$.
It follows from Lemma~\ref{lem:continuity_project_fiber}
that
$\mu$
is continuous. We define the {\em standard coordinates} 
of every point
$x\in X_\om$
as
$(z,h)\in\R^k\times\R=\R^{k+1}$,
where
$z=\pi_\om(x)$, $h=\frac{1}{4}\sign\mu(x)|o\mu(x)|^2$,
$$\sign\mu(x)=\begin{cases}
+1&o<\mu(x)\\
0&o=\mu(x)\\
-1&\mu(x)<o.
          \end{cases}$$
The coefficient
$\frac{1}{4}$
in front of the expression for the coordinate 
$h$
is introduced to provide the property that the standard generator
$c=[a,b]$
of the center of the classical Heisenberg group
$\bH=\bH^1$
has coordinates
$c=(0,1)$.

\subsection{Multiplication law in coordinates}
Since the group
$N_\om$
acts on
$X_\om$
simply transitively, see Lemma~\ref{lem:simply_transitivity_shifts},
every isometry
$g\in N_\om$
can be written as
$g=(z,h)$,
where
$(z,h)$
are the coordinates of
$g(o)$.

\begin{lem}\label{lem:multi_law} For
$g=(z,h)$, $g'=(z',h')\in N_\om$
we have
$$g\cdot g'=(z+z',h+h'+h\circ\tau_T(o)),$$
where
$T=\frac{1}{2}(z\wedge z')=\pi_\om(o)z(z+z')\sub B_\om$
is the oriented triangle, 
$\tau_T:X_\om\to X_\om$
the respective lifting isometry, see sect.~\ref{subsect:lift_polygons}.
\end{lem}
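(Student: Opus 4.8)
The plan is to compute the product $g \cdot g' \in N_\om$ directly by tracking what happens to the base point $o$ under the composition $g \circ g'$, and to read off the resulting standard coordinates. Write $g(o)$ with coordinates $(z,h)$ and $g'(o)$ with coordinates $(z',h')$. Since $N_\om$ acts simply transitively on $X_\om$ (Lemma~\ref{lem:simply_transitivity_shifts}) and $\pi_\ast$ is a homomorphism onto the translation group of $B_\om=\R^k$ with $\pi_\ast(\zeta)=\id$ exactly for vertical shifts (sect.~\ref{subsect:max_unipotent}), the base coordinate of $g\cdot g'$ is immediately $\pi_\om(g\circ g'(o))=\pi_\ast(g)(z')+z=z+z'$, using that $\pi_\ast(g)$ is the translation by $z$. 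So the only content is the computation of the fiber (vertical) coordinate.

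\textbf{Splitting off the vertical part.}
First I would separate $g$ into its horizontal lift and its vertical part. Let $\ell=\lift_o$ be the lift map of sect.~\ref{subsect:nonintegrability}, so $x_0=\lift_o(z)\in D_o$ is the unique point over $z$ joined to $o$ by a geodesic segment; then $g(o)$ and $x_0$ lie in the common $\C$-line $F_z=\pi_\om^{-1}(z)$, and the vertical coordinate $h$ measures the signed squared displacement from the canonically determined point $x_0$ up to $g(o)$ along $F_z$, rescaled by $\frac14$ as in the definition. The key geometric mechanism is the holonomy of the canonical distribution $\cD$: transporting $o$ first along $g'$ (which carries the fiber over $\pi_\om(o)$ to the fiber over $z'$ via a horizontal path realizing the translation by $z'$) and then along $g$ produces a lift of the broken path $\pi_\om(o)\to z' \to z+z'$, whereas the direct geodesic lift of $o$ to the fiber over $z+z'$ follows the straight segment. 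The discrepancy between these two lifts is exactly a vertical shift governed by the oriented triangle $T=\pi_\om(o)\,z\,(z+z')$, and by Lemma~\ref{lem:triangle_lift} and Proposition~\ref{pro:const_displacement_cline} this holonomy is a vertical shift of constant displacement, computed by $\tau_T$.

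\textbf{Assembling the vertical coordinate.}
I would then argue that the vertical coordinate of $g\cdot g'$ is the sum of three contributions: the displacement $h$ of $g$, the displacement $h'$ of $g'$ (transported up by the horizontal motion, which by the equidistant property of both foliations in a \semi-plane, Lemma~\ref{lem:equidist_lines}, contributes $h'$ unchanged), and the holonomy correction from closing up the broken path, namely the value $h\circ\tau_T(o)$ obtained by applying the lifting isometry $\tau_T$ to the base point and reading off its vertical coordinate. That the three contributions add linearly in the $h$-coordinate (i.e.\ that displacements along a fixed $\C$-line compose additively in the rescaled squared-length parametrization) is precisely the Euclidean Pythagorean law on fibers, Proposition~\ref{pro:euclid_square_fiber}, together with order-preservation of shifts (Lemma~\ref{lem:shift_preserve_order}) so that the signs combine correctly. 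Collecting the three terms gives
\[
g\cdot g'=\bigl(z+z',\ h+h'+h\circ\tau_T(o)\bigr),
\]
which is the claimed formula.

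\textbf{Main obstacle.}
The delicate point I expect is the bookkeeping of the holonomy term: one must verify that the correct oriented triangle is $T=\frac12(z\wedge z')=\pi_\om(o)\,z\,(z+z')$ and not, say, its reflection or the full parallelogram, and that the base point of the lift used to define $\tau_T$ is consistent with where the horizontal transport of $g'(o)$ actually lands. This requires carefully matching the order in which the two shifts act (so that $g\circ g'$ rather than $g'\circ g$ is computed) against the cyclic order in which $\tau_T$ lifts the sides of the triangle, and confirming via Lemma~\ref{lem:adding_lifts} that the broken-path lift minus the straight lift is exactly $\tau_T$ applied at the correct fiber. Everything else reduces to the already-established additivity (Proposition~\ref{pro:euclid_square_fiber}), constancy of vertical displacements (Proposition~\ref{pro:const_displacement_cline}), and the homomorphism property of $\pi_\ast$.
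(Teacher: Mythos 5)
Your overall architecture matches the paper's: split off the vertical parts (which are central, so they contribute $h+h'$ additively via Proposition~\ref{pro:euclid_square_fiber}), and identify the remaining correction as the holonomy $h\circ\tau_T(o)$ of a lifted triangle. But there is a genuine gap at exactly the point you flag as the ``main obstacle,'' and your sketch is internally inconsistent there. You assert that applying $g'$ and then $g$ to $o$ ``produces a lift of the broken path $\pi_\om(o)\to z'\to z+z'$,'' yet you then invoke the triangle $T=\pi_\om(o)\,z\,(z+z')$. These do not match, and the mismatch is not cosmetic: the horizontal lift (in the sense used to define $\tau$, i.e.\ concatenation of geodesic lifts of the segments) of $\pi_\om(o)\to z'\to z+z'$ ends at a point whose $h$-coordinate differs from that of $g\cdot g'(o)$ by the holonomy of the triangle $\pi_\om(o)\,z'\,(z+z')=\frac12(z'\wedge z)$, which by Lemma~\ref{lem:coord_lift_triangle} equals $\frac{c^2}{8}\langle J(z'),z\rangle=-h\circ\tau_T(o)$, the opposite sign. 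Although the chronologically intermediate point $g'(o)$ does lie over $z'$, the passage from $g'(o)$ to $g(g'(o))$ is \emph{not} a geodesic lift of the segment $z'\to z+z'$: the geodesic from $g'(o)$ to the fiber over $z+z'$ lands at a different point of that fiber. So ``first $g'$, then $g$ traces the lift through $z'$'' is false, and as written your mechanism would produce the wrong sign for the holonomy term.

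The correct identification --- which is the actual content of the lemma --- is that $g\cdot g'(o)$ is the endpoint of the horizontal lift of the broken path $\pi_\om(o)\to z\to z+z'$, through $z$ and not $z'$. The paper gets this by writing $g=\wt g\cdot(0,h)$, $g'=\wt g'\cdot(0,h')$ with $\wt g$, $\wt g'$ shifts preserving Ptolemy lines $l$, $l'$ through $o$, and observing that $\wt g\wt g'(o)$ lies on the Ptolemy line $\wt g(l')$, which passes through $\wt g(o)$, the point over $z$; hence the second leg of the lift starts over $z$, the closing projection $\mu$ back to $F_o$ realizes the third side of $T=\pi_\om(o)\,z\,(z+z')$, and $\mu(\wt g\,\wt g'(o))=\tau_T(o)$. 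Without this step your argument does not determine the sign of the holonomy term, and that sign is load-bearing: it is what later yields $-\frac12\im(z,z')$ in Lemma~\ref{lem:mult_hermitian} and the identification of $N_\om$ with the Heisenberg group.
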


\begin{proof} The center
$Z_\om$
of
$N_\om$
acts on
$X_\om$
by vertical shifts, and by Proposition~\ref{pro:euclid_square_fiber} 
we have 
$g\cdot g'=(0,h+h')$
for
$g=(0,h)$, $g'=(0,h')\in Z_\om$.

The group
$\pi_\ast(N_\om)=N_\om/Z_\om$
acts on the base
$B_\om$
by shifts, and we have
$\pi_\ast(g)\cdot\pi_\ast(g')=z+z'$
for
$g=(z,h)$, $g'=(z',h')\in N$.
 
Let
$l\sub X_\om$
be the Ptolemy line 
through
$o$
such that 
$z:B_\om\to B_\om$
is the shift along the line
$\pi_\om(l)$.
There is an isometry
$\wt g\in N_\om$
which preserves
$l$
and projects down to
$z=\pi_\ast(\wt g)$.
Then
$\wt g=(z,0)$,
and
$g=(z,h)$
can be written as
$(z,h)=(z,0)\cdot(0,h)=\wt g\cdot(0,h)$.
Similarly we have
$g'=(z',h')=(z',0)\cdot(0,h')=\wt g'\cdot(0,h')$,
and the isometry
$g'\in N_\om$
preserves a line
$l'\sub X_\om$
through
$o$.
Then
$l''=\wt g(l')$
is the Ptolemy line in
$X_\om$
through
$\wt g(o)=(z,0)$
and
$\wt g\cdot\wt g'(o)$.
The 
$z$-coordinate of the point 
$\wt g\cdot\wt g'(o)$
is
$z+z'$.

Now, we compute the 
$h$-coordinate 
of
$\wt g\cdot\wt g'(o)$.
Let
$z\wedge z'\sub B_\om$
be the oriented parallelogram spanned by
$z$, $z'$,
$T=\frac{1}{2}(z\wedge z')$
the oriented triangle
$\pi_\om(o)z(z+z')$.
Then
$\mu(\wt g\cdot\wt g'(o))=\tau_T(o)$.
Thus
$\wt g\cdot\wt g'=(z+z',h\circ\tau_T(o))$,
and we obtain
$g\cdot g'=\wt g\cdot(0,h)\cdot\wt g'\cdot(0,h')=
(z+z',h+h'+h\circ\tau_T(o))$.
\end{proof}

\begin{lem}\label{lem:coord_lift_triangle}
For any triangle
$T=\frac{1}{2}z\wedge z'\sub B_\om$,
we have
$$h\circ\tau_T(o)=\frac{c^2}{8}\langle J(z),z'\rangle,$$
where
$c>0$
is the lifting constant of
$B_\om$.
\end{lem}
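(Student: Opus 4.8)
The plan is to push everything into the fiber $F_o=\pi_\om^{-1}(\pi_\om(o))$, linearize the fiber metric so that vertical shifts become ordinary translations, and then recognize the signed squared displacement of a parallelogram lift as an alternating bilinear form whose value on orthonormal pairs the complex structure $J$ already computes via the functional $\xi$.

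First I would linearize the fiber. By Proposition~\ref{pro:euclid_square_fiber} the function $\Phi\colon F_o\to\R$, $\Phi(x)=\sign\mu(x)\,|o\,\mu(x)|^2$ (so that $h=\tfrac14\Phi$ on $F_o$), satisfies $|xy|^2=|\Phi(x)-\Phi(y)|$ for $x,y\in F_o$; hence $\Phi$ is an order-preserving bijection with $\Phi(o)=0$, and every vertical shift $\zeta\in Z_\om$ acts on $\Phi$-coordinates as translation by $\Phi(\zeta(o))$. By constancy of displacement (Proposition~\ref{pro:const_displacement_cline}) the map $\zeta\mapsto\Phi(\zeta(o))$ is a homomorphism $Z_\om\to(\R,+)$. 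Setting $A(z,z'):=\Phi\bigl(\tau_{z\wedge z'}(o)\bigr)=\sign(z\wedge z')\,\de(z\wedge z')^2$ for the oriented parallelogram $z\wedge z'$, the target reduces to $A(z,z')=c^2\langle J(z),z'\rangle$, since the triangle differs from the parallelogram only by a factor. Indeed, cutting $z\wedge z'$ along the diagonal from $\pi_\om(o)$ to $z+z'$ writes it as $T_1\cup T_2$ with $T_1=T$ and $T_2$ a central-symmetric copy of $T$; by Lemma~\ref{lem:adding_lifts} together with invariance of the lift under order-preserving automorphisms and the realization of $-\id$ on a $2$-plane (Lemma~\ref{lem:transitive_action}) one gets $\tau_{T_2}=\tau_{T_1}=\tau_T$, hence $\tau_{z\wedge z'}=\tau_T^2$. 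Using that $\Phi(\cdot(o))$ is a homomorphism, $A(z,z')=2\,\Phi(\tau_T(o))$, so $h\circ\tau_T(o)=\tfrac14\Phi(\tau_T(o))=\tfrac18A(z,z')$.

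It then remains to identify $A$. Fix the $2$-plane $L=\mathrm{span}(z,z')$. The form $A|_L$ is antisymmetric by orientation reversal and vanishes on degenerate parallelograms; after shearing any parallelogram in $L$ to a rectangle of equal area with fixed orthogonal sides (a cut-and-paste justified by Lemma~\ref{lem:adding_lifts}, the two triangle contributions being shifted copies that cancel), the area law of lifting (Proposition~\ref{pro:area_law_all}) gives $\de(z\wedge z')^2=c_0(L)^2\,|\det_L(z,z')|$, while the sign of $A$ equals the orientation sign of $z\wedge z'$. Thus $A|_L=\kappa(L)\,\det_L$ for a scalar $\kappa(L)=\pm c_0(L)^2$, and in particular $A|_L$ is bilinear. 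For a positively oriented orthonormal basis $e_1,e_2$ of $L$ we have $\det_L(e_1,e_2)=1$, $e_2\in e_1^\bot$, and $e_1$ a unit vector, so $\kappa(L)=A(e_1,e_2)=\xi_{e_1}(e_2)=c^2\langle J(e_1),e_2\rangle$ by Lemma~\ref{lem:xi_norm} and the definition of $J$. Since $(z,z')\mapsto c^2\langle J(z),z'\rangle$ is itself bilinear and restricts on $L$ to $c^2\langle J(e_1),e_2\rangle\,\det_L$, the two bilinear forms agree on all of $L$, giving $A(z,z')=c^2\langle J(z),z'\rangle$. Combining with the factor above yields $h\circ\tau_T(o)=\tfrac{c^2}{8}\langle J(z),z'\rangle$.

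The hard part will be the careful bookkeeping rather than any single deep step: one must check that the squared displacement really linearizes the fiber (the $\tfrac14$ and the Pythagorean law), that the diagonal dissection produces a central-symmetric triangle whose lift is unchanged, and that shearing a general parallelogram into a rectangle preserves the lift with the correct sign. The sign conventions ($\sign_u$, the orientation of $z\wedge z'$ versus $z\wedge z$, and the compatibility of $\det_L$ with $\langle J\cdot,\cdot\rangle$) are the most error-prone, and the shear-invariance of the lift must be reduced explicitly to Lemma~\ref{lem:adding_lifts} so that the cancelling triangle pairs are genuinely shifted copies of one another. Once these orientation and cancellation facts are in place, the identification $A=c^2\langle J\cdot,\cdot\rangle$ follows from evaluating on orthonormal pairs via $\xi$, and the proof is complete.
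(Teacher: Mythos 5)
Your proof is correct and follows essentially the same route as the paper's: decompose the parallelogram along its diagonal to get $\tau_{z\wedge z'}=\tau_T^2$, use the Pythagorean law on the fiber (Proposition~\ref{pro:euclid_square_fiber}) to halve the signed squared displacement, and identify the result with $\xi_z(z')=c^2\langle J(z),z'\rangle$ via the area law of lifting. The only difference is that you make explicit two points the paper leaves implicit --- that the second triangle in the diagonal dissection is a centrally symmetric (not shifted) copy of $T$, so that $\tau_{T_2}=\tau_{T_1}$ needs Lemma~\ref{lem:transitive_action} and the invariance of lifts under order-preserving isometries, and that the reduction to the case $|z|=1$, $z\bot z'$ is legitimate because both sides of the identity restrict on the plane spanned by $z$, $z'$ to multiples of the same area form --- which is a genuine, if minor, improvement in rigor.
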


\begin{proof}
By Lemma~\ref{lem:adding_lifts},
$\tau_{z\wedge z'}=\tau_T^2$.
Thus by Proposition~\ref{pro:euclid_square_fiber}
$|o\tau_T(o)|^2=\frac{1}{2}\de(z\wedge z')^2$,
where
$\de(z\wedge z')$
is the displacement of the isometry
$\tau_{z\wedge z'}$.
By results of section~\ref{subsect:area_law_lift}
the lifting isometry
$\tau_{z\wedge z'}$
does not change when we replace the parallelogram
$z\wedge z'$
by a rectangle of equal area that has a side of length one
in the same 2-dimensional subspace. Thus we can assume that 
$|z|=1$
and
$z\bot z'$.
Since
$\tau_T(o)\in F_o$,
we have
$\mu\circ\tau_T=\tau_T$
and 
$\sign\mu\circ\tau_T=\sign_z(z')$,
see section~\ref{sect:complex_structure}.
It follows that 
$h\circ\tau_T(o)=\frac{1}{8}\xi_z(z')$,
where the linear functional
$\xi_z:z^\bot\to\R$
is used in the definition of the complex structure
$J$
(for the definition of 
$\xi_z$
see sect.~\ref{subsect:functional_xi}).

By Lemma~\ref{lem:xi_norm},
$|\xi_z|=c^2$.
Then
$\xi_z(z')=c^2\langle J(z),z'\rangle$
and
$h\circ\tau_T(o)=\frac{c^2}{8}\langle J(z),z'\rangle$.
\end{proof}

\subsection{The distance function D}
\label{subsect:distance_function}

For two points
$x,y\in X_{\om}$
let
$F_x$
and
$F_y$
be the
$\C$-lines
through
$x,y$,
and let
$\mu_{xy}:F_x\to F_y$
be the projection, see sect.~\ref{subsect:both_foliation_equidistant}.
We denote with
$$|xy|_\om^\R=|\pi_\om(x)\pi_\om(y)|,\quad |xy|_\om^\C=|\mu_{xy}(x)y|.$$
Note that
$|yx|_\om^\C=|xy|_\om^\C$
because Ptolemy lines are equidistant on every \semi-plane in
$X_\om$,
see Lemma~\ref{lem:equidist_lines}.

\begin{lem}\label{lem:univ_dist_function}
There exists some function
$D:[0,\infty)\times [0,\infty)\to [0,\infty)$,
such that for all
$\om \in X$
and all
$x,y \in X_{\om}$
$$|xy|_\om=D(|xy|_\om^\R,|xy|_\om^\C).$$
\end{lem}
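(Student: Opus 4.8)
The plan is to exploit the simply transitive isometric action of the shift group $N_\om$ (Lemma~\ref{lem:simply_transitivity_shifts}) to reduce the statement to a computation of a single left-invariant ``norm'', and then to pin that norm down using rotations, the group inversion, and homotheties. Fix $\om$ and the base point $o$, identify $X_\om$ with $N_\om$ via $\eta\mapsto\eta(o)$, write points in the standard coordinates $(z,h)$ of Section~\ref{sect:coordinates}, and set $\|g\|_\om:=|o\,g(o)|_\om$. Since shifts are isometries preserving the fibration and the base metric, for $x=g(o)$ and $y=g'(o)$ all three quantities $|xy|_\om,\ |xy|_\om^\R,\ |xy|_\om^\C$ are computed from $g^{-1}g'$; hence it suffices to take $x=o$. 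For $y=(z,h)$ one has $|oy|_\om^\R=|z|$ (as $\pi_\om(o)$ is the origin), while the foot $\mu_{oy}(o)=\lift_o(z)$ of the unique Ptolemy line from $o$ to $F_y$ has coordinates $(z,0)$ by the multiplication law (Lemma~\ref{lem:multi_law}); since $(z,h)=(z,0)\cdot(0,h)$ and the central shift $(0,h)$ has constant displacement $2\sqrt{|h|}$ (Proposition~\ref{pro:const_displacement_cline} together with Proposition~\ref{pro:euclid_square_fiber}), we get $|oy|_\om^\C=|(z,0)\,(z,h)|_\om=2\sqrt{|h|}$. So the whole claim reduces to showing that $\|(z,h)\|_\om$ depends only on $(|z|,|h|)$, by a rule independent of $\om$.

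For the dependence on $z$ only through $|z|$, recall that the order-preserving isometries of $X_\om$ fixing $o$ act transitively on every sphere of $B_\om$ centred at $\pi_\om(o)$ (as established in the proof of Proposition~\ref{pro:complex_structure}). Each such $R$ induces the automorphism $g\mapsto RgR^{-1}$ of $N_\om$; it fixes the centre $Z_\om$ pointwise, because central elements have constant displacement and $R$ preserves both the displacement and the order $O$, and a direct check with the multiplication law then gives $R(z,h)R^{-1}=(\ov Rz,h)$ with $\ov R=\pi_\ast(R)$. Conjugation by an isometry fixing $o$ preserves $\|\cdot\|_\om$, so $\|(z,h)\|_\om=\|(\ov Rz,h)\|_\om$ and the norm depends only on $(|z|,h)$. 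The remaining, and least obvious, point is symmetry in the sign of $h$; I expect this to be the main obstacle, since the naive vertical reflection $(z,h)\mapsto(z,-h)$ is \emph{not} an isometry and the symmetry must be recovered indirectly. The device is the group inversion: from the multiplication law $(z,h)^{-1}=(-z,-h)$, and because $g$ is an isometry, $\|g\|_\om=|o\,g(o)|_\om=|g^{-1}(o)\,o|_\om=\|g^{-1}\|_\om$, whence $\|(z,h)\|_\om=\|(-z,-h)\|_\om$. Applying a rotation $R$ with $\ov R(-z)=z$ (which preserves the height $-h$) yields $\|(-z,-h)\|_\om=\|(z,-h)\|_\om$, so $\|(z,h)\|_\om=\|(z,-h)\|_\om$. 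Thus $\|(z,h)\|_\om$ depends only on $(|z|,|h|)$, and we may define $D_\om(r,s):=\|(z,h)\|_\om$ for any $(z,h)$ with $|z|=r$ and $2\sqrt{|h|}=s$.

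It remains to show $D_\om$ is independent of $\om$. A pure homothety of coefficient $\la$ fixing $o$ (property (H), Lemma~\ref{lem:pure_homothety}) sends $(z,h)$ to $(\la z,\la^2 h)$ and multiplies $\|\cdot\|_\om$ by $\la$, hence $D_\om(\la r,\la s)=\la D_\om(r,s)$: the function $D_\om$ is homogeneous of degree one. For distinct $\om,\om'$ take, by two-point homogeneity (Proposition~\ref{pro:two_point_homogeneous}), a M\"obius automorphism $\phi:X\to X$ with $\phi(\om)=\om'$. By Lemma~\ref{lem:homothety_infinite} and Lemma~\ref{lem:induced_base_map}, $\phi:X_\om\to X_{\om'}$ is a homothety of some coefficient $\la_0$ carrying $\C$-lines to $\C$-lines and inducing a homothety of the bases, so it scales each of $|xy|_\om$, $|xy|_\om^\R$, $|xy|_\om^\C$ by $\la_0$. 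This gives $D_{\om'}(\la_0 r,\la_0 s)=\la_0 D_\om(r,s)$, and comparing with the homogeneity of $D_{\om'}$ forces $D_{\om'}=D_\om$. Setting $D:=D_\om$ then satisfies $|xy|_\om=D(|xy|_\om^\R,|xy|_\om^\C)$ for all $\om$ and all $x,y\in X_\om$, which completes the proof.
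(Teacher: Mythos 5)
Your argument is correct, but it follows a genuinely different route from the paper's. The paper proves the lemma synthetically: given two configurations $(\om,x,y)$ and $(\om',x',y')$ with equal invariants, it uses the extension property $({\rm E}_2)$ to produce a M\"obius map carrying $\om\mapsto\om'$, $x\mapsto x'$ and the foot $z=\mu_{xy}(x)$ to $z'$; the only remaining ambiguity is whether $y$ lands on $y'$ or on the point $y''\in F_{y'}$ reflected across $z'$, and that case is killed by Lemma~\ref{lem:rectangle_side_length} (the equality $|xy'|=|x'y|$ in a \semi-plane rectangle). You instead work entirely inside one chart: identify $X_\om$ with $N_\om$, observe that $|xy|_\om^\R$ and $|xy|_\om^\C$ are exactly $|z|$ and $2\sqrt{|h|}$ for $g^{-1}g'=(z,h)$, and then show the left-invariant norm $\|(z,h)\|$ depends only on $(|z|,|h|)$ — rotational invariance coming from the transitive stabilizer of $o$ (which, strictly speaking, you should lift from the base to isometries fixing $o\in X_\om$ by composing with a vertical shift, a one-line repair), and the crucial sign symmetry $\|(z,h)\|=\|(z,-h)\|$ coming from $\|g\|=\|g^{-1}\|$ plus a rotation sending $-z$ to $z$. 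That inversion trick is a slick substitute for the paper's reflection argument and in fact anticipates the vertical flip of Proposition~\ref{pro:vert_flip}; your closing homogeneity argument also subsumes Lemma~\ref{lem:homogeneous_dist_function}. The trade-off is that your proof leans on the coordinate/multiplication-law machinery of Section~\ref{sect:coordinates} (all of which is available at this point, so there is no circularity), whereas the paper's proof is shorter and coordinate-free.
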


\begin{proof}
To prove this, we have to show, that given triples of points
$\om,x,y$
and
$\om',x',y'$
in
$X$
with
$|xy|_\om^\R=|x'y'|_{\om'}^\R$
and
$|xy|_\om^\C=|x'y'|_{\om'}^\C$
we have
$|xy|_\om=|x'y'|_{\om'}$.

We can assume that
$|xy|_\om^\R\neq 0$, since otherwise the claim is trivial.
Let
$z=\mu_{xy}(x)$
and
$z'=\mu_{x'y'}(x')$.
By assumption
$|xz|_\om=|x'z'|_{\om'}\ne 0$
and
$|zy|_\om=|z'y'|_{\om'}$.
By (${\rm E}_2$) there exists a M\"obius map
$\phi:X\to X$
which maps
$\om\to\om'$, $x\to x'$, $z\to z'$
because each triple of points
$(\om,x,z)$
and
$(\om',x',z')$
belongs to a respective
Ptolemy circle in
$X$.
Note that
$\phi:X_\om\to X_{\om'}$
is an isometry that maps the fibers
$F_x$, $F_y=F_z$
to
$F_{x'}$, $F_{y'}=F_{z'}$.
Then
$\phi$
maps
$y$
either to
$y'$
or to
$y'' \in F_{y'}$,
the points symmetric to
$y'$
with
$|y''z'|_{\om'}=|z'y'|_{\om'}$.
In the last case, applying an isometry of
$X_{\om'}$
that order preserves the
$\C$-lines $F_{x'}$, $F_{y'}$
and maps
$y'$
to
$z'$,
and Lemma~\ref{lem:rectangle_side_length}, we have
$|xy|_\om=|x'y''|_{\om'}=|x'y'|_{\om'}$.
This proves our claim.
\end{proof}

\begin{lem}\label{lem:homogeneous_dist_function}
The distance function
$D$
is homogeneous,
$D(\la a,\la b)=\la D(a,b)$
for every
$a$, $b\ge 0$, $\la>0$.
\end{lem}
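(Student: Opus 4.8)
The plan is to exploit the pure homotheties of $X_\om$, which rescale every distance by a fixed factor while respecting the entire foliation structure out of which the function $D$ is built. Fix $\om\in X$ and a metric with infinitely remote point $\om$. By property~(H) together with Lemma~\ref{lem:pure_homothety}, for every $\la>0$ there is a pure homothety $h_\la:X_\om\to X_\om$ with coefficient $\la$, i.e. $|h_\la(u)h_\la(w)|_\om=\la|uw|_\om$ for all $u,w\in X_\om$ (the coefficient can be prescribed arbitrarily, see the proof of Proposition~\ref{pro:homothety_property}). Given $a,b\ge 0$, I would first choose $x,y\in X_\om$ realizing them as $|xy|_\om^\R=a$ and $|xy|_\om^\C=b$: inside a \semi-plane through $x$ move along a Ptolemy line by distance $a$ to a point $x'$, then move along the $\C$-line $F_{x'}$ by distance $b$ to a point $y$. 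Since $\pi_\om(y)=\pi_\om(x')$ we get $|xy|_\om^\R=a$, while $\mu_{xy}(x)=x'$ gives $|xy|_\om^\C=b$; this also shows in passing that $D$ is defined on all of $[0,\infty)\times[0,\infty)$.

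The heart of the matter is to track how the two components transform under $h_\la$. For the $\R$-component this is immediate: by Corollary~\ref{cor:pi_submetry} the induced map $\pi_\ast(h_\la):B_\om\to B_\om$ is a homothety with the same coefficient $\la$, so $|h_\la(x)h_\la(y)|_\om^\R=\la\,|xy|_\om^\R=\la a$. For the $\C$-component I would use that $h_\la$ intertwines the canonical $\C$-line projections. By property~($1_\C$), the point $\mu_{xy}(x)$ is the intersection of $F_y$ with the \emph{unique} Ptolemy line $l$ through $x$ meeting $F_y$. Since $h_\la$ carries Ptolemy lines to Ptolemy lines and, by Lemma~\ref{lem:induced_base_map}, fibers to fibers (so $F_x,F_y$ go to $F_{h_\la(x)},F_{h_\la(y)}$), the line $h_\la(l)$ is the unique Ptolemy line through $h_\la(x)$ meeting $F_{h_\la(y)}$; hence $h_\la(\mu_{xy}(x))=\mu_{h_\la(x)h_\la(y)}(h_\la(x))$. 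Because $|xy|_\om^\C$ is literally the $X_\om$-distance $|\mu_{xy}(x)\,y|_\om$, applying the homothety yields $|h_\la(x)h_\la(y)|_\om^\C=\la\,|\mu_{xy}(x)\,y|_\om=\la b$.

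Putting these together and applying Lemma~\ref{lem:univ_dist_function} to the pair $h_\la(x),h_\la(y)$ gives
\[
D(\la a,\la b)=|h_\la(x)h_\la(y)|_\om=\la\,|xy|_\om=\la\,D(a,b),
\]
which is the desired homogeneity. I expect the only point requiring genuine care to be the intertwining identity $h_\la\circ\mu_{xy}=\mu_{h_\la(x)h_\la(y)}\circ h_\la$ for the $\C$-line projections: it rests on the uniqueness clause of property~($1_\C$) and on $h_\la$ simultaneously preserving the Ptolemy-line and the $\C$-line (fiber) foliations. Everything else is a direct bookkeeping of how a homothety acts on the two invariantly defined distance components.
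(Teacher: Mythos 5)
Your proof is correct and follows essentially the same route as the paper: apply a homothety of coefficient $\la$, observe that both components $|\cdot|_\om^\R$ and $|\cdot|_\om^\C$ scale by $\la$ along with the full distance, and conclude via Lemma~\ref{lem:univ_dist_function}. The extra care you take with the intertwining identity $h_\la\circ\mu_{xy}=\mu_{h_\la(x)h_\la(y)}\circ h_\la$ and with realizing an arbitrary pair $(a,b)$ is sound and merely makes explicit what the paper leaves implicit.
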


\begin{proof}
Let
$\phi_\la:X_\om\to X_\om$
be a homothety with coefficient 
$\la$.
Then for each
$x$, $x'\in X_\om$
we have
$|\phi_\la(x)\phi_\la(x')|_\om^\R=\la|xx'|_\om^\R$
and
$|\phi_\la(x)\phi_\la(x')|_\om^\C=\la|xx'|_\om^\C$.
Together with
$|\phi_\la(x)\phi_\la(x')|=\la|xx'|$
this implies the claim.
\end{proof}

\subsection{Existence of a vertical flip}
\label{subsect:exist_vert_flip}

Results of this section have important
applications in sect.~\ref{sect:rcircles}.

A {\em vertical flip} w.r.t. 
$o\in X_\om$
is an isometry
$j:X_\om\to X_\om$
that reverses the order
$O$
and preserves
$o$, $j(o)=o$.
In particular,
$j(F)=F$
for the 
$\C$-line 
$F$
through
$o$
and
$j|F$
reverses the order of
$F$.

Let
$J:B_\om\to B_\om$
be the canonical complex structure on
$B_\om$.
The orthonormal basis
$b=\{u_1,v_1,\dots,u_k,v_k\}$,
where
$v_i=J(u_i)$, $i=1,\dots,k$,
is called a {\em canonical basis} of
$B_\om$
for the complex structure
$J$.
We define a linear map
$\conj:B_\om\to B_\om$
by
$\conj(u_i)=u_i$, $\conj(v_i)=-v_i$
for 
$i=1,\dots, k$.
Then
$\conj$
is an isometry that anticommutes with
$J$, $J\circ\conj=-\conj\circ J$.
In particular, we have
\begin{equation}\label{eq:conjugate}
 \langle J\circ\conj(z),\conj(z')\rangle
=-\langle J(z),z'\rangle
\end{equation}
for each 
$z$, $z'\in B_\om$,
where
$\langle z,z'\rangle$
is the inner product of
$z$, $z'$
of the Euclidean space
$B_\om$.

\begin{pro}\label{pro:vert_flip} For every
$o\in X_\om$
and a Ptolemy line
$l\sub X_\om$
through
$o$
there exists a vertical flip of
$X_\om$
with respect to
$o$
that fixes
$l$
pointwise.
\end{pro}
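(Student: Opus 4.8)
The plan is to realize the desired flip as the transformation of $X_\om\cong N_\om$ induced by an explicit automorphism of the nilpotent group $N_\om$, expressed in the standard coordinates of sect.~\ref{sect:coordinates}. Let $u\in B_\om$ be the unit direction of the projected line $\pi_\om(l)$. Using the inductive construction in Proposition~\ref{pro:complex_structure} I would pick a canonical basis $\{u_1,v_1,\dots,u_k,v_k\}$ of $(B_\om,J)$ with $u_1=u$, and let $\conj:B_\om\to B_\om$ be the associated conjugation, so that $\conj(u)=u$, $\conj$ is a linear isometry, and $J\circ\conj=-\conj\circ J$, i.e. relation~(\ref{eq:conjugate}) holds. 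I then set $\si(z,h)=(\conj z,-h)$ and define $j:X_\om\to X_\om$ by $j(g\cdot o)=\si(g)\cdot o$; this is a well defined bijection since $N_\om$ acts simply transitively on $X_\om$ by Lemma~\ref{lem:simply_transitivity_shifts}.

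First I would verify that $\si$ is a group automorphism. By Lemma~\ref{lem:multi_law} and Lemma~\ref{lem:coord_lift_triangle} the multiplication law is
\begin{equation*}
(z,h)\cdot(z',h')=\Bigl(z+z',\,h+h'+\tfrac{c^2}{8}\langle J(z),z'\rangle\Bigr).
\end{equation*}
Applying $\si$ to each factor, using additivity of $\conj$ and~(\ref{eq:conjugate}), the cross term changes sign exactly as the $h$-coordinate does, so $\si\bigl((z,h)\bigr)\cdot\si\bigl((z',h')\bigr)=\si\bigl((z,h)\cdot(z',h')\bigr)$; hence $\si$ is an automorphism and $j$ is a bijection. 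Next, $j$ fixes $o=(0,0)$ and negates the $h$-coordinate, so it reverses the order $O$ on every $\C$-line, in particular on the fiber through $o$; thus $j$ is a candidate vertical flip once it is shown to be an isometry. It also fixes $l$ pointwise: by the proof of Lemma~\ref{lem:multi_law} the Ptolemy line through $o$ in the direction $u$ is the one-parameter subgroup $\{(su,0):s\in\R\}=l$, and $\si(su,0)=(s\conj u,0)=(su,0)$ because $\conj u=u$.

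It remains to prove that $j$ is an isometry, which I regard as the crux. By Lemma~\ref{lem:univ_dist_function} it suffices to check that $j$ preserves $|xy|_\om^\R$ and $|xy|_\om^\C$. The first is immediate, since $\pi_\om\circ j=\conj\circ\pi_\om$ and $\conj$ is a linear isometry of $B_\om$, whence $|j(x)j(y)|_\om^\R=|xy|_\om^\R$. For the second I would first record that, because the coordinate $h$ is defined through the projection $\mu$ and $\mu$ restricts to the canonical isometry onto $F_o$ preserving $h$, Proposition~\ref{pro:euclid_square_fiber} gives that the distance along a $\C$-line between points with $h$-coordinates $h$ and $h'$ equals $2\sqrt{|h-h'|}$. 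Writing $x=g_1\cdot o=(z_1,h_1)$ and $y=(z_2,h_2)$, the unique Ptolemy line through $x$ meeting $F_y$ (property~($1_\C$)) is $g_1\cdot\{(s\hat w,0)\}$ with $\hat w=(z_2-z_1)/|z_2-z_1|$, and it hits $F_y$ at $\mu_{xy}(x)=\bigl(z_2,\,h_1+\tfrac{c^2}{8}\langle J(z_1),z_2\rangle\bigr)$; consequently
\begin{equation*}
\bigl(|xy|_\om^\C\bigr)^2=4\,\Bigl|\,h_2-h_1-\tfrac{c^2}{8}\langle J(z_1),z_2\rangle\,\Bigr|.
\end{equation*}

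The hard part is then just the bookkeeping in applying this last formula to $\si(x)=(\conj z_1,-h_1)$ and $\si(y)=(\conj z_2,-h_2)$: invoking~(\ref{eq:conjugate}) to rewrite $\langle J(\conj z_1),\conj z_2\rangle=-\langle J(z_1),z_2\rangle$, the expression inside the absolute value merely changes sign, so $|j(x)j(y)|_\om^\C=|xy|_\om^\C$. Hence $j$ preserves the universal distance function $D$, is an isometry fixing $o$ and reversing $O$, and fixes $l$ pointwise, i.e. it is the required vertical flip. The only nonroutine inputs are the coordinate formula for $\mu_{xy}(x)$ and the $\C$-line distance $2\sqrt{|h-h'|}$, both of which follow from the area law of lifting and Proposition~\ref{pro:euclid_square_fiber}; once these are in place the remaining obstacle dissolves into the two displayed computations.
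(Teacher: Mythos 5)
Your proposal is correct and follows essentially the same route as the paper: you define the same map $j(z,h)=(\conj(z),-h)$ via the conjugation associated with a canonical basis adapted to $\pi_\om(l)$, and the isometry check reduces, exactly as in the paper, to the multiplication law of $N_\om$ (Lemmas~\ref{lem:multi_law} and \ref{lem:coord_lift_triangle}) together with the anticommutation relation~(\ref{eq:conjugate}). The only cosmetic difference is that you verify invariance of $|xy|_\om^\R$ and $|xy|_\om^\C$ separately via Lemma~\ref{lem:univ_dist_function}, while the paper writes the single distance formula $|xx'|=D(|z-z'|,2|h-h'+\tfrac{c^2}{8}\langle J(z),z'\rangle|^{1/2})$ and compares it before and after applying $j$ — the same computation.
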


\begin{proof} 
We introduce standard coordinates
$x=(z,h)$
in
$X_\om$
with the origin 
$o$
and take a canonical basis
$b=\{u_1,v_1,\dots,u_k,v_k\}$
of the complex structure
$J$
such that the vector
$u_1$
generates 1-dimensional subspace
$\pi_\om(l)\sub B_\om$.
Let
$\conj:B_\om\to B_\om$
be the conjugation isometry associated with
$b$.
We define 
$j:X_\om\to X_\om$
as
$$j(x)=j(z,h)=(\conj(z),-h).$$
Then
$j(o)=o$, $j$
fixes
$l$
pointwise by the choice of
$b$, $j(F)=F$,
where
$F$
is the 
$\C$-line
through
$o$,
and
$j|F$
reverses the order. We only have to show that
$j$
is an isometry. 

Recall that 
$X_\om$
is identified with the group
$N_\om$
by
$x=g(o)$, $x\in X_\om$, $g\in N_\om$.
For 
$g\in N_\om$, $g=(z,h)$, 
we put
$\|g\|:=|og(o)|=D(|z|,2\sqrt{|h|})$,
where we used Lemma~\ref{lem:univ_dist_function}
in the last equality. Since
$N_\om$
acts on
$X_\om$
by isometries, for
$x=g(o)$, $x'=g'(o)$,
we have
$|xx'|=|g(o)g'(o)|=|og^{-1}\cdot g'(o)|=\|g^{-1}\cdot g'\|$.
Using Lemma~\ref{lem:multi_law}, we obtain for
$g=(z,h)$, $g'=(z',h')$
$$g^{-1}\cdot g'=(-z,-h)\cdot(z',h')=
(z'-z,h'-h+h\circ\tau_T(o)),$$
where
$T=\frac{1}{2}(-z\wedge z')=\pi(o)(-z)(z'-z)\sub B_\om$
is the oriented triangle. By Lemma~\ref{lem:coord_lift_triangle},
$h\circ\tau_T(o)=\frac{c^2}{8}\langle J(-z),z'\rangle
=-\frac{c^2}{8}\langle J(z),z'\rangle$.
Therefore
\begin{equation}\label{eq:distance_formula}
|xx'|=D\left(|z-z'|,2|h-h'+\frac{c^2}{8}\langle J(z),z'\rangle|^{1/2}\right).
\end{equation}

Similarly for 
$j(x)=(\conj(z),-h)$, $j(x')=(\conj(z'),-h')$
we have
\begin{align*}
|j(x)j(x')|&=\|\left(\conj(z),-h\right)^{-1}\cdot\left(\conj(z'),-h'\right)\|\\
&=\|\left(\conj(z')-\conj(z),h-h'+h\circ\tau_{T'}(o)\right)\|, 
\end{align*}
where
$T'=\frac{1}{2}(-\conj(z)\wedge\conj(z'))=\pi(o)(-\conj(z))(\conj(z'-z))$
is the oriented triangle in
$B_\om$.
Then
$h\circ\tau_{T'}(o)=-\frac{c^2}{8}\langle J\circ\conj(z),\conj(z')\rangle
=\frac{c^2}{8}\langle J(z),z'\rangle$
by Equality~(\ref{eq:conjugate}). Using that
$\conj:B_\om\to B_\om$
is an isometry, we obtain
$$|j(x)j(x')|=D\left(|z-z'|,2|h-h'+\frac{c^2}{8}\langle J(z),z'\rangle|^{1/2}\right).$$
Comparing the formulae for the distances
$|xx'|$, $|j(x)j(x')|$
we see that
$j$
is an isometry.
\end{proof}

\section{Ptolemy circles in $X_\om$}
\label{sect:rcircles}

Here we study shape of Ptolemy circles in
$X_\om$.
Results of this section are used to compute the lifting constant
$c$.

\subsection{Ptolemy circles meeting a $\C$-line twice}
Let now
$\om \in X$,
and let
$F\sub X_\om$
be a
$\C$-line.
With
$\mu=\mu_\om:X_{\om} \to F$
we denote the projection onto
$F$.

\begin{lem}\label{lem:common_rline} Let
$z\in F$
and assume
$\mu(u)=z$
and
$\mu(v)=z$
for
$u$, $v\in X_\om$
such that
$|uv|=|uz|+|zv|$.
Then
$u$, $v$, $z$
lie on a common
Ptolemy line
in
$X_\om$.
\end{lem}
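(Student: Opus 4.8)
The plan is to unwind the hypothesis into the statement that $u,z$ lie on one Ptolemy line and $v,z$ on another, both through $z$, and then to force these two lines to coincide by a scaling argument that makes the metric equality $|uv|=|uz|+|zv|$ valid at every scale. Since $\mu$ is the canonical projection onto $F$, we have $\mu(u)=\mu_{F_u,F}(u)$, and by the description of \semi-planes in Lemma~\ref{lem:rfoliation_semik} two points corresponding under the canonical $\C$-line isometry $\mu_{F_u,F}$ lie on a common Ptolemy line of the \semi-plane spanned by $F_u$ and $F$. Hence $u$ and $z$ lie on a Ptolemy line $l_1\sub X_\om$ meeting $F$ at $z$, and likewise $v$ and $z$ lie on a Ptolemy line $l_2\sub X_\om$ meeting $F$ at $z$; both pass through $z$ (we may assume $u,v\neq z$, the remaining case being trivial). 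I would fix unit speed parametrizations $c_1,c_2:\R\to X_\om$ of $l_1,l_2$ with $c_1(0)=c_2(0)=z$, $c_1(a)=u$, $c_2(b)=v$, where $a=|uz|>0$ and $b=|zv|>0$; the goal is then $l_1=l_2$.

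Next I would record the straightness coming from the metric equality. The concatenation of the geodesic segments $[u,z]\sub l_1$ and $[z,v]\sub l_2$ has length $a+b=|uv|$, so it is a minimizing geodesic; since subsegments of geodesics are minimizing, $|c_1(s)c_2(t)|=s+t$ for all $s\in[0,a]$, $t\in[0,b]$. The decisive step is to upgrade this to all scales. By property~(H) and Lemma~\ref{lem:pure_homothety}, for each $\la>0$ there is a pure homothety $h_\la\in\Ga_{\om,z}$ with coefficient $\la$ fixing $z$ and preserving every oriented Ptolemy line through $z$; thus $h_\la(c_i(r))=c_i(\la r)$, and applying $h_\la$ to the previous identity (which multiplies all distances by $\la$) yields $|c_1(s)c_2(t)|=s+t$ for all $s,t\ge 0$.

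With this relation valid for all $s,t\ge 0$ I would compute the slope of $l_1$ with respect to $l_2$. Taking the Busemann function $b^+$ of $l_2$ compatible with its orientation, normalized by $b^+(z)=0$ so that $b^+(x)=\lim_{\tau\to\infty}(|x\,c_2(\tau)|-\tau)$, the all-scales identity gives $b^+(c_1(s))=\lim_{\tau\to\infty}(s+\tau-\tau)=s$; since $b^+$ is affine along $l_1$ by Proposition~\ref{pro:busemann_affine}, this means $\slope(l_1,l_2)=1$. Reversing the orientation of $l_2$ gives slope $-1$ with compatible orientations, i.e. $l_1$ and $l_2$ are Busemann parallel; as they share the point $z$, Lemma~\ref{lem:unique_line} forces $l_1=l_2$. (Equivalently, the extremal slope makes $l_2$ tangent to $l_1$ at $z$, so by uniqueness of the tangent Ptolemy line, Proposition~\ref{pro:tangent_rcircle}, the two lines coincide.) Then $u,z,v$ all lie on $l_1=l_2$, the desired common Ptolemy line.

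I expect the main obstacle to be the passage from the extremal slope to the coincidence $l_1=l_2$: the hypothesis controls only the forward rays $c_1([0,\infty))$ and $c_2([0,\infty))$, which diverge linearly, whereas Busemann parallelism pairs the forward ray of one line with the backward ray of the other; one must therefore argue through the slope, a quantity that becomes computable from the forward rays only after the scaling step, rather than directly through sublinear divergence in Lemma~\ref{lem:busparallel_sublinear}. A secondary point requiring care is the precise identification, in the first paragraph, of the condition $\mu(u)=z$ with the collinearity of $u$ and $z$ along a Ptolemy line meeting $F$ at $z$.
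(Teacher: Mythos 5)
Your proof is correct and follows essentially the same route as the paper's: identify the two Ptolemy lines through $z$ via the fibration, use a pure homothety from $\Ga_{\om,z}$ to propagate the additivity $|c_1(s)c_2(t)|=s+t$ to all scales, deduce that the Busemann function of one line restricts to $\pm t$ along the other, and conclude by Lemma~\ref{lem:unique_line}. The only cosmetic difference is your detour through $\slope(l_1,l_2)=1$ and Busemann parallelism: since you have the exact identity $b^+\circ c_1(s)=s$ for all $s$ (after extending by affineness), the hypothesis of Lemma~\ref{lem:unique_line} is verified literally for the reversed parameterization of $l_1$, which is precisely how the paper finishes, so the worry raised in your last paragraph does not actually arise.
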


\begin{proof} 
We can assume that both
$u$, $v$
are not on
$F$,
since otherwise the claim is obvious. Let
$l$
be the
Ptolemy line through
$z$
and
$u$, and let
$l'$
be the
Ptolemy line
through
$z$ and
$v$.
Let
$c,c':\R\to X_{\om}$
be unit speed parameterizations of
$l,l'$,
such that
$c(0)=c'(0)=z$,
$c(t_0)=u$,
for $t_0=|zu|$
and
$c'(s_0)=v$,
for
$s_0=-|vz|$.
By assumption
$|c(t_0)c'(s_0)|=t_0-s_0$.
For every
$\la>0$
there is a pure homothety
$\phi_\la:X_\om\to X_\om$, $\phi_\la(z)=z$,
with coefficient
$\la$.
Then
$\phi_\la\circ c(t)=c(\la t)$ 
for every
$t\in\R$,
and
$\phi_\la\circ c'(s)=c'(\la s)$
for every
$s\in\R$.
Thus
$|c(t)c'(s)|=t-s$
for all
$t\ge 0$, $s\le 0$.
This implies
$b(c'(s))=-s$
for all
$s\le 0$,
where
$b:X_\om\to\R$
is a Busemann function, associated with
$l$, $b(y)=\lim_{t\to \infty}(|c(t)y|-|c(t)z|)$, $y\in X_\om$.
Thus 
$b(c'(s))=-s$
for all
$s\in\R$,
since
$b$
is affine. Therefore
$b\circ c'=b\circ c$.
Then
$l'=l$
by Lemma~\ref{lem:unique_line}.
\end{proof}

\begin{lem}\label{lem:rline_through_three_points}
Let
$\si\sub X_\om$
be a
$\R$-circle 
intersecting the
$\C$-line
$F$ 
in two points
$x$ and $y$.
Let
$u,v\in\si$
be points in different components of
$\si\sm\{x,y\}$
such that
$\mu(u)=\mu(v)=z\in F$.
Then the points
$u,z,v$
are contained in a Ptolemy line.
\end{lem}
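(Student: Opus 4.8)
The plan is to reduce the statement, via Lemma~\ref{lem:common_rline}, to the single metric assertion $|uv|=|uz|+|zv|$; since $\mu(u)=\mu(v)=z$ is already given, Lemma~\ref{lem:common_rline} then immediately places $u,z,v$ on a common Ptolemy line. Throughout I would work in standard coordinates on $X_\om$ centered at $o=z$, so that $F$ is the ``vertical axis'' $\{(0,h)\}$ and, because $\mu(u)=\mu(v)=z$ forces the vertical coordinate of $u,v$ to vanish, one has $u=(z_u,0)$, $v=(z_v,0)$ with $z_u,z_v\in B_\om$, while $x=(0,h_x)$, $y=(0,h_y)$. Using the distance formula~(\ref{eq:distance_formula}) one checks $|uz|=|z_u|$, $|vz|=|z_v|$, and $|uv|=D(|z_u-z_v|,2|\tfrac{c^2}{8}\langle Jz_u,z_v\rangle|^{1/2})$. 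Hence the desired equality follows as soon as $z_v=-\la z_u$ for some $\la>0$ (opposite rays): then $\langle Jz_u,z_v\rangle=0$ since $Jz_u\bot z_u$, and $|z_u-z_v|=|z_u|+|z_v|$, so $|uv|=D(|z_u|+|z_v|,0)=|z_u|+|z_v|$. Thus everything reduces to showing that the base projections of $u$ and $v$ lie on opposite rays through $\pi_\om(z)$.

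To produce this alignment I introduce the ``half-turn about $F$''. The distance formula~(\ref{eq:distance_formula}), together with $\langle J(-z'),-z''\rangle=\langle Jz',z''\rangle$, shows that $\rho(z',h'):=(-z',h')$ is an isometry of $X_\om$; it fixes $\om$ and the axis $F$ pointwise (so $\rho(x)=x$, $\rho(y)=y$, $\rho(z)=z$) and induces $-\id$ on $B_\om$ by Proposition~\ref{pro:complex_structure}. I claim $\rho(\si)=\si$. Both $\si$ and $\rho(\si)$ are Ptolemy circles through the common point $y$, and by Corollary~\ref{cor:tangent_unique} a Ptolemy circle is determined by one of its points together with its tangent Ptolemy line at another point; so it suffices to prove $\rho$ fixes the tangent line $l_x$ of $\si$ at $x$. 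Since $\rho$ acts as $+\id$ on the vertical direction $T_xF$ and as $-\id$ on the horizontal (base) directions, $\rho(l_x)=l_x$ will follow once I show $l_x$ is horizontal, i.e. that $\si$ meets the $\C$-circle $\wh F$ orthogonally at $x$. Granting this, $\rho(\si)=\si$; moreover the foot map is $\rho$-equivariant (feet are nearest points of $F$, a metric notion), and along each arc the foot is injective, so the unique point of the opposite arc with foot $z$ is $\rho(u)$. Therefore $\rho(u)=v$, whence $z_v=-z_u$ and $|uv|=|uz|+|zv|$ as required.

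The heart of the argument, and the step I expect to be hardest, is the orthogonality of $\si$ and $\wh F$ at their intersection points. In coordinates this says that, for $p\in\si$ tending to $x$, the vertical displacement $|h_x-t_p|$ is of order $o(s^2)$ in the arclength $s=L(xp)$, equivalently $|x\,\mu(p)|=o(|xp|)$; since the foot $\mu(p)=(0,t_p)$ has the same height as $p$, this is precisely the assertion that $\si$ is tangent to the horizontal distribution at $x$. I would establish it by combining the second-order estimate $L(xp)=|xp|+o(|xp|^2)$ of Lemma~\ref{lem:circle_rectifiable} with the quadratic excess inequality of Lemma~\ref{lem:quaratic_excess} and the self-duality of Corollary~\ref{cor:selfduality} (which pins the nearest-point data of $\si$ against $F$), forcing the vertical term to enter only at higher order; this is exactly the second-order information that the section subsequently converts into a computation of the lifting constant $c$. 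A secondary point to verify is the injectivity of the foot map on each arc of $\si\sm\{x,y\}$, needed to identify $\rho(u)$ with $v$; this I would deduce from the strict convexity of the distance to $F$ along $\si$ together with property~($4_\C$) of Corollary~\ref{cor:3c_4c}.
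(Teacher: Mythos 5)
Your reduction to the single identity $|uv|=|uz|+|zv|$ via Lemma~\ref{lem:common_rline} is exactly the right target, but the route you take to it has a genuine gap at the decisive step. Everything hinges on identifying $\rho(u)$ with $v$, and for that you need to know that $v$ is the \emph{unique} point of its arc of $\si\sm\{x,y\}$ with foot $z$. In the paper this uniqueness is Corollary~\ref{cor:two_points_circle}, which is \emph{deduced from} Lemma~\ref{lem:rline_through_three_points}; invoking it here is circular. Your proposed substitute --- ``strict convexity of the distance to $F$ along $\si$'' --- is not available: all convexity statements in the paper (Lemma~\ref{lem:smooth_convex}, Lemma~\ref{lem:smooth_concave}, etc.) concern distance functions to a \emph{point} along Ptolemy \emph{lines} or in the distance parameterization of a circle, and nothing is known about the behaviour of $p\mapsto\dist(p,F)$ along a Ptolemy circle at this stage. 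Without that uniqueness you only get that $\rho(u)$ is \emph{some} point of the opposite arc with foot $z$ lying on the line through $u$ and $z$ --- which is precisely the auxiliary point the paper calls $v'$ (up to the normalization $|v'z|=|vz|$ versus $|v'z|=|uz|$; note these agree only via Lemma~\ref{lem:equal_distances}, also proved \emph{after} this lemma). The remaining work is to show $v'=v$, and your argument does not do it.

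Two further remarks. First, the step you flag as hardest --- that $\si$ meets $\wh F$ orthogonally at $x$ --- is actually automatic and needs none of the second-order machinery: the tangent line $l_x$ to $\si$ at $x$ is a Ptolemy line (Proposition~\ref{pro:tangent_rcircle}), its projection is a geodesic of $B_\om$ through $\pi_\om(x)=\pi_\om(z)$, hence is setwise preserved by $-\id$, and the uniqueness in ($1_\C$) of the Ptolemy line through $x$ meeting a given fiber forces $\rho(l_x)=l_x$; likewise $\rho$ must swap the two arcs because $\rho|\si$ is a M\"obius involution of $\si$ fixing $x,y$ and cannot be $\id_\si$ (else $\pi_\om(u)=0$). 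Second, the paper's own proof sidesteps all of this: it places $v'$ on the line $l$ through $u,z$ on the far side of $z$ with $|v'z|=|vz|$, observes $|v'x|=|vx|$ and $|v'y|=|vy|$ by Lemma~\ref{lem:univ_dist_function}, and then squeezes $|uv'|\le|uv|$ (Ptolemy inequality for $(x,u,y,v')$ against the Ptolemy equality on $\si$) against $|uv|\le|uz|+|zv|=|uv'|$ (triangle inequality plus collinearity of $u,z,v'$). That two-line squeeze is what your argument is missing; if you want to keep your symmetry $\rho$, you would still need an independent proof that $\mu^{-1}(z)$ meets each arc at most once, and I do not see one that avoids the paper's computation.
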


\begin{proof} Let
$l$
be the
Ptolemy line through the
points
$u$ and $z=\mu(u)$.
Consider the point
$v'$ on
$l$ with order
$v'<z<u$ such that
$|v'z|=|vz|$.
We show that
$v=v'$.
By the Ptolemy equality on $\si$ we have
$$|vu| |xy| = |xu| |yv| + |uy| |v x|,$$
where we use that the pair
$u$, $v\in\si$
is separated by the pair
$x$, $y\in\si$.
On the other hand, we have
$$|v'u| |xy|\le |xu| |yv'| + |uy| |v' x|.$$
Note also that
$|vy|=|v'y|, |vx|=|v'x|$
by Lemma~\ref{lem:univ_dist_function}. Then
$|v'u|\le |vu|$.
On the other hand,
$|vu|\le|v'u|$
by definition of
$v'$.
Thus
$|vu|=|v'u|$
and
$v'=v$
by Lemma~\ref{lem:common_rline}.
\end{proof}

\begin{cor}\label{cor:two_points_circle} Let
$F\sub X_\om$
be a $\C$-line, and
$\si\sub X_\om$
a Ptolemy circle intersecting
$F$ 
in two distinct points
$x$, $y\in F$.
Let
$z\in F$
a point between
$x$ 
and 
$y$, $x<z<y$.
Then
$|\mu^{-1}(z)\cap \si| =2$.
\end{cor}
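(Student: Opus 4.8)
The plan is to show that $\mu^{-1}(z)$ meets each of the two open arcs of $\si\sm\{x,y\}$ in exactly one point. Write $\si_+$, $\si_-$ for the two components of $\si\sm\{x,y\}$. Since $\mu(x)=x$ and $\mu(y)=y$ lie in $F$ while $z$ is strictly between $x$ and $y$ in the order $O$ of $F$, we have $z\neq\mu(x),\mu(y)$, so no point of $\mu^{-1}(z)\cap\si$ can equal $x$ or $y$; the entire count is carried by the open arcs.

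For existence I would compose $\mu|_\si$ (continuous by Lemma~\ref{lem:continuity_project_fiber}) with the signed-position function of the ordered line $F$ to obtain a continuous real function $\rho$ on $\si$ with the property that $\rho(w)$ equals the position of $z$ precisely when $\mu(w)=z$. On the closed arc $\ov{\si_+}$ from $x$ to $y$, the function $\rho$ runs from the position of $x$ to the position of $y$, and the position of $z$ lies strictly between these by $x<z<y$. The intermediate value theorem then yields a point $u\in\si_+$ with $\mu(u)=z$, and likewise a point $v\in\si_-$ with $\mu(v)=z$. Hence $\mu^{-1}(z)\cap\si$ contains at least one point in each open arc.

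For the reverse bound I would argue by contradiction. Suppose $\si_+$ contained two distinct points $u_1,u_2$ with $\mu(u_1)=\mu(u_2)=z$, and take the point $v\in\si_-$ found above. Since $u_i$ and $v$ lie in different components of $\si\sm\{x,y\}$ and $\mu(u_i)=\mu(v)=z$, Lemma~\ref{lem:rline_through_three_points} puts $u_i,z,v$ on a common Ptolemy line $l_i$ for $i=1,2$. Both $l_1$ and $l_2$ pass through the two distinct points $z$ and $v$; adjoining $\om$ turns them into Ptolemy circles $l_1\cup\om$ and $l_2\cup\om$ sharing the three points $z,v,\om$, so they coincide by Corollary~\ref{cor:weak_unique}, giving $l_1=l_2=:l$. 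But then the bounded Ptolemy circle $\si$ and the Ptolemy circle $l\cup\om$ share the three distinct points $u_1,u_2,v$, again contradicting Corollary~\ref{cor:weak_unique} since $\om\in l\cup\om$ while $\om\notin\si$. Thus each open arc carries at most one intersection point, and together with existence this forces exactly one in each, i.e. $|\mu^{-1}(z)\cap\si|=2$.

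The point requiring the most care is the existence half: one must be sure that $\mu$ is genuinely continuous along $\si$ and that the signed-position function on $F$ converts the order-theoretic relation $x<z<y$ into a strict betweenness of real values, so that the intermediate value theorem really applies; this is exactly where Lemma~\ref{lem:continuity_project_fiber} and the total order $O$ on $\C$-lines are used. The uniqueness half is then a purely combinatorial consequence of the fact (Corollary~\ref{cor:weak_unique}) that two distinct Ptolemy circles meet in at most two points, together with the collinearity supplied by Lemma~\ref{lem:rline_through_three_points}.
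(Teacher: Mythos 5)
Your proof is correct and follows essentially the same route as the paper's: existence of one preimage on each arc by continuity of $\mu$ and the intermediate value theorem, and the upper bound by combining Lemma~\ref{lem:rline_through_three_points} with Corollary~\ref{cor:weak_unique} to force $\si$ to coincide with a Ptolemy line through $\om$, a contradiction. Your version is slightly more explicit than the paper's in identifying the two lines $l_1=l_2$ before deriving the contradiction, but the substance is identical.
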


\begin{proof} Note that
$\om\not\in\si$
since otherwise
$\si$
is a Ptolemy line in
$X_\om$
intersecting
$F$
in
$x$, $y$
in contradiction with Corollary~\ref{cor:3c_4c}($4_\C$).
By the continuity of
$\mu$
we have
$|\mu^{-1}(z)\cap \si|\ge 2$,
since at least one point of every arc of
$\si$
from
$x$ to $y$
projects to
$z$.
Assume that there are 3 points projecting to 
$z$, $u$ 
from one arc, and
$v$, $v'$ 
from the other, then by Lemma~\ref{lem:rline_through_three_points} 
the points
$u$, $v$, $v'$ 
are on the Ptolemy line through
$\mu(u)$
and
$u$.
But they are also on
$\si$.
Thus
$\si$ 
and this Ptolemy line coincide by Corollary~\ref{cor:weak_unique}.
This is a contradiction.
\end{proof}

\begin{lem}\label{lem:one_circle} Given a $\C$-line 
$F\sub X_\om$,
points
$x$, $y$, $z\in F$
such that
$x<z<y$
and a Ptolemy line
$l\sub X_\om$
through
$z$,
there is at most one Ptolemy circle through
$x$, $y$
that meets
$l$.
\end{lem}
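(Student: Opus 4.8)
The plan is to reduce the assertion to the uniqueness of the pair of points in which such a circle meets $l$, and then to derive that uniqueness from the Ptolemy equality combined with a \emph{supermodularity} property of the universal distance function $D$ of Lemma~\ref{lem:univ_dist_function}.

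First I would describe how a Ptolemy circle $\si$ through $x$, $y$ can meet $l$. Since $\si$ meets the $\C$-line $F$ exactly in $x$, $y$ with $x<z<y$, Corollary~\ref{cor:two_points_circle} shows that $\si$ meets $\mu^{-1}(z)$ in precisely two points, one in each arc of $\si\sm\{x,y\}$. As every $w\in l$ has $\mu(w)=z$, so that $l\sub\mu^{-1}(z)$, any point of $\si\cap l$ is one of these two; if $u\in\si\cap l$ and $v$ is the other point of $\si\cap\mu^{-1}(z)$, then Lemma~\ref{lem:rline_through_three_points} places $u$, $z$, $v$ on one Ptolemy line, necessarily $l$ (the union of $l$ with $\om$ is the only Ptolemy circle through $u$, $z$, $\om$, by Corollary~\ref{cor:weak_unique}). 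Hence $\si\cap l=\{u,v\}$ with $z$ strictly between $u$ and $v$ on $l$, while $z\notin\si$ by Corollary~\ref{cor:3c_4c}. Because a Ptolemy circle is determined by any three of its points (Corollary~\ref{cor:weak_unique}), $\si$ is the unique circle through $x$, $y$, $u$, so it suffices to show that the numbers $s=|uz|$ and $t=|vz|$, measured along $l$, are determined by $x,y,z,l$.

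For the distances I would use that $\mu(w)=z$ for $w\in l$, whence $|wp|_\om^\R=|wz|$ and $|wp|_\om^\C=|zp|$ for $p\in F$, so Lemma~\ref{lem:univ_dist_function} gives $|wp|=D(|wz|,|zp|)$. With $a=|xz|$, $b=|zy|$ and the cyclic order $x,u,y,v$ on $\si$, the Ptolemy equality reads $g(s,t)=0$, where, writing $f(\cdot)=D(\cdot,a)$ and $h(\cdot)=D(\cdot,b)$,
\[
g(s,t)=f(s)h(t)+h(s)f(t)-(a+b)(s+t).
\]
If a second such circle met $l$ in the pair with parameters $s',t'$, then (the circles sharing only $x$, $y$) the quadruples $x,u,y,v'$ and $x,u',y,v$ are not concircular, so their cross-ratio triples lie in the interior of $\De$, all three triangle inequalities are strict, and $g(s,t')>0$, $g(s',t)>0$. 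The crucial structural input is the mixed difference
\[
g(s,t)+g(s',t')-g(s,t')-g(s',t)=[f(s)-f(s')][h(t)-h(t')]+[h(s)-h(s')][f(t)-f(t')],
\]
which is strictly positive whenever $s<s'$ and $t<t'$ because $f$, $h$ are strictly increasing. Since $g(s,t)=g(s',t')=0$, this contradicts $g(s,t')>0$ and $g(s',t)>0$; thus no two distinct admissible circles can have one intersection pair coordinatewise below the other.

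The main obstacle is the remaining ordering $s<s'$, $t>t'$, for which the supermodularity inequality is consistent rather than contradictory: two distinct circles would form a strictly decreasing antichain in the parameter plane, and this cannot be excluded by the Ptolemy equality alone. Here I would bring in extra geometry. The flip of $l$ fixing $z$ extends to an isometry of $X_\om$ by the extension property (${\rm E}_2$) and sends a circle with parameters $(s,t)$ to one with parameters $(t,s)$, so the admissible set is symmetric under coordinate interchange; combined with the decreasing-antichain property this already forces strong rigidity. To finish I expect to need the concircularity of $\si$ itself, beyond the single Ptolemy equality, which I would access through the second-order behaviour of $\si$ at $u$ and $v$: the common slope $\al=\slope_u(\si,l)=\slope_v(\si,l)$ together with the quadratic excess estimate of Lemma~\ref{lem:quaratic_excess} gives an additional relation among $s$, $t$ and $\al$, and it is the interaction of this relation with the interchange symmetry that I anticipate will close the decreasing-antichain case. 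This last step is where I expect the real difficulty to lie.
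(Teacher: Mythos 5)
Your reduction to the uniqueness of the intersection pair $\{u,v\}=\si\cap l$ is correct and coincides with the first step of the paper's argument, but the analytic route you then follow does not close, and the case you leave open is not a corner case --- it is the entire content of the lemma. The supermodularity of the mixed differences of $g$ only excludes two circles whose parameter pairs are comparable ($s<s'$ and $t<t'$). The remaining configuration $s<s'$, $t>t'$ is exactly the one produced by the flip of $l$ at $z$: by Proposition~\ref{pro:flip_extension} that flip extends to an isometry of $X_\om$ fixing $x$, $y$, $z$, and it carries a circle with parameters $(s,t)$ to one with parameters $(t,s)$; these two pairs form a decreasing antichain whenever $s\neq t$. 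So ruling out the antichain case is equivalent to proving $|uz|=|zv|$, which is Lemma~\ref{lem:equal_distances} --- and that lemma is proved in the paper \emph{after}, and by means of, the present one. Your appeal to Lemma~\ref{lem:quaratic_excess} and the interchange symmetry is left as an expectation rather than an argument, so the proof is incomplete at its essential point. Two smaller slips: by Proposition~\ref{pro:euclid_square_fiber} one has $|xy|=\sqrt{a^2+b^2}$, not $a+b$, so the linear term of $g$ should be $\sqrt{a^2+b^2}\,(s+t)$; and the strict inequalities $g(s,t')>0$, $g(s',t)>0$ rest on the unproved claim that a quadruple realizing a Ptolemy equality must be concircular (fortunately the non-strict Ptolemy inequality already suffices for the comparable case).

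The paper closes the argument by a duality you did not consider: pass to a metric of the M\"obius structure with infinitely remote point $y$. Then $\si$ and a hypothetical second circle $\si'$ become Ptolemy lines of $X_y$ through $x$, while $l\cup\om$ becomes a bounded Ptolemy circle meeting the $\C$-line $\wh F\sm y$ in the two points $z$ and $\om$, with $x$ between them. The four points $u,v,u',v'$ are pairwise distinct by Corollary~\ref{cor:weak_unique}, and each of them projects to $x$ under $\mu_y$, since the unique Ptolemy line of $X_y$ through it meeting $\wh F\sm y$ is the corresponding $\si\sm y$ or $\si'\sm y$. This puts four points of the Ptolemy circle $l\cup\om$ in $\mu_y^{-1}(x)$, contradicting Corollary~\ref{cor:two_points_circle} applied in $X_y$. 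If you want to rescue your approach, this inversion is the missing idea; the metric inequalities you set up in $X_\om$ alone cannot distinguish the true configuration from a decreasing antichain of intersection pairs.
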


\begin{proof} 
It follows from Lemma~\ref{lem:rline_through_three_points}
and Corollary~\ref{cor:two_points_circle} that every
Ptolemy circle 
$\si$
through
$x$, $y$
that meets
$l$
intersects
$l$
in two points
$u$, $v$
separated by
$z$, $u<z<v\sub l$.
If
$\si'\neq\si$
is another Ptolemy circle through
$x$, $y$
that meets
$l$
in points
$u'$, $v'$, $u'<z<v'\sub l$,
then the points
$u$, $v$, $u'$, $v'\sub l$
are pairwise distinct by Corollary~\ref{cor:weak_unique}.
Consider now the situation in a metric of the M\"obius structure 
with infinitely remote point
$y$.
In this metric
$l$
is a Ptolemy circle intersecting
$F$ 
in
$z$, $\om$,
and
$x\in F$
is a point between
$z$
and
$\om$.
The Ptolemy circles
$\si$, $\si'$
are Ptolemy lines through
$x$.
Let
$\mu_y:X_y\to F$
be the projection in this metric. Then
$\mu_y(u)=\mu_y(v)=\mu_y(u')=\mu_y(v')=x$,
in contradiction to Corollary~\ref{cor:two_points_circle}. Thus
$\si=\si'$.
\end{proof}

\begin{lem}\label{lem:equal_distances} Under the assumptions of
Lemma~\ref{lem:rline_through_three_points} we have
$|uz|=|zv|$.
\end{lem}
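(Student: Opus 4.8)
The plan is to exhibit an isometry of $X_\om$ that fixes $z$ and interchanges $u$ and $v$; once this is done, $|uz|=|zv|$ is immediate. First I would record the geometric picture. By Lemma~\ref{lem:rline_through_three_points} the three points $u$, $z$, $v$ lie on a common Ptolemy line $l\sub X_\om$, and in fact $z$ lies between $u$ and $v$ on $l$ (this is how $v'$ is positioned in that proof, and it is consistent with $z$ being between $x$ and $y$ on $F$ together with $u$, $v$ belonging to different components of $\si\sm\{x,y\}$). By Corollary~\ref{cor:two_points_circle} the set $\si\cap l$ consists of exactly the two points $u$, $v$. The candidate symmetry is the flip of $l$ about $z$.

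Let $M=M_l$ be the \semi-plane over $l$. Since the $\C$-line $F$ through $z$ meets $l$ at $z$, we have $F\sub M$, so $x$, $y\in M$ as well. I would take the flip $\phi\colon l\to l$ about the fixed point $z$ and, using Proposition~\ref{pro:flip_extension}, extend it to an isometry of $X_\om$ that is a flip on $M$. The key claim is that $\phi$ fixes $x$ and $y$. Indeed, a flip on $M$ preserves each of the two half-planes $H^{\pm}$ of $M$ bounded by $l$ and fixes $z$; as $F$ is the unique $\C$-line through $z$, we get $\phi(F)=F$, and since $\phi(F^{\pm})\sub\phi(F)\cap\phi(H^{\pm})=F\cap H^{\pm}=F^{\pm}$ for the two rays $F^{\pm}=F\cap H^{\pm}$ of $F$ at $z$, the restriction $\phi|F$ is an isometry of $F$ fixing $z$ and preserving each ray, hence $\phi|F=\id_F$. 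In particular $\phi(x)=x$ and $\phi(y)=y$.

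Now $\phi(\si)$ is a Ptolemy circle through $\phi(x)=x$ and $\phi(y)=y$, and it meets $l$, because $\phi(l)=l$ gives $\phi(\si)\cap l=\phi(\si\cap l)=\phi(\{u,v\})$. By the uniqueness statement of Lemma~\ref{lem:one_circle} there is at most one Ptolemy circle through $x$, $y$ meeting $l$, whence $\phi(\si)=\si$. Therefore $\phi$ permutes $\si\cap l=\{u,v\}$. Since $\phi$ reverses $l$ with fixed point $z$ and $u\neq z$, it cannot fix $u$, so $\phi(u)=v$; as $\phi$ is an isometry fixing $z$ we conclude $|uz|=|\phi(u)\phi(z)|=|vz|$.

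The substantive point — and the step I expect to be the main obstacle — is the middle one: seeing that the flip $\phi$ fixes $x$ and $y$ (equivalently, fixes $F$ pointwise) and then recognizing that the uniqueness in Lemma~\ref{lem:one_circle} upgrades the flip from a symmetry of $l$ to a symmetry of the whole circle $\si$. Everything else is formal, and the argument avoids any explicit computation with the distance function $D$.
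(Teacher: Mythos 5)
Your argument is correct and is essentially the paper's own proof: extend the flip of $l$ at $z$ via Proposition~\ref{pro:flip_extension}, check that it fixes $x$ and $y$, invoke the uniqueness in Lemma~\ref{lem:one_circle} to get $\phi(\si)=\si$, and conclude that $\phi$ swaps $u$ and $v$. The only difference is that you spell out in more detail why the flip is the identity on $F$ (via preservation of the half-planes of $M$), where the paper appeals directly to preservation of the order of $\C$-lines.
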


\begin{proof} 
We denote by
$l$ 
the Ptolemy line containing
$u$, $z$, $v$.
By Proposition~\ref{pro:flip_extension}, there exists an isometry
$\phi$
of
$X_\om$
preserving the order of $\C$-lines
that extends the flip of
$l$
at
$z$.
Thus
$\phi(x)=x$, $\phi(y)=y$, $\phi(z)=z$, $\phi(\om)=\om$.
Then
$\si'=\phi(\si)$
is a Ptolemy circle through
$x$, $y$
that meets
$l$
at the points
$\phi(u)$, $\phi(v)$.
By Lemma~\ref{lem:one_circle},
$\si'=\si$,
hence
$\phi(u)=v,\phi(v)=u$
and thus
$|uz|=|zv|$.
\end{proof}

\begin{cor}\label{cor:circle_cover_twice} The projection
$\mu(\si)$
is the segment in
$F$
from
$x$
to
$y$.
Let
$\si_1$
and
$\si_2$
be the two segments of
$\si$
with endpoints
$x$
and
$y$,
then
$\mu|\si_i:\si_i\to \mu(\si_i)$
is a homeomorphism for
$i=1,2$.
\end{cor}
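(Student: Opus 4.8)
The plan is to build, for each arc, an explicit continuous inverse of $\mu$ and to read off both assertions from it. Throughout identify $F\cong\R$ using the order $O$, so that $x<y$, and recall from the proof of Corollary~\ref{cor:two_points_circle} that $\om\notin\si$ (otherwise $\si\sm\om$ would be a Ptolemy line meeting $F$ twice, against $(4_\C)$). Since $\mu$ is continuous (Lemma~\ref{lem:continuity_project_fiber}) and $\si$ is compact and connected, $\mu(\si)$ is a compact subinterval of $F$ containing $x=\mu(x)$ and $y=\mu(y)$, so $[x,y]\sub\mu(\si)$. For every $z\in(x,y)$ Corollary~\ref{cor:two_points_circle} gives exactly two points of $\si$ over $z$, one on each arc; denote them $u(z)\in\si_1$ and $v(z)\in\si_2$. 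This defines injective maps $u\colon(x,y)\to\si_1$ and $v\colon(x,y)\to\si_2$, since distinct arguments have distinct $\mu$-values, hence distinct images.

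First I would check that $u$ (and likewise $v$) is continuous on $(x,y)$: if $z_n\to z\in(x,y)$, any subsequential limit $p$ of $u(z_n)$ lies in the compact arc $\si_1$ and satisfies $\mu(p)=z$, so $p=u(z)$ by the uniqueness in Corollary~\ref{cor:two_points_circle}; thus $u(z_n)\to u(z)$. The decisive step is to show that $u$ extends continuously to $[x,y]$ with $u(z)\to y$ as $z\to y^-$ and $u(z)\to x$ as $z\to x^+$ (and the same for $v$). This is exactly the statement that $\si$ does not \emph{overshoot} $F$ past $x$ or $y$ in the $\mu$-direction, and it cannot follow from Corollary~\ref{cor:two_points_circle} alone: in a Euclidean model the shadow of a chord-circle on a line is strictly longer than the chord. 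It is here that the special self-dual geometry must enter, and I expect this to be the main obstacle.

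For the boundary behaviour at $y$ I would argue as follows. Fix $z_n\to y^-$ and, passing to a subsequence, let $u(z_n)\to p\in\si_1$ and $v(z_n)\to q\in\si_2$; then $\mu(p)=\mu(q)=y$. By Lemma~\ref{lem:equal_distances} we have $|u(z_n)z_n|=|z_nv(z_n)|$ for every $n$, so in the limit $|py|=|qy|$. Suppose $p\neq y$. Then $|py|>0$, hence also $q\neq y$; and since $\mu(p)=y\neq x=\mu(x)$ and $\mu(q)=y\neq x$ we get $p\neq x$ and $q\neq x$, so $p$ and $q$ lie in different components of $\si\sm\{x,y\}$. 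Lemma~\ref{lem:rline_through_three_points} then places $p$, $y$, $q$ on a common Ptolemy line $l'$. But $y\in\si$ as well, so the Ptolemy circle $\wh{l'}=l'\cup\om$ meets $\si$ in the three distinct points $p$, $q$, $y$, which forces $\wh{l'}=\si$ by Corollary~\ref{cor:weak_unique}; this is impossible because $\om\in\wh{l'}\sm\si$. Hence $p=q=y$, every subsequential limit equals $y$, and $u(z)\to y$. The argument at $x$ is symmetric after interchanging the roles of $x$ and $y$.

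Finally, the extended map $\bar u\colon[x,y]\to\si_1$ with $\bar u(x)=x$ and $\bar u(y)=y$ is continuous and injective (on $(x,y)$ by construction, and there $u(z)\notin\{x,y\}$ since $\mu(u(z))=z$), hence a homeomorphism onto its image; that image is a connected compact subset of the arc $\si_1$ containing both endpoints, so it is all of $\si_1$. Thus $\bar u\colon[x,y]\to\si_1$ is a homeomorphism and $\mu|\si_1=\bar u^{-1}\colon\si_1\to[x,y]$ is a homeomorphism; the same holds for $\si_2$. In particular $\mu(\si)=\mu(\si_1)\cup\mu(\si_2)=[x,y]$, the segment from $x$ to $y$, and $\mu|\si_i$ is a homeomorphism onto $\mu(\si_i)=[x,y]$ for $i=1,2$. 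The crux is the no-overshoot step, whose resolution hinges on combining the midpoint symmetry of Lemma~\ref{lem:equal_distances} with the three-point collinearity of Lemma~\ref{lem:rline_through_three_points} and the two-point intersection bound of Corollary~\ref{cor:weak_unique}; everything else is routine one-dimensional topology.
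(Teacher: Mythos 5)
Your proof is correct and follows essentially the same route as the paper's: the key step in both is to rule out points of $\si\sm\{x,y\}$ projecting to $x$ or $y$ by combining the symmetry $|u_1z|=|u_2z|$ from Lemma~\ref{lem:equal_distances} with the collinearity of Lemma~\ref{lem:rline_through_three_points} and the two-point bound of Corollary~\ref{cor:weak_unique}, the contradiction being $\om\notin\si$. You merely make the limiting/point-set-topology details (subsequential limits, continuity of the inverse, surjectivity onto each arc) more explicit than the paper does.
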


\begin{proof} Let
$z\in F$
be a point between
$x$
and
$y$, $x<z<y$.
There are
$u_i\in\si_i$
with
$\mu(u_i)=z$, $i=1,2$,
and by Corollary~\ref{cor:two_points_circle} the points
$u_1$, $u_2$
are uniquely determined. By Lemma~\ref{lem:equal_distances}
we have
$|u_1z|=|u_2z|$.
Then
$|u_1z|=|u_2z|\to 0$
as
$z\to x$
or
$y$.
Indeed otherwise we find
$u_1\in\si_1$, $u_2\in\si_2$
distinct from
$x$, $y$
such that
$\mu(u_1)=\mu(u_2)=x$
or
$y$.
Then by Lemma~\ref{lem:rline_through_three_points} there is a
Ptolemy line $l\sub X_\om$
through
$u_i$, $\mu(u_i)$, $i=1,2$.
These points are common for
$l$
and
$\si$,
hence
$l=\si$
by Corollary~\ref{cor:weak_unique}, a contradiction since
$\om\not\in\si$.

If follows that
$\mu^{-1}(x)\cap\si=x$
and
$\mu^{-1}(y)\cap \si =y$.
Furthermore
$\mu^{-1}(w)\cap\si =\es$,
if
$w\in F$
is not in the segment from
$x$
to
$y$,
since
$\si\sub X_\om$
is an embedded circle. Therefore both the restrictions
$\mu|\si_i:\si_i\to\mu(\si_i)$, $i=1,2$,
are continuous bijections and thus homeomorphisms.
\end{proof}

\begin{lem}\label{lem:mean_geometric} Given points
$x<z<y$
in the 
$\C$-line $F$
and a Ptolemy circle 
$\si\sub X_\om$
through
$x$, $y$,
we have
$|xz|\cdot|zy|=|zu|^2$,
where
$u\in\si$
is a point with
$\mu(u)=z$.
\end{lem}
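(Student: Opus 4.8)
The plan is to reduce the identity to a single relation coming from the Ptolemy equality on $\si$, and then to evaluate the two ``slant'' distances $|xu|$ and $|uy|$ using the self-dual flat structure of the \semi-plane. First I would produce the companion point: by Corollary~\ref{cor:two_points_circle} there is a second point $v\in\si$ with $\mu(v)=z$, lying on the arc of $\si$ opposite to the one containing $u$, and by Lemma~\ref{lem:rline_through_three_points} the points $u,z,v$ lie on one Ptolemy line $l\sub X_\om$ with $z$ between $u$ and $v$. Lemma~\ref{lem:equal_distances} gives $|uz|=|zv|$, hence $|uv|=2|uz|$. The flip of $l$ at $z$ furnished by Proposition~\ref{pro:flip_extension} is an isometry of $X_\om$ fixing $x$, $y$, $z$ and interchanging $u$, $v$ (it is the flip of the \semi-plane $M_l$ that reverses the Ptolemy direction and preserves the $\C$-line through $z$), so $|xu|=|xv|$ and $|uy|=|vy|$. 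Since $x,u,y,v$ occur in this cyclic order on $\si$ with the pair $(x,y)$ separating $(u,v)$, the Ptolemy equality~(\ref{eq:PT_eq}) reads $|xy|\,|uv|=|xu|\,|yv|+|xv|\,|yu|$, which by the flip collapses to
$$|xy|\,|uz|=|xu|\,|uy|.$$
Thus, writing $a=|xz|$, $b=|zy|$, the target $|uz|^2=ab$ is equivalent to the single identity $|xu|\,|uy|=|xy|\sqrt{ab}$, and (using Proposition~\ref{pro:euclid_square_fiber}, $|xy|^2=a^2+b^2$) it suffices to prove the two ``leg'' relations $|xu|^2=|xz|\,|xy|$ and $|uy|^2=|zy|\,|xy|$.

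The key geometric input for the leg relations is that $z$ is the foot of $u$ on the $\C$-line $F$. Indeed $\mu(u)=z$ means $u$ and $z$ lie on a common Ptolemy line in the \semi-plane $M=M_l$ spanned by $l$ and $F$, and self-duality (Corollary~\ref{cor:selfduality}) makes the $\C$-line foliation and the Ptolemy foliation of $M$ mutually orthogonal: $z$ is the closest point of $F$ to $u$, and the equidistance of both foliations (Lemma~\ref{lem:equidist_lines}) endows $M$ with a flat product coordinate $(s,t)$ in which $l$ is the $s$-axis, $F$ is the $t$-axis, $|u z|=|s_u|$, and $|pz|^2=|t_p|$ along $F$ (Proposition~\ref{pro:euclid_square_fiber}). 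In these coordinates $x=(0,-a^2)$, $y=(0,b^2)$, $u=(r,0)$ with $r=|uz|$, so that $|xu|,|uy|$ are the two mixed ``hypotenuse'' distances attached to $z$. The plan here is to pin down $r$ by the fact that $\si$ is not merely four points satisfying one Ptolemy relation but the \emph{unique} Ptolemy circle through $x,y$ meeting $l$ (Lemma~\ref{lem:one_circle}); equivalently, after m-inverting at $z$ one lands in the self-dual configuration in $X_z$ where $\om=F^\ast\cap l^\ast$ is the common point of the $\C$-line $F^\ast\ni x,y$ and the Ptolemy line $l^\ast\ni u,v$, and the assertion becomes the power-of-a-point equality $|\om x|_z\,|\om y|_z=|\om u|_z\,|\om v|_z$. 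I would extract the leg relations from this flat picture, cross-checking them against $|xu|\,|uy|=|xy|\sqrt{ab}$.

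Combining the leg relations with the reduced Ptolemy identity gives $|xy|\,|uz|=|xu|\,|uy|=\sqrt{|xz|\,|xy|}\,\sqrt{|zy|\,|xy|}=|xy|\sqrt{|xz|\,|zy|}$, whence $|uz|^2=|xz|\,|zy|$, as required. The hard part will be the evaluation of the mixed distances $|xu|$, $|uy|$: the single Ptolemy equation on the quadruple $x,u,y,v$ together with the known $\C$-line law only constrains them up to the unknown shape function of the \semi-plane, so the proof must genuinely use the rigidity of $\si$ (its uniqueness via Lemma~\ref{lem:one_circle} and the symmetry forced by Lemma~\ref{lem:equal_distances} and Proposition~\ref{pro:flip_extension}) rather than concyclicity of four points alone. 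I expect the cleanest execution to run entirely inside $M$ via the self-dual orthogonal coordinates, turning the leg relations into an elementary consequence of the square law $|pz|^2=|t_p|$ once the foot property $\mu(u)=z$ is interpreted as orthogonality.
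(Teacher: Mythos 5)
Your reduction step is correct and even elegant: producing the companion point $v$ via Corollary~\ref{cor:two_points_circle} and Lemma~\ref{lem:rline_through_three_points}, using Lemma~\ref{lem:equal_distances} and the flip of Proposition~\ref{pro:flip_extension} to get $|xu|=|xv|$, $|uy|=|vy|$, $|uv|=2|uz|$, and then collapsing the Ptolemy equality on $(x,u,y,v)$ to $|xy|\,|uz|=|xu|\,|uy|$ is all sound. But this is one equation in three unknowns, and the proof then rests entirely on the two ``leg relations'' $|xu|^2=|xz|\,|xy|$ and $|uy|^2=|zy|\,|xy|$, which you never actually derive. The flat $(s,t)$-coordinates on the \semi-plane cannot deliver them: the product structure of $M_l$ (equidistance of both foliations plus the square law along $\C$-lines) determines $|xu|$ only as $D(|uz|,|xz|)$ for the unknown universal distance function $D$ of Lemma~\ref{lem:univ_dist_function}, and the leg relations are \emph{equivalent} to $D(a,b)^4=a^4+b^4$ --- which is Proposition~\ref{pro:comp_dist_function}, proved later in the paper \emph{using} the present lemma. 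Likewise the ``power-of-a-point equality'' $|\om x|_z|\om y|_z=|\om u|_z|\om v|_z$ in $X_z$ is not an available tool; it is literally the statement of the lemma transplanted to the inverted metric, so invoking it is circular. You acknowledge this yourself in the last paragraph (``the hard part will be the evaluation of the mixed distances''), which is precisely where the proof is missing.

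The paper closes this gap by a different two-step argument that avoids mixed distances altogether. Step one: $r=|uz|$ is a well-defined function of the pair $(a,d)=(|xz|,|zy|)$ only --- for any other configuration with the same two distances, property $({\rm E}_2)$, a homothety, and a vertical flip (Proposition~\ref{pro:vert_flip}) produce an isometry matching the data, and Lemma~\ref{lem:one_circle} forces the circles to correspond, so $r$ is preserved. Step two: the m-inversion of $\la d_\om$ at $z$ with $\la=1/ad$ carries the configuration to another configuration of exactly the same type in $X_z$ (with $\om$ playing the role of $z$, the roles of $a$ and $d$ swapped, and $d_z(\om,u)=ad/r$); step one then yields the functional equation $r=ad/r$, hence $r^2=ad$. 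If you want to salvage your route, the natural fix is to abandon the leg relations and instead supply the two missing equations by this invariance-under-inversion argument; your observation that the inverted picture is ``the self-dual configuration in $X_z$'' is exactly the right starting point, but it must be turned into the well-definedness statement rather than into a power-of-a-point principle.
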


\begin{proof}
We denote
$|xz|=a$, $|zy|=d$.
In a first step we show that the distance 
$r=|uz|$
depends only on
$a$, $d$.
Let
$F'$
be a $\C$-line
with infinitely remote point
$\om'$, $x'<z'<y'$
be points in
$F'$
with 
$|x'z'|=a$, $|z'y'|=d$, $\si'\sub X_{\om'}$
a  Ptolemy circle through
$x'$, $y'$,
a point
$u'\in\si'$
is projected to
$z'$
by the respective projection
$\mu':X_{\om'}\to F'$,
where the distances are taken in a metric
$d_{\om'}$
of the M\"obius structure. 

There are Ptolemy circles
$l$
and 
$l'$
in
$X$
through
$\om$, $z$, $u$
and
$\om'$, $z'$, $u'$
respectively. By (${\rm E}_2$), a M\"obius map
$\phi:l\to l'$
with
$\phi(\om)=\om'$, $\phi(z)=z'$, $\phi(u)=u'$,
is extended to a M\"obius automorphism 
$X\to X$
for which we use the same notation
$\phi$.
Then
$\phi:(X,d_\om)\to (X,d_{\om'})$
is a homothety with
$\phi(F)=F'$.
Thus applying if necessary a homothety w.r.t. the metric
$d_{\om'}$
that fixes
$z'$
and leaves invariant
$l'$,
we can assume that
$|z'\phi(x)|=|z'x'|=a$
and
$|z'\phi(y)|=|z'y'|=d$.
Then
$\phi:(X,d_\om)\to (X,d_{\om'})$
is an isometry. Next, applying if necessary a vertical flip 
$j:X_{\om'}\to X_{\om'}$
which fixes the Ptolemy line 
$l'$
pointwise, we can assume that
$\phi(x)=x'$, $\phi(y)=y'$.
Then by Lemma~\ref{lem:one_circle}, we have
$\phi(\si)=\si'$.
Hence using Lemma~\ref{lem:equal_distances}, we obtain
$|z'u'|=|zu|=r$.

In a second step we show that
$r^2=ad$.
Let
$d_z$
be the inversion of the metric
$\la d_\om(p,q)=\la|pq|$
with respect to
$z$,
where
$\la=1/ad$, $d_z(p,q)=\frac{ad|pq|}{|zp|\cdot|zq|}$.
Then
$d_z(\om,x)=d$, $d_z(\om,y)=a$,
and 
$\si$
is still a Ptolemy circle through the points
$x$, $y$
in the $\C$-line 
$F$
with infinitely remote point 
$z$.
The Ptolemy line 
$l$
in the metric 
$d_\om$
through
$z$, $u$
with infinitely remote point 
$\om$
is the Ptolemy line in the metric
$d_z$
through
$\om$, $u$
with infinitely remote point 
$z$.
Hence
$d_z(\om,u)=r$
by the first step. On the other hand, we have
$d_z(\om,u)=ad/r$.
Thus
$ad=r^2$.
\end{proof}

\begin{pro}\label{pro:unit_rcircle}
Let
$\si\sub X_\om$
be a Ptolemy circle that intersects the $\C$-line
$F$
in distinct points
$x$, $y$
and meets a Ptolemy line 
$l\sub X_\om$
in points
$u$, $v$
with
$|uz|=|zv|=1$,
where
$z\in l\cap F$
is the midpoint between
$x$, $y$.
Then
$|zw|=1$
for every
$w\in\si$.
\end{pro}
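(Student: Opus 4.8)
The plan is to work in the standard coordinates centred at $z$, to reduce the assertion to a statement about the two basic invariants $|zw|_\om^\R$ and $|zw|_\om^\C$ of Lemma~\ref{lem:univ_dist_function}, and only at the end to pin down the actual distance by combining the Ptolemy equality with the homogeneity of the universal distance function $D$.

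First I would fix the metric with infinitely remote point $\om$, put $o=z$, and record the reference data. Since $z\in F$ lies between $x$ and $y$, Corollary~\ref{cor:two_points_circle} shows that exactly two points of $\si$ project to $z$ under $\mu$; by Lemma~\ref{lem:rline_through_three_points} and self-duality (Corollary~\ref{cor:selfduality}) they lie on the Ptolemy line $l$ through $z$ perpendicular to $F$, and by Lemma~\ref{lem:equal_distances} they are equidistant from $z$. These are the points $u,v$, and Lemma~\ref{lem:mean_geometric} gives $|zu|^2=|xz|\cdot|zy|$; as $z$ is the midpoint this yields $|xz|=|zy|=|zu|=|zv|=1$, so all four reference points lie on the unit sphere about $z$. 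Next, for an arbitrary $w\in\si$ I would produce a companion point: with $w'=\mu(w)$, Corollary~\ref{cor:circle_cover_twice} places $w'$ in the segment $[x,y]$ and Corollary~\ref{cor:two_points_circle} supplies a partner $\bar w$ on the opposite arc with $\mu(\bar w)=w'$. By Lemma~\ref{lem:rline_through_three_points} the points $w,w',\bar w$ lie on one Ptolemy line $l_w$, perpendicular to $F$ at the foot $w'$ by self-duality; hence $|zw|_\om^\R=|w'w|$ and $|zw|_\om^\C=|zw'|$, the $h$-coordinate being constant along the fibres of $\mu$. Applying Lemma~\ref{lem:mean_geometric} along $F$ gives $|w'w|^2=|xw'|\cdot|w'y|$, and the Pythagorean law on the $\C$-line $F$ (Proposition~\ref{pro:euclid_square_fiber}) gives, say for $w'$ between $z$ and $y$, $|xw'|^2=1+|zw'|^2$ and $|w'y|^2=1-|zw'|^2$. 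Combining these I obtain, for every $w\in\si$, the clean relation $\bigl(|zw|_\om^\R\bigr)^4+\bigl(|zw|_\om^\C\bigr)^4=1$.

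By Lemma~\ref{lem:univ_dist_function} we have $|zw|=D\bigl(|zw|_\om^\R,|zw|_\om^\C\bigr)$, so it remains to show $D(a,b)=1$ on the whole locus $a^4+b^4=1$. The endpoints are immediate: $D(a,0)=a$ and $D(0,b)=b$ (a point and its foot lie on one Ptolemy line, resp.\ on one $\C$-line), which settles $x,y,u,v$. For the interior I would read off the values of $D$ from Ptolemy equalities on inscribed quadruples: the symmetric quadruple $(x,u,y,v)$ together with its rescaled copies forces $D(R,R)^4=2R^4$, while the quadruple $(x,w,y,\bar w)$ — whose cross pairs are equal by Lemma~\ref{lem:univ_dist_function} — gives the functional relation $D(\rho,|xw'|)\,D(\rho,|yw'|)=\sqrt2\,\rho$ with $\rho=|w'w|$. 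Feeding in the homogeneity of $D$ (Lemma~\ref{lem:homogeneous_dist_function}) and the flip isometries of $\si$ fixing $z$ supplied by Proposition~\ref{pro:vert_flip} and by the order-preserving isometry induced by $\zeta\mapsto-\zeta$ (both preserve $\si$ by the uniqueness Lemma~\ref{lem:one_circle}), this determines $D$ on the locus and yields $|zw|=1$.

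I expect this last step to be the main obstacle. The point-stabiliser of $z$ in the isometry group of $X_\om$ preserves the ``height'' coordinate and is therefore not transitive on $\si$, so the equidistance cannot be produced by symmetry alone; nor does the Ptolemy inequality applied to the off-circle quadruple $(x,w,y,z)$ give more than a two-sided bound on $|zw|$. The exact value $1$ has to be squeezed out of the full Ptolemy-equality network for the embedded circle, which amounts to identifying $D$ with the fourth-root metric on the relevant two-dimensional slice — the genuinely computational heart of the proof.
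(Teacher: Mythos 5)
Your reduction is set up correctly and the intermediate computations are sound: the identities $|zw|_\om^\R=|w'w|$, $|zw|_\om^\C=|zw'|$, the relation $\bigl(|zw|_\om^\R\bigr)^4+\bigl(|zw|_\om^\C\bigr)^4=1$ obtained from Lemma~\ref{lem:mean_geometric} and Proposition~\ref{pro:euclid_square_fiber}, and the special values $D(R,R)^4=2R^4$ all check out. But the proof does not close, and you have correctly located where: the final step, showing $D(a,b)=1$ on the locus $a^4+b^4=1$, is not carried out, and the ingredients you assemble cannot produce it. The Ptolemy \emph{equality} is only available for quadruples of points lying \emph{on} $\si$, and for such quadruples the arguments of $D$ that appear are pairs like $(\rho,|xw'|)$ with $\rho^2=|xw'|\cdot|yw'|$; these satisfy $\rho^4+|xw'|^4=2|xw'|^2$, i.e.\ they lie on a \emph{different} curve than the locus $a^4+b^4=1$ on which $(|zw|_\om^\R,|zw|_\om^\C)$ lives. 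Moreover each such quadruple yields only the \emph{product} $D(\rho,|xw'|)\,D(\rho,|yw'|)=\sqrt2\,\rho$, never the individual factors, and the available symmetries (the vertical flip and the flip of $l$) merely permute the two factors consistently with this product. Since $z\notin\si$, no equality constrains $D(\rho,s)=|zw|$ directly, only the two-sided Ptolemy inequality, as you note. In effect, closing your argument would require already knowing $D(a,b)^4=a^4+b^4$ --- but in the paper that formula (Proposition~\ref{pro:comp_dist_function}) is \emph{deduced from} the present proposition, so this route is circular unless you supply a genuinely new input.

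The paper's proof avoids the distance function entirely. It takes the M\"obius involution $\phi$ of $l$ fixing $u,v$ and swapping $z$ with $\om$, extends it to $X$ by (${\rm E}_2$), and observes that $\phi$ is an isometry from $(X,d_\om)$ to the unit metric inversion $d_z$ at $z$. Since $|xz|=|zy|=1$, $\phi$ preserves $\{x,y\}$ and hence $\si$; composing with the vertical flip from Proposition~\ref{pro:vert_flip} one may assume $\phi$ fixes the four points $x,y,u,v\in\si$, hence fixes $\si$ pointwise (Proposition~\ref{pro:moebch-circ}). The identity $|xw|=d_z(x,w)=|xw|/(|zw|\,|zx|)$ then forces $|zw|=1$ for every $w\in\si$ with no computation. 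If you want to salvage your approach, this inversion is exactly the missing idea: it supplies the ``extra symmetry'' fixing $\si$ pointwise that the isometry group of $X_\om$ alone cannot provide.
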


\begin{proof} Let
$\phi:l\to l$
be a M\"obius involution with
$\phi(u)=u$, $\phi(v)=v$
and
$\phi(z)=\om$, $\phi(\om)=z$.
By (${\rm E}_2$),
$\phi$
extends to a M\"obius automorphism of
$X$
for which we use the same notation,
$\phi:X\to X$.
Then
$\phi$
preserves the $\C$-circle
$F$, $\phi(F)=F$,
because by Corollary~\ref{cor:3c_4c}($3_\C$),
$F$
is the unique $\C$-circle
through
$z$, $\om$.

Let
$d_z$
be the metric inversion of 
$d_\om$
w.r.t.
$z$, 
where
$d_\om(p,q)=|pq|$
for
$p$, $q\in X$,
that is,
$d_z(p,q)=\frac{|pq|}{|zp|\cdot|zq|}$.
Then
$\phi:(X,d_\om)\to(X,d_z)$
is an isometry because
$$d_z(\phi(u),\phi(z))=d_z(u,\om)=\frac{1}{d_\om(z,u)}=1.$$
We have
$|xz|=|zy|$,
and
$|xz|\cdot|zy|=1$
by Lemma~\ref{lem:mean_geometric}. Thus
$|xz|=|zy|=1$.
Then
$d_z(x,\om)=1/|zx|=1$
and similarly
$d_z(y,\om)=1$.
It follows that
$\phi$
preserves the set 
$\{x,y\}$, $\phi(\{x,y\})=\{x,y\}$
and hence the Ptolemy circle 
$\si$
is invariant under
$\phi$, $\phi(\si)=\si$.

By Proposition~\ref{pro:vert_flip}, there exists a vertical flip 
$j:X_\om\to X_\om$
that fixes the Ptolemy line 
$l$
pointwise. Then
$j$
flips
$F$, $j(x)=y$, $j(y)=x$
and thus
$j(\si)=\si$.
Applying
$j$
if necessary, we can assume that
$\phi(x)=x$, $\phi(y)=y$.
Then
$\phi$
fixes
$\si$
pointwise,
$\phi(w)=w$
for every
$w\in\si$.
Now, we have
$$|xw|=d_z(\phi(x),\phi(w))=d_z(x,w)
=\frac{|xw|}{|zw|\cdot|zx|}=\frac{|xw|}{|zw|},$$
which implies
$|zw|=1$
for every
$w\in\si$.
\end{proof}

\subsection{Computing the distance function $D$}
Now, we are able to find the distance function
$D$,
see section~\ref{subsect:distance_function}.
\begin{pro}\label{pro:comp_dist_function}
We have
$D(a,b)^4=a^4+b^4$
for each
$a$, $b\ge 0$.
\end{pro}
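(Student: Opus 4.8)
The plan is to pin down $D$ on all of $[0,\infty)^2$ from its values on the single level set $\{D=1\}$, exploiting the homogeneity of Lemma~\ref{lem:homogeneous_dist_function}, and to trace that level set by reading off the two components $|\cdot|_\om^\R$, $|\cdot|_\om^\C$ of the distance from the center of one explicit unit Ptolemy circle. I fix $\om$, a metric with infinitely remote point $\om$, an order $O$, and standard coordinates on $X_\om$ with origin $o=z=(0,0)$. Let $F=F_o$ be the $\C$-line through $o$; by Proposition~\ref{pro:euclid_square_fiber} the $\C$-line distance is $|(0,h)(0,h')|=2\sqrt{|h-h'|}$, and in particular $|o(0,h)|=2\sqrt{|h|}$, so $x=(0,\tfrac14)$ and $y=(0,-\tfrac14)$ satisfy $|zx|=|zy|=1$ with $z$ as midpoint. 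By (E) there is a Ptolemy circle $\si$ through $x,y$; since a Ptolemy line meets a fiber at most once, $\om\notin\si$ and $\si\sub X_\om$ meets $F$ exactly in $x,y$. By Corollary~\ref{cor:two_points_circle}, Lemma~\ref{lem:rline_through_three_points} and Lemma~\ref{lem:equal_distances}, $\si$ meets a Ptolemy line $l$ through $z$ in two points symmetric about $z$; applying a homothety fixing $o$ (which preserves $F$ and scales distances) I normalize these to distance $1$ from $z$, and then Lemma~\ref{lem:mean_geometric} with $z'=z$ forces $|zx|=|zy|=1$ as above. Hence $\si$ satisfies the hypotheses of Proposition~\ref{pro:unit_rcircle}, so $|zw|=1$ for every $w\in\si$.

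The computation then runs as follows. For $s\in(-\tfrac14,\tfrac14)$ put $z'=(0,s)\in F$, which lies between $x$ and $y$, and let $u=(z_u,h_u)\in\si$ be a point with $\mu(u)=z'$ (two exist, by Corollary~\ref{cor:two_points_circle}). Because the $h$-coordinate of a point is by definition the $h$-coordinate of its $\mu$-projection, $h_u=s$, and therefore $|zu|_\om^\C=|o\,\mu(u)|=|oz'|=2\sqrt{|s|}=:b$. Since $z'=\mu(u)$, the points $z'$ and $u$ lie on a common Ptolemy line along which $\pi_\om$ is isometric (Lemma~\ref{lem:line_dist_klines}), whence $|z'u|=|z'u|_\om^\R=|z_u|=:a=|zu|_\om^\R$. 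Now Lemma~\ref{lem:mean_geometric} applied to $x<z'<y$ gives $a^2=|z'u|^2=|xz'|\cdot|z'y|=4\sqrt{\tfrac1{16}-s^2}$, so $a^4=1-16s^2$, while $b^4=16s^2$; adding, $a^4+b^4=1$. On the other hand $D(a,b)=|zu|=1$ by Proposition~\ref{pro:unit_rcircle}. As $s$ runs over $[-\tfrac14,\tfrac14]$ the pair $(a,b)$ sweeps the full quarter-curve $\{a^4+b^4=1,\ a,b\ge0\}$, the values $s=0$ and $s=\pm\tfrac14$ yielding the endpoints $(1,0)$ and $(0,1)$. Thus $D\equiv1$ on this curve.

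Finally, for arbitrary $a,b\ge0$ I set $\rho=(a^4+b^4)^{1/4}$; then $(a/\rho,b/\rho)$ lies on the curve above, so Lemma~\ref{lem:homogeneous_dist_function} gives $D(a,b)=\rho\,D(a/\rho,b/\rho)=\rho$, i.e. $D(a,b)^4=a^4+b^4$. I expect the genuinely delicate part to be the coordinate bookkeeping of the second paragraph rather than any new geometry: namely verifying from the definition of the standard coordinates that $h_u=s$, that $z'=\mu(u)$ yields $|z'u|=|z_u|$, and that the parameter $s$ really sweeps the whole unit arc, together with the preliminary normalization of $\si$ to a bona fide unit circle. Once these identifications are in place, the area law enters only through Proposition~\ref{pro:unit_rcircle} and Lemma~\ref{lem:mean_geometric}, and the remaining algebra is immediate.
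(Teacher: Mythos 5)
Your proposal is correct and follows essentially the same route as the paper: both arguments place a unit Ptolemy circle over a $\C$-line, use Proposition~\ref{pro:euclid_square_fiber} and Lemma~\ref{lem:mean_geometric} to relate the $\R$- and $\C$-components of the distance from the center to a point of the circle, invoke Proposition~\ref{pro:unit_rcircle} to know that distance is $1$, and finish with homogeneity (the paper folds the homogeneity step into the determination of a scaling factor $\la$ with $\la^4=1/(a^4+b^4)$, whereas you sweep the level curve $\{D=1\}$ explicitly, which if anything makes the surjectivity onto all pairs $(a,b)$ more transparent). The only blemishes are cosmetic: the homothety normalization in your first paragraph is unnecessary, since with $|zx|=|zy|=1$ Lemma~\ref{lem:mean_geometric} already yields $|zu|=|zv|=1$ directly, and the degenerate case $a=b=0$ should be noted separately.
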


\begin{proof}
Let
$\si\sub X_\om$
be a Ptolemy circle through points
$x$, $y$
in the $\C$-line 
$F$, $o\in F$
the midpoint between
$x$, $y$, $|xo|=|oy|=1$.
By rescaling with coefficient
$\la>0$,
which has to be found, we assume that
$|oz|=\la b$, $|zu|=\la a$
for some
$u\in\si$
with
$\mu(u)=z$, 
where
$z\in F$
is a point between
$o$
and
$y$.
Using Proposition~\ref{pro:euclid_square_fiber} and notations
$|xz|=c$, $|zy|=d$,
we obtain
$d^2=1-\la^2b^2$, $c^2=1+\la^2b^2$.
By Lemma~\ref{lem:mean_geometric}, we have
$c^2d^2=\la^4a^4$.
Hence
$\la^4=1/(a^4+b^4)$.
By Proposition~\ref{pro:unit_rcircle},
$|ou|=1=\la D(a,b)$.
Therefore,
$D(a,b)^4=1/\la^4=a^4+b^4$.
\end{proof}

\subsection{The lifting constant}
\label{subsect:lifting_constant}

\begin{lem}\label{lem:complex_line} Let 
$\si\sub X_\om$
be a Ptolemy circle centered at
$z=l\cap F$,
where the Ptolemy line 
$l\sub X_\om$
intersects
$\si$
in
$u$, $v$,
the $\C$-line 
$F\sub X_\om$
intersects
$\si$
in
$x$, $y$.
Let
$l'\sub X_\om$
be the tangent Ptolemy line to
$\si$
at the point
$u$.
Then the lines
$\ov l=\pi_\om(l)$, $\ov l'=\pi_\om(l')\sub B_\om$
span a complex line, that is,
$J(\ov l)\parallel \ov l'$
for the canonical complex structure
$J:B_\om\to B_\om$.
\end{lem}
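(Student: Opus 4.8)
The plan is to choose coordinates in which the geometry of $\si$ becomes transparent and then to read off the tangent direction from the complex structure. First I would normalise: by Proposition~\ref{pro:unit_rcircle} we may rescale the metric of $X_\om$ so that $\si$ lies on the unit sphere about its centre $z=l\cap F$, i.e. $|zq|=1$ for all $q\in\si$. Fix standard coordinates (sect.~\ref{sect:coordinates}) with origin $o=z$ and a canonical basis $\{u_1,v_1,\dots,u_k,v_k\}$, $v_i=J(u_i)$, of $B_\om$ chosen so that $\ov l=\pi_\om(l)=\R u_1$. Then $F$ is the vertical axis, $u=(u_1,0)$, $v=(-u_1,0)$, and by the distance formula~(\ref{eq:distance_formula}) based at $z$ together with Proposition~\ref{pro:comp_dist_function} the unit sphere is $\{(\zeta,\eta):|\zeta|^4+16\eta^2=1\}$; in particular $x,y=(0,\pm\tfrac14)$ are the two intersection points of $\si$ with $F$.

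Since $\si\sub\{|\zeta|^4+16\eta^2=1\}$, its projection satisfies $\pi_\om(\si)\sub\{|\zeta|\le1\}$ and touches the boundary sphere $S(o)$ at $\pi_\om(u)=u_1$. The tangent Ptolemy line $l'$ to $\si$ at $u$ exists by Proposition~\ref{pro:tangent_rcircle}, and because $\pi_\om$ is isometric on Ptolemy lines the direction $w$ of $\ov l'=\pi_\om(l')$ is tangent to the curve $\pi_\om(\si)$ at $u_1$. As that curve lies inside the unit ball and meets its boundary at $u_1$, the function $s\mapsto|\pi_\om(\si(s))|^2$ attains a maximum there, so $w\perp u_1$.

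Next I would pin $w$ down to the complex line $L=\mathrm{span}(u_1,Ju_1)$ by symmetry. By Lemma~\ref{lem:one_circle}, $\si$ is the unique Ptolemy circle through $x,y$ that meets $l$. Let $G$ be the group of order-preserving M\"obius automorphisms of $X$ that fix $\om$ and act on $X_\om$ as isometries fixing $o$; by Proposition~\ref{pro:complex_structure} the induced isometries $\pi_\ast(G)$ of $B_\om$ commute with $J$, and by the extension property~$(\mathrm{E}_2)$ (as in the proofs of Lemma~\ref{lem:unclosed_parall} and Proposition~\ref{pro:complex_structure}) $\pi_\ast(G)$ acts transitively on $S(o)$. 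Any $g$ in the stabiliser $K$ of $u_1$ fixes $u_1$ and, since $\pi_\ast(g)$ commutes with $J$, fixes $v_1=Ju_1$, so $g$ fixes $L$ pointwise; being an order-preserving isometry fixing $o$ it also fixes $x,y$ and maps $l$ onto $l$, whence $g(\si)=\si$ by uniqueness. Therefore $g$ fixes $u$ and preserves $\si$, hence preserves $l'$ and the direction $w$, giving $\pi_\ast(g)(w)\in\R w$ for all $g\in K$. Writing $w=w_L+w_\bot$ with $w_L\in L$, $w_\bot\in L^\bot$, and using that $K$ acts on $L^\bot\cong\C^{m-1}$ ($m=\dim_\C B_\om$) with no invariant real line (indeed transitively on its unit sphere), one concludes $w_\bot=0$, so $w\in L$.

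Combining the two constraints, $w\in L$ and $w\perp u_1$ force $w\parallel v_1=J(u_1)$, i.e. $\ov l'\parallel J(\ov l)$, which is the assertion. The main obstacle is the group-theoretic input of the third paragraph: that the stabiliser in $\pi_\ast(G)$ of a unit vector acts with no invariant real line on the complex-orthogonal complement $L^\bot$. This is where transitivity of $\pi_\ast(G)$ on $S(o)$ and the fact that it commutes with $J$ must be combined, appealing to the classification of transitive actions on spheres used in Lemma~\ref{lem:transitive_action}; the case $m=1$ (i.e. $k=2$) is trivial since then $L=B_\om$. A secondary technical point is the clean passage from the tangency of $l'$ to $\si$ in $X_\om$ to the tangency of $\ov l'$ to $\pi_\om(\si)$ in $B_\om$, which I would justify via the $1$-Lipschitz submetry $\pi_\om$ together with the fact that $\pi_\om$ is isometric on Ptolemy lines.
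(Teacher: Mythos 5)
Your first two paragraphs are essentially sound and take a genuinely different route to the orthogonality $w\perp u_1$ than the paper does (the paper derives it from the vertical flip rather than from a maximum of $|\pi_\om(\si(s))|^2$ on the sphere $|\zeta|^4+16\eta^2=1$). The genuine gap is in the third paragraph. You need the stabilizer $K$ of $u_1$ in the isotropy group $H\sub\bO(2m)$ to have no invariant real line in $L^\bot$, and you assert this holds because $K$ acts ``transitively on its unit sphere.'' At this stage all that is known about $H$ is that it is a closed subgroup of $U(m)$ (it commutes with $J$ by Proposition~\ref{pro:complex_structure}) acting transitively on $S^{2m-1}$. By the classification invoked in Lemma~\ref{lem:transitive_action}, $H$ could a priori be $SU(m)$, $Sp(m/2)$ or $Sp(m/2)U(1)$ rather than $U(m)$, and for several of these your claim is false: for $H=Sp(m/2)$ the stabilizer of $u_1$ fixes the quaternionic line $\bH u_1$ pointwise, so $\R(ju_1)\sub L^\bot$ is an invariant (indeed pointwise fixed) real line; for $m=2$ and $H=SU(2)$ the stabilizer of $u_1$ is trivial and every real line in $L^\bot$ is invariant. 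Ruling these groups out --- i.e.\ showing that $H$ contains $U(m)$ --- is essentially part of what this lemma and the subsequent computation of the lifting constant are meant to establish, so the step is circular as it stands. (A secondary, repairable point: to run the maximum argument you also need that the two arcs of $\si$ at $u$ project to the two opposite rays of $\ov l'$ at $u_1$ to first order, which deserves a sentence.)

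The paper's proof sidesteps the isotropy group entirely. It uses the vertical flip $j$ of Proposition~\ref{pro:vert_flip} fixing $l$ pointwise: $j$ preserves $\si$ (it fixes $u$, $v$ and swaps $x$, $y$), hence preserves the tangent line $l'$ and acts on it as a flip, while $\pi_\ast(j)=\conj$ for a canonical basis whose first vector spans $\ov l$. The decisive point is the freedom in choosing that basis: if the orthogonal projection of $\ov l''$ (the translate of $\ov l'$ through the origin) to $L^\bot$ were nonzero, one chooses $u_2$ along that projection; then $\conj$ fixes $\ov l''$ pointwise, so $j$ fixes $l'$ pointwise, contradicting that $j$ flips $l'$. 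If you want to keep your symmetry-based strategy, you must replace the unavailable transitivity of $K$ on the unit sphere of $L^\bot$ by this explicitly constructed involution (or otherwise first prove $U(m)\sub H$).
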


\begin{proof}
We regard
$B_\om$
as an Euclidean space with origin
$o=\pi_\om(z)=\pi_\om(x)=\pi_\om(y)$,
and 
$J$
as an isometry of
$B_\om$
with
$J(o)=o$. 

By Proposition~\ref{pro:vert_flip}, there is a vertical flip 
$j:X_\om\to X_\om$
with respect to 
$z$
that fixes
$l$
pointwise. Then
$j(u)=u$, $j(v)=v$, $j(x)=y$
and
$j(y)=x$.
Thus
$j$
preserves
$\si$, $j(\si)=\si$.
Moreover 
$j$
preserves
$l'$, $j(l')=l'$,
by uniqueness the tangent Ptolemy line,
see Proposition~\ref{pro:tangent_rcircle}, and it acts on
$l'$
as a flip because
$j$
flips the Ptolemy circle 
$\si$.

Let
$\ov l''\sub B_\om$
be the line through the origin
$o$
parallel to
$\ov l'$.
Recall that
$\pi_\ast(j)=\conj$,
see sect.~\ref{subsect:exist_vert_flip}. Since
$j$
preserves
$l$
pointwise and flips
$l'$,
the line
$\ov l'$
and thus the line
$\ov l''$
is orthogonal to
$\ov l$.
It suffices to show that
$J(\ov l)=\ov l''$.
To this end, we use a freedom in the construction of
the vertical flip 
$j$.
Namely, assume that 
$J(\ov l)\neq\ov l''$.
Since
$J(\ov l)\perp\ov l$,
the projection
$m\sub B_\om$
of
$\ov l''$
to the orthogonal complement
$L^\bot\sub B_\om$
of the subspace
$L$
spanned by
$u_1$, $v_1=J(u_1)$
is nontrivial. We construct a canonical basis
$b=\{u_1,v_1,\dots,u_k,v_k\}$
of
$B_\om$
for the complex structure
$J$,
see sect.~\ref{subsect:exist_vert_flip}, so that
$u_1$
generates
$\ov l$
and
$u_2$
generates 
$m$.
Then for the respective conjugation isometry
$\conj:B_\om\to B_\om$
we have
$\conj(\ov l'')\neq\ov l''$
unless
$\ov l''=m$
because by definition
$\conj(u_i)=u_i$, $\conj(v_i)=-v_i$
for
$i=1,\dots,k$.

Since
$j(l')=l'$,
it follows from definition of the respective flip
$j:X_\om\to X_\om$,
see Proposition~\ref{pro:vert_flip}, that
$\conj(\ov l'')=\ov l''$
and hence
$\ov l''=m$.
But then the isometry
$\conj$
preserves
$\ov l''$
pointwise and thus
$j$
preserves the tangent Ptolemy line 
$l'$
pointwise. This contradicts the property that
$j$
acts on
$l'$
as a flip. Therefore
$J(\ov l)=\ov l''$,
and
$\ov l$, $\ov l'$
span the complex line 
$L\sub B_\om$.
\end{proof}

\begin{lem}\label{lem:mu_distortion}
Let
$\mu=\mu_F:X_\om\to X_\om$
be the projection onto a $\C$-line 
$F\sub X_\om$.
Then for each
$u$, $v\in X_\om$
we have
$$|\mu(u)\mu(v)|^2\le|uv|^2+\de(T)^2,$$
where
$T=\ov u\,\ov z\,\ov v$
is the (oriented) triangle in the base
$B_\om$
with vertices
$\ov z=\pi_\om(F)$, $\ov u=\pi_\om(u)$, $\ov v=\pi_\om(v)$,
and
$\de(T)$ 
is the displacement of the lifting isometry
$\tau_T:X_\om\to X_\om$.
\end{lem}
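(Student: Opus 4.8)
The plan is to work in the standard coordinates of Section~\ref{sect:coordinates}, choosing the base point $o$ to lie on the given $\C$-line $F$. Then $F=F_o=\pi_\om^{-1}(\pi_\om(o))$ is the coordinate fibre, $\ov z=\pi_\om(F)$ is the origin of $B_\om$, and the projection $\mu=\mu_F$ acquires the transparent form $\mu(z,h)=(0,h)$: indeed the $h$-coordinate of a point is defined precisely through its projection onto $F_o=F$, and $\mu$ is idempotent. Writing $u=(z,h)$, $v=(z',h')$, I would first record the three quantities entering the inequality. For the projected points $\mu(u)=(0,h)$, $\mu(v)=(0,h')$ the distance formula~(\ref{eq:distance_formula}), together with the special value $D(0,b)=b$ coming from $D(a,b)^4=a^4+b^4$ (Proposition~\ref{pro:comp_dist_function}), gives
$$|\mu(u)\mu(v)|^2=4|h-h'|,$$
while for $u,v$ themselves (\ref{eq:distance_formula}) and Proposition~\ref{pro:comp_dist_function} give
$$|uv|^2=\sqrt{|z-z'|^4+16\Big(h-h'+\tfrac{c^2}{8}\langle J(z),z'\rangle\Big)^2}.$$

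The delicate step is the displacement $\de(T)$ of the lifting isometry of the triangle $T=\ov u\,\ov z\,\ov v$, which in these coordinates has vertices $z,0,z'$. Since the lift of a closed oriented polygon is unchanged under shifts of $B_\om$ (sect.~\ref{subsect:lift_polygons}), I would read $T$ as the cyclic loop $0\to z'\to z\to 0$ and identify it with the normal form $\tfrac12\big(z'\wedge(z-z')\big)$ of Lemma~\ref{lem:coord_lift_triangle}. That lemma then yields
$$h\circ\tau_T(o)=\tfrac{c^2}{8}\langle J(z'),z-z'\rangle=-\tfrac{c^2}{8}\langle J(z),z'\rangle,$$
using that $J$ is a skew-adjoint isometry, so $\langle J(z'),z'\rangle=0$ and $\langle J(z'),z\rangle=-\langle J(z),z'\rangle$ (Proposition~\ref{pro:complex_structure}). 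As $\tau_T$ is a vertical shift it carries $o=(0,0)$ to the point $(0,h\circ\tau_T(o))$ of $F_o$, so its displacement is measured inside the fibre and, again by $D(0,b)=b$,
$$\de(T)^2=|o\tau_T(o)|^2=4\,|h\circ\tau_T(o)|=\tfrac{c^2}{2}\,|\langle J(z),z'\rangle|.$$

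Setting $A:=h-h'$ and $\alpha:=\langle J(z),z'\rangle$, the claim $|\mu(u)\mu(v)|^2\le|uv|^2+\de(T)^2$ becomes $4|A|\le|uv|^2+\tfrac{c^2}{2}|\alpha|$. Discarding the nonnegative term $|z-z'|^4$ under the square root gives $|uv|^2\ge 4\,|A+\tfrac{c^2}{8}\alpha|$, so it suffices to establish
$$4|A|\le 4\big|A+\tfrac{c^2}{8}\alpha\big|+\tfrac{c^2}{2}|\alpha|,$$
which is exactly the triangle inequality on $\R$ applied to $A=\big(A+\tfrac{c^2}{8}\alpha\big)-\tfrac{c^2}{8}\alpha$, since $4\cdot\tfrac{c^2}{8}=\tfrac{c^2}{2}$. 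The only genuinely nontrivial point in the argument is the displacement computation: one must track the orientation of $T$ and invoke shift-invariance of the lift to reduce it to Lemma~\ref{lem:coord_lift_triangle}. Once $\de(T)^2$ is correctly identified with $\tfrac{c^2}{2}|\langle J(z),z'\rangle|$ and $D(0,b)=b$ is used for both vertical distances, the whole estimate collapses to this one-line application of the triangle inequality.
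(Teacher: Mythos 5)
Your proof is correct; no step fails. It is, however, less a new argument than the paper's argument written out in coordinates. The paper introduces $u'=\tau_T(u)$ and $v'=\mu_{F'}(v)$ on the $\C$-line $F'$ through $u$, notes $|\mu(u)\mu(v)|=|u'v'|$, applies Proposition~\ref{pro:euclid_square_fiber} to the three fibre points $u$, $u'$, $v'$ to get $|u'v'|^2\le|uv'|^2+|uu'|^2$, and bounds $|uv'|\le|uv|$ via $D(a,b)^4=a^4+b^4$. In your coordinates these become exactly $|u'v'|^2=4|h-h'|$, $|uv'|^2=4|A+\tfrac{c^2}{8}\alpha|$, $|uu'|^2=\tfrac{c^2}{2}|\alpha|=\de(T)^2$; your concluding inequality $4|A|\le 4|A+\tfrac{c^2}{8}\alpha|+\tfrac{c^2}{2}|\alpha|$ is Proposition~\ref{pro:euclid_square_fiber} applied to those three points, and ``discarding $|z-z'|^4$'' is the paper's $|uv'|\le|uv|$. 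What your version buys is explicitness: the holonomy term $\tfrac{c^2}{8}\langle J(z),z'\rangle$ and all constants are visible, and the reduction to the one-dimensional triangle inequality shows where equality can occur. Everything you invoke --- formula~(\ref{eq:distance_formula}), Lemma~\ref{lem:coord_lift_triangle}, Proposition~\ref{pro:comp_dist_function} --- precedes the lemma, and you rightly keep $c$ symbolic since $c=2$ is derived only afterwards using this very lemma. Two points worth making explicit: $J$ is skew-adjoint because it is a linear isometry with $J^2=-\id$ (so $J^{T}=J^{-1}=-J$), which justifies $\langle J(z'),z\rangle=-\langle J(z),z'\rangle$; and $\de(T)$ is insensitive to the base vertex and the orientation of $T$ (constant displacement; the inverse isometry has the same displacement), so re-reading $\ov u\,\ov z\,\ov v$ as $\tfrac12\bigl(z'\wedge(z-z')\bigr)$ based at the origin is harmless.
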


\begin{proof}
Let
$F'=\pi^{-1}(\ov u)$
be the
$\C$-line
in
$X_\om$
through
$u$, $u'=\tau_T(u)\in F'$, $v'=\mu_{F'}(v)\in F'$.
By Proposition~\ref{pro:euclid_square_fiber}, we have
$|u'v'|^2=||uv'|^2\pm|uu'|^2|$,
and
$|uu'|=\de(T)$.
It follows from Proposition~\ref{pro:comp_dist_function} that
$|uv'|\le|uv|$.
Thus
$$|\mu(u)\mu(v)|^2=|u'v'|^2\le|uv|^2+\de(T)^2.$$
\end{proof}

\begin{pro}\label{pro:lift_const_2}
Let
$c>0$
be the supremum of the lifting constants, taken over all 
the unit squares
$Q$
in
$B_\om$, $c=\sup_Q\de(Q)$,
see section~\ref{sect:complex_structure}. Then
$c=2$.
\end{pro}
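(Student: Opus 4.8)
The plan is to read off $c$ from the explicit coordinate model of $X_\om$ constructed in the previous sections, applied to one highly symmetric Ptolemy circle. Fix $o\in X_\om$, standard coordinates $x=(z,h)\in\R^k\times\R$, and a canonical basis with $v_1=J(u_1)$ for the complex structure $J$. Combining the multiplication law (Lemma~\ref{lem:multi_law}), the lift formula (Lemma~\ref{lem:coord_lift_triangle}) and $D(a,b)^4=a^4+b^4$ (Proposition~\ref{pro:comp_dist_function}) yields the distance formula
\begin{equation*}
|xx'|^4=|z-z'|^4+16\Bigl(h-h'+\tfrac{c^2}{8}\langle J(z),z'\rangle\Bigr)^2 .
\end{equation*}
First I would produce the test object. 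By Proposition~\ref{pro:unit_rcircle} there is a Ptolemy circle $\si$ with $|ow|=1$ for all $w\in\si$, meeting a Ptolemy line $l$ with $\pi_\om(l)=\R u_1$ in $u,v=(\pm u_1,0)$, and meeting the $\C$-line $F$ through $o$ in two points $x,y$. Since $o$ is the midpoint of $x,y$ on $F$ with $|ox|=|oy|=1$, the formula forces $x=(0,-\tfrac14)$, $y=(0,\tfrac14)$ and $|xy|=\sqrt2$.

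Next I would pin down the shape of $\si$. For $w\in\si$ one has $\mu(w)=(0,h_w)$, and Lemma~\ref{lem:mean_geometric} (or Proposition~\ref{pro:euclid_square_fiber} with $|ow|=1$) gives $|z_w|=(1-16h_w^2)^{1/4}$, independent of $c$. The only remaining freedom is the angular part, and here I would use Lemma~\ref{lem:complex_line}: the tangent Ptolemy line $l'$ to $\si$ at $u$ has $\pi_\om(l')\parallel J(u_1)=v_1$. Confining $\si$ to the complex line $L=\mathrm{span}(u_1,v_1)$ is legitimate, since the isometries of $X_\om$ whose base action fixes $L$ pointwise and commutes with $J$ (these exist by (${\rm E}_2$) and Proposition~\ref{pro:complex_structure}) fix $x,y,u,v$ and $l$, hence preserve $\si$ by the uniqueness of Lemma~\ref{lem:one_circle}. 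Thus on the arc through $u$ I may write $w(h)=(z(h),h)$ with $z(h)\in L$, $z(0)=u_1$, $|z(h)|=(1-16h^2)^{1/4}$, and $\arg z(h)=\theta(h)$ odd with $\theta'(0)=:\kappa$. Lifting $l'$ by the group law shows its direction is $(v_1,c^2/8)$, so tangency reads $\kappa=8/c^2$.

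The decisive step is to feed the Ptolemy equality back in. For $0<h<\tfrac14$ the points $x,u,w(h),y$ lie on $\si$ in this cyclic order, so
\begin{equation*}
|x\,w(h)|\cdot|u\,y|=|x\,u|\cdot|w(h)\,y|+|x\,y|\cdot|u\,w(h)| .
\end{equation*}
All six distances but $|u\,w(h)|$ come straight from the distance formula: $|xu|=|uy|=2^{1/4}$, $|x\,w(h)|=(2+8h)^{1/4}$, $|w(h)\,y|=(2-8h)^{1/4}$, $|xy|=\sqrt2$, whence $|u\,w(h)|=2^{-1/4}\bigl((2+8h)^{1/4}-(2-8h)^{1/4}\bigr)=2h+O(h^3)$ and $|u\,w(h)|^4=16h^4+O(h^6)$. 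On the other hand the distance formula gives $|u\,w(h)|^4=A^2+16B^2$ with $A=|u_1-z(h)|^2=\kappa^2h^2+O(h^4)$ and $B=-h+\tfrac{c^2}{8}\langle v_1,z(h)\rangle=(\tfrac{c^2\kappa}{8}-1)h+O(h^3)$. Matching the two expansions: the absence of an $h^2$ term forces $\tfrac{c^2\kappa}{8}=1$ (recovering the tangency relation), and then the $h^4$ terms give $\kappa^4=16$, i.e. $\kappa=2$; hence $c^2=8/\kappa=4$ and $c=2$.

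The main obstacle is this final reconciliation. One must first justify rigorously that $\si$ lies in $L$ (so the two-real-dimensional parametrization is valid and $\theta$ is odd), and then carry the expansions of $A$ and $B$ to order $h^4$ with care: the $h^2$ relation has to be invoked to annihilate the leading term of $B$ before the $h^4$ coefficient of $A^2$ can be identified with the $16h^4$ delivered by the Ptolemy identity. Any slip of one order in these developments changes the numerical value extracted for $c$, so the bookkeeping — not the ideas — is where the difficulty lies.
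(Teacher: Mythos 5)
Your setup is sound and you land on the correct number, but the final coefficient-matching has a genuine gap that your own argument cannot close with the regularity available in the paper. Write $\beta(h)=\langle v_1,z(h)\rangle$, so that $B(h)=-h+\frac{c^2}{8}\beta(h)$. Matching the $h^2$-terms in $A^2+16B^2=16h^4+O(h^6)$ correctly forces $\beta(h)=\kappa h+O(h^2)$ with $\kappa=8/c^2$. But to read off the $h^4$-coefficient you need $16B(h)^2=o(h^4)$, i.e. $\beta(h)=\kappa h+o(h^2)$ --- a \emph{second-order} statement about the circle at $u$. Tangency (Proposition~\ref{pro:tangent_rcircle}) only gives $\dist(w(h),l')=o(|uw(h)|)=o(h)$, hence $\beta(h)=\kappa h+o(h)$, and oddness of $\theta$ upgrades this only if you already know $\theta$ is twice differentiable at $0$, which nothing in the paper provides (an odd remainder like $h|h|$ is $O(h^2)$ but not $o(h^2)$). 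Writing $\beta(h)=\kappa h+\beta_2(h)$ with $\beta_2=O(h^2)$, your identity becomes $\kappa^4h^4+\frac{c^4}{4}\beta_2(h)^2=16h^4+O(h^5)$; since the middle term is nonnegative this yields only $\kappa^4\le 16$, i.e. $c\ge 2$, and the deficit $16-\kappa^4$ is exactly absorbed by a possible genuine $h^2$-oscillation of $\beta_2$. So as written you prove one inequality, not the equality.

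The paper avoids this by arranging the comparison at \emph{first} order in $|uv|$ rather than fourth order in $h$. It takes the two points $v$ and $w=j(v)$ symmetric about $u$, uses the Ptolemy equality on $(x,v,w,y)$ together with $(b^\pm)^2=a\sqrt2,\,d\sqrt2$ to get the exact identity $|vw|=d-a$, replaces $v,w$ by their projections $v',w'$ to the tangent line $l'$ (errors $o(|uv|)$), and then computes the displacement of the thin triangle $T=\ov z\,\ov v'\,\ov w'$ in two ways: via the area law, $\de(T)^2=c^2\cdot\frac12 r(d-a)+o(|uv|)$ (Lemma~\ref{lem:complex_line} guarantees $T$ lies in a complex line), and via the fiber metric, $\de(T)^2=|ts|^2+o(|uv|)=d^2-a^2+o(|uv|)$. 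Both sides are of order $|uv|$, so the $o(|uv|)$ errors coming from mere tangency are genuinely negligible, and dividing by $d-a\asymp|uv|$ gives $d+a=\frac{c^2}{2}r+o(1)\to 2=\frac{c^2}{2}$. If you want to salvage your computation you must either establish $\beta(h)=\kappa h+o(h^2)$ (second-order contact of $\si$ with $l'$), or restructure the comparison so that the unknown second-order term cancels --- which is in effect what the paper's choice of the symmetric pair $v,\,j(v)$ and the triangle $T$ accomplishes.
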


\begin{proof}
Let
$\si\sub X_\om$
be a Ptolemy circle of radius 1 centered at
$z\in F$,
where
$F\sub X_\om$
is a $\C$-line,
$x$, $y\in\si\cap F$
such that
$x<z<y$
with respect to the order
$O$, $l$
the Ptolemy line through
$z$
that intersects
$\si$.
We denote with
$j:X_\om\to X_\om$
a vertical flip which preserves
$l$
pointwise, see sect.~\ref{subsect:exist_vert_flip}. Then recall
$j(\si)=\si$.

Given
$t\in F$, $x<t<y$,
we denote with
$a=a(t)=|xt|$, $d=d(t)=|ty|$.
Then by Proposition~\ref{pro:euclid_square_fiber}  we have
$a^2+d^2=2$.
For
$v\in\si$
with
$\mu(v)=t$,
we let
$r=r(t)=|tv|$.
Then by Lemma~\ref{lem:mean_geometric},
$r^2=a\cdot d$.
Denote with
$b^+=b^+(t)=|xv|$
and with
$b^-=b^-(t)=|yv|$.
Then by Proposition~\ref{pro:comp_dist_function}
$$(b^+)^4=a^4+r^4=a^2(a^2+d^2)=2a^2$$
and similarly
$(b^-)^4=2d^2$.
Thus
$(b^+)^2=a\sqrt 2$
and
$(b^-)^2=d\sqrt 2$.

Now, we assume that
$a<d$
and take a point
$w\in\si$
on the arc
$xvy$
of
$\si$
with
$\mu(w)=s\in F$
such that
$|sy|=a=|xt|$,
that is,
$w=j(v)$.
Then
$|yw|=|xv|=b^+$, $|xw|=|yv|=b^-$,
and using the Ptolemy equality applied to the quadruple
$(x,v,w,y)\sub\si$,
we obtain
$(b^-)^2-(b^+)^2=|vw|\sqrt{2}$.
This gives
$$d-a=|vw|.$$
Furthermore,
$|st|^2=d^2-a^2$
again by Proposition~\ref{pro:euclid_square_fiber}.

The Ptolemy line 
$l$
intersects the Ptolemy circle 
$\si$
in two points, and we denote with
$u\in l\cap\si$
the point between
$v$, $w$
on the arc
$xvwy$
of
$\si$.
Then
$j(u)=u$
and 
$j$
preserves the tangent Ptolemy line 
$l'$
to
$\si$
through 
$u$.
We denote with
$v'$, $w'\in l'$
points closest in
$l'$
to
$v$, $w$
respectively. Then
$|vv'|=o(|uv|)$, $|ww'|=o(|uw|)$.
Note that
$|uv|=|uw|$
by
$j$-symmetry.
Thus
$|v'w'|=|vw|+o(|uv|)=d-a+o(|uv|)$.

Since
$\ov v'\,\ov w'\sub\ov l'$,
by Lemma~\ref{lem:complex_line} the triangle
$T=\ov z\,\ov v'\,\ov w'$
lies a complex line in
$B_\om$,
where
$\ov z=\pi_\om(z)=\pi_\om(s)=\pi_\om(t)$, $\ov v'=\pi_\om(v')$, 
$\ov w'=\pi_\om(w')$, $\ov l'=\pi_\om(l')$,
and thus the canonical complex structure
$J$
with 
$J(\ov z)=\ov z$
preserves the 2-subspace of
$B_\om$
that contains
$T$.
Hence the area law of lifting applied to
$T$
gives
$$\de(T)^2=c^2\cdot\area T,$$
see Remark~\ref{rem:complex_structure}, where
$\de(T)$
is the displacement of the lifting isometry
$\tau_T:X_\om\to X_\om$.
We have
$|\ov v'\ov w'|=|v'w'|=d-a+o(|uv|)$,
$|\ov z\,\ov v'|=|\pi_\om(t)\pi_\om(v)|+o(|uv|)=
|tv|+o(|uv|)=r+o(|uv|)$
and similarly
$|\ov z\,\ov w'|=r+o(|uv|)$.
Thus
$\area T=\frac{1}{2}r(d-a)+o(|uv|)$.

Denote with
$t'=\mu(v')$
and
$s'=\mu(w')$
points in the $\C$-line 
$F$.
Then
$\de(T)=|t's'|$.
By Lemma~\ref{lem:mu_distortion},
$|tt'|^2\le|vv'|^2+\de(T_t)^2$, $|ss'|^2\le|ww'|^2+\de(T_s)^2$,
where
$T_t=\ov z\,\ov v\,\ov v'$, $T_s=\ov z\,\ov w\,\ov w'$
are triangles in
$B_\om$.
We have
$|\ov v\,\ov v'|$, $|\ov w\,\ov w'|=o(|uv|)$
and
$|\ov z\,\ov v|$, $\ov z\,\ov w|\le 1$.
Thus
$\area T_t$, $\area T_s=o(|uv|)$,
and hence
$\de(T_t)^2$, $\de(T_s)^2=o(|uv|)$
by the area law of lifting. Therefore
$|tt'|^2$, $|ss'|^2=o(|uv|)$,
and using Proposition~\ref{pro:euclid_square_fiber} we obtain
$$||t's'|^2-|ts|^2|\le|tt'|^2+|ss'|^2=o(|uv|).$$
Then
$$\de(T)^2=|t's'|^2=|ts|^2+o(|uv|)=d^2-a^2+o(|uv|)$$
and therefore
$$d^2-a^2=\frac{c^2}{2}r(d-a)+o(|uv|).$$
Since
$\si$
is a Ptolemy circle of radius one, we have
$a$, $d\to 1$
and
$$c^2=\frac{2(a+d)}{\sqrt{ad}}+o(1)\to 4$$
as
$|uv|\to 0$.
Thus
$c=2$.
\end{proof}

\section{The model space $\di\K{\rm H}^k$}
\label{sect:model_space}

Every rank one symmetric space
$M$
of non-compact type is a hyperbolic space
$\K\hyp^k$
over a normed division algebra
$\K$.
The only possibilities are the real numbers
$\K=\R$, $\dim\K=1$;
the complex numbers
$\K=\C$, $\dim\K=2$; 
the quaternions
$\K=\bH$, $\dim\K=4$;
and the octonions
$\K=\C a$, $\dim\K=8$.
Then
$\dim M=k\cdot\dim\K$,
where
$k\ge 2$
and
$k=2$
in the case
$\K=\C a$.
We choose a normalization of the metric so that in the case
$\K=\R$
the sectional curvatures of
$M$
are
$K_\si\equiv-1$
and in the case
$\K\neq\R$
the sectional curvatures of
$M$
are pinched as
$-4\le K_\si\le -1$.

We use the standard notation
$TM$
for the tangent bundle of
$M$
and
$UM$
for the subbundle of the unit vectors.
For every unit vector
$u\in U_oM$,
where
$o\in M$,
the eigenspaces
$E_u(\la)$
of the {\em curvature operator}
$\cR(\cdot,u)u:u^\bot\to u^\bot$,
where
$u^\bot\sub T_oM$
is the subspace orthogonal to
$u$,
are parallel along the geodesic
$\ga(t)=\exp_o(tu)$, $t\in\R$, and the respective
eigenvalues
$\la=-1,-4$
are constant along
$\ga$.
The dimensions of the eigenspaces are
$\dim E_u(-1)=(k-1)\dim\K$, $\dim E_u(-4)=\dim\K-1$,
$u^\bot=E_u(-1)\oplus E_u(-4)$.

\subsection{The M\"obius structure of $\di\K{\rm H}^k$}
\label{subsect:model_moebius_structure}

We let
$Y=\di M$
be the geodesic boundary at infinity of
$M$.
For every
$o\in M$
the function
$d_o(\xi,\xi')=e^{-(\xi|\xi')_o}$
for
$\xi$, $\xi'\in Y$
is a (bounded) metric on
$Y$,
where
$(\xi|\xi')_o$
is the Gromov product based at
$o$.
For every
$\om\in Y$
and every Busemann function
$b:M\to\R$
centered at
$\om$
the function
$d_b(\om,\om):=0$
and
$d_b(\xi,\xi')=e^{-(\xi|\xi')_b}$,
except the case
$\xi=\xi'=\om$,
is an (unbounded) metric on
$Y$,
where
$(\xi|\xi')_b$
is the Gromov product with respect to
$b$.
Since
$M$
is a
$\CAT(-1)$-space, 
the metrics
$d_o$, $d_b$
satisfy the Ptolemy inequality and furthermore
all these metrics are pairwise M\"obius equivalent,
see \cite{FS1}.

We let
$\cM$
be the {\em canonical} M\"obius structure on
$Y$
generated by the metrics of type
$d_o$, $o\in M$.
Recall that
$\cM$
is the class of metrics on
$Y$
each of which is M\"obius equivalent to any
$d_o$.
Then
$Y$
endowed with
$\cM$
is a compact Ptolemy space. Every metric
$d\in\cM$
is of type
$d=d_b$
for some Busemann function
$b:M\to\R$,
or
$d=\la d_o$,
for some 
$o\in M$
and 
$\la>0$.
We emphasize that in general metrics of
$\cM$
are neither Carnot-Carath\'eodory metrics nor length
metrics.

The following lemma is a modification of 
Lemma~\ref{lem:sinversion_minversion}.

\begin{lem}\label{lem:invariant_sphere} Let
$\phi:X\to X$
be a M\"obius involution,
$\phi^2=\id$,
of a Ptolemy space 
$X$
with
$\phi(\om)=\om'$
for distinct
$\om$, $\om'\in X$.
Then there is a unique metric sphere
$S\sub X$
between
$\om$, $\om'$
invariant for
$\phi$, $\phi(S)=S$.
\end{lem}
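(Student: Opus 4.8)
The plan is to fix once and for all a metric $d\in\cM$ with infinitely remote point $\om'$, and to exploit the fact (see sect.~\ref{subsect:space_inversions}) that every metric sphere between $\om$ and $\om'$ is of the form $S_r^d(\om)$ for a unique radius $r>0$; since proportional metrics produce the same family of level sets, this one-parameter family does not depend on the particular choice of $d$. Because $\phi$ permutes $\om$ and $\om'$ and the notion of a metric sphere between $\om,\om'$ is symmetric in these two points, $\phi$ carries this family into itself. Thus it suffices to determine for which $r$ the sphere $S_r^d(\om)$ is fixed by $\phi$, and to show that there is exactly one such $r$.

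The key computation identifies the action of $\phi$ on radii. Since $\phi^2=\id$ and $\phi(\om)=\om'$, we also have $\phi(\om')=\om$, so the pulled-back metric $\phi^\ast d\in\cM$ has infinitely remote point $\phi^{-1}(\om')=\om$. Let $d'$ denote the m-inversion of $d$ of radius $1$ with respect to $\om$; by Proposition~\ref{pro:moeb_ptolemy} it lies in $\cM$, and it too has infinitely remote point $\om$. Hence by Lemma~\ref{lem:homothety_infinite} the two metrics are proportional, $\phi^\ast d=\la d'$ for some $\la>0$. Using $d'(x,\om')=1/d(x,\om)$ (the standard convention for an m-inversion, see sect.~\ref{subsect:Ptolemy_spaces}), for every $x$ with $d(x,\om)=r$ I would compute
\[
d(\phi(x),\om)=d(\phi(x),\phi(\om'))=(\phi^\ast d)(x,\om')=\la\,d'(x,\om')=\frac{\la}{r}.
\]
This shows $\phi(S_r^d(\om))\subseteq S_{\la/r}^d(\om)$, and applying $\phi$ once more together with $\phi^2=\id$ upgrades the inclusion to the equality $\phi(S_r^d(\om))=S_{\la/r}^d(\om)$.

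With this formula the conclusion is immediate: $\phi(S_r^d(\om))=S_r^d(\om)$ holds precisely when $\la/r=r$, i.e. for the single value $r=\sqrt{\la}$. Taking $S=S_{\sqrt\la}^d(\om)$ yields the desired $\phi$-invariant metric sphere, and uniqueness follows because no other radius gives an invariant sphere. I expect the only real care needed to be the bookkeeping of conventions --- verifying that $\phi^\ast d$ and the m-inversion $d'$ genuinely share the same infinitely remote point $\om$ (so that Lemma~\ref{lem:homothety_infinite} applies) and tracking the radius-$1$ normalization so that the exponent balances to give $r^2=\la$. This parallels, and slightly refines, the argument of Lemma~\ref{lem:sinversion_minversion}, the difference being that here $\phi$ is only assumed to be a M\"obius involution rather than a prescribed s-inversion.
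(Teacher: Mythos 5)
Your proof is correct and follows essentially the same route as the paper: pull back the metric $d$ by $\phi$, observe that $\phi^\ast d$ has infinitely remote point $\om$ and hence is proportional (with factor $\la$) to the m-inversion of $d$ at $\om$, compute $d(\phi(x),\om)=\la/d(x,\om)$, and conclude that $S_r^d(\om)$ is invariant exactly when $r^2=\la$. The paper phrases the uniqueness step pointwise ($d(x,\om)\lessgtr r$ implies $d(\phi(x),\om)\gtrless r$) rather than via the induced map $r\mapsto\la/r$ on radii, but the content is identical.
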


\begin{proof} Let
$d$
be a metric of the M\"obius structure with infinitely
remote point 
$\om'$.
Since
$\phi(\om)=\om'$,
the point 
$\om$
is infinitely remote for the induced metric
$\phi^\ast d$.
Thus for some
$\la>0$
we have
$$(\phi^\ast d)(x,y)=\frac{\la d(x,y)}{d(x,\om)d(y,\om)}$$
for each
$x$, $y\in X$
which are not equal to
$\om$
simultaneously. We let
$S=S_r^d(\om)\sub X$
be a metric sphere between
$\om$, $\om'$
with
$r^2=\la$, $d(x,\om)=r$
for every
$x\in S$.
Then
$$d(\phi(x),\om)=d(\phi(x),\phi(\om'))=(\phi^\ast d)(x,\om')
  =\la/d(x,\om)=r$$
for every
$x\in S$.
Hence
$\phi(S)=S$.
For any
$x\in X$
with 
$d(x,\om)\lessgtr r$
the same argument shows that
$d(\phi(x),\om)\gtrless r$,
thus an invariant sphere between
$\om$, $\om'$
is unique.
\end{proof}

\begin{pro}\label{pro:rank_one_basic_axioms} Let
$Y=\di M$
be the boundary at infinity of a rank one symmetric
space
$M$
of noncompact type. Then regarded as a compact Ptolemy space 
with respective M\"obius structure
$\cM$, $Y$
satisfies properties (E) and (I).
\end{pro}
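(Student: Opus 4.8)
The plan is to deduce both properties from the abundance of isometries of $M=\K\hyp^k$ together with the naturality of the canonical M\"obius structure $\cM$: since an isometry $f\colon M\to M$ preserves all Gromov products based at points and at Busemann functions, its boundary extension $\di f\colon Y\to Y$ preserves each generating metric $d_o$ (up to the data defining $\cM$) and is therefore a M\"obius automorphism of $Y$. I will use this repeatedly.

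For property (E) I would exhibit an explicit Ptolemy circle. Every rank one symmetric space of noncompact type contains totally geodesic real hyperbolic planes $\hyp^2\sub M$ (the totally real, curvature $-1$ planes). I claim $\si:=\di\hyp^2\sub Y$ is a Ptolemy circle. Fix a base point $o\in\hyp^2$. Because $\hyp^2$ is totally geodesic, geodesic rays of $\hyp^2$ are geodesic rays of $M$ and the intrinsic and extrinsic distances between points of $\hyp^2$ coincide; hence the Gromov product $(\xi|\xi')_o$ of $\xi,\xi'\in\di\hyp^2$ computed in $M$ equals the one computed in $\hyp^2$. Thus $d_o|_\si$ is the boundary metric of $\hyp^2$, and the M\"obius structure induced on $\si$ is exactly that of $\di\hyp^2=\wh\R=S^1$. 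So $\si$ is M\"obius equivalent to a standard circle, i.e.\ a Ptolemy circle, and (E) holds.

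For property (I) I would treat existence and uniqueness separately. For existence, fix distinct $\om,\om'\in Y$, let $l=\om\om'\sub M$ be the connecting geodesic, and for $o\in l$ let $s_o\colon M\to M$ be the geodesic (central) symmetry, an isometric involution with $ds_o=-\id$ on $T_oM$. Its boundary map $\phi_o=\di s_o$ is a M\"obius involution that swaps $\om,\om'$ (since $s_o$ reverses $l$) and is fixed-point-free on $Y$ (since $ds_o=-\id$ reverses every direction at $o$). Moreover $\phi_o$ preserves every Ptolemy circle $\si=\di\hyp^2$ through $\om,\om'$: such an $\hyp^2$ contains $l$, hence $o$, and $ds_o=-\id$ preserves $T_o\hyp^2$, so $s_o(\hyp^2)=\hyp^2$ by uniqueness of a totally geodesic plane with prescribed tangent plane. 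By Lemma~\ref{lem:invariant_sphere}, $\phi_o$ has a unique invariant metric sphere $S_o$ between $\om,\om'$, so $\phi_o$ is a space inversion $\phi_{\om,\om',S_o}$. Finally I would show that every metric sphere $S$ between $\om,\om'$ arises as some $S_o$: in a metric $d$ with infinitely remote point $\om'$ the spheres are $S_r^d(\om)$, $r\in(0,\infty)$, and restricting the picture to one circle $\si$ reduces the computation to the model $\hyp^2$, where the symmetry at the point of $l$ of height $t$ fixes precisely the sphere of radius $t$. Thus $r(o)$ varies continuously and monotonically from $0$ to $\infty$ as $o$ runs along $l$, and the intermediate value theorem produces an $o$ with $S_o=S$. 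This gives existence.

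Uniqueness is the crux. Suppose $\phi,\phi'$ are space inversions with the same data $\om,\om',S$. Here I would invoke the standard rigidity fact that every M\"obius automorphism of the boundary of a rank one symmetric space of noncompact type is the boundary extension of a (unique) isometry of $M$; thus $\phi=\di f$ for an isometric involution $f$ swapping $\om,\om'$. Then $f$ preserves and reverses $l$, so it fixes a unique center $o\in l$, and $df_o$ is an involution in the isotropy representation. The requirements that $f$ preserve every real circle through $\om,\om'$ and act on it without fixed points force $df_o=-\id$ on every totally real $2$-plane through the tangent direction $v$ of $l$ at $o$, i.e.\ on $\R v\oplus(\K v)^{\bot}$. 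Since $df_o$ respects the $\K$-structure (the isotropy group is $U(k)$ for $\K=\C$, and its analogues $\operatorname{Sp}(k)\operatorname{Sp}(1)$, $\operatorname{Spin}(9)$ in the other cases), and the $\K$-antilinear alternative would leave a direction fixed and hence produce a fixed boundary point (excluded by fixed-point-freeness), this propagates to $df_o=-\id$ on all of $T_oM$. Therefore $f=s_o$ is the geodesic symmetry, and by Lemma~\ref{lem:invariant_sphere} together with the monotonicity above, $S$ determines $o$ uniquely; hence $f=f'$ and $\phi=\phi'$. The main obstacle is exactly this uniqueness step: one must combine the identification of boundary M\"obius automorphisms with isometries and the linear-algebra rigidity showing that a fixed-point-free involution swapping $\om,\om'$ and preserving all real circles through them can only be a geodesic symmetry.
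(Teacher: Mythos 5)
Your argument for (E) and for the \emph{existence} half of (I) is essentially the paper's: totally geodesic copies of $\hyp^2$ give Ptolemy circles, and the boundary map of the central symmetry $s_o$ at a point $o\in l=\om\om'$ is a fixed-point-free M\"obius involution swapping $\om,\om'$, preserving every circle $\di\hyp^2\sups\{\om,\om'\}$, and (via Lemma~\ref{lem:invariant_sphere}) preserving exactly one sphere $S_o$; the paper pins down which sphere by choosing $o$ with $b(o)=-\ln r$ and checking equiradiality on the eigen-directions of the curvature operator, where you instead run an intermediate-value/monotonicity argument along $l$ --- both are fine.

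The divergence, and the one substantive problem, is the uniqueness step. You outsource it to the assertion that every M\"obius automorphism of $\di\K\hyp^k$ is the boundary extension of an isometry of $M$. That statement is true, but for $\K\neq\R$ it is not a throwaway ``standard fact'': it rests on rigidity results of Pansu/Kor\'anyi--Reimann type (or at least on a genuine classification of the M\"obius self-maps of the Heisenberg-type boundary), i.e.\ on machinery at least as heavy as the proposition itself, and you neither prove it nor cite it precisely. Once it is granted, your linear-algebra propagation ($df_o=-\id$ on $\R v\oplus E_u(-1)$ from the action on all real circles, then on $E_u(-4)$ using $\K$-linearity and fixed-point-freeness) does close the argument, but as written the crux is asserted rather than established. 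The paper avoids this import entirely: it observes that two space inversions with the same data agree on the union $B$ of all Ptolemy circles through $\om,\om'$ (three-point rigidity on each circle, Proposition~\ref{pro:moebch-circ}), so $\eta=\psi^{-1}\circ\phi$ is an isometry of $Y_\om$ fixing $B$ pointwise and inducing the identity on the base of the known fibration $\pi_\om$ of the model; the open-closed argument of Lemma~\ref{lem:simply_transitivity_shifts} then forces $\eta=\id$. You should either supply a precise reference and proof sketch for the boundary-rigidity theorem you invoke, or replace that step by an argument along the paper's more elementary lines.
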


\begin{proof} Every rank one symmetric space
$M$
of noncompact type contains geodesic subspaces 
$L$
isometric to
$\hyp^2$.
Then
$\si=\di L\sub Y$
is a Ptolemy circle. Hence, the property (E) is fulfilled for 
$Y$.

Given distinct
$\om$, $\om'\in Y$
and a metric sphere
$S\sub Y$
between
$\om$, $\om'$,
we show that there is a unique space inversion
$\phi=\phi_{\om,\om',S}:Y\to Y$
w.r.t.
$\om$, $\om'$, $S$.

Let
$b:M\to\R$
be a Busemann function of the geodesic ray
$x\om\sub M$, $x\in l=\om\om'\sub M$.
Then the Gromov product
$(\xi|\xi')_b\in\R$
w.r.t.
$b$
is well defined for
$\xi$, $\xi'\in Y\sm\om$, 
see e.g. \cite[sect.3.2]{BS}, \cite{BK}, and the function
$d(\xi,\xi')=e^{-(\xi|\xi')_b}$
is a metric of the M\"obius structure with infinitely remote point 
$\om$.
Let
$r>0$
be the radius of
$S$, $S=S_r^d(\om')$.
We take 
$o\in l$
with
$b(o)=-\ln r$.
Let
$u\in U_oM$
be a tangent vector to
$l$.
For every
$\xi\in Y$
such that the direction
$v_\xi\in U_oM\cap u^\bot$
of the ray
$o\xi\sub M$
is an eigenvector of the curvature operator,
$v_\xi\in E_u(-1)\cup E_u(-4)$,
the ideal triangle
$\om\xi\om'$
lies in a geodesic subspace of
$M$
isometric to
$\hyp^2$
(the case
$v_\xi\in E_u(-1)$)
or to
$\frac{1}{2}\hyp^2$
(the case
$v_\xi\in E_u(-4)$).
By symmetry,
$o$
is an equiradial point of
$\om\xi\om'$,
see \cite[Proposition~2.5]{BK}, thus
$(\xi|\om')_b=b(o)=-\ln r$
and
$d(\xi,\om')=r$,
that is,
$\xi\in S$.

The central symmetry
$f:M\to M$
at
$o$, $d_of=-\id_{T_oM}$,
is an isometry that induces a M\"obius involution 
$\phi=\di f:Y\to Y$
without fixed points and with
$\phi(\om)=\om'$.
Then for every
$\xi\in S$
with
$v_\xi\in E_u(-i^2)$
we have
$v_{\phi(\xi)}=-v_\xi\in E_u(-i^2)$, $i=1,2$.
Thus
$\phi(\xi)\in S$.
It follows from Lemma~\ref{lem:invariant_sphere} that the sphere
$S$
is invariant for 
$\phi$, $\phi(S)=S$.
Furthermore, every Ptolemy circle
$\si\sub Y$
through
$\om$, $\om'$
is the boundary at infinity of a geodesic subspace
$L\sub M$
containing
$l$
and isometric to
$\hyp^2$,
see \cite{FS1}. Since
$L$
is invariant for 
$f$,
we have
$\phi(\si)=\si$.
Therefore,
$\phi=\phi_{\om,\om',S}:Y\to Y$
is a space inversion w.r.t.
$\om$, $\om'$, $S$.

Assume there is a M\"obius involution
$\psi:Y\to Y$, $\psi^2=\id$,
without fixed points and with 
$\psi(\om)=\om'$, $\psi(S)=S$,
such that
$\psi(\si)=\si$
for every Ptolemy circle
$\si\sub Y$
through
$\om$, $\om'$.
We show that
$\psi=\phi$.

Let
$B\sub Y$
be the union of all the Ptolemy circles through
$\om$, $\om'$.
Then
$\psi|B=\phi|B$
because 
$S$
intersects every arc between
$\om$, $\om'$
of a Ptolemy circle through
$\om$, $\om'$
at a unique point, and any M\"obius automorphism of
$\si$
is uniquely determined by values at three distinct points.

Note that the existence of the fibration
$\pi_\om:Y_\om\to B_\om$
with the base 
$B_\om$
isometric to
$\R^{(k-1)\dim\K}$, $\dim M=k\dim\K$,
is well known, and it follows from consideration of a model,
see \cite{Gol} for the case
$M=\C\hyp^k$.

We put
$\eta=\psi^{-1}\circ\phi$
and note that
$\eta|B=\id_B$.
Then
$\eta:Y_\om\to Y_\om$
is an isometry w.r.t. the metric
$d$
that induces the identity map of the base
$B_\om$.
Thus
$\eta$
preserves any foliation of
$Y_\om$
by Busemann parallel Ptolemy line. Then 
$\eta=\id$
by the same argument as in the proof of 
Lemma~\ref{lem:simply_transitivity_shifts}. Hence,
$\psi=\phi$,
and the property (I) is fulfilled for 
$Y$.
\end{proof}

\subsection{Isomorphism with the model space $\di\C{\rm H}^k$}
\label{subsect:isomorphism_model}

For a given
$\om\in X$
we fix as usual a metric from the M\"obius structure
with infinitely remote point
$\om$.
As in sect.~\ref{sect:coordinates} we introduce standard coordinates in
$X_\om$
with origin
$o\in X_\om$
identifying
$X_\om$
with
$\R^n\times\R=\R^{n+1}$
by
$x=(z,h)$.
Moreover, we regard
$X_\om$
as the nilpotent group
$N_\om$,
see sect.~\ref{subsect:max_unipotent}, identifying every
$x\in X_\om$
with
$g\in N_\om$
via the rule
$x=g(o)$.
Then the multiplication law of 
$N_\om$
in the coordinates for
$g=(z,h)$, $g'=(z',h')$
is given by
$$g\cdot g'=\left(z+z',h+h'+\frac{1}{2}\langle J(z),z'\rangle\right),$$
where we used Lemmas~\ref{lem:multi_law}, \ref{lem:coord_lift_triangle}
and Proposition~\ref{pro:lift_const_2}. Next, we fix a canonical basis
$b=\{u_1,v_1,\dots,u_k,v_k\}$
of
$B_\om=\R^n$, $n=2k$,
for the complex structure
$J$, 
where
$v_i=J(u_i)$, $i=1,\dots,k$.
Then
$z=(z_1,\dots,z_k)$
for every
$z\in B_\om$,
where
$z_i=x_iu_i+y_iv_i$
with
$x_i$, $y_i\in\R$
and
$z=\sum_iz_i$,
that is, we identify
$B_\om$
with the complex coordinate space
$\C^k$.
Note that
$\conj z_i=x_iu_i-y_iv_i$
for every
$z_i=x_iu_i+y_iv_i$,
and we use the standard notation
$\conj z=\ov z$
for the complex conjugation.

Let
$(z,z')=\sum_iz_i\ov z_i'$
be the standard Hermitian form on
$\C^k$.

\begin{lem}\label{lem:mult_hermitian} For each
$g=(z,h)$, $g'=(z',h')\in N_\om$
we have
$$g\cdot g'=\left(z+z',h+h'-\frac{1}{2}\im(z,z')\right).$$
\end{lem}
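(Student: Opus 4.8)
The plan is to observe that the content of the lemma is entirely the algebraic identity
$$\langle J(z),z'\rangle=-\im(z,z')$$
expressed in the canonical basis $b=\{u_1,v_1,\dots,u_k,v_k\}$. Indeed, combining Lemma~\ref{lem:multi_law}, Lemma~\ref{lem:coord_lift_triangle} and Proposition~\ref{pro:lift_const_2} (so that the constant $\tfrac{c^2}{8}$ becomes $\tfrac12$) we already have the multiplication law $g\cdot g'=\bigl(z+z',\,h+h'+\tfrac12\langle J(z),z'\rangle\bigr)$, as stated just before the lemma. Hence once the identity above is verified the claimed formula follows immediately by substitution.

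First I would record the action of $J$ on the basis: since $v_i=J(u_i)$ and $J^2=-\id$ by Proposition~\ref{pro:complex_structure}, we have $J(u_i)=v_i$ and $J(v_i)=-u_i$. Writing $z=\sum_i z_i$ and $z'=\sum_i z_i'$ with $z_i=x_iu_i+y_iv_i$ and $z_i'=x_i'u_i+y_i'v_i$, linearity of $J$ gives $J(z)=\sum_i(x_iv_i-y_iu_i)$.

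Next, since $b$ is orthonormal for the Euclidean inner product $\langle\cdot,\cdot\rangle$ on $B_\om$, all cross terms between distinct indices vanish, and a term-by-term computation yields
$$\langle J(z),z'\rangle=\sum_i(x_iy_i'-y_ix_i').$$
On the other side, identifying $z_i$ with the complex number $x_i+iy_i$ so that $(z,z')=\sum_i z_i\ov{z_i'}$, a direct expansion gives $\im(z_i\ov{z_i'})=y_ix_i'-x_iy_i'$, whence
$$\im(z,z')=\sum_i(y_ix_i'-x_iy_i')=-\langle J(z),z'\rangle.$$
Substituting into the multiplication law produces $g\cdot g'=\bigl(z+z',\,h+h'-\tfrac12\im(z,z')\bigr)$, as required.

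The computation presents no genuine obstacle; the only point demanding care is the bookkeeping of sign and conjugation conventions. Specifically, one must use that the Hermitian form is taken as $(z,z')=\sum_i z_i\ov{z_i'}$, conjugate-linear in the second slot, and that $\conj$ is the conjugation associated with the chosen canonical basis, so that the two coordinate computations are carried out with matching orientations of $J$. With these conventions fixed, matching $\langle J(z),z'\rangle$ against $-\im(z,z')$ is a routine verification.
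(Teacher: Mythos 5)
Your proposal is correct and takes essentially the same route as the paper: the paper's proof consists of the single line ``this follows from the equality $\langle J(z),z'\rangle=-\im(z,z')$,'' and you simply verify that identity explicitly in the canonical basis, which is a routine and accurate computation.
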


\begin{proof}
This follows from the equality
$\langle J(z),z'\rangle=-\im(z,z')$. 
\end{proof}

It follows that
$N_\om$
is the higher-dimensional Heisenberg group
$\bH^k$,
see \cite[sect.~2.1.2]{CDPT}.

\begin{proof}[Proof of Theorem~\ref{thm:complex_hyperbolic}]
We only need to consider the case
$p=1$
and to show that
$X$
is M\"obius equivalent to
$\di\C\hyp^k$
taken with the canonical M\"obius structure,
$\dim X=n=2k-1$.

It follows from Proposition~\ref{pro:complex_structure} that
the dimension of the base 
$B_\om$
is even,
$\dim B_\om=n-1=2(k-1)$.
By Corollary~\ref{cor:onedimensional_base},
$k\ge 2$.
In a given dimension a compact Ptolemy space that
satisfies (E), (I) with
$p=1$
is uniquely determined up to a M\"obius isomorphism 
by the lifting constant
$c$,
and this constant itself is uniquely determined by these properties,
$c=2$.

We give more detail for this argument, which are however
straightforward. Let
$Y=\di\C\hyp^k$
be the boundary at infinity of the complex hyperbolic space
$\C\hyp^k$
taken with the canonical M\"obius structure. Since
$\C\hyp^k$
is a
$\CAT(-1)$-space,
the M\"obius structure of
$Y$
is a compact Ptolemy space, and
$\dim Y=2k-1$.
By Propositions~\ref{pro:rank_one_basic_axioms},
$Y$
satisfies properties (E) and (I).

We fix metrics from the M\"obius structures of
$X$, $Y$
with infinitely remote points
$\om\in X$
and
$\zeta\in Y$
respectively and introduce standard coordinates in
$X_\om$, $Y_\zeta$
identifying these spaces with respective nilpotent groups
of isometries
$N_\om$, $N_\zeta$.
By Lemma~\ref{lem:mult_hermitian}, the multiplications laws in
$N_\om$, $N_\zeta$
are identical, thus the groups are isomorphic via the isomorphism 
which associates to each other the elements with equal coordinates.
Moreover, this isomorphism is an isometry between
$X_\om$, $Y_\zeta$
because for any
$x=(z,h)$, $x'=(z',h')\in X_\om$
we have
\begin{equation}\label{eq:koranyi_gauge}
|xx'|^4=|z-z'|^4+16|h-h'-\frac{1}{2}\im(z\ov z')|^2
\end{equation}
by the distance formula~(\ref{eq:distance_formula}) from Proposition~\ref{pro:vert_flip} together with
Proposition~\ref{pro:comp_dist_function}.
\end{proof}

\begin{rem}\label{rem:koranyi_gauge}
The distance on a Heisenberg group 
$\bH^k$
given by Eq.~(\ref{eq:koranyi_gauge})
is called the {\em Kor\'anyi gauge}, see \cite{CDPT}. In our approach,
this metric is a member of the canonical M\"obius structure
on the boundary at infinity
$Y=\di\C\hyp^k$,
cp. \cite[sect.~3.4.5]{CDPT}.
\end{rem}


\end{document}